\DeclareRobustCommand{\SkipTocEntry}[5]{}
\let\old@tocline\@tocline
\let\section@tocline\@tocline
\newcommand{\subsection@dotsep}{4.5}
\newcommand{\subsubsection@dotsep}{4.5}
     \leaders\hbox{$\m@th
        \mkern \subsection@dotsep mu\hbox{.}\mkern \subsection@dotsep mu$}\hfill
\let\subsection@tocline\@tocline
\let\@tocline\old@tocline
     \leaders\hbox{$\m@th
        \mkern \subsubsection@dotsep mu\hbox{.}\mkern \subsubsection@dotsep mu$}\hfill
\let\subsubsection@tocline\@tocline
\let\@tocline\old@tocline
\let\old@l@subsection\l@subsection
\let\old@l@subsubsection\l@subsubsection
\def\@tocwriteb#1#2#3{%
  \begingroup
    \@xp\def\csname #2@tocline\endcsname##1##2##3##4##5##6{%
      \ifnum##1>\c@tocdepth
      \else \sbox\z@{##5\let\indentlabel\@tochangmeasure##6}\fi}%
    \csname l@#2\endcsname{#1{\csname#2name\endcsname}{\@secnumber}{}}%
  \endgroup
  \addcontentsline{toc}{#2}%
    {\protect#1{\csname#2name\endcsname}{\@secnumber}{#3}}}%
\newlength{\@tocsectionindent}
\newlength{\@tocsubsectionindent}
\newlength{\@tocsubsubsectionindent}
\newlength{\@tocsectionnumwidth}
\newlength{\@tocsubsectionnumwidth}
\newlength{\@tocsubsubsectionnumwidth}
\newcommand{\settocsectionnumwidth}[1]{\setlength{\@tocsectionnumwidth}{#1}}
\newcommand{\settocsubsectionnumwidth}[1]{\setlength{\@tocsubsectionnumwidth}{#1}}
\newcommand{\settocsubsubsectionnumwidth}[1]{\setlength{\@tocsubsubsectionnumwidth}{#1}}
\newcommand{\settocsectionindent}[1]{\setlength{\@tocsectionindent}{#1}}
\newcommand{\settocsubsectionindent}[1]{\setlength{\@tocsubsectionindent}{#1}}
\newcommand{\settocsubsubsectionindent}[1]{\setlength{\@tocsubsubsectionindent}{#1}}
\renewcommand{\l@section}{\section@tocline{1}{\@tocsectionvskip}{\@tocsectionindent}{}{\@tocsectionformat}}%
\renewcommand{\l@subsection}{\subsection@tocline{1}{\@tocsubsectionvskip}{\@tocsubsectionindent}{}{\@tocsubsectionformat}}%
\renewcommand{\l@subsubsection}{\subsubsection@tocline{1}{\@tocsubsubsectionvskip}{\@tocsubsubsectionindent}{}{\@tocsubsubsectionformat}}%
\newcommand{\@tocsectionformat}{}
\newcommand{\@tocsubsectionformat}{}
\newcommand{\@tocsubsubsectionformat}{}
\def\csname toc@1format\endcsname{\@tocsectionformat}
\def\csname toc@2format\endcsname{\@tocsubsectionformat}
\def\csname toc@3format\endcsname{\@tocsubsubsectionformat}
\newcommand{\settocsectionformat}[1]{\renewcommand{\@tocsectionformat}{#1}}
\newcommand{\settocsubsectionformat}[1]{\renewcommand{\@tocsubsectionformat}{#1}}
\newcommand{\settocsubsubsectionformat}[1]{\renewcommand{\@tocsubsubsectionformat}{#1}}
\newlength{\@tocsectionvskip}
\newcommand{\settocsectionvskip}[1]{\setlength{\@tocsectionvskip}{#1}}
\newlength{\@tocsubsectionvskip}
\newcommand{\settocsubsectionvskip}[1]{\setlength{\@tocsubsectionvskip}{#1}}
\newlength{\@tocsubsubsectionvskip}
\newcommand{\settocsubsubsectionvskip}[1]{\setlength{\@tocsubsubsectionvskip}{#1}}
\patchcmd{\tocsection}{\indentlabel}{\makebox[\@tocsectionnumwidth][l]}{}{}
\patchcmd{\tocsubsection}{\indentlabel}{\makebox[\@tocsubsectionnumwidth][l]}{}{}
\patchcmd{\tocsubsubsection}{\indentlabel}{\makebox[\@tocsubsubsectionnumwidth][l]}{}{}
\newcommand{\@sectypepnumformat}{}
\renewcommand{\contentsline}[1]{%
  \expandafter\let\expandafter\@sectypepnumformat\csname @toc#1pnumformat\endcsname%
  \csname l@#1\endcsname}
\newcommand{\@tocsectionpnumformat}{}
\newcommand{\@tocsubsectionpnumformat}{}
\newcommand{\@tocsubsubsectionpnumformat}{}
\newcommand{\setsectionpnumformat}[1]{\renewcommand{\@tocsectionpnumformat}{#1}}
\newcommand{\setsubsectionpnumformat}[1]{\renewcommand{\@tocsubsectionpnumformat}{#1}}
\newcommand{\setsubsubsectionpnumformat}[1]{\renewcommand{\@tocsubsubsectionpnumformat}{#1}}
\renewcommand{\@tocpagenum}[1]{%
  \hfill {\mdseries\@sectypepnumformat #1}}
\let\oldappendix\appendix
\renewcommand{\appendix}{%
  \leavevmode\oldappendix%
  \addtocontents{toc}{%
    \protect\settowidth{\protect\@tocsectionnumwidth}{\protect\@tocsectionformat\sectionname\space}%
    \protect\addtolength{\protect\@tocsectionnumwidth}{2em}}%
}
\let\oldtableofcontents\tableofcontents
\renewcommand{\tableofcontents}{%
  \vspace*{-\linespacing}
  \oldtableofcontents}
    \theoremstyle{definition}
    \newtheorem{definition}{Definition}[subsection]
    \newtheorem{example}[definition]{Example}
    \newtheorem{remark}[definition]{Remark}
    \theoremstyle{plain}
    \newtheorem{theorem}[definition]{Theorem}
    \newtheorem{proposition}[definition]{Proposition}
    \newtheorem{definition-proposition}[definition]{Definition--Proposition}
    \newtheorem{lemma}[definition]{Lemma}
    \newtheorem{corollary}[definition]{Corollary}
\newenvironment{customthm}[1]
  {\innercustomthm}
  {\endinnercustomthm}
\newcommand{\R}{\mathbb{R}}
\newcommand{\SH}{\mathscr{H}}
\newcommand{\SC}{\mathcal{C}}
\newcommand{\SD}{\mathscr{D}}
\newcommand{\SE}{\mathscr{E}}
\newcommand{\SF}{\mathscr{F}}
\newcommand{\SG}{\mathscr{G}}
\newcommand{\ST}{\mathscr{T}}
\newcommand{\SK}{\mathscr{K}}
\newcommand{\SL}{\mathscr{L}}
\newcommand{\lr}{\longrightarrow}
\newcommand{\Aut}{\operatorname{Aut}}
\newcommand{\Sh}{\operatorname{Sh}}
\newcommand{\WFuk}{\operatorname{WFuk}}
\newcommand{\ol}{\overline}
\newcommand{\bR}{\mathbb{R}}
\newcommand{\bZ}{\mathbb{Z}}
\newcommand{\cC}{\mathcal{C}}
\newcommand{\cF}{\mathcal{F}}
\newcommand{\cV}{\mathcal{V}}
\newcommand\Cat{ {\operatorname{Cat}} }
\newcommand\Fun{ {\operatorname{Fun}} }
\newcommand\id{\operatorname{id}}
\newcommand\Hom{\operatorname{Hom}}
\newcommand\PrL{  {  \operatorname{Pr}^{\operatorname{L} }  } }
\newcommand\PrLst{  {  \operatorname{Pr}^{\operatorname{L} }_{st} } }
\newcommand\PrR{ {\operatorname{Pr}^{\operatorname{R} } } }
\newcommand\PrRst{  {  \operatorname{Pr}^{\operatorname{R} }_{st} } }
\newcommand\colim{\operatorname{colim}}
\newcommand\Mod{\operatorname{Mod}}
\newcommand\Loc{ {\operatorname{Loc}} }
\newcommand\msh{\operatorname{\mu sh}}
\renewcommand\SS{\operatorname{ss}}
\renewcommand\ss{\operatorname{ss}}
\newcommand\sHom{\mathscr{H}om}
\newcommand\supp{ {\operatorname{supp}} }
\newcommand\Fuk{\operatorname{Fuk}}
\newcommand\PrLV[1][\cV]{  {  \operatorname{Pr}^{\operatorname{L} }_{\cV,st} } }
   \def\MR#1{}
\begin{document}

\title{\hspace{-4mm} Microsheaf composition of Lagrangian correspondences}
\date{}
\author{Wenyuan Li}
\author{David Nadler}
\author{Vivek Shende}

\begin{abstract}
    In exact symplectic manifolds whose Liouville flow is gradientlike for a proper Morse function, one can associate conic microsheaves to eventually conic exact Lagrangians.  
    Here we study how this `microsheaf quantization' interacts with composition of Lagrangian correspondences.  In particular: these operations commute when the composition is embedded.   As an illustration, we show that Lie groups of exact symplectomorphisms act on microsheaf categories. 
    
    The key technical advance is a version `in families' of the gappedness criterion for commuting nearby cycles past tensor or Hom. 
\end{abstract}

\maketitle

\thispagestyle{empty}

\renewcommand{\contentsname}{}
\tableofcontents
\thispagestyle{empty}

\newpage


\section{Introduction}

As Weinstein explained, symplectic geometry is organized by a `category' whose objects are symplectic manifolds, whose morphisms are Lagrangian correspondences, and in which composition is set-theoretic composition of correspondences \cite{Weinstein-symplectic}.  This picture was partly motivated by the idea that geometric quantization  \cite{Hormander,Kostant, Souriau} should be a `functor' from some version of this symplectic `category' to the category of Hilbert spaces. 

More recently, the association from a symplectic manifold to its Fukaya category has begun to be extended similarly to a `functor' from the symplectic category \cite{Wehrheim-Woodward, LekiliLipyanskiy, MauWehrheimWoodward, Fukaya-correspondences} to some category of categories.  This has had many applications, though foundational work remains to be done to establish the expected 2-categorical properties (see for instance  \cite{Abouzaid-Bottman}).    

There is another,  rather older, such functor, albeit defined on a rather limited class of symplectic manifolds and an unusual class of Lagrangians: to a cotangent bundle $T^*M$ of a complex manifold $M$, associate the category of $\mathcal D$-modules $\mathcal{D}(M)$; from a conic, generally singular, Lagrangian $L \subset T^*M \times T^*N$ and the data of a $\mathcal D$-module with characteristic cycle $L$, one obtains an integral transform functor $\mathcal{D}(M) \to \mathcal{D}(N)$,  and the characteristic cycle of the composition of such functors is the set-theoretic composition of the characteristic cycles \cite{SKK, Kashiwara-thesis,Bernstein}.  Shortly thereafter, by replacing the notion of characteristic cycle with that of microsupport of an arbitrary sheaf, Kashiwara and Schapira gave a similar structure on cotangent bundles of real manifolds \cite{KS}.

\vspace{2mm}
Our purpose here is to extend the Kashiwara--Schapira theory from cotangent bundles to the class of  exact symplectic manifolds -- also named for Weinstein -- admitting a Morse function for which the Liouville vector field is the gradient flow.  We explained already in \cite{Shende-h-principle, Nadler-Shende} how to associate categories of microsheaves to such Weinstein manifolds. These microsheaf categories are now known to be equivalent to the wrapped Fukaya categories of the same manifolds \cite{Ganatra-Pardon-Shende3}. 
In the present article, we will construct functors  from exact immersed Lagrangian correspondences, and study their compositions.

As illustrations, we construct Lagrangian correspondence incarnations of the Viterbo restriction, of the contactomorphism action on microsheaves, and of the Lie group actions by exact symplectomorphisms.  

Let us mention two advantages that the present methods have over
presently available Floer theoretic methods.  First, we may consider singular Lagrangians and Lagrangian correspondences.   Second, various categorical desiderata, such as 2-categorical coherences and the ability to work over the sphere spectrum, are essentially automatic given modern sheaf theoretical foundations \cite{Lurie-HTT,Gaitsgory-Rozenblyum,Volpe-six-operations,LoubatonRuit,Scholze-sixfunctor}. 

\vspace{2mm}

We proceed to describe our setup and state our results. 
Fix a compactly generated\footnote{The non-characteristic deformation lemma   \cite[Proposition 2.7.2]{KS}, fundamental in the microlocal theory of sheaves, holds for sheaves valued in compactly generated stable categories \cite{Robalo-Schapira}, but fails more generally for sheaves in presentable stable categories \cite[Remark 4.26]{EfimovK}.}
symmetric monoidal stable $(\infty, 1)$-category of coefficients $\mathcal{C}$, such as 
the category of chain complexes of modules over a commutative ring, localized along quasi-isomorphisms; or such as the category of spectra. Henceforth all categories will be infinity categories, all functors will mean their derived version, etc. 
For a manifold $M$, we write $\Sh(M)$ for the category of sheaves on $M$ valued in $\mathcal{C}$. For $\SF \in \Sh(M)$, we write $\ss(\SF) \subset T^*M$ for the microsupport as defined by Kashiwara and Schapira;  it is a conic co-isotropic subset which records the failure of propagation of sections in the corresponding codirections \cite{KS}.  

Let $(W, \lambda)$ be an exact symplectic manifold; the Liouville vector field $Z$ defined by $d\lambda(Z, -) = \lambda$ gives it a conic structure. In \cite{Shende-h-principle, Nadler-Shende}, we explained that, given a null homotopy $\sigma$ of a certain canonical map $W \to B^2 {Pic}(\mathcal{C})$ -- we call it `Maslov data' -- which we will fix and mostly omit from the notation, there is a canonically associated conic sheaf of categories $\msh_W$ on $W$, generalizing the categories of microsheaves on cotangent bundles introduced by Kashiwara and Schapira \cite{KS}. 

For a conic subset $K$ of $W$, we write $\mu sh_{K} \subset \mu sh_W$ for the  subsheaf of full subcategories on objects supported in $K$. When $W$ is compact and the Liouville flow is outwardly transverse to its boundary (such $W$ are termed `Liouville domains'), we write $\mathfrak{c}_W$ for the locus of points in $W$ which do not escape under the Liouville flow.  More generally,  
for $\Lambda \subset \partial W$, we write $\mathfrak{c}_{W, \Lambda} := \mathfrak{c}_W \cup Cone(\Lambda)$ for the locus of points which do not escape to the complement of $\Lambda$.  When $\mathfrak{c}_{W, \Lambda}$ is `sufficiently Lagrangian', it is known that  
$\Gamma(\msh_{\mathfrak{c}_{W, \Lambda}})$ is equivalent to the Fukaya category of $W$ with wrapping stopped by $\Lambda$
\cite{Ganatra-Pardon-Shende3}. 

An apparent tension between microsheaf methods and symplectic topology is that subsets immediately visible to the former are conic (because microsupports are conic) whereas the subsets of most interest in symplectic topology are typically not conic.  However, as Tamarkin borrowed from semiclassical analysis, (exact) nonconic Lagrangians can be expressed in terms of conic Lagrangians in one dimension more \cite{Tamarkin1}.

Let $L \subset W \times \R$ be a smooth Legendrian whose projection to $\R$ lies in $(-N, \infty)$; we assume always its Lagrangian projection $\bar{L} \subset W$ is (possibly not embedded but) eventually conic along an embedded Legendrian $\partial L$. 
Consider the locus
\begin{equation} \label{defining diagram of sheaf quantization}
\mathfrak{c}_W^L := (\mathfrak{c}_{W,\partial L} \times (-N, \infty)) \cup Cone(L) \subset W \times T^* \R
\end{equation}
There are restriction functors: 
\begin{equation} \label{sheaf quantization diagram} \Gamma(\msh_L) \leftarrow \Gamma(\msh_{\mathfrak{c}_W^L}) \rightarrow \Gamma(\msh_{\mathfrak{c}_{W,\partial L}}).\end{equation}
The left arrow is the restriction near $L$,
and its image consists of (Maslov twisted) local systems on $L$; in other words, the only input from symplectic geometry to this category $\Gamma(\msh_L)$ is through the Maslov obstruction \cite{Guillermou-survey,Jin2, Nadler-Shende}.

We call objects in the middle category $\Gamma(\msh_{\mathfrak{c}_W^L})$  {\em conic microsheaf quantizations} of $L$, and objects in the image of the right arrow {\em conic microsheaf quantizations of $\overline{L}$}. 
The term `quantization' can be understood as follows: for cotangent bundles $W = T^*M$ carrying the canonical 1-form, the core $\mathfrak{c}_{T^*M}$ is the zero section $M \subset T^*M$; one has
$\mathfrak{c}_{T^*M, \partial L} = M \cup Cone(\partial L)$, and the (nearly tautological) identification $\Gamma(\msh_{M \cup Cone(\partial L)}) = \Sh_{M \cup Cone(\partial L)}(M)$.
Thus a conic (micro)sheaf quantization of a Lagrangian $\bar{L} \looparrowright T^*M$ is a  sheaf on the base $M$, 
just as a geometric quantization would be a function (or distribution) on the base.

There is a fundamental existence result for conic sheaf quantizations of {\em embedded} Lagrangians, 
which asserts that the left arrow of Formula \eqref{sheaf quantization diagram} is an equivalence, and the right arrow is fully faithful; or in other words, that conic sheaf quantizations are characterized by the corresponding Maslov data.\footnote{The same is true by definition for objects in the Fukaya category of $W$ arising from $L$.  The sheaf/Fukaya correspondence of \cite{Ganatra-Pardon-Shende3} intertwines these assertions; see \cite[Proposition 11]{Shende-monodromy}.}  
This is 
due to Guillermou for $W = T^*M$ and smooth $L$ \cite{Guillermou-survey}, and was generalized to Weinstein manifolds $W$ and a class of singular Legendrians $L$ in \cite{Nadler-Shende}.  (More precisely, the main result of  \cite{Nadler-Shende} can be reformulated as Formula \eqref{sheaf quantization diagram}; we explain in Theorem \ref{thm: nearby = restriction at infty}.)


We turn to composition of Lagrangian correspondences.  Kashiwara and Schapira showed that, under appropriate compactness hypotheses, the  `composition of integral kernels'  $$\circ: \Sh(M_1 \times M_2) \times \Sh(M_2 \times M_3) \to \Sh(M_1 \times M_3)$$ satisfies the microsupport estimation
$\ss(\SF_{23} \circ \SF_{12}) \subset \ss(\SF_{23}) \circ \ss(\SF_{12})$.  
They further considered descending the composition of kernels to the setting of microsheaves, which however required certain strong hypotheses \cite[Chapter 7.3]{KS}, not suitable for our desired applications to symplectic topology.  

In the present article, we will construct -- using nontrivial geometric arguments and requiring various isotropicity and displaceability hypotheses -- a composition of microsheaf kernels in more general settings. Our functor is given in Definition \ref{def: gap composition weinstein}; we show it is associative in Proposition \ref{prop: gapped composition associative weinstein}. 
A key feature of our composition is that it is compatible with the sheaf quantization diagram \eqref{sheaf quantization diagram} in the following sense:

\begin{customthm}{I}[Theorem \ref{thm: main composition}]
 \label{sheaf quantization commutes with composition}
    Fix Weinstein manifolds $W_1, W_2, W_3$ and eventually conic Legendrians $L_{12} \subset W_{1}^- \times W_2 \times \R$ and $L_{23} \subset W_2^- \times W_{3} \times \R$ conic at infinity.  $L_{13} \subset W_{1}^- \times W_{3} \times \R$ is a certain `contact composition' of the subsets $L_{12}$ and $L_{23}$,  given in Definition \ref{def: contact composition final}.
    We require the $L_{ij}$ are `sufficiently Legendrian', defined in Definition \ref{def: universal legendrian}.
    
    Then there are associative composition functors $\circ$ which fit into a commutative diagram 
    
    \[\begin{tikzcd}
    \Gamma(\msh_{L_{12}}) \otimes \Gamma(\msh_{L_{23}})  \ar[r, "\circ"]
    & 
    \Gamma(\msh_{L_{13}})
    \\
    \Gamma( \msh_{\mathfrak{c}_{W_1^- \times W_2}^{L_{12}}})
    \otimes 
    \Gamma(
    \msh_{\mathfrak{c}_{W_2^- \times W_3}^{L_{23}}})
    \ar{u} \ar{d} \ar[r, "\circ"] 
    & 
    \Gamma(\msh_{\mathfrak{c}_{W_1^- \times W_3}^{L_{13}}}) \ar[u] \ar[d]
    \\ 
    \Gamma(\msh_{\mathfrak{c}_{W_1^- \times W_2, \partial L_{12}}})
    \otimes 
    \Gamma(\msh_{\mathfrak{c}_{W_2^- \times W_3, \partial L_{23}}}) \ar[r,"\circ"] & \Gamma(\msh_{\mathfrak{c}_{W_1^- \times W_3,\partial L_{13}}})
    \end{tikzcd}\]
    When $L_{13}$ is a smooth Legendrian, the top horizontal arrow is a pullback of (twisted) local systems along the map $L_{13} \hookrightarrow L_{12} \times L_{23}$. 

    A corresponding result holds after enlarging $\mathfrak{c}_{W_i}$ to any sufficiently Lagrangian $\mathfrak{c}_{W_i, \Lambda_i}$ so long as the $L_{ij}$ are disjoint at infinity from the products $\mathfrak{c}_{W_i, \Lambda_i} \times \mathfrak{c}_{W_j, \Lambda_j}$.
\end{customthm}


In words: the commuting of the lower square says that if $\SF_{12}$ is a conic microsheaf quantization of $L_{12}$ and $\SF_{23}$ is a conic microsheaf quantization of $L_{23}$, then $\SF_{23} \circ \SF_{12}$ is a conic microsheaf quantization of $L_{23} \circ L_{12}$. Recall that if the the projections $L_{ij} \to W_i^- \times W_j$ are all embeddings, 
then the up arrows are equivalences and the down arrows are fully faithful.  Thus, in this case, the commuting of the upper square implies that the compositions can be computed by the (much simpler) top horizontal arrow.  In particular, when the $L_{ij}$ are all smooth, the top horizontal arrow is just the pullback along the appropriate inclusion of a tensor product of local systems. 

The deepest assertion in Theorem \ref{sheaf quantization commutes with composition} is the commutativity of the lower square, which is new even when all the $W_i$ are cotangent bundles.  The key technical result needed here is a criterion to commute  composition of sheaf theoretic correspondences with nearby cycles, proven in Theorem \ref{thm:gapped composition}.  This is then transported to a corresponding result in Theorem \ref{thm:gapped composition microsheaf} in the microsheaf-in-cotangent-bundle setting using the doubling construction.  Finally we pass by an appropriate high codimension embedding (as in \cite{Shende-h-principle, Nadler-Shende}) to the setting of Weinstein manifolds.  We also prove a parallel version for the `Hom' (rather than tensor) version of composition. 

Theorem \ref{sheaf quantization commutes with composition}  is  analogous to Fukaya's result constructing a composition of bounding cochains for a geometric composition of Lagrangians \cite{Fukaya-correspondences}.
It becomes a precise counterpart if one replaces the Akaho-Joyce \cite{Akaho-Joyce} (curved) $A_\infty$ algebra associated to an immersed Lagrangian $\bar{L}$ by the $A_\infty$ algebra generated by positive self Reeb chords for the associated Legendrian $L$.

\vspace{2mm}

In typical applications of Theorem \ref{sheaf quantization commutes with composition} (we give several in Section \ref{sec:example}), one wants to use $L_{ij}$ to define a morphism, i.e. an element of 
$$\Fun^\mathrm{L}(\Gamma(\msh_{\mathfrak{c}_{W_i}}), \Gamma(\msh_{\mathfrak{c}_{W_j}}) ) = \Gamma(\msh_{\mathfrak{c}_{W_i}})^\vee \otimes \Gamma(\msh_{\mathfrak{c}_{W_j}}) = \Gamma(\msh_{\mathfrak{c}_{W_i^- \times W_j}}).$$
(These identifications are formal consequences of various results on dualizability of categories of sheaves with sufficiently Legendrian microsupport; see Theorem \ref{thm: weinstein kunneth duality maslov}.)
There is a map:
\begin{equation} \label{stop removal of kernel}
    \Gamma(\msh_{\mathfrak{c}_{W_i^- \times W_j, \partial L_{ij}}}) \to \Gamma(\msh_{\mathfrak{c}_{W_i^- \times W_j}})
\end{equation} 
namely the left adjoint of the inclusion in the opposite direction.  
(The sheaf/Fukaya correspondence of \cite{Ganatra-Pardon-Shende3} identifies such maps with stop removals.)  

\begin{customthm}{II}
[Theorem \ref{thm: main composition stop removal}] \label{stop removal commutes with composition} 
    Preserving the notation and hypotheses of Theorem \ref{sheaf quantization commutes with composition}, assume in addition that
    \begin{equation} \label{stop removal hypothesis}\mathfrak{c}_{W_i \times W_j, \partial L_{ij}} \circ \mathfrak{c}_{W_i} \subset \mathfrak{c}_{W_j}.
    \end{equation} Then there is a commutative diagram:
    \[\begin{tikzcd}
    \Gamma(\msh_{\mathfrak{c}_{W_1^- \times W_2,\partial L_{12}}})
    \otimes 
    \Gamma(\msh_{\mathfrak{c}_{W_2^- \times W_3, \partial L_{23}}}) \ar[r, "\circ"] \ar[d] & \Gamma(\msh_{\mathfrak{c}_{W_1^- \times W_3,\partial L_{13}}}) \ar[d] \\
    \Gamma(\msh_{\mathfrak{c}_{W_1^- \times W_2}})
    \otimes 
    \Gamma(\msh_{\mathfrak{c}_{W_2^- \times W_3}}) \ar[r, "\circ"] & \Gamma(\msh_{\mathfrak{c}_{W_1^- \times W_3}})
    \end{tikzcd}\]
 Moreover, the bottom horizontal arrow can be identified with the natural functor
    $$ \Gamma(\msh_{\mathfrak{c}_{W_1}})^\vee \otimes \Gamma(\msh_{\mathfrak{c}_{W_2}}) \otimes \Gamma(\msh_{\mathfrak{c}_{W_2}})^\vee \otimes \Gamma(\msh_{\mathfrak{c}_{W_3}}) \to \Gamma(\msh_{\mathfrak{c}_{W_1}})^\vee \otimes \Gamma(\msh_{\mathfrak{c}_{W_3}})$$
\end{customthm}

We note that the condition in Formula \eqref{stop removal hypothesis} can always be satisfied after  enlarging $\mathfrak{c}_{W_j}$ to $\mathfrak{c}_{W_i\times W_j,\partial L_{ij}} \circ \mathfrak{c}_{W_i}$, and replacing $L_{ij}$ by a small negative pushoff.

The main point in the proof of Theorem \ref{stop removal commutes with composition} is to show that, for the bottom arrow, the geometric definition of composition agrees with the formal one.  This is established in Theorem \ref{thm: gap composition wrap kunneth}, using our criterion for commuting nearby cycles and compositions (Theorem \ref{thm:gapped composition}).  

Finally, we mention that in the main body of the paper, the higher coherence of the diagrams in Theorems \ref{sheaf quantization commutes with composition} and \ref{stop removal commutes with composition} are also obtained (in Corollaries \ref{thm: main composition higher} and \ref{thm: main composition stop removal higher}).

\addtocontents{toc}{\SkipTocEntry}
\subsection*{Acknowledgments}
    We would like to thank Sheel Ganatra, Peter Haine, Chris Kuo, Tatsuki Kuwagaki, Nick Rozenblyum, Harold Williams, Bingyu Zhang and Peng Zhou for helpful discussions. WL is partially supported by the AMS-Simons Travel Grant.
    DN is supported by NSF DMS grant 2401178.  VS is supported by the 
    VILLUM FONDEN grant VILLUM Investigator 37814,  Novo Nordisk Foundation 
grant NNF20OC0066298, and Danish National Research Foundation grant DNRF157.  This work began over a conversation at the  Centre de Recherches
Math\'ematiques, where V.S. was supported by the Simons-CRM scholar-in-residence program. 

\addtocontents{toc}{\SkipTocEntry}
\subsection*{Conventions}

    When we write formulae stepping through sheaf identities, we use $=$ to mean that we have done something trivial or applied standard six-functor type identities including proper or smooth base change. We assume throughout all sheaves are compactly supported; as usual, this may often be replaced by requiring ``tameness at infinity''.   
    
    We write $0_M \subset T^*M$ for the zero section, $\dot T^*M = T^*M \setminus 0_M$ for the punctured cotangent bundle, and $S^*M = \dot T^*M / \R_{>0}$. We write $\Sh(M)$ for the category of sheaves on $M$ valued in a compactly generated symmetric monoidal stable category $\SC$.  In this setting, there is a 6-functor formalism \cite{Volpe-six-operations, Scholze-sixfunctor}, and the noncharacteristic deformation lemma holds \cite{Robalo-Schapira}.     
    For $\SF \in \Sh(M)$, we write $\ss(\SF)$ for the microsupport and $\dot{\ss}(\SF) := \ss(\SF) \cap \dot T^*M$; as this locus is conic, its projection to  $S^*M = \dot T^*M / \R_{>0}$ carries the same information, so we use $\dot{\ss}(\SF)$ also for this projection.

    We write $\Cat$ for the category of all (not necessarily small) categories, $\PrL$ for the category of presentable categories with colimit preserving functors, denoted by $\Fun^\mathrm{L}(-,-)$, and $\PrR$ for the category of presentable categories with limit preserving functors, denoted by $\Fun^\mathrm{R}(-,-)$. Respectively, $\Cat_{st}$ is the category of all stable categories with exact functors, and $\PrLst$ and $\PrRst$ are the category of presentable stable categories. Finally, $\operatorname{Pr}^\mathrm{L}_\omega$ (resp.~$\operatorname{Pr}^\mathrm{R}_\omega$) consists of compactly generated presentable categories with colimit preserving functors that preserve compact objects (resp.~limit and colimit preserving functors).


\section{Sample applications}\label{sec:example}

    In this section, we illustrate our main theorems by studying functors associated to  embeddings of Weinstein domains, contact isotopies on the contact boundaries of Weinstein domains, and Hamiltonian group actions.

    The Lagrangians (and Lagrangian correspondences) we consider in this section are all exact, in the sense of being the bijective images of Legendrians. 
    In this case, the microsheaf quantization theorem  of \cite{Nadler-Shende} (recalled and reformulated as Theorem \ref{thm: nearby = restriction at infty}  below) asserts 
    \begin{equation} \label{quantization setup}
    \msh_{L}(L) \xleftarrow{\sim} \msh_{\mathfrak{c}_W^L}(\mathfrak{c}_W^L) \hookrightarrow \msh_{\mathfrak{c}_{W,\partial L}}(\mathfrak{c}_{W,\partial L}).    
    \end{equation}
    We denote the composition as
    \begin{equation}\label{eq: quantization}
    \psi: \msh_{L}(L) \hookrightarrow \msh_{\mathfrak{c}_{W,\partial L}}(\mathfrak{c}_{W,\partial L}).
    \end{equation}
    Moreover, given a (compactly supported) contact Hamiltonian isotopy in $W \times \bR$, we have the following commutative diagram (formulated in Corollary \ref{cor: Hamilton invariance quantization} below): 
    \begin{equation}\label{eq: Hamilton invariance quantization}
    \begin{tikzcd}
        \msh_{L_{0}}(L_0) & \msh_{\mathfrak{c}_{W,\sigma}^{L_0}}(\mathfrak{c}_W^{L_0}) \ar[l, "i_{L_0}^*" above] \ar[r, "i_\infty^*"] & \msh_{\mathfrak{c}_{W,\partial L_0}}(\mathfrak{c}_{W,\partial L_0}) \\
        \msh_{L_H}(L_H) \ar[u, "\rotatebox{90}{$\sim$}"] \ar[d, "\rotatebox{90}{$\sim$}" left] & \msh_{\mathfrak{c}_{W\times T^*I}^{L_H}}(\mathfrak{c}_{W\times T^*I}^{L_H}) \ar[l, "i_{L_H}^*" above] \ar[r, "i_\infty^*"] \ar[d, "\rotatebox{90}{$\sim$}"] \ar[u, "\rotatebox{90}{$\sim$}" right] & \msh_{\mathfrak{c}_{W \times T^*I,\partial L_H}}(\mathfrak{c}_{W \times T^*I,\partial L_H}) \ar[d, "\rotatebox{90}{$\sim$}"] \ar[u, "\rotatebox{90}{$\sim$}" right] \\
        \msh_{L_{1}}(L_1) & \msh_{\mathfrak{c}_{W}^{L_1}}(\mathfrak{c}_W^{L_1}) \ar[l, "i_{L_1}^*" above] \ar[r, "i_\infty^*"] & \msh_{\mathfrak{c}_{W,\partial L_1}}(\mathfrak{c}_{W,\partial L_1}).
    \end{tikzcd}
    \end{equation}


\subsection{Viterbo functor}

    We recall that an exact symplectic manifold-with-boundary $(W, \lambda)$ is said to be a Liouville domain if the corresponding Liouville vector field is outward 
    pointing along $\partial W$; in this case $\lambda|_{\partial W}$ is a contact form and we call $(\partial W, \lambda|_{\partial W})$ the contact boundary.  
    Given Liouville domains $(W_0, \lambda_0)$ and $(W_1, \lambda_1)$, a Liouville domain inclusion is a map $i_{01}: W_0 \hookrightarrow W_1$ such that $i_{01}^*\lambda_1 = \lambda_0 + df_{0}$. 
    Said differently, $i_{01} \times f: W_0 \to W_1 \times \R$ pulls back the contact form on the contactization to Liouville form $\lambda_0$.   We fix Maslov data on $W_1$ and, when appropriate, pull it back to give Maslov data on $W_0$. 
    
    When $W_0$ and $W_1$ are Weinstein, we get an instance of Formula \eqref{quantization setup} where $W:= W_1$ and $L := (i_{01} \times f)(\mathfrak{c}_{W_0})$.   We denote the corresponding instance of Formula \eqref{eq: quantization} 
    by
    $$i_{01*}: \msh_{\mathfrak{c}_{W_0}}(\mathfrak{c}_{W_0})  \hookrightarrow \msh_{\mathfrak{c}_{W_1}}(\mathfrak{c}_{W_1}).$$

    The above construction of $i_{01*}$ corresponds under the sheaf/Fukaya dictionary to (pullback along) Sylvan's  Viterbo transfer (see \cite[Section 8.3]{Sylvan2,Ganatra-Pardon-Shende2}). 
    When $W_0 = W_1$ and $\lambda_0$ and $\lambda_1$ are related by an exact deformation, the above map is used to compare the corresponding microsheaf categories in \cite[Section 9.2]{Nadler-Shende}. 

    Here we would like to show that the Viterbo transfer is implemented by a Lagrangian correspondence. Thus consider the 
    graph of the embedding 
    $\Gamma_{01} = \{(x_0, i_{01}(x_0)) \mid x_0 \in W_0\}$.  It is an exact Lagrangian, but it is not cylindrical with respect to the product Liouville form $-\lambda_0 \times \lambda_1$ on the Weinstein manifold $W_0^- \times W_1$. (It is cylindrical with respect to the Liouville form $-\lambda_1 \times \lambda_1$.)
    We may repair this if we further restrict attention to embeddings such that $f_0$ is a smooth function such that $Z_{\lambda_0}f_0 = 0$ near the contact boundary $\partial W_0$. 
    Under this assumption, we consider the Hamiltonian flow $\varphi_{f_0}^t$ associated to $f_0$ and define 
    $$\Gamma_{01}' = \{(\varphi_{f_0}^1(x_0), i_{01}(x_0)) \mid x_0 \in W_0\}.$$
    This is an exact Lagrangian with cylindrical ends with respect to the Liouville form $-\lambda_0 \times \lambda_1$.

\begin{proposition}\label{prop: subdomain}
    Given a Liouville subdomain embedding of Weinstein manifolds $i_{01}: W_0 \hookrightarrow W_1$, the image of the constant rank one local system under 
    \begin{align*}
    \msh_{\Gamma_{01}'}(\Gamma_{01}') &\hookrightarrow \msh_{\mathfrak{c}_{W_0^-} \times \mathfrak{c}_{W_1}, \partial \Gamma_{01}' }(\mathfrak{c}_{W_0^-} \times \mathfrak{c}_{W_1}) \to \msh_{\mathfrak{c}_{W_0^-} \times \mathfrak{c}_{W_1}}(\mathfrak{c}_{W_0^-} \times \mathfrak{c}_{W_1}) \\
    &= \Fun^\mathrm{L}\big(\msh_{\mathfrak{c}_{W_0}}(\mathfrak{c}_{W_0}), \msh_{\mathfrak{c}_{W_1}}(\mathfrak{c}_{W_1})\big)
    \end{align*}
    is the functor $i_{01*}: \msh_{\mathfrak{c}_{W_0}}(\mathfrak{c}_{W_0}) \to \msh_{\mathfrak{c}_{W_1}}(\mathfrak{c}_{W_1})$.
\end{proposition}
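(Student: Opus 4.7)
The plan is to evaluate both functors on an arbitrary $\SF \in \msh_{\mathfrak{c}_{W_0}}(\mathfrak{c}_{W_0})$ and show they agree. Viewing $\SF$ as a Lagrangian kernel supported on $\{\pt\} \times \mathfrak{c}_{W_0}$ (so that it encodes a correspondence from a point to $W_0$), the functor induced by the constant local system on $\Gamma_{01}'$ evaluates on $\SF$ by the composition of Lagrangian correspondences. By Theorems \ref{sheaf quantization commutes with composition} and \ref{stop removal commutes with composition} (and the higher-coherence corollaries), this categorical composition is identified with the microsheaf quantization of the geometric composition $\Gamma_{01}' \circ \supp(\SF) \subset W_1$.

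Since $\Gamma_{01}' = \{(\varphi_{f_0}^1(x_0), i_{01}(x_0)) : x_0 \in W_0\}$, the geometric composition is the image of $\supp(\SF)$ under $i_{01} \circ (\varphi_{f_0}^1)^{-1}$. Because $\varphi_{f_0}^1$ is compactly supported in the interior of $W_0$ by the hypothesis $Z_{\lambda_0} f_0 = 0$ near $\partial W_0$, the Hamiltonian invariance diagram of Corollary \ref{cor: Hamilton invariance quantization} canonically identifies the microsheaf quantization of $i_{01}((\varphi_{f_0}^1)^{-1}(\mathfrak{c}_{W_0}))$ with that of $i_{01}(\mathfrak{c}_{W_0})$. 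Unwinding Formula \eqref{eq: quantization} for the Legendrian $L = (i_{01} \times f_0)(\mathfrak{c}_{W_0}) \subset W_1 \times \R$ then recovers $i_{01*}\SF$ by construction.

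The hypotheses of the composition theorems need to be verified. Condition \eqref{stop removal hypothesis} reads $\mathfrak{c}_{W_0^- \times W_1, \partial \Gamma_{01}'} \circ \mathfrak{c}_{W_0} \subset \mathfrak{c}_{W_1}$, which holds because $\Gamma_{01}'$ is the graph of an embedding modulo a Hamiltonian isotopy compactly supported in the interior, so its composition with the core of $W_0$ lands in $i_{01}(\mathfrak{c}_{W_0}) \subset \mathfrak{c}_{W_1}$. The main obstacle will be verifying the ``sufficiently Legendrian'' hypothesis of Definition \ref{def: universal legendrian} for the eventually conic Legendrian lift of $\Gamma_{01}'$ in $W_0^- \times W_1 \times \R$; this amounts to a displaceability check at infinity for the Legendrian lift of the graph, where the Liouville primitive relation $i_{01}^*\lambda_1 = \lambda_0 + df_0$ is precisely what ensures the lift is well-defined and cylindrical near $\partial(W_0^- \times W_1)$.
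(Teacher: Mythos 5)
Your approach is genuinely different from the paper's, and I think it has two real gaps.

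First, the paper's proof of Proposition~\ref{prop: subdomain} does not invoke Theorems~\ref{sheaf quantization commutes with composition} or \ref{stop removal commutes with composition} at all --- those are used only in the subsequent Corollary~\ref{functoriality of embedding}. Instead the paper argues entirely at the level of the kernel: it introduces the Weinstein cobordism on $W_0^- \times W_1 \times T^*\bR$ interpolating the Liouville forms $-\lambda_t \times \lambda_1$ (via \cite[Proposition 2.42]{LazarevSylvanTanaka1}), uses the Hamiltonian-invariance diagram of Corollary~\ref{cor: Hamilton invariance quantization} to replace $\Gamma_{01}'$ (cylindrical for $-\lambda_0 \times \lambda_1$) by $\Gamma_{01}$ (cylindrical for $-\lambda_1 \times \lambda_1$), and then observes that the constant rank one local system on $\Gamma_{01}$ --- which is the diagonal of $W_0$ sitting inside $W_0^- \times W_1$ --- maps to the naive inclusion functor under the K\"unneth/dualizability identification (Theorem~\ref{thm: weinstein kunneth duality maslov}). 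The precomposition by $i_{01*}$ then falls out of the naturality of this identification under the Weinstein deformation. You never mention the change of Liouville form or the cobordism; this is the structural ingredient that makes $\Gamma_{01}$, not $\Gamma_{01}'$, the easy Lagrangian to analyze.

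Second, the evaluate-on-objects strategy has a gap. Theorems~\ref{sheaf quantization commutes with composition} and \ref{stop removal commutes with composition} are statements about the commutative squares of Formula~\eqref{sheaf quantization diagram}; to run them with $W_1 = \mathrm{pt}$, $W_2 = W_0$, $W_3 = W_1$, you need $\SF$ to be a conic microsheaf quantization of an eventually conic, universally sufficiently Legendrian $L_{12} \subset W_0 \times \bR$. An arbitrary object of $\msh_{\mathfrak{c}_{W_0}}(\mathfrak{c}_{W_0})$ is not of this form, and you do not address which generating class of objects you would evaluate on, nor the functoriality of the agreement across morphisms. Separately, your reading of the hypothesis $Z_{\lambda_0} f_0 = 0$ near $\partial W_0$ is wrong: that condition says $f_0$ is Liouville-invariant near the boundary, which gives eventual conicity of $\varphi_{f_0}^t$ but does \emph{not} imply the isotopy is compactly supported in the interior; $f_0$ can be nonzero near $\partial W_0$, in which case the flow genuinely moves the boundary. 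In particular $(\varphi_{f_0}^1)^{-1}(\mathfrak{c}_{W_0})$ is not contained in $\mathfrak{c}_{W_0}$ in general, so the verification of condition~\eqref{stop removal hypothesis} as you state it does not go through, and the invocation of Corollary~\ref{cor: Hamilton invariance quantization} (which assumes a compactly supported isotopy) needs the eventual-conicity version, not the compactly-supported one.
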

\begin{proof}
    Let $\lambda_t = \lambda_0 + t df_0$ for $t\in I$. Consider the Liouville form $-\lambda_t \times \lambda_1 \times \lambda_{T^*I}$ on $W_0^- \times W_1 \times T^*\bR$. By \cite[Proposition 2.42]{LazarevSylvanTanaka1}, there is a compactly supported deformation of the Liouville structure $-\lambda_t \times \lambda_1 \times \lambda_{T^*I}$ to a Weinstein structure $\lambda_I$ on $W_0^- \times W_1 \times T^*\bR$. We fix this Weinstein structure $\lambda_I$ on $W_0^- \times W_1 \times T^*\bR$ and denote the core by $\mathfrak{c}_{W_0^- \times W_1 \times T^*I,\lambda_I}$. Let $\varphi_{f_0}^t: W_0 \to W_0$ be the Hamiltonian flow associated to $f_0$. Consider the following Legendrian that is cylindrical at infinity with respect to the Weinstein structure:
    $$\Gamma_{01,I} = \{(\varphi_{f_0}^t(x_0), i_{01}(x_0), t, -tf_0(x_0)) \mid x_0 \in W_0, t \in I\} \subset W_0^- \times W_1 \times T^*\bR.$$
    Denote the symplectic reduction at $t = 0$ by $\Gamma_{01}$ and its reduction at $t = 1$ by $\Gamma_{01}'$.
    Then we have a commutative diagram as a special case of Formula \eqref{eq: Hamilton invariance quantization} 
    \[\begin{tikzcd}
    \Gamma(\msh_{\Gamma_{01}'}) \ar[r] & \Gamma(\msh_{\mathfrak{c}_{W_0^-\times W_1,\partial \Gamma_{01}'}}) \ar[r] & \Gamma(\msh_{\mathfrak{c}_{W_0^-\times W_1,-\lambda_0\times\lambda_1}}) \\
    \Gamma(\msh_{\Gamma_{01,I}}) \ar[r] \ar[u, "i_1^*"] \ar[d, "i_0^*" left] & \Gamma(\msh_{\mathfrak{c}_{W_0^-\times W_1 \times T^*I,\partial \Gamma_{01,I}}}) \ar[r] \ar[u] \ar[d] & \Gamma(\msh_{\mathfrak{c}_{W_0^- \times W_1 \times T^*I,\lambda_I}}) \ar[u, "i_1^*" right] \ar[d, "i_0^*"] \\
    \Gamma(\msh_{\Gamma_{01}}) \ar[r] & \Gamma(\msh_{\mathfrak{c}_{W_0^-\times W_1,\partial \Gamma_{01}}}) \ar[r] & \Gamma(\msh_{\mathfrak{c}_{W_0^-\times W_1,-\lambda_1\times \lambda_1}}).
    \end{tikzcd}\]
    The composition of the two right vertical arrows is the microsheaf quantization functor, which, by dualizability and K\"unneth formula, is realized by $i_{01*}^\vee \otimes \id: \msh_{\mathfrak{c}_{W_0,\lambda_1}}(\mathfrak{c}_{W_0,\lambda_1})^\vee \otimes  \msh_{\mathfrak{c}_{W_1,\lambda_1}}(\mathfrak{c}_{W_1,\lambda_1}) \to \msh_{\mathfrak{c}_{W_0,\lambda_0}} (\mathfrak{c}_{W_0,\lambda_0})^\vee \otimes \msh_{\mathfrak{c}_{W_1,\lambda_1}}(\mathfrak{c}_{W_1,\lambda_1})$. Therefore, it sends a functor $\varphi_{01}: \msh_{\mathfrak{c}_{W_0,\lambda_1}}(\mathfrak{c}_{W_0,\lambda_1}) \to \msh_{\mathfrak{c}_{W_1,\lambda_1}}(\mathfrak{c}_{W_1,\lambda_1})$ to the composition
    $$\msh_{\mathfrak{c}_{W_0,\lambda_0}}(\mathfrak{c}_{W_0,\lambda_0}) \xrightarrow{i_{01*}} \msh_{\mathfrak{c}_{W_0,\lambda_1}}(\mathfrak{c}_{W_0,\lambda_1}) \xrightarrow{\varphi_{01}} \msh_{\mathfrak{c}_{W_1,\lambda_1}}(\mathfrak{c}_{W_1,\lambda_1}).$$
    Then, since the constant rank 1 local system on $\Gamma_{01}$ is the diagonal in $W_0^- \times W_0 \subset W_0^- \times W_1$, the image under the bottom horizontal arrow is the naive inclusion functor. Therefore, by commutativity of the diagram, the image of the constant rank 1 local system on $\Gamma_{01}'$ under the top horizontal arrow is the composition of $i_{01*}$ with the natural inclusion.
\end{proof}

In particular, we recover the functoriality of the embedding functor (which was not explicitly shown in \cite{Ganatra-Pardon-Shende2} but can be established using the methods there, then translated by the sheaf/Fukaya comparison \cite{Ganatra-Pardon-Shende3}; the result in the sheaf context was also established by a manipulation of vanishing cycles in  \cite[Theorem 1.1]{LiFunctoriality}). 

\begin{corollary} \label{functoriality of embedding}
      Given Liouville subdomain embeddings 
      $W_0 \hookrightarrow W_1 \hookrightarrow W_2$ of Weinstein domains with embeddings denoted by $i_{01}, i_{12}$ and $i_{02}$, we have $i_{12*} \circ i_{01*} \simeq i_{02*}$. 
\end{corollary}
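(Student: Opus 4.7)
The plan is to realize all three Viterbo transfers as microsheaf quantizations of Lagrangian correspondences via Proposition \ref{prop: subdomain}, then use Theorem \ref{sheaf quantization commutes with composition} to convert composition of functors into geometric composition of correspondences, and finally identify the composed correspondence with the one representing $i_{02*}$. Concretely, by Proposition \ref{prop: subdomain} each $i_{ij*}$ is the image of the constant rank one local system on $\Gamma_{ij}' \subset W_i^- \times W_j$ under microsheaf quantization, where the primitives satisfy $f_{02} = f_0 + i_{01}^* f_1$ by the chain identity $i_{02}^*\lambda_2 = i_{01}^*(\lambda_1 + df_1) = \lambda_0 + d(f_0 + i_{01}^* f_1)$.

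Next, I would apply Theorem \ref{sheaf quantization commutes with composition} to the pair $(\Gamma_{01}', \Gamma_{12}')$, noting that the geometric composition
\[
\Gamma_{12}' \circ \Gamma_{01}' = \bigl\{\bigl(\varphi_{f_0}^1(x_0),\, i_{12}(\varphi_{f_1}^{-1}(i_{01}(x_0)))\bigr) : x_0 \in W_0\bigr\}
\]
is a smooth embedded Lagrangian: since $\Gamma_{01}'$ and $\Gamma_{12}'$ are graphs over their Liouville subdomain images in $W_1$, the fiber product $\Gamma_{01}' \times_{W_1} \Gamma_{12}'$ is transversely cut out. Thus the top horizontal arrow of Theorem \ref{sheaf quantization commutes with composition} identifies $i_{12*} \circ i_{01*}$ with the microsheaf quantization of the constant rank one local system on $\Gamma_{12}' \circ \Gamma_{01}'$.

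The remaining task is to identify $\Gamma_{12}' \circ \Gamma_{01}'$ with $\Gamma_{02}'$ up to compactly supported Hamiltonian isotopy. At the level of Legendrian lifts in the contactization $W_0^- \times W_2 \times \bR$, the heights add to $f_0(x_0) + f_1(i_{01}(x_0)) = f_{02}(x_0)$, so the two Legendrians represent the same exact Lagrangian data and are both Hamiltonian perturbations of a single Legendrian lift of $i_{02}$. To produce the interpolation I would mimic the proof of Proposition \ref{prop: subdomain}, introducing an auxiliary interval parameter and applying the Weinstein-straightening of \cite[Proposition 2.42]{LazarevSylvanTanaka1} to get a Legendrian family in $W_0^- \times W_2 \times T^*I$ whose endpoint reductions are the two Lagrangians in question. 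A final appeal to the Hamiltonian invariance diagram \eqref{eq: Hamilton invariance quantization} identifies the two constant rank one local systems under quantization, and Proposition \ref{prop: subdomain} applied to $\Gamma_{02}'$ then yields the claim. The main obstacle is checking that the interpolating family stays embedded and within the hypotheses of Theorem \ref{sheaf quantization commutes with composition} throughout; this is essentially a repetition of the Weinstein-straightening argument already carried out in Proposition \ref{prop: subdomain}, so I do not expect genuinely new difficulties beyond careful bookkeeping of the cylindrical-at-infinity behavior of the three Liouville forms $-\lambda_i \times \lambda_j$ involved.
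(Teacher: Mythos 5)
Your plan is broadly aligned with the paper's, but there is a genuine gap: you invoke only Theorem~\ref{sheaf quantization commutes with composition}, while the essential ingredient needed to promote a geometric composition of kernels to an identity of functors $i_{12*} \circ i_{01*} \simeq i_{02*}$ is Theorem~\ref{stop removal commutes with composition}. Proposition~\ref{prop: subdomain} realizes each $i_{ij*}$ as an element of $\msh_{\mathfrak{c}_{W_i^-\times W_j}}(\mathfrak{c}_{W_i^-\times W_j}) = \Fun^{\mathrm{L}}(\msh_{\mathfrak{c}_{W_i}}(\mathfrak{c}_{W_i}),\msh_{\mathfrak{c}_{W_j}}(\mathfrak{c}_{W_j}))$, i.e.\ living in the bottom row of the diagram in Theorem~\ref{stop removal commutes with composition}, \emph{after} stop removal $\msh_{\mathfrak{c},\partial \Gamma'} \to \msh_{\mathfrak{c}}$. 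Theorem~\ref{sheaf quantization commutes with composition} on its own only controls compositions at the level of the stopped categories $\msh_{\mathfrak{c},\partial L}$ and the Legendrian local systems; it is Theorem~\ref{stop removal commutes with composition} (with its hypothesis $\mathfrak{c}_{W_i\times W_j,\partial L_{ij}}\circ\mathfrak{c}_{W_i}\subset\mathfrak{c}_{W_j}$, which holds here since the cones over $\partial\Gamma_{ij}'$ are disjoint from $\mathfrak{c}_{W_i}\times W_j$) that identifies the stop-removed composition with the algebraic composition of colimit-preserving functors. Without it, your ``top horizontal arrow of Theorem~\ref{sheaf quantization commutes with composition} identifies $i_{12*}\circ i_{01*}$ with\dots'' is not justified.

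On the other hand, you are right to flag something the paper's write-up passes over: $\Gamma_{12}' \circ \Gamma_{01}' = \{(\varphi_{f_0}^1(x_0),\, i_{12}(\varphi_{f_1}^{-1}(i_{01}(x_0))))\}$ is not literally equal to $\Gamma_{02}' = \{(\varphi_{f_{02}}^1(x_0),\, i_{02}(x_0))\}$, and your plan to bridge them by a compactly supported Hamiltonian isotopy (via a Weinstein-straightened family in $W_0^-\times W_2\times T^*I$ and the invariance diagram \eqref{eq: Hamilton invariance quantization}) is the right mechanism and matches the technology already set up in Proposition~\ref{prop: subdomain}. The paper's argument implicitly resolves this by reducing to the naive graphs $\Gamma_{ij}$ (which compose on the nose to $\Gamma_{02}$) through the $I$-parametric interpolation of Proposition~\ref{prop: subdomain}, so the two routes differ in emphasis but not in substance at this point; your version makes the Hamiltonian bookkeeping explicit where the paper treats it as subsumed. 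With the addition of Theorem~\ref{stop removal commutes with composition} to your argument, the proof goes through.
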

\begin{proof}
    We can understand the assertion as an identity happening at the level of the bottom row of Theorem \ref{stop removal commutes with composition}.  The identifications of Proposition \ref{prop: subdomain} (and the fact that in the case at hand, the Lagrangian correspondences are embedded), along with Theorems \ref{sheaf quantization commutes with composition} and \ref{stop removal commutes with composition} mean that said identity can be deduced from an identity at the level of the top row of Theorem \ref{sheaf quantization commutes with composition}. In the case at hand, Proposition \ref{prop: subdomain} asserts that the required identity is between the constant rank one local system on $\Gamma_{02}'$ and a pullback of an external product of constant rank one local systems $\Gamma_{12}' \times \Gamma_{01}'$ under
    $$\Gamma_{02}' \hookrightarrow \Gamma_{12}' \times \Gamma_{01}',$$
    and hence they are obviously isomorphic. Moreover, we can choose the isomorphism canonically to be the identity between constant rank one local systems and in particular we have functorially with respect to further composition.
\end{proof}



    

\subsection{Sheaf quantization of contact isotopy}
    Let $W$ be a Weinstein domain with contact boundary $\partial W$. Consider a contact Hamiltonian isotopy $\varphi_{H_\infty}^t: \partial W \to \partial W$; extend it to a symplectic Hamiltonian isotopy $\varphi_H^t: W \to W$ by a cut-off function supported in a cylindrical neighborhood of $\partial W$. Consider the Lagrangian graph of the Hamiltonian isotopy
    $$\Gamma_{H} = \{(x, \varphi_H^t(x), t, -H \circ \varphi_H^t(x)) \mid x \in W, t\in I\} \subset W^- \times W \times I.$$
    which is an exact and cylindrical near boundary. Similarly, for $\Lambda \subset \partial W$, we can define
    $$\Lambda_H = \{(x, t, -H \circ \varphi_H^t(x)) \mid x \in \Lambda, t\in I\} \subset \partial W \times T^*I.$$

    We fix Maslov data on $W$, and consider the opposite Maslov data on $W^-$.  
    Then by considering the natural secondary Maslov data on the diagonal, there is a natural choice of secondary Maslov data on $\Gamma_{H}$, similar to \cite[Equation 11]{CKNS2}. This induces an equivalence between microsheaves and local systems on $\Gamma_{H}$. We define $\SK_{H} \in \Gamma(\msh_{\Gamma_{H}})$ to be the microsheaf associated to the constant rank one local system.  

\begin{proposition}\label{prop: GKS}
    Let $W$ be a Weinstein domain with contact boundary $\partial W$ and $\Lambda \subset \partial W$ be a Whitney stratified Legendrian. Let $\varphi_{H}^t: W \to W$ be a Hamiltonian isotopy that restricts to a contact isotopy $\varphi_{H_\infty}^t: \partial W \to \partial W$.  Then the image of the constant rank one local system under
    \begin{align*}\msh_{\Gamma_H}(\Gamma_H) &\hookrightarrow \msh_{\mathfrak{c}_{W^- \times W, \partial \Gamma_{H}}}(\mathfrak{c}_{W^- \times W}) \to \msh_{\mathfrak{c}_{W^-,\Lambda} \times \mathfrak{c}_{W \times T^*I,\Lambda_H}}(\mathfrak{c}_{W^-,\Lambda} \times \mathfrak{c}_{W \times T^*I,\Lambda_H}) \\
    &= \Fun^\mathrm{L}\big(\msh_{\mathfrak{c}_{W,\Lambda}}(\mathfrak{c}_{W,\Lambda}), \msh_{\mathfrak{c}_{W \times T^*I,\Lambda_H}}(\mathfrak{c}_{W \times T^*I,\Lambda_H})\big)
    \end{align*}
    is the identity when restricted to $t = 0$. Moreover, for any $J$-paramatric family of Hamiltonian isotopies that restrict to $\varphi_{H_\infty}^t: \partial W \to \partial W$, the functors assigned at $t\in I$ are canonically isomorphic.
\end{proposition}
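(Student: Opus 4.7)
The plan is to follow the strategy of Proposition \ref{prop: subdomain}, applying the Hamiltonian invariance diagram \eqref{eq: Hamilton invariance quantization} to the family Legendrian $\Gamma_H \subset W^- \times W \times T^*I$ and its contact reductions at fixed $t$.  First I would check that $\Gamma_H$ is an exact Legendrian, cylindrical at infinity:  since $H$ is supported in a cylindrical neighborhood of $\partial W$ and $\varphi_{H_\infty}^t$ is a contact isotopy, the flow $\varphi_H^t$ preserves the Liouville form near infinity, so $\Gamma_H$ has the desired cylindricity, with boundary at infinity $\partial \Gamma_H = \Lambda \cup \Lambda_H$ on the $W^-$ and $W \times T^*I$ sides respectively.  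This produces the kernel $\SK_H \in \Gamma(\msh_{\Gamma_H})$ using the chosen secondary Maslov data.

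At $t = 0$, the flow is the identity, so the contact reduction of $\Gamma_H$ at $\{t = 0\}$ is the diagonal $\Delta_W \subset W^- \times W$ (equipped with the secondary Maslov data induced from that on $\Gamma_H$).  The key step is then to identify the image of the constant rank one local system on $\Delta_W$ with the identity functor under the K\"unneth/dualizability identification of Theorem \ref{thm: weinstein kunneth duality maslov}:
\[
\msh_{\mathfrak{c}_{W^-,\Lambda} \times \mathfrak{c}_{W,\Lambda}}(\mathfrak{c}_{W^-,\Lambda} \times \mathfrak{c}_{W,\Lambda})
= \msh_{\mathfrak{c}_{W,\Lambda}}(\mathfrak{c}_{W,\Lambda})^\vee \otimes \msh_{\mathfrak{c}_{W,\Lambda}}(\mathfrak{c}_{W,\Lambda})
= \Fun^\mathrm{L}\bigl(\msh_{\mathfrak{c}_{W,\Lambda}}(\mathfrak{c}_{W,\Lambda}), \msh_{\mathfrak{c}_{W,\Lambda}}(\mathfrak{c}_{W,\Lambda})\bigr).
\]
This is a ``diagonal equals identity kernel'' statement: by the chosen secondary Maslov data on $\Delta_W$, the constant rank one local system represents the coevaluation, i.e.~the unit of the self-duality.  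Commutativity of \eqref{eq: Hamilton invariance quantization}, together with this identification at $t = 0$, then yields the first assertion by the same argument as in Proposition \ref{prop: subdomain}.

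For the $J$-parametric statement, given a family $\{H^j\}_{j \in J}$ of Hamiltonians all restricting to the same contact Hamiltonian on $\partial W$, one constructs the parametric Legendrian $\Gamma_{H,J} \subset W^- \times W \times T^*I \times T^*J$ by the same formula as $\Gamma_H$ but with $H$ replaced by $H^j$ and an appropriate choice of primitive in the $T^*J$ direction.  Since the contact boundary behavior is independent of $j$, this family is cylindrical at infinity in both $I$ and $J$ directions, and its contact reductions at each fixed $j \in J$ recover the Legendrians $\Gamma_{H^j}$.  Applying the analogue of \eqref{eq: Hamilton invariance quantization} in the $J$ direction then produces canonical isomorphisms between the kernels, and hence between the associated functors, at distinct values of $j$, compatibly with restriction at any $t \in I$.

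The main obstacle is the identification in the second step: verifying that the constant rank one local system on $\Delta_W$, equipped with the secondary Maslov data coming from $\Gamma_H$, corresponds precisely to the coevaluation under the self-duality, rather than to some nontrivial twist.  This requires careful bookkeeping of how the Maslov data on $W$ and on $W^-$ combine with the secondary data on the diagonal; it ultimately reduces to an \emph{a priori} computation of the ``diagonal kernel'' in the cotangent bundle setting, then transported to the Weinstein setting via the high-codimension embedding method used throughout the paper.
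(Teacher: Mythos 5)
Your proposal follows the same strategy as the paper: reduce at $t = 0$ to the diagonal $\Delta$, observe that the constant rank one local system there corresponds to the identity functor under the duality of Theorem~\ref{thm: weinstein kunneth duality maslov}, and then use the Hamiltonian invariance diagram~\eqref{eq: Hamilton invariance quantization} on a $J$-parametric family to get canonical isomorphisms at different $j$. The only place you diverge is in treating the ``diagonal equals identity kernel'' identification as an unresolved obstacle requiring an a priori Maslov data computation: in the paper this is tautological by construction, since the secondary Maslov data on $\Gamma_H$ is defined by pulling back the natural secondary data on $\Delta$ (see the paragraph preceding the proposition, following \cite[Equation 11]{CKNS2}), and the unit of the self-duality in Theorem~\ref{thm:microsheaf-duality} is literally the image of $1_\Delta$ under doubling and microlocalization, so there is no twist to rule out.
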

\begin{proof}
    First, consider the restriction to $t = 0$. Then we have a diagram
    $$\msh_\Delta(\Delta) \hookrightarrow \msh_{\mathfrak{c}_{W^- \times W,\partial \Delta}}(\mathfrak{c}_{W^- \times W}) \to \msh_{\mathfrak{c}_{W^-,\Lambda} \times \mathfrak{c}_{W,\Lambda}}(\mathfrak{c}_{W^-} \times \mathfrak{c}_{W}).$$
    The constant rank 1 local system on the diagonal defines the identity functor.
    
    Consider any smooth $J$-family of extensions $H_J: W \times J \to \bR$ that are equal to $H_\infty$ on a neighborhood of $\partial W$. Assume that $J$ contains $0$ and $1$. Then we have a commutative diagram as a special case of Formula \eqref{eq: Hamilton invariance quantization}
    \[\begin{tikzcd}
     \Gamma(\msh_{\Gamma_{H_0}}) \ar[r]  & \Gamma(\msh_{\mathfrak{c}_{W^- \times W \times T^*I,\partial \Gamma_{H_0}}}) \ar[r] & \Gamma(\msh_{\mathfrak{c}_{W^-,\Lambda} \times \mathfrak{c}_{W \times T^*I,\Lambda_{H_0}}}) \\
    \Gamma(\msh_{\Gamma_{H_J}}) \ar[r] \ar[u, "i_0^*"] \ar[d, "i_1^*" left] & \Gamma(\msh_{\mathfrak{c}_{W^- \times W \times T^*I \times T^*J,\partial \Gamma_{H} \times J}}) \ar[r] \ar[u] \ar[d] & \Gamma(\msh_{\mathfrak{c}_{W^-,\Lambda} \times \mathfrak{c}_{W \times T^*I \times T^*J,\Lambda_{H} \times J}}) \ar[u, "i_0^*" right] \ar[d, "i_1^*"] \\
    \Gamma(\msh_{\Gamma_{H_1}}) \ar[r] & \Gamma(\msh_{\mathfrak{c}_{W^-} \times \mathfrak{c}_{W \times T^*I},\partial \Gamma_{H_1}}) \ar[r] & \Gamma(\msh_{\mathfrak{c}_{W^-,\Lambda} \times \mathfrak{c}_{W \times T^*I,\Lambda_{H_1}}}).
    \end{tikzcd}\]
    Under the identification that $\Gamma(\msh_{\Gamma_{H}}) \simeq \Loc(\Gamma_H)$, we know that the composition of the left vertical arrows is the identity. The composition of the right vertical arrows is also the identity. Therefore, we have a canonical isomorphism between the associated functors.
\end{proof}

\begin{remark}
    For an exact symplectic manifold $W$ with contact boundary $\partial W$, consider the group of compactly supported Hamiltonian diffeomorphisms $\mathrm{Ham}(W, \partial W)$, the group of Hamiltonian diffeomorphisms that restrict to contactomorphisms on the boundary $\mathrm{Ham}(W)$, and the group of contactomorphisms in the identity component $\mathrm{Cont}_0(\partial W)$. It is well known that $\mathrm{Ham}(W, \partial W) \to \mathrm{Ham}(W) \to \mathrm{Cont}_0(\partial W)$ is a Serre fibration (the homotopy lifting is defined by extending the contact isotopies on $\partial W$ to the interior $W$ by cut-off functions), and the above proposition shows that $\mathrm{Ham}(W, \partial W)$ acts trivially on the microsheaf category $\msh_{\mathfrak{c}_{W,\Lambda}}(\mathfrak{c}_{W,\Lambda})$ for any Legendrian stop $\Lambda$.
\end{remark}

\begin{definition}\label{def: GKS}
    Let $W$ be a Weinstein domain with contact boundary $\partial W$ and $\varphi_{H_\infty}^t: \partial W \to \partial W$ be a contact isotopy. Then we define the microsheaf quantization $\SK_{H_\infty}$ to be the image of the (constant) rank one microsheaf $\SK_H \in \msh_{\Gamma_H}(\Gamma_H)$ under
    $$\msh_{\Gamma_H}(\Gamma_H) \hookrightarrow \msh_{\mathfrak{c}_{W^- \times W \times T^*I, \partial \Gamma_{H}}}(\mathfrak{c}_{W^- \times W \times T^*I}),$$
    where $\varphi_H^t$ is any extension of $\varphi_{H_\infty}^t$ to a Hamiltonian isotopy.
\end{definition}

    Consider the Hamiltonian $G_\infty \# H_\infty$ whose contact isotopy is 
    $$\varphi_{G_\infty \# H_\infty}^t = \varphi_{G_\infty}^t \circ \varphi_{H_\infty}^t.$$
    We show functoriality of the sheaf quantizations of contact isotopies.

\begin{corollary}\label{cor: GKS kernel invertible} 
    Let $W$ be a Weinstein domain with contact boundary $\partial W$, and $\varphi_{G_\infty}^t, \varphi_{H_\infty}^t: \partial W \to \partial W$ be contact Hamiltonian isotopies. Then there is a  canonical isomorphism
    $$\SK_{G_\infty \# H_\infty} \simeq \SK_{G_\infty} \circ_I \SK_{H_\infty}.$$
    In particular, taking $G = -H \circ \varphi_H^1$, we see that such kernels are invertible. 
\end{corollary}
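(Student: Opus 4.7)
The plan is to apply Theorem \ref{sheaf quantization commutes with composition} to the Legendrian correspondences $\Gamma_G, \Gamma_H \subset W^- \times W \times T^*I$ and identify the resulting composition with the constant rank one local system on $\Gamma_{G_\infty \# H_\infty}$. First I would verify the geometric identity: unpacking Definition \ref{def: contact composition final}, a point of $\Gamma_G \circ \Gamma_H$ has the form $(x, \varphi_G^t\circ\varphi_H^t(x), t, \cdot)$ with $T^*\bR$-component equal to $-H(\varphi_H^t(x),t) - G(\varphi_G^t\circ\varphi_H^t(x),t)$. Since this sum is precisely the primitive of the composed Hamiltonian flow $\varphi_{G\# H}^t = \varphi_G^t \circ \varphi_H^t$, this set equals $\Gamma_{G_\infty \# H_\infty}$, and the composition is embedded because it is a graph of a diffeomorphism in the $W$-variables.

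Because $\Gamma_{G_\infty \# H_\infty}$ is a smooth embedded Legendrian, Theorem \ref{sheaf quantization commutes with composition} identifies $\SK_{G_\infty} \circ_I \SK_{H_\infty}$ with the pullback of the external tensor product of $\SK_{G_\infty}$ and $\SK_{H_\infty}$ along the inclusion $\Gamma_{G_\infty \# H_\infty} \hookrightarrow \Gamma_G \times \Gamma_H$. Since both factors are constant rank one local systems on smooth Legendrians, their pullback is the constant rank one local system on $\Gamma_{G_\infty \# H_\infty}$, which is $\SK_{G_\infty \# H_\infty}$ by Definition \ref{def: GKS}. The step requiring most care will be Maslov-theoretic: I need to check that the secondary Maslov datum on $\Gamma_{G_\infty \# H_\infty}$ used in Definition \ref{def: GKS} coincides with the one induced on the geometric composition by those on $\Gamma_G$ and $\Gamma_H$. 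Both are obtained from the canonical secondary Maslov datum on the diagonal in $W^-\times W$ by the same cutoff-and-extension prescription, and functoriality of that construction along the graph embedding of a diffeomorphism should supply the canonical identification.

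For the invertibility statement, take $G_\infty$ to be the contact Hamiltonian generating $(\varphi_{H_\infty}^t)^{-1}$; the formula in the statement is a standard expression for such a generator. Then $\varphi_{G_\infty \# H_\infty}^t = \id$, so $\Gamma_{G_\infty \# H_\infty}$ restricts at $t=1$ to the diagonal Legendrian. By Proposition \ref{prop: GKS}, the associated functor is the identity, and combining this with the isomorphism of the previous paragraph gives $\SK_{G_\infty} \circ_I \SK_{H_\infty} \simeq \id$. Swapping the roles of $G_\infty$ and $H_\infty$ (i.e., composing in the opposite order) yields the other one-sided identity, hence two-sided invertibility.
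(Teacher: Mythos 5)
Your proof is correct and takes essentially the same route as the paper's: both reduce the asserted identity to the top row of Theorem \ref{sheaf quantization commutes with composition} via Proposition \ref{prop: GKS}, where the question becomes whether the constant rank one local system on $\Gamma_{G\#H}$ agrees with the pullback of the external product of constant rank one local systems along $\Gamma_{G\#H} \hookrightarrow \Gamma_G \times \Gamma_H$, and then observe this identity is obvious and can be taken to be the identity map. You add some worthwhile explicitness that the paper leaves implicit — the set-theoretic verification that the contact composition $\Gamma_G \circ \Gamma_H$ really is $\Gamma_{G\#H}$ with matching $T^*\bR$-component, the flagging of the secondary Maslov data compatibility, and the invertibility argument, which the paper's proof does not actually carry out — but these are details of the same argument rather than a different approach. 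One small point: in your invertibility step you invoke Proposition \ref{prop: GKS} at $t=1$, whereas that proposition is stated at $t=0$; what you actually want is the simpler observation that when $\varphi_{G\#H}^t \equiv \id$ the whole Legendrian $\Gamma_{G\#H}$ is the constant diagonal family and so $\SK_{G\#H}$ is the identity kernel, from which both one-sided inverse identities follow.
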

\begin{proof}
    We understand the assertion as an identity at the level of the bottom row of Theorem \ref{sheaf quantization commutes with composition}. The identification of Proposition \ref{prop: GKS}, along with Theorem \ref{sheaf quantization commutes with composition}, mean that said identity can be deduced from the corresponding identity at the level of the top row of Theorem \ref{sheaf quantization commutes with composition} that 
    $$\SK_{G} \circ_I \SK_{H} \simeq \SK_{G \# H}.$$
    By Proposition \ref{prop: GKS}, the above identity is between the constant rank one local system on $\Gamma_{G\#H}$ and the pullback of the external product of constant rank one local systems on $\Gamma_G \times \Gamma_H$ via
    $$\Gamma_{G\#H} \hookrightarrow \Gamma_G \times \Gamma_H,$$
    and hence they are isomorphic. Moreover, we can choose the isomorphism canonically to be the identity between constant rank one local systems and in particular we have functorially with respect to further composition.
\end{proof}

\begin{corollary}
     For any sufficiently Legendrian $\Lambda$ and $\Lambda_t = \varphi_{H_\infty}^t(\Lambda)$ in $\partial W$, denote the image of $\SK_{H_\infty}^t$ in $\Fun^L(\Gamma(\msh_{\mathfrak{c}_{W,\Lambda}}), \Gamma(\msh_{\mathfrak{c}_{W,\Lambda_t}}))$ by $\Phi_{H_\infty}^t$. Then, under the notation of Corollary \ref{cor: GKS kernel invertible}, we have a canonical isomorphism of functors $\Phi_{G_\infty}^t \circ \Phi_{H_\infty}^t \simeq \Phi_{G_\infty \# H_\infty}^t$.
\end{corollary}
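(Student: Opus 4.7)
The strategy is to deduce this from the kernel-level isomorphism of Corollary \ref{cor: GKS kernel invertible} by applying the stop-removal commutativity result (Theorem \ref{stop removal commutes with composition}). Concretely, by definition $\Phi_{H_\infty}^t$ is obtained from $\SK_{H_\infty}^t$ by evaluating at a fixed value of $t$ and then applying the sequence of maps
\[
\Gamma(\msh_{\mathfrak{c}_{W^-\times W, \partial \Gamma_H^t}})
\longrightarrow
\Gamma(\msh_{\mathfrak{c}_{W^-,\Lambda}\times \mathfrak{c}_{W,\Lambda_t}})
=
\Fun^{\mathrm L}\bigl(\Gamma(\msh_{\mathfrak{c}_{W,\Lambda}}),\Gamma(\msh_{\mathfrak{c}_{W,\Lambda_t}})\bigr),
\]
the first being the stop removal and the second the K\"unneth/dualizability identification from Theorem \ref{thm: weinstein kunneth duality maslov}. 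The task is therefore to show this process converts the kernel composition $\SK_G\circ_I\SK_H\simeq \SK_{G\# H}$ into the corresponding composition of functors $\Phi_G^t\circ\Phi_H^t\simeq \Phi_{G\# H}^t$.

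First I would evaluate at a fixed time $t$ (or, equivalently, work in the $t=0$ slice and note that the $t$-family case reduces to a fiberwise assertion plus the canonical isomorphism from the $J$-family version of Proposition \ref{prop: GKS}). This places us in the setting of composing correspondences $L_{12}=\Gamma_{H}^t \subset W^-\times W$ and $L_{23}=\Gamma_{G}^t\subset W^-\times W$ whose set-theoretic composition is $\Gamma_{G\# H}^t$; in particular this composition is embedded, so Theorem \ref{sheaf quantization commutes with composition} applies. Next I would verify the hypothesis \eqref{stop removal hypothesis} in the form
\[
\mathfrak{c}_{W^-\times W,\partial \Gamma_H^t}\circ \mathfrak{c}_{W,\Lambda}\subset \mathfrak{c}_{W,\Lambda_t},
\qquad
\mathfrak{c}_{W^-\times W,\partial \Gamma_G^t}\circ \mathfrak{c}_{W,\Lambda_t}\subset \mathfrak{c}_{W,\Lambda_{G\# H}^t},
\]
which holds because $\varphi_H^t$ is a Hamiltonian diffeomorphism carrying $\Lambda$ to $\Lambda_t$ and the trivial Liouville flow on $\mathfrak{c}_W$ composes trivially with the graph.

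With these hypotheses in place, Theorem \ref{stop removal commutes with composition} gives a commutative square identifying the image under stop removal of the kernel composition $\SK_G\circ_I\SK_H$ with the composition in $\Fun^{\mathrm L}$ of the images of $\SK_G$ and $\SK_H$ -- that is, with $\Phi_G^t\circ\Phi_H^t$. Combining this with the canonical isomorphism $\SK_{G\# H}\simeq \SK_G\circ_I\SK_H$ from Corollary \ref{cor: GKS kernel invertible} yields the desired canonical isomorphism $\Phi_{G\# H}^t\simeq \Phi_G^t\circ \Phi_H^t$.

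The main obstacle I anticipate is purely bookkeeping rather than geometric: verifying the containment hypothesis \eqref{stop removal hypothesis} for all intermediate $t$ uniformly (so that one obtains the isomorphism in the functor category in a $t$-coherent way, and not merely pointwise), and then chasing the K\"unneth identification through the stop-removal square to check that the geometric composition of Lagrangian correspondences indeed matches the formal composition of functors. Both are handled by the last statement of Theorem \ref{stop removal commutes with composition} together with the higher coherence upgrade recorded in Corollary \ref{thm: main composition stop removal higher}, and so no new ideas beyond those already deployed for Corollary \ref{cor: GKS kernel invertible} are needed.
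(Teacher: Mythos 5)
Your proposal is correct and takes exactly the same route as the paper: the paper's proof is simply the one-line citation "Follows from Corollary \ref{cor: GKS kernel invertible} and Theorem \ref{stop removal commutes with composition}." Your elaboration of how stop removal and the K\"unneth/dualizability identification combine to convert the kernel-level isomorphism into the functor-level one, and your remark on verifying the containment hypothesis \eqref{stop removal hypothesis}, correctly supply the details the paper leaves implicit.
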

\begin{proof}
    Follows from Corollary \ref{cor: GKS kernel invertible} and Theorem \ref{stop removal commutes with composition}.
\end{proof}

\subsection{Hamiltonian action and equivariant microsheaves}
    Recall that a Hamiltonian $G$-action on a symplectic manifold $W$ is equivalent data to a Lagrangian correspondence in $W^- \times W \times T^*G$ which is appropriately `group-like' for the set-theoretic composition given by the natural composition of correspondences in the $W^- \times W$ factor and the convolution in the $T^*G$ factor.  Explicitly, if the moment map is $\mu: W \to \mathfrak{g}^*$, then the Lagrangian is 
    $$\Gamma_\mu = \{(x, \varphi_\mu^g(x), g, -\mu(\varphi_\mu^g(x))) \mid x \in W, g \in G\} \subset W^- \times W \times G \times \mathfrak{g}^*.$$
    
    When $W$ is Liouville, we will say that the action is eventually conic if the Hamiltonian diffeomorphism $\varphi_\mu^g: W \to W$ is eventually conic for any $g \in G$. When the Hamiltonian action is eventually conic, the graph $\Gamma_\mu$ is an exact Lagrangian that is eventually conic.

    Let us discuss Maslov data.  
    If $G$ acts on $W$, then $G$ acts on the space of 
    null homotopies of the canonical map $W \to BU \to B^2Pic(\SC)$.  By $G$-equivariant Maslov data, we mean a $G$-equivariant null-homotopy of this map.  
    
    There is a corresponding notion of secondary $G$-equivariant Maslov data.  If we fix $G$-equivariant Maslov data $\sigma$ on $W$ and the corresponding opposite Maslov data on $W^-$, 
    similarly to \cite[Equation 11]{CKNS2}, there is canonical secondary $G$-equivariant Maslov data on the diagonal $\Delta$ and hence on $\Gamma_{\mu}$.
    This induces an equivalence 
    between microsheaves and local systems on $\Gamma_{\mu}$. We define $\SK_{\mu} \in \Gamma(\msh_{\Gamma_{\mu}})$ to be the microsheaf associated to the constant rank one local system.

\begin{proposition}\label{prop: group action}
    Let $W$ be a Weinstein domain with contact boundary $\partial W$. Let $G$ be a (finite dimensional) Lie group and $\mu: W \to \mathfrak{g}^*$ be an eventually conical exact Hamiltonian $G$-action.  Fix $G$-equivariant Maslov data. Then the image of the constant rank one local system under
    \begin{align*}\msh_{\Gamma_\mu}(\Gamma_\mu) &\hookrightarrow \msh_{\mathfrak{c}_{W^- \times W \times T^*G, \partial \Gamma_{\mu}}}(\mathfrak{c}_{W^- \times W \times T^*G}) \to \msh_{\mathfrak{c}_{W^-} \times \mathfrak{c}_{W \times T^*G}}(\mathfrak{c}_{W^-} \times \mathfrak{c}_{W \times T^*G}) \\
    &= \Fun^\mathrm{L}\big(\msh_{\mathfrak{c}_{W}}(\mathfrak{c}_{W}), \msh_{\mathfrak{c}_{W \times T^*G}}(\mathfrak{c}_{W \times T^*G})\big)
    \end{align*}
    is the identity when restricted to $g = 1$. Moreover, for any $J$-paramatric family of Hamiltonian $G$-actions that restricts to the same contact $G$-action near boundary, the functors assigned are canonically isomorphic.
\end{proposition}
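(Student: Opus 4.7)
The plan is to mirror the strategy of Proposition \ref{prop: GKS}, treating the Lie group $G$ as the analogue of the interval $I$ there, with the parametric $T^*J$ factor playing the role of the homotopy space. The two statements to prove are (a) the functor is the identity at $g=1$, and (b) any two Hamiltonian extensions of a given boundary contact $G$-action yield canonically isomorphic functors.

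For (a), observe that at $g = 1$ the Hamiltonian flow $\varphi_\mu^1 = \mathrm{id}$, so the symplectic reduction of $\Gamma_\mu$ at $g=1$ is the diagonal $\Delta \subset W^- \times W$ with its canonical secondary Maslov data. Restricting the outer composition in the statement along the inclusion $\{1\} \hookrightarrow G$ and applying Theorem \ref{sheaf quantization commutes with composition} (functoriality of microsheaf quantization under restriction/composition) reduces us to the fact, proven in Proposition \ref{prop: GKS} at $t=0$, that the constant rank one local system on $\Delta$ corresponds under
$$\msh_{\Delta}(\Delta) \hookrightarrow \msh_{\mathfrak{c}_{W^- \times W}, \partial \Delta}(\mathfrak{c}_{W^- \times W}) \to \msh_{\mathfrak{c}_{W^-} \times \mathfrak{c}_W}(\mathfrak{c}_{W^-} \times \mathfrak{c}_W) \simeq \Fun^{\mathrm{L}}(\msh_{\mathfrak{c}_W}(\mathfrak{c}_W), \msh_{\mathfrak{c}_W}(\mathfrak{c}_W))$$
to the identity functor. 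The $G$-equivariance of the chosen Maslov data ensures this identification is compatible with the $G$-action.

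For (b), given a $J$-parametric family $\mu_J : W \times J \to \mathfrak{g}^*$ of Hamiltonian extensions all agreeing with the same contact $G$-action near $\partial W$, assemble the family into a single exact, eventually conic Legendrian
$$\Gamma_{\mu_J} = \{(x, \varphi_{\mu_j}^g(x), g, -\mu_j(\varphi_{\mu_j}^g(x)), j, 0) \mid x \in W,\ g \in G,\ j \in J\} \subset W^- \times W \times T^*G \times T^*J,$$
cylindrical at infinity with respect to a suitable Weinstein structure (obtained as in the proof of Proposition \ref{prop: subdomain} via \cite[Proposition 2.42]{LazarevSylvanTanaka1} to correct the product Liouville form). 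Applying the Hamiltonian invariance diagram \eqref{eq: Hamilton invariance quantization} to this family, with $I$ replaced by $J$ and the extra factor $T^*G$ carried along, the restrictions $i_0^*$ and $i_1^*$ to the two endpoints of $J$ are equivalences on each row. Since the constant rank one local system on $\Gamma_{\mu_J}$ restricts to the constant rank one local systems on $\Gamma_{\mu_0}$ and $\Gamma_{\mu_1}$, we obtain the canonical isomorphism of functors.

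The principal technical obstacle is twofold: first, arranging the Weinstein structure on $W^- \times W \times T^*G \times T^*J$ so that $\Gamma_{\mu_J}$ is genuinely eventually conic and sufficiently Legendrian, which requires the Lazarev--Sylvan--Tanaka deformation in the $G$ and $J$ directions (the naive product Liouville form fails because the Hamiltonian shear is not conic); second, ensuring that the chosen $G$-equivariant Maslov data on $W$ propagates to $G$-equivariant secondary Maslov data on $\Gamma_\mu$ and on the parametric family $\Gamma_{\mu_J}$, so that the constant rank one local system is well-defined throughout. Once these are in place, the result is a formal consequence of Theorem \ref{sheaf quantization commutes with composition} applied to the family.
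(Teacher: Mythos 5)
Your proposal follows essentially the same route as the paper: at $g=1$ reduce to the diagonal and identify the constant rank one local system with the identity functor, and for the $J$-parametric statement assemble the family into a single eventually conic Legendrian and apply the Hamiltonian invariance diagram \eqref{eq: Hamilton invariance quantization}, concluding from the fact that restriction along the contractible $J$ sends constants to constants. Two small inaccuracies are worth noting. First, for the $g=1$ step the paper does not go through Theorem \ref{sheaf quantization commutes with composition}; restricting along $\{1\}\hookrightarrow G$ is a reduction at a cotangent fiber, handled directly via the Hamiltonian-invariance/restriction machinery (Theorem \ref{thm: weinstein invariance}, diagram \eqref{eq: Hamilton invariance quantization}), not a composition of correspondences — your citation of Theorem I is a detour that would itself need the point-correspondence $T^*_1 G$ to be spelled out before it applied. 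Second, your explicit formula for $\Gamma_{\mu_J}$ puts $0$ in the $T^*J$ slot, which is incorrect for a non-constant family: the momentum in the $J$-direction is determined by the $j$-derivative of the flow $\varphi_{\mu_j}^g$ and vanishes only near $\partial W$ where the extensions agree (compare the $-t f_0(x_0)$ term in $\Gamma_{01,I}$ from Proposition \ref{prop: subdomain}). This does not affect the structure of the argument, since the Legendrian is determined by exactness rather than by your formula, but as written the displayed set is not Lagrangian in the interior. Both the LST-deformation point and the equivariant Maslov data point you raise are real and are also implicit in the paper's treatment.
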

\begin{proof}
    The proof is identical to Proposition \ref{prop: GKS}. Consider any smooth $J$-family of extensions $\mu_J: W \times J \to \bR$ that are equal to $\mu_\infty$ on a neighborhood of $\partial W$. Assume that $J$ contains $0$ and $1$. Then since the microsheaf quantization functor is compatible with respect to Hamiltonian isotopies, we have a commutative diagram as a special case of Formula \eqref{eq: Hamilton invariance quantization}
    \[\begin{tikzcd}
    \Gamma(\msh_{\Gamma_{\mu_0}}) \ar[r] & \Gamma(\msh_{\mathfrak{c}_{W^- \times W \times T^*G,\partial \Gamma_{\mu}}}) \ar[r] & \Gamma(\msh_{\mathfrak{c}_{W^-} \times \mathfrak{c}_{W \times T^*G}}) \\
    \Gamma(\msh_{\Gamma_{\mu_J}}) \ar[r] \ar[u, "i_0^*"] \ar[d, "i_1^*" left] & \Gamma(\msh_{\mathfrak{c}_{W^- \times W \times T^*G \times T^*J},\partial \Gamma_\mu \times J}) \ar[r] \ar[u] \ar[d] & \Gamma(\msh_{\mathfrak{c}_{W^-} \times \mathfrak{c}_{W \times T^*G \times T^*J,\Lambda_{\mu} \times J}}) \ar[u, "i_0^*" right] \ar[d, "i_1^*"] \\
    \Gamma(\msh_{\Gamma_{\mu_1}}) \ar[r] & \Gamma(\msh_{\mathfrak{c}_{W^-} \times \mathfrak{c}_{W \times T^*G},\partial \Gamma_{\mu}}) \ar[r] & \Gamma(\msh_{\mathfrak{c}_{W^-} \times \mathfrak{c}_{W \times T^*G,\Lambda_\mu}}).
    \end{tikzcd}\]
    Under the identification that $\msh_{\Gamma_{\mu}}(\Gamma_\mu) \simeq \Loc(\Gamma_\mu)$, we know that the composition of the left vertical arrows is the identity. The composition of the right vertical arrows is also the identity. Therefore, we have a canonical isomorphism between the associated functors.
\end{proof}

    For a contact moment map $\mu_\infty: \partial W \to \mathfrak{g}^*$, we can define a contact $G$-action on $\partial W$. The above result allows us to define the sheaf quantization for contact Hamiltonian $G$-actions.

\begin{definition}
    Let $W$ be a Weinstein domain with contact boundary $\partial W$. Let $G$ be a (finite dimensional) Lie group and $\mu_\infty: \partial W \to \mathfrak{g}^*$ be a contact Hamiltonian $G$-action.  Fix $G$-equivariant Maslov data. Then we define the microsheaf quantization $\SK_{\mu_\infty}$ to be the image of the (constant) rank one microsheaf $\SK_\mu \in \Gamma(\msh_{\Gamma_\mu})$ under
    $$\msh_{\Gamma_\mu}(\Gamma_\mu) \hookrightarrow \msh_{\mathfrak{c}_{W^- \times W \times T^*G, \partial \Gamma_\mu}}(\mathfrak{c}_{W^- \times W \times T^*G}),$$
    where $\mu$ is any extension of $\mu_\infty$ to an exact Hamiltonian group action.
\end{definition}

\begin{theorem}\label{thm: group action} 
    Let $W$ be a Weinstein domain. Let $G$ be a (finite dimensional) Lie group with a contact Hamiltonian $G$-action $\mu_\infty$ on the contact boundary $\partial W$. Fix $G$-equivariant Maslov data.  
    (Note that we do not require the $G$ action to preserve $\mathfrak{c}_W$.)
    Then under the multiplication map $m: G \times G \to G$, the inverse map $a: G \to G$ and the projection map $\pi_i: G \times G \to G$, there are canonical equivalences 
    $$m^*\SK_{\mu_\infty} \simeq \pi_1^*\SK_{\mu_\infty} \circ \pi_2^*\SK_{\mu_\infty}, \quad \SK_{\mu_\infty} \circ a^*\SK_{\mu_\infty} = 1_{\Delta \times G}.$$
    which assemble into 
    a topological group homomorphism
    $$G \longrightarrow \Aut(\msh_{\mathfrak{c}_{W}}(\mathfrak{c}_{W})).$$
\end{theorem}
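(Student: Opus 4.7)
The plan is to adapt the argument of Corollary \ref{cor: GKS kernel invertible} (which handles an interval-parametrized family of contact isotopies) to this $G$-parametrized setting, while additionally invoking the higher coherence versions of the main theorems. The two displayed identities encode associativity and inverse-compatibility; together with the unit (which is essentially the content of Proposition \ref{prop: group action} restricted to $g = 1$), they give the data of a group homomorphism up to coherent homotopy.

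The first step is to establish the two identities at the level of microsheaf kernels. By Theorem \ref{sheaf quantization commutes with composition} applied along the top row of its diagram, each such identity reduces to an identity between constant rank one local systems on the relevant Legendrians. At the geometric level, the property $\varphi_\mu^{g_2} \circ \varphi_\mu^{g_1} = \varphi_\mu^{g_1 g_2}$ shows directly that the contact composition $\Gamma_\mu \circ_W \Gamma_\mu$ inside $W^- \times W \times T^*G \times T^*G$ coincides with the pullback $m^*\Gamma_\mu$ along multiplication; similarly $\Gamma_\mu \circ_W a^*\Gamma_\mu$ coincides with $\Delta_W \times G$. Given the $G$-equivariant secondary Maslov data, the identification of constant rank one local systems is canonical and compatible with further composition, exactly as in the proofs of Corollary \ref{functoriality of embedding} and Corollary \ref{cor: GKS kernel invertible}.

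The second step is to assemble these pointwise identities into a topological group action. Here I would apply the higher coherence version of Theorem \ref{sheaf quantization commutes with composition}, namely Corollary \ref{thm: main composition higher}, to the $G^{\times n}$-parametrized families $\{\Gamma_\mu\}$, producing coherent homotopies which witness the group axioms. Because the Hamiltonian $G$-action need not preserve $\mathfrak{c}_W$, the resulting kernels naturally define functors into an enlarged microsheaf category; to obtain endofunctors of $\msh_{\mathfrak{c}_W}(\mathfrak{c}_W)$ itself one post-composes with the stop removal adjoint, for which the relevant compatibilities with composition are ensured by Theorem \ref{stop removal commutes with composition} and Corollary \ref{thm: main composition stop removal higher}.

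The main obstacle is the coherence bookkeeping: organizing all of this data into a genuinely \emph{topological} (rather than merely set-theoretic) group homomorphism $G \to \Aut(\msh_{\mathfrak{c}_W}(\mathfrak{c}_W))$. This requires working throughout with continuously $G^{\times n}$-parametric families of Lagrangian correspondences, $G$-equivariant Maslov data, and compositions thereof, and verifying that the resulting composition functors vary continuously over the base. This is precisely where the in-families version of the gappedness criterion for composition of nearby cycles (Theorem \ref{thm:gapped composition}, the main technical input of the paper) does the crucial work; once it is in place, the group axioms are forced by the geometric identities of the first step.
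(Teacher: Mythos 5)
Your proposal follows essentially the same route as the paper: you reduce both kernel identities to canonical isomorphisms of constant rank one local systems via Theorem \ref{sheaf quantization commutes with composition}, pass through stop removal (Theorem \ref{stop removal commutes with composition}) to land in endofunctors of $\msh_{\mathfrak{c}_W}(\mathfrak{c}_W)$, and invoke the higher-coherence corollaries (\ref{thm: main composition higher}, \ref{thm: main composition stop removal higher}) for the simplicial assembly — the only thing you leave tacit is the paper's precise $\infty$-categorical packaging of the coherent data, namely construction of a monoidal functor $\Loc(G) \to \Fun^\mathrm{L}\bigl(\msh_{\mathfrak{c}_W}(\mathfrak{c}_W), \msh_{\mathfrak{c}_W}(\mathfrak{c}_W)\bigr)$ and a $\Loc(G)$-module simplicial diagram in the sense of \cite{Lurie-HTT,Lurie-HA}. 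That packaging is a formal consequence of the coherent isomorphisms you describe, so this is the same argument.
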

\begin{proof}
    We can understand the assertion as an identity at the level of the bottom row of Theorem \ref{sheaf quantization commutes with composition} or, after stop removal, the bottom row of Theorem \ref{stop removal commutes with composition}. The identification of Proposition \ref{prop: group action}, along with Theorems \ref{sheaf quantization commutes with composition} and \ref{stop removal commutes with composition} mean that said identity can be deduced from the corresponding identity at the level of the top row of Theorem \ref{sheaf quantization commutes with composition} that
    $$m^*\SK_\mu \simeq \pi_1^*\SK_\mu \circ \pi_2^*\SK_\mu, \quad \SK_\mu \circ a^*\SK_\mu = 1_{\Delta \times G}.$$
    For the identity that $m^*\SK_\mu \simeq \pi_1^*\SK_\mu \circ \pi_2^*\SK_\mu$, by Proposition \ref{prop: group action}, the left hand side is the pullback of the constant rank one local system on $\Gamma_\mu$ via the multiplication and the pullback of the external product of constant rank one local systems on $\Gamma_\mu \times \Gamma_\mu$, via the diagram
    $$\Gamma_{\mu} \xleftarrow{m} \Gamma_{\mu \times \mu} \hookrightarrow \Gamma_\mu \times \Gamma_\mu.$$
    For the identity $\SK_\mu \circ a^*\SK_\mu = 1_{\Delta \times G}$, the identity we need is between the constant rank one local system on $\Delta \times G$ and the pullback of the external product of constant rank one local systems on $\Gamma_\mu \times \Gamma_\mu$ via
    $$\Delta \times G \hookrightarrow \Gamma_\mu \times \Gamma_{-\mu}.$$
    Therefore, we can conclude that they are canonically isomorphic by choosing the isomorphism to be the identity between the constant rank one local systems.

    For the notion of ($\infty$-)group objects and ($\infty$-)group actions, we follow \cite[Definition 6.1.2.7 \& 7.2.2.1]{Lurie-HTT} or \cite[Definition 4.1.2.5 \& 4.2.2.2]{Lurie-HA}.
    The structure of being a ($\infty$-)group object in spaces is encoded by the following simplicial diagram:
    $$\cdots \,\substack{\longrightarrow \\[-0.8em] \longrightarrow \\[-0.8em] \longrightarrow \\[-0.8em] \longrightarrow}\, G \times G \times G \,\substack{\longrightarrow \\[-0.8em] \longrightarrow \\[-0.8em] \longrightarrow}\, G \times G \,\substack{\longrightarrow \\[-0.8em] \longrightarrow}\, G \rightarrow *.$$
    Here, the face morphisms are given by group multiplications and projections (and all the degeneration maps defined by units are omitted). The geometric realization of the diagram is classifying space $BG$ \cite{Segal}. This gives rise to a group object in $\PrLst$
    $$\cdots \,\substack{\longrightarrow \\[-0.8em] \longrightarrow \\[-0.8em] \longrightarrow \\[-0.8em] \longrightarrow}\, \Loc(G) \otimes \Loc(G) \otimes \Loc(G) \,\substack{\longrightarrow \\[-0.8em] \longrightarrow \\[-0.8em] \longrightarrow}\, \Loc(G) \otimes \Loc(G) \,\substack{\longrightarrow \\[-0.8em] \longrightarrow}\, \Loc(G) \rightarrow \SC,$$
    and there is a fully faithful embedding $G \hookrightarrow \Loc(G)$. By definition, a topological $G$-action on a presentable category $\SE$ is a $\Loc(G)$-action on $\SE$ encoded by the simplicial diagram
    $$\cdots \,\substack{\longrightarrow \\[-0.8em] \longrightarrow \\[-0.8em] \longrightarrow \\[-0.8em] \longrightarrow \\[-0.8em] \longrightarrow}\, \SE \otimes \Loc(G)^{\otimes 3} \,\substack{\longrightarrow \\[-0.8em] \longrightarrow \\[-0.8em] \longrightarrow\\[-0.8em] \longrightarrow}\, \SE \otimes \Loc(G)^{\otimes 2} \,\substack{\longrightarrow \\[-0.8em] \longrightarrow \\[-0.8em] \longrightarrow}\, \SE \otimes \Loc(G) \substack{\longrightarrow \\[-0.8em] \longrightarrow} \, \SE.$$

    We have shown already $m^*\SK_{\mu_\infty} \simeq \pi_1^*\SK_{\mu_\infty} \circ \pi_2^*\SK_{\mu_\infty}$ and $\SK_{\mu_\infty} \circ a^*\SK_{\mu_\infty} = 1_{\Delta \times G}$. By induction, the same argument implies that any commutative diagram in the ($\infty$-)group induces isomorphisms of microsheaves, and in particular, 
    \begin{gather*}
    (\id^{i-1} \times m \times \id^j)^*(\pi_1^*\SK_{\mu_\infty} \circ \dots \pi_{i+j}^*\SK_{\mu_\infty}) \simeq \pi_1^*\SK_{\mu_\infty} \circ \dots (\pi_{i}^*\SK_{\mu_\infty} \circ \pi_{i+1}^*\SK_{\mu_\infty}) \dots \circ \pi_{i+j+1}^*\SK_{\mu_\infty}, \\
    (\id^{i-1} \times a \times \id^j)^*(\pi_1^*\SK_{\mu_\infty} \circ \dots \circ \pi_{i+j}^*\SK_{\mu_\infty}) \simeq \pi_1^*\SK_{\mu_\infty} \circ \dots \widehat{\pi_{i}^*\SK_{\mu_\infty}} \dots \circ \pi_{i+j}^*\SK_{\mu_\infty}.
    \end{gather*}
    Hence we have a monoidal functor $\Loc(G) \to \Fun^\mathrm{L}(\msh_{\mathfrak{c}_W}(\mathfrak{c}_W), \msh_{\mathfrak{c}_W}(\mathfrak{c}_W))$ such that the functor $\Loc(G)^{\otimes j} \to \Fun^\mathrm{L}(\msh_{\mathfrak{c}_W}(\mathfrak{c}_W), \msh_{\mathfrak{c}_W}(\mathfrak{c}_W))^{\otimes j}$ in the simplicial diagram is defined by $\pi_1^*\SK_{\mu_\infty} \otimes \dots \otimes \pi_j^*\SK_{\mu_\infty}$. Therefore, we can define the $G$-action through the simplicial diagram
    $$\cdots 
    \,\substack{\longrightarrow \\[-0.8em] \longrightarrow \\[-0.8em] \longrightarrow \\[-0.8em] \longrightarrow \\[-0.8em] \longrightarrow}\, \msh_{\mathfrak{c}_{W}}(\mathfrak{c}_{W}) \otimes \Loc(G)^{\otimes 3} 
    \,\substack{\longrightarrow \\[-0.8em] \longrightarrow \\[-0.8em] \longrightarrow \\[-0.8em] \longrightarrow}\, \msh_{\mathfrak{c}_{W}}(\mathfrak{c}_{W}) \otimes \Loc(G)^{\otimes 2} \,\substack{\longrightarrow \\[-0.8em] \longrightarrow \\[-0.8em] \longrightarrow}\, \msh_{\mathfrak{c}_{W}}(\mathfrak{c}_{W}) \otimes \Loc(G) \,\substack{\longrightarrow \\[-0.8em] \longrightarrow}\, \msh_{\mathfrak{c}_W}(\mathfrak{c}_W)$$
    where the action is induced by the monoidal functor $\Loc(G) \to \Fun^\mathrm{L}(\msh_{\mathfrak{c}_W}(\mathfrak{c}_W), \msh_{\mathfrak{c}_W}(\mathfrak{c}_W))$. More precisely, the face morphisms are given by the action 
    $\SF \mapsto \SK_\mu \circ \SF$ for $\SF \in \msh_{\mathfrak{c}_W}(\mathfrak{c}_W)$ and group multiplication $\SL \mapsto m^*\SL$ for $\SL \in \Loc(G)$, and the degeneracy map is given by restrictions along the unit. This completes the construction.
\end{proof}

    With microsheaves replaced by Fukaya categories, such a construction was anticipated by Teleman \cite[Conjecture 2.9]{Teleman} (and played a role in his related conjectures \cite[Conjecture 4.2]{Teleman}), but has yet to appear due to difficulties with composition of correspondences in that setting.  For actions on compact symplectic $X$, a morphism $C_*(\Omega G) = \WFuk(T^*G) \to \Fuk(X^- \times X)$
    was constructed via Lagrangian correspondence by Evans and Lekili \cite{LekiliEvans}, but the coherences needed to identify this as a topological $G$-action have not appeared.  Meanwhile, Oh and Tanaka have given a rather different (not using Lagrangian correspondences) construction of topological $G$-actions on Liouville manifolds \cite{Oh-Tanaka1}; we sketch in Appendix \ref{comparison of G-actions} why our action agrees with theirs.

    As also noted by Teleman, one important consequence of the existence of such a topological $G$ action is that, by taking endomorphisms of the identity, one obtains a morphism 
    $$C_*\Omega G \longrightarrow  \operatorname{HC}^{*}(\msh_{\mathfrak{c}_{W}}(\mathfrak{c}_W)),$$
    which is an $E_2$-algebra homomorphism for abstract reasons, e.g. by 
    Tamarkin's proof of the Deligne conjecture via Dunn additivity \cite{TamarkinDG} (or the $\infty$-categorical version \cite{Lurie-FMP}). 
    Here, $\operatorname{HC}^*$ is the Hochschild cochains of a stable category.

    
    

    Finally, we note that the construction allows us to define the equivariant microsheaf category of Weinstein manifolds with respect to a Hamiltonian $G$-action that restricts to a contact $G$-action on the contact boundary.

\begin{definition}
    Let $W$ be Weinstein sector with Maslov data.
    Let $G$ be a Lie group with an exact and eventually conical Hamiltonian $G$-action on $W$ that respects Maslov data. Then we define the $G$-equivariant microsheaf category to be the homotopy quotient $\msh_{\mathfrak{c}_{W}}(\mathfrak{c}_W)_{hG}$, defined as the geometric realization
    $$\operatorname{colim}\Big(\cdots  \,\substack{\longrightarrow \\[-0.8em] \longrightarrow \\[-0.8em] \longrightarrow}\, \msh_{\mathfrak{c}_{W \times T^*G^2}}(\mathfrak{c}_{W \times T^*G^2}) \,\substack{\longrightarrow \\[-0.8em] \longrightarrow}\, \msh_{\mathfrak{c}_{W \times T^*G}}(\mathfrak{c}_{W \times T^*G}) \rightarrow \msh_{\mathfrak{c}_W}(\mathfrak{c}_W)\Big).$$
\end{definition}

    


\section{Sufficiently Legendrian subsets}

Recall that a sheaf is said to be  cohomologically constructible when the filtered colimits computing the stalks/costalks can be taken over cofinal sequences which are eventually constant.  This condition ensures various commutativities which would not ordinarily be satisfied for stalks.
    
It is classical that constructibility with respect to a Whitney stratification ensures  cohomological constructibility; in \cite{KS} it was shown that said Whitney constructibility can be ensured by requiring subanalytic Lagrangian microsupport. However, for purposes of symplectic topology, it is inconvenient to impose subanalyticity hypotheses.  In \cite{Nadler-Shende}, a weaker condition was introduced, motivated by the non-characteristic deformation lemma of Kashiwara and Schapira, which we recall:

\begin{lemma}[Non-characteristic deformation {\cite[Proposition 2.7.2]{KS}}]\label{lem: non-char}
    Let $\SF \in \Sh(M)$ and $\{U_t\}_{t \in \bR}$ be a family 
    of open subsets. Suppose
    \begin{enumerate}
        \item for any $t \in \bR$, $U_t = \bigcup_{s<t}U_s$;
        \item for any $t \in \bR$, $\ol{U}_t \cap \mathrm{supp}(F)$ is compact;
        \item for any $t \in \bR$, $\dot N^*_{out}U_t \cap \dot\SS(\SF) = \varnothing$.
    \end{enumerate}
    Then for any $t\in \bR$, the restriction map induces an isomorphism
    $$\Gamma\big(\bigcup\nolimits_{t\in \bR}U_t, \SF\big) \xrightarrow{\sim} \Gamma(U_t, \SF).$$

    Alternatively, let $\{Z_t\}_{t \in \bR}$ be a family of closed subsets. Suppose
    \begin{enumerate}
        \item for any $t \in \bR$, $Z_t = \bigcap_{s<t}Z_s$;
        \item for any $t \in \bR$, $Z_t \cap \mathrm{supp}(F)$ is compact;
        \item for any $t \in \bR$, $\dot N^*_{in}Z_t \cap \dot\SS(\SF) = \varnothing$.
    \end{enumerate}
    Then for any $t\in \bR$, the restriction map induces an isomorphism
    $$\Gamma_{\bigcap\nolimits_{t\in \bR} Z_t}\big(M, \SF\big) \xrightarrow{\sim} \Gamma_{Z_t}(M, \SF).$$
\end{lemma}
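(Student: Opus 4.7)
The plan is to prove the open statement; the closed version follows either by Verdier duality or by an identical argument applied to the complements $M\smallsetminus Z_t$ (noting that $N^*_{in}Z_t$ corresponds to $N^*_{out}(M\smallsetminus Z_t)$ up to sign). Write $V=\bigcup_t U_t$. The sheaf axiom for directed unions of opens gives $\Gamma(V,\SF)\simeq \lim_t \Gamma(U_t,\SF)$, so it suffices to show that every transition map $\rho_{s,t}\colon\Gamma(U_t,\SF)\to \Gamma(U_s,\SF)$ with $s<t$ is an equivalence; its fiber is $\Gamma_{U_t\smallsetminus U_s}(U_t,\SF)$, supported by (2) on a set meeting $\mathrm{supp}(\SF)$ compactly.

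The key microlocal input is pointwise propagation: if $\varphi$ is a smooth function near $x_0$ with $\varphi(x_0)=0$ and $d\varphi(x_0)\notin \dot\SS(\SF)$, then on some neighborhood $W$ of $x_0$ and for small $\eta>0$ the restriction $\Gamma(W\cap\{\varphi<\eta\},\SF)\to \Gamma(W\cap\{\varphi<0\},\SF)$ is an equivalence. This is the standard stalkwise content of microsupport; for compactly generated coefficients it is established by Robalo--Schapira. Hypothesis (3) supplies exactly the conormal data needed at each $x_0\in \partial U_r\cap\mathrm{supp}(\SF)$, applied with a locally chosen smooth defining function whose outward differential at $x_0$ lies in $N^*_{out}U_r$.

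To globalize at a fixed level $r$, use condition (2) to cover the compact set $\partial U_r\cap\mathrm{supp}(\SF)$ by finitely many propagation neighborhoods and glue by Mayer--Vietoris, producing a uniform $\eta(r)>0$ such that $\rho_{r-\eta(r),r}$ is an equivalence. Then for fixed $t$ set $J:=\{r\le t:\rho_{r,t}\text{ is an equivalence}\}$. Clearly $t\in J$; the previous uniform propagation shows $J$ is open from below in $(-\infty,t]$, while condition (1) together with $\Gamma(\bigcup_{s<r}U_s,\SF)\simeq \lim_{s<r}\Gamma(U_s,\SF)$ shows $J$ is closed under suprema. A connectedness argument then forces $J=(-\infty,t]$, yielding the desired conclusion.

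The main obstacle is the pointwise propagation itself. With subanalyticity or Whitney-stratification hypotheses this is handled by the classical Kashiwara--Schapira apparatus; without them, and for coefficients in an arbitrary compactly generated stable $\infty$-category $\SC$, the paper relies on Robalo--Schapira's extension, and this is precisely the reason the introduction insists on compact generation of $\SC$. A mild secondary technicality is that $\partial U_r$ need not be smooth, which is absorbed by the local choice of defining function just above; once pointwise propagation and this bit of pointwise bookkeeping are in place, the globalization via compactness plus the open/closed dichotomy is routine.
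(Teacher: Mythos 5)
The paper does not prove this lemma: it is imported verbatim from Kashiwara--Schapira (\cite[Proposition 2.7.2]{KS}), with the extension to general compactly generated stable coefficient categories supplied by \cite{Robalo-Schapira}, as noted in the paper's conventions and the footnote in the introduction. So there is no in-paper argument to compare against; your text is playing the role of a reconstructed proof of a cited classical fact.

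Your outline captures the right ingredients (pointwise propagation from the microsupport definition, compactness from hypothesis~(2) to globalize at a fixed level, limit behavior from hypothesis~(1), and a downward induction). However, the connectedness step as written has a gap. The two properties you isolate for $J=\{r\le t:\rho_{r,t}\text{ is an equivalence}\}$ --- ``open from below'' and ``closed under suprema of increasing sequences drawn from $J$'' --- together with $t\in J$ do \emph{not} force $J=(-\infty,t]$. For instance $J=(-\infty,0]\cup(\tfrac12,t]$ satisfies both: every $r\in J$ has a left half--neighborhood inside $J$, and every increasing sequence \emph{in} $J$ converging to a point has that point in $J$, because the ``missing'' interval $(0,\tfrac12]$ contains no points of $J$ to converge through. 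What is actually needed is the stronger downward-closure statement: one works with $K=\{s\le t:\rho_{r,r'}\text{ is an equivalence for all }s\le r\le r'\le t\}$, which is an up-set in $(-\infty,t]$, proves it is open at its lower end via propagation plus compactness (condition~(3) now has to be applied uniformly along a whole subinterval, not just at $\partial U_r$), and then closes it up using condition~(1). Relatedly, note the direction of propagation: $N^*_{\mathrm{out}}U_s\cap\dot{\operatorname{SS}}(\SF)=\varnothing$ gives that sections extend \emph{outward} across $\partial U_s$ into slightly larger $U_{s+\delta}$, so the local step establishes $\rho_{s,s+\delta}$ is an equivalence, and the induction builds down from $t$ by composing these, not by propagating at a single hypersurface $\partial U_r$. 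These are the points where the classical proof has to be more careful than your sketch suggests.
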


\subsection{Contact isotopies}
    One important way to obtain the family of subsets to apply non-characteristic propagation is by considering contact isotopies defined by positive (non-negative) Hamiltonian functions, known as {\em positive (non-negative) contact isotopies}.

    Given a contact isotopy on $S^*M$, Guillermou--Kashiwara--Schapira constructed sheaves in the product manifold $M \times M$ whose microsupport is the graph of the contactomorphism, which is known as the `sheaf quantization for contact isotopies':

\begin{theorem}[{\cite[Proposition 3.12]{Guillermou-Kashiwara-Schapira}}]\label{thm: GKS sheaf}
    Let $\varphi_H: S^*M \times I \to S^*M$ be a contact isotopy defined by the Hamiltonian $H: S^*M \times I \to \mathbb{R}$. Let $\Lambda$ be a closed subset and $\Lambda_t = \varphi_H^t(\Lambda)$. Then there is a unique sheaf $\SK_\varphi \in \Sh(M \times M \times I)$ such that the restrictions $\SK_{\varphi}^t = i_t^*\SK_\varphi$ induces natural equivalences
    $$\SK_\varphi^t \circ -: \Sh_\Lambda(M) \to \Sh_{\Lambda_t}(M).$$
\end{theorem}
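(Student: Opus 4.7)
The plan is to follow the classical construction of Guillermou--Kashiwara--Schapira. First, homogenize the contact Hamiltonian $H$ on $S^*M \times I$ to a $\bR_{>0}$-homogeneous function $\widetilde H$ of degree one on $\dot T^*M \times I$, and let $\widetilde\varphi^t$ denote its Hamiltonian flow, which lifts $\varphi_H^t$. The antipodal-twisted graph of $\widetilde\varphi$, enriched by its time-energy component in $T^*I$, forms a conic Lagrangian
$$\Lambda_\varphi \subset \dot T^*(M \times M \times I)$$
whose restriction at $t = 0$ is $\dot T^*_\Delta(M \times M) \times 0_I$ (where $\Delta \subset M \times M$ is the diagonal), and which projects submersively to $I$.

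Next I would construct $\SK_\varphi$ as the unique sheaf on $M \times M \times I$ satisfying $\dot\ss(\SK_\varphi) \subset \Lambda_\varphi$ and $i_0^*\SK_\varphi$ isomorphic to the constant sheaf on $\Delta$. Both existence and uniqueness of such a sheaf follow from the non-characteristic deformation lemma (Lemma \ref{lem: non-char}) applied to the family of time sublevel sets $\{M \times M \times (-\infty, t]\}$: because $\Lambda_\varphi$ projects submersively onto $I$, the inward conormals to these sublevel sets avoid $\Lambda_\varphi$, so sections propagate isomorphically in both directions from the initial condition at $t = 0$, canonically extending the constant sheaf on the diagonal across the whole interval.

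For the equivalence claim, by the Kashiwara--Schapira microsupport estimate for convolution of kernels, $\SK_\varphi^t \circ -$ carries $\Sh_\Lambda(M)$ into $\Sh_{\Lambda_t}(M)$, since the set-theoretic composition $\dot\ss(\SK_\varphi^t) \circ \Lambda$ equals $\Lambda_t$ --- here one uses that $\widetilde\varphi^t$ is a diffeomorphism so that the composition is transverse and proper. An inverse functor comes from convolution with $\SK_{\varphi^{-1}}$, built from the reverse isotopy by the same recipe; the kernel composition $\SK_\varphi \circ \SK_{\varphi^{-1}}$ has microsupport in $\dot T^*_\Delta(M \times M) \times 0_I$ (again by set-theoretic composition) and restricts at $t = 0$ to the constant sheaf on $\Delta$, so the uniqueness half of the non-characteristic deformation argument identifies it with the constant sheaf on $\Delta \times I$ globally. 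The uniqueness clause in the statement reduces to showing that any kernel inducing such equivalences must have $\dot\ss \subset \Lambda_\varphi$ and the prescribed initial condition, which then forces an isomorphism with the constructed $\SK_\varphi$.

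The main technical obstacle is verifying the non-characteristic hypothesis uniformly in $t$ and ensuring that the geometric composition of microsupports remains sharp and proper throughout the isotopy; the latter is guaranteed by homogeneity and the diffeomorphism property of $\widetilde\varphi^t$, while the former follows from the submersive projection $\Lambda_\varphi \to I$. A secondary subtlety is the initial normalization at $t = 0$: one must check that the canonical identification $\SK_\varphi^0 \cong k_\Delta$ is indeed compatible with the convolution action inducing the identity on $\Sh_\Lambda(M)$, but this is formal from Yoneda once the microsupport is controlled.
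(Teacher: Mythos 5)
Your sketch is essentially the Guillermou--Kashiwara--Schapira construction, which is precisely what the paper is citing (it does not give an independent proof). The homogenization of $H$, the conic Lagrangian $\Lambda_\varphi$ as the antipodal-twisted graph with time-energy component, existence and uniqueness of the kernel via the non-characteristic deformation lemma in the $I$-direction from the initial condition $\SK_\varphi^0 \simeq 1_\Delta$, and invertibility via the reverse isotopy are all the same route GKS takes.

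One step in your sketch overstates what can be deduced, though. You claim the uniqueness in the statement ``reduces to showing that any kernel inducing such equivalences must have $\dot\ss \subset \Lambda_\varphi$ and the prescribed initial condition.'' That is not true: a kernel whose time-$t$ restrictions induce equivalences $\Sh_\Lambda(M) \to \Sh_{\Lambda_t}(M)$ need not have microsupport confined to $\Lambda_\varphi$, nor need its $t=0$ fiber be $1_\Delta$ (for instance, composing your $\SK_\varphi$ with a shift $[n]$ or, more generally, with a kernel whose time-$t$ fibers are invertible endofunctors of $\Sh_{\Lambda_t}(M)$ still yields ``equivalences''). In GKS, uniqueness is established \emph{under} the two constraints $\dot\ss(\SK_\varphi) \subset \Lambda_\varphi$ and $i_0^*\SK_\varphi \simeq 1_\Delta$; the equivalence of $\SK_\varphi^t \circ -$ is then derived as a further consequence, not used as a characterizing property. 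The paper's stated formulation is itself a casual paraphrase of GKS's result, so the intended reading is the GKS characterization, and your proof should pin uniqueness to those two conditions rather than attempting to extract them from the equivalence property.
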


    Positive contact isotopies induce natural continuation maps:

\begin{proposition}[{\cite[Proposition 4.8]{Guillermou-Kashiwara-Schapira} \cite[Proposition 3.2]{Kuo-wrapped-sheaves}}]\label{prop: positive continue shv}
    Let $\Lambda \subset S^*M \times T^*_{\tau \leq 0}I$ be any closed subset. Then for any $\SF \in \Sh_\Lambda(M \times I)$, there is a natural continuation morphism 
    $i_0^*\SF \to i_t^*\SF$, such that for any $U \subset M$, its induced map on sections is 
    $$\Gamma(U, i_0^*\SF) \xleftarrow{\sim} \Gamma(U \times I, \SF) \rightarrow \Gamma(U, i_1^*\SF).$$
    Hence, for a positive contact isotopy $\varphi$, there is a natural continuation map
    $\SF \to \SF_t := \SK_{\varphi}^t \circ \SF$.
\end{proposition}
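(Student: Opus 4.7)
The plan is to realize the continuation morphism at the sheaf level as a roof
\[ i_0^*\SF \xleftarrow{\;\sim\;} \pi_*\SF \longrightarrow i_t^*\SF, \]
where $\pi\colon M\times I \to M$ is the projection. The right arrow is $\pi_*$ applied to the adjunction unit $\SF \to i_{t,*}i_t^*\SF$, using $\pi\circ i_t=\id_M$ to identify its target as $i_t^*\SF$. The content of the proposition is the claim that the left arrow is an isomorphism, and this is where the microsupport hypothesis enters.

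To check $\pi_*\SF \xrightarrow{\sim} i_0^*\SF$, I would verify that the induced map on sections is an isomorphism for each open $U\subset M$. Working inside the open subset $U\times I \subset M\times I$, consider the family of opens
\[ U_s := U \times \bigl([0,s)\cap I\bigr),\qquad s\in(0,\infty), \]
(taking $I=[0,1]$ for concreteness). This family is monotone and left-continuous, and $\bigcup_s U_s = U\times I$. The boundary of $U_s$ \emph{inside $U\times I$}---the only boundary relevant for Lemma \ref{lem: non-char}---is $U\times\{s\}$ for $s\in(0,1)$ and empty for $s\geq 1$. The outward conormal at the moving boundary lies in the codirection $+dt$, i.e.\ in $\{\tau>0\}$, and hence is disjoint from $\dot{\ss}(\SF)\subset S^*M\times T^*_{\tau\leq 0}I$. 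The non-characteristic deformation lemma then yields
\[ \Gamma(U\times I,\SF)\xrightarrow{\;\sim\;}\Gamma(U\times [0,s),\SF) \]
for every $s>0$. Since the left-sided stalk $\Gamma(U, i_0^*\SF)$ is the colimit of the right-hand sections as $s\to 0^+$, and each restriction map in the colimit system is an isomorphism, we conclude $\Gamma(U\times I,\SF)\xrightarrow{\sim}\Gamma(U,i_0^*\SF)$, naturally in $U$.

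Combining with the right arrow produces the continuation morphism $i_0^*\SF \to i_t^*\SF$, and unwinding the construction at the level of sections over $U$ reproduces exactly the cospan in the statement. For the final assertion, a positive contact isotopy has Hamiltonian $H>0$; the Guillermou--Kashiwara--Schapira kernel $\SK_\varphi$ is then computed (cf.\ Theorem \ref{thm: GKS sheaf} and its proof) to have microsupport in the locus $\tau\leq 0$ of the $T^*I$-factor. The Kashiwara--Schapira estimate for the microsupport of a composition gives $\dot{\ss}(\SK_\varphi \circ \SF) \subset S^*M \times T^*_{\tau\leq 0}I$, so the first part applies and produces the natural map $\SF = i_0^*(\SK_\varphi\circ\SF) \to i_t^*(\SK_\varphi\circ\SF) = \SF_t$.

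The step I anticipate being most delicate is ensuring that the lateral boundary of $U\times I$ inside $M\times I$ plays no role in the non-characteristic argument; this is handled, as above, by applying Lemma \ref{lem: non-char} inside the open manifold $U\times I$ itself, so that the only boundary visible to the non-characteristic hypothesis is the moving hypersurface $U\times\{s\}$, whose outward conormal is in the region $\tau>0$ avoided by the microsupport.
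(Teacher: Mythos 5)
The paper does not prove this proposition; it is quoted with a citation to \cite[Prop. 4.8]{Guillermou-Kashiwara-Schapira} and \cite[Prop. 3.2]{Kuo-wrapped-sheaves}. Your proposal reconstructs the standard argument from those references, and the overall strategy is correct: define the roof $i_0^*\SF \leftarrow \pi_*\SF \to i_t^*\SF$ by adjunction units, then prove the left arrow is an isomorphism via the non-characteristic deformation lemma applied to the family $U \times [0,s)$ inside the open submanifold $U \times I$, using that the moving conormal $\{\tau > 0\}$ misses $\dot\ss(\SF) \subset S^*M \times T^*_{\tau\le 0}I$. The point you flag as delicate (discarding the lateral boundary by working inside $U \times I$) is indeed the right maneuver.

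One small imprecision worth tightening: the line ``$\Gamma(U, i_0^*\SF)$ is the colimit of the right-hand sections as $s\to 0^+$'' is not true for a general open $U$, because $\{U\times[0,s)\}_{s>0}$ is \emph{not} cofinal in the poset of open neighbourhoods of $U\times\{0\}$ in $U\times I$ when $U$ is non-compact (an open of the form $\{t < f(x)\}$ with $f > 0$ but $\inf f = 0$ is a neighbourhood not containing any $U\times[0,s)$). The cleaner route is to check that the morphism of sheaves $\pi_*\SF \to i_0^*\SF$ is an isomorphism on \emph{stalks}: at $x\in M$, both colimits are taken over small balls $V\ni x$, for which $\{V\times[0,s)\}$ \emph{is} cofinal among neighbourhoods of $(x,0)$, and the non-characteristic deformation lemma (applied over each $V\times I$) then identifies $\Gamma(V\times I,\SF) \xrightarrow{\sim} \Gamma(V\times[0,s),\SF)$. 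Once the sheaf map is an isomorphism, the formula $\Gamma(U, i_0^*\SF) \cong \Gamma(U\times I,\SF)$ follows for every $U$ by taking $\Gamma(U,-)$. This is exactly what the references do. Your treatment of the ``Hence'' clause --- appealing to the microsupport estimate of Theorem \ref{thm: GKS sheaf} for $\SK_\varphi$ and the composition estimate to place $\ss(\SK_\varphi\circ\SF)$ in $\{\tau\le 0\}$ --- is standard and correct (modulo the sign convention for $H$, which the references fix consistently).
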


\begin{proposition}[{\cite[Corollary 3.4 \& 3.8]{Kuo-wrapped-sheaves}}]\label{prop: perturb-lim}
    Consider a non-negative contact isotopy. Then for any $\SF, \SG \in \Sh(M)$, the natural map is an isomorphism
    $$\Hom(\SF, \SG) \xrightarrow{\sim} \lim_{t \to 0^+}\Hom(\SF, \SG_t).$$
\end{proposition}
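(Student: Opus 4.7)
The plan is to realize both sides of the desired isomorphism as global sections of a single sheaf on $M \times I$ associated to the Guillermou--Kashiwara--Schapira quantization $\SK_\varphi$ of Theorem \ref{thm: GKS sheaf}, and then compare them using the non-characteristic deformation lemma. Form the family sheaf $\SG_I := \SK_\varphi \circ \SG \in \Sh(M \times I)$ and the internal Hom sheaf $\mathcal{H}_I := \sHom(\pi_M^*\SF, \SG_I) \in \Sh(M \times I)$, where $\pi_M : M \times I \to M$. The non-negativity of the contact Hamiltonian guarantees $\ss(\SG_I) \subseteq T^*M \times T^*_{\tau \leq 0}I$, and $\mathcal{H}_I$ inherits this microsupport bound.

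The left-hand side $\Hom(\SF, \SG)$ is identified with $\Gamma(M \times I, \mathcal{H}_I)$ by tensor-Hom adjunction and Prop \ref{prop: positive continue shv} applied to $\SG_I|_{U \times I}$ for each open $U \subseteq M$:
\[
\Gamma(M \times I, \mathcal{H}_I) = \Hom(\pi_M^*\SF, \SG_I) = \Hom(\SF, R\pi_{M*}\SG_I) = \Hom(\SF, i_0^*\SG_I) = \Hom(\SF, \SG).
\]
For the right-hand side, the translation-invariant version of Prop \ref{prop: positive continue shv} (with starting point $t$ in place of $0$), together with a comparison of sections over $[t, T)$ and $(t, T)$, gives $\Gamma(M \times (t, T), \mathcal{H}_I) \simeq \Hom(\SF, \SG_t)$ for $t > 0$. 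Writing $(0, T) = \bigcup_{t > 0}(t, T)$ as an increasing union of open subintervals identifies $\lim_{t \to 0^+}\Hom(\SF, \SG_t) \simeq \Gamma(M \times (0, T), \mathcal{H}_I)$. Under these identifications, the natural map of the proposition corresponds to the restriction $\Gamma(M \times I, \mathcal{H}_I) \to \Gamma(M \times (0, T), \mathcal{H}_I)$.

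The remaining, and main technical, step is to show this restriction is an equivalence, equivalently that the costalk $\Gamma_{M \times \{0\}}(M \times I, \mathcal{H}_I)$ vanishes. The microsupport bound $\tau \leq 0$ alone does not force this (compare the models $k_{[0,\infty)}$ versus $k_{(0,\infty)}$); the additional input is the specific behavior of the GKS sheaf at $t = 0$. One extends $\SK_\varphi$ across $t = 0$ to a slightly larger interval $(-\delta, T)$ by the identity kernel $\SK_\Delta$ on $(-\delta, 0]$, which preserves the $\tau \leq 0$ microsupport bound and makes the extended $\SG_I$ locally constant in the $t$-direction on $(-\delta, 0]$ with value $\SG$. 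A final application of the non-characteristic deformation lemma (Lemma \ref{lem: non-char}) --- with the increasing family of opens $M \times (-\delta, s)$ for $s \in (0, T)$, whose outward conormal at the moving right endpoint lies in the $\tau > 0$ direction and hence avoids $\ss(\mathcal{H}_I)$ --- identifies $\Gamma(M \times (-\delta, T), \mathcal{H}_I)$ with $\Gamma(M \times (-\delta, s), \mathcal{H}_I)$ for all such $s$. Combined with the constancy of $\mathcal{H}_I$ on the collar $(-\delta, 0]$, where it equals $\pi_M^*\sHom(\SF, \SG)$ and thus contributes only $\Hom(\SF, \SG)$ to sections, this forces the costalk at $M \times \{0\}$ to vanish and completes the proof.
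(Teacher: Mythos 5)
Your strategy of realizing both sides as sections of the GKS family sheaf $\mathcal{H}_I := \sHom(\pi_M^*\SF, \SK_\varphi \circ \SG)$ over a neighborhood of $0$ in the time line is a reasonable one, and Steps 1--3 are essentially sound. But the final step, which you correctly flag as the ``main technical step,'' is where the argument breaks down, and the breakdown is more serious than a missing detail.

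The two facts you bring to bear in Step 5 --- the coarse microsupport bound $\ss(\mathcal{H}_I) \subset T^*M \times T^*_{\tau \le 0}I$ and the fact that $\mathcal{H}_I$ is constant equal to $\pi_M^*\sHom(\SF,\SG)$ on the collar $M \times (-\delta, 0]$ --- are \emph{not} sufficient to force $\Gamma(M\times(-\delta,T),\mathcal{H}_I) \to \Gamma(M\times(0,T),\mathcal{H}_I)$ to be an equivalence. There is a one-dimensional model that satisfies both conditions and fails the conclusion: take $F = 1_{(-\delta,0]} \in \Sh((-\delta,T))$. One checks directly that $\ss(F) \subset \{\tau \le 0\}$ (the only non-zero-section microsupport is at $t = 0$, where only the codirection $\tau < 0$ appears) and that $F$ is constant on $(-\delta, 0]$, yet $\Gamma((-\delta,T), F) \simeq 1_\SC$ while $\Gamma((0,T), F) = 0$. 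So whatever makes the proposition true, it is additional structure of $\mathcal{H}_I$ beyond these two facts; your argument as written does not use that extra structure.

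There is a second, independent confusion: you assert that the constancy on the collar ``forces the costalk at $M\times\{0\}$ to vanish,'' but even if $\Gamma_{M\times\{0\}}(\mathcal{H}_I)$ were zero, that would only tell you that $\Gamma((-\delta,T))$ decomposes as $\Gamma((-\delta,0)) \times \Gamma((0,T))$ --- and since $\Gamma((-\delta,0)) = \Hom(\SF,\SG)$ is typically nonzero, this does not give the restriction isomorphism. What you actually need to kill is $\Gamma_{M\times(-\delta,0]}(M\times(-\delta,T),\mathcal{H}_I)$.

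What is missing is a finer statement at $t = 0$: one needs to use that the GKS kernel, extended by the identity kernel for $t \le 0$, is non-characteristic for the wall $\{t=0\}$, so that propagating the closed-set version of Lemma~\ref{lem: non-char} along a decreasing family of closed slabs $M \times (-\delta, s]$ yields the vanishing of $\Gamma_{M\times(-\delta,0]}(\mathcal{H}_I)$. That non-characteristicity itself is not a formal consequence of $H_t \ge 0$: it requires controlling the closure of the conic Lagrangian movie $\Lambda_{\tilde\varphi}$ as $t\to 0^+$, and as stated your argument has no step where this is established. (The same issue appears, less visibly, in your identification of $\Gamma(M\times(t,T),\mathcal{H}_I)$ with $\Hom(\SF,\SG_t)$ in Step 4; there it is salvageable by interleaving $[t,T)$-sections with $(t,T)$-sections inside the cofiltered limit, but at $t=0$ there is no interleaving available.) You should consult the proofs of Corollaries 3.4 and 3.8 in Kuo's paper for the precise mechanism, which does not reduce to the two facts you invoke.
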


\subsection{Positive displaceable from fibers}

\begin{definition}[{\cite[Definition 2.2]{Nadler-Shende}}]
    A closed subset $\Lambda \subset S^*M$ is positively displaceable from Legendrians (pdff) 
    if $\Lambda$ is contact isotopic to $\Lambda'$ such that for any $x\in M$, there is a positive Legendrian isotopy $(S^*_xM)_t$ for $t \in (-\delta, \delta)$, such that $(S^*_xM)_t$ is disjoint from $\Lambda'$ when $t \neq 0$. 
\end{definition}

    Whitney stratified Legendrians are always positively displaceable from fibers, and this is how the above condition is usually checked:

\begin{lemma}\label{lem: pdff whitney}
    Let $\Lambda \subset S^*M$ be a Whitney stratified isotropic subset. Then $\Lambda$ is positively displaceable from fibers.
\end{lemma}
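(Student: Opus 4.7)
The plan is to realize the required positive Legendrian isotopy of each fiber as the Reeb (cogeodesic) flow $\phi_R^t$ on $S^*M$ for an arbitrary Riemannian metric on $M$; this is a positive contact isotopy, being generated by the constant Hamiltonian $H \equiv 1$. After replacing $\Lambda$ by a small generic contact perturbation $\Lambda'$, I will show that $(S^*_xM)_t := \phi_R^t(S^*_xM)$ satisfies $(S^*_xM)_t \cap \Lambda' = \varnothing$ for every $x \in M$ and all $0 < |t| < \delta_x$ for some $\delta_x > 0$.

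Since $\Lambda$ is Whitney stratified isotropic, each smooth stratum $\Lambda_\alpha$ has $\dim \Lambda_\alpha \leq n-1$. By the dimension count $\dim \Lambda_\alpha + \dim S^*_xM \leq 2(n-1) < 2n-1 = \dim S^*M$, a generic $C^0$-small contact perturbation of $\Lambda$ yields a $\Lambda'$ whose every stratum $\Lambda'_\alpha$ meets every cotangent fiber $S^*_xM$ either trivially or in a discrete set of transverse intersection points.

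The key geometric input is that the Reeb flow is instantly transverse to every stratum at each intersection point. Choose Darboux coordinates $(y,z,\xi)$ near an intersection point $p = (x, \xi_0) \in S^*_xM \cap \Lambda'_\alpha$ with contact form $dz - \xi\, dy$ and Reeb field $\partial_z$; the fiber $S^*_xM$ lies locally in $\{y = 0,\, z = 0\}$. Since $\Lambda'_\alpha$ is isotropic, $(dz - \xi\, dy)|_{\Lambda'_\alpha} = 0$; restricted to the slice $\Lambda'_\alpha \cap \{y=0\}$, the condition $dy = 0$ forces $dz = 0$, so $z$ is constant on the component of $\Lambda'_\alpha \cap \{y=0\}$ through $p$, namely equal to $0$. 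Hence the Reeb-flowed fiber $\phi_R^t(S^*_xM) \subset \{y = 0,\, z = t\}$ is disjoint from $\Lambda'_\alpha$ near $p$ for $0 < |t| < \delta_p$.

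Whitney's condition (b) extends this Reeb-transversality to any higher-dimensional stratum $\Lambda'_\beta$ whose closure contains $p$, since the limits of tangent spaces $T_q\Lambda'_\beta$ as $q \to p$ also lie in the contact distribution, so the above $z$-shift argument applies uniformly in a neighborhood of $p$. Combining this with compactness of $S^*_xM$ and the finiteness of its stratified intersection with $\Lambda'$ produces the required uniform $\delta_x > 0$. The main technical obstacle is exactly this uniformity across the Whitney stratification; Whitney's condition (b), combined with the isotropy hypothesis, is what converts pointwise Reeb-transversality at smooth strata into neighborhood-wide control.
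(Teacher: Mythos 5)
Your proposal takes a genuinely different route from the paper's: you attempt to work entirely inside $S^*M$, combining local Darboux normal forms for the fiber, the isotropy condition on strata, and Whitney condition (b) for $\Lambda'$ itself. The paper instead passes to the front $\pi(\Lambda') \subset M$, arranges (via the Thom--Boardman decomposition of the jet of $\pi|_\Lambda$ and the Goresky--Trotman stratified transversality theorem) that after perturbation this front is a Whitney stratified immersed subset of $M$, and then applies condition (b) on the \emph{front}. This distinction is not cosmetic: your argument has two genuine gaps.

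First, the claim that a generic $C^0$-small contact perturbation makes $\Lambda'_\alpha \cap S^*_xM$ discrete and transverse for \emph{every} $x$ is false. The cotangent fibers form an $n$-parameter family, and one cannot make a single $\Lambda'$ simultaneously transverse to all of them; indeed, by your own dimension count, transversality with a fixed fiber would force empty intersection, and what actually happens is that the structure of the fibers of $\pi: \Lambda' \to M$ is governed by the singularity theory of that projection. Controlling those fibers is precisely what the Thom--Boardman machinery is for, and it is not a byproduct of an unstructured generic perturbation.

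Second, and more fundamentally, Whitney (b) for $\Lambda'$ in $S^*M$ together with the isotropy condition does not yield what you need. Let $q_n \in \Lambda'_\beta$ with $q_n \to p \in S^*_xM$, $y(q_n) = 0$, $z(q_n) = t_n \ne 0$. Whitney (b) gives a limiting tangent $T = \lim T_{q_n}\Lambda'_\beta$ containing the limit secant $v = \lim (q_n - p)/|q_n - p|$, and closedness of the isotropy condition gives $(dz - \xi\,dy)|_T = 0$; since $dy(v) = 0$ this yields only $dz(v) = 0$, i.e.\ $t_n = o(|q_n - p|)$. That is a first-order statement: it does not exclude, say, $t_n \sim |q_n - p|^2 \ne 0$, in which case the positively-flowed fiber still meets $\Lambda'$ for arbitrarily small $t$. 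Your final paragraph attributes to Whitney (b) only the fact that limiting tangents lie in the contact distribution, which is automatic and vacuous here. The input that actually rules out such quadratic intersections is Whitney (b) applied to the \emph{front} stratification in $M$: there the conormal $\xi_n$ of the front is perpendicular to $T_{p_n}F_\beta$, while if $\xi_n$ is radial from $x$ then $\xi_n$ limits to a secant, and condition (b) places that secant inside the limiting tangent, giving the honest contradiction $v \perp v$. But the front of a Whitney stratified isotropic is not automatically Whitney stratified, which is exactly why the Thom--Boardman perturbation is needed before condition (b) can be invoked.
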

\begin{proof}
    When $\Lambda \subset S^*M$ is a Whitney stratified isotropic subset, there is a small contact perturbation to $\Lambda'$ such that $\Lambda'$ is positively displaceable from cotangent fibers. Consider singularities of the projection $\pi: \Lambda \to M$ with the Thom--Boardman decomposition in the jet bundle \cite{MatherThom}. Since the Thom--Boardman decomposition admits a Whitney stratified refinement \cite[Theorem 3]{Ryabichev,MatherThom}, we know that there is a small contact perturbation of $\Lambda$ to $\Lambda'$ that tranverse intersection with the refined Thom--Boardman stratification by the stratified Thom transversality theorem of Goresky and Trotman \cite[Part 1 Section 1.3]{Goresky-MacPherson}. Then the image of the projection $\pi: \Lambda' \to M$ admits a refinement to a Whitney stratified immersion, and thus it is positive displaceable from cotangent fibers by the Whitney condition B \cite[Part I Section 1.4]{Goresky-MacPherson}.
\end{proof}
\begin{remark}
    When $\Lambda$ is contained in the conormal bundle of a $C^1$-Whitney stratification of $M$, then $\Lambda$ is positively displaceable from cotangent fibers (without small contact perturbation) by the Whitney condition B.
\end{remark}
    
\begin{lemma}\label{lem: pdff analytic}
    Let $\Lambda \subset S^*M$ be a subanalytic isotropic subset with respect to some analytic structure on $M$. Then $\Lambda$ is positively displaceable from fibers.
\end{lemma}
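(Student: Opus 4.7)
The plan is to reduce the claim directly to Lemma \ref{lem: pdff whitney} via the classical existence of Whitney stratifications for subanalytic sets. The key input is the theorem of \L{}ojasiewicz--Hironaka (also established by Verdier and Hardt in various formulations) that any subanalytic subset of a real analytic manifold admits a subanalytic Whitney stratification into smooth subanalytic submanifolds, and moreover that one can take the stratification to refine any prescribed locally finite subanalytic partition.

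First I would apply this stratification theorem to $\Lambda \subset S^*M$ to obtain a subanalytic Whitney stratification $\Lambda = \bigsqcup_\alpha \Lambda_\alpha$ by smooth strata $\Lambda_\alpha$. Next I would verify that each open smooth stratum $\Lambda_\alpha$ is itself isotropic, which is essentially tautological given the conventions: the subanalytic isotropy of $\Lambda$ means that at every smooth point of $\Lambda$ the tangent space is isotropic with respect to the contact structure on $S^*M$, and in particular each smooth stratum $\Lambda_\alpha$ satisfies this pointwise since its tangent space is contained in the generalized tangent cone to $\Lambda$ at any such point. At that point $\Lambda$ is a Whitney stratified isotropic subset in the sense of Lemma \ref{lem: pdff whitney}, and the desired positive displaceability from fibers follows immediately by applying that lemma.

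The main potential obstacle is merely matching the conventions for ``isotropic'' between the subanalytic setting (where the condition is typically imposed only on the smooth locus) and the Whitney-stratified setting of Lemma \ref{lem: pdff whitney} (where each stratum is assumed to be smoothly isotropic). Once this identification is verified, the argument is a one-line reduction, and no new contact-perturbation argument is required beyond what already appeared in the proof of Lemma \ref{lem: pdff whitney}; the Thom--Boardman machinery there likewise applies verbatim, since Thom--Boardman strata of a subanalytic map are themselves subanalytic.
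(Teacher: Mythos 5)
Your proposal is correct and follows essentially the same route as the paper: both reduce Lemma~\ref{lem: pdff analytic} to Lemma~\ref{lem: pdff whitney} by producing a Whitney stratification. The paper cites Czapla's result that every subanalytic isotropic subset is \emph{contained in} a Whitney stratified Legendrian, and then implicitly uses that positive displaceability passes to subsets; you instead invoke the classical existence of subanalytic Whitney stratifications of $\Lambda$ itself and then appeal to Lemma~\ref{lem: pdff whitney} directly. These are effectively interchangeable, though the one step you gloss over — that every stratum of a subanalytic Whitney stratification of an isotropic $\Lambda$ is automatically isotropic — deserves a sentence: the top-dimensional strata lie in the smooth locus and are isotropic by hypothesis, and lower strata are isotropic because Whitney condition (a) forces $T_q\Lambda_\alpha$ to be contained in a limit of isotropic planes $T_p\Lambda_\beta$, and isotropy (vanishing of $\alpha$ and $d\alpha$) is a closed condition on planes. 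Note also that the paper offers a second, independent argument via the microlocal Bertini--Sard theorem \cite[Proposition 8.3.12]{KS}, which avoids Whitney stratifications altogether; your proposal doesn't touch this alternative, but it isn't needed.
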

\begin{proof}
    When $\Lambda \subset S^*M$ be a subanalytic isotropic subset, it is contained in a Whitney stratified Legendrian \cite[Theorem 4.15 \& 5.6]{Czapla}. Hence it is positively displaceable from fibers by Lemma \ref{lem: pdff whitney}. Alternatively, by microlocal Bertini--Sard theorem \cite[Proposition 8.3.12]{KS}, we can also conclude that $\Lambda$ is positively displaceable from fibers.
\end{proof}

    The positive Legendrian isotopy for the cotangent fibers determines choices of open balls near the given point:

\begin{definition}  \label{pdff ball}  
    From now on, for any pdff subset $\Lambda \subset S^*M$, we will fix a positive isotopy of $S^*_xM$ that displaces $S^*_xM$ from $\Lambda$.  Said isotopy distinguishes a class of small balls around $x$, these being the locus bounded by the projection of the time $\epsilon$ flow of $S^*_x M$. We write 
    $S_\epsilon(x)$, $B_\epsilon(x)$ and $\overline{B}_{\epsilon}({x})$ be respectively the sphere, open ball and closed ball thus determined for sufficiently small $\epsilon > 0$.
\end{definition}

\begin{lemma}\label{stalk simple}
    Let ${\SF} \in \Sh(M \times \bR_{>0})$ be a sheaf such that $\dot{\ss}({\SF})$ is pdff. Then for any $x \in M$, and $\epsilon > 0$ sufficiently small,
    $$\Gamma(\overline{B}_{\epsilon}({x}), \SF) \simeq \Gamma({B}_{\epsilon}({x}), \SF) \simeq \SF_x.$$
\end{lemma}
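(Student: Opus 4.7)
The plan is to realize both sections as instances of the non-characteristic deformation lemma (Lemma~\ref{lem: non-char}) applied to the distinguished one-parameter family of balls $\{B_t(x)\}$, $\{\overline{B}_t(x)\}$ provided by the pdff hypothesis. Recall from Definition~\ref{pdff ball} that, for $t>0$ sufficiently small, the outward conormal $\dot N^*_{out} B_t(x)$ is (up to positive scaling) the projection of the time-$t$ image of $S^*_x M$ under the chosen positive contact isotopy displacing $S^*_x M$ from $\dot\ss(\SF)$. Hence on a sufficiently small interval $t \in (0, \epsilon_0)$, the outward conormal $\dot N^*_{out} B_t(x)$ is disjoint from $\dot\ss(\SF)$ uniformly in $t$; the inward conormal of $\overline{B}_t(x)$ is carried by the same Legendrian with opposite sign, and hence also avoids $\dot\ss(\SF)$.

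For the open ball, I would apply the first form of Lemma~\ref{lem: non-char} with $U_t := B_t(x)$ for $t \in (0, \epsilon)$. The family is continuously increasing and exhausts $B_\epsilon(x)$, so $U_t = \bigcup_{s<t} U_s$; each $\overline{U_t}$ is compact; and the non-characteristic hypothesis was just verified. The lemma then produces isomorphisms $\Gamma(B_\epsilon(x), \SF) \xrightarrow{\sim} \Gamma(B_t(x), \SF)$ for all $0 < t < \epsilon < \epsilon_0$. Since $\{B_t(x)\}_{t \to 0^+}$ is cofinal among open neighborhoods of $x$, the colimit defining the stalk $\SF_x$ is a colimit over a system of equivalences and collapses to the common value, so $\Gamma(B_\epsilon(x), \SF) \simeq \SF_x$.

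For the closed ball I would then identify $\Gamma(\overline{B}_\epsilon(x), \SF)$ with the limit of sections on the cofinal fundamental system of open neighborhoods $\{B_{\epsilon+\delta}(x)\}_{\delta \to 0^+}$; by the previous paragraph each such section is (canonically) $\SF_x$ and the transition maps are equivalences, so the limit is again $\SF_x$. A more symmetric alternative is to apply the closed form of Lemma~\ref{lem: non-char} directly to the decreasing family $Z_t := \overline{B}_{\epsilon_0 - t}(x)$, whose inward conormals satisfy the required disjointness by the same pdff input.

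The main point requiring care is ensuring the non-characteristic condition holds \emph{uniformly} over an entire interval of radii rather than at isolated values. This is built into the pdff definition: the isotopy displaces $S^*_x M$ from (a contact perturbation of) $\dot\ss(\SF)$ at every nonzero time in an open window around $0$, and the distinguished balls of Definition~\ref{pdff ball} are precisely the projections of these displaced spheres, so by shrinking $\epsilon_0$ we obtain uniform disjointness on $(0, \epsilon_0)$. With that uniformity secured, both identifications fall out mechanically from the two forms of Lemma~\ref{lem: non-char}.
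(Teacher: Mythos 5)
Your proposal is correct and takes exactly the approach the paper uses: the paper's entire proof is the one-line citation of the non-characteristic deformation lemma applied to the positive flow of $S^*_x M$ furnished by Definition~\ref{pdff ball}, and your write-up simply fills in the same argument in full. The only additional caveat worth noting is that the pdff definition permits first replacing $\dot\ss(\SF)$ by a contact isotope $\Lambda'$, so one should either assume that normalization has been made or observe (as the paper implicitly does) that the resulting family of balls and the disjointness of their conormals from $\dot\ss(\SF)$ transport under the ambient contact isotopy.
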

\begin{proof}
    By noncharacteristic propagation (Lemma \ref{lem: non-char}) with respect to the flow of the cosphere over $x$ by the positive isotopy displacing it from $\dot{\ss}({\SF})$. 
\end{proof}

    The second Legendrian condition we need is the finite position assumption, which will be used in Section \ref{sec:microsheaf recollections} to identify the stalks of microsheaves:

\begin{definition}\label{def: ptfp}
    A subset $\Lambda \subset S^*M$ is in {\em finite position} if the projection of its closure to $M$ is finite to one. We say a subset of $S^*M$ is {\em perturbable to finite position} if its image under a contact isotopy is in finite position.
\end{definition} 

    Whitney stratified Legendrians and subanalytic Legendrians are always perturbable to finite positions:

\begin{lemma}\label{lem: ptfp whitney}
    Let $\Lambda \subset S^*M$ be a compact Whitney stratified isotropic subset. Then $\Lambda$ is perturbable to finite position.
\end{lemma}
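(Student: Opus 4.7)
The plan is to closely follow the argument of Lemma \ref{lem: pdff whitney}: produce the desired contact perturbation by applying the stratified Thom transversality theorem of Goresky-Trotman to a Whitney refinement of the Thom-Boardman stratification of the projection $\pi: S^*M \to M$, then deduce finite fibers from the resulting codimension estimates. Since $\Lambda$ is compact, ``finite to one'' is equivalent to the fibers $\pi^{-1}(y) \cap \Lambda$ being zero-dimensional, so this is purely a transversality question.

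First I would fix a Whitney stratification of $\Lambda$ into smooth isotropic strata $\{S_\alpha\}$ of dimensions $k_\alpha \leq n-1$, where $n = \dim M$. For each stratum the Thom-Boardman loci $\Sigma^I \subset S_\alpha$ (recording where the iterated coranks of $d(\pi|_{S_\alpha})$ attain prescribed values) admit Whitney stratified refinements by Mather-Thom and Ryabichev. By stratified Thom transversality there exists a $C^\infty$-small contact isotopy $\varphi$ of $S^*M$ such that on each perturbed stratum $S_\alpha' = \varphi(S_\alpha)$ the appropriate jet $j^r(\pi|_{S_\alpha'})$ is transverse to the refined Thom-Boardman strata. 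Contact isotopies have enough flexibility for this step: in a Darboux chart, smooth Hamiltonian functions generate a rich family of contact perturbations acting transitively on germs of isotropic submanifolds.

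From transversality, on each $S_\alpha'$ the corank-$i$ locus of $d(\pi|_{S_\alpha'})$ has codimension equal to the expected $i(n-k_\alpha+i)$, and the iterated Boardman strata have the expected larger codimensions. Since $k_\alpha \leq n-1 < n$, these estimates force the map $\pi|_{S_\alpha'}: S_\alpha' \to M$ to exhibit only Morin-type stable singularities; such stable smooth maps from a $k$-manifold to an $n$-manifold with $k < n$ are locally finite to one at every point by their normal form description. Combining over the finitely many strata and using compactness of $\Lambda'$, each fiber $\pi^{-1}(y) \cap \Lambda'$ is finite, so $\Lambda'$ is in finite position.

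The main obstacle I expect is not the overall strategy but the passage from \emph{local} finite-to-oneness on each stratum to a \emph{global} bound on fiber size. The standard fix is to additionally apply Thom transversality to \emph{multi-jet} bundles, so as to put the ``double-point'' locus (pairs of distinct points in $\Lambda'$, possibly in different strata, mapping to a common image in $M$) in general position. This multi-jet step is classical in singularity theory for smooth maps; transcribing it to the Whitney-stratified, contact-isotopic setting fits into the Goresky-Trotman framework already used in Lemma \ref{lem: pdff whitney}, and the verification that multi-jet transversality is realizable by contact (rather than arbitrary smooth) isotopies follows from the transitivity of the contactomorphism group on germs of isotropic submanifolds.
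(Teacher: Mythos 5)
Your proposal takes essentially the same route as the paper: perturb $\Lambda$ by a small contact isotopy so that the projection's Thom--Boardman loci are in general position (via Goresky--Trotman stratified transversality), then read off finite fibers from the codimension estimates. The one place you depart is the final paragraph, where you flag a worry about passing from ``locally finite-to-one on each stratum'' to global finiteness and propose multi-jet transversality as a fix. That extra step is not needed: the definition of finite position only requires each fiber $\pi^{-1}(y)\cap\overline{\Lambda'}$ to be finite, not to admit a uniform bound over $y$, and you already established in your third paragraph that after the perturbation each fiber is discrete; a discrete closed subset of the compact $\overline{\Lambda'}$ is finite, with no further input. So the multi-jet discussion is a harmless over-engineering that can simply be dropped, and the rest of the argument matches the paper's proof.
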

\begin{proof}
    When $\Lambda$ is a compact Whitney stratified isotropic subset, consider singularities of the projection $\pi: \Lambda \to M$ with the Thom--Boardman decomposition in the jet bundle of $M$ \cite{MatherThom}. Then there is a small contact perturbation to $\Lambda'$ whose projection to $M$ intersect transversely with the Thom--Boardman singularities by the Thom transversality theorem of Goresky and Trotman \cite[Part I Section 1.3]{Goresky-MacPherson}. Then it follows that the projection of $\Lambda'$ to $M$ is in finite position.
\end{proof}

\begin{remark}
    When $\Lambda \subset S^*M$ is contained in the conormal bundle of a $C^1$-Whitney stratification of $M$, then it is perturbable to finite position.
\end{remark}

\begin{lemma}\label{lem: ptfp analytic}
    Let $\Lambda \subset S^*M$ be a compact subanalytic isotropic subset with respect to some analytic structure on $M$. Then $\Lambda$ is perturbable to finite position.
\end{lemma}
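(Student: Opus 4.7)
The plan is to mirror the structure of Lemma \ref{lem: pdff analytic}, reducing the subanalytic statement to the Whitney stratified one already handled in Lemma \ref{lem: ptfp whitney}. The key point is that ``perturbable to finite position'' is monotone under inclusion: if $\Lambda \subset \Lambda'$ and a contact isotopy $\varphi_t$ places $\varphi_1(\Lambda')$ in finite position, then $\varphi_1(\Lambda) \subset \varphi_1(\Lambda')$ is automatically in finite position as well (any fiber intersecting $\varphi_1(\Lambda)$ also intersects $\varphi_1(\Lambda')$, and the latter intersection is finite).

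With that observation in hand, the proof is short. First, invoke the result of Czapla (cited already in Lemma \ref{lem: pdff analytic}, see \cite[Theorem 4.15 \& 5.6]{Czapla}) to embed the compact subanalytic isotropic $\Lambda \subset S^*M$ inside a compact Whitney stratified Legendrian $\Lambda' \subset S^*M$. Then apply Lemma \ref{lem: ptfp whitney} to produce a small contact isotopy $\varphi_t$ with $\varphi_1(\Lambda')$ in finite position. By the monotonicity observation, $\varphi_1(\Lambda)$ is also in finite position, which is exactly the desired conclusion.

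An alternative, more direct route is to appeal to the microlocal Bertini--Sard theorem \cite[Proposition 8.3.12]{KS}: the critical values of the projection $\pi\colon \overline{\Lambda} \to M$ form a subanalytic subset of dimension strictly less than $\dim M$, so a generic small contact perturbation (for instance by the flow of a subanalytic Hamiltonian transverse to $\pi(\overline{\Lambda})$) places $\Lambda$ away from the set of infinite-fiber values and in finite position. The only mild obstacle in either route is verifying that the contact perturbation can be taken arbitrarily $C^0$-small, but this is ensured in the Whitney case by the Thom transversality argument already used in Lemma \ref{lem: ptfp whitney} and in the Bertini--Sard case by the openness of transversality. I expect the first approach is the cleanest since it literally reuses Lemma \ref{lem: ptfp whitney}.
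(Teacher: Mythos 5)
Your proposal is correct and takes essentially the same route as the paper: embed $\Lambda$ in a compact Whitney stratified Legendrian via Czapla and then apply Lemma \ref{lem: ptfp whitney}. The monotonicity observation you make explicit is sound (a closed subset of a finite-to-one projection is finite-to-one) and is implicit in the paper's one-line proof.
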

\begin{proof}
    When $\Lambda \subset S^*M$ be a subanalytic isotropic subset, it is contained in a Whitney stratified Legendrian \cite[Theorem 4.15 \& 5.6]{Czapla}. Hence it is positively displaceable from fibers by Lemma \ref{lem: ptfp whitney}.
\end{proof}

\subsection{Self-displaceability}
    The third `Legendrian' condition we need is a self displaceability assumption.  We will use this in Section \ref{sec:microsheaf recollections} to construct the doubling functor, but it can also be motivated by the following fact:

\begin{proposition}[{\cite[Proposition 2.9]{Zhou} \cite[Proposition 3.4]{Kuo-wrapped-sheaves} \cite[Theorem 4.46]{KuoLi-spherical}\footnote{These references assume stronger constructibility hypotheses, but the proofs require only what we have assumed.}}]\label{prop: perturb compact} 
    Let $\Lambda \subset S^*M$ be any subset and $\Omega \subset \Lambda$ be a precompact open subset. Assume that there is a non-negative compactly supported contact isotopy $\phi_t: S^*M \to S^*M$, supported in the closure of some $U \subset S^*M$ where $\Omega = \Lambda \cap U$, such that
    $$\Lambda \cap \phi_t(\Omega) = \varnothing$$
    for any small $t \neq 0$. Then for any $\SF, \SG \in \Sh_\Lambda(M)$ and $t \geq 0$, continuation induces an isomorphism:
    $$\Hom(\SF, \SG) \xrightarrow{\sim} \Hom(\SF, \SG_t).$$
\end{proposition}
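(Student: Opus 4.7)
The plan is to bootstrap from the perturbation-limit formula of Proposition \ref{prop: perturb-lim} by showing that the continuation maps between nearby positive times are already isomorphisms, which forces the whole directed system to be essentially constant on a small interval $(0, \delta)$.

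First, Proposition \ref{prop: perturb-lim} applied to the given non-negative isotopy $\phi$ gives
$\Hom(\SF, \SG) \xrightarrow{\sim} \lim_{s \to 0^+} \Hom(\SF, \SG_s)$.
Thus it suffices to show that for all $0 < s < t < \delta$, the continuation map $\Hom(\SF, \SG_s) \to \Hom(\SF, \SG_t)$ is an equivalence. If so, the system is essentially constant on $(0, \delta)$, so the limit is canonically identified with any $\Hom(\SF, \SG_t)$, and by naturality the composite $\Hom(\SF,\SG) \to \Hom(\SF, \SG_t)$ is this identification, which is exactly the continuation.

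Second, setting $C_{s,t} := \cone(\SG_s \to \SG_t)$, the goal reduces to $\Hom(\SF, C_{s,t}) = 0$ for $0 < s < t$ small. The continuation $\SG_s \to \SG_t$ is induced by the GKS kernel (Theorem \ref{thm: GKS sheaf}) for the non-negative isotopy $\phi|_{[s,t]}$, which is compactly supported in $\bar U$; hence $C_{s,t}$ vanishes outside the projection of $\bar U$, and the standard kernel microsupport estimate yields $\dot\ss(C_{s,t}) \subset \bigcup_{r \in [s,t]} \phi_r(\Omega)$. The self-displaceability hypothesis says $\Lambda \cap \phi_r(\Omega) = \varnothing$ for all $r \in (0, \delta)$, so $\dot\ss(C_{s,t})$ is disjoint from $\Lambda$, and in particular from $\dot\ss(\SF)$.

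Third, to convert this microsupport disjointness in $\dot T^*M$ into the global vanishing $\Hom(\SF, C_{s,t}) = 0$, I would apply Proposition \ref{prop: perturb-lim} once more, now to the pair $(\SF, C_{s,t})$, using an auxiliary non-negative contact isotopy (either $\phi$ extended, or a separately chosen one supported near $\bar U$) that continues to displace $\dot\ss(C_{s,t})$ without reintroducing intersection with $\Lambda$. Combined with repeated non-characteristic propagation (Lemma \ref{lem: non-char}) along the disjoint microsupports of $\SF$ and the displaced cone, one can move the support of the displaced $C_{s,t}$ off the compact support of $\SF$, forcing the Hom limit and hence $\Hom(\SF, C_{s,t})$ to vanish.

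The principal obstacle is the third step: upgrading microsupport disjointness on $\dot T^*M$ to vanishing of global $\Hom$, since zero-section contributions (ordinary $\mathcal{H}om$) are not directly controlled by $\dot\ss$-disjointness. The delicate point is arranging a non-negative displacement that actually separates $\supp(C_{s,t})$ from $\supp(\SF)$ while staying disjoint from $\Lambda$ throughout; this is precisely the phenomenon exploited in the cited arguments of Zhou, Kuo, and Kuo--Li, and the details here should follow their template.
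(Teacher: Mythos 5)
The paper does not supply its own proof of this proposition; it is cited from Zhou, Kuo and Kuo--Li, so I assess your argument on its own terms. Your step~1 is a correct reduction via Proposition~\ref{prop: perturb-lim}. Your step~2 claim that $\dot\ss(C_{s,t})\subset\bigcup_{r\in[s,t]}\phi_r(\Omega)$ is true but is not the ``standard kernel microsupport estimate'': the triangle inequality only gives $\dot\ss(C_{s,t})\subset(\Lambda\setminus\Omega)\cup\phi_s(\Omega)\cup\phi_t(\Omega)$, and removing the $\Lambda\setminus\Omega$ part requires the separate (correct, but omitted) observation that $\SG_s\to\SG_t$ is a microlocal isomorphism on the locus where $\phi$ acts trivially.

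The genuine gap is step~3, which you rightly flag as the principal obstacle, but whose proposed resolution would not go through. Disjointness of $\dot\ss(\SF)$ and $\dot\ss(C_{s,t})$ in $\dot T^*M$ alone does not force $\Hom(\SF,C_{s,t})=0$: it only controls $\sHom(\SF,C_{s,t})$ off the zero section and says nothing about its global sections over the zero-section part of its support (take $\SF$ constant on a compact piece for a counterexample to the bare implication). The proposed remedy --- invoke Proposition~\ref{prop: perturb-lim} again for $(\SF,C_{s,t})$ and ``move the support of the displaced $C_{s,t}$ off the compact support of $\SF$'' --- cannot work: Proposition~\ref{prop: perturb-lim} only controls the limit over \emph{arbitrarily short} non-negative isotopies, and a short compactly supported isotopy leaves $\supp(C_{s,t})$ essentially where it is; it will never separate it from a fixed, generally overlapping compact $\supp(\SF)$. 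The idea that actually closes this step, and that the cited sources use, is to stay in the movie direction rather than pass to the cone in $M$: form $\SH=\SK_\phi\circ\SG\in\Sh(M\times I)$, push $\sHom(\pi_M^*\SF,\SH)$ forward along $\pi_I:M\times I\to I$, and apply non-characteristic propagation (Lemma~\ref{lem: non-char}) \emph{in the $I$-direction}. A point of $\dot\ss(\sHom(\pi_M^*\SF,\SH))$ with vanishing $M$-covector is a Reeb chord of $\phi$ inside $\Lambda$ with $I$-covector $\tau=-H_r$; non-negativity of $H$ kills the $\tau>0$ side for free, and the hypothesis $\Lambda\cap\phi_r(\Omega)=\varnothing$ kills the $\tau<0$ side on $r\in(0,\delta)$ (any such chord would require $H_r>0$, hence an intersection of $\Lambda$ with $\phi_r(\Omega)$), so the pushforward is locally constant there. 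That extra $I$-direction is precisely what your cone-plus-displacement argument, which works only inside $M$, cannot access.
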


We axiomatize the hypotheses of the proposition:

\begin{definition}\label{def: self displaceable} 
    A closed subset $\Lambda \subset S^*M$ is {\em positively self displaceable} if there is a topological basis of precompact open subsets in $\Lambda$, such that for any such $\Omega \subset \Lambda$ in the topological basis, there exists a positive contact isotopy for $t \in (-\delta, \delta)$, supported in the closure of some open subset $U \subset S^*M$  with $\Omega = \Lambda \cap U$, such that $\Omega_t$ is disjoint from $\Lambda$ when $t \neq 0$.
\end{definition}

\begin{lemma}\label{lem: self displace hypersurface}
    Let $\Lambda \subset S^*M$ be contained in an exact symplectic hypersurface. Then $\Lambda$ is always positively self displaceable. 
\end{lemma}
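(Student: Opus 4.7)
I plan to use the Reeb flow of a contact form $\alpha$ on $S^*M$ for which $\alpha|_H$ realizes the exact symplectic structure on $H$.  The key geometric input is that any exact symplectic codimension-one submanifold $H \subset S^*M$ is everywhere transverse to the Reeb vector field $R_\alpha$: non-degeneracy of $d\alpha|_H$ requires $T_pH \cap \ker d\alpha = 0$ for all $p \in H$, and $\ker d\alpha$ is the Reeb line.  A Weinstein-type tubular neighborhood theorem then identifies a neighborhood $N \cong H \times (-\epsilon, \epsilon) \subset S^*M$ of $H$ with $\alpha = ds + \alpha|_H$, $R_\alpha = \partial_s$, and $H = \{s=0\}$; the Reeb flow restricts on $N$ to the translation $(h, s) \mapsto (h, s+t)$, visibly displacing $H$, and hence $\Lambda$, from itself.

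For each precompact open $\Omega \subset \Lambda$ in a topological basis, I take $U := S^*M \setminus (\Lambda \setminus \Omega)$: since $\Lambda \setminus \Omega$ is closed in $\Lambda$ (as $\Omega$ is open in $\Lambda$) and $\Lambda$ is closed in $S^*M$, the set $U$ is open and satisfies $U \cap \Lambda = \Omega$; since $\Lambda$ has empty interior in $S^*M$, $\overline U = S^*M$.  The Reeb flow $\phi^{R_\alpha}_t$ is then a globally defined positive contact isotopy (constant contact Hamiltonian $\alpha(R_\alpha) = 1$), trivially supported in $\overline U$.  Compactness of $\overline\Omega$ provides $\delta > 0$ so that $\phi^{R_\alpha}_t(\overline\Omega) \subset N$ for $|t| < \delta$; then $\phi^{R_\alpha}_t(\Omega) \subset H \times \{t\}$ is disjoint from $H \supset \Lambda$ for $0 < |t| < \delta$, verifying Definition~\ref{def: self displaceable}.

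The main potential obstacle is the intended reading of ``supported in the closure of some open subset'': if Definition~\ref{def: self displaceable} implicitly requires a \emph{compactly supported} isotopy paralleling Proposition~\ref{prop: perturb compact}, I would instead cut off the Reeb Hamiltonian by a nonnegative smooth bump $\chi$ supported in the tubular neighborhood, and verify that $X_\chi$ is still transverse to $H$ on $\Omega$.  In the product model this reduces to the condition $\chi - L_{Z_\beta}\chi > 0$ on $\Omega$, with $Z_\beta$ the Liouville vector field of $(H, \alpha|_H)$: a naive plateau $\chi \equiv 1$ on an $S^*M$-neighborhood of $\overline\Omega$ with support in $\overline U$ would force $\Omega$ to be both open and closed in $\Lambda$, so one would instead use a non-plateau $\chi$ adapted to $Z_\beta$ together with a compatibly chosen basis of $\Lambda$.
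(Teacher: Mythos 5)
Your main argument takes a shortcut that probably misreads the intent of Definition~\ref{def: self displaceable}. By choosing $U = S^*M \setminus (\Lambda \setminus \Omega)$ you get $\overline{U} = S^*M$ (since $\Lambda$ is nowhere dense), which makes the support constraint vacuous; you then take the global Reeb flow. But the definition is explicitly marked as an axiomatization of the hypotheses of Proposition~\ref{prop: perturb compact}, which requires a \emph{compactly supported} isotopy, and every one of the paper's sibling proofs (Lemmas~\ref{lem: self displace morse smale} and~\ref{lem: self displace analytic}, and the present lemma) produces an isotopy with small, genuinely controlled support. Under your reading, the definition would just say ``there exists a positive isotopy displacing $\Omega$ from $\Lambda$,'' which is too weak to feed into the wrapping/doubling machinery. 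And the global Reeb flow is not compactly supported once $M$ is noncompact. So the main body, while formally consistent with a maximally literal reading, does not verify the condition the paper actually uses.

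Your contingency argument is the right one and coincides in outline with the paper's proof: pass to a Weinstein/collar neighborhood $W \times (-\epsilon,\epsilon)$ of the hypersurface (the paper writes the contact form as $dt - \lambda$, you write $ds + \alpha|_H$, which is the same up to the sign convention for the Liouville form), take the basis $\{\Lambda \cap \Omega_W\}$ indexed by open $\Omega_W \subset W$, set $U = \Omega_W \times (-\epsilon,\epsilon)$, and cut off the Reeb Hamiltonian to be supported in $\overline{U}$. You correctly notice the real technical point: in the product model the $s$-velocity of the cut-off flow at $(h,0)$ involves both the bump $\rho(h)$ and its Liouville derivative $Z_\beta\rho$, so a plateau bump does not work (it would force $\Omega$ to be clopen in $\Lambda$), and one must pick the bump compatibly with $Z_\beta$. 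The paper is terse here (``via some smooth cut-off of the standard Reeb flow supported in the closure of the contact neighbourhood''), so your remark points at genuine content being elided, but your proposal does not actually finish this step --- it only gestures at ``a non-plateau $\chi$ adapted to $Z_\beta$ together with a compatibly chosen basis.'' To close the gap, promote the contingency plan to the main argument, replace $\overline{U} = S^*M$ with $\overline{U} = \overline{\Omega_W} \times [-\epsilon,\epsilon]$, and supply an explicit $\rho$ (e.g.\ a bump with controlled logarithmic derivative along $Z_\beta$, or a basis of sublevel sets adapted to the Liouville flow) for which the $s$-velocity is nonvanishing on $\Lambda \cap \Omega_W$.
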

\begin{proof}
    For an exact symplectic hypersurface $(W, \lambda)$, we can consider an open neighbourhood of $W$ that is contactomorphic to $(W \times (-\epsilon, \epsilon), dt - \lambda)$. Consider the basis of open subsets of $W$. Then for any open subset $\Omega \subset W$, the subset $\Lambda \cap \Omega$ is positively self displaceable via some smooth cut-off of the standard Reeb flow supported in the closure of the contact neighbourhood $\Omega \times (-\epsilon, \epsilon)$.
\end{proof}

\begin{lemma}\label{lem: self displace morse smale}
    Let $\Lambda \subset S^*M$ be contained in the conormal bundle of a $C^1$-stratification by the stable submanifolds of a Morse--Smale function on $M$. Then $\Lambda$ is self displaceable.
\end{lemma}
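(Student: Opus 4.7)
The plan is to produce, for each sufficiently small precompact open $\Omega \subset \Lambda$ in a topological basis, a positive contact isotopy supported in some $U \supset \overline{\Omega}$ with $U \cap \Lambda = \Omega$, which moves $\Omega$ off $\Lambda$ for small $t > 0$. After shrinking, I may assume $\Omega$ is contained in $N^*W^s(p) \cap S^*M$ for a single stratum $W^s(p)$.

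The positive contact Hamiltonian will be a Reeb-perturbation of the lift of the negative gradient. For a Riemannian metric on $M$ realizing the Morse--Smale condition, the function $H(x,\xi) := \langle \xi, -\nabla f(x)\rangle$ is homogeneous of degree one in $\xi$, so descends to a contact Hamiltonian $\tilde H$ on $S^*M$; its associated flow lifts the negative gradient flow on $M$. Since $-\nabla f$ preserves each stratum $W^s(q)$, the lifted flow preserves each conormal $N^*W^s(q)$; since $-\nabla f(x) \in T_x W^s(q)$ for $x \in W^s(q)$, the Hamiltonian $\tilde H$ vanishes identically on $\Lambda$. Choosing $U$ small enough that $|\tilde H|_U < \varepsilon$ and a bump function $\rho \geq 0$ supported in $U$ with $\rho \equiv 1$ on $\overline\Omega$, I set $H'' := \rho(\tilde H + \varepsilon)$. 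This is non-negative on $S^*M$, strictly positive on $\overline{\Omega}$, and supported in $U$, so its flow $\psi^t$ is a positive contact isotopy with the required support.

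On $\overline{\Omega}$ the contact vector field of $H''$ equals $X_{\tilde H} + \varepsilon R$ for $R$ the Reeb vector field, whose flow is geodesic flow on $S^*M$. The summand $X_{\tilde H}$ is tangent to $\Lambda$, so the infinitesimal displacement is governed by the Reeb term. For $(x_0,\xi_0) \in \Omega$, the Reeb flow moves $x_0$ along the geodesic in the direction $\xi_0^\sharp$, which is normal to $T_{x_0}W^s(p)$; thus for small $t > 0$ the base point enters some neighboring stratum $W^s(q)$ with $W^s(p) \subsetneq \overline{W^s(q)}$.

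The main obstacle is to ensure the transported covector $\xi_t$ fails to be conormal to $W^s(q)$ at the new base point, so that $\psi^t(\Omega) \cap \Lambda = \varnothing$. This rests on two inputs: the $C^1$-Whitney regularity of the stable-manifold decomposition of a Morse--Smale function, which forces limit tangent spaces of $W^s(q)$ at points of $W^s(p)$ to strictly contain $T_x W^s(p)$, so the corresponding limit conormals are proper subspaces $N^*_x T \subsetneq N^*_x W^s(p)$; and the observation that their union is nowhere dense in $N^*W^s(p)$, so that a topological basis of $\Omega$'s avoiding these degenerate configurations still generates the topology of $\Lambda$. For such $\Omega$, compactness of $\overline{\Omega}$ together with continuity of $\psi^t$ yield $\psi^t(\Omega) \cap N^*W^s(q') = \varnothing$ for every stratum $q'$ (for neighboring $q'$ by non-conormality, for non-neighboring $q'$ by projection distance) and all $t$ in a sufficiently small interval, completing the argument.
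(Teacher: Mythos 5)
Your approach is genuinely different from the paper's. The paper's argument (contained in the remark that immediately follows the lemma) is a short reduction: by Nicolaescu, the stable-manifold stratification of a $C^2$ Morse--Smale function is a $C^1$-Whitney stratification; by Giroux's symplectic Lefschetz fibration construction, the conormal variety of such a stratification is contained in an exact symplectic hypersurface; then Lemma \ref{lem: self displace hypersurface} applies, using the Reeb flow on the contact collar $(W\times(-\epsilon,\epsilon),\,dt-\lambda)$, which is uniformly transverse to the hypersurface. You instead attempt a direct displacement by the lift of the negative gradient plus a scaled Reeb term and try to control the covector displacement through Whitney regularity.

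There are two genuine gaps. First, the reduction ``after shrinking I may assume $\Omega$ is contained in $N^*W^s(p)$ for a single stratum'' fails at points of $\Lambda$ lying in the closure of a higher stratum's conormal (for instance, a covector over a lower-dimensional stratum limiting from the conormal of an adjacent stratum); every neighborhood of such a point in $\Lambda$ meets conormals of several strata, and since a basis of $\Lambda$ must cover $\Lambda$, these points cannot be avoided. Second, and more seriously, the ``nowhere dense bad set'' workaround is not logically sound: a topological basis must cover $\Lambda$, so you cannot restrict to $\Omega$'s that miss the degenerate locus. What you should invoke instead is Whitney condition (b), which is strictly stronger than (a) and handles every point without any exceptional set: the secant direction from $x$ to $x_t$ tends to $-\nabla f(x)+\varepsilon\,\xi^\sharp$, which by (b) lies in the limit tangent $T=\lim T_{x_t}W^s(q)$, and since $-\nabla f(x)\in T_xW^s(p)$ (as $-\nabla f$ preserves each stable manifold) and $\xi$ annihilates $T_xW^s(p)$, one has $\xi(-\nabla f(x)+\varepsilon\,\xi^\sharp)=\varepsilon|\xi|^2\neq 0$, so $\xi\notin N^*T$ and hence $\xi\notin N^*_{x_t}W^s(q)$ for $t$ small. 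This removes the spurious bad set. Even with that fix you would still owe a uniformity argument: you need the escape time to be bounded below over $\overline\Omega$, and the paper's Example (a graph $y=f(x)$ with dense-near-$0$ critical values) shows exactly that a Whitney stratified Legendrian can fail to be self-displaceable for want of such uniformity. For the finite stratification coming from a Morse--Smale function this should follow by compactness, but you would need to say so explicitly; the paper's route through the exact symplectic hypersurface bypasses the issue because the Reeb direction of the collar is uniformly transverse by construction.
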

\begin{remark}
    For a $C^1$-stratification by the stable submanifolds of a $C^2$-Morse--Smale function on $M$, we know that it is a $C^1$-Whitney stratification \cite[Theorem 4.33]{Nicolaescu}. Moreover, it is known that the conormal of the stratification is contained in an exact symplectic hypersurface (defined by a symplectic Lefschetz fibration) \cite[Theorem 1 \& Lemma 5]{GirouxLefschetz}. Therefore, if $\Lambda \subset S^*M$ is contained in the conormal bundle of a Whitney stratification of a Morse--Smale function on $M$, it is self displaceable by Lemma \ref{lem: self displace hypersurface}.
\end{remark}

\begin{lemma}\label{lem: self displace analytic}
    Let $\Lambda \subset S^*M$ be a subanalytic isotropic subset with respect to some analytic structure of $M$. Then $\Lambda$ is positively self displaceable.
\end{lemma}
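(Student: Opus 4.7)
The plan is to reduce to the Morse--Smale case already handled in Lemma \ref{lem: self displace morse smale}, using the fact that positive self displaceability is inherited by closed subsets of a positively self displaceable set.

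First I would note the inheritance property: if $\Lambda \subset \Lambda'$ is closed and $\Lambda'$ is positively self displaceable with basis $\{\Omega'_\alpha = \Lambda' \cap U_\alpha\}$, then the collection $\{\Omega_\alpha := \Lambda \cap U_\alpha\}$ is a topological basis of precompact opens in $\Lambda$, and the positive contact isotopy supported in $\overline{U_\alpha}$ that disjoints $\Omega'_\alpha$ from $\Lambda'$ in particular disjoints $\Omega_\alpha$ from $\Lambda$. So it suffices to find a positively self displaceable closed subset $\Lambda' \supset \Lambda$.

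Next, by the references to Czapla cited in Lemma \ref{lem: pdff analytic} and Lemma \ref{lem: ptfp analytic}, the subanalytic isotropic subset $\Lambda$ is contained in the conormal variety of a subanalytic Whitney stratification $\mathcal{S}$ of $M$; call this conormal $\Lambda_{\mathcal{S}}$. The next step is to refine $\mathcal{S}$ to a stratification that arises from the stable manifolds of a (subanalytic) Morse--Smale function on $M$. This is available by classical subanalytic methods: a subanalytic Whitney stratification admits a subanalytic triangulation compatible with the strata (Hironaka--\L{}ojasiewicz), and any triangulation may be realized as the stable-manifold stratification of a smooth Morse--Smale function obtained by assembling and smoothing local height functions on each simplex. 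Let $f$ be such a Morse--Smale function and $\mathcal{S}'$ the associated $C^1$-Whitney stratification, chosen so that $\Lambda_{\mathcal{S}} \subset \Lambda_{\mathcal{S}'}$.

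Then Lemma \ref{lem: self displace morse smale} (equivalently, Lemma \ref{lem: self displace hypersurface} together with the Giroux--Lefschetz embedding recalled in the remark there) shows that $\Lambda_{\mathcal{S}'}$ is positively self displaceable. Applying the inheritance observation to $\Lambda \subset \Lambda_{\mathcal{S}} \subset \Lambda_{\mathcal{S}'}$ concludes the proof.

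The main obstacle is the refinement step to Morse--Smale data: one needs a version of subanalytic triangulation whose smoothing yields a Morse--Smale stratification whose conormal actually contains $\Lambda_{\mathcal{S}}$ (not just refines the underlying stratification of $M$, since passing to a finer stratification of $M$ enlarges the conormal). This is the standard package used in subanalytic stratification theory, but it is worth being explicit that the smoothing can be chosen to leave the conormal directions along $\Lambda$ unchanged, for instance by performing the smoothing in a subanalytic tubular neighborhood system that is transverse to $\Lambda$. Alternatively, one can bypass the Morse--Smale step and prove directly, using a subanalytic Lefschetz-type fibration adapted to $\mathcal{S}$, that $\Lambda_{\mathcal{S}}$ is contained in an exact symplectic hypersurface, and then invoke Lemma \ref{lem: self displace hypersurface} directly.
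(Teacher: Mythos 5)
Your proposal takes a genuinely different route from the paper, and it has a gap at the step you yourself flag as ``the main obstacle.''

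The paper's actual proof is a direct generic displacement argument: it takes a basis of open sets $\Omega \subset S^*M$ with subanalytic boundary, considers analytic contact Hamiltonians vanishing on $\partial\Omega$ to high order, and applies the microlocal Bertini--Sard theorem \cite[Proposition 8.3.12]{KS} to conclude that a generic such isotopy displaces $\Lambda\cap\Omega$ from $\Lambda$ for small nonzero times, with support in $\overline{\Omega}$. This never passes through Morse--Smale stratifications, triangulations, or Lefschetz fibrations; the subanalyticity is exploited only to make the genericity argument go through.

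Your inheritance observation is sound, and the first reduction (Czapla: $\Lambda$ is contained in the conormal of a subanalytic Whitney stratification) matches what the paper does in its other analytic lemmas. The problem is the refinement step: you need to produce a $C^2$ Morse--Smale function on $M$ whose stable-manifold stratification has conormal variety containing $\Lambda_{\mathcal{S}}$. The triangulation route you sketch does not obviously deliver this. Smoothing a subanalytic triangulation to a Morse function is classical, but the smoothing changes the strata near simplex boundaries, and controlling the conormal directions of the smoothed stratification so that they still contain $\Lambda_{\mathcal{S}}$ is exactly the point at issue; it is not covered by the references in the paper, and the remark you invoke (``the smoothing can be chosen to leave the conormal directions along $\Lambda$ unchanged, for instance by performing the smoothing in a subanalytic tubular neighborhood system transverse to $\Lambda$'') is a claim that would itself need a proof. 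The alternative you suggest --- a subanalytic Lefschetz-type fibration adapted to $\mathcal{S}$ realizing $\Lambda_{\mathcal{S}}$ inside an exact symplectic hypersurface --- is likewise not something the cited Giroux result gives you in the subanalytic setting without additional work. So as written the proposal is incomplete; the Bertini--Sard argument the paper uses is both shorter and avoids these issues entirely.
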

\begin{proof}
    When $\Lambda$ is a subanalytic isotropic subset with respect to some analytic structure of $M$, we can consider the basis of open subsets of $S^*M$ with (sub)analytic boundaries. For each subset $\Lambda \cap \Omega$ with subanalytic boundary, consider an analytic contact isotopy that vanish on the boundary of $\Omega$ up to order $k \geq 2$ and then apply the microlocal Bertini--Sard theorem \cite[Proposition 8.3.12]{KS}. We can conclude that it is positively self displaceable.
\end{proof}

\begin{example}
    We warn the reader that it is not true that any Whitney stratified isotropic subset in $S^*M$ is positively self displaceable. For example, consider a smooth function $f: \bR \to \bR$ such that the critical values are dense at $0$. Then $\Lambda = \{(x, y; \xi, \eta) \mid y = 0, \xi = 0 \text{ or } y = f(x), \xi = \eta\, df(x)\}$ is a Whitney stratified Legendrian but it is not positively self displaceable. The distinction between this non-example and the above examples (in Lemma \ref{lem: self displace hypersurface}) is reflected through the fact that $\Lambda$ in this non-example cannot be the Lagrangian skeleton of any exact symplectic manifold \cite[Remark 1.2]{Eliashberg-revisit}.
\end{example}

    We will find the following terminology convenient: 

\begin{definition}\label{def: sufficient Legendrian}
    We say a subset is {\em sufficiently Legendrian} if it is pdff, perturbable to finite position and positively self displaceable. 
\end{definition} 

\begin{lemma}\label{lem: sufficient legendrian whitney}
    Let $\Lambda \subset S^*M$ be any Whitney stratified Legendrian subset that is contained in an exact symplectic hypersurface. Then $\Lambda$ is a sufficiently Legendrian subset.
\end{lemma}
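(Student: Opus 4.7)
The plan is simply to verify each of the three conditions in Definition \ref{def: sufficient Legendrian} separately, each by invoking one of the structural lemmas established earlier in this subsection.

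First, since $\Lambda$ is a Whitney stratified isotropic subset of $S^*M$, Lemma \ref{lem: pdff whitney} applies directly and shows that $\Lambda$ is positively displaceable from fibers. The substantive input there is Thom--Boardman transversality for the projection $\pi : \Lambda \to M$, which we are entitled to use as a black box. Second, the same hypothesis lets us appeal to Lemma \ref{lem: ptfp whitney} to conclude that $\Lambda$ is perturbable to finite position (again via a stratified Thom transversality argument, already carried out there).

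Third, and this is where the hypothesis that $\Lambda$ lies in an exact symplectic hypersurface is actually used, we apply Lemma \ref{lem: self displace hypersurface}: a tubular neighborhood of the hypersurface is contactomorphic to $W \times (-\e, \e)$ with $dt - \lambda$, and cutting off the standard translation in the $t$ direction over any precompact $\Omega \subset \Lambda$ yields a positive contact isotopy that displaces $\Omega$ from $\Lambda$. Thus $\Lambda$ is positively self displaceable.

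Combining the three conditions gives exactly the assertion that $\Lambda$ is sufficiently Legendrian in the sense of Definition \ref{def: sufficient Legendrian}, so the lemma follows.

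In terms of where the real difficulty sits: conditions (1) and (2) are essentially automatic from the Whitney stratification assumption and do not require the hypersurface hypothesis. The nontrivial role of that hypothesis is to guarantee (3), and indeed the example following Lemma \ref{lem: self displace analytic} shows that self displaceability can fail for a Whitney stratified Legendrian unless one has additional geometric control---here provided by the exact symplectic hypersurface and reflected in the fact \cite{Eliashberg-revisit} that such pathological Legendrians cannot appear as skeleta of exact symplectic manifolds. So there is no new content to prove; the statement is an assembly of the three preceding lemmas, and no separate argument is needed.
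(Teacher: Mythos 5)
Your proof is correct and is essentially identical to the paper's, which simply cites Lemmas \ref{lem: pdff whitney}, \ref{lem: ptfp whitney}, and \ref{lem: self displace hypersurface} in the same way. Your added commentary on where the hypersurface hypothesis is actually needed is accurate but not required for the argument.
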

\begin{proof}
    This follows from Lemmas \ref{lem: pdff whitney}, \ref{lem: ptfp whitney} and \ref{lem: self displace hypersurface}.
\end{proof}

\begin{lemma}\label{lem: sufficient legendrian morse}
    Let $\Lambda \subset S^*M$ be any subset that is contained in the union of conormals to strata in the stratification by stable submanifolds of a Morse--Smale function. Then $\Lambda$ is a sufficiently Legendrian subset.
\end{lemma}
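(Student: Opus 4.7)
The plan is to verify each of the three conditions in Definition \ref{def: sufficient Legendrian} separately, invoking the previously established results for subsets contained in conormal bundles of $C^1$-Whitney stratifications together with the Morse--Smale specific self-displaceability lemma.

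The key preliminary observation is the result of Nicolaescu, already cited in the remark following Lemma \ref{lem: self displace morse smale}, which states that the stratification of $M$ by the stable submanifolds of a $C^2$-Morse--Smale function is a $C^1$-Whitney stratification. Thus a subset $\Lambda \subset S^*M$ that is contained in the union of conormals to the stable submanifold stratification is automatically contained in the conormal bundle of a $C^1$-Whitney stratification of $M$.

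From here, the three conditions follow by direct citation. For pdff, I would invoke the remark following Lemma \ref{lem: pdff whitney}, which asserts that subsets contained in conormals of $C^1$-Whitney stratifications are positively displaceable from fibers (in fact without any perturbation, by Whitney condition B). For perturbability to finite position, I would invoke the remark following Lemma \ref{lem: ptfp whitney}, which makes the analogous statement for the ptfp property. Positive self displaceability is precisely the content of Lemma \ref{lem: self displace morse smale} itself, which follows (as explained in its surrounding remark) from the fact that the conormal of a stable submanifold stratification of a Morse--Smale function sits inside an exact symplectic hypersurface (by the result of Giroux) together with Lemma \ref{lem: self displace hypersurface}.

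I do not expect any obstacle: the lemma is essentially a packaging of previously established ingredients, and the only subtlety is ensuring that the Morse--Smale stratification really is $C^1$-Whitney so that the remarks following Lemmas \ref{lem: pdff whitney} and \ref{lem: ptfp whitney} apply. This however has already been flagged in the preceding discussion.
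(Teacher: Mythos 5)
Your proposal is correct and follows essentially the same route as the paper's one-line proof, which simply cites Lemmas \ref{lem: pdff whitney}, \ref{lem: ptfp whitney}, and \ref{lem: self displace morse smale}. You are in fact slightly more precise than the printed proof: since the hypothesis on $\Lambda$ is only that it is \emph{contained in} the union of conormals to a Morse--Smale stable-manifold stratification (and not that $\Lambda$ is itself Whitney stratified isotropic), the remarks following Lemmas \ref{lem: pdff whitney} and \ref{lem: ptfp whitney} — which address exactly the case of a subset inside the conormal of a $C^1$-Whitney stratification, invoking Nicolaescu's result — are the more directly applicable citations, and you correctly identify them as such.
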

\begin{proof}
    This follows from Lemmas \ref{lem: pdff whitney}, \ref{lem: ptfp whitney} and \ref{lem: self displace morse smale}.
\end{proof}

\begin{lemma}\label{lem: sufficient legendrian analytic}
    Let $\Lambda \subset S^*M$ be any subanalytic Legendrian subset. Then $\Lambda$ is a sufficiently Legendrian susbet.
\end{lemma}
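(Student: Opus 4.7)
The plan is simply to combine the three subanalytic results already established in the excerpt, following the exact template of Lemmas \ref{lem: sufficient legendrian whitney} and \ref{lem: sufficient legendrian morse}. Specifically, for any subanalytic Legendrian $\Lambda \subset S^*M$, we need to verify the three defining properties of Definition \ref{def: sufficient Legendrian}: pdff, perturbability to finite position, and positive self-displaceability.

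First I would invoke Lemma \ref{lem: pdff analytic} to conclude that $\Lambda$ is positively displaceable from fibers; this applies directly since subanalytic Legendrians are in particular subanalytic isotropic subsets. Second, I would apply Lemma \ref{lem: ptfp analytic} to obtain perturbability to finite position; here one should either assume $\Lambda$ is compact or, more generally, note that the argument is local and the Thom transversality argument from Lemma \ref{lem: ptfp whitney} (via the inclusion of subanalytic Legendrians into Whitney-stratified Legendrians of \cite{Czapla}) goes through without any essential change. Third, I would apply Lemma \ref{lem: self displace analytic} to get positive self-displaceability, again a direct invocation.

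Since these three lemmas together give exactly the three conditions of Definition \ref{def: sufficient Legendrian}, the proof reduces to a single sentence citing them. There is no real obstacle: the work has already been done in the three separate analytic lemmas, and the only subtle point is the possible compactness hypothesis in Lemma \ref{lem: ptfp analytic}, which can be handled either by assuming compactness (consistent with the compact case of the Whitney-stratified analogue) or by noting that finite position is a local condition that holds on any compact piece of $\Lambda$ after a small subanalytic contact perturbation, using again the Whitney stratification of $\Lambda$ provided by \cite[Theorem 4.15 \& 5.6]{Czapla}.
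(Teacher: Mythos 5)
Your proposal is correct and follows essentially the same route as the paper, which simply cites Lemmas \ref{lem: pdff analytic}, \ref{lem: ptfp analytic} and \ref{lem: self displace analytic} in one line. Your extra remark about the compactness hypothesis in Lemma \ref{lem: ptfp analytic} is a reasonable clarification but not something the paper addresses.
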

\begin{proof}
    This follows from Lemmas \ref{lem: pdff analytic}, \ref{lem: ptfp analytic} and \ref{lem: self displace analytic}.
\end{proof}


\section{Recollections on nearby cycles}
\label{appen: nearby}

    Let $M$ be a manifold. Consider the maps:
    $$M \times 0 \xrightarrow{i} M \times \bR \xleftarrow{j} M \times \bR_{>0}.$$
    Consider the nearby cycle functor:
    \begin{equation}
    \psi = i^* j_* : \Sh(M \times \bR_{>0}) \to \Sh(M).
    \end{equation}

    In this section we will recall notions and results from \cite{Nadler-Shende, LiCobordism} around microsupport conditions which ensure the commutativity of the nearby cycle functor with various operations.  Also we will give the `$\otimes$' version of certain results which were established in \cite{Nadler-Shende} for Hom.

\subsection{Microsupport estimates}

    For a smooth map $f: M \rightarrow N$ and a subset $\Lambda \subset T^*N$, recall the subset $i^\#\Lambda \subset T^*M$ defined in \cite[Proposition 6.2.4]{KS} to be the set of points $(x, \xi) \in T^*M$ such that there are sequences $x_n \in M$ and $(y_n, \eta_n) \in \Lambda$ with 
    $$x_n \to x, \quad y_n \to f(x), \quad f^*\eta_n \to \xi, \quad |y_n - f(x_n)||\eta_n| \to 0.$$
    This is used in the following estimate \cite[Corollary 6.4.4]{KS}: 
\begin{equation}\label{lem: ss-pull back}
    \SS(f^*\SF) \subset f^\#\SS(\SF), \quad \SS(f^!\SF) \subset f^\#\SS(\SF).
\end{equation}
    In particular, when $f: M \to N$ is a submersion, by considering the maps of cotangent bundles $T^*M \xleftarrow{f^*} f^*T^*N \xrightarrow{f_\pi} T^*N$, we have \cite[Proposition 5.4.5]{KS}:
    $$\SS(f^*\SF) = \SS(f^!\SF) = f^*f_\pi^{-1}\SS(F).$$

    For two subsets $\Lambda, \Lambda' \subset T^*M$, we define the subset $\Lambda \,\widehat+\,\Lambda' \subset T^*M$ as in \cite[Proposition 6.2.4]{KS} to be the set of points $(x, \xi) \in T^*M$ such that there are sequences $(x_n, \xi_n) \in T^*M$ and $(y_n, \eta_n) \in T^*M$ with
    $$x_n\to x, \quad y_n \to x, \quad \xi_n - \eta_n \to \xi, \quad |y_n - x_n||\xi_n| \to 0.$$

    There are the standard estimates \cite[Corollary 6.4.5]{KS}: 
\begin{equation}\label{lem: ss-hom}
    \SS(\SF \otimes \SG) \subset \SS(\SF) \,\widehat +\, \SS(\SG), \quad \SS(\sHom(\SF, \SG)) \subset -\SS(\SF) \,\widehat +\, \SS(\SG).
\end{equation}
    In particular, as in \cite[Proposition 5.4.14]{KS}, if $\SS(\SF) \cap \SS(\SG) \subset 0_M$, then
    $$\SS(\sHom(\SF, \SG)) \subset -\SS(\SF) + \SS(\SG).$$
    If $-\SS(\SF) \cap \SS(\SG) \subset 0_M$, then
    $$\SS(\SF \otimes \SG) \subset \SS(\SF) + \SS(\SG).$$

    Let $j: U \hookrightarrow M$ be an open embedding and $\SF \in \Sh(U)$. Then \cite[Theorem 6.3.1 \& Proposition 6.3.2]{KS}
\begin{equation}\label{lem: ss-push forward}
    \SS(j_*\SF) \subset \SS(\SF) \,\widehat+\, N^*_{in}U.
\end{equation}
    In particular, if $N = M \setminus U$ is a closed submanifold, then
    $$\SS(j_*\SF) \cap T^*M|_N \subset \SS(\SF) \,\widehat+\, T^*_NM.$$

    Let $B^* \subset B$ be an open submanifold whose complement is a point. Write $j: M \times B^* \hookrightarrow M \times B$ and $i: M \times B \setminus B^* \hookrightarrow M \times B$. Let $\SF \in \Sh(M \times B^*)$. Then we define the nearby cycle functor to be
    $$\psi\SF = i^*j_*\SF.$$
    For a subset $\Lambda \subset T^*(M \times B)$, define the projection $\Pi: T^*(M \times B) \to T^*M \times B$. We define
    \begin{equation}\label{eq: nearby subset}
    \psi(\Lambda) = \ol{\Pi(\Lambda)} \cap T^*M.
    \end{equation}
    Then by Formulae \eqref{lem: ss-pull back} and \eqref{lem: ss-push forward}, we have:

\begin{lemma}[{\cite[Lemma 3.16]{Nadler-Shende}}]\label{lem: ss-nearby-cycle}
    Let $\SF \in \Sh(M \times B^*)$. Then we have 
    $$\SS(\psi\SF) \subset \psi(\SS(\SF)) = \ol{\Pi(\SS(F))} \cap T^*M.$$
\end{lemma}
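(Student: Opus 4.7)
The plan is to combine the two microsupport estimates already cited in the excerpt: Formula \eqref{lem: ss-push forward} for the open pushforward $j_*$, and Formula \eqref{lem: ss-pull back} for the closed pullback $i^*$. Since $\psi = i^* j_*$ by definition, controlling $\SS(\psi \SF)$ amounts to composing these two estimates and then identifying the resulting subset of $T^*M$ with $\overline{\Pi(\SS(\SF))} \cap T^*M$.

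First, I would apply Formula \eqref{lem: ss-push forward} to $j: M \times B^* \hookrightarrow M \times B$. Because the complement $M \times \{b_0\}$ is a closed submanifold, one gets
\[
\SS(j_* \SF) \cap T^*(M \times B)\big|_{M \times \{b_0\}} \;\subset\; \SS(\SF) \,\widehat+\, T^*_{M \times \{b_0\}}(M \times B),
\]
and the conormal on the right is $0_M \times T^*_{b_0} B$. Next, applying Formula \eqref{lem: ss-pull back} to the inclusion $i: M \times \{b_0\} \hookrightarrow M \times B$ yields
\[
\SS(\psi \SF) \;=\; \SS(i^* j_* \SF) \;\subset\; i^\# \SS(j_* \SF).
\]

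The remaining task is unpacking the right-hand side. By the definition of $i^\#$, a covector $(x,\xi) \in T^*M$ belongs to $i^\# \SS(j_* \SF)$ iff there are sequences $x_n \in M$ and points $(y_n, b_n, \eta_n, \beta_n) \in \SS(j_* \SF)$ with $x_n \to x$, $(y_n, b_n) \to (x, b_0)$, $\eta_n \to \xi$, and $|(y_n,b_n)-(x_n,b_0)|\cdot |(\eta_n,\beta_n)| \to 0$. Substituting the estimate from the first step, each $(y_n,b_n,\eta_n,\beta_n)$ can be taken as a limit of sums of elements of $\SS(\SF)$ with elements of $0_M \times T^*_{b_0} B$; in particular, the $M$-components of these sums already lie in $\Pi(\SS(\SF))$. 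Unraveling the two layers of ``closure at infinity'' encoded by $\,\widehat+\,$ and $i^\#$, the resulting covector $(x,\xi)$ is a limit of elements of $\Pi(\SS(\SF))$, i.e.\ belongs to $\overline{\Pi(\SS(\SF))} \cap T^*M$. This gives the claimed inclusion.

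The main technical point is the last paragraph: ensuring that the auxiliary ``$T^*_{b_0} B$ direction'' introduced by the pushforward estimate is exactly what $i^\#$ then discards, so that one is left only with the $T^*M$-component of $\SS(\SF)$ and its closure. This is a bookkeeping exercise with the $\,\widehat+\,$ and $i^\#$ definitions, using crucially that the complement of $B^*$ is a single point (so the extra directions are confined to the fiber $T^*_{b_0} B$ and do not contribute to the projection $\Pi$). No further geometric input is required beyond the estimates of Kashiwara--Schapira already invoked.
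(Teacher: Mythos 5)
Your proof is correct, and it fleshes out exactly the argument the paper intends: the statement is presented as an immediate consequence of Formulae \eqref{lem: ss-pull back} and \eqref{lem: ss-push forward}, and your proposal just applies them in sequence ($j_*$ then $i^*$) and unwinds the definitions of $\,\widehat+\,$ and $i^\#$. One small point of hygiene: when you invoke the pushforward estimate you state the restricted version $\SS(j_*\SF)\cap T^*(M\times B)|_{M\times\{b_0\}}\subset\SS(\SF)\,\widehat+\,T^*_{M\times\{b_0\}}(M\times B)$, but the sequences appearing in the definition of $i^\#$ may approach $(x,b_0)$ through base points $b_n\in B^*$, so you should use the unrestricted estimate $\SS(j_*\SF)\subset\SS(\SF)\,\widehat+\,N^*_{in}(M\times B^*)$ instead; this is harmless since $N^*_{in}(M\times B^*)$ is supported over $M\times\{b_0\}$ with zero $T^*M$-component (and over $M\times B^*$ one has $\SS(j_*\SF)=\SS(\SF)$ outright), so the final unwinding is unchanged.
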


We say $\Lambda \subset T^*(M \times B)$ is $B$-noncharacteristic if $\Lambda \cap (0_M \times T^*B) \subset 0_{M \times B}$; this ensures non-characteristic propagation with respect to families of open sets pulled back from $B$.

When the above limiting microsupport is pdff, we may compute the stalk of the nearby cycles as sections over a nearby ball:

\begin{lemma}[{\cite[Corollary 4.4]{Nadler-Shende}}]\label{stalk}
    Let ${\SF} \in \Sh(M \times \bR_{>0})$ be a sheaf such that $\dot{\ss}({\SF})$ is $\bR_{>0}$-non-characterstic, and $\psi(\dot{\ss}_{\pi}({\SF}))$ is pdff. Then for any $x \in M$, and $\epsilon > 0$, $\epsilon' > 0$ sufficiently small,
    $$\Gamma\big(\overline{B}_{\epsilon'}({x}) \times (0, \epsilon], {\SF}\big) \simeq \Gamma\big({B}_{\epsilon'}({x}) \times (0, \epsilon), {\SF}\big) \simeq \psi\SF_x,$$
    where $\overline{B}_{\epsilon'}({x})$ (resp.~${B}_{\epsilon'}({x})$) is a closed (resp.~open) ball of radius $\epsilon'$ around $x$.
\end{lemma}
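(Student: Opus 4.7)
Unwinding the definitions of nearby cycles and stalks,
\begin{equation*}
\psi\SF_{x} \;=\; (j_{*}\SF)_{(x,0)} \;=\; \colim_{V \ni (x,0)} \Gamma\bigl(V \cap (M \times \bR_{>0}),\,\SF\bigr),
\end{equation*}
where the colimit runs over open neighborhoods of $(x,0)$ in $M \times \bR$. The boxes $V = B_{\epsilon'}(x) \times (-\delta,\delta)$, with $B_{\epsilon'}(x)$ the distinguished pdff ball from Definition \ref{pdff ball}, form a cofinal system as $\epsilon',\delta \to 0^{+}$. So it suffices to show that $\Gamma\bigl(B_{\epsilon'}(x) \times (0,\epsilon),\,\SF\bigr)$ is eventually constant in $(\epsilon',\epsilon)$ as both parameters shrink to $0$. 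The closed-ball variant $\Gamma(\overline{B}_{\epsilon'}(x) \times (0,\epsilon],\SF)$ can be treated in parallel using the second (closed set) version of Lemma \ref{lem: non-char}, so I focus on the open case.

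\textbf{Independence in $\epsilon$.} Fix a small $\epsilon' > 0$ and apply the open version of Lemma \ref{lem: non-char} to the increasing family $U_{\epsilon} = B_{\epsilon'}(x) \times (0,\epsilon)$ inside $M \times \bR_{>0}$. The outward conormal $\dot N^{*}_{\mathrm{out}} U_{\epsilon}$ decomposes into the piece along $\partial B_{\epsilon'}(x) \times (0,\epsilon)$ (which I address in the next step) and the piece along $B_{\epsilon'}(x) \times \{\epsilon\}$, where the conormal points in the $+d\tau$ direction. The latter does not meet $\dot\ss(\SF)$ by the $\bR_{>0}$-noncharacteristic hypothesis, which precisely asserts $\dot\ss(\SF) \cap (0_{M} \times T^{*}\bR_{>0}) = \varnothing$.

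\textbf{Independence in $\epsilon'$.} The pdff structure on $\psi(\dot\ss_{\pi}(\SF))$ supplies a positive contact isotopy of $S^{*}_{x}M$ and an associated family of balls $B_{s}(x)$ with $\dot N^{*}_{\mathrm{out}}\partial B_{s}(x) \cap \psi(\dot\ss_{\pi}(\SF)) = \varnothing$ for all small $s>0$. Since $\psi(\dot\ss_{\pi}(\SF))$ is defined as $\ol{\Pi(\dot\ss(\SF))} \cap T^{*}M$, disjointness from this limit set is an open condition on the compact sphere bundle $S^{*}M$; hence by continuity and the compactness of the family of conormal directions involved, it persists to disjointness from the full projection of $\dot\ss(\SF)|_{M \times (0,\epsilon)}$ once $\epsilon$ is taken sufficiently small. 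Applying Lemma \ref{lem: non-char} to the increasing family $V_{s} = B_{s}(x) \times (0,\epsilon)$ for $s$ ranging from $0$ to $\epsilon'$, with outward conormal handled along $\partial B_{s}(x) \times (0,\epsilon)$ by this persistence argument and along $B_{s}(x) \times \{\epsilon\}$ by the $\bR_{>0}$-noncharacteristic hypothesis as before, yields independence in $\epsilon'$ and completes the identification with the stalk.

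\textbf{Main obstacle.} The delicate step is the passage from the asymptotic condition (pdff of the closure of the projection at the boundary $t=0$) to an actual non-characteristic deformation statement on the open collar $M \times (0,\epsilon)$. This is where the closure operation in the definition of $\psi(\dot\ss_{\pi}(\SF))$ is essential: one needs that any sequence $(x_{n},t_{n},\xi_{n}) \in \dot\ss(\SF)$ with $t_{n} \to 0$ and $\xi_{n}$ approaching a forbidden outward codirection would produce a limit point in $\psi(\dot\ss_{\pi}(\SF))$, contradicting pdff. After this compactness-plus-closure argument, everything reduces to two clean applications of Lemma \ref{lem: non-char}.
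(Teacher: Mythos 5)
The paper records this lemma by citation to \cite[Corollary 4.4]{Nadler-Shende} and does not supply its own proof, so I assess your argument on its own terms. The tools you reach for — the colimit description of the stalk of $j_*\SF$, Lemma~\ref{lem: non-char}, and the distinguished family of balls $B_s(x)$ furnished by the pdff condition (Definition~\ref{pdff ball}) — are exactly the right ones, and your ``Main obstacle'' paragraph correctly identifies where the real work lies. However, as written the proof has two genuine gaps.

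\textbf{The ordering of the two deformations is circular.} In your first step you apply the open version of Lemma~\ref{lem: non-char} to $U_\epsilon = B_{\epsilon'}(x) \times (0,\epsilon)$ with $\epsilon'$ fixed, but that lemma requires the \emph{entire} outward conormal $\dot N^*_{\mathrm{out}}U_\epsilon$ to be disjoint from $\dot\ss(\SF)$; you verify only the top piece along $B_{\epsilon'}(x)\times\{\epsilon\}$ and ``defer'' the side wall to step two. The deferral cannot be granted: step two concerns a different family ($V_s = B_s(x)\times(0,\epsilon)$, varying $s$ with $\epsilon$ fixed), so it does not establish the hypothesis needed for step one. You must first establish, for a fixed small $\epsilon'$, that there is $\epsilon_0>0$ with $\dot N^*_{\mathrm{out}}\partial B_{\epsilon'}(x) \cap \Pi(\dot\ss(\SF))\big|_{(0,\epsilon_0)} = \varnothing$ --- \emph{then} step one goes through for all $\epsilon < \epsilon_0$. (Note that once the side condition is phrased via the projection $\Pi$, the corners are automatically handled: $\bR_{>0}$-noncharacteristicity ensures every covector in $\dot\ss(\SF)$ has nonzero $M$-component, so its image under $\Pi$ lands in $\dot T^*M$, and the mixed corner conormals project into $\dot N^*_{\mathrm{out}}\partial B_{\epsilon'}(x)$.)

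\textbf{The persistence/compactness claim in step two is too strong as stated.} You assert that disjointness from the limit set $\psi(\dot\ss_\pi(\SF))$ ``persists to disjointness from the full projection of $\dot\ss(\SF)\big|_{M\times(0,\epsilon)}$ once $\epsilon$ is taken sufficiently small,'' and then apply Lemma~\ref{lem: non-char} to the family $V_s$ for \emph{all} $s$ ranging down to $0$ with a single fixed $\epsilon$. But the set $\bigcup_{0<s\leq \epsilon'}\dot N^*_{\mathrm{out}}\partial B_s(x)$ has $S^*_x M$ in its closure, and the pdff condition gives no disjointness \emph{at} $s=0$ (the isotopy passes through $S^*_xM$, which may very well meet $\psi(\dot\ss_\pi(\SF))$). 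So the compactness argument does not yield a single $\epsilon$ that works uniformly for all small radii $s$; the threshold $\epsilon_0(s)$ may degenerate as $s\to 0$. The fix is either an iterated argument — establish the disjointness for radii in a compact interval $[\epsilon'/2,\epsilon']$ (where compactness does give a uniform $\epsilon_0$), shrink $\epsilon'$ first over $[\epsilon'/2,\epsilon']$, then shrink $\epsilon$, then pass to $[\epsilon'/4,\epsilon'/2]$, and so on — or a suitable simultaneous two-parameter deformation. Either repair recovers the claimed identification of the stalk, but the two-big-steps organization you propose does not close on its own.

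The closed-ball statement is, as you say, parallel via the second clause of Lemma~\ref{lem: non-char}, once the above issues are repaired.
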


\subsection{Commuting nearby cycles}

    We recall some general considerations around the commutativity of nearby cycles for multi-parametric families.
    Consider a sheaf $\SF \in \Sh(M \times \bR_{>0}^2)$. We write
    $$ M \times 0 \times 0 \xrightarrow{i} M \times \bR^2 \xleftarrow{j} M \times \bR_{>0}^2.$$
    Consider now 
    \begin{gather*}
    M \times 0 \times 0 \xrightarrow{i_1} M \times \bR \times 0 \xleftarrow{j_1} M \times \bR_{>0} \times 0,\\
    M \times \bR_{>0} \times 0 \xrightarrow{\overline{i}_1} M \times \bR_{>0} \times \bR \xleftarrow{\overline{j}_1} M \times \bR_{>0} \times \bR_{>0},
    \end{gather*}
    and by abuse of notations
    \begin{gather*}
    M \times \bR \times 0 \xrightarrow{\overline{i}_1} M \times \bR \times \bR \xleftarrow{\overline{j}_1} M \times \bR \times \bR_{>0},
    \end{gather*}
    We have the following micro-support condition for a base change: 

\begin{lemma}[{\cite[Proposition 3.3]{LiCobordism}, {\cite[Proposition 1.2]{LiFunctoriality}}}]\label{nearby basechange}
    Let ${\SF} \in \Sh(M \times \bR_{\geq 0} \times \bR_{>0})$ be a sheaf such that $\dot{\ss}(\SF)$ is $\bR_{>0}^2$-non-characteristic, $\overline{i}_1^\# \dot{\ss}(\SF)$ is $\bR_{>0}$-non-characterstic, and $\psi(\dot{\ss}_\pi(\SF))$ is pdff. Then there is a natural isomorphism of sheaves 
    $$\overline{i}^*_1\overline{j}_{2*}{\SF} \simeq \overline{j}_{2*}\overline{i}^*_1{\SF}.$$
\end{lemma}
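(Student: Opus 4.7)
The plan is to verify the base change isomorphism stalkwise, exploiting the pdff hypothesis to identify both stalks with concrete sections of $\SF$ over a common nearby region. First I would construct the natural candidate morphism $\overline{i}_1^*\overline{j}_{2*}\SF \to \overline{j}_{2*}\overline{i}_1^*\SF$ as the standard base change transformation coming from the unit of the adjunction $(\overline{j}_2^*, \overline{j}_{2*})$, and then check it is a stalkwise isomorphism.

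For the left-hand side $\overline{i}_1^*\overline{j}_{2*}\SF$, the hypothesis that $\psi(\dot{\ss}_\pi(\SF))$ is pdff activates Lemma \ref{stalk}, which identifies the stalk with $\Gamma(B \times (0,\epsilon''],\, \SF)$ where $B$ is a distinguished ball in the sense of Definition \ref{pdff ball}. For the right-hand side $\overline{j}_{2*}\overline{i}_1^*\SF$, I would first apply the microsupport estimate of Formula \eqref{lem: ss-pull back} to obtain $\dot{\ss}(\overline{i}_1^*\SF) \subset \overline{i}_1^\# \dot{\ss}(\SF)$; the hypothesis that $\overline{i}_1^\# \dot{\ss}(\SF)$ is $\bR_{>0}$-non-characteristic then guarantees that $\overline{i}_1^*\SF$ satisfies the hypothesis of Lemma \ref{stalk}, so that its nearby-cycle stalk also reduces to sections of $\SF$ over the same box. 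The global $\bR_{>0}^2$-non-characteristic hypothesis is what allows the non-characteristic deformation lemma (Lemma \ref{lem: non-char}) to extend sections along each positive coordinate direction, ensuring that both identifications land on the same $\Gamma(B \times (0,\epsilon''],\, \SF)$.

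The main obstacle will be ensuring that the pdff property for $\psi(\dot{\ss}_\pi(\SF))$ transfers to a pdff property for $\psi(\dot{\ss}_\pi(\overline{i}_1^*\SF))$, so that Lemma \ref{stalk} can be invoked a second time on the right-hand side. The $\bR_{>0}$-non-characteristic condition on $\overline{i}_1^\# \dot{\ss}(\SF)$ is precisely what controls the limiting microsupport after restriction and so preserves the pdff hypothesis. Once these microsupport bookkeeping steps are carried out, both stalks are identified with $\Gamma(B \times (0,\epsilon''],\, \SF)$, and the base change map must be the identity under this identification, yielding the isomorphism.
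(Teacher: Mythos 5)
The paper does not supply its own proof of this lemma; it is recalled from \cite{LiCobordism,LiFunctoriality}, so there is nothing internal to compare against directly. Your overall strategy (construct the standard base-change transformation, evaluate both stalks at a point with $s_1=s_2=0$ via microsupport estimates, deduce the map is a stalkwise isomorphism) is the right one and is consistent with how the paper argues its own parallel statement, Lemma \ref{lem: non-proper base change compose}.

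That said, your write-up glosses over the point that actually carries the weight of the statement. You describe both stalks as ``$\Gamma(B\times(0,\epsilon''],\SF)$,'' but the two stalks are not tautologically the same object: applying Lemma \ref{stalk} to $\overline{j}_{2*}\SF$ (with base $M\times\bR_{\ge 0}$) identifies the left-hand stalk at $(x,0,0)$ with sections over a \emph{box} $\overline{B}_{\epsilon'}(x)\times[0,\delta_1]\times(0,\epsilon]$, whereas the right-hand stalk is sections of $\overline{i}_1^*\SF$ over the \emph{slice} $\overline{B}_{\epsilon'}(x)\times 0\times(0,\epsilon]$. The content of the lemma is precisely that restriction from the box to the slice is an isomorphism. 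You correctly invoke the non-characteristic deformation lemma here, and $\bR_{>0}^2$-non-characteristicity does ensure $\dot{\ss}(\SF)$ misses the outward conormals of $\{s_1<\delta_1\}$ as $\delta_1\to 0$. But you do not flag that the region is non-compact in the $s_2$-direction, which is exactly where the hypothesis that $\psi(\dot{\ss}_\pi(\SF))$ is pdff is needed: it lets one choose $\overline{B}_{\epsilon'}(x)$ to be a distinguished ball (Definition \ref{pdff ball}), so that the deformation argument closes up in spite of the open end $(0,\epsilon]$. Without addressing this, the appeal to Lemma \ref{lem: non-char} is incomplete.

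You also identify the ``main obstacle'' as transferring the pdff property from $\psi(\dot{\ss}_\pi(\SF))$ to $\psi(\dot{\ss}_\pi(\overline{i}_1^*\SF))$. That step is comparatively routine: the estimate $\dot{\ss}(\overline{i}_1^*\SF)\subset\overline{i}_1^\#\dot{\ss}(\SF)$ together with the definitions yields a containment of the limiting microsupports, and a subset of a pdff set is pdff with the same displacing isotopy. The genuinely delicate issue is the non-compactness point above, not the pdff transfer.
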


    Consider now also 
    \begin{gather*}
    M \times 0 \times 0 \xrightarrow{i_1} M \times \bR \times t \xleftarrow{j_1} M \times \bR_{>0} \times 0,\\
    M \times \bR_{>0} \times 0 \xrightarrow{\overline{i}_1} M \times \bR_{>0} \times \bR \xleftarrow{\overline{j}_1} M \times \bR_{>0} \times \bR_{>0},
    \end{gather*}
    We denote the nearby cycle functors as: 
    $$\psi_{12} = i^*j_*, \; \psi_k = i_k^*j_{k *}, \; \overline{\psi}_k = \overline{i}_k^* \overline{j}_{k *}.$$
    We deduce the following variant (weakening the constructibility hypotheses, which we only impose on the limiting microsupport; see \cite{LiCobordism,LiFunctoriality}) of a lemma on commutativity of nearby cycles \cite{Kochersperger-Nearby,Maisonobe-Nearby, Nadler-Nearby}.

\begin{lemma}[{\cite[Theorem 4]{Maisonobe-Nearby}, \cite[Theorem 1.1.1]{Nadler-Nearby}, \cite[Section 3.2.2]{LiCobordism}}] \label{nearby commute Nadler}
    Let $\SF \in \Sh(M \times \bR_{>0}^2)$ be a sheaf such that $\dot{\ss}(\SF)$ is $\bR_{>0}^2$-non-characteristic, $\overline{\psi}_1(\dot{\ss}(\SF))$, $\overline{\psi}_2(\dot{\ss}(\SF))$ are $\bR_{>0}$-non-characterstic, and $\psi(\dot{\ss}_\pi(\SF))$ is pdff. Then
    $$\psi_{12}\SF \simeq \psi_1 \overline{\psi}_2 \SF \simeq \psi_2 \overline{\psi}_1 \SF.$$
\end{lemma}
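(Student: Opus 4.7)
The plan is to reduce to iterated applications of the base-change Lemma \ref{nearby basechange}. By symmetry (swapping the two $\bR_{>0}$ factors), it suffices to produce the first isomorphism $\psi_{12}\SF \simeq \psi_1\overline{\psi}_2\SF$; the second follows by the analogous argument with the roles of the variables reversed.

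Factor the embeddings so as to address one parameter at a time. The open embedding $j$ factors as $j = j' \circ \overline{j}_2$, first extending one factor to $\bR$ while keeping the other in $\bR_{>0}$, and then extending the other; similarly the closed embedding $i$ factors through a one-parameter restriction $i_1$ composed with an auxiliary closed embedding $i'$. Then
\[
\psi_{12}\SF \;=\; i_1^*\,{i'}^*\,j'_*\,\overline{j}_{2*}\SF, \qquad
\psi_1\overline{\psi}_2\SF \;=\; i_1^*\,j_{1*}\,\overline{i}_2^*\,\overline{j}_{2*}\SF.
\]
Thus the claim reduces to the base-change interchange ${i'}^*\,j'_* \simeq j_{1*}\,\overline{i}_2^*$ applied to the intermediate sheaf $\overline{j}_{2*}\SF$ on $M \times \bR \times \bR_{>0}$, in the cartesian square built from the open embedding $\bR_{>0} \hookrightarrow \bR$ in one factor and the closed embedding $\{0\} \hookrightarrow \bR$ in the other.

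This is exactly the content of Lemma \ref{nearby basechange}. Verifying its hypotheses on $\overline{j}_{2*}\SF$ amounts to a microsupport calculation: the $\bR_{>0}^2$-non-characteristicity follows from the assumption on $\dot{\ss}(\SF)$ via the pushforward estimate \eqref{lem: ss-push forward}; the requirement that ${i'}^\#$ of $\dot{\ss}(\overline{j}_{2*}\SF)$ be $\bR_{>0}$-non-characteristic is, by Lemma \ref{lem: ss-nearby-cycle}, exactly the hypothesis that $\overline{\psi}_2(\dot{\ss}(\SF))$ is $\bR_{>0}$-non-characteristic; and the pdff condition on the full limit is given directly. Applying Lemma \ref{nearby basechange} and composing with $i_1^*$ then produces the desired natural isomorphism.

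The main obstacle is the microsupport bookkeeping through the intermediate pushforward: the estimate \eqref{lem: ss-push forward} is only an inclusion, so one must carefully check that the added ``boundary'' contribution from $N^*_{in}U$ does not obstruct non-characteristicity at the next stage. This is ensured precisely by the $\bR_{>0}^2$-non-characteristic hypothesis, which controls $\dot{\ss}(\SF)$ in the codirections conormal to the $\bR_{>0}$-boundary factors. Once this is checked, the output of Lemma \ref{nearby basechange} assembles into the claimed isomorphism, completing the argument.
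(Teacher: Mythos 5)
Your proposal is correct and follows the same route as the paper: factor $i$ and $j$ through the one-parameter intermediate manifolds, reduce to a single interchange of restriction-to-the-slice past pushforward-of-one-factor applied to $\overline{j}_{2*}\SF$, and invoke Lemma~\ref{nearby basechange}. The paper's proof is just the one-line chain $i_1^*j_{1*}\overline{i}_2^*\overline{j}_{2*}\SF \simeq i_1^*\overline{i}_2^*j_{1*}\overline{j}_{2*}\SF \simeq i^*j_*\SF$; your version additionally spells out the microsupport verifications left implicit there (with the minor caveat that Lemma~\ref{lem: ss-nearby-cycle} gives an inclusion rather than an equality, so the hypothesis on $\overline{\psi}_2(\dot\ss(\SF))$ is a sufficient—not literally ``exact''—condition for the required non-characteristicity).
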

\begin{proof}
    We have the following isomorphisms
    \begin{equation*}
    i_1^*j_{1*}\ol{i}_2^*\ol{j}_{2*}\SF \xrightarrow{\ref{nearby basechange}} i_1^*\ol{i}_2^*j_{1*}\ol{j}_{2*}\SF \xrightarrow{\sim} i^*j_*\SF. \qedhere
    \end{equation*}
\end{proof}

    More generally, let $\mathrm{Flag}_k$ be the set of partial flags in the simplex $[k]$ with partial orders (which encodes all the different ways to take the $k$-parametric nearby cycle functors). Then:

\begin{corollary}\label{rem: nearby commute higher}
    Consider $\SF \in \Sh(M \times \bR_{>0}^k)$ such that $\dot\SS(\SF)$ is $\bR_{>0}^k$-non-characteristic, $\ol{\psi}_{j_1\dots j_l}(\SS(\SF))$ are $\bR_{>0}^{k-l}$-non-characteristic, and such that $\psi(\dot\SS_\pi(\SF))$ is pdff. Then for any permutation $j_1, j_2, \dots, j_k$ any $1< l_1 < \dots < l_r < k$, we have natural isomorphisms 
    $$\psi_{12\cdots k}\SF \simeq \psi_{j_1j_2\cdots j_{l_1}}\ol{\psi}_{j_{l_1+1}\cdots j_{l_2}} \dots \ol{\psi}_{j_{l_r+1}\cdots j_k}\SF,$$
    that fit into a coherent diagram $\mathrm{Flag}_k \to \Fun^{ex}(\Sh(M \times \bR_{>0}^k), \Sh(M))$, where the partial flag given by $j_1, j_2, \dots, j_k$ and $1< l_1 < \dots < l_r < k$ is sent to the right hand side of the equation.
\end{corollary}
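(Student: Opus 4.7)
The plan is to induct on $k$. The base case $k=2$ is precisely Lemma \ref{nearby commute Nadler}, which already produces the two distinct factorizations and identifies them with $\psi_{12}\SF$ via the base change isomorphism of Lemma \ref{nearby basechange}. For $k\ge 3$, the key point is that each partial nearby cycle $\overline{\psi}_{j_1\cdots j_l}$ is just a composition of a pullback along a closed inclusion with a pushforward along an open inclusion, in variables disjoint from the remaining $\bR_{>0}$-factors; so the identity $\psi_{12\cdots k} \simeq \psi_{j_1\cdots j_{l_1}} \overline{\psi}_{j_{l_1+1}\cdots j_{l_2}} \cdots \overline{\psi}_{j_{l_r+1}\cdots j_k}$ follows by iteratively commuting $\ol{i}^*$ past $\ol{j}_*$.

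First I would verify the \emph{persistence} of the microsupport hypotheses: applying Lemma \ref{lem: ss-nearby-cycle} to $\overline{\psi}_{j_{l_r+1}\cdots j_k}\SF$ (viewed as a sheaf on $M\times \bR_{>0}^{l_r}$), the assumption that $\ol{\psi}_{j_{l_r+1}\cdots j_k}(\SS(\SF))$ is $\bR_{>0}^{l_r}$-non-characteristic exactly guarantees the hypothesis needed to continue; and $\psi(\dot\SS_\pi(\SF))$ remains pdff by construction, since it is intrinsic to the limiting projection. This lets me iterate the base-change style manipulation.

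Second, for a fixed flag I would factor the ambient inclusions into a sequence of block inclusions
\[
M\times \bR_{>0}^k \xrightarrow{\overline{j}_{r}} M\times \bR_{>0}^{k-l_r}\times \bR^{l_r} \xrightarrow{\overline{j}_{r-1}} \cdots \xrightarrow{j_1} M,
\]
and the corresponding closed inclusions, and then apply Lemma \ref{nearby basechange} repeatedly block by block to slide each closed pullback past the outer open pushforward. Each application uses the non-characteristic hypothesis on the appropriate partial nearby cycle of $\dot\SS(\SF)$, together with the pdff hypothesis for the final step. Composing all the base change isomorphisms yields the stated equivalence.

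Third, for the coherence statement, I would interpret $\mathrm{Flag}_k$ as a poset under refinement (with the finest flag corresponding to the ordered one-at-a-time nearby cycle and the coarsest to $\psi_{12\cdots k}$ itself), and assemble the block-by-block identifications into a functor to $\Fun^{ex}(\Sh(M\times\bR_{>0}^k),\Sh(M))$ by taking, for a refinement $F' \le F$, the comparison map built from the same base-change isomorphisms restricted to the common refinement. The main obstacle here is bookkeeping the higher coherence: one must check that the pentagon and higher compatibilities among different orders of base change hold simulaneously. The cleanest way I see to carry this out is to realize all the functors involved as compositions of six-functor morphisms along the commutative diagram of inclusions $\prod_{i\in S}\{0\}\times \prod_{i\notin S}\bR_{>0}\hookrightarrow \prod_i \bR$ indexed by subsets $S\subseteq[k]$, and appeal to the standard $\infty$-categorical base-change coherence in the six-functor formalism \cite{Volpe-six-operations,Scholze-sixfunctor}: the microsupport hypotheses ensure that every face of this cube satisfies the noncharacteristic version of base change, so the functor $\mathrm{Flag}_k\to \Fun^{ex}$ is built tautologically from the cube, with each edge an isomorphism for our $\SF$.
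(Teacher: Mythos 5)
Your proposal takes essentially the same route as the paper: factor the nearby cycle along the cube of inclusions indexed by subsets of $[k]$, iterate Lemma~\ref{nearby basechange} to identify each flagged factorization with $\psi_{12\cdots k}\SF$, and let the higher coherence come from formal nonsense after checking the edges are invertible. The one place your argument is imprecise is the appeal to ``standard $\infty$-categorical base-change coherence in the six-functor formalism'': the relevant transformations $\ol i^{\,*}\,\ol j_{\,*}\to j_{*}\,\ol i^{\,*}$ are \emph{not} the coherent base-change isomorphisms built into the $6$-functor package (they are non-proper and generically non-invertible mate maps), so what one actually needs is the result that a diagram of adjunctions assembles coherently into a diagram of \emph{oplax} natural transformations, namely \cite[Corollary~F]{HaugsengHebestreitLinskensNuiten} (cf.~\cite[Appendix~A, §12.3]{Gaitsgory-Rozenblyum}), which is exactly the tool the paper invokes; Lemma~\ref{nearby basechange} then upgrades every edge of that oplax diagram to an isomorphism under your (correctly identified) persisting microsupport hypotheses.
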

\begin{proof}
    We have a diagram of functors with (non-proper) base changes induced by adjunctions as in Figure \ref{fig:nearby_commutative}, which defines a diagram of functors with (op)lax natural transformations by \cite[Corollary F]{HaugsengHebestreitLinskensNuiten}; see also \cite[Appendix A Section 12.3]{Gaitsgory-Rozenblyum}.
    Then by applying Lemma \ref{nearby basechange} iteratively as in \cite[Section 3.2.2]{Nadler-Nearby}, we obtain a commutative diagram of functors where all the natural transformations are invertible. 
\end{proof}

\begin{figure}
    \centering
    \includegraphics[width=0.95\linewidth]{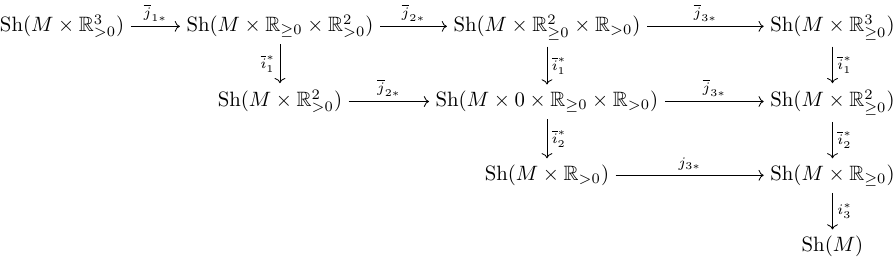}
    \caption{Part of the diagram of non-proper base changes that appears in the commutativity of the parametric nearby cycle functors, where the natural transformation in each square is induced by adjunction of six functors.}
    \label{fig:nearby_commutative}
\end{figure}
    
    We also have the following lemma, where 
    for $t \in \bR_{>0}$, we write
    \begin{gather*}
    M \times \bR_{>0} \times t \xrightarrow{i_{1,t}} M \times \bR \times t \xleftarrow{j_{1,t}} M \times \bR_{>0} \times t.
    \end{gather*}

\begin{lemma}[{\cite[Section 3.2.2]{LiCobordism}}]\label{nearby commute with restrict}
    Let $\SF \in \Sh(M \times \bR_{>0}^2)$ be such that $\dot \ss(\SF)$ is $\bR_{>0}^2$-non-characteristic, and $\overline{\psi}_1(\dot \ss_\pi(\SF))$ is $\bR_{>0}$-non-characterstic and pdff. 
    Then
    $$i_{2,t}^*\overline{\psi}_1 \SF = \psi_1(i_{2,t}^*\SF).$$
\end{lemma}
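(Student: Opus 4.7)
The plan is to reduce the assertion to a base-change identity, verify non-characteristicity of the slice inclusion, and then invoke the non-characteristic deformation lemma.

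First, I would unpack both sides in terms of elementary functors. Writing coordinates $(x,s,t) \in M \times \bR_{>0} \times \bR_{>0}$ with $\overline{\psi}_1$ taking nearby cycles in $s$, the LHS equals $i_{2,t_0}^* \overline{i}_1^* \overline{j}_{1*}\SF$ and the RHS equals $i_1^* j_{1*} i_{2,t_0}^* \SF$. The two closed inclusions $\overline{i}_1$ (at $\{s = 0\}$) and $i_{2,t_0}$ (at $\{t = t_0\}$) sit in orthogonal coordinate directions and commute, so it suffices to establish the base-change isomorphism
$$\widetilde{i}_{2,t_0}^* \, \overline{j}_{1*} \SF \simeq j_{1*} \, i_{2,t_0}^* \SF,$$
where $\widetilde{i}_{2,t_0}: M \times \bR \times \{t_0\} \hookrightarrow M \times \bR \times \bR_{>0}$ is the slice inclusion without imposing $\{s = 0\}$, and $\overline{j}_1$ (resp.\ $j_1$) is the open $\{s > 0\}$ inclusion in the ambient (resp.\ sliced) manifold. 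The adjunction produces a canonical comparison map; the content is to verify it is invertible.

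The key step is to show that $\widetilde{i}_{2,t_0}$ is non-characteristic for $\overline{j}_{1*}\SF$. By the microsupport estimate for open pushforward (Formula \eqref{lem: ss-push forward}),
$$\dot{\ss}(\overline{j}_{1*}\SF) \;\subset\; \dot{\ss}(\SF) \,\widehat+\, N^*_{\mathrm{in}}\{s > 0\}.$$
Since $N^*_{\mathrm{in}}\{s > 0\}$ is carried entirely in the $ds$-direction, any covector in this $\widehat+$ sum whose $M$- and $ds$-components both vanish must arise as a limit of covectors in $\dot{\ss}(\SF)$ whose $M$- and $ds$-components tend to zero as the base point approaches $\{s = 0\}$. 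By construction such limits are recorded in $\overline{\psi}_1(\dot{\ss}_\pi(\SF))$ (per Formula \eqref{eq: nearby subset}), and the hypothesis that this limit microsupport is $\bR_{>0}$-non-characteristic then forbids any nonzero $dt$-component there. Combined with the ambient $\bR_{>0}^2$-non-characteristicity of $\dot{\ss}(\SF)$, this yields $N^*(\{t = t_0\}) \cap \dot{\ss}(\overline{j}_{1*}\SF) \subset 0$, i.e., $\widetilde{i}_{2,t_0}$ is non-characteristic for $\overline{j}_{1*}\SF$.

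With non-characteristicity in hand, the required base-change isomorphism follows by applying the non-characteristic deformation Lemma \ref{lem: non-char} to the family of open strips $U_r = \{|t - t_0| < r\}$ shrinking to the slice $\{t = t_0\}$; alternatively, one may appeal directly to Kashiwara--Schapira's $f^\#$ framework. The pdff hypothesis enters to identify stalks via sections over small product balls (cf.\ Lemma \ref{stalk}), confirming the isomorphism pointwise. The main obstacle is precisely the $\widehat+$ limiting step: the microsupport of an open pushforward can grow through ``Hausdorffification'' limits, and the content of the $\overline{\psi}_1$-non-characteristic assumption is to control exactly those limits in the $dt$-direction that would otherwise obstruct the slice being non-characteristic.
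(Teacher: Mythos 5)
The paper's proof is a one-line composition: it reduces the claim to the base-change isomorphism $j_{1*}\, i_{2,t}^* \simeq \overline{i}_{2,t}^*\, \overline{j}_{1*}$ and cites the previously established Lemma~\ref{nearby basechange}; you perform the same reduction and then re-prove the base change directly from scratch. The two approaches are essentially the same at the conceptual level — your steps (identify stalks via the pdff hypothesis, verify non-characteristicity of the slice inclusion, apply non-characteristic deformation) are precisely what the proof of that base-change lemma does (compare the analogous Lemma~\ref{lem: non-proper base change compose}). Your closing observation that the $\widehat+$-limiting (``Hausdorffification'') of the pushed-forward microsupport is the crux, and that the hypothesis on $\overline{\psi}_1(\dot\ss_\pi(\SF))$ exists to tame those limits in the $dt$-direction, is a correct and useful summary of the content of the lemma.

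There is, however, a concrete misstep in the microsupport analysis, and it matters for the logic of the argument even though the conclusion survives. You assert that ``any covector in this $\widehat+$ sum whose $M$- and $ds$-components both vanish must arise as a limit of covectors in $\dot{\ss}(\SF)$ whose $M$- and $ds$-components tend to zero.'' The claim about the $ds$-component is false. In the definition of $\widehat+$, the witnessing covector $(\xi_n, \sigma_n, \eta_n) \in \dot{\ss}(\SF)$ need only satisfy $\xi_n \to 0$ and $\eta_n \to \eta$; the $ds$-component $\sigma_n$ may diverge to $\pm\infty$, because the summand from $N^*_{\mathrm{in}}\{s > 0\}$ acts purely in the $ds$-direction and can cancel it (subject to $|x_n - y_n|\,|(\xi_n,\sigma_n,\eta_n)| \to 0$). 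As written, your argument only treats the bounded-$\sigma_n$ case. The argument can be repaired with a small change of emphasis: the hypothesis is imposed on $\overline{\psi}_1(\dot\ss_\pi(\SF))$, and the relative microsupport $\ss_\pi$ already projects out the $ds$-direction, so the limit $(0,\eta)$ of the $(\xi_n,\eta_n)$-components lands in $\overline{\psi}_1(\dot\ss_\pi(\SF))$ regardless of what $\sigma_n$ does — it is the $\pi$-projection, not any decay of $\sigma_n$, that makes the hypothesis applicable. If you replace ``whose $M$- and $ds$-components tend to zero'' with ``whose $M$- and $dt$-components converge (the $ds$-component being irrelevant after the $\ss_\pi$-projection),'' the argument closes correctly and matches the paper's.
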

\begin{proof}
    We have the following isomorphisms
    \begin{equation*}
    i_1^*j_{1*}{i}_{2,t}^*\SF \xrightarrow{\ref{nearby basechange}} i_1^*\ol{i}_{2,t}^*\ol{j}_{1*}\SF \xrightarrow{\sim} i_{2,t}^*\ol{i}_1\ol{j}_{1*}\SF. \qedhere
    \end{equation*}
\end{proof}

\subsection{Nearby cycles and exterior tensor products}
    For $\SF_1 \in \Sh(M_1 \times \bR_{>0})$ and $\SF_2 \in \Sh(M_2 \times \bR_{>0})$, we write
    $$\pi_1: M_1 \times M_2 \times \bR_{>0} \to M_1 \times \bR_{>0}, \quad \pi_2: M_1 \times M_2 \times \bR_{>0} \to M_2 \times \bR_{>0}$$
    and define the exterior tensor product relative to $\bR_{>0}$ by
    $$\SF_1 \boxtimes_{\bR_{>0}} \SF_2 = \pi_1^*\SF_1 \otimes \pi_2^*\SF_2.$$

    A similar notion for Hom was considered in \cite{Nadler-Shende}, where the following was established: 

\begin{proposition}[{\cite[Theorem 4.2]{Nadler-Shende}}]\label{nearby cycle hom product}
    Let $\SF_1 \in \Sh(M_1 \times \bR_{>0})$ and $\SF_2 \in \Sh(M_2 \times \bR_{>0})$. Suppose $\dot{\ss}(\SF_1)$ and $\dot{\ss}(\SF_2)$ are $\bR_{>0}$-non-characteristic and $\psi_1(\dot{\ss}_\pi(\SF_1))$ and $\psi_2(\dot{\ss}_\pi(\SF_2))$ pdff. Then there is an isomorphism of sheaves on $M_1 \times M_2$:
    $$\psi_{12}\sHom^{\boxtimes/\bR_{>0}}(\SF_1, \SF_2) \simeq \sHom^\boxtimes(\psi_1 \SF_1 , \psi_2\SF_2).$$
\end{proposition}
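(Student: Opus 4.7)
The plan is to reduce the identity to a stalkwise comparison via Lemma~\ref{stalk}, then match both sides using the hom-tensor adjunction together with a Fubini-type exchange for sections that is enabled by the $\bR_{>0}$-non-characteristicity and pdff hypotheses.

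First I would do the microsupport bookkeeping. Writing $p_i \colon M_1 \times M_2 \times \bR_{>0} \to M_i \times \bR_{>0}$, the estimates \eqref{lem: ss-pull back} and \eqref{lem: ss-hom}, applied fiberwise over $\bR_{>0}$, give
\[
\dot\ss(\sHom^{\boxtimes/\bR_{>0}}(\SF_1, \SF_2)) \subset -\,p_1^{\#}\dot\ss(\SF_1) \,\widehat+\, p_2^{\#}\dot\ss(\SF_2),
\]
and $\bR_{>0}$-noncharacteristicity of each $\dot\ss(\SF_i)$ transfers to $\bR_{>0}$-noncharacteristicity of this sum. Taking limits as $t \to 0$, the projected limit microsupport lands inside $-\psi_1(\dot\ss_\pi(\SF_1)) \times \psi_2(\dot\ss_\pi(\SF_2))$, which is pdff because the product of pdff subsets in $S^*M_1$ and $S^*M_2$ is pdff in $S^*(M_1 \times M_2)$ (displace each factor by its own positive Legendrian isotopy). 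So Lemma~\ref{stalk} applies to the left hand side.

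Second, I would write out the stalk of each side at a point $(x_1,x_2) \in M_1 \times M_2$. Lemma~\ref{stalk} gives, for $\epsilon, \epsilon' > 0$ sufficiently small,
\[
(\psi\sHom^{\boxtimes/\bR_{>0}}(\SF_1, \SF_2))_{(x_1,x_2)} \simeq \Gamma\bigl(\overline{B}_{\epsilon'}(x_1) \times \overline{B}_{\epsilon'}(x_2) \times (0,\epsilon],\ \sHom^{\boxtimes/\bR_{>0}}(\SF_1,\SF_2)\bigr),
\]
and similarly $(\psi_i \SF_i)_{x_i} \simeq \Gamma(\overline{B}_{\epsilon'}(x_i) \times (0,\epsilon],\ \SF_i)$. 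Since the microsupports of $\psi_1 \SF_1$ and $-\psi_2\SF_2$ are transverse (they sit in $T^*M_1 \times 0$ and $0 \times T^*M_2$ after external product), the external $\sHom$ on the right hand side computes at the product stalk as the absolute $\Hom$ of stalks, giving
\[
\sHom^{\boxtimes}(\psi_1 \SF_1, \psi_2 \SF_2)_{(x_1,x_2)} \simeq \Hom\bigl((\psi_1\SF_1)_{x_1},\ (\psi_2\SF_2)_{x_2}\bigr).
\]

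Third, I would compare the two. Expanding $\sHom^{\boxtimes/\bR_{>0}}(\SF_1,\SF_2)$ as an internal Hom fibered over $\bR_{>0}$, the global sections in Step~2 rewrite, via the $(p_1^*, p_{1*})$ adjunction and smooth base change along the projection to $M_2 \times \bR_{>0}$, as
\[
\Hom\bigl(\SF_1|_{\overline{B}_{\epsilon'}(x_1) \times (0,\epsilon]},\ q_* \SF_2|_{\overline{B}_{\epsilon'}(x_2) \times (0,\epsilon]}\bigr),
\]
where $q$ collapses $\overline{B}_{\epsilon'}(x_2)$ to a point; this matches the RHS stalk expression once one identifies the pushforward with $\Gamma(\overline{B}_{\epsilon'}(x_2) \times (0,\epsilon],\SF_2)$. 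The natural map between the two expressions is constructed by tensor-hom adjunction, and I would verify that it is an isomorphism by chasing the identifications.

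The main obstacle is the final identification, which is a Fubini-type exchange between taking sections over a product neighborhood and taking $\Hom$ between sections over the individual neighborhoods. Such an exchange can fail for derived Hom in general, but here it is enabled by the fact that the pdff condition makes the small balls $\overline{B}_{\epsilon'}$ form a cofinal basis computing nearby stalks, and the non-characteristic deformation lemma (Lemma~\ref{lem: non-char}) promotes this cofinality into a clean commutation of sections with derived Hom in each coordinate separately. This is exactly the point where all three hypotheses (noncharacteristicity in $\bR_{>0}$, pdff of the limit, and eventual compactness/support of the stalks) come together.
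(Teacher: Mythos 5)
There is a genuine gap at Step~2. You assert that
\[
\sHom^{\boxtimes}(\psi_1 \SF_1, \psi_2 \SF_2)_{(x_1,x_2)} \;\simeq\; \Hom\bigl((\psi_1\SF_1)_{x_1},\ (\psi_2\SF_2)_{x_2}\bigr),
\]
justified by transversality of the microsupports, but this is not how stalks of $\sHom$ behave. Writing $\sHom^\boxtimes(\SG_1,\SG_2)=\sHom(p_1^*\SG_1,p_2^!\SG_2)$ and computing sections over a product of small balls $B_1\times B_2$ via the $(p_{2!},p_2^!)$ adjunction and the projection formula, one finds
\[
\Gamma(B_1 \times B_2, \sHom^\boxtimes(\SG_1,\SG_2)) \simeq \Hom\bigl(\Gamma_c(B_1,\SG_1),\ \Gamma(B_2,\SG_2)\bigr),
\]
so the first factor contributes through $\Gamma_c$ over a shrinking neighborhood — i.e.\ the \emph{costalk} of $\SG_1$, not its stalk. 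The pdff hypotheses, via Lemma~\ref{stalk simple}, identify $\Gamma(\overline B_\epsilon(x),-)$ with the stalk; they say nothing of the sort for $\Gamma_c(B_\epsilon(x),-)$. Transversality of microsupports only sharpens the microsupport estimate for $\sHom$ (replacing $\widehat+$ by $+$); it does not turn the Hom-stalk into a Hom of stalks. A concrete counterexample to your Step~2 formula: take $\SG_1 = j_!1_U$ for $U$ a half-space with $x_1 \in \partial U$ and $\SG_2 = 1_{M_2}$. Then $(\SG_1)_{x_1}=0$ but $\sHom^\boxtimes(\SG_1,\SG_2)_{(x_1,x_2)} = \Gamma(B_1\cap U, \omega_{M_1}) \otimes (\SG_2)_{x_2}$, which is a nontrivial shift of the unit; both sheaves have sufficiently Legendrian microsupport so this example is in scope. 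Because Step~2 fails, the Fubini-type exchange you lean on in Step~3 is being asked to prove something false at the level of stalks, and no amount of cofinality of small balls will rescue it.

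Separately, the route here is quite different from the one actually used. The paper cites this statement from Nadler--Shende without reproducing the proof, but its own proof of the analogous $\boxtimes$-version (Proposition~\ref{nearby cycle product}) is entirely sheaf-functorial: it reduces both sides to $\overline i^*\overline j_*(\SF_1\boxtimes\SF_2)$ via Lemma~\ref{nearby commute with restrict}, Lemma~\ref{nearby commute Nadler}, and Lemma~\ref{remove irrelevant factors}, never computing a stalk. That strategy has the advantage of avoiding exactly the stalk/costalk distinction that defeats the present proposal: every map it uses is a base-change natural transformation which is an isomorphism by the relevant non-characteristic propagation applied to a single variable at a time. A correct proof of the $\sHom$ version along those lines would mirror that argument with $\sHom^{\boxtimes/\bR_{>0}}$ in place of $\boxtimes_{\bR_{>0}}$, invoking Proposition~\ref{nearby cycle hom product}'s hypotheses to apply the $\Hom$-analogues of those lemmas (as in Nadler--Shende's Proposition~2.25 and its kin), rather than passing to stalks.
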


    The purpose of this subsection is to establish the corresponding result for $\boxtimes$, by a similar argument. We first treat the case when one of the sheaves is constant along the $\bR$-direction. 

\begin{lemma} \label{remove irrelevant factors}
    Let $\SF_1 \in \Sh(M_1 \times \bR_{>0})$ and $\SG_2 \in \Sh(M_2)$. Suppose that $\dot{\ss}(\SF_1)$ is $\bR_{>0}$-non-characteristic and $\psi_1(\dot{\ss}_\pi(\SF_1))$ is pdff, and suppose that $\dot{\ss}(\SG_2)$ is pdff. Then there is an isomorphism of sheaves on $M_1 \times M_2$:
    $$\psi_{1}(\SF_1 \boxtimes \SG_2) = \psi_1 \SF_1 \boxtimes \SG_2.$$
\end{lemma}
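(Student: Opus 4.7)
My plan is to verify the identity stalkwise at each $(x, y) \in M_1 \times M_2$, using Lemma \ref{stalk} and Lemma \ref{stalk simple} together with a Künneth-type identification of sections; the global comparison map is constructed via smooth base change combined with the canonical projection-formula morphism.

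First, I would check that the hypotheses of Lemma \ref{stalk} apply to $\SF_1 \boxtimes \SG_2$. Since $\pi_1^*\SF_1$ and $\pi_2^*\SG_2$ have microsupport supported on transverse components of $T^*(M_1 \times M_2 \times \bR_{>0})$, the external tensor microsupport estimate gives $\ss(\SF_1 \boxtimes \SG_2) \subset \ss(\SF_1) \times \ss(\SG_2)$ with no $\widehat{+}$ interaction. In particular $\dot\ss(\SF_1 \boxtimes \SG_2)$ is $\bR_{>0}$-non-characteristic (inherited from $\SF_1$), and $\psi(\dot\ss_\pi(\SF_1 \boxtimes \SG_2))$ is pdff at each $(x, y)$ via the product of the displacing isotopies (in the sense of Definition \ref{pdff ball}) chosen for $\psi(\dot\ss_\pi(\SF_1))$ and $\ss(\SG_2)$. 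This product isotopy realizes the displacing ball at $(x, y)$ as $\bar B_{\epsilon'}(x) \times \bar B_{\epsilon'}(y)$.

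Second, Lemma \ref{stalk} applied to $\SF_1 \boxtimes \SG_2$ identifies
\[
\psi_1(\SF_1 \boxtimes \SG_2)_{(x, y)} \;\simeq\; \Gamma\bigl(\bar B_{\epsilon'}(x) \times \bar B_{\epsilon'}(y) \times (0, \epsilon],\; \SF_1 \boxtimes \SG_2\bigr),
\]
and a Künneth formula (valid under the paper's tameness-at-infinity and compact-support conventions) factors the right-hand side as $\Gamma(\bar B_{\epsilon'}(x) \times (0, \epsilon], \SF_1) \otimes \Gamma(\bar B_{\epsilon'}(y), \SG_2)$. Applying Lemma \ref{stalk} to $\SF_1$ and Lemma \ref{stalk simple} to $\SG_2$, these factors identify respectively with $(\psi_1\SF_1)_x$ and $\SG_{2, y}$, whose tensor product is the stalk of $\psi_1\SF_1 \boxtimes \SG_2$ at $(x, y)$.

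To globalize, let $\tilde\SG_2 := \bar\pi_2^*\SG_2 \in \Sh(M_1 \times M_2 \times \bR)$, where $\bar\pi_2$ denotes the projection to $M_2$; so $\pi_2^*\SG_2 = j^*\tilde\SG_2$. Smooth base change along the smooth projection $\bar\pi_1 \colon M_1 \times M_2 \times \bR \to M_1 \times \bR$ gives $j_*\pi_1^*\SF_1 \simeq \bar\pi_1^* j_{X*}\SF_1$; combining this with the canonical projection morphism $j_*\pi_1^*\SF_1 \otimes \tilde\SG_2 \to j_*(\pi_1^*\SF_1 \otimes j^*\tilde\SG_2)$ and then applying $i^*$ yields a natural map $\psi_1\SF_1 \boxtimes \SG_2 \to \psi_1(\SF_1 \boxtimes \SG_2)$, which the stalk computation above shows is an isomorphism. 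The main obstacle is the Künneth step: one must verify that the paper's tameness/compact-support conventions indeed give the Künneth factorization for sections over the mixed closed/half-open product $\bar B_{\epsilon'}(x) \times \bar B_{\epsilon'}(y) \times (0, \epsilon]$. This is exactly the point where the argument parallels the external Hom version in Proposition \ref{nearby cycle hom product}.
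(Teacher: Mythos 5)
Your stalkwise setup and verification of the hypotheses of Lemma \ref{stalk} for the product sheaf agree with the paper's approach, and your construction of a global comparison map via smooth base change and the projection morphism is reasonable (and, if anything, makes the argument more explicit than the paper's, which simply compares stalk computations without formally constructing the natural transformation). The divergence---and the gap---is exactly the step you flag: the K\"unneth factorization of sections over $\overline{B}_{\epsilon'}(x)\times\overline{B}_{\epsilon'}(y)\times(0,\epsilon]$.

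A K\"unneth formula for $\Gamma$ over a product of two positive-dimensional subsets does not follow in general from the paper's compact-support conventions, and you do not prove it; you only observe that it is ``exactly the point where the argument parallels Proposition \ref{nearby cycle hom product}.'' But that proposition is proved by an essentially different technique (non-proper base change for nearby cycles), not by invoking such a K\"unneth. The paper's proof avoids the issue entirely: after identifying $\psi_1(\SF_1\boxtimes\SG_2)_{(x_1,x_2)}$ with sections over $\overline{B}_{\epsilon}(x_1)\times\overline{B}_{\epsilon'}(x_2)\times(0,\epsilon]$, it applies the non-characteristic deformation lemma to the shrinking family $\overline{B}_{\epsilon'}(x_2)$ as $\epsilon'\to 0$ (using the hypothesis that $\dot{\ss}(\SG_2)$ misses $\dot N^*_{in/out}B_{\epsilon'}(x_2)$), reducing to sections over $\overline{B}_\epsilon(x_1)\times\{x_2\}\times(0,\epsilon]$. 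Once one factor is a point, the only tensor identity needed is the projection formula $\Gamma(Z,\SF\otimes\underline{c})\simeq\Gamma(Z,\SF)\otimes c$ for a constant object $c=(\SG_2)_{x_2}\in\SC$, which is far weaker than a full K\"unneth and holds under the compact-support convention. In effect, the non-characteristic deformation step is \emph{proving} the specific instance of K\"unneth you need, rather than assuming it. To close your gap, you should replace the K\"unneth appeal with exactly this shrinking argument, using the pdff hypothesis on $\dot{\ss}(\SG_2)$ to ensure the deformation is non-characteristic.
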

\begin{proof}
    Consider $(x_1, x_2) \in M_1 \times M_2$, it suffices to show that the stalks at $x$ are isomorphic. Since $\psi_1(\ss_\pi(\SF_1))$ and $\ss(\SG_2)$ are pdff, we know by Lemma \ref{stalk} that 
    $$\psi_1(\SF_1 \boxtimes \SG_2)_{(x_1, x_2)} = \Gamma(\ol{B}_{\epsilon}({x_1}) \times \ol{B}_{\epsilon'}({x_2}) \times (0, \epsilon], \SF_1 \boxtimes \SG_2).$$
    On the other hand, since $\psi_1(\dot{\ss}_\pi(\SF_1))$ is pdff, by Lemma \ref{stalk},
    $$(\psi_1 \SF_1 \boxtimes \SG_2)_{(x_1, x_2)} = (\psi_1\SF_1)_{x_1} \otimes (\SG_2)_{x_2} = \Gamma(\ol{B}_\epsilon({x_1}) \times (0, \epsilon], \SF_1) \otimes (\SG_2)_{x_2}.$$
    We may assume that $\epsilon > 0$ is sufficiently small so that 
    $$\dot{\ss}(\SG_2) \cap \dot{N}^*_{in/out}B_{\epsilon'}(x_2) = \varnothing.$$
    This immediately implies that 
    $$\dot{\ss}_\pi(\SF_1) \times \dot{\ss}(\SG_2) \cap \dot{N}^*B_\epsilon(x_1) \times \dot{N}^*B_{\epsilon'}(x_2) \times (0, \epsilon) = \varnothing.$$
    By noncharacteristic deformation lemma applied to the family $\ol{B}_{\epsilon}(x_1) \times \ol{B}_{\epsilon'}(x_2) \times (0, \epsilon]$ as $\epsilon' \to 0$, we get
    \begin{align*}
    (\psi_1 \SF_1 \boxtimes \SG_2)_{(x_1, x_2)} &= \Gamma (\ol{B}_{\epsilon}({x_1}) \times \ol{B}_\epsilon({x_2}) \times (0, \epsilon], \SF_1 \boxtimes \SG_2) \\
    &\simeq  \Gamma(\ol{B}_{\epsilon}({x_1}) \times x_2 \times (0, \epsilon], \SF_1 \boxtimes \SG_2) \\
    &=  \Gamma(\ol{B}_\epsilon({x_1}) \times (0, \epsilon], \SF_1) \otimes (\SG_2)_{x_2} = (\psi_1 \SF_1 \boxtimes \SG_2)_{(x_1, x_2)}.
    \end{align*}
    This completes the proof.
\end{proof}

\begin{proposition}\label{nearby cycle product}
    Let $\SF_1 \in \Sh(M_1 \times \bR_{>0})$ and $\SF_2 \in \Sh(M_2 \times \bR_{>0})$. Suppose that $\dot{\ss}(\SF_1)$ and $\dot{\ss}(\SF_2)$ are $\bR_{>0}$-non-characteristic and $\psi_1(\dot{\ss}_\pi(\SF_1))$ and $\psi_2(\dot{\ss}_\pi(\SF_2))$ are pdff. Then there is an isomorphism of sheaves on $M_1 \times M_2$:
    $$\psi_{12}(\SF_1 \boxtimes_{\bR_{>0}} \SF_2) \simeq \psi_1 \SF_1 \boxtimes \psi_2\SF_2.$$
\end{proposition}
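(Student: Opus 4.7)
The plan is to lift both factors to a two-parameter situation on $M_1 \times M_2 \times \bR_{>0}^2$, iteratively apply Lemma \ref{remove irrelevant factors}, and then descend to the desired single-parameter statement via non-characteristic base change along the diagonal $\bR_{>0} \hookrightarrow \bR_{>0}^2$; this parallels the proof of Proposition \ref{nearby cycle hom product} given in \cite{Nadler-Shende}. Let $\tilde{\pi}_i\colon M_1 \times M_2 \times \bR_{>0}^2 \to M_i \times \bR_{>0}$ denote the projection onto $M_i$ and the $i$-th $\bR_{>0}$-factor, set $\tilde{\SF} := \tilde{\pi}_1^*\SF_1 \otimes \tilde{\pi}_2^*\SF_2$, and let $\iota\colon M_1 \times M_2 \times \bR_{>0} \hookrightarrow M_1 \times M_2 \times \bR_{>0}^2$ be the diagonal embedding in the $\bR$-coordinates, so that $\iota^*\tilde{\SF} = \SF_1 \boxtimes_{\bR_{>0}} \SF_2$. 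I write $\Psi$ for the two-parameter nearby cycle along $\bR_{>0}^2$ to distinguish it from the one-parameter nearby cycles $\psi, \psi_1, \psi_2$.

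First I would verify the microsupport hypotheses for applying Corollary \ref{rem: nearby commute higher} and Lemma \ref{remove irrelevant factors} to $\tilde{\SF}$. Using the standard estimates of Formulae \eqref{lem: ss-pull back} and \eqref{lem: ss-hom} together with the fact that the two tensor factors live over disjoint coordinate blocks, $\dot\ss(\tilde\SF)$ is contained in the external product $\dot\ss(\SF_1) \times \dot\ss(\SF_2)$. From this one checks that $\tilde{\SF}$ is $\bR_{>0}^2$-non-characteristic and that $\overline{\psi}_k\tilde{\SF}$ is $\bR_{>0}$-non-characteristic for $k = 1, 2$, by directly invoking the $\bR_{>0}$-non-characteristic assumption on each $\SF_i$; and $\psi(\dot\ss_\pi(\tilde{\SF}))$ is pdff because a product of pdff sets is pdff.

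The main calculation then runs as follows. By Corollary \ref{rem: nearby commute higher}, $\Psi\tilde{\SF} \simeq \psi_1\overline{\psi}_2\tilde{\SF}$. Since $\tilde{\pi}_1^*\SF_1$ is constant in $t_2$, Lemma \ref{remove irrelevant factors} applied in the $t_2$ direction, combined with smooth base change for $\tilde{\pi}_2$, yields $\overline{\psi}_2\tilde{\SF} \simeq \pi_1^*\SF_1 \otimes \tilde{q}_2^*\psi_2\SF_2$, where $\tilde{q}_2\colon M_1 \times M_2 \times \bR_{>0} \to M_2$. Because the second factor is now constant in $t_1$, a second application of Lemma \ref{remove irrelevant factors} gives $\Psi\tilde{\SF} \simeq \psi_1\SF_1 \boxtimes \psi_2\SF_2$.

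To finish I would establish the base change $\overline{\iota}^*j_{2*}\tilde{\SF} \simeq j_{1*}\iota^*\tilde{\SF}$, where $\overline{\iota}$ and $j_1, j_2$ are the extensions of $\iota$ and of the open inclusions to $\bR_{\geq 0}$, respectively $\bR_{\geq 0}^2$. This reduces to checking that $\iota$ is non-characteristic for $\tilde{\SF}$: the conormal of $\iota$ consists of covectors of the form $(0,0,\tau,-\tau)$, and a nonzero such covector in $\dot\ss(\tilde{\SF})$ would require both $(0,\tau)\in\dot\ss(\SF_1)$ over a zero section in $M_1$ and $(0,-\tau)\in\dot\ss(\SF_2)$ over a zero section in $M_2$, contradicting $\bR_{>0}$-non-characteristicity of the $\SF_i$. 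Pulling back by $i_1^*$ and using $i_2 = \overline{\iota}\circ i_1$ then identifies $\Psi\tilde{\SF}$ with $\psi(\iota^*\tilde{\SF}) = \psi_{12}(\SF_1 \boxtimes_{\bR_{>0}} \SF_2)$, which combined with the previous paragraph gives the required isomorphism. The main obstacle is this last step: while the microsupport verification is short, one must carefully rule out cross-terms that could appear in the non-proper product estimate $\widehat{+}$ yet would violate the external-product control claimed in $\dot\ss(\tilde\SF) \subset \dot\ss(\SF_1) \times \dot\ss(\SF_2)$.
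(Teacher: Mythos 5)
Your proof follows the same two-step strategy as the paper's: first compute the two-parameter nearby cycle of the exterior product on $M_1 \times M_2 \times \bR_{>0}^2$ using Corollary \ref{rem: nearby commute higher} and Lemma \ref{remove irrelevant factors}, obtaining $\psi_1\SF_1 \boxtimes \psi_2\SF_2$; then relate this to $\psi_{12}(\SF_1 \boxtimes_{\bR_{>0}} \SF_2)$ via a base change between the diagonal restriction and the open pushforward. The paper does exactly this (its left-hand side reduction is your final base change step, its right-hand side computation is your application of Lemma \ref{remove irrelevant factors} and Lemma \ref{nearby commute Nadler}).

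However, your last step is not justified as stated. You claim that the base change $\overline{\iota}^*j_{2*}\tilde{\SF} \simeq j_{1*}\iota^*\tilde{\SF}$ \emph{reduces to} checking that $\iota$ is non-characteristic for $\tilde{\SF}$ in the interior $\bR_{>0}^2$. This is insufficient. Both sides agree trivially over $\bR_{>0}$; the real content is equality of stalks at the corner $(0,0)$, i.e.\ an isomorphism between sections of $\tilde{\SF}$ over a shrinking open square $(0,\epsilon)^2$ and over its diagonal. This requires the non-characteristic deformation lemma for the family of open sets interpolating between the square and the diagonal, and in particular depends on control over the \emph{limiting} relative microsupport $\psi(\dot\ss_\pi(\tilde\SF))$ (to know the stalk stabilizes) and on non-characteristicity along the entire family, not just along the diagonal. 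This is precisely the content of Lemma \ref{nearby commute with restrict} (equivalently Lemma \ref{nearby basechange}), whose hypotheses include the pdff condition. You have in fact verified that $\psi(\dot\ss_\pi(\tilde\SF))$ is pdff in your first paragraph, so all necessary ingredients are present; but your conclusion that the base change follows from the interior non-characteristicity alone misattributes where the real work lies. Replacing the final sentence's claim with an invocation of Lemma \ref{nearby commute with restrict} (after a reparametrization of $\bR_{>0}^2$ moving the diagonal to a level set $\{t_2 = 1\}$) would close the gap and recover the paper's argument. Also note that your worry about cross-terms violating $\dot\ss(\tilde\SF) \subset \dot\ss(\SF_1) \times \dot\ss(\SF_2)$ is unfounded: for an exterior tensor product on a genuine product space this containment is the standard estimate [KS, Proposition 5.4.1], so that was never the actual obstacle.
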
 
\begin{proof}    
    We denote the relevant embeddings:
    \begin{gather*}
    M_1 \times M_2 \times 0 \xrightarrow{i} M_1 \times M_2 \times \bR \xleftarrow{j} M_1 \times M_2 \times \bR_{>0}, \\
    M_1 \times M_2 \times 0 \times \bR \xrightarrow{\ol{i}_1} M_1 \times M_2 \times \bR \times \bR\xleftarrow{\ol{j}_1} M_1 \times M_2 \times \bR_{>0} \times \bR, \\
    M_1 \times M_2 \times 0 \times 0 \xrightarrow{\ol{i}} M_1 \times M_2 \times \bR \times \bR \xleftarrow{\ol{j}} M_1 \times M_2 \times \bR_{>0} \times \bR_{>0}, \\
    M_1 \times 0 \xrightarrow{i_1} M_1 \times \bR \xleftarrow{j_1} M_1 \times \bR_{>0},\\
    \delta: \bR_{>0} \hookrightarrow \bR_{>0} \times \bR_{>0}, \; \delta: \bR \hookrightarrow \bR \times \bR.
    \end{gather*}

    We know that the left hand side equals to $i^*j_*\delta^*(\SF_1 \boxtimes \SF_2)$, and the right hand side equals to $\delta^*(i_1^*j_{1*}\SF_1 \boxtimes i_2^*j_{2*}\SF_2) = \ol{i}^*(j_{1*}\SF_1 \boxtimes j_{2*}\SF_2)$. Hence, it suffices to show that
    $$i^*j_*\delta^*(\SF_1 \boxtimes \SF_2) \simeq \ol{i}^*(j_{1*}\SF_1 \boxtimes j_{2*}\SF_2).$$
    On the left hand side, we use Lemma \ref{nearby commute with restrict} and deduce that
    $$i^*j_*\delta^*(\SF_1 \boxtimes \SF_2) \xrightarrow{\ref{nearby commute with restrict}} i^*\delta^*\ol{j}_*(\SF_1 \boxtimes \SF_2) \xrightarrow{\sim} \ol{i}^*\ol{j}_*(\SF_1 \boxtimes \SF_2).$$
    On the right hand side, we use Lemma \ref{nearby commute Nadler} and \ref{remove irrelevant factors} and deduce that
    \begin{align*}
    \ol{i}^*(j_{1*}\SF_1 \boxtimes j_{2*}\SF_2) &\xrightarrow{\,\sim\,} {i}_1^*j_{1*}\SF_1 \boxtimes {i}_2^*j_{2*}\SF_2 \xrightarrow{\ref{remove irrelevant factors}} \ol{i}_1^*\ol{j}_{1*}(\SF_1 \boxtimes i_2^*j_{2*}\SF_2) \\
    &\xrightarrow{\ref{remove irrelevant factors}} {i}_1^*{j}_{1*}\ol{i}_2^*\ol{j}_{2*}(\SF_1 \boxtimes \SF_2) \xrightarrow{\ref{nearby commute Nadler}} \ol{i}^*\ol{j}_*(\SF_1 \boxtimes \SF_2).
    \end{align*}
    This then completes the proof.
\end{proof}

    More generally, let $\mathrm{Flag}_k$ be the partial flags in the simplex $[k]$ with partial orders (which encodes all the different ways to take the $k$-fold box tensor product). Then we have:

\begin{corollary}\label{rem: box tensor commute higher}
    Consider $\SF_1 \in \Sh(M_1 \times \bR_{>0}), \dots, \SF_k \in \Sh(M_k \times \bR_{>0})$ such that $\dot\SS(\SF_j)$ are $\bR_{>0}$-non-characteristic and $\psi_j(\dot\SS_\pi(\SF_j))$ are pdff. Then for any $1 < l_1 < \dots < l_r < k$ there are natural isomorphisms
    $$\psi_{12\cdots k}(\SF_1 \boxtimes_{\bR_{>0}} \dots \boxtimes_{\bR_{>0}} \SF_k) \simeq \psi_{j_1\cdots j_{l_1}}(\SF_{j_1} \boxtimes_{\bR_{>0}} \dots \boxtimes_{\bR_{>0}} \SF_{j_{l_1}}) \boxtimes \cdots  \psi_{{j_{l_r+1}}\cdots j_k}(\SF_{j_{l_r+1}} \boxtimes_{\bR_{>0}} \dots \boxtimes_{\bR_{>0}} \SF_{j_k})$$
    that fit into a coherent diagram $\mathrm{Flag}_{k} \to \Fun^{ex}(\Sh(\prod_{j=1}^k (M_j \times \bR_{>0})), \Sh(\prod_{j=1}^k M_j))$ where partial flag given by $j_1, j_2, \dots, j_k$ and $1 < l_1 < \dots < l_r < k$ is sent to the right hand side of the equation.
\end{corollary}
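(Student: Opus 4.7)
The plan is to bootstrap from the two-variable case in Proposition \ref{nearby cycle product} to the general $k$-variable statement, using the same strategy that promoted Lemma \ref{nearby commute Nadler} to Corollary \ref{rem: nearby commute higher}. Concretely, each node of $\mathrm{Flag}_k$ corresponds to a decomposition $[k] = S_1 \sqcup \cdots \sqcup S_{r+1}$ together with a total order on the blocks, and the functor I want to assign to it is the box tensor over the blocks of the nearby cycles along $\bR_{>0}^{|S_i|}$ of the internal box tensor product of the $\SF_j$ for $j \in S_i$. The finest flag corresponds to the right hand side (fully factored), and the coarsest to the left hand side.

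First I would check the base case: for any two-block decomposition $\{S_1, S_2\}$, the claimed isomorphism is a direct consequence of Proposition \ref{nearby cycle product} applied to $\boxtimes_{j \in S_1} \SF_j$ and $\boxtimes_{j \in S_2} \SF_j$, provided these partial box tensor products still satisfy the $\bR_{>0}$-non-characteristic and pdff hypotheses. These hypotheses are stable under box tensor product because microsupports of box tensors satisfy $\ss(\SF \boxtimes \SG) \subseteq \ss(\SF) \times \ss(\SG)$, and the non-characteristic and pdff conditions are preserved under Cartesian product of isotropic sets in cotangent bundles of products. With this stability, Proposition \ref{nearby cycle product} iterated block by block gives, for each refinement of flags, an invertible comparison map, with the underlying natural transformation produced by Lemmas \ref{nearby commute with restrict}, \ref{remove irrelevant factors}, and \ref{nearby commute Nadler}, exactly as in the proof of Proposition \ref{nearby cycle product}.

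Next I would assemble these individual isomorphisms into a functor out of $\mathrm{Flag}_k$. The same mechanism used in Corollary \ref{rem: nearby commute higher} applies: the various nearby cycle and $\boxtimes$ functors and their base change transformations organize into a (possibly oplax) functor from a diagrammatic shape into $\Fun^{ex}(\Sh(\prod_j(M_j \times \bR_{>0})), \Sh(\prod_j M_j))$ by \cite[Corollary F]{HaugsengHebestreitLinskensNuiten} (cf.\ \cite[Appendix A, Section 12.3]{Gaitsgory-Rozenblyum}). In our situation the relevant shape is $\mathrm{Flag}_k$: a refinement of flags corresponds to merging consecutive blocks, which at the functor level is a base-change/nearby-cycle commutation square of the types appearing in the proof of Proposition \ref{nearby cycle product}. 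Each such square was shown above to consist of invertible natural transformations, hence the (a priori oplax) functor is genuinely a functor of $\infty$-categories landing in equivalences, which is the coherence we want.

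The main technical obstacle is the same as in Corollary \ref{rem: nearby commute higher}: producing the coherent diagram $\mathrm{Flag}_k \to \Fun^{ex}$, rather than just a family of pointwise isomorphisms. The point is that the constituent base change morphisms come from adjoint units/counits of $i^*\dashv i_*$ and the $\boxtimes$ exchange maps, which automatically package into an $\infty$-functor by the cited results; one only needs to verify, square by square, that the relevant microsupport hypotheses propagate (each partial nearby cycle remains $\bR_{>0}^{k-\ell}$-non-characteristic and its limiting microsupport remains pdff) so that each such map is invertible. This propagation is the content already assumed in the hypothesis $\psi_j(\dot\ss_\pi(\SF_j))$ pdff together with the Cartesian-product preservation remarked above, so the proof reduces to writing out the flag poset and quoting the same combination of Lemmas \ref{nearby commute with restrict}, \ref{remove irrelevant factors}, \ref{nearby commute Nadler}, and the coherence result from \cite{HaugsengHebestreitLinskensNuiten} as in the proof of Corollary \ref{rem: nearby commute higher}.
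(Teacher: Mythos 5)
Your overall strategy — produce a diagram of base-change natural transformations and invoke the coherence result of \cite{HaugsengHebestreitLinskensNuiten} to upgrade pointwise isomorphisms to a coherent $\mathrm{Flag}_k$-indexed diagram — is the same as the paper's, and that part of your write-up is fine. The difference is in what you iterate. The paper constructs the diagram on the $k$-fold absolute exterior tensor $\SF_1\boxtimes\cdots\boxtimes\SF_k$ over $\prod_j(M_j\times\bR_{>0})$ and then applies the $k$-parametric nearby cycle commutativity of Corollary \ref{rem: nearby commute higher}, descending to the relative $\boxtimes_{\bR_{>0}}$ at the end by diagonal restriction, exactly as the two-variable argument of Proposition \ref{nearby cycle product} does. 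You instead propose to iterate Proposition \ref{nearby cycle product} directly on the relative box tensors $\SF_{S_i}=\boxtimes_{\bR_{>0},\,j\in S_i}\SF_j$, block by block.

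That is where the gap is. To apply Proposition \ref{nearby cycle product} to a pair of blocks $(\SF_{S_1},\SF_{S_2})$ you must verify that $\dot\ss(\SF_{S_i})$ is $\bR_{>0}$-non-characteristic and that $\psi_{S_i}(\dot\ss_\pi(\SF_{S_i}))$ is pdff. You justify this by citing $\ss(\SF\boxtimes\SG)\subseteq\ss(\SF)\times\ss(\SG)$ and the stability of pdff/non-characteristic under Cartesian products, but this is the estimate for the \emph{absolute} box tensor. The block sheaf $\SF_{S_i}$ is a \emph{relative} box tensor over a shared $\bR_{>0}$, and its microsupport is controlled by $\widehat{+}$ along the common $\bR_{>0}$ direction (Formula \eqref{lem: ss-hom} applied to $\pi_1^*\SF\otimes\pi_2^*\SG$), not by a Cartesian product. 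Worse, the quantity appearing in the hypothesis of Proposition \ref{nearby cycle product} is $\psi_{S_i}(\dot\ss_\pi(\SF_{S_i}))$ — a limit-closure of the fibrewise microsupports (Formula \eqref{eq: nearby subset}) — rather than the actual microsupport of $\psi_{S_i}\SF_{S_i}$ (which, by the inductive conclusion, would indeed be a product and hence pdff). These two sets need not agree, and your stability remark doesn't reach the right one.

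So your approach can likely be salvaged, but not with the stated justification. You would either need a separate lemma propagating the $\bR_{>0}$-non-characteristic and pdff conditions through the relative $\boxtimes_{\bR_{>0}}$, or — simpler, and what the paper does — lift everything to the $k$-fold absolute product $\prod_j (M_j\times\bR_{>0})$, where the microsupports really are Cartesian products and Corollary \ref{rem: nearby commute higher} applies directly, and only then descend along the diagonals. Reorganizing your argument that way aligns it with the proof of Proposition \ref{nearby cycle product} and with the figure the paper references.
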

\begin{proof}
    We have a diagram of functors with (non-proper) base changes induced by adjunctions as in Figure \ref{fig:nearby exterior tensor}, which defines a diagram of functors with (op)lax diagram of natural transformations by \cite[Corollary F]{HaugsengHebestreitLinskensNuiten}; see also \cite[Appendix A Section 12.3]{Gaitsgory-Rozenblyum}. Then by applying Corollary \ref{rem: nearby commute higher} iteratively, we can show that this is a commutative diagram of functors where natural transformations are invertible.
\end{proof}

%

\begin{figure}
    \centering
    \includegraphics[width=1\linewidth]{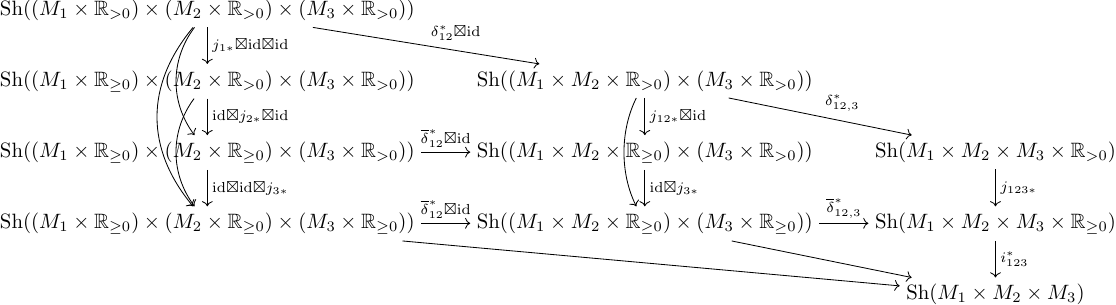}
    \caption{Part of the diagram of that appears in the commutativity of the exterior tensor products and nearby cycle functors, where the natural transformation in each square is induced by adjunctions of six functors.}
    \label{fig:nearby exterior tensor}
\end{figure}

\subsection{Global sections of nearby cycles and tensor products}

We recall the notion of gapped families of subsets. Write $T^*_\Delta(M\times M) \subset T^*(M \times M)$ for the conormal to the diagonal $\Delta \subset M \times M$.

\begin{definition}[{\cite[Definition 2.10]{Nadler-Shende}}]\label{def: gap}
    Let $(\Lambda_t, \Lambda_t')$ be a family of pairs of subsets inside the contact manifold $X$ with a given Reeb flow. Then $(\Lambda_t, \Lambda_t')$ is gapped if the lengths of all the Reeb chords between $\Lambda_t$ and $\Lambda_t'$ are bounded from below by some $\epsilon > 0$. $(\Lambda_t, \Lambda_t')$ is positively gapped if the lengths of all the Reeb chords from $\Lambda_t'$ to $\Lambda_t$ are bounded from below by some $\epsilon > 0$.
\end{definition}

    We recall the full faithfulness theorem on gapped nearby cycle functors: 
    
\begin{theorem}[{\cite[Theorem 5.1]{Nadler-Shende}}]\label{fullfaithful nearby}
    Let $\SF, \SG \in \Sh(M \times \bR_{>0})$. Suppose that $\ss(\SF)$ and $\ss(\SG)$ are $\bR_{>0}$-non-characteristic, $\psi(-\ss(\SF)) \times \psi(\ss(\SG))$ is pdff, and that
    the family $(\dot{\ss}_\pi(\SF), \dot{\ss}_\pi(\SG))$ is positively\footnote{In \cite{Nadler-Shende}, the fact that only positive gappedness is needed is not mentioned, but that is what the proof uses. See \cite[Lemma 2.19 \& 5.2]{Nadler-Shende}.} gapped, or equivalently that $(-\ss_\pi(\SF) \times_{\bR_{>0}} \ss_\pi(\SG), \dot{N}^*\Delta_{M} \times \bR_{>0})$ is positively gapped.\footnote{In \cite{Nadler-Shende}, this `or equivalently' does not appear in the statement, but does in the proof. See for example \cite[Lemma 2.19]{Nadler-Shende}.}  Then
    $$\Hom(\SF, \SG) \simeq \Hom(\psi \SF, \psi \SG).$$
\end{theorem}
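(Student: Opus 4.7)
The plan is to pass through the product $M \times M$ and reduce to Proposition \ref{nearby cycle hom product}, with positive gappedness supplying exactly the microsupport input needed to perform the diagonal restriction. The natural map $\Hom(\SF, \SG) \to \Hom(\psi \SF, \psi \SG)$ is induced by functoriality of $\psi$.

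First, I would reformulate both sides as $\Delta_M$-supported sections of external Hom sheaves on $M \times M$: $\Hom(\SF,\SG) = \Gamma(M \times \bR_{>0}, \sHom(\SF,\SG))$ is the $\Delta_M \times \bR_{>0}$-supported sections of $\sHom^{\boxtimes/\bR_{>0}}(\SF,\SG)$, and $\Hom(\psi\SF, \psi\SG)$ is the $\Delta_M$-supported sections of $\sHom^\boxtimes(\psi\SF, \psi\SG)$. The hypotheses of Proposition \ref{nearby cycle hom product} --- $\bR_{>0}$-noncharacteristicity of $\ss(\SF), \ss(\SG)$ and pdff of $\psi(-\ss(\SF)) \times \psi(\ss(\SG))$ --- are precisely what is assumed here, so that proposition yields
$$\psi_{12}\sHom^{\boxtimes/\bR_{>0}}(\SF,\SG) \simeq \sHom^\boxtimes(\psi\SF, \psi\SG)$$
as sheaves on $M \times M$. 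Taking sections along $\Delta_M$ identifies the right-hand side with $\Hom(\psi\SF, \psi\SG)$.

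It remains to identify $\Gamma_{\Delta_M}(M \times M, \psi_{12}\sHom^{\boxtimes/\bR_{>0}}(\SF,\SG))$ with $\Hom(\SF,\SG)$, via a two-stage noncharacteristic deformation (Lemma \ref{lem: non-char}). The $\bR_{>0}$-stage converts global sections over $\bR_{>0}$ into the nearby-cycle stalk using $\bR_{>0}$-noncharacteristicity, as in Lemma \ref{stalk}. The $M \times M$-stage, shrinking a tubular neighborhood of $\Delta_M$ down to $\Delta_M$, is where positive gappedness is essential: by Formula \eqref{lem: ss-hom}, $\ss(\sHom^\boxtimes(\SF,\SG)) \subset -\ss(\SF) \,\widehat+\, \ss(\SG)$, and positive gappedness of the pair $(-\ss_\pi(\SF) \times_{\bR_{>0}} \ss_\pi(\SG), \dot N^*\Delta_M \times \bR_{>0})$ provides a uniform positive Reeb-chord lower bound, making the shrinking family of neighborhoods of $\Delta_M$ noncharacteristic. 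That positive (rather than full) gappedness suffices reflects the sign in $-\ss(\SF) \,\widehat+\, \ss(\SG)$: only Reeb chords of one orientation can obstruct propagation toward $\Delta_M$, namely those running from $\ss_\pi(\SG)$ back to $\ss_\pi(\SF)$ in the diagonal direction.

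The main obstacle is precisely this diagonal restriction in $M \times M$. Without the positive Reeb-length gap, the family of shrinking neighborhoods of $\Delta_M$ acquires characteristic codirections and the natural map $\Hom(\SF,\SG) \to \Hom(\psi\SF, \psi\SG)$ fails to be an equivalence in general; the ``or equivalently'' reformulation of positive gappedness in the statement is exactly the condition isolating when this noncharacteristic deformation succeeds.
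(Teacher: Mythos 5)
Your proposal reconstructs the same argument that the paper presents as the $\otimes$-version in Theorem \ref{nearby fully faithful} — namely: pass to $M\times M$, apply the nearby-cycle/external-Hom commutation of Proposition \ref{nearby cycle hom product}, and handle the diagonal restriction via a gapped noncharacteristic deformation (the Hom-analogue of Lemma \ref{gapped basechange}). Since the paper explicitly states that Theorem \ref{nearby fully faithful} is proved ``by a similar argument'' to the cited Nadler--Shende Theorem 5.1, and since you swap $\otimes/\boxtimes/\Delta^*/\Gamma_c$ for $\sHom/\sHom^\boxtimes/\Delta^!/\Gamma$ in exactly the expected way, your route is essentially the paper's route.

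Two small points you glossed over, neither fatal: (i) Proposition \ref{nearby cycle hom product} requires each of $\psi(\dot\ss_\pi(\SF))$ and $\psi(\dot\ss_\pi(\SG))$ to be pdff individually, whereas the theorem's hypothesis is stated for the product $\psi(-\ss(\SF))\times\psi(\ss(\SG))$; one should note the former is implied by the latter (by projecting a displacing isotopy). (ii) Your ``two-stage noncharacteristic deformation'' is, in the paper's version (Lemma \ref{gapped basechange} / Lemma \ref{lem: non-proper base change}), first reduced by proper base change to a model on $\bR_{>0}\times\bR_{\geq 0}$ before shrinking the tube; describing it directly as shrinking $U_{\epsilon'}(\Delta_M)$ is in the right spirit but would also require the proper/compact-support hypothesis to invoke base change, which the theorem statement leaves implicit (it holds by the paper's global conventions). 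Your explanation of why positive gappedness (not full gappedness) suffices, via the sign in $-\ss(\SF)\,\widehat+\,\ss(\SG)$, is correct and is exactly the point flagged in the statement's footnote.
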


    In this subsection we will show the analogous result for $\otimes$. 
    
    Similar to \cite[Proposition 2.25 \& Lemma 5.2]{Nadler-Shende}, we need a base change formula between push-forward and pull-backs. We explain a model case. 
    Consider the maps
    \[\begin{tikzcd}
        \bR_{>0} \times 0 \ar[d, "\ol{i}_1" left] \ar[r, "j_2"] & \bR_{\geq 0} \times 0 \ar[d, "i_1"] \\
        \bR_{>0} \times \bR_{\geq 0} \ar[r, "\ol{j}_2"] & \bR_{\geq 0} \times \bR_{\geq 0}
    \end{tikzcd}\]
    Using non-characteristic deformation Lemma \ref{lem: non-char}, we can easily show the following lemma (which plays the same role as \cite[Lemma 2.24]{Nadler-Shende}):
    
\begin{lemma}\label{lem: non-proper base change}
    For a sheaf $\SF \in \Sh(\bR_{>0} \times \bR_{\geq 0})$ such that
    \begin{enumerate}
        \item $\SS(\SF)$ is non-characteristic with respect to the projection $\pi_1: \bR_{>0} \times \bR_{\geq 0} \to \bR_{>0}$, and
        \item $\SS_{\pi_1}(\SF)$ is gapped from $\dot N^*(\bR_{>0} \times 0)$ are $\bR_{>0}$-families of subsets,
    \end{enumerate}
    the natural transformation defines an isomorphism
    $$i_{1}^*\ol{j}_{2*}\SF \xrightarrow{\sim} j_{2*} \ol{i}_1^*\SF.$$
\end{lemma}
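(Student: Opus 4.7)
The plan is to verify the base change morphism stalk-by-stalk along its target $\bR_{\geq 0}\times 0$. Away from $(0,0)$ both sides canonically identify with $\SF_{(x_1,0)}$ and the base change is the identity, so the entire content is concentrated at the corner, where
\[
(i_1^*\ol{j}_{2*}\SF)_{(0,0)} \;=\; \varinjlim_{\epsilon,\delta\to 0^+}\Gamma\bigl((0,\epsilon)\times[0,\delta),\SF\bigr),
\]
while
\[
(j_{2*}\ol{i}_1^*\SF)_{(0,0)} \;=\; \varinjlim_{\epsilon\to 0^+}\,\varinjlim_{W\supset (0,\epsilon)\times 0}\Gamma(W,\SF),
\]
the inner limit running over open neighborhoods of the axis segment inside $\bR_{>0}\times\bR_{\geq 0}$. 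It is enough to show that, for every sufficiently small $\epsilon>0$, all restriction maps in the inner $W$-system are isomorphisms, so that one may replace $W$ by a rectangular tube $(0,\epsilon)\times[0,\delta)$ and recover the LHS system.

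I would produce this constancy by applying the non-characteristic deformation Lemma \ref{lem: non-char} to a shrinking family of rectangular tubes $U_t=(0,\epsilon)\times[0,t)$, having first interpolated any given $W$ to such a rectangular sub-tube via a height-function deformation. The outward conormals along the horizontal moving boundary of $U_t$ lie purely in the vertical direction $(x_1,t;\,0,\eta)$ with $\eta>0$, which miss $\dot{\ss}(\SF)$ by the non-characteristic condition (1). The compactness hypothesis of Lemma \ref{lem: non-char} is supplied by the gap condition (2): the bound between $\dot{\ss}_{\pi_1}(\SF)$ and $\dot N^*(\bR_{>0}\times 0)$ lets us cut off $\SF$ outside a small tubular neighborhood of the axis without altering the sections over $U_t$.

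The main obstacle is controlling the remaining boundary components of $U_t$: the vertical side at $x_1=\epsilon$, and, when interpolating from a non-rectangular $W$, slanted pieces of moving boundary whose outward conormals pick up horizontal components not directly handled by condition (1). I would address both by choosing the small $\epsilon$ generically, which is possible in every neighborhood of $0$ since otherwise $\ss(\SF)$ would accumulate onto the axis conormal, contradicting condition (2), and by designing the interpolating deformation from $W$ to a rectangular tube so that its moving boundary sweeps essentially in the $\pi_1$-fibre direction, where condition (1) again applies. Assembling these, Lemma \ref{lem: non-char} yields the desired isomorphism of sections for fixed small $\epsilon$, and passing to the double colimit in $(\epsilon,\delta)$ identifies the two stalks at $(0,0)$, proving the base change.
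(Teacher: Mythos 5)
The overall plan — verify the natural transformation stalkwise, note the iso is tautological away from the corner, and compare colimits of sections at $(0,0)$ via non‐characteristic deformation — is the right one, and matches the proof template the paper uses for the more elaborate Lemma~\ref{lem: non-proper base change compose}. However, you have two substantive errors in how the hypotheses are brought to bear, and these constitute a genuine gap.

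First, the gap condition~(2) is not what supplies the compactness hypothesis of Lemma~\ref{lem: non-char}; that is supplied by the paper's standing convention that all sheaves are compactly supported (or tame at infinity). Cutting off $\SF$ outside a tube around the axis would change the sections over $U_t$ as soon as $t$ exceeds the tube radius, so this interpretation doesn't even parse. The actual role of~(2) is more delicate: hypothesis~(1) rules out \emph{purely} vertical covectors $(x_1,x_2;0,\xi_2)$ in $\dot{\ss}(\SF)$, but the \emph{relative} microsupport $\ss_{\pi_1}(\SF)$ forgets the $\xi_1$-component, so it can still limit onto the vertical fiber over $x_2=0$ via covectors $(x_1,x_2;\xi_1,\xi_2)$ with $\xi_1$ large. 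The gap condition~(2) is precisely what forbids this uniformly: it provides a $\delta_0>0$ such that for all $x_1$, no point of $\ss_{\pi_1}(\SF)_{x_1}$ in the layer $0<x_2<\delta_0$ has upward codirection. That is what makes the non‐characteristic propagation of the shrinking boundary $(0,\epsilon)\times\{\epsilon'\}$ go through for $\epsilon'\to 0$.

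Second, the detour through arbitrary open neighborhoods $W\supset(0,\epsilon)\times 0$ and the interpolation to rectangular tubes is where the argument fails. Once you attempt to apply Lemma~\ref{lem: non-char} directly to $\ss(\SF)$ and rectangular $U_t=(0,\epsilon)\times[0,t)$, you must control the outward conormals on the entire boundary of $U_t$, including the stationary vertical side $\{\epsilon\}\times[0,t)$ and the corner; these are horizontal or slanted and are not ruled out by either hypothesis. Your patch — genericity of $\epsilon$, justified by the claim that otherwise $\ss(\SF)$ accumulates onto the axis conormal contradicting~(2) — does not hold: the horizontal covectors $(\epsilon,x_2;\xi_1,0)$ with $\xi_1>0$ accumulating along the axis are in a completely different codirection from the axis conormal $(x_1,0;0,\xi_2)$, and~(2) says nothing about them. (There may even be horizontal covectors over every $x_1$, e.g.\ when $\SF$ has jumps along lines $x_1=\mathrm{const}$.) The way the paper sidesteps this (as its proof of Lemma~\ref{lem: non-proper base change compose} makes explicit) is to use condition~(1) to replace $\ss(\SF)$ by the injectively projected relative microsupport $\ss_{\pi_1}(\SF)\subset T^*\bR_{\geq 0}\times\bR_{>0}$, so that the $\bR_{>0}$-direction disappears entirely and the only outward conormal that needs controlling is that of the shrinking interval $[0,\epsilon')$, which is handled by~(2). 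Rework the argument in the relative cotangent and the stationary-boundary problem never arises.
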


    We denote the diagonal embeddings by
    $$\Delta_{\bR_{>0}}: M \times \bR_{>0} \hookrightarrow M \times M \times \bR_{>0}, \; \Delta: M \hookrightarrow M \times M.$$
    In our situation, the main technical lemma we need is the following of which we write a complete proof for expository reasons:

\begin{lemma}\label{gapped basechange}
    Let $\SF, \SG \in \Sh(M \times \bR_{>0})$ be with proper supports via $p: M \times \bR_{>0} \to \bR_{>0}$. Suppose $\ss(\SF) \times \ss(\SG)$ is $\bR_{>0}$-noncharacteristic, $\psi(\ss(\SF)) \times \psi(\ss(\SG))$ is pdff, and the family of pairs $(\ss_\pi(\SF) \times_{\bR_{>0}} \ss_\pi(\SG), \dot{N}^*\Delta_M \times \bR_{>0})$ is positively gapped, or equivalently that the family of pairs $(\ss_\pi(\SF), -\ss_\pi(\SG))$ is positively gapped. Then
    $$\Gamma(M, \psi \Delta_{\bR_{>0}}^*(\SF \boxtimes_{\bR_{>0}} \SG)) \simeq \Gamma(M, \Delta^* \psi (\SF \otimes_{\bR_{>0}} \SG)).$$
\end{lemma}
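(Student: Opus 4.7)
The plan is to rewrite both sides as iterated filtered colimits of sections over shrinking open neighborhoods of $\Delta_M \times \{0\} \subset (M \times M) \times \bR_{\geq 0}$, and then identify them using the noncharacteristic deformation lemma (Lemma \ref{lem: non-char}) applied uniformly in the two shrinking parameters, with the uniformity supplied by the positive gappedness hypothesis. Abbreviate $\SH := \SF \boxtimes_{\bR_{>0}} \SG$, and pick a smooth nonnegative function $\rho : M \times M \to \bR_{\geq 0}$ vanishing to order two along $\Delta_M$ (locally a squared distance for some auxiliary metric), with $W_\delta := \{\rho < \delta\}$.

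For the left hand side: since $\Delta_{\bR_{>0}}^*\SH = \SF \otimes \SG$ on $M \times \bR_{>0}$ and $p$ is proper on supports, unwinding the definition of the nearby cycle functor gives
\[
\Gamma(M, \psi\Delta_{\bR_{>0}}^*\SH) = \colim_{\varepsilon \to 0^+} \Gamma\bigl(\Delta_M \times (0,\varepsilon),\,\SH\bigr).
\]
For the right hand side: Proposition \ref{nearby cycle product} gives $\psi\SH \simeq \psi\SF \boxtimes \psi\SG$, with microsupport contained in $\psi(\dot\ss(\SF)) \times \psi(\dot\ss(\SG))$, which is pdff and disjoint from $\dot N^*\Delta_M$ by the $\tau \to 0^+$ limit of the positive gappedness. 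Hence Lemma \ref{lem: non-char} applies to the open family $W_\delta$ shrinking to $\Delta_M$, producing
\[
\Gamma(M, \Delta^*\psi\SH) = \Gamma(\Delta_M, \psi\SH|_{\Delta_M}) = \colim_{\delta \to 0^+} \colim_{\varepsilon \to 0^+} \Gamma\bigl(W_\delta \times (0,\varepsilon),\,\SH\bigr).
\]

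The core of the argument, which I would address next, is showing that for $\delta, \varepsilon$ both sufficiently small the restriction map
\[
\Gamma\bigl(W_\delta \times (0,\varepsilon),\,\SH\bigr) \;\xrightarrow{\sim}\; \Gamma\bigl(\Delta_M \times (0,\varepsilon),\,\SH\bigr)
\]
is an isomorphism; combined with the previous colimit formulas this identifies the two sides. I would deduce it from Lemma \ref{lem: non-char} applied to the open family $W_{\delta'} \times (0,\varepsilon)$ for $\delta' \in (0,\delta]$, whose outer conormals along $\partial W_{\delta'}$ concentrate in any prescribed open conic neighborhood of $\dot N^*\Delta_M \subset T^*(M \times M)$ as $\delta' \to 0$. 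The noncharacteristic condition then reduces to disjointness of $\dot\ss(\SH)|_{\tau \in (0,\varepsilon)}$ from a suitable open conic neighborhood of $\dot N^*\Delta_M \times 0_{\bR_{>0}}$.

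The main obstacle is to convert the positive gappedness of $(\ss_\pi(\SF) \times_{\bR_{>0}} \ss_\pi(\SG),\, \dot N^*\Delta_M \times \bR_{>0})$ — a statement about lower bounds for Reeb chord lengths at contact infinity — into the uniform finite-distance separation in the cotangent bundle needed to apply Lemma \ref{lem: non-char} on a slab $\tau \in (0,\varepsilon_0)$. The key inputs are conicity of microsupports and the compactness of supports supplied by properness: the Reeb-length lower bound $\delta_0 > 0$ provided by positive gappedness converts, by homogeneity together with compactness, into a uniform angular gap between the relevant conic loci valid for all small $\tau$. This mirrors the mechanism of \cite[Lemma 5.2]{Nadler-Shende}, which establishes the Hom-analog Theorem \ref{fullfaithful nearby}; it is also the reason that \emph{positive} rather than two-sided gappedness is what the statement requires, since the pushforward $\ol j_*$ only propagates sections in the positive $\tau$-direction.
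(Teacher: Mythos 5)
Your overall strategy is the same as the paper's: reduce both sides to sections over shrinking tubular neighborhoods of $\Delta_M$ times a slab $(0,\varepsilon]$, then compare them by the noncharacteristic deformation lemma with the conormal disjointness coming from positive gappedness. The paper gets there by proper base change down to $\bR_t$ and $\bR_d \times \bR_t$ followed by a stalk computation, whereas you phrase it as iterated filtered colimits; that part is a cosmetic difference. There are, however, two gaps at the crux.

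First, your reduction of $\Gamma(M, \Delta^*\psi\SH)$ to a double colimit passes through the claim that $\psi(\dot\ss(\SF)) \times \psi(\dot\ss(\SG))$ is disjoint from $\dot N^*\Delta_M$ ``by the $\tau\to 0^+$ limit of the positive gappedness.'' This does not follow and is false in general: positive gappedness bounds from below the length of Reeb chords \emph{from} $\dot N^*\Delta_M$ \emph{to} $\ss_\pi(\SF)_\tau \times \ss_\pi(\SG)_\tau$, uniformly in $\tau$, but Reeb chords in the opposite direction can have length $\to 0$, and then the limiting microsupport lands on $\dot N^*\Delta_M$. (A toy example: $\SF_\tau = 1_{[\tau,\infty)}$, $\SG_\tau = 1_{(-\infty,0]}$ on $M = \bR$.) This is precisely the regime the lemma is designed for. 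The paper avoids the issue by never computing $\ss(\psi\SH)$; the noncharacteristicity is verified directly in terms of $\ss(\SH) \subset T^*(M^2 \times \bR_{>0})$, which is what the hypotheses control.

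Second, and this is what you flag as the main obstacle, your tube $W_\delta = \{\rho < \delta\}$ uses an arbitrary auxiliary metric, and you hope to convert the Reeb-chord lower bound into ``a uniform angular gap between the relevant conic loci.'' Again, there is no such angular gap as $\tau \to 0$, precisely because the microsupports converge to $\dot N^*\Delta_M$. The paper's proof works because it chooses the specific tube with $\dot N^*_{out}U_{\epsilon'}(\Delta_M) = R_{\epsilon'}(\dot T^*_\Delta(M\times M))$, so that a point of $\ss(\SH)_\tau \cap \dot N^*_{out}U_{\epsilon'}(\Delta_M)$ is literally a Reeb chord of length $\epsilon'$ from $\dot T^*_\Delta$ to $\ss(\SH)_\tau$, which gappedness excludes once $\epsilon' < \epsilon_0$. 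Without calibrating $W_\delta$ to the chosen Reeb flow, you would have to separately prove $\dot N^*_{out}W_{\delta'} \subset \bigcup_{0<s<\epsilon_0} R_s(\dot T^*_\Delta(M\times M))$ for all small $\delta'$ (uniformly over the compact support supplied by properness); that is the geometric content you have skipped over. A minor additional remark: you invoke Proposition \ref{nearby cycle product} inside the proof; the paper deliberately keeps this lemma as the pure base-change step and applies Proposition \ref{nearby cycle product} only afterward, in Theorem \ref{nearby fully faithful}.
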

\begin{proof}
    We write the projection maps
    $$p: M \times \bR_t \to \bR_t, \; d: M \times M \times \bR_t \to \bR_d$$
    where $d$ is a submersion such that $d(\Delta_M \times \bR_t) = 0$. Under the assumption that $\SF$ and $\SG$ have proper supports, we may apply proper base change and the proof is reduced to
    $$(p \circ j)_* \Delta_{\bR_{>0}}^*(\SF \boxtimes_{\bR_{>0}} \SG) \simeq i_0^*(d \circ j)_*(\SF \boxtimes_{\bR_{>0}} \SG),$$
    where $i_0: 0 \times \bR_t \hookrightarrow \bR_d \times \bR_t$ is the embedding.

    Since $\psi(\ss(\SF) \times \ss(\SG))$ is pdff and $\ss(\SF) \times \ss(\SG)$ is $\bR_{>0}$-noncharacteristic, the stalks at $t = 0$ can be computed by the section on a small open neighbourhood by Lemma \ref{stalk}. On the left hand side, the stalk is
    $$\Gamma(\Delta_M \times (0, \epsilon], \SF \boxtimes_{\bR_{>0}} \SG).$$
    On the right hand side, considering tubular neighbourhoods $U_{\epsilon'}(\Delta_M)$ of $\Delta_M$ with smooth boundary, the stalk at $t = 0$ can be computed by
    $$\Gamma(\ol{U}_{\epsilon'}(\Delta_M) \times (0, \epsilon], \SF \boxtimes_{\bR_{>0}} \SG).$$
    Therefore, it suffices to show that one can apply non-characteristic deformation lemma to the family $\ol{U}_{\epsilon'}(\Delta_M) \times (0, \epsilon]$ for $\epsilon' \to 0$. 

    Let $R_s$ be the Reeb flow on $\dot{T}^*(M \times M)$. The image of $\dot{T}^*_\Delta(M \times M)$ under the Reeb flow is the outward conormal bundle of $U_{\epsilon'}(\Delta_M)$, i.e.~$R_{\epsilon'}(\dot{T}^*_\Delta(M \times M)) = \dot{N}_{out}^*U_{\epsilon'}(\Delta_M)$. It suffices to show that for any $\epsilon' > 0$ small enough,
    $$\ss(\SF) \times \ss(\SG) \cap R_{\epsilon'}(\dot{T}^*_\Delta(M \times M)) = \varnothing.$$
    This is because $\ss(\SF) \times \ss(\SG) \cap R_{\epsilon'}(\dot{T}^*_\Delta(M \times M))$ are short Reeb chords from $\dot{T}^*_\Delta(M \times M)$ to $\ss(\SF) \times \ss(\SG)$, or equivalently the short Reeb chords from $-\dot{\ss}(\SF)$ to $\dot{\ss}(\SG)$. Since the pair is known to be positively gapped, for $\epsilon' > 0$ small enough there are no Reeb chords between them. Thus, the isomorphism
    $$\Gamma(\Delta_M \times (0, \epsilon], \SF \boxtimes_{\bR_{>0}} \SG) \simeq \Gamma(\ol{U}_{\epsilon'}(\Delta_M) \times (0, \epsilon], \SF \boxtimes_{\bR_{>0}} \SG)$$
    follows from the non-characteristic deformation lemma.
\end{proof}

\begin{theorem}\label{nearby fully faithful}
    Let $\SF, \SG \in \Sh(M \times \bR_{>0})$ be sheaves with proper supports via $p: M \times \bR_{>0} \to \bR_{>0}$. Suppose $\ss(\SF)$ and $\ss(\SG)$ are $\bR_{>0}$-noncharacteristic, $\psi(\ss(\SF) \times \ss(\SG))$ is pdff, and the family of pairs $(\ss_\pi(\SF) \times \ss_\pi(\SG), \dot{T}^*_\Delta(M\times M) \times \bR_{>0})$ is positively gapped, or equivalently that the family of pairs $(\dot{\ss}_\pi(\SF), -\dot{\ss}_\pi(\SG))$ is positively gapped. Then
    $$\Gamma(M, \psi(\SF \otimes_{\bR_{>0}} \SG)) \simeq \Gamma(M, \psi\SF \otimes \psi\SG).$$
\end{theorem}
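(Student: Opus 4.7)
The plan is to reduce this statement to a combination of Proposition \ref{nearby cycle product} (which commutes nearby cycles with the exterior product $\boxtimes_{\bR_{>0}}$) and Lemma \ref{gapped basechange} (which, under the same positive gappedness hypothesis, commutes global sections of nearby cycles past restriction to the diagonal). The algebraic hinge is the pair of tautological identities
$$\SF \otimes_{\bR_{>0}} \SG = \Delta_{\bR_{>0}}^*(\SF \boxtimes_{\bR_{>0}} \SG), \qquad \psi\SF \otimes \psi\SG = \Delta^*(\psi\SF \boxtimes \psi\SG),$$
which turn an internal tensor product on $M$ into the external product on $M \times M$ followed by diagonal restriction.

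Before stringing together the isomorphisms, I would verify the auxiliary hypotheses. The $\bR_{>0}$-noncharacteristicity of $\ss(\SF) \times \ss(\SG)$ follows from the individual $\bR_{>0}$-noncharacteristicity assumptions on the two factors. The pdff property of $\psi(\ss(\SF)) \times \psi(\ss(\SG))$, needed for Lemma \ref{gapped basechange}, is a consequence of the assumed pdff of $\psi(\ss(\SF) \times \ss(\SG))$, and likewise each individual $\psi_i(\dot\ss_\pi(\SF_i))$ inherits pdff, supplying the hypothesis of Proposition \ref{nearby cycle product}. The positive gappedness of $(\ss_\pi(\SF) \times_{\bR_{>0}} \ss_\pi(\SG), \dot{N}^*\Delta_M \times \bR_{>0})$ is stated in precisely the form required by Lemma \ref{gapped basechange}.

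With these ingredients the proof runs through the chain
\begin{align*}
\Gamma(M, \psi(\SF \otimes_{\bR_{>0}} \SG))
&= \Gamma(M, \psi\, \Delta_{\bR_{>0}}^*(\SF \boxtimes_{\bR_{>0}} \SG)) \\
&\simeq \Gamma(M, \Delta^*\, \psi(\SF \boxtimes_{\bR_{>0}} \SG)) \quad (\text{Lemma \ref{gapped basechange}}) \\
&\simeq \Gamma(M, \Delta^*(\psi\SF \boxtimes \psi\SG)) \quad (\text{Proposition \ref{nearby cycle product}}) \\
&= \Gamma(M, \psi\SF \otimes \psi\SG).
\end{align*}

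The step carrying the real content is the first isomorphism, supplied by Lemma \ref{gapped basechange}: this is where one collapses a shrinking tubular neighborhood of the diagonal via non-characteristic deformation, and where the positive gappedness hypothesis is crucial to rule out short Reeb chords between $-\dot\ss(\SF)$ and $\dot\ss(\SG)$ that would otherwise obstruct the deformation. Since that technical work has been carried out already in Lemma \ref{gapped basechange}, the main obstacle for the present statement is simply matching hypotheses; everything else is a formal concatenation of previously established commutativities.
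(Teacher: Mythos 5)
Your proposal is correct and follows exactly the same chain of isomorphisms as the paper's proof: rewrite the internal tensor product via the diagonal pullback of the external product, apply Lemma \ref{gapped basechange}, then Proposition \ref{nearby cycle product}, then identify back. The paper's proof is just this four-step chain without the explicit hypothesis-matching you spell out; that matching is correct and is indeed the only thing to check.
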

\begin{proof}
    The isomorphism follows from the following compositions
    \begin{align*}
    \Gamma(M, \psi(\SF \otimes_{\bR_{>0}} \SG)) &\xrightarrow{\,\,\sim\,\,} \Gamma(M, \psi\Delta_{\bR_{>0}}^*(\SF \boxtimes_{\bR_{>0}} \SG)) \\
    &\xrightarrow{\ref{gapped basechange}} \Gamma(M, \Delta^*\psi(\SF \boxtimes_{\bR_{>0}} \SG)) \\
    &\xrightarrow{\,\ref{nearby cycle product}\,} \Gamma(M, \Delta^*(\psi\SF \boxtimes \psi\SG)) \\
    &\xrightarrow{\,\,\sim\,\,} \Gamma(M, \psi\SF \otimes \psi\SG). \qedhere
    \end{align*}
\end{proof}

\begin{example}\label{ex: cotangent bundle hom}
    Let $M = N \times \bR_t$ and the diffeotopy $\varphi_s(x, t) = \varphi_s(x, st)$. Consider $\SF, \SG \in \Sh_{\tau>0}(N \times \bR_t \times \bR_{>0})$ be the pull-backs of some sheaves in $\Sh_{\tau>0}(N \times \bR_t)$ via the diffeotopy $\varphi: N \times \bR_t \times \bR_{>0} \to N \times \bR_t$. Then for sufficiently large $t \in \bR$, we know that $(\ss_\pi^\infty(\SF), \ss_\pi^\infty(T_t\SG))$ is positively gapped, as there are no Reeb chords from one to the other. Therefore, we have
    \begin{align*}
    \Hom(\psi (\SF), \psi(\SG)) &= \Hom(\psi(\SF), \psi (T_t\SG)) = \Hom(\SF, T_t\SG).
    \end{align*}
\end{example}


%
%
%
%
%
%


\section{Recollections on microsheaves}\label{sec:microsheaf recollections}

    We recall some basic results on microsheaves. In particular prove several folklore results, including presentability of the microsheaf categories, constructibility of the sheaf of microsheaf categories, and K\"unneth formulae for such categories under suitable Lagrangian hypotheses.

\subsection{Microsheaves}\label{ssec: microsheaf definition}

    Following \cite[Section 6.1]{KS},
    for any open subset $\underline\Omega \subset T^*M$, we set
    $$\msh^\text{pre}(\underline\Omega) = \Sh(M; \underline\Omega) = \Sh(M)/\Sh_{T^*M \setminus \underline\Omega}(M).$$
    For any $\underline\Omega' \subset \underline\Omega$, we have a restriction functor $\msh^\text{pre}(\underline\Omega) \to \msh^\text{pre}(\underline\Omega')$ induced by the inclusion $\Sh_{T^*M \setminus \underline\Omega}(M) \hookrightarrow \Sh_{T^*M \setminus \underline\Omega'}(M)$. 
    
    We view  $\msh^\text{pre}$ as a presheaf valued in the category of all  categories, and sheafify it as such to obtain $\msh$.  (See \cite[Remark 6.1]{Nadler-Shende} for a discussion of why we sheafify here as as opposed to in $\PrL$ or $\PrR$; the basic reason is that stalks do not work well for sheaves valued in the latter categories.) We refer to sections of $\msh$ as `microsheaves'. By computing the stalks of the internal Hom \cite[Theorem 6.1.2]{KS}, one can show that the internal Hom of $\msh$ is computed by the $\mu hom$ functor in Kashiwara--Schapira \cite[Defintion 4.1]{KS}.  

    The sheaf of categories  $\msh$ is invariant under the $\bR_+$-action by dilation in $T^*M$. One can restrict $\msh$ to the cosphere bundle $S^*M = (T^*M \setminus M)/\bR_+$. From the definitions there is an evident `microlocalization' functor:
    $$m_{S^*M}: \Sh(M) \xrightarrow{\sim} \msh(T^*M) \to \msh(S^*M).$$

    Let $\Lambda \subset S^*M$ be a subset, we write $\msh_\Lambda$ for the subsheaf of $\msh$ whose sections are objects that are microsupported on $\Lambda$.  Microlocalization restricts to the subcategories with prescribed microsupport: 
    $$m_\Lambda: \Sh_\Lambda(M) \to \msh_\Lambda(S^*M) \xrightarrow{\sim} \msh_\Lambda(\Lambda).$$

    As with objects in any quotient, to prove results about microsheaves it is necessary to argue in terms of representative sheaves. 
    The basic tool for producing sheaf representatives from given microsheaves is the microlocal cutoff of Kashiwara and Schapira, especially in its `refined' variant \cite[Proposition 6.1.4]{KS}. It is perhaps most powerful when the support of the microsheaf in $S^*M$ satisfies the finite position hypothesis as in Definition \ref{def: ptfp}, i.e.~when the projection to $M$ is finite-to-one.

    This condition ensures the following consequence via the refined microlocal cutoff:

\begin{proposition}[{\cite[Lemma 10.2.5]{Guillermou-survey}}, {\cite[Proposition 7.10]{Nadler-Shende}}]\label{prop: microstalk}
    Let $\Lambda \subset S^*M$ be a 
    locally closed subset in finite position at $x \in M$, and let $\pi: S^*M \to M$ be the projection. Then the natural morphism between sheaves of categories
    $$\Sh_\Lambda/\,\Loc \longrightarrow \pi_*\msh_\Lambda$$
    is an isomorphism at $x \in M$. 
\end{proposition}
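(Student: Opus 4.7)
The plan is to verify the isomorphism stalkwise at $x$, checking essential surjectivity and fully faithfulness of the microlocalization functor separately. The stalk of $\pi_*\msh_\Lambda$ at $x$ is the filtered colimit $\colim_{x \in U} \msh_\Lambda(\pi^{-1}(U))$ over open neighborhoods $U$ of $x$. Using the finite position hypothesis, I would first shrink $U$ so that $\overline{\Lambda} \cap \pi^{-1}(U)$ decomposes as a disjoint union of small cone-like neighborhoods $V_1 \sqcup \cdots \sqcup V_n$ of the finitely many points $(x,\xi_1),\ldots,(x,\xi_n)$ in $\pi^{-1}(x) \cap \overline{\Lambda}$. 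Correspondingly, $\msh_\Lambda(\pi^{-1}(U))$ splits as a product $\prod_i \msh_{\Lambda \cap V_i}(V_i)$, reducing both directions to a local statement near each covector $(x,\xi_i)$.

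For essential surjectivity, the key tool is the refined microlocal cut-off of \cite[Proposition 6.1.4]{KS}: given a microsheaf germ at $(x,\xi_i)$, presented by any $\mathcal{G}_i \in \Sh(M)$ whose microsupport at $x$ lies in a small cone around $(x,\xi_i)$, the cut-off produces a representative $\mathcal{F}_i \in \Sh(M)$ whose microsupport near $x$ is contained in an arbitrarily small cone around $(x,\xi_i)$, and which has the correct microlocalization. Because the codirections $\xi_1,\ldots,\xi_n$ are distinct, the cones can be chosen pairwise disjoint at $x$, so that $\mathcal{F} := \bigoplus_i \mathcal{F}_i$ lies in $\Sh_\Lambda$ on a small neighborhood of $x$ and realizes the prescribed section of $\msh_\Lambda(\pi^{-1}(U))$.

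For fully faithfulness, I would use that internal Hom in $\msh$ is computed by $\mhom$, together with the fundamental triangle that, for $\mathcal{F}_1,\mathcal{F}_2 \in \Sh_\Lambda$ near $x$ and a small ball $B \ni x$, identifies the sections of $\mhom(\mathcal{F}_1,\mathcal{F}_2)$ on a neighborhood of $\pi^{-1}(x) \cap \Lambda$ in $S^*M$ with the cofiber of the restriction $\sHom(\mathcal{F}_1,\mathcal{F}_2)(B) \to \sHom(\mathcal{F}_1,\mathcal{F}_2)(B \setminus \{x\})$. This cofiber is precisely $\Hom_{\Sh(B)}(\mathcal{F}_1,\mathcal{F}_2)$ modulo those maps that extend across $B\setminus\{x\}$, i.e.\ modulo the local system part, giving the desired identification. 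The main obstacle is controlling the boundary of the cut-off so that the representative $\mathcal{F}_i$ has microsupport genuinely inside $\Lambda$ rather than artificially enlarged by cut-off artifacts; the finite position hypothesis is exactly what lets us separate the branches at $x$ and perform this localization cleanly. I would then assemble the local verifications into the claimed isomorphism, noting that this argument specializes the proofs of \cite[Lemma 10.2.5]{Guillermou-survey} and \cite[Proposition 7.10]{Nadler-Shende}.
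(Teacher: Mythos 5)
The paper does not actually prove this proposition; it is cited as \cite[Lemma 10.2.5]{Guillermou-survey} and \cite[Proposition 7.10]{Nadler-Shende}, so the comparison is with what those references do. Your overall scaffolding matches them: compute the stalk as a filtered colimit of $\Sh_\Lambda(U)/\Loc(U)$, use the finite-position hypothesis to split into finitely many branches at $x$, establish essential surjectivity with the refined microlocal cut-off of \cite[Proposition 6.1.4]{KS}, and handle Hom via $\mu hom$.

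There is, however, a genuine issue in your fully-faithfulness step. The triangle you invoke — identifying sections of $\mu hom$ near $\pi^{-1}(x)\cap\Lambda$ with the cofiber of $\sHom(\cF_1,\cF_2)(B)\to\sHom(\cF_1,\cF_2)(B\setminus\{x\})$ — is not the Sato triangle, and its cokernel is not $\Hom(\cF_1|_B,\cF_2|_B)$ modulo maps that extend, nor is that the same as Hom modulo maps factoring through local systems. The standard triangle is $D'\cF_1\otimes\cF_2 \to \sHom(\cF_1,\cF_2) \to R\dot\pi_*\mu hom(\cF_1,\cF_2) \to +1$; after taking sections over a small ball, the first term contributes the stalk-level maps (this is where the \emph{local system} part of the Verdier quotient is picked out), and the third term is what you want to compute. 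Conflating the cofiber of a punctured-ball restriction with the quotient by $\Loc$ skips over exactly the step you need. You should also note that in the $\infty$-categorical localization $\Sh_\Lambda/\Loc$, Hom is not simply a cofiber but a colimit over zigzags with fiber in $\Loc$; verifying that this stabilizes to the Sato-triangle computation is part of the work.

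On essential surjectivity, you correctly flag the cut-off boundary as the delicate point: after cut-off the representative has microsupport in $\Lambda$ union a cone-wall Lagrangian that is not inside $\Lambda$, so it does not literally lie in $\Sh_\Lambda(B)$ for a fixed $B$. The resolution — that as the cone shrinks and $B$ is adjusted accordingly, the colimit over neighborhoods stabilizes to an object genuinely microsupported on $\Lambda$ — is exactly where finite position is used, and is the substantive content of \cite[Proposition 7.10]{Nadler-Shende}. Your sketch gestures at this but does not carry it out.
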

\begin{remark}
    This fact is also (and earlier) used in the complex setting \cite[Section 5.1]{Waschkies}, where it however requires stronger results on the microlocal cutoff \cite[Theorem 3.2]{DAgnolo}. 
\end{remark}

    The following lemma asserts the invariance of microsheaves under contactomorphism and stabilization, and will be frequently used in later sections:

\begin{lemma}[{\cite[Theorem 7.2.1]{KS}, \cite[Lemma 6.3]{Nadler-Shende}}]    
    \label{lem:contact-transform-main}
    Let $\Lambda' \subset S^*M$ be a subset that is contactomorphic to $\Lambda \times 0_B \subset S^*N \times T^*B$, where $B$ is a ball and $\Lambda$ is contractible. Let $\pi: \Lambda' \to \Lambda$ be the projection. Then there is a (noncanonical) isomorphism $\pi^*\msh_\Lambda \cong \msh_{\Lambda'}$. 
\end{lemma}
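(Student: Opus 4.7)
The plan is to decompose the claim into two pieces: (a) invariance of the sheaf of microsheaf categories under a contactomorphism of an open neighborhood of $\Lambda'$, and (b) the fact that stabilizing by a zero section factor $0_B$ pulls back the sheaf of categories.

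For step (a), I would invoke the Kashiwara--Schapira Theorem 7.2.1, which produces, for any contactomorphism $\varphi$ between conic open sets $U \subset T^*M$ and $V \subset T^*(N \times B)$ carrying $\Lambda'$ to $\Lambda \times 0_B$, an equivalence of sheaves of categories $\msh|_U \cong \varphi^* \msh|_V$. The standard proof quantizes an ambient contact isotopy realizing $\varphi$ via GKS (Theorem \ref{thm: GKS sheaf}), so that convolution with the associated kernel sheaf $\SK_\varphi$ gives equivalences of $\Sh_\Lambda(M \times I)$-style categories that descend to the microsheaf quotient. Using this, I would reduce to the case $\Lambda' = \Lambda \times 0_B \subset T^*N \times T^*B$ with $\pi$ the projection.

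For step (b), I would show the local statement that pullback induces an equivalence
\[
p_1^* \colon \msh_\Lambda \xrightarrow{\ \sim\ } p_{1*}\, \msh_{\Lambda \times 0_B},
\]
where $p_1 \colon N \times B \to N$. Working at the level of sheaf representatives: by the noncharacteristic deformation lemma (Lemma \ref{lem: non-char}) applied to the family of balls shrinking in the $B$ direction, any sheaf on $N \times B$ with microsupport in $T^*N \times 0_B$ is locally constant along $B$. When $B$ is contractible, this means that $p_1^* \colon \Sh_{\text{cone}(\Lambda)}(N) \to \Sh_{\text{cone}(\Lambda) \times 0_B}(N \times B)$ is fully faithful with the obvious essential image. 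Passing to the quotient by sheaves with microsupport outside prescribed open cones, this upgrades to a local equivalence of the presheaves $\msh^{\text{pre}}$ and hence, after sheafification, of $\msh$.

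Combining these two steps gives the isomorphism $\pi^*\msh_\Lambda \cong \msh_{\Lambda'}$. The main obstacle is step (a): the KS contactomorphism invariance theorem is deep, requiring the GKS quantization machinery (which in turn depends on the microlocal cutoff techniques recalled in the previous subsection, together with the existence of a globally defined kernel sheaf). The noncanonicity of the final isomorphism traces back to this step and to the choice of null-isotopy used to quantize the contactomorphism; with Maslov data fixed, one could make a canonical choice up to a computable shift, but since the statement only claims a noncanonical isomorphism, no such bookkeeping is needed here.
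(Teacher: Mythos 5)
Your decomposition into (a) contact invariance and (b) stabilization by $0_B$ is the right organizing principle, and your treatment of (b) via noncharacteristic deformation over the contractible ball $B$ is fine (modulo being explicit that local constancy along $B$ upgrades $p_{1*}\msh_{\Lambda \times 0_B} \simeq \msh_\Lambda$ to the asserted $\pi^*\msh_\Lambda \simeq \msh_{\Lambda \times 0_B}$). But there is a genuine gap in step (a): you never use the contractibility of $\Lambda$, and the argument does not work without it.

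The issue is that KS Theorem 7.2.1 produces, near each Legendrian point, an equivalence of localized categories depending on a choice of simple kernel along the conormal of the graph of the contactomorphism $\varphi$. Two such choices differ by tensoring with an invertible object of $\cC$ (a shift, in the $\Mod_k$ case). Consequently, what one obtains globally is not a single isomorphism $\msh|_U \cong \varphi^*\msh|_V$, but rather an isomorphism \emph{twisted} by a $BPic(\cC)$-class on $\Lambda'$ measuring the discrepancy between the two ambient fiber polarizations transported by $\varphi$. This is exactly the (relative) Maslov obstruction. To promote the local contact-invariance isomorphisms to a global isomorphism of sheaves of categories one must trivialize this twist; the trivialization exists precisely because $\Lambda$ (hence $\Lambda'$) is contractible, and it is noncanonical because the set of trivializations is a nonempty torsor. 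If $\Lambda$ is, say, a circle with nontrivial Maslov class, the conclusion fails. Your remark about a ``computable shift'' gestures in this direction, but your proof never confronts the gluing problem or invokes contractibility of $\Lambda$ to solve it, only contractibility of $B$.

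A secondary, more cosmetic issue: GKS (\cite[Theorem 3.12]{Guillermou-Kashiwara-Schapira}) quantizes contact \emph{isotopies}, and a germ of contactomorphism between open cones at a Legendrian point is not a priori the time-one of an isotopy defined on the ambient cosphere bundle. The original KS proof of 7.2.1 is by a direct kernel construction (reducing to elementary contact transformations which are quantized by explicit Fourier--Sato-type functors), not by GKS; one \emph{can} reformulate it through GKS after localizing, but calling that ``the standard proof'' is misleading and doesn't resolve the points above.
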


\subsection{Doubling functor}\label{ssec: doubling}

    When $\Lambda$ is a smooth Legendrian, Guillermou upgraded Proposition \ref{prop: microstalk} to a construction of {\em global} representatives, by what is now called `doubling'  \cite[Part 11]{Guillermou-survey}. This was generalized to allow singular $\Lambda$ in \cite[Section 7]{Nadler-Shende}; here we recall a variant of this construction given in \cite[Section 4]{KuoLi-spherical}.

    Let $\Lambda \subset S^*M$ be a relatively compact subset with closure $\overline{\Lambda}$.  We write $\partial\Lambda = \overline{\Lambda} \setminus \Lambda$ for the boundary of $\Lambda$. Suppose $\Lambda$ is positively self displaceable as in Definition \ref{def: self displaceable}. Then we consider a contact Hamiltonian pushoff of $\Lambda$ which is positive on an open neighborhood of $\Lambda$ and zero on $\partial\Lambda$ such that $\Lambda_s$ is disjoint from $\Lambda$ for $s \neq 0$.
    
    We define the doubling movie to be $\Lambda^\prec = \Lambda_- \cup \Lambda_+ \subset S^*(M \times \bR)$ and the doubling at $s \in \bR$ to be the reduction $\overline{\Lambda}_{-s} \cup \overline{\Lambda}_s \subset S^*M$. $\Lambda_{\pm s}$ is the Hamiltonian pushoff of $\Lambda$ along the Reeb flow by time $\pm s^{3/2}$ and $\Lambda_\pm \subset S^*(M \times \bR)$ is the movie of the Hamiltonian pushoff by the Reeb flow for time $\pm s^{3/2}$. We let
    $$\underline\Lambda_{\cup,s} = (\overline{\Lambda}_{-s} \times \bR_+) \cup (\overline{\Lambda}_{s} \times \bR_+) \cup \bigcup_{-s \leq r \leq s} \pi(\Lambda_r) \subset T^*M.$$
    Respectively, we also let
    $$\underline\Lambda_{\cup}^\prec = (\ol{\Lambda}_{-} \times \bR_+) \cup (\ol{\Lambda}_{+} \times \bR_+) \cup \bigcup_{-s \leq r \leq s} (\pi(\ol{\Lambda}_r) \times s) \subset T^*(M \times \bR).$$
    Moreover, we will set $\underline\Lambda_{\cup,s}^\prec = \underline\Lambda_{\cup}^\prec \cap T^*(M \times \bR_{\leq s})$.

\begin{theorem}[{\cite[Theorem 4.47 \& Proposition 6.8]{KuoLi-spherical}}] \label{thm: relative-doubling} 
    Let $\Lambda \subset S^*M$ be a relatively compact sufficiently Legendrian subset\footnote{The reference assumes stronger constructibility hypotheses, but the proof goes through without change.}. Then for $s>0$ sufficiently small, the microlocalization $m_{\Lambda_{-s}}$ induces an equivalence: 
    $$m_{\Lambda_{-s}}: \Sh_{\underline\Lambda_{\cup,s}}(M) \xrightarrow{\sim} \msh_{\Lambda_{-s}}(\Lambda_{-s}).$$
    We denote the inverse as $w_{\Lambda_{-s}}$ and term it the doubling functor. Moreover, there is an equivalence
    $$\Sh_{\underline\Lambda_{\cup,s}^\prec}(M \times \bR_{\leq s}) \xrightarrow{\sim} \msh_{\Lambda_{-s}}(\Lambda_{-s}).$$
    We denote its inverse by $w_{\Lambda^\prec_s}$ and term it the parametric doubling functor. 
\end{theorem}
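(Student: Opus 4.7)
The plan is to construct an inverse (the doubling functor) $w_{\Lambda_{-s}}$ to $m_{\Lambda_{-s}}$, with each ingredient of the ``sufficiently Legendrian'' hypothesis playing a distinct role: perturbability to finite position gives local sheaf representatives, pdff controls stalks in the limit, and positive self-displaceability upgrades continuation morphisms to isomorphisms on Hom.

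First I would construct $w_{\Lambda_{-s}}$. Given a microsheaf $\SF \in \msh_{\Lambda_{-s}}(\Lambda_{-s})$, the perturbability to finite position (Definition \ref{def: ptfp}) together with the refined microlocal cutoff underlying Proposition \ref{prop: microstalk} produces a sheaf representative $\widetilde\SF$ of $\SF$ on a neighbourhood of $\pi(\Lambda_{-s})$. Using the sheaf quantization of the positive contact isotopy displacing $\Lambda_{-s}$ to $\Lambda_{s}$ (Theorem \ref{thm: GKS sheaf}), I then form the continuation morphism $\widetilde\SF \to T_{2s}\widetilde\SF$ of Proposition \ref{prop: positive continue shv}, and define $w_{\Lambda_{-s}}\SF$ as its cofiber. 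The microsupport estimate of Formula \eqref{lem: ss-hom} applied to the cone, combined with the fact that the microsupport of the continuation kernel tracks the movie $\bigcup_{-s\leq r \leq s}\pi(\Lambda_r)$ of the isotopy, shows $\ss(w_{\Lambda_{-s}}\SF) \subset \underline\Lambda_{\cup,s}$.

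For full faithfulness of $m_{\Lambda_{-s}}$, positive self-displaceability is exactly what is needed: by Proposition \ref{prop: perturb compact}, the positive displacing Hamiltonian induces an isomorphism on $\Hom$ between sheaves microsupported in (pieces of) $\underline\Lambda_{\cup,s}$. Combining this with the noncharacteristic deformation lemma (Lemma \ref{lem: non-char}) reduces the $\Hom$ computation in $\Sh_{\underline\Lambda_{\cup,s}}(M)$ to a microlocal $\mhom$ computation near $\Lambda_{-s}$, which by the definition of $\msh_{\Lambda_{-s}}$ recovers the microsheaf $\Hom$. Essential surjectivity is witnessed by $w_{\Lambda_{-s}}$: the composition $m_{\Lambda_{-s}}\circ w_{\Lambda_{-s}} \simeq \id$ since microlocalization at $\Lambda_{-s}$ forgets the $\Lambda_s$ copy and the movie (both disjoint from $\Lambda_{-s}$ in $S^*M$), and pdff together with Lemma \ref{stalk simple} ensures the relevant microstalks are computed as expected.

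For the parametric version over $M \times \bR_{\leq s}$, I would run the same construction on the full $r$-family of pushoffs, producing a sheaf whose microsupport is exactly $\underline\Lambda^\prec_{\cup,s}$. Non-characteristic propagation along the $\bR$-direction (using that $\underline\Lambda^\prec_{\cup,s}$ is conic in the $\tau$-variable at the boundary $t = s$) identifies $\Sh_{\underline\Lambda_{\cup,s}^\prec}(M \times \bR_{\leq s})$ with $\Sh_{\underline\Lambda_{\cup,s}}(M)$ by restriction to the slice $t = s$, and the conclusion follows from the non-parametric case.

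The main obstacle is the precise microsupport estimate in the cofiber step: pinning $\ss(w_{\Lambda_{-s}}\SF)$ inside $\underline\Lambda_{\cup,s}$ rather than an a priori larger set, especially near $\partial\Lambda$ where the displacing isotopy vanishes and the two pushoff copies meet through the movie. This forces the specific choice of rescaling (the $s^{3/2}$ in the definition of $\Lambda_{\pm s}$) so that the family matches smoothly along the seam, and the verification requires combining Formula \eqref{lem: ss-push forward} with Lemma \ref{lem: non-char} applied to shrinking tubular neighbourhoods of $\partial\Lambda$ to rule out extra microsupport from the gluing.
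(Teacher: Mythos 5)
The paper does not give a proof of this statement: it is cited from \cite{KuoLi-spherical} with a footnote asserting the argument there ``goes through without change'' under the weaker ``sufficiently Legendrian'' hypotheses. So the comparison here is against the known doubling argument (Guillermou, Nadler--Shende, Kuo--Li), not a proof written in this paper.

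Measured against that argument, your proposal has a gap at the very first step. You claim that perturbability to finite position plus the refined microlocal cutoff ``produces a sheaf representative $\widetilde\SF$ of $\SF$ on a neighbourhood of $\pi(\Lambda_{-s})$.'' That is false in general: Proposition \ref{prop: microstalk} and the microlocal cutoff give representatives only \emph{locally}, over small balls where $\Lambda$ is in finite position, and only up to locally constant sheaves. A global representative over a neighbourhood of all of $\pi(\Lambda_{-s})$ is precisely what does not exist (there is a genuine obstruction, and if it did exist the theorem would be nearly trivial). Everything downstream in your proposal — the cofiber construction, the microsupport estimate, the claim that $w_{\Lambda_{-s}}$ witnesses essential surjectivity — depends on this nonexistent object, so the argument is circular. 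The actual argument runs in the opposite order: (i) prove full faithfulness of $m_{\Lambda_{-s}}$ over any open subset, using exactly the self-displaceability via Proposition \ref{prop: perturb compact} (this part of your proposal is correct); (ii) cover the compact $\pi(\Lambda_{-s})$ by finitely many balls $U_i$ where one has local representatives $\widetilde\SF_i$, and form the \emph{local} doubles $w_i$ as cofibers of the local continuations (checking that the cofiber kills the ``up to locally constant'' ambiguity in $\widetilde\SF_i$, which you do not address); (iii) use the already-established full faithfulness over overlaps to show the local doubles are canonically isomorphic on $U_i \cap U_j$, hence glue to a global object of $\Sh_{\underline\Lambda_{\cup,s}}(M)$. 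Relative compactness is used crucially in (ii)/(iii) to make the cover finite and the support estimate go through. Your description of the parametric version and of the boundary/seam difficulty is reasonable, but they sit downstream of the gluing argument that is missing.
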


We will also sometimes use the following variant (essentially differing only by a change of coordinates):
    \begin{gather*}
    \underline\Lambda_{\cup,s}^+ = (\overline{\Lambda} \times \bR_+) \cup (\overline{\Lambda}_{s} \times \bR_+) \cup \bigcup_{0 \leq r \leq s} \pi(\Lambda_r) \subset T^*M, \\ \underline\Lambda_{\cup,s}^- = (\overline{\Lambda}_{-s} \times \bR_+) \cup (\overline{\Lambda} \times \bR_+) \cup \bigcup_{-s \leq r \leq 0} \pi(\Lambda_r) \subset T^*M.
    \end{gather*}
\begin{definition}\label{def: positive doubling}
    Let $\Lambda \subset S^*M$ be a relatively compact sufficiently Legendrian subset. Then for $s>0$ sufficiently small, the microlocalization $m_{\Lambda}$ induces an equivalence
    $$m_{\Lambda}: \Sh_{\underline\Lambda_{\cup,s}^\pm}(M) \xrightarrow{\sim} \msh_{\Lambda}(\Lambda).$$
    We denote the inverse as $w_{\Lambda}^\pm$ and also call it the (positive/negative) doubling functor.
\end{definition}

\begin{remark}\label{rem: doubling limit}
    When $\Lambda$ is only pertubable to finite position as in Definition \ref{def: ptfp} (potentially not self displaceable as in Definition \ref{def: self displaceable}), the first part of the theorem fails, but the second part still holds \cite[Lemma 7.27]{Nadler-Shende}:
    $$\Sh_{\underline\Lambda_{\cup,s}^\prec}(M \times \bR_{\leq s}) \xrightarrow{\sim} \msh_{\Lambda_{-s}}(\Lambda_{-s}).$$
    In fact, the above equivalence is enough for many of the applications. However, we do not go further into this direction and this will not be necessary for our main applications.
\end{remark}

    The original version of \cite[Theorem 7.28 \& Proposition 7.29]{Nadler-Shende} had the following additional hypothesis:   
\begin{definition}\label{def: contact collar}
    Let $\Lambda \subset S^*M$ be any subset. Then we say $\Lambda$ has a contact collar $(\partial\Lambda, U)$ for a subset $\partial \Lambda$ in some contact manifold $U$ if there is an open contact embedding $(\partial \Lambda \times (0,\epsilon), U \times T^*(0,\epsilon)) \hookrightarrow (\Lambda, S^*M)$. A contact flow is compatible with the contact collar if it is restricts to the pull-back of a contact flow on $U$.
\end{definition}
    Existence of a contact collar allows to produce a smooth double $\Lambda^\prec_{sm}$ given a smooth $\Lambda$. This smoothness is not necessary for the results in the present subsection, but will return later as a hypothesis. 
    
\subsection{Adjoints and doubling}

    Recall that the microsupport of a limit or colimit of sheaves is contained in the closure of the union of microsupports of all the terms (see \cite[Exercise V.7]{KS}, or its solution \cite[Proposition 3.4]{Guillermou-Viterbo}).  This has the following well-known immediate consequences:

\begin{lemma} \label{sheaves with prescribed microsupport closed under limits}
    Let $\underline\Lambda \subset T^*M$ be a closed conic subset. Then $\Sh_{\underline\Lambda}(M)$ is a presentable category, and the tautological inclusion
    $\iota_{\underline\Lambda *}: \Sh_{\underline\Lambda}(M) \hookrightarrow \Sh(M)$
    is limit and colimit preserving. 
\end{lemma}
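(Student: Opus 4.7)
The plan is to derive both statements directly from the microsupport estimate for (co)limits that was just recalled. Since $\underline\Lambda$ is closed in $T^*M$, the closure of a union of subsets of $\underline\Lambda$ remains inside $\underline\Lambda$; thus whenever all terms of a small diagram lie in $\Sh_{\underline\Lambda}(M)$, the limit and colimit of the diagram computed in $\Sh(M)$ again lie in $\Sh_{\underline\Lambda}(M)$. This immediately gives the limit and colimit preservation of $\iota_{\underline\Lambda*}$, since (co)limits in the subcategory can be computed as (co)limits in the ambient $\Sh(M)$.

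For presentability, I will use the tautological identification
$$\Sh_{\underline\Lambda}(M) \;=\; \ker\!\left(\Sh(M) \longrightarrow \msh^{\mathrm{pre}}(T^*M \setminus \underline\Lambda)\right),$$
where the right-hand functor is, by the very definition of $\msh^{\mathrm{pre}}$, the Verdier quotient of $\Sh(M)$ by the full subcategory $\Sh_{\underline\Lambda}(M)$. The ambient $\Sh(M)$ is presentable (indeed compactly generated) by the standing hypothesis on $\mathcal{C}$. Standard results on Verdier quotients in $\PrLst$ then identify $\Sh_{\underline\Lambda}(M)$ with the fiber, in $\PrLst$, of a colimit-preserving functor between presentable stable categories, which yields presentability.

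The main mild obstacle is the accessibility needed to ensure the Verdier quotient actually lives in $\PrLst$ in the first place. My plan for this point is to cover the open complement $T^*M \setminus \underline\Lambda$ by countably many relatively compact opens $U_n$, and to exhibit $\Sh_{\underline\Lambda}(M)$ as the small limit in $\PrLst$ of the kernels of the localizations $\Sh(M) \to \msh^{\mathrm{pre}}(U_n)$; each such localization is accessible by a direct compact-generator argument using open embeddings and microlocal cutoff. Alternatively, having already shown that $\iota_{\underline\Lambda*}$ preserves all limits and all colimits, the adjoint functor theorem produces both adjoints once accessibility is verified, presenting $\Sh_{\underline\Lambda}(M)$ as a simultaneously reflective and coreflective localization of a presentable stable category and closing the loop.
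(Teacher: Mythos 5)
Your first paragraph — closure of $\Sh_{\underline\Lambda}(M)$ under limits and colimits in $\Sh(M)$ by the microsupport estimate, hence (co)limit preservation of the tautological inclusion — is correct and is exactly what the paper invokes (the Lemma is stated as an ``immediate consequence'' of that estimate, with no written proof).

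For presentability, there is a real gap. You correctly diagnose the circularity of the ``kernel of the Verdier quotient'' argument: the quotient $\msh^{\mathrm{pre}}(T^*M\setminus\underline\Lambda)$ is \emph{defined} as $\Sh(M)/\Sh_{\underline\Lambda}(M)$, and for that quotient to exist in $\PrLst$ (so that its fiber is presentable) one needs to already know that $\Sh_{\underline\Lambda}(M)$ is generated under colimits by a small set, which is essentially the claim at hand. The two remedies you propose do not close this loop. The ``countable cover'' step is not a reduction: $\ker\big(\Sh(M)\to\msh^{\mathrm{pre}}(U_n)\big)$ is exactly $\Sh_{T^*M\setminus U_n}(M)$, a category of sheaves with microsupport in a closed conic set, i.e.\ an instance of the same problem; you have not shown why it is any easier for a relatively compact $U_n$, and ``accessible by a direct compact-generator argument using open embeddings and microlocal cutoff'' is asserted rather than argued. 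The ``adjoint functor theorem once accessibility is verified'' route inverts the logical order: the adjoint functor theorem is a statement about functors between \emph{presentable} categories, so one must already know that $\Sh_{\underline\Lambda}(M)$ is presentable to apply it — and ``closed under limits, colimits, and accessible'' is more or less the definition of presentable in this context, so ``once accessibility is verified'' is not progress but a restatement of the goal.

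What actually makes this work (and is, presumably, the folklore the paper has in mind) is that the microsupport condition is a \emph{local} condition that can be tested against a \emph{small} family of compact objects: for a second-countable $M$, one can choose a countable family of points $(x,\xi)\in T^*M\setminus\underline\Lambda$, test functions $\psi$ with $\psi(x)=0$, $d\psi(x)=\xi$, and small opens $U\ni x$, and the condition $\ss(\SF)\subset\underline\Lambda$ becomes the vanishing of $\Hom(c,\SF)$ for a small set of compact objects $c\in\Sh(M)$ built from these data (via $\Gamma_{\{\psi\geq 0\}}$ on small opens). Thus $\Sh_{\underline\Lambda}(M)$ is the right orthogonal of a small set of compacts in the compactly generated $\Sh(M)$, equivalently the category of local objects for an accessible Bousfield localization, and presentability follows from the general theory. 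The closure under colimits, which you derived from the microsupport estimate, is then what forces the microsupport of a colimit to stay in $\underline\Lambda$, so the right-orthogonality characterization is consistent with colimit closure. Your sketch gestures at this with ``compact-generator argument,'' but the argument you actually wrote down does not carry it.
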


\begin{corollary}\label{cor: sheaf-bicomplete}
    The inclusion $\iota_{\underline\Lambda *}: \Sh_{\underline\Lambda}(M) \hookrightarrow \Sh(M)$ has a left adjoint, which we will denote $\iota_{\underline\Lambda}^*$, and a right adjoint, which we will denote $\iota_{\underline\Lambda}^!$. 
\end{corollary}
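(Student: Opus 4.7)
The plan is to invoke the adjoint functor theorem in the presentable setting, using the previous lemma as a black box.

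First I would note that by Lemma \ref{sheaves with prescribed microsupport closed under limits}, the category $\Sh_{\underline\Lambda}(M)$ is presentable and $\iota_{\underline\Lambda *}$ preserves all limits and all colimits. Both $\Sh(M)$ and $\Sh_{\underline\Lambda}(M)$ are therefore presentable stable $\infty$-categories, and $\iota_{\underline\Lambda *}$ is a bicontinuous functor between them.

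To produce $\iota_{\underline\Lambda}^*$ (the left adjoint), I would apply the adjoint functor theorem in the form: a limit-preserving accessible functor between presentable $\infty$-categories admits a left adjoint (see e.g.~\cite[Corollary 5.5.2.9]{Lurie-HTT}). Accessibility is automatic here since $\iota_{\underline\Lambda *}$ preserves all filtered colimits. To produce $\iota_{\underline\Lambda}^!$ (the right adjoint), I would apply the dual form: a colimit-preserving functor between presentable $\infty$-categories admits a right adjoint (see e.g.~\cite[Corollary 5.5.2.9]{Lurie-HTT}). Since $\iota_{\underline\Lambda *}$ preserves arbitrary colimits, this applies directly.

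There is essentially no obstacle once the presentability and bicontinuity are in hand; the corollary is a purely formal consequence of the preceding lemma together with the adjoint functor theorem. The only thing to double-check is that accessibility of $\iota_{\underline\Lambda *}$ is truly automatic, which it is because preservation of all small colimits in particular implies preservation of $\kappa$-filtered colimits for every regular cardinal $\kappa$, so the functor is $\kappa$-accessible for any $\kappa$ for which both sides are $\kappa$-presentable.
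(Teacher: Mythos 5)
Your proposal is correct and follows the same route as the paper: cite Lemma \ref{sheaves with prescribed microsupport closed under limits} for presentability and bicontinuity of the inclusion, then invoke the adjoint functor theorem \cite[Corollary 5.5.2.9]{Lurie-HTT} for both adjoints. Your extra remark on accessibility being automatic is a harmless elaboration of a point the paper leaves implicit.
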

\begin{proof}
    For a manifold $M$, $\Sh(M)$ is presentable by \cite[Corollary 7.1.4.4]{Lurie-HTT} and \cite[Corollary 2.24]{Volpe-six-operations}. Then Lemma \ref{sheaves with prescribed microsupport closed under limits} implies that $\Sh_{\underline\Lambda}(M)$ is also presentable, and the result follows from the adjoint functor theorem of presentable categories \cite[Corollary 5.5.2.9]{Lurie-HTT}.
\end{proof}

    In fact, the microlocalization functor admits left and right adjoints, and they factor through doubling:

\begin{theorem}[{\cite[Theorem 4.2 \& 4.47]{KuoLi-spherical}\footnote{The reference uses stronger constructibility hypothesis. However, the proof does not rely on knowing a priori that the category of microsheaves is presentable or the microlocalization admits adjoint functors, so establishes the present assertion.}, which generalizes \cite[Lemma 12.2.1]{Guillermou-survey}}]\label{thm: doubling adjoint} 
    Let $\Lambda' \subset S^*M$ be any compact subset and $\Lambda \subset \Lambda'$ be an open sufficiently Legendrian subset. Then the left (right) adjoint of 
    $$m_{\Lambda}: \Sh_{\Lambda'}(M) \to \msh_{\Lambda}(\Lambda)$$
    factors as a composition of doubling and the left (right) adjoint of the tautological inclusion
    $$\iota_{\Lambda'}^* \circ w_{\Lambda_{-s}}: \msh_{\Lambda}(\Lambda) \to \Sh_{\underline{\Lambda}{}_{\cup,s}}(M) \to \Sh_{\Lambda'}(M).$$
\end{theorem}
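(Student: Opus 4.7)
The plan is to construct the left adjoint as the explicit composition $\iota_{\Lambda'}^* \circ w_{\Lambda_{-s}}$ and then verify its universal property; the right adjoint is handled identically, with $\iota_{\Lambda'}^*$ replaced by $\iota_{\Lambda'}^!$ and the adjunction $\iota_{\Lambda'}^* \dashv \iota_{\Lambda'*}$ replaced by $\iota_{\Lambda'*} \dashv \iota_{\Lambda'}^!$. First I would check that the composition is well-defined: the reflection $\iota_{\Lambda'}^*$ exists by Corollary \ref{cor: sheaf-bicomplete}, and $w_{\Lambda_{-s}}$ is the inverse of the equivalence of Theorem \ref{thm: relative-doubling}, pre-composed with the identification $\msh_\Lambda(\Lambda) \simeq \msh_{\Lambda_{-s}}(\Lambda_{-s})$ induced by a small positive contact isotopy (which makes sense because $\Lambda$ is sufficiently Legendrian).

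Next, for the adjunction, I would fix $F \in \msh_\Lambda(\Lambda)$ and $G \in \Sh_{\Lambda'}(M)$, set $w := w_{\Lambda_{-s}}(F)$, and use $\iota_{\Lambda'}^* \dashv \iota_{\Lambda'*}$ together with $\iota_{\Lambda'*}G = G$ to reduce the universal property to a natural equivalence
\[
\Hom_{\Sh(M)}(w, G) \simeq \Hom_{\msh_\Lambda(\Lambda)}(F, m_\Lambda G).
\]
The sheaf $w$ has microsupport at infinity contained in $\ol{\Lambda}_{-s} \cup \ol{\Lambda}_s$, and by construction $m_{\Lambda_{-s}}(w)$ recovers $F$ under the isotopy identification. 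I would then compute $\Hom(w, G) = \Gamma(M, \mhom(w, G))$; by the microsupport estimate \eqref{lem: ss-hom}, $\mhom(w, G)$ is supported in $-\ss(w)\,\widehat+\,\ss(G)$. The key geometric point is that the positively-displaced copy $\Lambda_s$ has no short Reeb chord to any point of $\Lambda' \supset \Lambda$ (it is already ahead of $\Lambda$ in the positive Reeb direction by the doubling construction), so its contribution to $\mhom(w, G)$ vanishes. What remains localizes near the length-$s$ Reeb chord from $\Lambda_{-s}$ to $\Lambda$, and the $\mhom$ calculation there computes (after integrating and applying the positive isotopy equivalence $\msh_{\Lambda_{-s}} \simeq \msh_\Lambda$) to the microsheaf hom $\Hom_{\msh_\Lambda(\Lambda)}(F, m_\Lambda G)$.

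The main obstacle will be this last localization step---showing that morphisms from the doubled sheaf to any $G \in \Sh_{\Lambda'}(M)$ are controlled by the microsheaf $F$ on the negative pushoff side, with no contribution from the positive side. This is essentially a positive gappedness assertion, and the full sufficiently Legendrian hypothesis on $\Lambda$ enters precisely here: positive self-displaceability controls the positive-side contribution (via the perturbation-invariance of $\Hom$ in the spirit of Proposition \ref{prop: perturb compact}), while the other ingredients (pdff and perturbability to finite position) are absorbed into the validity of the underlying doubling equivalence (Theorem \ref{thm: relative-doubling}) itself.
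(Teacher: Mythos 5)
The paper does not give its own proof of Theorem~\ref{thm: doubling adjoint}; it simply cites \cite[Theorems~4.2 \&~4.47]{KuoLi-spherical} with a footnote asserting that the proof there carries over unchanged in the present generality. So there is no in-paper argument to compare against. Judged on its own, your reduction step is correct: by the adjunction $\iota_{\Lambda'}^*\dashv\iota_{\Lambda'*}$ and $\iota_{\Lambda'*}G=G$, the left-adjoint property is equivalent to the natural equivalence $\Hom_{\Sh(M)}(w_{\Lambda_{-s}}F,G)\simeq \Hom_{\msh_\Lambda(\Lambda)}(F,m_\Lambda G)$. That is the right target, and it is precisely the content of Guillermou's Lemma~12.2.1 and its generalization.

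The genuine gap is in the ``key geometric point.'' You claim that the positive pushoff $\Lambda_s$ contributes nothing because it ``has no short Reeb chord to any point of $\Lambda'\supset\Lambda$.'' This is false as stated: by construction there is a Reeb chord of length exactly $s$ from $\Lambda\subset\Lambda'$ to $\Lambda_s$ (and more from $\Lambda'\setminus\Lambda$ to $\Lambda_s$ of various lengths, since $\Lambda'$ is only assumed compact). More importantly, the role of the positive branch in the doubling is not to be invisible; the doubling is designed precisely so that the two branches $\Lambda_{-s}$ and $\Lambda_s$ interact. The actual mechanism, as in \cite[Lemma~12.2.1]{Guillermou-survey} and as packaged in the paper's own Appendix~B (see the definition $T_\prec=\mathrm{Fib}(T_-\to T_+)$ before Proposition~\ref{prop: fiber sequence weinstein}, and that proposition's proof), is a fiber/cofiber sequence: the doubling sheaf is the fiber of a positive continuation map between the negative and positive pushoffs of $F$. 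One then uses the fact that the continuation map becomes an isomorphism on $\Hom(-,G)$ (Proposition~\ref{prop: perturb-lim} / Proposition~\ref{prop: perturb compact} type reasoning, which is where positive self-displaceability enters), so the Hom out of the fiber recovers the microsheaf $\Hom(F,m_\Lambda G)$. Without this triangle, the localization ``near the length-$s$ Reeb chord from $\Lambda_{-s}$ to $\Lambda$'' you propose has no clear meaning, and the argument does not close.

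A smaller point: you compute $\Hom(w,G)$ as $\Gamma(M,\mu hom(w,G))$ and then estimate the support of $\mu hom$ by Formula~\eqref{lem: ss-hom}. What is supported in $-\ss(w)\,\widehat{+}\,\ss(G)$ is the singular support of $\sHom(w,G)$, whereas $\mu hom(w,G)$ is supported on $\ss(w)\cap\ss(G)$ (away from the zero section) and its restriction to the zero section equals $\sHom(w,G)$. These are different estimates, and conflating them obscures exactly which piece is supposed to vanish. After correcting this, the doubling and $\Lambda'$ intersect in a way that cannot be dismissed by Reeb-chord length alone without the triangle structure.
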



    We will want to apply the theorem to results established in \cite{Nadler-Shende}, which however used a different construction of doubling \cite[Section 7]{Nadler-Shende}. One can either observe that \cite{Nadler-Shende} goes through using the doubling from (wherever) or observe that the doublings are in fact equivalent in the following sense. We denote the relative doubling in \cite[Section 7.3]{Nadler-Shende} by $\Lambda^\prec_{sm}$ and the relative doubling in \cite[Section 4.6]{KuoLi-spherical} by $\Lambda^\prec$.

\begin{proposition} 
    Let $\Lambda \subset S^*M$ be sufficiently Legendrian with contact collar. Then there is a contact isotopy that sends $\Lambda^\prec_{sm}$ into an arbitrary small neighborhood of $\Lambda^\prec$ and vice versa, which induces a pair of nearby cycle functors that induces equivalences and fits into the diagram
    \[\begin{tikzcd}
    \Sh_{\underline\Lambda_{\cup,s,sm}}(M) \ar[r, "\sim"] \ar[d, "m_\Lambda" left] & \Sh_{\underline\Lambda_{\cup,s}}(M) \ar[d, "m_\Lambda"] \\
    \msh_\Lambda(\Lambda) \ar[r, "="] & \msh_\Lambda(\Lambda).
    \end{tikzcd}\]
\end{proposition}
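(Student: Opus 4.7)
The plan is to interpolate between the two doubling constructions by a compactly supported contact isotopy of $S^*(M \times \bR)$ that is the identity in a neighborhood of $\Lambda$ itself, then realize the resulting equivalence of sheaf categories via a pair of nearby-cycle functors parameterized by the isotopy.

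Both $\Lambda^\prec_{sm}$ and $\Lambda^\prec$ are Legendrian movies obtained by Reeb pushoff of $\Lambda$, differing only in the profile of the pushoff: the [Nadler--Shende] version uses a smooth compactly supported profile, while the version $\Lambda^\prec$ recalled just above uses the $s \mapsto s^{3/2}$ schedule. First I would construct a one-parameter family of contact Hamiltonians on $S^*(M \times \bR)$, supported in a small tubular neighborhood of the pushoff movie and vanishing near $\Lambda \times \{0\}$, that interpolates between these two profiles. The contact collar hypothesis (Definition \ref{def: contact collar}) is exactly what allows this family to be chosen uniformly near $\partial\Lambda$. Integrating yields a compactly supported contact isotopy $\varphi_\tau$, $\tau \in [0,1]$, carrying $\Lambda^\prec_{sm}$ to $\Lambda^\prec$; composing with a dilation in the $s$-coordinate then places either doubling inside an arbitrarily small neighborhood of the other.

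Next, I would assemble the isotopy into a single Legendrian family $\Lambda^\prec_\bullet \subset S^*(M \times \bR \times \bR_{>0,\tau})$ by viewing $\tau$ as a parameter; it is $\bR_{>0,\tau}$-non-characteristic by construction. Given $\SF$ on $M \times \bR$ with microsupport in $\underline\Lambda_{\cup,s,sm}$, I pull back along $\varphi_\tau$ to obtain a sheaf $\wt\SF$ on $M \times \bR \times \bR_{>0,\tau}$ with microsupport in $\Lambda^\prec_\bullet$, and then take the nearby cycle of $\wt\SF$ as $\tau$ approaches the opposite endpoint. This produces a sheaf whose microsupport lies in $\underline\Lambda_{\cup,s}$. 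Running the same process in reverse gives the inverse functor. That these are mutually inverse equivalences is standard invariance of sheaf categories under compactly supported contact isotopies; it can be packaged either via the GKS kernel of Theorem \ref{thm: GKS sheaf}, or directly via the continuation map of Proposition \ref{prop: positive continue shv} after arranging the interpolating Hamiltonian to be non-negative in each direction.

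Finally, compatibility with $m_\Lambda$ in the diagram follows because the interpolating Hamiltonian was chosen to vanish near $\Lambda \subset S^*M \times \{0\}$: the nearby-cycle equivalences therefore fix the germ of any sheaf at points of $\Lambda$, so under the identifications of Theorem \ref{thm: relative-doubling} (and its [Nadler--Shende] analogue) the induced map on $\msh_\Lambda(\Lambda)$ is the identity. The main technical obstacle is ensuring that $\Lambda^\prec_\bullet$ remains genuinely $\bR_{>0,\tau}$-non-characteristic with pdff limiting projection, so that the nearby-cycle tools of Section \ref{appen: nearby} apply; this is arranged by requiring each profile in the interpolating family to be smooth and strictly monotone in $s$ away from $s = 0$ for every $\tau > 0$, and by choosing the cutoff in the contact collar to preserve the self-displaceability inherited from $\Lambda$.
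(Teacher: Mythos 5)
Your plan is the right one and follows the paper's overall approach: away from the collar both doublings agree on the nose (both are the Reeb movie of $\Lambda$), within the collar the two differ only in the push-off profile, so one interpolates the profile to get a contact isotopy supported in the collar and vanishing near $\Lambda$, and runs a nearby-cycle functor along that isotopy; commutativity with $m_\Lambda$ then follows because the isotopy fixes a neighborhood of $\Lambda$.

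Where you diverge from the paper is in how you establish that the resulting functor is an equivalence. You propose to build an explicit inverse via the reverse isotopy and then verify mutual invertibility by GKS or, alternatively, by the continuation map of Proposition \ref{prop: positive continue shv}. The paper takes a shortcut you should notice: both vertical maps $m_\Lambda$ in the square are already known to be equivalences by the respective doubling theorems (Theorem \ref{thm: relative-doubling} for $\underline\Lambda_{\cup,s}$ and the Nadler--Shende doubling theorem for $\underline\Lambda_{\cup,s,sm}$), so once the square commutes and the top horizontal admits the microsupport estimate of Lemma \ref{lem: ss-nearby-cycle}, the equivalence is automatic by two-out-of-three. This avoids (a) having to arrange positivity of the interpolating Hamiltonian, and (b) having to re-check that you land in $\Sh_{\underline\Lambda_{\cup,s}}(M)$ rather than nearby, since the conic subsets $\underline\Lambda_{\cup,s,sm}$ and $\underline\Lambda_{\cup,s}$ contain a zero-section piece which GKS alone does not directly control. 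Your fallback remark about ``arranging the interpolating Hamiltonian to be non-negative in each direction'' is not coherent as written -- a Hamiltonian and its inverse cannot both be non-negative unless they vanish -- and Proposition \ref{prop: positive continue shv} only furnishes a continuation morphism, not an equivalence; so if you do want a direct argument you must use GKS properly rather than this route. There is also a notational slip: the sheaves in the diagram live on $M$, not $M \times \bR$, and the isotopy is of $S^*M$, not $S^*(M \times \bR)$; the $\bR_{>0,\tau}$ parameter of the nearby cycle is the only auxiliary direction needed.
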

\begin{proof}
    Fix a Reeb vector field $R$ that is compatible with the contact collar. Recall that away from the contact collar, both doublings are defined by the contact movie of $\Lambda $ under the Reeb flow. Within the contact collar $U \times T^*(0, \epsilon) \times T^*\bR$, the smooth doubling $\Lambda_{\pm s,sm}$ of $\partial \Lambda \times (0, \epsilon)$ is defined by $\partial \Lambda \times P_{\pm s} \subset U \times T^*(0, \epsilon) \times$ defined by capping of the doubling of $\partial \Lambda_{\pm s}$ by a smooth parabola:
    $$\partial \Lambda \times P_{\pm s} = \{(\varphi_R^{f_\pm(t)}(x), t, f'_\pm(t)) \mid x \in \partial \Lambda, t \in (0, \epsilon)\},$$
    where $f: \bR_{\geq 0} \to \bR$ is a function such that $f_\pm(t) = \pm t^{3/2}$ when $t$ is sufficiently close to $0$ and $f_\pm(t) = \pm s$ when $t$ is close to $\pm \epsilon$. On the other hand, the doubling in $\Lambda_{\pm s}$ of $\partial \Lambda \times (0, \epsilon)$ is defined by
    $$(\partial \Lambda \times (0, \epsilon))_{\pm s} = \{(\varphi_R^{\rho_\pm(t)}(x), t, \rho_\pm'(t)) \mid x \in \partial \Lambda, t \in (0, \epsilon)\},$$
    where $\rho_\pm: \bR_{\geq 0} \to \bR$ are smooth cut-off functions. Then we can define the contact isotopy by considering a family of smooth functions $f_{\pm,t} \to \rho_{\pm}$ that induces a contact isotopy $\phi$ in $U \times T^*(0, \epsilon)$. Then by Lemma \ref{lem: ss-nearby-cycle}, we have a nearby cycle functor
    $$\Sh_{\underline\Lambda_{\cup,s,sm}}(M) \xrightarrow{\sim} \Sh_{(\underline\Lambda_{\cup,s,sm})_\phi}(M \times \bR_{>0}) \to \Sh_{\underline\Lambda_{\cup,s}}(M).$$
    Since the contact isotopy is identity away from the contact collar, we can conclude the diagram in the statement commutes by Lemma \ref{lem:contact-transform-main}. Then it follows from Theorem \ref{thm: relative-doubling} that the nearby cycle induces an equivalence.
\end{proof}

    By Theorem \ref{thm: doubling adjoint}, for an open subset $\Omega \subset \Lambda$ of sufficiently Legendrian subsets, we can realize both the restriction functor and its adjoint functors using doubling:

\begin{corollary}\label{cor: restrict doubling}
    Let $\Lambda \subset S^*M$ be any relative compact sufficiently Legendrian subset and $\Omega \subset \Lambda$ an open subset. Then the restriction functor $r_{\Omega}^*: \msh_{\Lambda}(\Lambda) \to \msh_{\Lambda}(\Omega)$ is given by
    $$m_{\Omega} \circ \iota_{\underline\Omega_{\cup,s}^+}^! \circ w_{\Lambda}^+: \msh_\Lambda(\Lambda) \to \Sh_{\underline\Lambda_{\cup,s}^+}(M) \to \Sh_{\underline\Omega_{\cup,s}^+}(M) \to \msh_{\Lambda}(\Omega),$$
    and the left adjoint $r_{\Omega,!}: \msh_{\Lambda}(\Omega) \to \msh_{\Lambda}(\Lambda)$ to the restriction functor is given by the composition
    $$m_{\Lambda} \circ \iota_{\underline{\Lambda}_{\cup,s}^+}^* \circ w_{\Omega}^+: \msh_{\Lambda}(\Omega) \rightarrow \Sh_{\underline{\Omega}_{\cup,s}^+}(M) \rightarrow \Sh_{\underline\Lambda_{\cup,s}^+}(M) \to \msh_{\Lambda}(\Lambda).$$
\end{corollary}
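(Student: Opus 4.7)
The strategy is to transport the adjoint pair $r_{\Omega,!} \dashv r_\Omega^*$ across the doubling equivalences of Definition \ref{def: positive doubling}, and to identify it with the standard reflection--coreflection adjunctions for the inclusion of nested microsupport subcategories in $\Sh(M)$.

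The starting point is that microlocalization is a sheaf-theoretic operation: since $\msh$ is constructed by sheafifying the presheaf $\underline\Omega \mapsto \Sh(M)/\Sh_{T^*M\setminus \underline\Omega}(M)$, it is automatically compatible with restriction to open subsets of $T^*M$. Combined with the inclusion $\underline\Omega_{\cup,s}^+ \subset \underline\Lambda_{\cup,s}^+$, this produces a commutative square
\[
\begin{tikzcd}
\Sh_{\underline\Omega_{\cup,s}^+}(M) \ar[r, hook, "\iota"] \ar[d, "m_\Omega"'] & \Sh_{\underline\Lambda_{\cup,s}^+}(M) \ar[d, "m_\Lambda"] \\
\msh_\Lambda(\Omega) & \msh_\Lambda(\Lambda) \ar[l, "r_\Omega^*"']
\end{tikzcd}
\]
whose vertical arrows are the doubling equivalences, whose top arrow is the tautological inclusion of full subcategories, and whose commutativity encodes the naturality of microlocalization under open restriction.

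Passing to left adjoints around the square yields $r_{\Omega,!}\circ m_\Omega \simeq m_\Lambda \circ \iota$, equivalently $r_{\Omega,!} = m_\Lambda \circ \iota \circ w_\Omega^+$. In the statement this is recorded as $m_\Lambda \circ \iota_{\underline\Lambda_{\cup,s}^+}^* \circ w_\Omega^+$, which is consistent with our $\iota$ because the left adjoint to the inclusion $\Sh_{\underline\Lambda_{\cup,s}^+}(M) \hookrightarrow \Sh(M)$ is the identity on the smaller subcategory $\Sh_{\underline\Omega_{\cup,s}^+}(M)$, so composing with the smaller inclusion reproduces $\iota$. Dually, passing to right adjoints gives $r_\Omega^* = m_\Omega \circ \iota^! \circ w_\Lambda^+$, where $\iota^!$ is the right adjoint of $\iota$; its existence follows from Corollary \ref{cor: sheaf-bicomplete} applied to the nested presentable subcategories, matching the notation $\iota_{\underline\Omega_{\cup,s}^+}^!$ in the statement.

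The main obstacle is verifying commutativity of the doubling square, i.e.\ that microlocalizing a sheaf in $\Sh_{\underline\Omega_{\cup,s}^+}(M)$ at $\Lambda$ and then restricting to $\Omega$ recovers $m_\Omega$. This is morally tautological from the sheaf-of-categories structure of $\msh$, but some care is needed because the horizontal arrows are honest inclusions whereas the vertical arrows are equivalences whose inverses (doubling) are geometrically nontrivial. An alternative, more formal route is to apply Theorem \ref{thm: doubling adjoint} directly: factor $m_\Omega: \Sh_\Lambda(M) \to \msh_\Omega(\Omega)$ through $r_\Omega^* \circ m_\Lambda$, compute its left adjoint via Theorem \ref{thm: doubling adjoint}, and read off the formula for $r_{\Omega,!}$ by uniqueness of left adjoints; the formula for $r_\Omega^*$ then follows by taking right adjoints.
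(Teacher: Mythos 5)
Your fallback route via Theorem~\ref{thm: doubling adjoint} is the correct and intended argument: the paper states Corollary~\ref{cor: restrict doubling} without proof as a direct consequence of that theorem, applied with $\Lambda'$ taken to be the doubled set $\Lambda \cup \Lambda_s$ (so that $\Sh_{\Lambda'}(M) = \Sh_{\underline\Lambda_{\cup,s}^+}(M)$) and with $\Omega$ playing the role of the open sufficiently Legendrian subset. One then factors $m_\Omega\colon \Sh_{\underline\Lambda_{\cup,s}^+}(M) \to \msh_\Lambda(\Omega)$ as $r_\Omega^* \circ m_\Lambda$, equates its left adjoint $w_\Lambda^+ \circ r_{\Omega,!}$ with the left adjoint $\iota_{\underline\Lambda_{\cup,s}^+}^* \circ w_\Omega^+$ supplied by that theorem, and solves; the first formula then follows by passing to right adjoints.

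Your primary route, however, has a genuine gap which you sense but misdiagnose. The commutativity $m_\Omega = r_\Omega^* \circ m_\Lambda \circ \iota$, with $\iota\colon \Sh_{\underline\Omega_{\cup,s}^+}(M) \hookrightarrow \Sh_{\underline\Lambda_{\cup,s}^+}(M)$ the inclusion, is in fact the easy part: restricting the sheaf of categories $\msh$ from $\Lambda$ to $\Omega$ commutes with microlocalization by construction. What does \emph{not} follow formally is the adjoint identity. Taking left adjoints across $m_\Omega = r_\Omega^* \circ m_\Lambda \circ \iota$ yields only
\[
\iota^L \circ w_\Lambda^+ \circ r_{\Omega,!} \;\simeq\; w_\Omega^+,
\]
where $\iota^L$ is the left adjoint of the inclusion $\iota$ --- a localization, not an equivalence. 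Your intermediate claim $r_{\Omega,!}\circ m_\Omega \simeq m_\Lambda \circ \iota$, equivalently $w_\Lambda^+ \circ r_{\Omega,!} \simeq \iota \circ w_\Omega^+$, is strictly stronger: it asserts in addition that $w_\Lambda^+ \circ r_{\Omega,!}$ already takes values in the subcategory $\Sh_{\underline\Omega_{\cup,s}^+}(M)$. Abstractly, knowing that $r^*\circ\iota \simeq \id$ for a fully faithful $\iota$ does not identify the left adjoint of $r^*$ with $\iota$; one needs the additional fact that this left adjoint factors through the doubling $w_\Omega^+$, which is precisely the geometric content of Theorem~\ref{thm: doubling adjoint} and cannot be recovered by contemplating the square alone. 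Promote the fallback to the main argument and discard the first route.
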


\begin{corollary}\label{rem: double vs wrap} 
    Let $\Lambda \subset S^*M$ and $L \subset S^*M$ be a sufficiently Legendrian subsets such that $\Lambda \subset L$ is a closed subset. Then there is a commutative diagram of left adjoints of restrictions and microlocalizations
    \[\begin{tikzcd}
    \msh_{L}(L) \ar[r] \ar[d, "\iota_{\Lambda}^*" left] & \Sh_{\underline{L}_{\cup, \epsilon}^+}(M) \ar[d, "\iota_{\underline{\Lambda}{}^+_{\cup,\epsilon}}^*"] \\
    \msh_{\Lambda}(\Lambda) \ar[r] & \Sh_{\underline{\Lambda}{}_{\cup,\epsilon}^+}(M).
    \end{tikzcd}\]
\end{corollary}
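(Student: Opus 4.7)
The plan is to pass to right adjoints and invoke the uniqueness of adjoint functors (the mate correspondence). Every arrow in the stated square is a left adjoint: the horizontal doublings $w_L^+$ and $w_\Lambda^+$ are left adjoints of the microlocalizations $m_L$ and $m_\Lambda$ (Definition \ref{def: positive doubling}, Theorem \ref{thm: doubling adjoint}), while the vertical arrows $\iota_\Lambda^*$ and $\iota_{\underline{\Lambda}{}^+_{\cup,\epsilon}}^*$ are, by Corollary \ref{cor: sheaf-bicomplete} and its microsheaf analogue, the left adjoints of the tautological inclusions of full subcategories induced by $\Lambda \subset L$ and $\underline{\Lambda}{}^+_{\cup,\epsilon} \subset \underline{L}^+_{\cup,\epsilon}$. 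By the mate correspondence for adjunctions of $\infty$-functors \cite[Appendix A, Section 12.3]{Gaitsgory-Rozenblyum}, it suffices to verify that the square of right adjoints commutes.

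The right-adjoint square reads
\[
\begin{tikzcd}
\msh_{L}(L) & \Sh_{\underline{L}_{\cup,\epsilon}^+}(M) \ar[l, "m_L"'] \\
\msh_{\Lambda}(\Lambda) \ar[u, hook] & \Sh_{\underline{\Lambda}{}^+_{\cup,\epsilon}}(M) \ar[u, hook] \ar[l, "m_\Lambda"']
\end{tikzcd}
\]
Commutativity here is essentially tautological. The sheaves of categories $\msh_\Lambda \subset \msh_L \subset \msh$ are subsheaves defined pointwise on full subcategories, and $\Sh_{\underline{\Lambda}{}^+_{\cup,\epsilon}}(M) \subset \Sh_{\underline{L}^+_{\cup,\epsilon}}(M) \subset \Sh(M)$ are inclusions of full subcategories. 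Both $m_L$ and $m_\Lambda$ are obtained by restricting a single microlocalization $m \colon \Sh(M) \to \msh(S^*M)$ and then restricting the target sheaf of categories to $L$ or $\Lambda$. For $\SF \in \Sh_{\underline{\Lambda}{}^+_{\cup,\epsilon}}(M)$, one has $\dot{\ss}(\SF) \cap L \subset \Lambda$, so $m(\SF)|_L$ is already supported on $\Lambda \subset L$, and the two composites are identified with $m(\SF)|_L$ via the canonical inclusion $\msh_\Lambda(\Lambda) \hookrightarrow \msh_L(L)$.

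The only real point requiring care is coherence of this identification in the $\infty$-categorical setting, not merely on objects. This is formal: the inclusions of subsheaves of categories and the microlocalization functor all arise by restriction from a single commutative diagram of functors that is built into the sheafification defining $\msh$ (Section \ref{ssec: microsheaf definition}), so no intermediate choices need to be made. The mate correspondence then produces the required equivalence of left adjoints, completing the proof.
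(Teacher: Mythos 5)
Your argument is correct and matches the approach implicit in the paper, which states this corollary without an explicit proof: reducing to commutativity of the square of right adjoints — where $m_L$ and $m_\Lambda$ both factor through the global microlocalization $m$ restricted to full subcategories cut out by microsupport, so the claim becomes the compatibility of $m$ with those inclusions — and then invoking uniqueness of adjoints is precisely the mechanism used nearby, e.g.\ in the proof of Theorem \ref{thm: kunneth-doubling}. The one thing you leave tacit (as does the paper) is that the doublings of $\Lambda$ and of $L$ must be constructed from compatible positive contact isotopies so that $\underline{\Lambda}{}^+_{\cup,\epsilon} \subset \underline{L}{}^+_{\cup,\epsilon}$ and the tautological inclusion of sheaf categories on the right actually exists; that same compatibility is what gives $\Lambda_\epsilon \cap L \subset L_\epsilon \cap L = \varnothing$, which you are implicitly using when you assert $\dot{\ss}(\SF) \cap L \subset \Lambda$ for $\SF \in \Sh_{\underline{\Lambda}{}^+_{\cup,\epsilon}}(M)$.
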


    From Theorems \ref{thm: relative-doubling}, \ref{thm: doubling adjoint}, Lemma \ref{sheaves with prescribed microsupport closed under limits}, we can immediately conclude that the category of microsheaves supported on sufficiently Legendrian is also presentable, and the microlocalization functor is limit and colimit preserving.

\begin{corollary}\label{cor: presentable doubling}
    Let $\underline\Lambda \subset T^*M$ be a conic subset and $\Lambda \subset S^*M$ be any relative compact sufficiently Legendrian subset. Then the sheaf of categories of microsheaves $\msh_{\underline\Lambda}$ takes values in presentable categories and the restriction functors are limit and colimit preserving. In particular, the microlocalization functor
    $$m_\Lambda: \Sh_{\Lambda}(M) \longrightarrow \msh_{\Lambda}(\Lambda)$$
    is limit and colimit preserving and admits left and right adjoints.
\end{corollary}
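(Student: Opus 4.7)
The plan is to reduce everything to the doubling equivalences already in place together with standard presentable-category bookkeeping: a functor between presentable categories which admits both a left and a right adjoint preserves all small limits and all small colimits, and an equivalence transports presentability. This lets us bootstrap from the concrete sheaf-theoretic side of doubling to abstract properties of the microsheaf categories.

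I would begin with the ``in particular'' clause for a fixed sufficiently Legendrian $\Lambda$. Definition \ref{def: positive doubling}, the positive-doubling version of Theorem \ref{thm: relative-doubling}, supplies an equivalence $m_\Lambda : \Sh_{\underline{\Lambda}{}^+_{\cup,s}}(M) \xrightarrow{\sim} \msh_\Lambda(\Lambda)$ whose inverse is the doubling functor $w_\Lambda^+$. The source is a full subcategory of $\Sh(M)$ closed under all limits and colimits by Lemma \ref{sheaves with prescribed microsupport closed under limits}, hence presentable with both inclusion-adjoints by Corollary \ref{cor: sheaf-bicomplete}. Transporting across the equivalence shows $\msh_\Lambda(\Lambda)$ is presentable. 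Theorem \ref{thm: doubling adjoint} then writes down explicit left and right adjoints to the microlocalization $m_\Lambda : \Sh_\Lambda(M) \to \msh_\Lambda(\Lambda)$ as $\iota^*_\Lambda \circ w^+_{\Lambda_{-s}}$ and $\iota^!_\Lambda \circ w^-_{\Lambda_s}$ respectively, so $m_\Lambda$ has both adjoints and therefore preserves all small limits and colimits.

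For the global statement about the sheaf of categories $\msh_{\underline{\Lambda}}$, I would use Corollary \ref{cor: restrict doubling}, which for any inclusion of sufficiently Legendrian opens $\Omega \subset \Lambda$ expresses the restriction $r_\Omega^* : \msh_\Lambda(\Lambda) \to \msh_\Lambda(\Omega)$ as a composition of doubling equivalences with the inclusion adjoint $\iota^!_{\underline{\Omega}{}^+_{\cup,s}}$, and exhibits a left adjoint $r_{\Omega,!}$ built out of $\iota^*$ and $w^+_\Omega$. Each factor preserves both limits and colimits by the previous paragraph and Lemma \ref{sheaves with prescribed microsupport closed under limits}, so restriction functors between sufficiently Legendrian pieces are limit- and colimit-preserving. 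Covering an arbitrary open $\underline{\Omega} \subset T^*M$ by opens of this form and using the sheaf property to express $\msh_{\underline{\Lambda}}(\underline{\Omega})$ as a limit in $\Cat$ over the cover then yields presentability of sections and limit/colimit preservation of the general restriction functors.

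The main obstacle is navigating the step from local sufficiently Legendrian data to the sheafified object $\msh_{\underline{\Lambda}}$: because the sheafification in Section \ref{ssec: microsheaf definition} is performed in the category of all categories rather than in $\PrL$ or $\PrR$ (done deliberately there so that stalks behave well), presentability and preservation properties of transition maps are not automatically inherited from the presheaf level and must be extracted from the concrete doubling description. Once Corollary \ref{cor: restrict doubling} identifies every relevant restriction with a manifest functor between presentable sheaf categories admitting both adjoints, the abstract assertion is a formal consequence of the adjoint functor theorem for presentable stable categories.
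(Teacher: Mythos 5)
Your argument is correct and matches the paper's intended line of reasoning: the paper simply declares the corollary "immediate" from Theorems \ref{thm: relative-doubling}, \ref{thm: doubling adjoint} and Lemma \ref{sheaves with prescribed microsupport closed under limits}, and your proposal is exactly a careful unpacking of that appeal (transport presentability along the doubling equivalence, read off both adjoints of $m_\Lambda$ from Theorem \ref{thm: doubling adjoint}, then use Corollary \ref{cor: restrict doubling} and the sheaf condition for the general restriction functors). The one observation you added — that the sheafification in $\Cat$ rather than $\PrL$/$\PrR$ forces one to extract presentability locally via doubling and then pass to limits over covers whose transition maps preserve both limits and colimits — is a worthwhile clarification that the paper leaves implicit but is handled correctly in your write-up.
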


\subsection{K\"unneth and dualizability}\label{ssec: microsheaf duality}

Recall that there is the Lurie tensor product on the category of presentable categories; see \cite[Section 5.3.6]{Lurie-HTT} or \cite[Section 4.8.1]{Lurie-HA}. 
The functor `sheaves' from locally compact Hausdorff spaces to presentable categories is monoidal (as shown for sheaves in spaces in \cite[Propositions 7.3.1.11 \& 7.3.3.9]{Lurie-HTT} and explicitly base changed to other coefficients in \cite[Proposition 2.30]{Volpe-six-operations}):
    $$\Sh(M_1) \otimes \Sh(M_2) \simeq \Sh(M_1 \times M_2).$$
    The same is known to hold when prescribing microsupports, per \cite[Theorem B]{Zhang-cutoff}: for any closed conic $\underline\Lambda_1 \subset T^*M_1$ and $\underline\Lambda_2 \subset T^*M_2$ and open conic 
    $\underline U_1 \subset T^*M_1$ and $\underline U_2 \subset T^*M_2$,
    \begin{equation} \label{bingyu kunneth}
    \Sh_{\underline\Lambda_1}(M_1; \underline U_1) \otimes \Sh_{\underline\Lambda_2}(M_2; \underline U_2) \simeq  \Sh_{\underline\Lambda_1 \times \underline\Lambda_2}(M_1 \times M_2; \underline U_1 \times \underline U_2) .
    \end{equation}
Such K\"unneth formulas imply that appropriately limit/colimit preserving functors between sheaf categories can be represented by integral kernels.  

In this section we develop similar results for microsheaf categories.  Note that we must restrict to microsheaf categories with sufficiently Legendrian support to ensure presentability of microsheaf categories (via Corollary \ref{cor: presentable doubling}).  

\begin{theorem}\label{thm:kunneth-microsheaf} 
    Let $\underline\Lambda_1 \subset T^*M_1$ and $\underline\Lambda_2 \subset T^*M_2$ be any closed conic subsets with $\Lambda_1, \Lambda_2$ compact stratified Legendrians. Then there is an equivalence of sheaves of categories
    $$\msh_{\underline\Lambda_1} \boxtimes \msh_{\underline\Lambda_2}  \simeq \msh_{\underline\Lambda_1 \times \underline\Lambda_2} .$$
    In particular, there is an equivalence of categories
    $$\msh_{\underline\Lambda_1}(\underline\Lambda_2) \otimes \msh_{\underline\Lambda_1}(\underline\Lambda_2) \simeq \msh_{\underline\Lambda_1 \times \underline\Sigma_2}(\underline\Lambda_1 \times \underline\Lambda_2) .$$
\end{theorem}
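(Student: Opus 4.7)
The plan is to reduce the microsheaf K\"unneth to Zhang's K\"unneth for sheaves with microsupport conditions (Formula \eqref{bingyu kunneth}), first at the presheaf level, then by sheafification, and finally at global sections via doubling.

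\textbf{Sheafy equivalence.} By the definition of microsheaves as the sheafification of $\msh^{\text{pre}}(\underline\Omega)=\Sh(M)/\Sh_{T^*M\setminus\underline\Omega}(M)$, for any conic opens $\underline U_i\subset T^*M_i$ one has $\msh^{\text{pre}}_{\underline\Lambda_i}(\underline U_i)=\Sh_{\underline\Lambda_i}(M_i;\underline U_i)$. Applying Formula \eqref{bingyu kunneth} then gives
\[
\msh^{\text{pre}}_{\underline\Lambda_1}(\underline U_1)\otimes\msh^{\text{pre}}_{\underline\Lambda_2}(\underline U_2)\;\simeq\;\msh^{\text{pre}}_{\underline\Lambda_1\times\underline\Lambda_2}(\underline U_1\times\underline U_2),
\]
functorially in $(\underline U_1,\underline U_2)$. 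Both $\msh_{\underline\Lambda_1}\boxtimes\msh_{\underline\Lambda_2}$ and $\msh_{\underline\Lambda_1\times\underline\Lambda_2}$ are sheaves of categories on $T^*M_1\times T^*M_2$, and the product opens $\underline U_1\times\underline U_2$ form a basis for the product topology. Since the defining presheaves agree functorially on this basis, the sheafifications coincide, yielding the first equivalence.

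\textbf{Global sections.} For the categories $\msh_{\underline\Lambda_i}(\underline\Lambda_i)$, I would apply the positive doubling of Definition \ref{def: positive doubling}, which by Theorem \ref{thm: relative-doubling} gives equivalences $\msh_{\underline\Lambda_i}(\underline\Lambda_i)\simeq\Sh_{\underline\Lambda_{i,\cup,s}^+}(M_i)$ — this uses that compact stratified Legendrians are sufficiently Legendrian via Lemmas \ref{lem: pdff whitney} and \ref{lem: ptfp whitney} together with a suitable symplectic hypersurface providing self-displaceability (Lemma \ref{lem: self displace hypersurface}), with Corollary \ref{cor: presentable doubling} ensuring presentability. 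Applying Formula \eqref{bingyu kunneth} to these sheaf categories gives
\[
\msh_{\underline\Lambda_1}(\underline\Lambda_1)\otimes\msh_{\underline\Lambda_2}(\underline\Lambda_2)\;\simeq\;\Sh_{\underline\Lambda_{1,\cup,s}^+\times\underline\Lambda_{2,\cup,s}^+}(M_1\times M_2),
\]
and the right-hand side is identified with $\msh_{\underline\Lambda_1\times\underline\Lambda_2}(\underline\Lambda_1\times\underline\Lambda_2)$ by doubling $\Lambda_1\times\Lambda_2$ using a product pushoff (e.g.\ the sum of the displacing Hamiltonians pulled back from each factor), so that $\underline{(\Lambda_1\times\Lambda_2)}_{\cup,s}^+$ is contactomorphic to $\underline\Lambda_{1,\cup,s}^+\times\underline\Lambda_{2,\cup,s}^+$.

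\textbf{Main obstacle.} The principal technical subtlety is in the global sections step: one must arrange the self-displacing pushoff for $\Lambda_1\times\Lambda_2$ compatibly with the product structure so that the double of the product coincides (up to contact isotopy, hence equivalence of sheaf categories by Lemma \ref{lem:contact-transform-main}) with the product of the doubles. The natural product Reeb-like flow accomplishes this, but one must verify it satisfies the hypotheses of Theorem \ref{thm: relative-doubling} on the product, which reduces to checking that the relevant microsupport, pdff, and self-displaceability conditions are stable under products — these properties follow from their factor-wise counterparts.
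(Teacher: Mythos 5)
Your sheafy step is essentially the same argument as the paper's: apply Zhang's K\"unneth (Formula \eqref{bingyu kunneth}) at the presheaf level, then sheafify. One point you gloss over that the paper makes explicit: forming $\msh_{\underline\Lambda_1}\boxtimes\msh_{\underline\Lambda_2}$ as a \emph{sheaf} of categories already requires knowing the values are presentable, which is Proposition \ref{cor: microsheaf-bicomplete}; and the paper verifies the resulting map of sheaves is an equivalence by checking on stalks rather than by a basis argument, which is the cleaner formulation. But the route is the same.

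The global sections step is where there is a genuine gap. You claim that $\underline{(\Lambda_1\,\widehat\times\,\Lambda_2)}_{\cup,s}^+$ is contactomorphic to $\underline\Lambda_{1,\cup,s}^+\times\underline\Lambda_{2,\cup,s}^+$. This is false: the double of the contact product and the product of the two doubles are \emph{different} conic subsets of $T^*(M_1\times M_2)$. The paper's own Figure \ref{fig:enter-label} is drawn precisely to illustrate this: even in the model case $M_1=M_2=\bR$ with $\Lambda_i$ a single point, the front of $(\Lambda_1)_{\cup,s}\times(\Lambda_2)_{\cup,s}$ is a square-shaped arrangement while the front of $(\Lambda_1\,\widehat\times\,\Lambda_2)_{\cup,s'}$ is a diamond. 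No product-type pushoff makes them coincide, because the product of doubles acquires extra strata (products of one factor's zero-section segment with the other factor's hairs) that the double of the Hamiltonian-displaced Legendrian product does not. What \emph{is} true is that the two sheaf categories $\Sh_{(\underline{\Lambda_1\widehat\times\Lambda_2})_{\cup,s'}^+}$ and $\Sh_{\underline\Lambda_{1,\cup,s}^+\times\underline\Lambda_{2,\cup,s}^+}$ are equivalent via a nontrivial localization/nearby cycle functor (the $U$-shape filling and wrapping argument of Theorem \ref{thm: doubling-small-piece} and Theorem \ref{thm: kunneth-doubling}). But those results come \emph{after} Theorem \ref{thm:kunneth-microsheaf} in the paper and their proofs invoke it, so reaching for them here would be circular.

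The paper avoids this altogether: once the sheafy equivalence is established, it deduces the global-sections statement directly from the projection formula for pushforward of sheaves of (presentable stable) categories along proper maps (\cite[Propositions 2.30 \& 6.11]{Volpe-six-operations}), using compactness of $\Lambda_1, \Lambda_2$. No doubling is needed. You should replace your global-sections step with this monoidality/projection-formula argument, or else prove the requisite comparison of doubles independently of the theorem you are trying to establish.
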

\begin{proof}
    It is immediate from Equation \eqref{bingyu kunneth} that there exists a natural functor of presheaves of categories:    $$\msh_{\underline\Lambda_1}^{pre} \boxtimes \msh_{\underline\Lambda_2}^{pre} \xrightarrow{\sim} \msh_{\underline\Lambda_1 \times \underline\Lambda_2}^{pre}.$$
    Since Proposition \ref{cor: microsheaf-bicomplete} shows that $\msh_{\underline\Lambda_1}$ and $\msh_{\underline\Lambda_2}$ take values in presentable categories, we can define the sheaf of categories $\msh_{\underline\Lambda_1} \boxtimes \msh_{\underline\Lambda_2}$ (resp.~the presheaf of categories $\msh_{\underline\Lambda_1}^{pre} \boxtimes \msh_{\underline\Lambda_2}^{pre}$). Using the universal property of the sheafification functor, we obtain a natural functor of sheaves of categories (where $(-)^{sh}$ is the sheafification functor)
    $$\msh_{\underline\Lambda_1} \boxtimes \msh_{\underline\Lambda_2} \to \big(\msh_{\underline\Lambda_1}^{pre} \boxtimes \msh_{\underline\Lambda_2}^{pre} \big)^{sh} \to \msh_{\underline\Lambda_1 \times \underline\Lambda_2}.$$
    Both morphisms are in fact isomorphisms, since this can be checked on stalks, which are determined by the corresponding presheaves. 
    
    Finally, when $\underline\Lambda_1$ and $\underline\Lambda_2$ are compact subsets, we apply the projection formula for the categories of sheaves valued in the bicomplete category of stable categories \cite[Propositions 2.30 \& 6.11]{Volpe-six-operations} and obtain that for the projection maps $\pi_{\underline\Lambda_1}: \underline\Lambda_1 \to pt$ and $\pi_{\underline\Lambda_2}: \underline\Lambda_2 \to pt$,
    $$ \pi_{\underline\Lambda_1 *}\msh_{\underline\Lambda_1} \otimes \pi_{\underline\Lambda_2*}\msh_{\underline\Lambda_2} = \pi_{\underline\Lambda_1 \times \underline\Lambda_2,*} \msh_{\underline\Lambda_1} \boxtimes \msh_{\underline\Lambda_2} = \pi_{\underline\Lambda_1 \times \underline\Lambda_2,*}\msh_{\underline\Lambda_1 \times \underline\Lambda_2}.$$
    This shows that $\msh_{\underline\Lambda_1}(\underline\Lambda_1) \otimes \msh_{\underline\Lambda_2}(\underline\Lambda_2) = \msh_{\underline\Lambda_1 \times \underline\Lambda_2}(\underline\Lambda_1 \times \underline\Lambda_2)$.
\end{proof}

    We recall some notions of products of contact manifolds and their submanifolds.

\begin{definition}\label{def: contact product}
    Let $X_1, X_2$ be contact manifolds. Then $X_1 \times \bR_{>0}, X_2 \times \bR_{>0}$ are homogeneous symplectic manifolds. The contact product is defined by
    $$X_1 \,\widehat\times\, X_2 = ((X_1 \times \bR_{>0}) \times (X_2 \times \bR_{>0}))/\bR_{>0} \cong X_1 \times X_2 \times \bR_{>0},$$
    where the contact form is $\alpha = r_1 \alpha_1 + r_2 \alpha_2$.
\end{definition}

\begin{example}
    The contact product $S^*M_1 \,\widehat\times\, S^*M_2$ is the open submanifold $\{[x_1, \xi_1, x_2, \xi_2] \mid \xi_1 \neq 0, \xi_2 \neq 0\} \subset S^*(M_1 \times M_2)$. Note that it is diffeomorphic to $S^*M_1 \times S^*M_2 \times \bR$.
\end{example}

    We will denote the points in the contact product $ X_1 \,\widehat\times\, X_2$ by their representatives in $(X_1 \times \bR_{>0}) \times (X_2 \times \bR_{>0})$. In particular, we will write $[x_1, r_1, x_2, r_2] \in X_1 \,\widehat\times\, X_2$ for the equivalence class of $(x_1, r_1, x_2, r_2) \in (X_1 \times \bR_{>0}) \times (X_2 \times \bR_{>0})$.
    
    We can define contact product of submanifolds inside contact manifolds.

\begin{definition}\label{def: contact product leg}
    Let $\Lambda_i \subset X_i$ be Legendrian submanifolds. Then the contact product is defined by
    $$\Lambda_1 \,\widehat\times\, \Lambda_2 = \{[x_1, r_1; x_2, r_2] \mid x_i \in \Lambda_i \} \subset X_1 \,\widehat\times\, X_2,$$
    whose conification is $(\Lambda_1 \times \bR_{>0}) \times (\Lambda_2 \times \bR_{>0}) \subset (X_1 \times \bR_{>0}) \times (X_2 \times \bR_{>0})$.
\end{definition}


\begin{example}
    When $\Lambda_1 \subset S^*M_1$ and $\Lambda_2 \subset S^*M_2$, 
    we can define the contact product $\Lambda_1 \,\widehat\times\, \Lambda_2 \subset S^*(M_1 \times M_2)$ as the subset whose conification in $\dot T^*(M_1 \times M_2)$ is the product of the cones
    $(\Lambda_1 \times \bR_{>0}) \times (\Lambda_2 \times \bR_{>0}) \subset \dot T^*M_1 \times \dot T^*M_2.$
    Note that there is a homeomorphism $\Lambda_1 \,\widehat\times\, \Lambda_2 \cong \Lambda_1 \times \Lambda_2 \times \bR$, and we can identify $\Lambda_1 \times \Lambda_2 \hookrightarrow \Lambda_1 \,\widehat\times\, \Lambda_2$ as the subset $\Lambda_1 \times \Lambda_2 \times 0$. 
\end{example}
    
    Thus, for contact products, the K\"unneth formula can be written as follows:


\begin{corollary}
    Let $\Lambda_1 \subset S^*M_1$ and $\Lambda_2 \subset S^*M_2$ be any closed stratified Legendrian subsets. Then there is an equivalence of sheaves of categories
    $$\msh_{\Lambda_1 \widehat\times \Lambda_2}|_{\Lambda_1 \times \Lambda_2} \simeq \msh_{\Lambda_1} \boxtimes \msh_{\Lambda_2}.$$
    In particular, there is an equivalence of categories
    $$\msh_{\Lambda_1 \widehat\times \Lambda_2}(\Lambda_1 \times \Lambda_2) \simeq \msh_{\Lambda_1}(\Lambda_1) \otimes \msh_{\Lambda_2}(\Lambda_2).$$
\end{corollary}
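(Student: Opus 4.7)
The plan is to deduce this as a direct consequence of Theorem~\ref{thm:kunneth-microsheaf} by descending the conic K\"unneth equivalence to a slice of the $\bR_{>0}$-action.

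First, I would let $\underline{\Lambda}_i := \Lambda_i \cdot \bR_{>0} \subset \dot{T}^*M_i$ be the conification of $\Lambda_i \subset S^*M_i$. Then $\underline{\Lambda}_1 \times \underline{\Lambda}_2 \subset \dot{T}^*(M_1 \times M_2)$ is closed conic for the diagonal $\bR_{>0}$-action, and its image in $S^*(M_1 \times M_2)$ is, by Definition~\ref{def: contact product leg}, precisely the contact product $\Lambda_1 \,\widehat\times\, \Lambda_2$. The subset $\Lambda_1 \times \Lambda_2 \hookrightarrow \Lambda_1 \,\widehat\times\, \Lambda_2$ (the $r_1=r_2$ locus, corresponding to the zero-section $\Lambda_1 \times \Lambda_2 \times \{0\}$ in the identification $\Lambda_1 \,\widehat\times\, \Lambda_2 \cong \Lambda_1 \times \Lambda_2 \times \bR$) lifts to the slice $S := \Lambda_1 \times \{1\} \times \Lambda_2 \times \{1\} \subset \underline{\Lambda}_1 \times \underline{\Lambda}_2$.

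Second, since the hypotheses of Theorem~\ref{thm:kunneth-microsheaf} are satisfied (namely $\Lambda_1,\Lambda_2$ are compact stratified Legendrians by assumption), I would apply that theorem to obtain the equivalence of sheaves of categories
\[
\msh_{\underline{\Lambda}_1} \boxtimes \msh_{\underline{\Lambda}_2} \xrightarrow{\sim} \msh_{\underline{\Lambda}_1 \times \underline{\Lambda}_2}
\]
on $\underline{\Lambda}_1 \times \underline{\Lambda}_2 \subset \dot{T}^*(M_1 \times M_2)$. Now I would restrict this equivalence to $S$. On the one hand, since $\msh$ is tautologically invariant under the $\bR_{>0}$-dilation action on $T^*M_i$, we have $\msh_{\underline{\Lambda}_i}|_{\Lambda_i \times \{1\}} \simeq \msh_{\Lambda_i}$, so the restricted left-hand side is $\msh_{\Lambda_1} \boxtimes \msh_{\Lambda_2}$. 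On the other hand, the same dilation invariance (now for the diagonal $\bR_{>0}$-action, which identifies $\underline{\Lambda}_1 \times \underline{\Lambda}_2 \to \Lambda_1 \,\widehat\times\, \Lambda_2$ sending $S \to \Lambda_1 \times \Lambda_2$) gives $\msh_{\underline{\Lambda}_1 \times \underline{\Lambda}_2}|_S \simeq \msh_{\Lambda_1 \,\widehat\times\, \Lambda_2}|_{\Lambda_1 \times \Lambda_2}$. Combining yields the first claimed equivalence of sheaves.

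Finally, for the global sections statement, I would take $\pi_!$ along $\Lambda_1 \times \Lambda_2 \to \mathrm{pt}$ of the equivalence of sheaves just established; using the compactness of $\Lambda_1, \Lambda_2$ and the projection formula for sheaves valued in presentable categories (exactly as applied at the end of the proof of Theorem~\ref{thm:kunneth-microsheaf}, via \cite[Propositions 2.30 \& 6.11]{Volpe-six-operations}), we obtain $\msh_{\Lambda_1}(\Lambda_1) \otimes \msh_{\Lambda_2}(\Lambda_2) \simeq \msh_{\Lambda_1 \,\widehat\times\, \Lambda_2}(\Lambda_1 \times \Lambda_2)$. There is no genuine obstacle here: the substance of the statement is Theorem~\ref{thm:kunneth-microsheaf}; the role of this corollary is only to repackage it using contact-product notation, and the single point of care is to track the identification of the slice $S$ with $\Lambda_1 \times \Lambda_2 \hookrightarrow \Lambda_1 \,\widehat\times\, \Lambda_2$ consistently with the convention fixed in the example following Definition~\ref{def: contact product leg}.
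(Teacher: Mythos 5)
Your proof is correct and is essentially the argument the paper implicitly intends (the paper gives no explicit proof of this Corollary, treating it as an immediate consequence of Theorem~\ref{thm:kunneth-microsheaf}). The slice argument — restricting the conic K\"unneth equivalence on $\underline\Lambda_1 \times \underline\Lambda_2 \subset T^*M_1 \times T^*M_2$ to a section $S = \Lambda_1 \times \{1\} \times \Lambda_2 \times \{1\}$ of the diagonal $\bR_{>0}$-action, and using the conicity of $\msh$ to identify the descended sheaf on the quotient $\Lambda_1 \,\widehat\times\, \Lambda_2$ with its pullback to $S$ — is exactly the right way to convert the statement on cotangent bundles into the cosphere-bundle statement. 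You are also correct to prove the sheaf-level equivalence first and then take sections: the global-section formula in Theorem~\ref{thm:kunneth-microsheaf} is for $\msh_{\underline\Lambda_1 \times \underline\Lambda_2}(\underline\Lambda_1 \times \underline\Lambda_2)$, which equals sections over the whole of $\Lambda_1 \,\widehat\times\, \Lambda_2 \cong \Lambda_1 \times \Lambda_2 \times \bR$, not a priori over the slice $\Lambda_1 \times \Lambda_2$, so the extra step is genuinely needed. Two minor remarks: the pushforward computing sections is $\pi_*$, not $\pi_!$ (you later cite the right result from \cite{Volpe-six-operations}, so this is just a notational slip; for compact $\Lambda_i$ the two agree anyway); and you correctly invoke compactness for the projection formula even though the Corollary statement says only ``closed'' — that hypothesis mismatch is in the paper, not in your argument, and should be read as inherited from Theorem~\ref{thm:kunneth-microsheaf}.
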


    For a locally compact Hausdorff space $M$, we know that the category of sheaves $\Sh(M)$ is compactly assembled \cite[Theorem 21.1.6.12 \& Proposition 21.1.7.1]{Lurie-SAG} and hence dualizable in $\PrLst$ \cite[Proposition D.7.3.1]{Lurie-SAG} (alternatively, this also follows from \cite[Proposition 2.30]{Volpe-six-operations} and \cite[Proposition 3.2]{Kuo-Shende-Zhang-Hochschild-Tamarkin}).

    The following results are stated in \cite{KuoLi-duality} under stronger constructibility hypotheses, but, after the constructibility and presentability results established above, their proofs carry through without change in the stratified Legendrian setting.  

\begin{theorem}[{\cite[Theorem 1.3]{KuoLi-duality}, \cite[Proposition 3.2]{Kuo-Shende-Zhang-Hochschild-Tamarkin}}]\label{thm: duality sheaf}
    Let $\underline\Lambda \subset T^*M$ be any conic subset. Then $\Sh_{\underline\Lambda}(M)$ is dual to $\Sh_{-\underline\Lambda}(M)$ with unit and counit
    $$\eta_\Lambda = \iota^*_{-\underline\Lambda \times \underline\Lambda} 1_\Delta, \; \epsilon_\Lambda(-, -) = \pi_!\Delta^*,$$
    where $\iota^*_{-\underline\Lambda \times \underline\Lambda}$ is the left adjoint of the tautological inclusion.
\end{theorem}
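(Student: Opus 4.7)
The plan is to restrict the known self-duality of $\Sh(M)$ in $\PrLst$ to subcategories with prescribed microsupport, following the approach of \cite{KuoLi-duality, Kuo-Shende-Zhang-Hochschild-Tamarkin}. First, I will invoke the K\"unneth formula \eqref{bingyu kunneth} to identify $\Sh_{-\underline\Lambda}(M) \otimes \Sh_{\underline\Lambda}(M) \simeq \Sh_{-\underline\Lambda \times \underline\Lambda}(M \times M)$. Writing $\iota_*$ for the tautological inclusion into $\Sh(M^2)$ and $\iota^* = \iota^*_{-\underline\Lambda \times \underline\Lambda}$ for its left adjoint (which exists by Corollary \ref{cor: sheaf-bicomplete}), the proposed unit $\eta_\Lambda(1) = \iota^*(1_\Delta)$ and counit $\epsilon_\Lambda = \pi_!\Delta^*$ (descending from $\Sh(M^2)$ via the colimit-preserving inclusion of Lemma \ref{sheaves with prescribed microsupport closed under limits}) are well-defined colimit-preserving functors.

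For the first triangle identity, I will show that for $\SF \in \Sh_{\underline\Lambda}(M)$, the composition $(\epsilon_\Lambda \otimes \id) \circ (\id \otimes \eta_\Lambda)(\SF)$---which unwinds via K\"unneth to the integral transform $T_{\iota_* \iota^*(1_\Delta)}(\SF) := p_{2!}(p_1^*\SF \otimes \iota_* \iota^*(1_\Delta))$---is naturally isomorphic to $\SF$. A microsupport computation shows that this integral transform already lies in $\Sh_{\underline\Lambda}(M)$, and the unit of $\iota^* \dashv \iota_*$ induces a natural map $\SF \simeq T_{1_\Delta}(\SF) \to T_{\iota_* \iota^*(1_\Delta)}(\SF)$ (where the first isomorphism is the triangle identity for the full $\Sh(M)$). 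To show this map is an isomorphism, I will argue via Yoneda in $\Sh_{\underline\Lambda}(M)$: for any $\SG \in \Sh_{\underline\Lambda}(M)$, the standard adjunctions $p_{2!} \dashv p_2^!$ and $(- \otimes p_1^*\SF) \dashv \sHom(p_1^*\SF, -)$ give
$$\Hom(T_{\iota_* \iota^*(1_\Delta)}(\SF), \SG) \simeq \Hom_{\Sh(M^2)}(\iota_* \iota^*(1_\Delta), \sHom(p_1^*\SF, p_2^!\SG)).$$
Since $\sHom(p_1^*\SF, p_2^!\SG) \in \Sh_{-\underline\Lambda \times \underline\Lambda}(M^2)$, the adjunction $\iota^* \dashv \iota_*$ rewrites this as $\Hom_{\Sh(M^2)}(1_\Delta, \sHom(p_1^*\SF, p_2^!\SG)) \simeq \Hom(\SF, \SG)$, where the second isomorphism is the triangle identity for $\Sh(M)$. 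The second triangle identity is symmetric.

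The main obstacle will be the microsupport estimate $\sHom(p_1^*\SF, p_2^!\SG) \in \Sh_{-\underline\Lambda \times \underline\Lambda}(M^2)$ when $\SF, \SG \in \Sh_{\underline\Lambda}(M)$. By Formula \eqref{lem: ss-hom}, this microsupport lies in $-\ss(p_1^*\SF) \,\widehat+\, \ss(p_2^!\SG)$; since $\ss(p_1^*\SF) \subset \underline\Lambda \times 0_M$ and $\ss(p_2^!\SG) \subset 0_M \times \underline\Lambda$ project to complementary factors of $T^*(M \times M) = T^*M \oplus T^*M$, the $\widehat+$ operation reduces to the fiberwise Minkowski sum and yields exactly $-\underline\Lambda \times \underline\Lambda$.
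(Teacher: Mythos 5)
The paper does not prove this statement itself; it cites it from \cite{KuoLi-duality} and \cite{Kuo-Shende-Zhang-Hochschild-Tamarkin} and uses it as a black box, so there is no proof in the paper to compare against. Your blind attempt is a correct proof via the standard route these references take: use K\"unneth to write $\Sh_{-\underline\Lambda}(M) \otimes \Sh_{\underline\Lambda}(M) \simeq \Sh_{-\underline\Lambda\times\underline\Lambda}(M^2)$, define the unit by localizing the diagonal kernel and the counit by restricting $\pi_!\Delta^*$, and verify the triangle identities by a Yoneda calculation that shuttles the localization across the composition adjunctions.

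Two technical points you mention in passing deserve a sentence of justification each, since they are the crux of the argument. First, the microsupport of $\sHom(p_1^*\SF, p_2^!\SG)$: you correctly observe that since $\ss(p_1^*\SF) \subset \underline\Lambda \times 0_M$ and $\ss(p_2^!\SG) \subset 0_M \times \underline\Lambda$ have intersection contained in $0_{M^2}$, the $\widehat{+}$ estimate of \cite[Prop.~5.4.14]{KS} (recalled in Formula~\eqref{lem: ss-hom}) degenerates to the genuine Minkowski sum, giving $-\underline\Lambda \times \underline\Lambda$. Second, you assert but do not check that $T_{\iota_*\iota^*(1_\Delta)}(\SF)$ lands in $\Sh_{\underline\Lambda}(M)$; this follows from Lemma~\ref{lem: ss-composition} applied with the kernel of microsupport $\subset -\underline\Lambda \times \underline\Lambda$ and the estimate $(-\underline\Lambda\times\underline\Lambda)\circ\underline\Lambda \subset \underline\Lambda$ (using the paper's standing hypothesis of compact supports to ensure properness of the projection). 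The landing-in-the-subcategory step is what makes the Yoneda argument in $\Sh_{\underline\Lambda}(M)$ conclude an isomorphism rather than merely a natural equivalence of corepresented functors, so it is worth spelling out rather than leaving as a one-line remark.

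One very minor point: you conflate $\iota^*(1_\Delta)$ (an object of $\Sh_{-\underline\Lambda\times\underline\Lambda}(M^2)$) and $\iota_*\iota^*(1_\Delta)$ (the same object read in $\Sh(M^2)$) when forming the integral transform. This is harmless since $\iota_*$ is the tautological fully faithful inclusion, but being explicit would make it clearer that the natural comparison map $T_{1_\Delta}(\SF) \to T_{\iota_*\iota^*(1_\Delta)}(\SF)$ is induced by the unit of the $\iota^*\dashv\iota_*$ adjunction, which is exactly the step that your Yoneda chain unwinds.
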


    For a contact manifold $X$, let $X^-$ be the contact manifold with the same contact distribution but opposite co-orientation. We define the diagonal submanifold $\Delta_X \subset X^- \,\widehat\times\, X$ by the quotient of the diagonal in the symplectization $\Delta_{X \times \bR_{>0}} \subset (X^- \times \bR_{>0}) \times (X \times \bR_{>0})$.

\begin{example}
    Let $X = S^*M$. Then the antipodal map on $S^*M$ provides a contactomorphism between $X^-$ and $X$. For $X^- \,\widehat\times\, X \cong S^*M \,\widehat\times\, S^*M \subset S^*(M \times M)$, the diagonal submanifold is just $S^*_\Delta(M \times M) \subset S^*(M \times M)$.
\end{example}

    From Theorems \ref{thm: duality sheaf} and \ref{thm: relative-doubling}, we deduce duality of microsheaves as in \cite{KuoLi-duality}, where we recall the notions from Definition \ref{def: positive doubling}:

\begin{corollary}
    [{\cite[Corollary 4.19]{KuoLi-duality}}]\label{thm:microsheaf-duality}
    Let $\Lambda \subset S^*M$ be a stratified Legendrian subset. Then $\msh_\Lambda(\Lambda)$ is dual to $\msh_{-\Lambda}(-\Lambda)$ in $\PrLst$, with unit and counit
    $$\eta_\Lambda = m_{-\Lambda \times \Lambda}(\iota^*_{-\underline\Lambda_{\cup,\epsilon}^+ \times \underline\Lambda_{\cup,\epsilon}^+} 1_\Delta), \; \epsilon_\Lambda(-, -) = \pi_!\Delta^*(w_{\Lambda}^+ - \boxtimes w_{-\Lambda}^+ -),$$
    where $\iota^*_{-\underline\Lambda_{\cup,\epsilon}^+ \times \underline\Lambda_{\cup,\epsilon}^+}$ is the left adjoint of the tautological inclusion and $w_{\Lambda}^+$ is the (positive) doubling.
\end{corollary}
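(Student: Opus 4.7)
The plan is to transport the sheaf-theoretic duality of Theorem \ref{thm: duality sheaf} across the doubling equivalences of Definition \ref{def: positive doubling}, and then use the K\"unneth theorem for microsheaves (Theorem \ref{thm:kunneth-microsheaf}) to re-express the unit on the microsheaf side.

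First, I would apply Theorem \ref{thm: duality sheaf} to the conic subset $\underline{\Lambda}_{\cup,\epsilon}^{+}\subset T^*M$, producing a duality in $\PrLst$ between $\Sh_{\underline{\Lambda}_{\cup,\epsilon}^{+}}(M)$ and $\Sh_{-\underline{\Lambda}_{\cup,\epsilon}^{+}}(M)$ with unit $\iota^{*}_{-\underline{\Lambda}_{\cup,\epsilon}^{+}\times \underline{\Lambda}_{\cup,\epsilon}^{+}}\,1_{\Delta}$ and counit $\pi_{!}\Delta^{*}(-\boxtimes -)$. By Definition \ref{def: positive doubling}, $m_{\Lambda}\colon \Sh_{\underline{\Lambda}_{\cup,\epsilon}^{+}}(M)\xrightarrow{\sim} \msh_{\Lambda}(\Lambda)$ is an equivalence with inverse $w_{\Lambda}^{+}$. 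Since the antipodal map on $S^*M$ conjugates the Reeb flow to its time reverse, one checks that $-\underline{\Lambda}_{\cup,\epsilon}^{+}=\underline{(-\Lambda)}_{\cup,\epsilon}^{-}$, and hence $\Sh_{-\underline{\Lambda}_{\cup,\epsilon}^{+}}(M)\simeq \msh_{-\Lambda}(-\Lambda)$ via $m_{-\Lambda}$ and $w_{-\Lambda}^{-}$. By Theorem \ref{thm: doubling adjoint}, the positive doubling $w_{-\Lambda}^{+}$ is a canonically isomorphic sheaf representative of the same microsheaf, so one may equally use it in the pairing.

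Second, I would transport the unit and counit. Theorem \ref{thm:kunneth-microsheaf} gives
$$\msh_{-\Lambda\times \Lambda}(-\Lambda\times \Lambda)\simeq \msh_{-\Lambda}(-\Lambda)\otimes \msh_{\Lambda}(\Lambda),$$
and intertwines the product microlocalization $m_{-\Lambda\times \Lambda}$ with $m_{-\Lambda}\otimes m_{\Lambda}$; applying $m_{-\Lambda\times \Lambda}$ to $\iota^{*}_{-\underline{\Lambda}_{\cup,\epsilon}^{+}\times \underline{\Lambda}_{\cup,\epsilon}^{+}}\,1_{\Delta}$ then yields the stated $\eta_{\Lambda}$. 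For the counit, since the sheaf-level pairing $\pi_{!}\Delta^{*}(-\boxtimes -)$ descends to a pairing on the microsheaf tensor product (its value being independent of the chosen sheaf representative), evaluating it on $(w_{\Lambda}^{+}\SF, w_{-\Lambda}^{+}\SG)$ produces the stated $\epsilon_{\Lambda}$. The triangle identities then descend from the sheaf duality by naturality of $m_{\Lambda}$, $w_{\Lambda}^{\pm}$, and the K\"unneth equivalence.

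The main obstacle I expect is the bookkeeping of positive versus negative doublings: one must justify that $\pi_{!}\Delta^{*}(w_{\Lambda}^{+}\SF\boxtimes w_{-\Lambda}^{+}\SG)$ really does compute the duality counit, despite $w_{-\Lambda}^{+}$ landing in $\Sh_{\underline{(-\Lambda)}_{\cup,\epsilon}^{+}}(M)$ rather than the naturally paired $\Sh_{-\underline{\Lambda}_{\cup,\epsilon}^{+}}(M)=\Sh_{\underline{(-\Lambda)}_{\cup,\epsilon}^{-}}(M)$. Theorem \ref{thm: doubling adjoint}, which characterizes both doublings through the same adjoints to the tautological inclusions, furnishes the canonical isomorphism needed to reconcile the two descriptions compatibly with the pairing, and it is here that the bulk of the technical work lies.
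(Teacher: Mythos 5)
Your proposed route — transport the sheaf-level duality of Theorem \ref{thm: duality sheaf} across the doubling equivalences and then re-express the unit via the K\"unneth theorem — matches the shape of the paper's one-line deduction from Theorems \ref{thm: duality sheaf} and \ref{thm: relative-doubling}. You also correctly isolate the key bookkeeping subtlety: since the antipodal map reverses the Reeb direction, $-\underline{\Lambda}_{\cup,\epsilon}^{+}=\underline{(-\Lambda)}_{\cup,\epsilon}^{-}$, so naive transport produces the counit with $w_{-\Lambda}^{-}$ on the second factor, whereas the statement asserts $w_{-\Lambda}^{+}$.

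The gap is in your proposed resolution. You claim that ``By Theorem \ref{thm: doubling adjoint}, the positive doubling $w_{-\Lambda}^{+}$ is a canonically isomorphic sheaf representative of the same microsheaf, so one may equally use it in the pairing.'' Theorem \ref{thm: doubling adjoint} does not say this: it says that the left (resp.\ right) adjoint of $m_{\Lambda}$ factors as doubling followed by wrapping $\iota_{\Lambda'}^{*}$ (resp.\ $\iota_{\Lambda'}^{!}$). It gives no identification between $w_{-\Lambda}^{+}\SG$ and $w_{-\Lambda}^{-}\SG$ themselves; these are genuinely different sheaves (they are supported on branches displaced to opposite Reeb sides of $-\Lambda$), and the functional $\SG' \mapsto \pi_{!}\Delta^{*}(w_{\Lambda}^{+}\SF\boxtimes\SG')$ does not factor through microlocalization on the entire sheaf category, so its value \emph{does} depend on which sheaf representative of $\SG$ one plugs in. Concretely, for $\Lambda$ a single point in $S^{*}\bR$ the two choices of doubling for $-\Lambda$ produce answers that differ by a shift, so the two pairings are not isomorphic. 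Establishing that one may replace $w_{-\Lambda}^{-}$ by $w_{-\Lambda}^{+}$ in the counit (or, equivalently, verifying the triangle identities directly with the unit/counit as stated) is exactly the nontrivial content that the paper defers to \cite[Corollary 4.19]{KuoLi-duality}; your sketch would need to reproduce that argument, not replace it with an appeal to Theorem \ref{thm: doubling adjoint}.
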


    From the duality theorem, we can provide an definition of microlocal compositions for microsheaves on products of pdff subsets in finite positions.

\begin{definition}\label{def: alg composition}
    Let $\Lambda_1 \subset S^*M_1$, $\Lambda_2 \subset S^*M_2$ and $\Lambda_3 \subset S^*M_3$ be compact sufficiently Legendrian subsets with boundaries, with sufficiently Legendrian products. We define the algebraic microlocal composition as the functor
    \begin{gather*}
    \msh_{\Lambda_1 \widehat\times \Lambda_2}(\Lambda_1 \times \Lambda_2) \otimes \msh_{-\Lambda_2 \widehat\times \Lambda_3}(-\Lambda_2 \times \Lambda_3) \to \msh_{\Lambda_1 \widehat\times \Lambda_3}(\Lambda_1 \times \Lambda_3), \\ 
    \SF_{12} \boxtimes \SF_{23} \mapsto \SF_{23} \circ_a \SF_{12} = (\id_{\Lambda_1} \otimes\, \epsilon_{-\Lambda_2} \otimes \id_{\Lambda_3})(\SF_{12} \boxtimes \SF_{23}).
    \end{gather*}
\end{definition}

    From the duality and K\"unneth formula, it follows that integral kernels classify colimit preserving functors of microsheaf categories. Using the notation from Definition \ref{def: positive doubling}:

\begin{theorem}[{\cite[Theorem 1.4]{KuoLi-duality}}]\label{thm:microsheaf-fourier}
    Let $\Lambda_1 \subset S^*M_1, \Lambda_2 \subset S^*M_2$ be relatively compact sufficiently Legendrian subsets. Then there is an equivalence
    $$\msh_{-\Lambda_1 \widehat\times \Lambda_2}(-\Lambda_1 \times \Lambda_2) \simeq \mathrm{Fun}^\mathrm{L}(\msh_{\Lambda_1}(\Lambda_1), \msh_{\Lambda_2}(\Lambda_2)),$$
    which sends $\SK$ to the colimit preserving functor 
    $$\SK \circ_a (-) = (\epsilon_{-\Lambda_1} \otimes \id_{\Lambda_2})(- \boxtimes \SK) = m_{\Lambda_{2}}\pi_{2!}(w_{-\Lambda_{1} \times \Lambda_{2}}^+\SK \otimes \pi_1^*w_{\Lambda_{1}}^+-).$$
\end{theorem}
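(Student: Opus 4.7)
The plan is to deduce the equivalence as a formal consequence of three results already in hand: presentability of $\msh_{\Lambda_i}(\Lambda_i)$ in $\PrLst$ (Corollary \ref{cor: presentable doubling}), the duality $\msh_{\Lambda_1}(\Lambda_1)^\vee \simeq \msh_{-\Lambda_1}(-\Lambda_1)$ (Corollary \ref{thm:microsheaf-duality}), and the K\"unneth formula for microsheaves (Theorem \ref{thm:kunneth-microsheaf}).

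First I would assemble the abstract equivalence. For any dualizable $\cC \in \PrLst$ with dual $\cC^\vee$ and any $\cD \in \PrLst$, there is a canonical equivalence $\cC^\vee \otimes \cD \simeq \mathrm{Fun}^\mathrm{L}(\cC, \cD)$, sending a simple tensor $\eta \otimes d$ to the functor $c \mapsto \epsilon_\cC(c, \eta) \cdot d$, where $\epsilon_\cC$ is the counit. Applied with $\cC = \msh_{\Lambda_1}(\Lambda_1)$ and $\cD = \msh_{\Lambda_2}(\Lambda_2)$, and combined with the K\"unneth isomorphism
$$\msh_{-\Lambda_1}(-\Lambda_1) \otimes \msh_{\Lambda_2}(\Lambda_2) \simeq \msh_{-\Lambda_1 \widehat\times \Lambda_2}(-\Lambda_1 \times \Lambda_2),$$
this yields the stated equivalence of categories.

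To match the concrete formula for $\SK \circ_a (-)$, I would unwind the abstract isomorphism: the corresponding functor sends $\SF \in \msh_{\Lambda_1}(\Lambda_1)$ to $(\epsilon_{-\Lambda_1} \otimes \id_{\Lambda_2})(\SF \boxtimes \SK)$, where $\epsilon_{-\Lambda_1}$ evaluates the $\msh_{\Lambda_1} \otimes \msh_{-\Lambda_1}$ factors into $\SC$. Substituting the doubling-theoretic realization of the counit from Corollary \ref{thm:microsheaf-duality}, namely $\pi_!\Delta^*$ applied to $w_{\Lambda_1}^+ - \boxtimes w_{-\Lambda_1}^+ -$, and then contracting with the remaining $\msh_{\Lambda_2}$ factor via $\pi_{2!}$, produces the formula $m_{\Lambda_2}\pi_{2!}(w_{-\Lambda_1 \times \Lambda_2}^+ \SK \otimes \pi_1^* w_{\Lambda_1}^+ \SF)$.

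The small verification needed is that the doubling functor $w_{-\Lambda_1 \widehat\times \Lambda_2}^+$ on the contact product agrees, after the K\"unneth identification, with the external tensor of the individual doublings $w_{-\Lambda_1}^+ \boxtimes w_{\Lambda_2}^+$; this follows from the K\"unneth formula for sheaves of prescribed microsupport in Equation \eqref{bingyu kunneth} together with the adjoint characterization of doubling in Theorem \ref{thm: doubling adjoint}, since the left adjoint of a tensor product of colimit-preserving inclusions is the tensor product of the left adjoints. I expect the main obstacle to lie precisely in this bookkeeping step: reconciling the abstract counit with the six-functor formula and verifying K\"unneth compatibility of doubling. The abstract skeleton is immediate from duality and K\"unneth; the content is in checking the integral-kernel formula matches.
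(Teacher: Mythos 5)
Your plan matches the paper's intent exactly: the paper treats this result as an import from the cited reference, noting just before these statements that the cited proofs ``carry through without change in the stratified Legendrian setting,'' and summarizes the derivation with ``From the duality and K\"unneth formula, it follows that integral kernels classify colimit preserving functors.'' So the abstract skeleton you set out — dualizability in $\PrLst$, the duality $\msh_{\Lambda_1}(\Lambda_1)^\vee \simeq \msh_{-\Lambda_1}(-\Lambda_1)$, the K\"unneth identification, and the standard $\cC^\vee \otimes \cD \simeq \Fun^\mathrm{L}(\cC,\cD)$ — is the right route.

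The issue is in your ``small verification.'' You claim that $w_{-\Lambda_1 \widehat\times \Lambda_2}^+$ agrees, after K\"unneth, with $w_{-\Lambda_1}^+ \boxtimes w_{\Lambda_2}^+$, justified by ``the left adjoint of a tensor product of colimit-preserving inclusions is the tensor product of the left adjoints.'' But these two doublings are left adjoints of two \emph{different} microlocalization functors, with distinct sources: $w_{\Lambda_1}^+ \boxtimes w_{\Lambda_2}^+$ is the left adjoint of $m_{\Lambda_1} \boxtimes m_{\Lambda_2}$ landing in $\Sh_{(\underline\Lambda_1)_{\cup,\epsilon}^+ \times (\underline\Lambda_2)_{\cup,\epsilon}^+}(M_1 \times M_2)$, whereas $w_{\Lambda_1 \widehat\times \Lambda_2}^+$ lands in $\Sh_{(\underline{\Lambda_1 \widehat\times \Lambda_2})_{\cup,\epsilon'}^+}(M_1 \times M_2)$. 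As Theorem \ref{thm: kunneth-doubling} makes precise, they do not agree as functors to sheaves; they differ by the left adjoint $\iota^*$ of the tautological inclusion between these two sheaf categories. Your argument, taken literally, establishes $w_{\Lambda_1}^+ \boxtimes w_{\Lambda_2}^+ = \iota^* \circ w_{\Lambda_1 \widehat\times \Lambda_2}^+$, not an equality of the doublings themselves. Since the integral-kernel formula $m_{\Lambda_2}\pi_{2!}(w_{-\Lambda_1 \times \Lambda_2}^+\SK \otimes \pi_1^* w_{\Lambda_1}^+(-))$ uses the contact-product doubling as a genuine sheaf inside a composition, you still need to show that the discrepancy introduced by $\iota^*$ dies after microlocalizing along $\Lambda_2$. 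That step is real, and the paper records it (Corollary \ref{rem: double vs wrap} and Corollary \ref{rem: double stop removal} are precisely the tools). So the architecture is right and the gap is fillable with material already in the paper, but the doubling/K\"unneth compatibility is more than a bookkeeping remark — Theorem \ref{thm: kunneth-doubling} exists precisely to supply it.
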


\begin{corollary}\label{rem: double stop removal}  
    For any conic sufficiently Legendrian subset $L$ such that $-\Lambda_1 \,\widehat\times\, \Lambda_2 \subset L$ is a closed subset, there is a commutative diagram
    \[\begin{tikzcd}[column sep=70pt]
    \msh_{L}(L) \ar[r, "\iota_{-\Lambda_1 \widehat\times \Lambda_2}^*"] \ar[d, "w_{L}^+" left] & \msh_{-\Lambda_1 \widehat\times \Lambda_2}(-\Lambda_1 \times \Lambda_2) \ar[d, "w_{-\Lambda_{1} \widehat\times \Lambda_{2}}^+"] \\
    \Sh_{\underline{L}_{\cup,s}}(M_1 \times M_2) \ar[r, "\iota_{-(\underline\Lambda_{1})_{\cup,\epsilon}^+ \times (\underline\Lambda_{2})_{\cup,\epsilon}^+}^*"] & \Sh_{-(\underline\Lambda_{1})_{\cup,\epsilon}^+ \times (\underline\Lambda_{2})^+_{\cup,\epsilon}}(M_1 \times M_2).
    \end{tikzcd}\]
    Hence for any $\SL \in \msh_L(L)$ we have an isomorphism between the following two colimit preserving functors:
    $$m_{\Lambda_2}(\iota_{-(\underline\Lambda_{1})_{\cup,\epsilon}^+ \times (\underline\Lambda_{2})_{\cup,\epsilon}^+}^* w_{L}^+\SL \circ w_{\Lambda_1}^+(-)) = m_{\Lambda_2}(w_{-\Lambda_1\widehat\times \Lambda_2}^+\iota_{-\Lambda_1 \widehat\times \Lambda_2}^*\SL \circ w_{\Lambda_1}^+(-)).$$
\end{corollary}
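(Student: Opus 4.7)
The strategy is to deduce this directly from Corollary \ref{rem: double vs wrap} together with Theorem \ref{thm:microsheaf-fourier}, after identifying the positive doubling of a product Legendrian with the product of positive doublings.

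The first step is the geometric observation that the positive doubling associated to $-\Lambda_1 \,\widehat\times\, \Lambda_2 \subset S^*(M_1 \times M_2)$ may be chosen to coincide with $-(\underline\Lambda_1)^+_{\cup,\epsilon} \times (\underline\Lambda_2)^+_{\cup,\epsilon}$. This is because the defining contact Hamiltonian on $S^*(M_1 \times M_2)$ that positively displaces $-\Lambda_1 \,\widehat\times\, \Lambda_2$ from cotangent fibers can be taken to be a sum of pullbacks of Hamiltonians on $S^*M_i$ positively displacing $\Lambda_i$ from fibers. Any two such choices of doubling are related by a contact isotopy supported away from the subset, so by Lemma \ref{lem:contact-transform-main} they yield canonically equivalent categories of sheaves with the prescribed microsupport conditions.

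With this identification in hand, the commutative square in the statement is precisely the transpose of the square produced by Corollary \ref{rem: double vs wrap} applied to the closed sufficiently Legendrian inclusion $-\Lambda_1 \,\widehat\times\, \Lambda_2 \hookrightarrow L$. In the diagram of Corollary \ref{rem: double vs wrap}, the horizontal arrows are the left adjoints of microlocalization (i.e.\ the positive doublings $w_\bullet^+$) while the vertical arrows are the left adjoints of the tautological inclusion (i.e.\ the stop removal functors $\iota^*_\bullet$). Swapping horizontal and vertical roles yields exactly our square.

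For the second assertion, I apply Theorem \ref{thm:microsheaf-fourier}, which states that the equivalence $\msh_{-\Lambda_1 \widehat\times \Lambda_2}(-\Lambda_1 \times \Lambda_2) \xrightarrow{\sim} \Fun^\mathrm{L}(\msh_{\Lambda_1}(\Lambda_1), \msh_{\Lambda_2}(\Lambda_2))$ sends a kernel $\SK$ to $m_{\Lambda_2}\pi_{2!}\bigl(w_{-\Lambda_1 \widehat\times \Lambda_2}^+ \SK \otimes \pi_1^* w_{\Lambda_1}^+ (-)\bigr)$. Taking $\SK = \iota^*_{-\Lambda_1 \widehat\times \Lambda_2} \SL$ yields the second functor in the display, while substituting the isomorphism $w_{-\Lambda_1 \widehat\times \Lambda_2}^+ \iota^*_{-\Lambda_1 \widehat\times \Lambda_2} \SL \simeq \iota^*_{-(\underline\Lambda_1)^+_{\cup,\epsilon} \times (\underline\Lambda_2)^+_{\cup,\epsilon}} w_L^+ \SL$ supplied by the commutative square produces the first. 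The main obstacle to writing this cleanly is the bookkeeping around the identification of the product doubling with the doubling of the product, but no further symplectic input is needed beyond what is established in Section \ref{ssec: doubling}.
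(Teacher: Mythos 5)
Your overall strategy --- reduce to Corollary \ref{rem: double vs wrap} and then use Theorem \ref{thm:microsheaf-fourier} to unwind the functor formula --- is the right one, and is exactly what the paper does (its entire proof is the sentence ``This is a special case of Corollary \ref{rem: double vs wrap}.''). However, your step 1 contains a genuine error that cannot be repaired by the appeal to Lemma \ref{lem:contact-transform-main}.

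You assert that the positive doubling of $-\Lambda_1 \,\widehat\times\, \Lambda_2$ ``may be chosen to coincide with'' $-(\underline\Lambda_1)^+_{\cup,\epsilon} \times (\underline\Lambda_2)^+_{\cup,\epsilon}$, and that this follows because the displacing Hamiltonian can be taken to be a sum of pullbacks. This is false: a positive doubling $(\underline{-\Lambda_1 \widehat\times \Lambda_2})^+_{\cup,\epsilon'}$ consists, by Definition \ref{def: positive doubling}, of the original Legendrian, \emph{one} pushoff branch, and the swept base region; the product of doublings $-(\underline\Lambda_1)^+_{\cup,\epsilon} \times (\underline\Lambda_2)^+_{\cup,\epsilon}$ has \emph{four} Legendrian branches ($\Lambda_1\times\Lambda_2$, $\Lambda_{1,\epsilon}\times\Lambda_2$, $\Lambda_1\times\Lambda_{2,\epsilon}$, $\Lambda_{1,\epsilon}\times\Lambda_{2,\epsilon}$) together with a thicker base. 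No choice of displacing Hamiltonian makes these equal. The actual relationship is a strict containment for $\epsilon'\ll\epsilon$, and the precise compatibility --- that the left adjoint $\iota^*_{-(\underline\Lambda_1)^+_{\cup,\epsilon} \times (\underline\Lambda_2)^+_{\cup,\epsilon}}$ restricts to an equivalence on $\Sh_{(\underline{-\Lambda_1 \widehat\times \Lambda_2})^+_{\cup,\epsilon'}}(M_1\times M_2)$ and intertwines the doublings with the K\"unneth decomposition --- is precisely Theorem \ref{thm: kunneth-doubling}, proved by the nontrivial nearby-cycle argument of Theorem \ref{thm: doubling-small-piece}. Your appeal to Lemma \ref{lem:contact-transform-main} cannot supply this: that lemma identifies $\msh$-categories under contactomorphism, whereas you would need to identify \emph{sheaf} categories $\Sh_{(\underline{-\Lambda_1 \widehat\times \Lambda_2})^+_{\cup,\epsilon'}}(M_1\times M_2)$ and $\Sh_{-(\underline\Lambda_1)^+_{\cup,\epsilon} \times (\underline\Lambda_2)^+_{\cup,\epsilon}}(M_1\times M_2)$ on the base $M_1\times M_2$, which requires a nearby-cycle/wrapping argument (Proposition \ref{prop:wrapping nearby}), not contact invariance.

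Once step 1 is replaced by the correct citation of Theorem \ref{thm: kunneth-doubling} (or simply by noting the containment of the doubling inside the product of doublings, which makes the arrow $w^+_{-\Lambda_1\widehat\times\Lambda_2}$ in the statement well defined after composing with the tautological inclusion), your steps 2 and 3 are fine: applying Corollary \ref{rem: double vs wrap} with $\Lambda=-\Lambda_1\widehat\times\Lambda_2\subset L$, composing with the bottom row of Theorem \ref{thm: kunneth-doubling}, and then invoking the formula from Theorem \ref{thm:microsheaf-fourier} gives the displayed identity of functors.
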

\begin{proof}
    This is a special case of Corollary \ref{rem: double vs wrap}.
\end{proof}

We have the limit preserving versions, whose proofs are the same as Theorem \ref{thm:microsheaf-duality}: 

\begin{theorem} \label{thm:microsheaf-duality-PrR} 
    Let $\Lambda \subset S^*M$ be a relatively compact sufficiently Legendrian. Then $\msh_\Lambda(\Lambda)$ is dual to $\msh_{-\Lambda}(-\Lambda)$ in $\PrRst$, with unit and counit
    $$\eta_\Lambda = m_{-\Lambda \times \Lambda}(\iota^!_{-\underline\Lambda_{\cup,\epsilon}^- \times \underline\Lambda_{\cup,\epsilon}^-} \omega_\Delta^{-1}), \; \epsilon_\Lambda(-, -) = \pi_*\Delta^!\sHom^\boxtimes(w_{\Lambda}^- -, w_{-\Lambda}^- -),$$
    where $\iota^!_{-\underline\Lambda_{\cup,\epsilon}^- \times \underline\Lambda_{\cup,\epsilon}^-}$ is the right adjoint of the tautological inclusion, and $w_{\Lambda}$ is the (negative) doubling.
\end{theorem}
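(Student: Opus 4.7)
The plan is to mirror the proof of Theorem \ref{thm:microsheaf-duality} essentially verbatim, replacing left adjoints by right adjoints, $\iota^*$ by $\iota^!$, the positive doubling $w^+$ by the negative doubling $w^-$ of Definition \ref{def: positive doubling}, and $\pi_!\Delta^*(-\otimes -)$ by $\pi_*\Delta^!\sHom^\boxtimes(-,-)$. The three steps are: establish a $\PrR$-analog of sheaf duality; transport it to microsheaves via negative doubling; verify the triangle identities.

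First I would prove the $\PrR$-analog of Theorem \ref{thm: duality sheaf}: for any closed conic $\underline\Lambda \subset T^*M$, $\Sh_{\underline\Lambda}(M)$ is dual to $\Sh_{-\underline\Lambda}(M)$ in $\PrRst$ with unit $\iota^!_{-\underline\Lambda \times \underline\Lambda}\omega_\Delta^{-1}$ and counit $(\SF,\SG) \mapsto \pi_*\Delta^!\sHom^\boxtimes(\SF,\SG)$. Since $\Sh(M)$ is dualizable in $\PrRst$ (e.g.~by compact assembly, via \cite[Proposition 2.30]{Volpe-six-operations} and \cite[Proposition 3.2]{Kuo-Shende-Zhang-Hochschild-Tamarkin}) and the right adjoint to $\iota_*: \Sh_{\underline\Lambda}(M) \hookrightarrow \Sh(M)$ is $\iota^!$ (Corollary \ref{cor: sheaf-bicomplete}), the triangle identities reduce to the internal-Hom/tensor adjunction together with base change for $\pi_*$ through $\Delta^!$ along the Cartesian square cutting out the diagonal. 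This is formally Verdier dual to the statement of Theorem \ref{thm: duality sheaf}.

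Second, I would transport the sheaf-level duality to microsheaves using the negative doubling equivalences $w^-_\Lambda: \msh_\Lambda(\Lambda) \xrightarrow{\sim} \Sh_{\underline\Lambda_{\cup,\epsilon}^-}(M)$ and $w^-_{-\Lambda}: \msh_{-\Lambda}(-\Lambda) \xrightarrow{\sim} \Sh_{-\underline\Lambda_{\cup,\epsilon}^-}(M)$ from Definition \ref{def: positive doubling}. Combined with the K\"unneth formula (Theorem \ref{thm:kunneth-microsheaf}) applied to $\pm\Lambda \,\widehat\times\, \Lambda$, these identifications carry the sheaf-level unit and counit to the claimed microsheaf-level $\eta_\Lambda$ and $\epsilon_\Lambda$, after post-composing with microlocalization. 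The triangle identities then follow from the sheaf-level ones, provided one checks that microlocalization intertwines $w^-$ with the appropriate $\iota^!$; this is the $\PrR$ analog of Corollary \ref{rem: double stop removal}, which dualizes Corollary \ref{rem: double vs wrap} using that microlocalization admits a right adjoint (Corollary \ref{cor: presentable doubling}).

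The main obstacle is ensuring that the right-adjoint / negative-doubling combination produces microsupports for which all six-functor operations are non-characteristic. Concretely one must check that for $\SF \in \Sh_{\underline\Lambda_{\cup,\epsilon}^-}(M)$ and $\SG \in \Sh_{-\underline\Lambda_{\cup,\epsilon}^-}(M)$, the microsupport of $\sHom^\boxtimes(\SF,\SG)$, estimated by Formula \eqref{lem: ss-hom}, meets $\dot T^*_\Delta(M\times M)$ only along the zero section so that $\Delta^!$ is well-behaved and the counit recovers the intended Hom pairing after $\pi_*$. This is exactly where positive self displaceability of $\Lambda$ (Definition \ref{def: self displaceable}) enters: the negative Reeb pushoffs used to build $\underline\Lambda_{\cup,\epsilon}^-$ and $-\underline\Lambda_{\cup,\epsilon}^-$ separate the two microsupports near the diagonal, so that $-\underline\Lambda_{\cup,\epsilon}^- \cap \underline\Lambda_{\cup,\epsilon}^-$ lies over the zero section, paralleling the disjointness estimate used for $\otimes$ in the $\PrL$ proof.
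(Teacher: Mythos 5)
Your proposal would work, but you've chosen the long road. The paper's own proof is a one-liner: since $\PrRst \simeq (\PrLst)^{op}$ by passing between left and right adjoints, and dualizability is a self-dual notion, the $\PrRst$-dualizability of $\msh_\Lambda(\Lambda)$ is \emph{literally the same statement} as its $\PrLst$-dualizability from Theorem~\ref{thm:microsheaf-duality}. Under this equivalence the (co)unit are transported automatically, and their explicit descriptions are obtained by replacing each six-functor operation in the $\PrLst$ formulas by its right adjoint: $\iota^* \rightsquigarrow \iota^!$, $\pi_!\Delta^*(-\otimes -) \rightsquigarrow \pi_*\Delta^!\sHom^\boxtimes(-,-)$, $w^+ \rightsquigarrow w^-$, $1_\Delta \rightsquigarrow \omega_\Delta^{-1}$. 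There is nothing to re-prove: the triangle identities in $\PrRst$ are the opposites of the triangle identities in $\PrLst$, which were already verified.

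What you propose instead is to re-derive the $\PrRst$ duality from scratch, mirroring all three stages of the $\PrLst$ argument (sheaf-level duality, doubling transport, triangle identities), with every symbol dualized. This is correct in principle, and your final-paragraph worry about microsupport control is legitimate but adds nothing new: the same self-displaceability hypothesis that separates the positive doubles near the diagonal in the $\PrLst$ proof separates the negative ones here, since the two constructions are exchanged by reversing the Reeb flow. But you never need to touch any of this, because the categorical duality $\PrRst = (\PrLst)^{op}$ already carries the entire structure across. Recognizing when a ``right-adjoint version'' of a theorem is a formal corollary rather than a parallel theorem is worth internalizing; here it replaces about a page of dualized verifications with a single sentence.
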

\begin{proof}
    The result follows from Theorem \ref{thm:microsheaf-duality} by the equivalence $\PrRst = (\PrLst)^{op}$ sending left adjoints to right adjoints and the adjunction of the six functors.
\end{proof}

\begin{theorem}\label{thm:microsheaf-fourier-PrR}
    Let $\Lambda_1 \subset S^*M_1, \Lambda_2 \subset S^*M_2$ be relatively compact stratified Legendrian subsets. Then there is an equivalence
    $$\msh_{-\Lambda_1 \widehat\times \Lambda_2}(-\Lambda_1 \times \Lambda_2) \simeq \mathrm{Fun}^\mathrm{R}(\msh_{\Lambda_1}(\Lambda_1), \msh_{\Lambda_2}(\Lambda_2))^{op},$$
    which sends $\SK$ to the limit preserving functor $$\sHom^\circ_a(\SK, -) = m_{\Lambda_{2}}\pi_{2*}\Delta^!\sHom(w_{-\Lambda_{1} \times \Lambda_{2}}^-\SK, \pi_1^!w_{\Lambda_{1}}^- -).$$
\end{theorem}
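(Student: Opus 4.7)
The argument is directly analogous to the proof of Theorem \ref{thm:microsheaf-duality-PrR}: I transfer Theorem \ref{thm:microsheaf-fourier} along the equivalence $\PrRst \simeq (\PrLst)^{op}$, which sends left adjoints to right adjoints while leaving underlying objects unchanged. Combining this with the duality of Theorem \ref{thm:microsheaf-duality-PrR} and the K\"unneth formula of Theorem \ref{thm:kunneth-microsheaf} yields
$$\msh_{-\Lambda_1 \widehat\times \Lambda_2}(-\Lambda_1 \times \Lambda_2) \simeq \msh_{-\Lambda_1}(-\Lambda_1) \otimes \msh_{\Lambda_2}(\Lambda_2) \simeq \mathrm{Fun}^\mathrm{R}(\msh_{\Lambda_1}(\Lambda_1), \msh_{\Lambda_2}(\Lambda_2))^{op},$$
where the first equivalence is K\"unneth and the second is the universal property of the internal hom in $\PrRst$ for a dualizable object. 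The $(-)^{op}$ in the second equivalence arises because the evaluation pairing in $\PrRst$ is implemented by $\sHom$ rather than $\otimes$, which is contravariant in the kernel variable.

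To pin down the explicit formula, trace a kernel $\SK$ through the identification. Writing the limit-preserving functor as the composition of $\SK \boxtimes (-)$ with the counit $\epsilon_{-\Lambda_1}$ from Theorem \ref{thm:microsheaf-duality-PrR}, its value on $\SF \in \msh_{\Lambda_1}(\Lambda_1)$ is
$$\SF \longmapsto (\epsilon_{-\Lambda_1} \otimes \id_{\Lambda_2})(\SK \boxtimes \SF).$$
Inserting the explicit formula $\epsilon_\Lambda(-,-) = \pi_*\Delta^!\sHom^\boxtimes(w_\Lambda^- -, w_{-\Lambda}^- -)$ and using the K\"unneth identification of the doubling $w^-_{-\Lambda_1 \widehat\times \Lambda_2}\SK$ in terms of separate doublings in the two factors produces
$$m_{\Lambda_2}\pi_{2*}\Delta^!\sHom(w^-_{-\Lambda_1 \times \Lambda_2}\SK, \pi_1^! w^-_{\Lambda_1}\SF),$$
matching the stated formula.

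The main step requiring care, as in Theorem \ref{thm:microsheaf-fourier}, is verifying that the expression is well-defined as a functor into $\msh_{\Lambda_2}(\Lambda_2)$ and that it is genuinely limit-preserving. Both follow from standard microsupport estimates (Formulas \eqref{lem: ss-pull back} and \eqref{lem: ss-hom}) applied to $\sHom$, $\pi^!$, and $\pi_*$, together with the fact that each of these operations is a right adjoint; the negative doubling $w^-$ of Definition \ref{def: positive doubling} supplies the right-adjoint-compatible section of microlocalization, precisely mirroring the role of $w^+$ in Theorem \ref{thm:microsheaf-fourier}. The remaining bookkeeping is formal, since every construction is the adjoint-swap of its counterpart in the $\PrL$-version already established.
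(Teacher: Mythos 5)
Your proposal matches the paper's implied argument exactly: the text before Theorem \ref{thm:microsheaf-duality-PrR} declares that the $\PrR$-versions have "proofs the same as Theorem \ref{thm:microsheaf-duality}", and your transfer along $\PrRst \simeq (\PrLst)^{op}$ plus the K\"unneth formula and the explicit counit $\epsilon_\Lambda$ from Theorem \ref{thm:microsheaf-duality-PrR} is precisely that argument. Your identification of where the $(-)^{op}$ comes from, and your observation that $w^-$ plays the role for right adjoints that $w^+$ plays for left adjoints, are both consistent with the paper's setup.
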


    We briefly mention the associativity of colimit preserving functors in presentable categories. 
    When $\SD_1$ is a dualizable object in $\PrLst$ \cite[Section 2.1 \& 2.2]{Hoyois-Scherotzke-Sibilla}, we know that there is an equivalence of 1-categories \cite[Proposition 4.6.1.6 \& 4.8.1.17]{Lurie-HA} \cite[Chapter I Section 4.3.2]{Gaitsgory-Rozenblyum}
    $$\Fun^\mathrm{L}(\SD_1, \SD_2) \simeq \SD_1^\vee \otimes \SD_2.$$
    Such equivalences are compatible with compositions: 
    Let $\SD_1, \dots, \SD_k$ be dualizable objects in $\PrLst$. Then there are natural isomorphisms
    \begin{equation}\label{rem: associativity cat}
    \begin{tikzcd}
        \Fun^\mathrm{L}(\SD_1, \SD_2) \otimes \dots \otimes \Fun^\mathrm{L}(\SD_{k-1}, \SD_k) \ar[d, "(-) \circ \dots  \circ (-)" left] \ar[r, "\sim"] & (\SD_1^\vee \otimes \SD_2) \otimes \dots \otimes (\SD_{k-1}^\vee \otimes \SD_k) \ar[d, "\id_{\SD_1^\vee} \otimes \epsilon_{\SD_2} \otimes \dots \otimes \epsilon_{\SD_{k-1}} \otimes \id_{\SD_k}"] \\
        \Fun^\mathrm{L}(\SD_1, \SD_k) \ar[r, "\sim"] & \SD_1^\vee \otimes \SD_k
    \end{tikzcd}
    \end{equation}
    that induce an isomorphism of commutative diagrams $\Delta_1^{\times k-1} \to \PrLst$, which send the vertices to $\Fun^\mathrm{L}(\SD_1, \SD_{l_1}) \otimes \dots \otimes \Fun^\mathrm{L}(\SD_{l_r + 1}, \SD_k)$ and respectively $(\SD_1^\vee \otimes \SD_{l_1}) \otimes \dots \otimes (\SD_{l_r+1}^\vee \otimes \SD_k)$ and the edges to the composition functors satisfying the Segal condition.
    
    In fact, for the categories of sheaves (and microsheaves with sufficiently Lagrangian supports), since the counits and compositions are all realized by adjunctions and six-functors as in Theorems \ref{thm: duality sheaf} and \ref{thm:microsheaf-duality}, we can also conclude the above diagram by the coherence of proper base change encoded in the six-functor formalism \cite{Volpe-six-operations} plus the coherence of (op)lax natural transformations induced by adjunctions \cite[Corollary F]{HaugsengHebestreitLinskensNuiten}.
    

%
%
%
%
%

\subsection{Constructibility and compact generation}
    In fact, under an additional geometric hypothesis, we can show that $\msh_\Lambda$ is constructible and takes values in compactly generated categories.  This will never be necessary in the present article, and so we will never impose this hypothesis after this subsection. 

\begin{definition}
    We say that a closed subset $\Lambda \subset S^*M$ is a tamely stratified Legendrian if $\Lambda$ is sufficiently Legendrian and (after some contact isotopy) $\Lambda$ is contained in the conormal bundle of some locally finite $C^1$-Whitney stratification of $M$.
\end{definition}  

    For example, any subanalytic Legendrian $\Lambda \subset S^*M$ with respect to some analytic structure on $M$ is a tamely stratified Legendrian by \cite[Section 8.3]{KS}.
    
\begin{proposition}[{\cite[Theorem 4.13]{Ganatra-Pardon-Shende3}}]\label{prop:sheaf-compact-gen}
    Let $\Lambda \subset S^*M$ be a closed tamely stratified Legendrian. Then $\Sh_{\Lambda}(M)$ is compactly generated by the corepresentatives of the microstalks at smooth Legendrian points.
\end{proposition}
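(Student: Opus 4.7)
The plan is to reduce the claim to the well-known compact generation of constructible sheaves with respect to a Whitney stratification, then identify the standard compact generators there with microstalk corepresentatives. Since $\Lambda$ is tamely stratified, after a contact isotopy (which preserves $\Sh_\Lambda(M)$ up to equivalence via Guillermou--Kashiwara--Schapira, Theorem~\ref{thm: GKS sheaf}) we may arrange $\Lambda$ to lie in the union of conormals of a locally finite $C^1$-Whitney stratification $\mathcal{S} = \{X_\alpha\}$ of $M$. Then $\Sh_\Lambda(M)$ embeds fully faithfully into the $\mathcal{S}$-constructible sheaves $\Sh_{\mathcal{S}}(M)$, and this embedding preserves colimits by Lemma~\ref{sheaves with prescribed microsupport closed under limits}.

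First I would construct the candidate compact objects. At each smooth (Legendrian, or smooth zero-section) point $p = (x,\xi)$ of $\Lambda \cup 0_M$, local contact Darboux coordinates identify a neighborhood of $p$ with a conormal line to a smooth cooriented hypersurface $H \subset M$ (resp.\ with a neighborhood of $x$, when $p$ is on the zero section). The refined microlocal cutoff of Proposition~\ref{prop: microstalk} shows that the microstalk functor $\mu_p : \Sh_\Lambda(M) \to \mathcal{C}$ is corepresented, up to a Maslov shift, by $k_{B^+}$, where $B^+$ is a small open half-ball on the $\xi$-side of $H$ (resp.\ a small open ball around $x$). After refining $\mathcal{S}$ by the local hypersurface, each such corepresentative is $\mathcal{S}$-constructible with microsupport in $\Lambda \cup 0_M$, and so lies in $\Sh_\Lambda(M)$.

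Next I would verify compactness and joint generation. The standard generators $(j_\alpha)_! k_{X_\alpha}$ are compact in $\Sh_{\mathcal{S}}(M)$; their compactness descends to $\Sh_\Lambda(M)$ via the colimit-preserving embedding. Each half-ball corepresentative can be expressed as a finite iterated colimit of such stratum standards after refinement, hence is compact. For joint generation, suppose $\mathcal{F} \in \Sh_\Lambda(M)$ satisfies $\mu_p\mathcal{F} = 0$ for every smooth $p$. On $\Lambda_{sm}$, non-vanishing of the microstalk detects the microsupport, so $\dot{\mathrm{SS}}(\mathcal{F}) \cap \Lambda_{sm} = \varnothing$. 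Since $\dot{\mathrm{SS}}(\mathcal{F}) \subset \Lambda$ is closed and coisotropic, involutivity \cite{KS} combined with Whitney regularity of $\mathcal{S}$ (which makes the singular locus of $\Lambda$ strictly lower-dimensional isotropic) rules out confinement of $\dot{\mathrm{SS}}(\mathcal{F})$ to the singular strata, forcing $\dot{\mathrm{SS}}(\mathcal{F}) = \varnothing$. Thus $\mathcal{F}$ is a local system on $M$, and vanishing of its stalks at smooth zero-section generating points then gives $\mathcal{F} = 0$.

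The main technical obstacle is the joint generation step, specifically the propagation of microstalk vanishing from $\Lambda_{sm}$ to a full vanishing of $\dot{\mathrm{SS}}(\mathcal{F})$. This uses involutivity of microsupport in an essential way, together with Whitney regularity of $\mathcal{S}$, which ensures the singular strata of $\Lambda$ are strictly lower-dimensional isotropic subsets incapable of supporting the closed coisotropic $\dot{\mathrm{SS}}(\mathcal{F})$ alone. The remaining steps (corepresentation by half-balls and compactness) are routine microlocal-cutoff calculations once the reduction to the Whitney-constructible setting has been made.
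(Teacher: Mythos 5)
Your overall strategy --- reduce to the Whitney-constructible case by a GKS contact isotopy, exhibit compact corepresentatives of microstalks, then argue generation via involutivity --- is sound, and the involutivity step in particular is correct and well stated: a nonempty closed coisotropic subset of $\dot T^*M$ contained in $\Lambda$ and disjoint from $\Lambda_{\mathrm{sm}}$ would have to lie in the strictly lower-dimensional isotropic singular locus, which is impossible by Kashiwara--Schapira involutivity.

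The gap is in the middle of your argument, where you assert that the half-ball indicator $k_{B^+}$ itself lies in $\Sh_\Lambda(M)$ ``after refining $\mathcal{S}$.'' Refining the stratification changes what is $\mathcal{S}$-constructible, but it changes neither $\Lambda$ nor $\SS(k_{B^+})$, which consists of the inward conormal to $\partial B^+$ (a hemisphere of the small sphere, a disk in $H$, and the corner). Generically only the single codirection at $p$ lies in $\Lambda$, so $k_{B^+}\notin\Sh_\Lambda(M)$; for the same reason the stratum standards $(j_\alpha)_!k_{X_\alpha}$ do not in general lie in $\Sh_\Lambda(M)$ either, so their compactness cannot ``descend'' there. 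The correct corepresentative of $\mu_p$ inside $\Sh_\Lambda(M)$ is instead $\iota^*_{\Lambda}k_{B^+}$, the image under the left adjoint of the tautological inclusion (whose existence is Corollary~\ref{cor: sheaf-bicomplete}): by adjunction $\mu_p(-)=\Hom(k_{B^+},\iota_{\Lambda*}(-))=\Hom(\iota_\Lambda^*k_{B^+},-)$, and $\iota_\Lambda^*k_{B^+}$ is compact because $\iota_{\Lambda*}$ preserves colimits and $k_{B^+}$ is compact in $\Sh(M)$. This one substitution also lets you drop your ``finite iterated colimit of stratum standards'' compactness argument, which does not work as written; the adjoint-functor argument replaces it. With $\iota_\Lambda^*k_{B^+}$ in place of $k_{B^+}$, the rest of your proof (involutivity and the zero-section bookkeeping) goes through.
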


\begin{proposition}\label{cor: microsheaf-bicomplete}
    Let $\underline\Lambda \subset T^*M$ is a conic subset such that $\Lambda \subset S^*M$ is a tamely stratified Legendrian. Then $\msh_{\underline\Lambda}$ is compactly generated,
    and the restriction functors are limit and colimit preserving. 
\end{proposition}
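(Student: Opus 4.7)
The limit and colimit preservation of the restriction functors is already part of Corollary \ref{cor: presentable doubling}, so the only new content is compact generation, which I would establish by reducing to global sections $\msh_\Lambda(\Lambda)$ via the doubling equivalence of Theorem \ref{thm: relative-doubling} and then propagating to arbitrary opens.

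First I would apply Theorem \ref{thm: relative-doubling} to obtain, for $s > 0$ sufficiently small, an equivalence
$$w_{\Lambda_{-s}}: \msh_{\Lambda_{-s}}(\Lambda_{-s}) \xrightarrow{\sim} \Sh_{\underline{\Lambda}_{\cup,s}}(M),$$
whose left hand side is identified with $\msh_\Lambda(\Lambda)$ by the contactomorphism invariance of microsheaves (Lemma \ref{lem:contact-transform-main}). The key geometric point is that the projectivization of $\underline{\Lambda}_{\cup,s}$ in $S^*M$ remains a tamely stratified Legendrian: the Hamiltonian pushoffs $\Lambda_{\pm s}$ inherit a $C^1$-Whitney stratification from $\Lambda$ via the contactomorphism, and the parabolic capping contributes a trace inside the contact collar of $\partial\Lambda$ which may be arranged to respect a tame stratification after a generic perturbation within the relevant contact isotopy class, analogous to the transversality arguments of Lemmas \ref{lem: pdff whitney} and \ref{lem: ptfp whitney}. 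Proposition \ref{prop:sheaf-compact-gen} then implies $\Sh_{\underline{\Lambda}_{\cup,s}}(M)$ is compactly generated by microstalk corepresentatives at smooth Legendrian points, and transport along the equivalence yields the same for $\msh_\Lambda(\Lambda)$.

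For a general conic open $\underline\Omega \subset T^*M$, Corollary \ref{cor: restrict doubling} realizes the restriction $r_\Omega^*: \msh_\Lambda(\Lambda) \to \msh_\Lambda(\Omega)$ as a Bousfield localization whose kernel corresponds via doubling to sheaves with microsupport in the doubled version of $\Lambda \setminus \Omega$. The closed subset $\Lambda \setminus \Omega$ is again tamely stratified (being closed in a tamely stratified Legendrian), so the first step applied to it produces a compactly generated kernel. The inclusion of the kernel into $\msh_\Lambda(\Lambda)$ sends compact generators to compact objects, since microstalk corepresentatives at smooth Legendrian points of $\Lambda\setminus\Omega$ interior to the complement are also microstalk corepresentatives at smooth Legendrian points of $\Lambda$, and are compact in the ambient category. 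Standard facts about quotients of compactly generated stable categories in $\operatorname{Pr}^{\operatorname{L}}_\omega$ (cf.~\cite[Section 5.5.4]{Lurie-HTT}) then give compact generation of $\msh_\Lambda(\Omega)$.

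The main obstacle I expect is verifying that tame stratification is preserved under the doubling construction and under passage to the complement $\Lambda\setminus\Omega$. In the subanalytic setting this is automatic from the properties of subanalytic sets, but in the general $C^1$-Whitney setting one must carefully choose the Reeb pushoff, the parabolic capping function, and the ambient contact perturbation so that the resulting doubled and restricted subsets remain inside the conormal bundle of a $C^1$-Whitney stratification of $M$. This is a technical but tractable extension of the genericity arguments already employed in Section 3.
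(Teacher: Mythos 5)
Your proposal is structured quite differently from the paper's proof, and the difference matters. The paper works \emph{locally}: it uses Proposition~\ref{prop: microstalk} to identify $\pi_*\msh_\Lambda$ with the sheaf of categories $\Sh_\Lambda/\Loc$ on $M$, applies Proposition~\ref{prop:sheaf-compact-gen} directly to the small balls $B_\epsilon$ to get compact generation of $\Sh_\Lambda(B_\epsilon)/\Loc(B_\epsilon)$, observes that these properties pass to sheafification, and finally uses the finite-position hypothesis to split $\msh_\Lambda(S^*B_\epsilon) \simeq \oplus_i \msh_{\Lambda_i}(S^*B_\epsilon)$ and descend to all small opens. At no point does the paper need to know that any doubled or ambient-global Legendrian is tamely stratified. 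Your proposal, by contrast, reroutes through the global equivalence $\msh_\Lambda(\Lambda) \simeq \Sh_{\underline\Lambda_{\cup,s}}(M)$ and then tries to propagate to arbitrary opens via a colocalization argument.

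This global reroute puts weight on precisely the point you flag as the "main obstacle," and I don't think that obstacle is as tractable as you suggest. Proposition~\ref{prop:sheaf-compact-gen} requires the microsupport to be a \emph{tamely stratified} Legendrian, and the doubled set $\underline\Lambda_{\cup,s}$ is built from the parabolic capping $f_\pm(t) = \pm t^{3/2}$ in the contact collar, which is $C^1$ but not $C^2$; the resulting conic subset includes the zero-section trace $\bigcup_{-s\le r\le s}\pi(\Lambda_r)$, and its spherical projectivization has a genuine corner where the two Reeb pushoffs meet. Whether this set sits inside the conormal of a locally finite $C^1$-Whitney stratification of $M$ after a permissible small contact isotopy is not something the paper establishes anywhere, and it is not a routine genericity statement of the kind in Lemmas~\ref{lem: pdff whitney}--\ref{lem: ptfp whitney} (those concern perturbations of $\Lambda$ itself, not of the doubling construction, and they only give pdff/finite-position conclusions, not the stronger containment-in-a-tame-conormal needed here). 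Your appeal to "a generic perturbation within the relevant contact isotopy class" would also need to respect the constraints of the doubling construction, since $w_\Lambda$ is the inverse of a specified microlocalization, not an arbitrary sheaf with the right microsupport.

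A second, independent gap is the step from global sections to arbitrary opens. You need $\Lambda \setminus \Omega$ to be a tamely stratified Legendrian in order to invoke your first step, but "sufficiently Legendrian" (Definition~\ref{def: sufficient Legendrian}) includes positive self-displaceability, which is not obviously inherited by a closed subset $\Lambda \setminus \Omega$ with possibly jagged frontier: the required displacing isotopy must be supported near $\Lambda\setminus\Omega$ and disjoint it from itself, and a displacement that works for $\Lambda$ need not restrict or cut off to one for $\Lambda\setminus\Omega$. You would need to choose $\Omega$ with nice boundary or argue more carefully. The paper never confronts this because it always restricts to small balls $B_\epsilon$, where finite position makes the local picture a disjoint union of single smooth Legendrian germs and the estimates are direct.

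In summary: the paper's local argument is genuinely simpler and avoids both of the above issues. Your global approach could in principle be made to work, but it requires new lemmas (tame stratification of the double; tame stratification of closed sub-Legendrians) that are not present in this paper and are not obviously trivial in the $C^1$-Whitney setting. As written, the proposal has a real gap.
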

\begin{proof}
    Let $B_\epsilon \subset M$ be a neighborhood of $x\in M$. By Proposition \ref{prop: microstalk}, we know that there is an isomorphism between sheaves of stable categories on $M$:
    $$m_\Lambda: \Sh_\Lambda/\Loc \xrightarrow{\sim} \pi_*\msh_\Lambda.$$
    By Proposition \ref{prop:sheaf-compact-gen}, the sections of the presheaves $\Sh_\Lambda(B_\epsilon)/\Loc(B_\epsilon)$ are compactly generated, and the restriction functors on presheaves $j_{\epsilon',\epsilon}^*: \Sh_\Lambda(B_\epsilon)/\Loc(B_\epsilon) \to \Sh_\Lambda(B_{\epsilon'})/\Loc(B_{\epsilon'})$ are limit and colimit preserving. Therefore, the sections and restriction functors after sheafification are still compactly generated and limit and colimit preserving, so the sections of sheaves $\msh_\Lambda(S^*B_\epsilon)$ are presentable compactly generated, and the restriction functors $\rho_{\epsilon',\epsilon}^*: \msh_\Lambda(S^*B_\epsilon) \to \msh_\Lambda(S^*B_{\epsilon'})$ are limit and colimit preserving functors.
    
    Finally, since $\Lambda$ is in finite position, we may assume that when $\epsilon > 0$ is sufficiently small, $\Lambda \cap S^*B_\epsilon = \bigsqcup_{i=1}^k \Lambda_i \cap S^*B_\epsilon$. Then we have
    $$\msh_\Lambda(S^*B_\epsilon) \simeq \oplus_{i=1}^k \msh_{\Lambda_i}(S^*B_\epsilon),$$
    and for any small open subset $U_{i,\epsilon} \subset S^*B_\epsilon$ so that $\Lambda \cap U_{i,\epsilon} = \Lambda_i \cap S^*B_\epsilon$, the sections $\msh_\Lambda(U_{i,\epsilon}) = \msh_{\Lambda_i}(S^*B_{\epsilon})$ take values in presentable compactly generated categories, and the restriction functors are limit and colimit preserving.
\end{proof}

    We now show that $\msh_\Lambda$ is a constructible sheaf of stable categories when $\Lambda$ is a tamely stratified Legendrian.

\begin{proposition}\label{prop:microsheaf-construct}
    Let $\Lambda \subset S^*M$ be closed tamely stratified Legendrian. Then at any $p \in T^*M$, there exists $\epsilon_0 > 0$ such that the natural restriction functor on sheaves of categories
    $$\msh_\Lambda(U_\epsilon(p)) \longrightarrow \msh_\Lambda(p)$$
    is an isomorphism for the open balls $U_\epsilon(p)$ of radius $\epsilon < \epsilon_0$ centered at $x$.
\end{proposition}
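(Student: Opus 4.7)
The plan is to combine the contactomorphism invariance of microsheaves (Lemma \ref{lem:contact-transform-main}) with the local product structure that Whitney stratification theory provides. If $p$ lies outside $0_M \cup \R_{>0} \cdot \overline\Lambda$ then $\msh_\Lambda$ vanishes near $p$ and the claim is vacuous. By conicity of $\msh_\Lambda$, the remaining cases split into $p \in S^*M$ and $p \in 0_M$.

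For $p \in S^*M$, I would first use the tameness hypothesis together with contact isotopy invariance to reduce to the case where $\Lambda$ lies in the conormal of a locally finite $C^1$-Whitney stratification $\mathcal{S}$ of $M$. Let $S_\alpha \in \mathcal{S}$ be the stratum containing $\pi(p)$. Thom's first isotopy theorem trivializes a neighborhood of $\pi(p)$ in $M$ as $B \times CL$, where $B \subset S_\alpha$ is an open ball and $CL$ is the open cone on the link of $S_\alpha$ at $\pi(p)$. The plan is then to upgrade this to a contact trivialization of a neighborhood of $p$ in $S^*M$ of the form $S^*(CL) \times T^*B$ carrying $(\Lambda, p)$ to $(\Lambda_0 \times 0_B, (p_0, 0))$ for some conic Legendrian $\Lambda_0 \subset S^*(CL)$. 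Lemma \ref{lem:contact-transform-main} then identifies $\msh_\Lambda$ on this neighborhood with the pullback of $\msh_{\Lambda_0}$ along the projection to $S^*(CL)$. Constancy in the $B$ direction is tautological; constancy over concentric balls in the $CL$ direction follows from the conicity of $\Lambda_0$ and the $\R_{>0}$-invariance of $\msh_{\Lambda_0}$, which together force sections over small balls around $p_0$ to stabilize.

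For $p \in 0_M$ projecting to $x \in M$, I would invoke Proposition \ref{prop: microstalk} to identify the stalk $(\pi_*\msh_\Lambda)_x$ with the stalk $(\Sh_\Lambda/\Loc)_x$. The Whitney stratification makes $\Sh_\Lambda$ a constructible sheaf of presentable categories on $M$, either by classical Thom--Mather theory applied to a refinement including $\pi(\overline\Lambda)$, or by Proposition \ref{prop:sheaf-compact-gen} combined with local constancy of the compact generators along strata. Hence its sections over sufficiently small concentric balls around $x$ stabilize, yielding the required equivalence. The relation between open balls in $T^*M$ around a zero-section point and open balls in $M$ around its projection is handled by the same conicity arguments as in the previous case.

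The principal obstacle will be promoting Thom's topological trivialization to a contact one in the first case: Thom's theorem produces a stratified homeomorphism of $M$ which is only smooth along strata and hence does not automatically lift to a contactomorphism of $T^*M$. The lift is nevertheless possible because Whitney condition A forces the conormal variety to be constant in the direction of $S_\alpha$, so the splitting $T^*M \cong T^*B \oplus T^*(CL)$ restricts correctly to $\Lambda$, while Whitney condition B controls the transverse angles sufficiently to permit a Moser-type interpolation to a genuine contactomorphism, in the spirit of the constructions underlying Lemma \ref{lem:contact-transform-main}.
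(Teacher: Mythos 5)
Your approach diverges substantially from the paper's, and the key step you flag as the "principal obstacle" is in fact a genuine gap that your sketch does not close. The paper never attempts to produce a contactomorphism to a product: it proves the equivalence directly, by establishing full faithfulness of $j_{\epsilon',\epsilon}^*$ on $\Sh_\Lambda/\Loc$ via a positive contact isotopy that displaces $\Lambda$ from $\bigcup_{r\in[\epsilon',\epsilon]} S^*B_r$ together with Proposition \ref{prop: perturb-lim} and the microsupport estimate on $\sHom(\SF, \SG_t)$, and essential surjectivity via the Thom--Mather tubular neighborhood structure directly on $M$ (constructibility plus contractibility of strata in $B_{\epsilon}$ and homotopy equivalence of stratum inclusions from $B_{\epsilon'}$ to $B_\epsilon$).

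The specific defect in your plan: Whitney condition~A does not assert that the conormal variety is translation-invariant along a stratum $S_\alpha$. It asserts that limiting tangent planes of higher strata contain $T S_\alpha$, equivalently that the closure of $\dot N^*S_\beta$ is contained in the kernel of the projection to $T^*S_\alpha$; it carries no information about how the link of the cone near different points of $S_\alpha$ compares. Whitney condition~B likewise does not produce the metric/contact control you would need: Thom's first isotopy theorem yields a stratified homeomorphism, and there is no known general mechanism to promote this to a contactomorphism $S^*(CL) \times T^*B \to $ (neighborhood of $p$) carrying $\Lambda$ to $\Lambda_0 \times 0_B$, precisely because the tangential geometry of the cone can vary along $S_\alpha$. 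A Moser-type interpolation requires a path of contact structures or a vector field, neither of which is provided by Whitney conditions alone.

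Even if you granted the product trivialization, the next step has a second gap: you claim "constancy over concentric balls in the $CL$ direction follows from the conicity of $\Lambda_0$ and the $\R_{>0}$-invariance of $\msh_{\Lambda_0}$." But $\msh$ is invariant under \emph{fiber} dilation in $T^*(CL)$, whereas concentric scaling of balls $B_\epsilon(p_0) \subset S^*(CL)$ involves dilation in the \emph{base} direction (the cone parameter of $CL$). These are different $\R_{>0}$-actions, and the lift of base scaling to a contact flow on $S^*(CL)$ is a nonconstant contact Hamiltonian whose interaction with $\Lambda_0$ must be controlled; that control is exactly what the positive-displaceability arguments in the paper supply and what your sketch omits.

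Finally, the zero-section case is circular as written: to conclude that $(\Sh_\Lambda/\Loc)(B_\epsilon) \to (\Sh_\Lambda/\Loc)(B_{\epsilon'})$ is an equivalence you appeal to "$\Sh_\Lambda$ being a constructible sheaf of presentable categories," but constructibility in the relevant sense (stabilization over concentric balls) is the content of the proposition. Proposition \ref{prop:sheaf-compact-gen} gives compact generation of $\Sh_\Lambda(M)$ globally, not local constancy; classical Thom--Mather theory gives the tube structure but still requires the separate full faithfulness argument (the paper's Step using Proposition \ref{prop: perturb-lim}) before the topological data suffices.
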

\begin{proof}
    By Proposition \ref{cor: microsheaf-bicomplete}, we know that for sufficiently small $\epsilon > 0$, there exists an equivalence of presheaves of categories
    $$m_\Lambda: (\Sh_{\Lambda}/\Loc)(B_{\epsilon}) \longrightarrow \msh_{\Lambda}(S^*B_\epsilon)$$
    For $\epsilon' < \epsilon$ sufficiently small, we have a commutative diagram
    \[\begin{tikzcd}
        (\Sh_\Lambda/\Loc)(B_\epsilon)  \ar[d, "\rotatebox{90}{$\sim$}"] \ar[r, "j_{\epsilon',\epsilon}^*"] & (\Sh_\Lambda/\Loc)(B_{\epsilon'})  \ar[d, "\rotatebox{90}{$\sim$}"] \\
        \msh_\Lambda(S^*B_\epsilon) \ar[r, "\rho_{\epsilon',\epsilon}^*"] & \msh_\Lambda(S^*B_\epsilon).
    \end{tikzcd}\]
    Hence, it suffices to show that $j_{\epsilon',\epsilon}^*: \Sh_\Lambda(B_\epsilon)/\Loc(B_\epsilon) \to \Sh_\Lambda(B_{\epsilon'})/\Loc(B_{\epsilon'})$ induces equivalences and then take sheafifications.

    Since $\Lambda$ is sufficiently Legendrian, consider a positive contact isotopy $\varphi_t$ on $S^*B_\epsilon$ such that $\varphi_t(\Lambda) \cap (\bigcup_{r\in[\epsilon',\epsilon]}S^*B_{r}) = \varnothing$. Consider any $\SF, \SG \in \Sh_\Lambda(B_\epsilon)$. Since $\dot\SS(\SF) \cap \dot\SS(\SG)_t = \Lambda \cap \Lambda_t = \varnothing$, we know by Formula \eqref{lem: ss-hom} that $\dot\SS(\sHom(\SF, \SG_t)) \subset -\dot\SS(\SF) \cup \dot\SS(\SG)_t = -\Lambda \cup \Lambda_t$ is also sufficiently Legendrian. By non-characteristic deformation Proposition \ref{prop: perturb-lim}, we know that
    $$\Hom(\SF, \SG_t) =\Hom(j_{\epsilon',\epsilon}^*\SF, j_{\epsilon',\epsilon}^*(\SG_t)).$$
    Therefore, by Proposition \ref{prop: perturb-lim}, we know that for any $\SF, \SG \in \Sh(B_\epsilon)$,
    $$\Hom(\SF, \SG) = \lim_{t\to 0^+}\Hom(\SF, \SG_t) = \lim_{t\to 0^+}\Hom(j_{\epsilon',\epsilon}^*\SF, j_{\epsilon',\epsilon}^*(\SG_t)) = \Hom(j_{\epsilon',\epsilon}^*\SF, j_{\epsilon',\epsilon}^*\SG).$$
    This implies the restriction functor is fully faithful. 
    
    On the other hand, for $\epsilon' < \epsilon$ sufficiently small, we may assume that the Whitney stratifications are contractible in $B$ and the inclusions of the strata from $B'$ to $B$ are homotopy equivalences by considering a tubular neighbourhood of the Whitney stratification \cite[Section 7 \& 8]{Mather}. We may also assume that $\Lambda \cap S^*B_\epsilon$ and $\Lambda \cap S^*B_\epsilon$ have the same set of smooth Legendrian components. Then for any $\SF' \in \Sh_\Lambda(B')$, it is constructible with respect to the Whitney stratification by \cite[Theorem 8.4.2]{KS}, and since the inclusions of the strata from $B'$ to $B$ are homotopy equivalences, $\SF'$ extends (canonically) to a constructible sheaf $\SF \in \Sh(B)$. Since $\Lambda \cap S^*B_\epsilon$ and $\Lambda \cap S^*B_\epsilon$ have the same set of smooth Legendrian components, we know by computing microstalks that
    $$\SF \in \Sh_\Lambda(B), \quad j_{\epsilon',\epsilon}^*\SF = \SF' \in \Sh_{\Lambda}(B').$$
    Therefore, the restriction functor $j_{\epsilon',\epsilon}^*$ is essentially surjective. This shows that when $\Lambda$ is a tamely stratified Legendrian, there are isomorphisms
    $$\pi_*\msh_\Lambda(S^*B_\epsilon) \xrightarrow{\sim} \pi_*\msh_\Lambda(S^*_xM).$$

    Finally, since $\Lambda$ is in finite position, we may assume that when $\epsilon > 0$ is sufficiently small, $\Lambda \cap S^*B_\epsilon = \bigsqcup_{i=1}^k \Lambda_i \cap S^*B_\epsilon$. Then we have
    $$\msh_\Lambda(S^*B_\epsilon) \simeq \oplus_{i=1}^k \msh_{\Lambda_i}(S^*B_\epsilon),$$
    and for any small open subset $U_{i,\epsilon} \subset S^*B_\epsilon$ so that $\Lambda \cap U_{i,\epsilon} = \Lambda_i \cap S^*B_\epsilon$ and $p_i = \Lambda_i \cap S^*_xB_\epsilon$, the restriction functor factors as
    $$\oplus_{i=1}^k \msh_{\Lambda_i}(U_{i,\epsilon}) \longrightarrow \oplus_{i=1}^k\msh_{\Lambda_i}(p_i).$$
    Hence each factor is an isomorphism.
\end{proof}

\begin{remark}
    The only place in the proof that the existence of a locally finite Whitney stratification is used is in the proof of the essential surjectivity of $j_{\epsilon',\epsilon}^*: \Sh_\Lambda(B_\epsilon)/\Loc(B_\epsilon) \to \Sh_\Lambda(B_{\epsilon'})/\Loc(B_{\epsilon'})$. In general, for any sufficiently Legendrian subsets, we can still prove $j_{\epsilon',\epsilon}^*$ is fully faithful, but we do not know how to prove essential surjectivity (though nor do we have examples of sufficient Legendrians where surjectivity fails).
\end{remark}

\begin{remark}
    If $\Lambda$ is subanalytic Lagrangian, the subset $(-\Lambda) \,\widehat+\, \Lambda$ is also subanalytic Lagrangian, and the standard microsupport estimate 
    $\SS(\sHom(\SF, \SG)) \subset -\SS(\SF) \,\widehat+\, \SS(G)$
    implies full faithfulness of the restriction functors, without appealing to Proposition \ref{prop: perturb-lim}. 
    
    However, we do not presently know whether  $\Lambda$ being stratified Legendrian implies that $(-\Lambda) \,\widehat+\, \Lambda$ is stratified Legendrian.  Thus we appeal to Proposition \ref{prop: perturb-lim}.
\end{remark}

\subsection{Microlocal nearby cycle functor}
    We recall the nearby cycle functor of microsheaves in \cite[Section 8]{Nadler-Shende}. 
    Consider a subset  $\Lambda \subset S^*(M \times \bR_{>0})$ that is $\bR_{>0}$-non-characteristic. We regard the projection $\Pi(\Lambda) \subset S^*M \times \bR_{>0}$ as an $\bR_{>0}$-family of subsets. Suppose that the $\bR_{>0}$-family of subsets is uniformly self displaceable. Then by Theorem \ref{thm: relative-doubling}, we can define the relative doubling functor. Following the notation in Definition \ref{def: positive doubling}, we can consider the following nearby cycle functor
    $$w_\Lambda^+: \msh_\Lambda(\Lambda) \rightarrow \Sh_{\underline\Lambda_{\cup,\epsilon}^+}(M \times \bR_{>0}).$$
    The microsupport estimate of Lemma \ref{lem: ss-nearby-cycle} constrains the image of the following nearby cycle functor:
    $$\psi = i^*j_*: \Sh_{\underline\Lambda_{\cup,\epsilon}^+}(M \times \bR_{>0}) \to \Sh_{\underline{\psi(\Lambda)}{}_{\cup,\epsilon}^+}(M).$$
    This allows us to define gapped nearby cycles of microsheaves:

\begin{definition}\label{def: gap micro nearby}
    Let $\Lambda \subset S^*(M \times \bR_{>0})$ be an $\bR_{>0}$-non-characteristic subset such that $\Pi(\Lambda) \subset S^*M \times \bR_{>0}$ defines a family of relative compact subsets that is uniformly self displaceable and perturbable to finite position. Then the microlocal nearby cycle functor $\psi: \msh_{\Lambda}(\Lambda) \to \msh_{\psi(\Lambda)}(\psi(\Lambda))$ is defined by
    $$\psi(\SF) = m_{\psi(\Lambda)}(\psi(w_{\Lambda}^+(\SF))).$$
\end{definition}
\begin{remark}
    The condition used in \cite[Section 8]{Nadler-Shende} to define the microlocal nearby cycle functor is that $\Pi(\Lambda) \subset S^*M \times \bR_{>0}$ is self gapped with respect to some contact flow compatible with the contact collar of $\Pi(\Lambda)$ in Definition \ref{def: contact collar}. Our condition above is weaker than that, and therefore, this condition does not always ensure that the nearby cycle functor is fully faithful as in \cite[Theorem 8.3]{Nadler-Shende}.
\end{remark}

    Here is a family of subsets which is not gapped with itself, where the gapped nearby cycle functor cannot be defined:

\begin{example}\label{ex: gapped specialization undefined} 
    Let $U = \{(x, t) \mid |x| \leq t\} \subset \bR \times \bR_{>0}$ and consider the conic Lagrangian subset $\Lambda = \dot{N}^*(0 \times \bR_{>0}) \cup \dot N^*_{out}U$, where $N^*_{out}U$ here means the outward conormal bundle of the smooth boundary $\partial U$. Then $\Lambda \subset S^*(\bR \times \bR_{>0})$ is $\bR_{>0}$-non-characteristic and $\Pi(\Lambda) \subset S^*\bR \times \bR_{>0}$ has (families of) self Reeb chords of length $t \to 0$. 
\end{example}

    Here is a family of subsets $\Pi(\Lambda)$ which is uniformly self displaceable but the doubling $\Pi(\Lambda) \cup \Pi(\Lambda_\epsilon)$ is not gapped with itself:

\begin{example}\label{ex: gapped collar}
    Let $U = \{(x, t) \mid |x| \leq t\} \subset \bR \times \bR_{>0}$ and the conic subset $\Lambda = N^*_{out}(0 \times \bR_{>0}) \cup N^*_{out}U \subset T^*(\bR \times \bR_{>0})$. For a smooth non-negative function $f: \bR \to \bR_{\geq 0}$, consider the embedding
    $$T^*(\bR \times \bR_{>0}) \hookrightarrow S^*(\bR^2 \times \bR_{>0}), \quad (x, t, \xi, \tau) \mapsto (x + f'(\xi), f(\xi), t, \xi, 1, \tau).$$
    Then for the image of $\Lambda$ under the embedding, it defines a family $\Pi(\Lambda)$ that is uniformly self gapped with itself. One can define the doubling by considering the embedding for a smooth function $f$ and $f_\epsilon$ such that $f_\epsilon - f = \epsilon \rho$ where $\rho$ is a smooth cut-off function supported on a compact interval $[-r, r]$. Then the doubling
    $\Pi(\Lambda) \cup \Pi(\Lambda_\epsilon)$ is not gapped with itself: For the pair of points $(t, \xi), (0, \xi) \in \Lambda_t$, consider their images under the embeddings
    $$(t + f'(\xi), f(\xi), \xi, 1) \in \Lambda_t, \quad (f'_\epsilon(\xi), f_\epsilon(\xi), \xi, 1) \in \Lambda_{t,\epsilon}.$$
    When $t + f'(\xi) = f'_\epsilon(\xi)$, there is a Reeb chord between the pair of points of length $f(\xi) - f_\epsilon(\xi)$. As $t \to 0$, the solution $\xi \to r$ and the length of the Reeb chord converges to $0$. See Figure \ref{fig:gapped non collar}.
\end{example}

\begin{figure}
    \centering
    \includegraphics[width=0.75\linewidth]{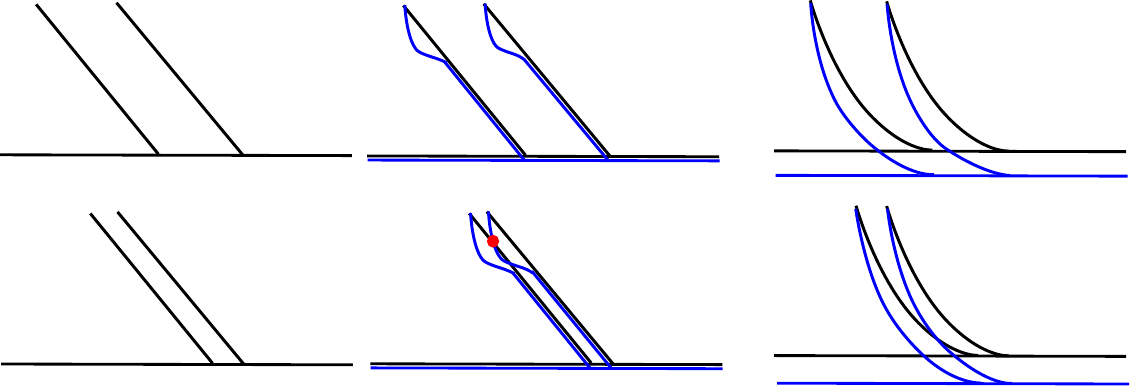}
    \caption{The self gapped family of Legendrians $\Lambda_t \subset S^*_{\eta>0}\bR^2$ that has non-gapped doubling in Example \ref{ex: gapped collar}. The figure on the left and middle are the Lagrangian projections of the Legendrians $\Lambda_t$ and doublings $(\Lambda_t)_{\pm\epsilon}$ where the Reeb chords are the double points. The figure on the right is the front projection of the doubling $(\Lambda_t)_{\pm\epsilon}$.}
    \label{fig:gapped non collar}
\end{figure}

    From the above example, one can see that uniform self displaceability of $\Pi(\Lambda)$ does not ensure self gappedness of $\Pi(\Lambda) \cup \Pi(\Lambda_\epsilon)$. The reason is the cut-off of the Reeb flow for relative doubling near the `boundary' of $\Pi(\Lambda)$: under the cut-off of the Reeb flow, failure of gappedness for the `boundary' of $\Pi(\Lambda)$ results in failure of gappedness for the doubling. To ensure that the doubling is self gapped, we therefore use the contact collar condition in Definition \ref{def: contact collar} introduced in \cite[Section 7.3]{Nadler-Shende}.

\begin{definition}\label{def: gapped collar}
    Let $\Lambda \subset S^*M \times \bR_{>0}$ be a family of subsets with a uniform family of contact collars. We say that $\Lambda$ is gapped with itself with gapped collar if $\Lambda$ is gapped with itself with respect to a positive contact flow compatible with the contact collar.
\end{definition}

\begin{lemma}\label{lem: gapped collar}
    Let $\Lambda \subset S^*M \times \bR_{>0}$ be a family of subsets with a uniform family of contact collars. If $\Lambda$ is gapped with itself with gapped collar, then the relative doubling $\Lambda_{-\epsilon} \cup \Lambda_\epsilon$ is gapped with itself.
\end{lemma}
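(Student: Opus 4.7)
The strategy is to decompose the doubling $\Lambda_{-\epsilon} \cup \Lambda_\epsilon$ according to the structure of the contact collar, verifying the gappedness estimate on each piece and stitching them together. Recall that the parametric doubling $\Lambda_{\pm s}$ is defined by pushing off $\Lambda$ along the Reeb flow by time $\pm s^{3/2}$ away from the collar, and by capping with a parabolic profile $f_{\pm}(t) = \pm t^{3/2}$ (interpolated to $\pm \epsilon^{3/2}$) inside the collar $(\partial\Lambda \times (0,\delta),\,U \times T^{*}(0,\delta))$.

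First I would check the interior region, i.e. the complement of the collar. There, $\Lambda_{\pm\epsilon}$ is a pure Reeb translate of $\Lambda$ by time $\pm\epsilon^{3/2}$. Since the Reeb flow preserves Reeb chord lengths, a Reeb chord of length $\ell$ between $\Lambda_{\sigma\epsilon}$ and $\Lambda_{\sigma'\epsilon}$ (with $\sigma,\sigma' \in \{+,-\}$) corresponds to a Reeb chord of $\Lambda$ of length $\ell + (\sigma-\sigma')\epsilon^{3/2}$. By the assumed self-gappedness of $\Lambda$ with uniform gap $g > 0$, these chord lengths are uniformly bounded below by $g - 2\epsilon^{3/2}$ (and by $2\epsilon^{3/2}$ in the cross cases where the shift of the original chord length is nonnegative), which is positive once $\epsilon$ is chosen small enough. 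The bound is uniform over the parameter $t \in \bR_{>0}$.

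Inside the collar, the key is the compatibility hypothesis, which asserts that the positive contact flow witnessing self-gappedness restricts there to the pullback of a positive contact flow $\psi^{s}$ on $U$. Hence $\Lambda \cap (U \times T^{*}(0,\delta))$ has the product form $\partial\Lambda \times (0,\delta)$, and $\partial\Lambda$ is itself uniformly self-gapped in $U$ via $\psi^{s}$. The parabolic caps then yield doublings $\partial\Lambda \times P_{\pm\epsilon}$, and at each fixed $t \in (0,\delta)$ a Reeb chord of the doubling projects under the contact collar to a Reeb chord between $\partial\Lambda$ and itself in $U$, up to a bounded shift in the Reeb direction depending on $f_{+}(t) - f_{-}(t) \in [\epsilon^{3/2}, 2\epsilon^{3/2}]$; the self-gappedness of $\partial\Lambda$ in $U$ then provides the desired positive lower bound, uniformly in both $t$ and the collar coordinate.

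The main obstacle, illustrated by Example \ref{ex: gapped collar}, is the transition zone: the region where $f_\pm$ interpolates between the parabolic profile and the constant $\pm\epsilon^{3/2}$, and the matching region between the collar doubling and the pure Reeb doubling outside. I would handle this by choosing the interpolating functions to remain compatible with the contact collar structure throughout, so that the pullback description of the displacing flow extends over the transition. Because this transition occurs on a compact region bounded away from the pinching locus $t = 0$, Reeb chord lengths depend continuously on the data and are strictly positive in the limit; a compactness argument then furnishes a uniform lower bound. Taking the minimum of the three bounds produced above shows that $\Lambda_{-\epsilon} \cup \Lambda_\epsilon$ is gapped with itself, completing the proof.
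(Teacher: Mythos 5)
The strategy you outline---decompose into the region away from the collar, the collar region, and a transition zone, and verify the gap estimate on each piece---is the same strategy the paper uses (the paper merges the transition into the collar analysis, giving a two-region split). Your interior argument and your overall organization are fine, but the collar analysis as written has a gap that would actually sink the estimate if one did not patch it.

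You assert that ``at each fixed $t\in(0,\delta)$ a Reeb chord of the doubling projects \ldots up to a bounded shift in the Reeb direction depending on $f_{+}(t)-f_{-}(t)\in[\epsilon^{3/2},2\epsilon^{3/2}]$.'' Two things are wrong here. First, with $f_{\pm}(t)=\pm t^{3/2}$ near $t=0$ interpolated to $\pm\epsilon^{3/2}$, the difference $f_{+}(t)-f_{-}(t)$ ranges over $[0,2\epsilon^{3/2}]$, not $[\epsilon^{3/2},2\epsilon^{3/2}]$; it vanishes in the limit $t\to 0$. If cross-branch Reeb chords actually existed at every $t$ with this shift, the chord length $m+f_{+}(t)-f_{-}(t)$ with $m=0$ (endpoints over the same point of $\partial\Lambda$) would tend to $0$ and your lower bound would be vacuous. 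Second, and more fundamentally, cross-branch chords do not occur at arbitrary $t$. The Reeb flow on the collar $U\times T^{*}(0,\delta)$ is the Reeb flow on $U$ with the $T^{*}(0,\delta)$-coordinates held fixed, so a Reeb trajectory preserves $(t,\tau)$. A chord from $(\varphi_R^{f_-(t)}(x),t,f_-'(t))$ to $(\varphi_R^{f_+(t')}(y),t',f_+'(t'))$ therefore forces $t=t'$ and $f_-'(t)=f_+'(t)$. With a monotone choice of $f_{\pm}$ (so $f_+'\ge 0\ge f_-'$ on the collar), this holds only where both derivatives vanish, i.e.\ where $f_{\pm}=\pm\epsilon^{3/2}$ are already constant; the shift there is exactly $2\epsilon^{3/2}$, and one recovers the same bounds as in the interior, $\min(2\epsilon^{3/2},\,g-2\epsilon^{3/2})$. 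The small-$t$ danger you were worried about never arises, but your plan does not record the $f_+'(t)=f_-'(t)$ constraint that rules it out, so as written the collar estimate is not established. Incorporating that constraint also makes your separate ``transition zone'' discussion unnecessary: once $f_{\pm}$ are chosen monotone and functions of the collar coordinate alone, cross-branch chords are automatically confined to the constant region and the compactness hand-waving can be dropped.
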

\begin{proof}
    Consider the positive contact isotopy that is compatible with the collar $\partial\Lambda \subset U \times \bR_{>0}$ of $\Lambda \subset S^*M \times \bR_{>0}$, we can define the relative doubling by the compatible contact isotopy on $\Lambda \setminus \partial \Lambda \times (0, \epsilon)$ and the contact isotopy of a cut-off function on the contact collar $\partial \Lambda \times T^*(0,\epsilon)$. Then the doubling is gapped with itself by direct computation on $\Lambda \setminus \partial \Lambda \times (0, \epsilon)$ and on the contact collar $\partial \Lambda \times T^*(0,\epsilon)$.
\end{proof}

\begin{theorem}[{\cite[Theorem 8.3]{Nadler-Shende}}] \label{gapped nearby cycle} 
    Let $\Lambda \subset S^*(M \times \bR_{>0})$ be a non-characteristic family of relatively compact sufficiently Legendrian subsets such that $\Pi(\Lambda) \subset S^*M \times \bR_{>0}$ is gapped with itself with gapped collar. Then the gapped nearby cycle functor $\psi$ is fully faithful:
    $$\msh_{\Lambda}(\Lambda) \hookrightarrow \msh_{\psi(\Lambda)}(\psi(\Lambda)).$$
\end{theorem}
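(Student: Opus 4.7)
The plan is to reduce this microsheaf-level full faithfulness to the sheaf-level statement of Theorem \ref{fullfaithful nearby}, using doublings to transport the Hom computations between the two settings.

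For $\SF, \SG \in \msh_\Lambda(\Lambda)$, the equivalence $m_\Lambda : \Sh_{\underline{\Lambda}_{\cup,\epsilon}^+}(M \times \bR_{>0}) \xrightarrow{\sim} \msh_\Lambda(\Lambda)$ of Definition \ref{def: positive doubling} (with inverse $w_\Lambda^+$) identifies
\[
    \Hom_{\msh_\Lambda(\Lambda)}(\SF, \SG) \;\simeq\; \Hom_{\Sh(M \times \bR_{>0})}(w_\Lambda^+ \SF,\, w_\Lambda^+ \SG).
\]
On the target side, Definition \ref{def: gap micro nearby} reads $\psi \SF = m_{\psi(\Lambda)}(\psi(w_\Lambda^+ \SF))$, and the microsupport estimate of Lemma \ref{lem: ss-nearby-cycle} places $\psi(w_\Lambda^+ \SF)$ in $\Sh_{\underline{\psi(\Lambda)}_{\cup,\epsilon}^+}(M)$, on which $m_{\psi(\Lambda)}$ is again an equivalence. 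I can thus identify
\[
    \Hom_{\msh_{\psi(\Lambda)}(\psi(\Lambda))}(\psi \SF, \psi \SG) \;\simeq\; \Hom_{\Sh(M)}(\psi w_\Lambda^+ \SF,\, \psi w_\Lambda^+ \SG).
\]
Under these two identifications, the morphism induced by the microlocal $\psi$ becomes the one induced by the ordinary nearby cycle functor on the pair of doublings, so the theorem reduces to showing that Theorem \ref{fullfaithful nearby} applies to $(w_\Lambda^+ \SF, w_\Lambda^+ \SG)$.

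The main obstacle lies in verifying the three hypotheses of Theorem \ref{fullfaithful nearby} for the doublings. The $\bR_{>0}$-non-characteristicity of $\ss(w_\Lambda^+ \SF),\, \ss(w_\Lambda^+ \SG) \subset \underline{\Lambda}_{\cup,\epsilon}^+$ follows from the standing assumption on $\Lambda$, since the doubling only adjoins a positive Reeb translate together with a vertical bridge that remains non-characteristic for the projection to $\bR_{>0}$. The pdff hypothesis on $\psi(-\ss(w_\Lambda^+ \SF)) \times \psi(\ss(w_\Lambda^+ \SG))$ follows because $\psi(\Lambda)$, and hence its doubling $\underline{\psi(\Lambda)}_{\cup,\epsilon}^+$, remains sufficiently Legendrian, and products of sufficiently Legendrian subsets are pdff. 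The crucial positive gappedness of the family $(\dot{\ss}_\pi(w_\Lambda^+ \SF),\, \dot{\ss}_\pi(w_\Lambda^+ \SG))$ in $S^*M \times \bR_{>0}$ reduces to self-gappedness of $\Pi(\Lambda)_{-\epsilon} \cup \Pi(\Lambda)_\epsilon$ as a family there, which is precisely the content of Lemma \ref{lem: gapped collar} applied to the gapped collar hypothesis on $\Pi(\Lambda)$. I expect this last step --- translating the abstract gapped collar assumption into uniform Reeb chord bounds that persist through the cap-off of the Reeb flow near the collar, so that the positive Reeb chords between the two sheets of the doubling remain bounded away from zero uniformly in $t$ --- to be the principal technical obstacle.
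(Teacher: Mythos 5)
Your approach matches the one the paper uses for its parallel result,
Theorem \ref{thm: gapped specialization} (the $\otimes$ analogue, proven immediately below the statement in question): pass to doublings via $w_\Lambda^+$, invoke the sheaf-level full-faithfulness theorem (here Theorem \ref{fullfaithful nearby}, there Theorem \ref{nearby fully faithful}), use Lemma \ref{lem: gapped collar} to verify the gappedness hypothesis for the doubled family, and transport the conclusion back through the microlocalization equivalences of Theorem \ref{thm: relative-doubling}.

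One step in your reduction deserves more care than the justification you give. You assert that ``the microsupport estimate of Lemma \ref{lem: ss-nearby-cycle} places $\psi(w_\Lambda^+\SF)$ in $\Sh_{\underline{\psi(\Lambda)}_{\cup,\epsilon}^+}(M)$.'' Lemma \ref{lem: ss-nearby-cycle} only bounds $\ss(\psi w_\Lambda^+\SF)$ by $\psi(\ss(w_\Lambda^+\SF))$; to conclude the desired inclusion one also needs the geometric identity $\psi(\underline{\Lambda}_{\cup,\epsilon}^+) \subset \underline{\psi(\Lambda)}_{\cup,\epsilon}^+$, i.e., that the nearby cycle of the (relative, uniform) doubling is contained in the doubling of the nearby cycle. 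This is not automatic for an arbitrary family: the doubling is constructed fiberwise via a Reeb pushoff modified near $\partial\Lambda$, and the failure of this cut-off to be compatible with the family as $t \to 0$ is exactly the pathology of Example \ref{ex: gapped collar}. The uniform self-displaceability built into Definition \ref{def: gap micro nearby} together with the gapped collar hypothesis (Definition \ref{def: gapped collar}) is what makes the cut-off family-compatible, so that the identification $w_{\psi(\Lambda)}^+(\psi\SF) = \psi(w_\Lambda^+\SF)$ used implicitly in your second Hom identification actually holds. You correctly foreground Lemma \ref{lem: gapped collar} for hypothesis (3) of Theorem \ref{fullfaithful nearby}, but the same collar structure is doing a second, silent job of making the doubling functor commute with $\psi$; that should be stated explicitly rather than attributed to Lemma \ref{lem: ss-nearby-cycle} alone.
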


    Here we also establish the `$\otimes$' version of Theorem \ref{gapped nearby cycle}: 

\begin{theorem}\label{thm: gapped specialization}
    Let $\Lambda \subset S^*(M \times \bR_{>0})$ be $\bR_{>0}$-non-characteristic such that $\Pi(\Lambda) \subset S^*M \times \bR_{>0}$ is a family of relatively compact sufficiently Legendrian subsets that is gapped with itself with gapped collar. Then there exists a commutative diagram of nearby cycle functors 
    \[\begin{tikzcd}[column sep=80pt]
    \msh_{-\Lambda}(-\Lambda) \otimes \msh_{\Lambda}(\Lambda) \ar[r,"\Gamma_c(w_{-\Lambda}^+ - \otimes w_{\Lambda}^+ - )"] \ar[d,"\psi" left] & \cC \ar[d, "{\rotatebox{90}{$\sim$}}"] \\
    \msh_{-\psi(\Lambda)}(-\psi(\Lambda)) \otimes \msh_{\psi(\Lambda)}(\psi(\Lambda)) \ar[r,"\Gamma_c(w_{-\psi(\Lambda)}^+ - \otimes w_{\psi(\Lambda)}^+ - )"] & \cC.
    \end{tikzcd}\]
\end{theorem}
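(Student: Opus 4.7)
The strategy parallels the proof of the ``Hom'' version in Theorem \ref{gapped nearby cycle}, combining the doubling functor with the $\otimes$-version of gapped nearby cycles (Theorem \ref{nearby fully faithful}).

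First, I lift to the sheaf level via doubling. For $\SF \in \msh_\Lambda(\Lambda)$ and $\SG \in \msh_{-\Lambda}(-\Lambda)$, set $F := w_\Lambda^+\SF \in \Sh_{\underline\Lambda_{\cup,\epsilon}^+}(M \times \bR_{>0})$ and $G := w_{-\Lambda}^+\SG \in \Sh_{(-\underline\Lambda)_{\cup,\epsilon}^+}(M \times \bR_{>0})$; by relative compactness of $\Pi(\Lambda)$, these can be chosen with compact support in $M$. The top horizontal arrow is then identified with $\Gamma_c(M \times \bR_{>0}, G \otimes F)$, which is the duality counit $\epsilon_\Lambda(\SG, \SF)$ by Corollary \ref{thm:microsheaf-duality}. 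Since $F$ and $G$ are $\bR_{>0}$-non-characteristic, the non-characteristic deformation Lemma \ref{lem: non-char} identifies this with the nearby cycle global sections $\Gamma(M, \psi(G \otimes_{\bR_{>0}} F))$, interpreted as the limit over the shrinking open sets $M \times (0, t)$ as $t \to 0^+$.

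Second, I commute $\psi$ past the tensor product. To apply Theorem \ref{nearby fully faithful}, I verify: $\bR_{>0}$-non-characteristicity of $F, G$ (inherited from $\Lambda$); the pdff property of $\psi(\ss(F)) \times \psi(\ss(G))$ (from the sufficient Legendrian hypothesis on $\psi\Lambda$); and positive gappedness of the family $(\ss_\pi(F), -\ss_\pi(G))$, the substantive hypothesis. The last follows from the gapped-collar assumption: Lemma \ref{lem: gapped collar} shows that the self-gappedness with gapped collar of $\Lambda$ upgrades to self-gappedness of the doubling $\Lambda_{-\epsilon} \cup \Lambda_\epsilon$, which bounds the Reeb chords between $\ss_\pi(F) \subset \underline\Lambda_{\cup,\epsilon}^+$ and the antipode of $\ss_\pi(G) \subset (-\underline\Lambda)_{\cup,\epsilon}^+$. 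Theorem \ref{nearby fully faithful} then yields
$$\Gamma(M, \psi(G \otimes_{\bR_{>0}} F)) \simeq \Gamma(M, \psi G \otimes \psi F).$$

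Finally, I identify the right-hand side with the bottom pairing. By Definition \ref{def: gap micro nearby}, the sheaves $\psi F$ and $\psi G$ have microsupports in the doublings of $\psi\Lambda$ and $-\psi\Lambda$, with microlocalizations $\psi\SF$ and $\psi\SG$; the bottom pairing instead uses the canonical doubled representatives $w_{\psi\Lambda}^+(\psi\SF)$ and $w_{-\psi\Lambda}^+(\psi\SG)$. Both pairs compute the duality counit $\epsilon_{\psi\Lambda}(\psi\SG, \psi\SF)$ by Corollary \ref{thm:microsheaf-duality}, yielding the desired commutativity. The main obstacle lies precisely in this last identification: $\psi F$ and $w_{\psi\Lambda}^+(\psi\SF)$ need not agree as sheaves on $M$, only as microsheaves on $\psi\Lambda$. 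The clean way to handle this is via the universal property of the left adjoint $w_{\psi\Lambda}^+$ to microlocalization (Theorem \ref{thm: doubling adjoint}): there is a canonical comparison map $w_{\psi\Lambda}^+(\psi\SF) \to \psi F$ whose cofiber has microsupport disjoint from $\psi\Lambda$, and positive gappedness forces this cofiber to pair trivially with any representative on the $-\psi\Lambda$-side, so the comparison induces the desired isomorphism of pairings.
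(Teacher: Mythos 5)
Your proof follows the same three-step skeleton as the paper's own argument — lift to sheaves via doubling, commute $\psi$ past $\otimes$ using Theorem \ref{nearby fully faithful} (with the gapped-collar hypothesis providing the requisite gappedness via Lemma \ref{lem: gapped collar}), and then descend back to microsheaves. You are also right to flag a subtlety in the descent step that the published proof glosses over in a single sentence (``This shows that the functors lift to microsheaves''). However, the way you propose to resolve that subtlety has a genuine gap.

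The issue is with the final sentence: ``there is a canonical comparison map $w_{\psi\Lambda}^+(\psi\SF) \to \psi F$ whose cofiber has microsupport disjoint from $\psi\Lambda$, and positive gappedness forces this cofiber to pair trivially with any representative on the $-\psi\Lambda$-side.'' Two problems. First, a cofiber whose microsupport misses $\psi\Lambda$ does \emph{not} pair trivially in $\Gamma_c(M,-\otimes-)$ with a sheaf whose microsupport lies in the $-\psi\Lambda$ doubling; microsupport disjointness controls propagation of sections and Hom-isomorphisms under wrapping (Propositions \ref{prop: perturb-lim}, \ref{prop: perturb compact}), but it does not force a pairing to vanish outright. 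You would need a separate argument, and none is given. Second, the functor $w_{\psi\Lambda}^+$ is not itself the left adjoint of $m_{\psi\Lambda}$; by Theorem \ref{thm: doubling adjoint} the left adjoint of $m_{\psi\Lambda}$ on a larger ambient category is a composite of doubling with a wrapping $\iota^*$, while $w_{\psi\Lambda}^+$ is the two-sided inverse of $m_{\psi\Lambda}$ on the sheaf category microsupported in the doubling (Theorem \ref{thm: relative-doubling}). So the ``canonical comparison map'' needs to be set up more carefully.

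The cleaner (and intended) resolution is that there is in fact no cofiber: under the gapped-collar hypothesis, the fiberwise Reeb pushoff defining the doubling of the $\bR_{>0}$-family limits to the Reeb pushoff of $\psi\Lambda$, so by Lemma \ref{lem: ss-nearby-cycle} one has $\SS(\psi(w_\Lambda^+\SG)) \subset \psi(\underline\Lambda^+_{\cup,\epsilon}) \subset (\underline{\psi\Lambda})^+_{\cup,\epsilon}$. In other words $\psi(w_\Lambda^+\SG)$ already lies in $\Sh_{(\underline{\psi\Lambda})^+_{\cup,\epsilon}}(M)$, where $m_{\psi\Lambda}$ and $w_{\psi\Lambda}^+$ are mutually inverse, so $w_{\psi\Lambda}^+(\psi\SG) = \psi(w_\Lambda^+\SG)$ on the nose. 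With that observation, your steps (1)--(3) and the final identification go through without any cofiber argument at all, matching the paper's proof.
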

\begin{proof}
    The proof is parallel to that of Theorem \ref{gapped nearby cycle} or \cite[Theorem 8.3]{Nadler-Shende}. 
    Assume that $\Pi(\Lambda)$ is positively $\epsilon$-gapped with itself with gapped collar. Then by Lemma \ref{lem: gapped collar}, for $0 < s < \epsilon$, the doubling $\Lambda \cup \Lambda_s$ is also positively gapped with itself. By Theorem \ref{nearby fully faithful}, we know that for $\SF \in \msh_{-\Lambda}(-\Lambda)$ and $\SG \in \msh_{\Lambda}(\Lambda)$,
    $$\Gamma_c(M, \psi(w_{-\Lambda}^+\SF) \otimes \psi(w_{\Lambda}^+\SG)) = \Gamma_c(M, \psi(w_{-\Lambda}^+\SF \otimes w_{\Lambda}^+\SG)).$$
    We know that $\Gamma_c(M, \psi(w_{-\Lambda}^+\SF \otimes w_{\Lambda}^+\SG)) = \Gamma_c(M, w_{-\Lambda}^+\SF \otimes w_{\Lambda}^+\SG)$. This shows that the functors lift to microsheaves and hence completes the proof.
\end{proof}

    We recall the following lemma, showing a commutative square of gapped nearby cycle functors for a $J$-parametric family of gapped sufficiently Legendrian subsets where $J$ is a contractible compact manifold. This will be later used to show the compatibility between the gapped nearby cycle functors under contact isotopies.

\begin{proposition}\label{prop: nearby commute micro} 
    Let $\Lambda \subset S^*(M \times J \times \bR_{>0})$ be an $J \times \bR_{>0}$-non-characteristic subset, such that $\Pi(\Lambda) \subset S^*M \times J \times \bR_{>0}$ defines a family of relatively compact sufficiently Legendrian subsets that is uniformly self displaceable. Let $\Lambda_s = \Pi(\Lambda) \cap S^*M \times s \times \bR_{>0}$. Then there exists a commutative diagram of microlocal nearby cycle functors
    \[\begin{tikzcd}
    \msh_{\Lambda}(\Lambda) \ar[d, "i_s^*"] \ar[r, "\psi"] & \msh_{\psi(\Lambda)}(\psi(\Lambda)) \ar[d, "i_s^*"] \\
    \msh_{\Lambda_s}(\Lambda_s) \ar[r, "\psi_s"] & \msh_{\psi_s(\Lambda_s)}(\psi_s(\Lambda_s)).
    \end{tikzcd}\]
\end{proposition}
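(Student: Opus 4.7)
The plan is to unwind the definition of the microlocal nearby cycle functor via doubling (Definition \ref{def: gap micro nearby}), writing $\psi = m_{\psi(\Lambda)} \circ \psi_{\bR_{>0}} \circ w_\Lambda^+$, and then verify three compatibilities at the sheaf level: doubling, ordinary nearby cycles, and microlocalization each commute with restriction along the closed inclusion $\{s\} \hookrightarrow J$. Chaining these three natural isomorphisms produces the desired commutative square.

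First, I would establish that doubling commutes with restriction, i.e.\ $i_s^* \circ w_\Lambda^+ \simeq w_{\Lambda_s}^+ \circ i_s^*$. Uniform self displaceability of $\Pi(\Lambda)$ over $J \times \bR_{>0}$ provides a positive contact isotopy on $S^*M$ that simultaneously displaces $\Lambda_{s',t}$ from itself for all parameters $(s',t)$ in a neighborhood of the base. The associated doubling $\underline\Lambda_{\cup,\epsilon}^+ \subset T^*(M \times J \times \bR_{>0})$ then restricts fiberwise over $s$ to $\underline{(\Lambda_s)}_{\cup,\epsilon}^+ \subset T^*(M \times \bR_{>0})$. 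By Theorem \ref{thm: relative-doubling}, both $i_s^* w_\Lambda^+\SF$ and $w_{\Lambda_s}^+ i_s^*\SF$ are objects of $\Sh_{\underline{(\Lambda_s)}_{\cup,\epsilon}^+}(M \times \bR_{>0})$ whose microlocalization along $\Lambda_s$ recovers $i_s^*\SF$, and the inverse equivalence asserted in that theorem then forces them to be canonically isomorphic.

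Next, I would verify that ordinary nearby cycles commute with restriction, $i_s^* \circ \psi_{\bR_{>0}} \simeq \psi_{\bR_{>0},s} \circ i_s^*$. This is a straightforward variant of Lemma \ref{nearby commute with restrict}, where the role of the second $\bR_{>0}$ factor is now played by the parameter space $J$, with restriction along $\{s\} \hookrightarrow J$ transverse to the $\bR_{>0}$-direction of the nearby cycle. The non-proper base change argument of Lemma \ref{lem: non-proper base change} goes through because $\dot\ss(w_\Lambda^+\SF)$ is $J \times \bR_{>0}$-noncharacteristic by hypothesis and its limiting microsupport is pdff. Finally, microlocalization commutes with restriction to a slice of $J$ essentially tautologically, being local in the base manifold.

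The main obstacle will be the first step. A priori, the doubling construction involves a choice of positive displacing contact flow, and different slices over $J$ might receive different such choices, leading to non-canonical identifications that fail to assemble into a global $J$-parametric doubling. The uniform self displaceability hypothesis is precisely what makes a coherent parametric choice possible, so that the slicewise restrictions of the global doubling agree with the slicewise doublings and the reduction to the sheaf-level lemmas goes through cleanly.
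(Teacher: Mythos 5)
Your proposal is correct and takes essentially the same approach as the paper's proof, which simply notes that by the definition of the gapped microlocal nearby cycle functor one may reduce to the corresponding sheaf-level square and then cite Lemma \ref{nearby commute with restrict}. You make explicit the two reductions that the paper leaves implicit — that the parametric doubling restricts over a slice $s \in J$ to the slicewise doubling (with the uniqueness clause of Theorem \ref{thm: relative-doubling} supplying the isomorphism $i_s^* w_\Lambda^+ \simeq w_{\Lambda_s}^+ i_s^*$) and that microlocalization is local in the base, hence commutes with $i_s^*$ — which is useful bookkeeping but not a new argument.
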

\begin{proof}
    Suppose $\Lambda \subset S^*M \times J \times \bR_{>0}$ is $\epsilon$-gapped with itself. By Definition \ref{def: gap micro nearby}, it suffices to show the commutativity of the following diagram
    \[\begin{tikzcd}
    \Sh_{\underline\Lambda_{\cup,\epsilon}}(M \times J \times \bR_{>0}) \ar[d, "i_s^*"] \ar[r, "\psi"] & \Sh_{\psi(\underline\Lambda_{\cup,\epsilon})}(M \times J) \ar[d, "i_s^*"] \\
    \Sh_{(\underline\Lambda_s)_{\cup,\epsilon}}(M \times s \times \bR_{>0}) \ar[r, "\psi_s"] & \Sh_{\psi_s((\underline\Lambda_s)_{\cup,\epsilon})}(M \times s).
    \end{tikzcd}\]
    Then the result follows from Proposition \ref{nearby commute with restrict}.
\end{proof}

\subsection{Microsheaf categories for Weinstein manifolds}\label{ssec: weinstein}
    We now recall how microsheaf categories can be defined for contact manifolds $X$ admitting appropriate topological `Maslov data' \cite{Shende-h-principle,Nadler-Shende}.  The construction is then adapted to Liouville manifolds $W$ by taking the contactization $X = W \times \mathbb{R}$, and ultimately to the skeleta of Weinstein manifolds by requiring microsupport in $\mathfrak{c}_W \times 0 \subset W \times \mathbb{R}$. 

    The first step is to take a high codimension contact embedding $X \hookrightarrow S^{*}\bR^N$.  Gromov's $h$-principle for contact embeddings \cite[Section 3.4.3]{Gromov-PDR}, see also \cite[Theorem 20.3.1]{CieliebakEliashbergMishachev}, ensures the existence of such embeddings, and also their uniqueness up to a space of choices which becomes arbitrarily connected as $N \to \infty$.  
    
    Given a Lagrangian sub-bundle $\sigma$ of the symplectic normal bundle of the embedding, we write $X_\sigma$ for a thickening of $X$ along $\sigma$, and define 
    $$\msh_{X, \sigma} := \msh_{X_\sigma}|_X.$$ 
    The basic stabilization and isotopy invariance of microsheaf categories (Lemma \ref{lem:contact-transform-main}) implies that this definition is independent of all choices except the stable equivalence class of $\sigma$.  If we begin with a subset $X \subset S^*M$, and choose $\sigma$ by restricting the fiber polarization on the cosphere bundle, we recover the previous notion of $\msh_X$ \cite[Corollary 4.13]{CKNS}.
    
    Note that said $\sigma$ may be regarded as a section of the stable Lagrangian Grassmannian bundle $LGr(X)\to X$.  In fact, it is shown in \cite[Section 11]{Nadler-Shende} $\msh_{X, \sigma}$ is the pullback of along $\sigma$ of a sheaf of categories $\msh_{LGr(X)}$ over $LGr(X)$, defined using the universal polarization on the contact manifold given by the relative cotangent bundle of $\mathfrak f: LGr(X) \to X$ \cite[Section 11.1]{Nadler-Shende}.  It is moreover shown that $\msh_{LGr(X)}$ structure descends to a certain $BPic(\mathcal{C})$ bundle over $X$ \cite[Theorem 11.14]{Nadler-Shende}, and correspondingly that the essential data to define $\msh_{X}$ is not in fact a stable polarization $\sigma$ -- which is a null-homotopy of the natural map $X \to B(U/O)$ classifying the Lagrangian Grassmannian bundle -- but rather a null-homotopy of the further composition $X \to B(U/O) \to B^2 Pic(\mathcal{C})$.  We refer to such a null-homotopy as {\em Maslov data}. Given Maslov data $\mu$ for $X$, we write $\msh_{X, \mu}$ for the corresponding sheaf of categories on $X$. In particular, for a Weinstein manifold $W$ with Lagrangian skeleton $\mathfrak{c}_W$, we take $X = W \times \bR$ and write 
    $$\msh_{\mathfrak{c}_{W,\mu}} = \msh_{W \times \bR,\mu}|_{\mathfrak{c}_W}.$$
    We will omit the notation for $\mu$ whenever we have fixed some Maslov data, or the assertion in question holds whenever the same (or compatible) Maslov data are chosen for all microsheaf categories appearing.

\begin{definition}\label{def: universal legendrian}
    Let $X$ be a contact manifold and $L \subset X$ be a relatively compact subset. Then $L$ is called universally sufficiently Legendrian if for any contact embedding $X \hookrightarrow S^*\bR^N$, the Legendrian thickening along the symplectic normal bundle $L_\sigma \subset S^*\bR^N$ is sufficiently Legendrian.
\end{definition}


\begin{lemma}\label{lem: universal legendrian}
    Let $W$ be a Weinstein manifold with Morse--Smale gradient like function for the Liouville vector field and $\Lambda \subset \partial W$ be a Whitney stratified Legendrian. Then the relative core $\mathfrak{c}_{W,\Lambda} = \mathfrak{c}_W \cup \Lambda \times \bR_{>0}$ is universally sufficient Legendrian.
\end{lemma}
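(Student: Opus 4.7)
My plan is to verify the three conditions of Definition \ref{def: sufficient Legendrian} for the Lagrangian thickening $\mathfrak{c}_{W,\Lambda,\sigma}\subset S^*\bR^N$ arising from an arbitrary contact embedding $W\times\bR\hookrightarrow S^*\bR^N$ with arbitrary Lagrangian subbundle $\sigma$ of the symplectic normal bundle. The preliminary observation is that $\mathfrak{c}_{W,\Lambda}$ itself admits a $C^1$-Whitney stratification: by the Morse--Smale hypothesis, $\mathfrak{c}_W$ decomposes into stable submanifolds of the critical points of the gradient-like function $f$, and these form a $C^1$-Whitney stratification (as in the remark following Lemma \ref{lem: self displace morse smale}). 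Since $f$ is gradient-like for the Liouville flow, its critical points lie in the interior of $W$ and the corresponding stable submanifolds do not touch $\partial W$, while $\Lambda\times\bR_{>0}$ inherits a Whitney stratification from that of $\Lambda$. The union is therefore Whitney stratified. The Weinstein tubular neighborhood theorem applied to the contact embedding shows that the thickening along $\sigma$ is locally a product of $\mathfrak{c}_{W,\Lambda}$ with a Euclidean factor, hence preserves the Whitney property.

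From this, positive displaceability from fibers and perturbability to finite position for $\mathfrak{c}_{W,\Lambda,\sigma}\subset S^*\bR^N$ follow immediately from Lemmas \ref{lem: pdff whitney} and \ref{lem: ptfp whitney}. For positive self-displaceability, my plan is to apply Lemma \ref{lem: self displace morse smale} by exhibiting $\mathfrak{c}_{W,\Lambda,\sigma}$ inside the conormal bundle of a stratification of $\bR^N$ by stable submanifolds of some Morse--Smale function $F$. The construction would proceed by using the Darboux--Weinstein model of the contact embedding to extend $f$ transversely: on a tubular neighborhood one adds a nondegenerate quadratic form in the transverse Darboux coordinates corresponding to $\sigma$, patches to a standard convex quadratic at infinity, and invokes the Thom--Goresky--Trotman transversality theorem (used in Lemmas \ref{lem: pdff whitney} and \ref{lem: ptfp whitney}) to perturb the stable-unstable intersections into Morse--Smale position. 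The stable manifolds of the resulting $F$ in $\bR^N$ are then the $\sigma$-thickenings of the stable manifolds of $f$ in $W$, and their conormals contain $\mathfrak{c}_{W,\Lambda,\sigma}$.

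The main obstacle I anticipate is controlling the boundary contribution $\Lambda\times\bR_{>0}$ in this extension: near $\partial W$ the Morse model for $f$ degenerates since there are no interior critical points, yet I still need $\Lambda\times\bR_{>0}$ to appear in the conormal of a stable-manifold stratification of $F$. To handle this I would rely on the remark following Lemma \ref{lem: self displace morse smale}: the Whitney stratification of $\Lambda\subset\partial W$ is automatically contained in the conormal of a $C^1$-Whitney stratification, and after a small contact perturbation this can be arranged to coincide with the stable-manifold stratification of an auxiliary Morse--Smale function on a collar neighborhood of the image of $\partial W$, glued to the interior model of the previous paragraph. Verifying that this gluing indeed yields a single globally Morse--Smale $F$ on $\bR^N$, compatible with the Legendrian thickening direction $\sigma$ along both pieces, is the sole technical point requiring real work; the three conclusions then combine via Definition \ref{def: sufficient Legendrian} to give universal sufficient Legendrianness.
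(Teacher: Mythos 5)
Your proof of the pdff and finite-position conditions is correct and matches the paper's (both reduce to Lemmas \ref{lem: pdff whitney} and \ref{lem: ptfp whitney} via the observation that the thickening is Whitney stratified). The divergence—and the gap—is in how you handle positive self-displaceability.

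You propose to apply Lemma \ref{lem: self displace morse smale} by constructing a globally Morse--Smale function $F$ on $\bR^N$ whose stable-manifold stratification engulfs the thickened relative core. You candidly flag the gluing problem as ``the sole technical point requiring real work,'' but you do not resolve it, and I do not see how it would be resolved along the lines you sketch. The stable manifolds of the interior piece of $F$ (built from the quadratic extension of $f$) and the stable manifolds of the auxiliary collar function are governed by entirely different critical points; there is no reason the conormal of the glued stratification of $\bR^N$ contains $\Lambda \times \bR_{>0}$ after thickening, and whether one can even arrange the glued $F$ to be Morse--Smale on all of $\bR^N$ while keeping $\mathfrak{c}_{W,\Lambda,\sigma}$ in the conormal of its stable stratification is precisely the kind of delicate global dynamics claim that the paper's approach is designed to avoid.

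The paper's actual route sidesteps all of this. It uses Lemma \ref{lem: sufficient legendrian whitney} rather than Lemma \ref{lem: self displace morse smale}: the relative core $\mathfrak{c}_{W,\Lambda}$ lies inside $W \times 0$, which is an exact symplectic hypersurface of the contact manifold $W \times \bR$, and this hypersurface structure is preserved under thickening along $\sigma$ (the thickened $W$ is an exact symplectic hypersurface in $S^*\bR^N$ containing the thickened relative core). Positive self-displaceability then follows directly from Lemma \ref{lem: self displace hypersurface}—a local Reeb-flow cut-off argument inside a contact neighborhood of the hypersurface—with no need for any Morse--Smale function on $\bR^N$ at all. Note also that the paper's own remark after Lemma \ref{lem: self displace morse smale} explains that the conormal-of-stable-manifolds situation is itself a special case of the hypersurface situation, so even if your gluing could be made to work, it would be a long detour back to the hypersurface lemma. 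I would rewrite your self-displaceability step to invoke Lemma \ref{lem: self displace hypersurface} (equivalently, Lemma \ref{lem: sufficient legendrian whitney}) directly.
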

\begin{proof}
    When the Weinstein manifold is equipped with a Morse--Smale gradient like function for the Liouville vector field, we know that the relative core $\mathfrak{c}_{W,\Lambda}$ is a Whitney stratified Legendrian by \cite[Theorem 4.33]{Nicolaescu}. For any contact embedding $W \times \bR \hookrightarrow S^*\bR^N$, the Legendrian thickening is a Whitney stratified Legendrian and contained in an exact symplectic hypersurface, and therefore it is sufficiently Legendrian by Lemmas \ref{lem: sufficient legendrian whitney}.
\end{proof}

\begin{lemma}\label{lem: universal legendrian analytic}
    Let $W$ be a Weinstein manifold with an analytic gradient like function for the Liouville vector field and $\Lambda \subset \partial W$ be a subanalytic Legendrian. Then the relative core $\mathfrak{c}_{W,\Lambda} = \mathfrak{c}_W \cup \Lambda \times \bR_{>0}$ is universally sufficient Legendrian.
\end{lemma}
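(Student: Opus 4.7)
The plan is to mimic the proof of Lemma \ref{lem: universal legendrian}, replacing ``Whitney stratified'' with ``subanalytic'' throughout and invoking Lemma \ref{lem: sufficient legendrian analytic} in place of Lemma \ref{lem: sufficient legendrian whitney}. First, I would show that the relative core $\mathfrak{c}_{W,\Lambda}$ is a subanalytic isotropic subset of $W\times\bR$ with respect to the given analytic structure. Since the gradient-like function is analytic, the Lojasiewicz gradient inequality for real analytic functions (cf.~work of Kurdyka--Mostowski--Parusi\'nski) implies that the stable manifolds of critical points of the Liouville flow are subanalytic, so their union $\mathfrak{c}_W$ is subanalytic. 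Combined with the hypothesis that $\Lambda\subset\partial W$ is subanalytic, this makes $\mathfrak{c}_{W,\Lambda}=\mathfrak{c}_W\cup\Lambda\times\bR_{>0}$ a subanalytic isotropic subset of $W\times\bR$.

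Next, I would address the Legendrian thickening along the symplectic normal bundle of a contact embedding $W\times\bR\hookrightarrow S^*\bR^N$. For a fixed smooth contact embedding, Gromov's $h$-principle for contact embeddings (\cite[Section 3.4.3]{Gromov-PDR}, \cite[Theorem 20.3.1]{CieliebakEliashbergMishachev}) implies that, up to smooth contact isotopy in $S^*\bR^N$ (enlarging $N$ if necessary), the embedding and its chosen Lagrangian normal framing can be approximated by an analytic contact embedding with analytic framing. Under such an analytic embedding, the thickening $(\mathfrak{c}_{W,\Lambda})_\sigma \subset S^*\bR^N$ is again subanalytic in the ambient analytic structure on $S^*\bR^N$, so by Lemma \ref{lem: sufficient legendrian analytic} it is sufficiently Legendrian.

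Finally, I would observe that each of the three conditions assembling sufficient Legendrianity (Definition \ref{def: sufficient Legendrian})---pdff, perturbability to finite position, and positive self-displaceability---is preserved under contact isotopies, since each is defined by the mere existence of auxiliary contact isotopies with appropriate transversality or disjointness properties, which can be pre- or post-composed with the given isotopy. Therefore the thickening $(\mathfrak{c}_{W,\Lambda})_\sigma$ is sufficiently Legendrian for the original (arbitrary smooth) contact embedding as well.

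The main technical obstacle is the first step: verifying that the skeleton of a Weinstein manifold with analytic gradient-like Liouville function is genuinely subanalytic. This is not automatic from analyticity of the flow alone (smooth images of subanalytic sets need not be subanalytic), but it follows from the standard consequence of the real-analytic Lojasiewicz inequality that trajectories of analytic gradient-like vector fields have finite length and the associated stable/unstable manifolds of hyperbolic zeros are subanalytic. The remaining ingredients---the approximation step via the $h$-principle and the contact-isotopy invariance of sufficient Legendrianity---are routine.
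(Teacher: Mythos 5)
Your proof follows the same overall route as the paper's (terse) proof: pass to a real-analytic contact embedding $W\times\bR\hookrightarrow S^*\bR^N$ up to a small contact perturbation, observe that the Legendrian thickening of $\mathfrak{c}_{W,\Lambda}$ is then subanalytic, and conclude via Lemma~\ref{lem: sufficient legendrian analytic} (equivalently, Lemmas~\ref{lem: pdff analytic}, \ref{lem: ptfp analytic}, \ref{lem: self displace analytic}). Your fourth step --- that pdff, finite-positionability, and self-displaceability are contact-isotopy invariant --- is correct and is what the paper's phrase ``up to a small contact perturbation'' is silently invoking.

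Where you go beyond the paper is the explicit first step: arguing that $\mathfrak{c}_W$ itself is subanalytic in $W\times\bR$. The paper's proof leaves this entirely implicit, so your raising it is a genuine improvement in rigor. However, be careful with the justification: the \L{}ojasiewicz gradient inequality and the Kurdyka--Mostowski--Parusi\'nski gradient-conjecture theorem give finiteness and asymptotic information about \emph{individual} trajectories of genuinely analytic gradient flows; they do not directly yield subanalyticity of the union of stable manifolds, and the Liouville field $Z$ is only assumed \emph{gradient-like} for an analytic function, which does not by itself make $Z$ (or the flow) analytic. One needs either to assume the Liouville vector field itself is analytic (so the stable manifolds of the hyperbolic zeros are analytic submanifolds outright), or to argue more carefully, e.g.\ via a Whitney-stratification theorem for analytic Morse--Smale-type flows as in Lemma~\ref{lem: universal legendrian}. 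A minor stylistic point in the second step: the $h$-principle is not the natural tool for approximating a contact embedding by an analytic one --- real-analytic maps are $C^\infty$-dense by Whitney/Grauert approximation, and being a contact embedding is an open condition, which is exactly what ``small contact perturbation'' in the paper refers to; the $h$-principle is relevant only for existence and (stable) uniqueness of embeddings, not for analytic regularity.
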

\begin{proof}
    For any contact embedding $W \times \bR \hookrightarrow S^*\bR^N$, we may assume that it is analytic up to a small contact perturbation. Then the Legendrian thickening of $\Lambda$ is also a subanalytic Legendrian, and therefore it is sufficiently Legendrian by Lemmas \ref{lem: pdff analytic}, \ref{lem: ptfp analytic} and \ref{lem: self displace analytic}.
\end{proof}

\begin{definition}\label{def: eventually conic}
    Let $W$ be a Liouville manifold and $L \subset W \times \bR$ be any subset. Denote by $\partial W$ the (ideal) contact boundary and $\partial W \times \bR_{>0}$ the cylindrical end determined by the Liouville vector field. We say that $L$ is an eventually conical subset if there exists a compact subset $\Lambda \subset \partial W$, $r \in \bR_{>0}$ and $t \in \bR$ such that $L \cap (\partial W \times \bR_{>r} \times \bR) \subset \Lambda \times \bR_{>0} \times t$. Under such condition, we say that $\Lambda \subset \partial W$ is the conical end of $L$.
\end{definition}

\begin{definition}\label{def: quantization immerse}
    Let $W$ be a Weinstein manifold with Maslov data and $L \subset W \times \bR$ be an eventually conical universally sufficient Legendrian subset. By the category of conic sheaf quantizations of $L$, we mean the category of microsheaves  
    $$\msh_{\mathfrak{c}_{W}^L}(\mathfrak{c}_W^L) = \msh_{\mathfrak{c}_{W \times T^*\bR,L}}(\mathfrak{c}_{W \times T^*\bR,L})_0$$
    where $\mathfrak{c}_W^L = \mathfrak{c}_{W \times T^*\bR,L}$ and the subscript $0$ means the subcategory of microsheaves with $0$ stalks in a neighborhood of $-\infty \in [-\infty, +\infty] = \ol{\bR}$.
\end{definition}

    We have natural restriction functors 
    $$\msh_{L}(L) \xleftarrow{i_L^*} \msh_{\mathfrak{c}_{W}^L}(\mathfrak{c}_W^L) \xrightarrow{i_\infty^*} \msh_{\mathfrak{c}_{W,\partial L}}(\mathfrak{c}_{W,\partial L}).$$

The fundamental microsheaf quantization result \cite[Cor. 9.9]{Nadler-Shende} -- ultimately a consequence of gapped specialization for sheaves -- amounts to the assertion that when $L \to W$ is an embedding, left arrow is an equivalence, and the right arrow is fully faithful; we give a more explicit presentation of this assertion in Theorem \ref{thm: nearby = restriction at infty} below.  

For now, we recall the following fundamental result invariance result, also a consequence of the results on gapped specialization: 
\begin{theorem}[{\cite[Theorem 9.14]{Nadler-Shende}}]\label{thm: weinstein invariance}
    Let $\lambda, \lambda'$ be Weinstein Liouville forms on $W$ that are connected by a Weinstein cobordism $W \times T^*I$. Then the restriction functors induce equivalences:
    $$\msh_{\mathfrak{c}_{W,\lambda}}(\mathfrak{c}_{W,\lambda}) \xleftarrow{\sim} \msh_{\mathfrak{c}_{W\times T^*I}}(\mathfrak{c}_{W \times T^*I}) \xrightarrow{\sim} \msh_{\mathfrak{c}_{W,\lambda'}}(\mathfrak{c}_{W,\lambda'}).$$
\end{theorem}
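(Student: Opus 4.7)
My plan is to prove the theorem by realizing each restriction functor as a gapped nearby cycle functor and then invoking Theorem \ref{gapped nearby cycle} (together with the constructions surrounding it) to get fully faithfulness, and arguing separately for essential surjectivity using the conic structure of the cobordism skeleton. By symmetry it suffices to treat the restriction at $t=0$.

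\textbf{Step 1 (geometric setup).} Since $W \times T^*I$ is a Weinstein cobordism with Liouville vector field outward transverse to the boundary at $t=0$ (after an exact deformation that standardizes the collar), the skeleton $\mathfrak{c}_{W \times T^*I}$ is cylindrical in a collar neighborhood: it has the form $\mathfrak{c}_{W,\lambda_0} \times [0,\varepsilon)$. In particular, viewing this collar as an $\bR_{>0}$-family of skeleta in the fiber direction, the family is \emph{constant}, so it is trivially positively gapped with itself (no short Reeb chords appear), it has a compatible contact collar in the sense of Definition \ref{def: contact collar}, and each slice is universally sufficiently Legendrian by Lemma \ref{lem: universal legendrian}.

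\textbf{Step 2 (fully faithfulness).} In the collar, the restriction from $\msh_{\mathfrak{c}_{W \times T^*I}}(\mathfrak{c}_{W\times T^*I})$ to $\msh_{\mathfrak{c}_{W,\lambda_0}}(\mathfrak{c}_{W,\lambda_0})$ is locally (near $t=0$) computed by the microlocal nearby cycle functor $\psi$ of Definition \ref{def: gap micro nearby}, because the doubling representatives produced by $w^+$ are compatible with the collar. By Step 1 the hypotheses of Theorem \ref{gapped nearby cycle} are satisfied, so $\psi$ is fully faithful. Gluing with the identity on the bulk (i.e.\ the fact that away from the collar both sides are defined from the same skeleton via the Weinstein structure), this shows the restriction functor at $t=0$ is fully faithful, and likewise for $t=1$.

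\textbf{Step 3 (essential surjectivity).} Given a microsheaf $\SF \in \msh_{\mathfrak{c}_{W,\lambda_0}}(\mathfrak{c}_{W,\lambda_0})$, I construct a lift $\widetilde{\SF} \in \msh_{\mathfrak{c}_{W\times T^*I}}(\mathfrak{c}_{W\times T^*I})$ as follows. On the cylindrical collar of $t=0$, pull back $\SF$ along the projection $\mathfrak{c}_{W,\lambda_0}\times[0,\varepsilon) \to \mathfrak{c}_{W,\lambda_0}$ (this is well-defined by Lemma \ref{lem:contact-transform-main}). Using the Liouville flow of the cobordism, $\mathfrak{c}_{W\times T^*I}$ deformation retracts onto $\mathfrak{c}_{W,\lambda_0}$ (possibly after handle-slides that do not affect the skeleton up to Legendrian isotopy within the cobordism, handled by the invariance of $\msh$ under contact isotopy in Lemma \ref{lem:contact-transform-main}). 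Transport the collar-extended object along this retraction to obtain $\widetilde{\SF}$; the gluing is coherent by the sheaf property of $\msh$ established in Corollary \ref{cor: presentable doubling}, and $\widetilde{\SF}$ restricts to $\SF$ at $t=0$ by construction.

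\textbf{Main obstacle.} The hard part is Step 3 when the Weinstein homotopy passes through a Morse critical moment where an isotropic handle is attached: the cobordism skeleton then acquires extra Lagrangian handle-disks not present in either endpoint skeleton. One must show these handle disks carry no additional microsheaf information beyond what is determined by the link where they attach to the pre-existing skeleton. This follows by Lemma \ref{lem:contact-transform-main} (the disk is contractible, so microsheaves on it reduce to local systems fully determined by Maslov data) and the locality of $\msh$, but the careful bookkeeping — showing the extension $\widetilde{\SF}$ exists across \emph{each} handle attachment and that it is unique enough to make the restriction functor essentially surjective — is the real content, and is best organized by an induction on critical values of the gradientlike Morse function of the cobordism.
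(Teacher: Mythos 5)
This theorem is cited from \cite[Theorem 9.14]{Nadler-Shende}; the present paper gives no proof, only the remark that it is ``also a consequence of the results on gapped specialization.'' So there is no in-paper argument to compare against, and the assessment must be of the proposal on its own terms.

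Your Steps 1 and 2 are sound and match the intended spirit: the restriction near a cylindrical end is a gapped nearby cycle, and full faithfulness comes from Theorem \ref{gapped nearby cycle}. The genuine gap is in Step 3. The operation ``pull back along the collar projection, then transport along the retraction of $\mathfrak{c}_{W\times T^*I}$ onto $\mathfrak{c}_{W,\lambda_0}$'' is not a well-defined operation on microsheaves: $\msh$ is a sheaf of categories on the skeleton, not a locally constant sheaf, so the existence of a deformation retraction of underlying spaces does not by itself produce a functor of section categories. Concretely, as soon as the cobordism has a Weinstein critical point, $\mathfrak{c}_{W\times T^*I}$ is \emph{not} homeomorphic to a product over $I$; it contains handle disks, and the microsupport estimates that control nearby cycles only control restriction toward the collar, not the reverse ``spreading-out'' direction your construction needs. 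The sheaf axiom cited via Corollary \ref{cor: presentable doubling} only glues along open covers where you already have compatible sections — it does not manufacture the section over the handle disk.

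Moreover, the sentence in your ``main obstacle'' paragraph identifying the resolution — ``the disk is contractible, so microsheaves on it reduce to local systems fully determined by Maslov data'' — is not the relevant point and in fact obscures the difficulty. That $\msh$ of a Lagrangian handle disk is $\Loc(\text{disk})\simeq \cC$ says nothing about whether a given $\SF\in\msh_{\mathfrak{c}_{W,\lambda_0}}(\mathfrak{c}_{W,\lambda_0})$ extends across the disk: that depends on whether the restriction of $\SF$ (and its doubling) to the attaching locus is compatible with the constant functor $\cC \to \msh_\Lambda(\Lambda)$ given by extension off the disk, which is a nontrivial condition. In other words, the induction on critical values is not mere bookkeeping — it is precisely the place where the doubling representatives must be isotoped through the handle attachment and the microsupport of the resulting sheaf controlled. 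That argument is what \cite[Theorem 9.14]{Nadler-Shende} actually supplies, and your sketch does not.
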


    By Theorem \ref{thm: weinstein invariance}, 
    we have a commutative diagram for a compactly supported contact isotopy, with $L_t \subset W \times \bR$ the induced Legendrian isotopy

\begin{corollary}\label{cor: Hamilton invariance quantization}
    Let $W$ be a Weinstein manifold with Maslov data and $L \subset W \times \bR$ a universally sufficient Legendrian. For a compactly supported contact isotopy, with $L_t \subset W \times \bR$ the induced Legendrian isotopy and $L_H \subset W \times T^*I \times \bR$ the induced contact movie, we have a commutative diagram where vertical arrows define equivalences
    \begin{equation}\label{eq: Hamilton invariance quantization main}
    \begin{tikzcd}
        \msh_{L_{0}}(L_0) & \msh_{\mathfrak{c}_{W,\sigma}^{L_0}}(\mathfrak{c}_W^{L_0}) \ar[l, "i_{L_0}^*" above] \ar[r, "i_\infty^*"] & \msh_{\mathfrak{c}_{W,\partial L_0}}(\mathfrak{c}_{W,\partial L_0}) \\
        \msh_{L_H}(L_H) \ar[u, "\rotatebox{90}{$\sim$}"] \ar[d, "\rotatebox{90}{$\sim$}" left] & \msh_{\mathfrak{c}_{W\times T^*I}^{L_H}}(\mathfrak{c}_{W\times T^*I}^{L_H}) \ar[l, "i_{L_H}^*" above] \ar[r, "i_\infty^*"] \ar[d, "\rotatebox{90}{$\sim$}"] \ar[u, "\rotatebox{90}{$\sim$}" right] & \msh_{\mathfrak{c}_{W \times T^*I,\partial L_H}}(\mathfrak{c}_{W \times T^*I,\partial L_H}) \ar[d, "\rotatebox{90}{$\sim$}"] \ar[u, "\rotatebox{90}{$\sim$}" right] \\
        \msh_{L_{1}}(L_1) & \msh_{\mathfrak{c}_{W}^{L_1}}(\mathfrak{c}_W^{L_1}) \ar[l, "i_{L_1}^*" above] \ar[r, "i_\infty^*"] & \msh_{\mathfrak{c}_{W,\partial L_1}}(\mathfrak{c}_{W,\partial L_1}).
    \end{tikzcd}
    \end{equation}
\end{corollary}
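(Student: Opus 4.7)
The plan is to deduce this corollary from Theorem \ref{thm: weinstein invariance} (Weinstein invariance of microsheaves) applied column by column, after installing a suitable Weinstein cobordism structure on $W \times T^*I$ tailored to the contact movie $L_H$.

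First, I would construct the relevant Weinstein cobordism structure. The compactly supported contact Hamiltonian isotopy $\varphi_H^t$ on $W \times \bR$ extends (via cut-off near the interior and the symplectization) to a Hamiltonian isotopy of Liouville forms on $W$ parametrized by $t \in I$. By the cobordism/interpolation result \cite[Proposition 2.42]{LazarevSylvanTanaka1} (used already in Proposition \ref{prop: subdomain}), this may be deformed, compactly supported, to a genuine Weinstein structure $\lambda_I$ on $W \times T^*I$ whose slices at $t=0$ and $t=1$ recover the original Weinstein structure on $W$, and with respect to which the contact movie $L_H \subset W \times T^*I \times \bR$ is eventually conic and universally sufficiently Legendrian. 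Under this structure, slicing at $t=s$ identifies the relevant cores: $\mathfrak{c}_{W \times T^*I, \partial L_H}|_{t=s} = \mathfrak{c}_{W, \partial L_s}$ and $\mathfrak{c}_{W \times T^*I}^{L_H}|_{t=s} = \mathfrak{c}_W^{L_s}$, while $L_H|_{t=s} = L_s$.

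Second, I would establish the vertical equivalences one column at a time. For the rightmost column, Theorem \ref{thm: weinstein invariance} directly gives that restriction to either slice is an equivalence on the microsheaf category of the (relative) core. For the middle column, the same theorem (applied to the enlarged cores $\mathfrak{c}_{W \times T^*I}^{L_H}$ obtained by adjoining $Cone(L_H)$; this variant follows from the same proof because gapped specialization is unaffected by enlarging the core by a conic Legendrian) gives the equivalence for $\Gamma(\msh_{\mathfrak{c}_{W \times T^*I}^{L_H}})$. For the leftmost column, by the Maslov data identification (cited in Formula \eqref{eq: quantization}), $\msh_{L_H}(L_H)$ is (twisted) local systems on $L_H$; since $L_H \simeq L \times I$ deformation retracts onto either $L_0$ or $L_1$, restriction to slices is again an equivalence.

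Third, I would assemble the $3\times 3$ diagram: the two horizontal restriction functors making up the quantization diagram \eqref{sheaf quantization diagram} are defined for any fixed Weinstein manifold and eventually conic Legendrian, hence are natural in the slice parameter. This naturality gives the commutativity of every square in the display. The main obstacle I anticipate is bookkeeping: ensuring that the Weinstein deformation constructed in the first step genuinely has slice cores equal (not merely deformation equivalent) to the target cores $\mathfrak{c}_W^{L_s}$ and $\mathfrak{c}_{W, \partial L_s}$ appearing in the top and bottom rows, so that one does not need to compose with an additional invariance equivalence. If such an additional step is unavoidable, it is again an instance of Theorem \ref{thm: weinstein invariance} and can be absorbed into the vertical arrows without affecting commutativity.
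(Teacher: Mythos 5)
Your proposal is correct and rests on the same engine as the paper's: identify a Weinstein cobordism interpolating the two ends and invoke Theorem \ref{thm: weinstein invariance}, with the left column handled via the local-system identification. The one genuine difference in presentation is where you place the deformation. The paper fixes the Liouville form on $W \times T^*\bR$ and moves the \emph{stop}, treating $(W \times T^*\bR \times T^*I,\, L_H)$ directly as a stopped Weinstein cobordism with the product Weinstein structure; because the contact isotopy is compactly supported, $L_H$ is automatically cylindrical at infinity and no interpolation of Liouville forms is required. You instead pull the isotopy back into a one-parameter family of Liouville forms on $W$ with a \emph{fixed} stop, and then invoke \cite[Proposition 2.42]{LazarevSylvanTanaka1} to promote that family to a Weinstein structure on $W \times T^*I$. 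This is the maneuver the paper needs in Proposition \ref{prop: subdomain} (where the Liouville form genuinely changes), but here it is extra machinery: the two pictures are exchanged by the ambient symplectomorphism $\varphi_H^t$, under which $\mathfrak{c}_{W,\lambda_t}^{L} = \mathfrak{c}_{W,\lambda}^{L_t}$, so you end up at the same place, just less economically. Your column-by-column justification (right and middle columns via the invariance theorem for stopped cores, left column via deformation retraction of $L_H \cong L \times I$) and the assembly by naturality of the restriction functors are sound and are the content that the paper leaves implicit.
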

\begin{proof}
    Consider the Weinstein manifolds with stops $(W \times T^*\bR, L_0)$ and $(W \times T^*\bR, L_1)$ related by the Weinstein cobordism $(W \times T^*\bR \times T^*I, L_H)$. Then we have the commutative diagram by restriction functors. The fact that the vertical arrows by restriction functors define equivalences now follows from Theorem \ref{thm: weinstein invariance}.
\end{proof}

\subsection{Maslov data and products}
    Let $X_1$ and $X_2$ be contact manifolds equipped with Maslov data $\mu_1$ and $\mu_2$ -- which are null homotopies of the maps $X_i \to B^2Pic(\SC)$. We define the product Maslov data $\mu_{12}$ as the null homotopy of the product map composed with the composition map on $B^2Pic(\SC)$:
    $$X_1 \,\widehat\times\, X_2 \to B^2Pic(\SC) \times B^2Pic(\SC) \to B^2Pic(\SC).$$
    Note that the Maslov data define sections of the $BPic(\SC)$-torsors $BPic(\SC)(X_i) \to X_i$, which define a section on the Whitney product
    $$\pi_1^*BPic(\SC)(X_1) \times \pi_2^*BPic(\SC)(X_2) \to X_1 \,\widehat\times\, X_2.$$
    By the fiberwise composition, this determines a section over $BPic(\SC)(X_1 \,\widehat\times\, X_2) \to X_1 \,\widehat\times\, X_2$. 
    
    We translate the results on duality and K\"unneth to this setting: 

\begin{theorem}\label{thm: weinstein kunneth duality maslov}
    Let $W_1, W_2$ be Weinstein manifolds with Maslov data $\mu_1, \mu_2$. Then there is an equivalence
    $$\msh_{\mathfrak{c}_{W_1 \times W_2,\mu_{12}}}(\mathfrak{c}_{W_1 \times W_2}) \simeq \msh_{\mathfrak{c}_{W_1,\mu_1}}(\mathfrak{c}_{W_1}) \otimes \msh_{\mathfrak{c}_{W_2,\mu_2}}(\mathfrak{c}_{W_2}).$$
    Moreover, $\msh_{\mathfrak{c}_{W_1,\mu_1}}(\mathfrak{c}_{W_1})$ is dualizable in $\PrLst$ with dual $\msh_{\mathfrak{c}_{W_1^-,\mu_1^-}}(\mathfrak{c}_{W_1^-})$, and
    $$\msh_{\mathfrak{c}_{W_1^- \times W_2, \mu_{12}^-}}(\mathfrak{c}_{W_1^- \times W_2}) \simeq \Fun^\mathrm{L}(\msh_{\mathfrak{c}_{W_1,\mu_1}}(\mathfrak{c}_{W_1}), \msh_{\mathfrak{c}_{W_2,\mu_2}}(\mathfrak{c}_{W_2})).$$
\end{theorem}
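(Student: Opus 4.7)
The plan is to reduce everything to the cotangent-bundle case handled by Theorems \ref{thm:kunneth-microsheaf}, \ref{thm:microsheaf-duality}, and \ref{thm:microsheaf-fourier}, via the high-codimension contact embedding definition of $\msh_{W\times\bR,\mu}$ recalled in Subsection \ref{ssec: weinstein}. I would begin by fixing contact embeddings $W_i \times \bR \hookrightarrow S^*\bR^{N_i}$ of sufficiently high codimension with Lagrangian thickenings $\sigma_i$ of their symplectic normal bundles, and set $\Lambda_i \subset S^*\bR^{N_i}$ to be the thickening of $\mathfrak{c}_{W_i} \times 0$. Lemma \ref{lem: universal legendrian} ensures that each $\Lambda_i$ is stratified (and universally sufficiently) Legendrian, so that by definition $\msh_{\mathfrak{c}_{W_i,\mu_i}}(\mathfrak{c}_{W_i}) \simeq \msh_{\Lambda_i}(\Lambda_i)$.

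For the K\"unneth identification, I would form the product contact embedding realized for instance by the natural diagonal embedding $(W_1 \times W_2) \times \bR \hookrightarrow (W_1 \times \bR) \,\widehat\times\, (W_2 \times \bR)$ followed by the product of the fixed embeddings into $S^*\bR^{N_1} \,\widehat\times\, S^*\bR^{N_2}$. Under this embedding the Lagrangian thickening of $\mathfrak{c}_{W_1} \times \mathfrak{c}_{W_2}$ is identified, up to controlled stabilization, with the restriction of the contact product $\Lambda_1 \,\widehat\times\, \Lambda_2$ to $\Lambda_1 \times \Lambda_2$. The product Maslov data $\mu_{12}$ is designed precisely so that, under this identification, the corresponding section of the $BPic(\SC)$-torsor agrees with the fiberwise product of those coming from $\mu_1$ and $\mu_2$. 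Invoking the contact-product K\"unneth corollary of Theorem \ref{thm:kunneth-microsheaf} then yields
$$\msh_{\mathfrak{c}_{W_1\times W_2,\mu_{12}}}(\mathfrak{c}_{W_1\times W_2}) \simeq \msh_{\Lambda_1}(\Lambda_1) \otimes \msh_{\Lambda_2}(\Lambda_2) \simeq \msh_{\mathfrak{c}_{W_1,\mu_1}}(\mathfrak{c}_{W_1}) \otimes \msh_{\mathfrak{c}_{W_2,\mu_2}}(\mathfrak{c}_{W_2}).$$

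For the duality statement, composing the chosen embedding with the fiberwise antipodal map identifies the thickening of $\mathfrak{c}_{W_1^-}$ with $-\Lambda_1$, and $\mu_1^-$ is by construction the Maslov data induced under this antipode; hence $\msh_{\mathfrak{c}_{W_1^-,\mu_1^-}}(\mathfrak{c}_{W_1^-}) \simeq \msh_{-\Lambda_1}(-\Lambda_1)$, and Theorem \ref{thm:microsheaf-duality} immediately provides dualizability in $\PrLst$ with the stated dual. Finally, applying the K\"unneth identification to $W_1^- \times W_2$ with the induced product Maslov data $\mu_{12}^-$ and combining with the dualizability gives
$$\msh_{\mathfrak{c}_{W_1^-\times W_2,\mu_{12}^-}}(\mathfrak{c}_{W_1^-\times W_2}) \simeq \msh_{\mathfrak{c}_{W_1,\mu_1}}(\mathfrak{c}_{W_1})^{\vee} \otimes \msh_{\mathfrak{c}_{W_2,\mu_2}}(\mathfrak{c}_{W_2}) \simeq \Fun^\mathrm{L}\bigl(\msh_{\mathfrak{c}_{W_1,\mu_1}}(\mathfrak{c}_{W_1}),\, \msh_{\mathfrak{c}_{W_2,\mu_2}}(\mathfrak{c}_{W_2})\bigr),$$
where the last equivalence is the standard identification for dualizable objects in $\PrLst$. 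Alternatively, one can apply Theorem \ref{thm:microsheaf-fourier} directly to the pair $(-\Lambda_1, \Lambda_2)$.

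The main obstacle will be the bookkeeping around Maslov data rather than the cotangent-bundle K\"unneth and duality inputs themselves: one must verify that the section of the $BPic(\SC)$-torsor associated to $\mu_{12}$ (respectively $\mu_1^-$) coincides, up to canonical homotopy, with the one induced by the product contact embedding (respectively by the antipodal flip). This is essentially a diagrammatic check using the monoidal and involutive structure on the $BPic(\SC)$-bundle from \cite[Section 11]{Nadler-Shende}, but it is the only non-formal step and requires care to set up the coherences in a way that will be compatible with the compositions appearing in the applications.
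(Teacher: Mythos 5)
Your reduction to the cotangent–bundle K\"unneth and duality statements (Theorems \ref{thm:kunneth-microsheaf}, \ref{thm:microsheaf-duality}, \ref{thm:microsheaf-fourier}) matches the paper's strategy, but the step you wave off as ``bookkeeping around Maslov data'' is actually the content of the proof, and your initial framing does not quite accommodate it. You fix Lagrangian sub-bundles $\sigma_i$ of the symplectic normal bundles and then assert that ``by definition $\msh_{\mathfrak{c}_{W_i,\mu_i}}(\mathfrak{c}_{W_i}) \simeq \msh_{\Lambda_i}(\Lambda_i)$.'' That equality holds when $\mu_i$ is the Maslov data \emph{induced by} $\sigma_i$, but a general Maslov datum is only a null-homotopy of $W_i \times \bR \to B^2Pic(\SC)$ and need not lift to a null-homotopy of $W_i \times \bR \to B(U/O)$, i.e.\ a stable polarization. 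In that case $\msh_{X,\mu}$ is defined by pulling back along $\mu$ a sheaf of categories obtained by \emph{descent} from $\msh_{LGr(X)}$ to a $BPic(\SC)$-bundle over $X$ (Section \ref{ssec: weinstein}), not as sections of a microsheaf category on a thickened Legendrian of your choice.

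The paper's proof addresses exactly this point by never choosing a $\sigma_i$: it works with the universal microsheaf categories over $LGr(X_i)$, identifies $m^*T^*\mathfrak f_{12} = T^*\mathfrak f_1 \,\widehat\times\, T^*\mathfrak f_2$ on the level of relative cotangent bundles, and then uses the fact that $U/O \to BPic(\SC)$ is a \emph{monoidal} morphism of grouplike $E_\infty$-spaces to make the square \eqref{eq: LGr to BPic} commute, so that both sides of the K\"unneth formula descend from the same sheaf of categories on the product $BPic(\SC)$-bundles. Only after that descent is established does one specialize to the cores and quote Theorem \ref{thm:kunneth-microsheaf}; the duality statement then follows because the fiberwise inverse on $LGr(X)$ reverses the co-orientation of $\mathfrak f$, realizing the opposite Maslov data as the antipodal Legendrian. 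So the ``coherences'' you defer are precisely the monoidality/involutivity of $U/O \to BPic(\SC)$ over the actual Grassmannian bundle, and they cannot be postponed: without them you cannot even define the object on the left-hand side of the K\"unneth identity independently of a polarization. I would rewrite your proof to start from the universal picture over $LGr(X_i)$ and descend, rather than starting from a thickening and trying to patch in the twist afterward.
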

\begin{proof}
    Denote the fiber product structure on the product Lagrangian Grassmannian (given by taking direct sums of Lagrangian subspaces in each component) by
    $$m: \pi_1^*LGr(X_1) \times \pi_2^*LGr(X_2) \to LGr(X_1 \,\widehat\times\, X_2).$$
    Consider the universal microsheaves on the Lagrangian Grassmannian $LGr(X_i)$ determined by the relative cotangent bundles $\mathfrak f_i: LGr(X_i) \to X_i$ and $\mathfrak f_{12}: LGr(X_1 \,\widehat\times\, X_2) \to X_1 \,\widehat\times\, X_2$ following \cite[Section 11.1]{Nadler-Shende}. We have
    $m^*T^*\mathfrak f_{12} = T^*\mathfrak f_1 \,\widehat\times\, T^*\mathfrak f_2.$
    Therefore, when defining the universal microsheaves over the Lagrangian Grassmannian, taking the contact embedding of the product relative cotangent bundle $T^*\mathfrak f_{12}$ is equivalent to the contact product of the relative cotangent bundles $T^*\mathfrak f_1 \,\widehat\times\, T^*\mathfrak f_2$.
    
    We know that microsheaves on $\pi_1^*LGr(X_1) \times \pi_2^*LGr(X_2)$ descends to a sheaf of categories on
    $\pi_1^*BPic(\SC) (X_1) \times \pi_2^*BPic(\SC)(X_2)$, and microsheaves $LGr(X_1 \,\widehat\times\, X_2)$ descends to a sheaf of categories on $BPic(\SC)(X_1 \widehat\times X_2)$ by \cite[Theorem 11.14]{Nadler-Shende}.
    Then, since the natural map $U/O \to BPic(\SC)$ is a monoidal functor, we have a commutative diagram on the fiberwise products
    \begin{equation}\label{eq: LGr to BPic}
    \begin{tikzcd}
    \pi_1^*LGr(X_1) \times \pi_2^*LGr(X_2) \ar[d] \ar[r,"m"] & LGr(X_1 \,\widehat\times\, X_2) \ar[d] \\
    \pi_1^*BPic(\SC)(X_1) \times \pi_2^*BPic(\SC)(X_2) \ar[r, "m"] & BPic(\SC)(X_1 \,\widehat\times\, X_2)
    \end{tikzcd}
    \end{equation}
    Therefore, we have is a natural isomorphism between sheaves of categories as both sides are obtained by descents from the same sheaf of categories
    $$\mu_{12}^*\msh_{BPic(\SC)(X_1 \,\widehat\times\, X_2)} = (\mu_1 \times \mu_2)^*\msh_{\pi_1^*BPic(\SC)(X_1) \widehat\times \pi_2^*BPic(\SC)(X_2)}.$$
    When $W_i$ are Weinstein domains and $X_i = W_i \times \bR$, we know that the Lagrangian skeleta of $T^*\mathfrak f_i$ are the restrictions of the Lagrangian Grassmannian $LGr(W_i)|_{\mathfrak{c}_{W_i}}$. Hence the K\"unneth formula follows from Theorem \ref{thm:kunneth-microsheaf}. 
    
    Finally, note that the fiberwise inverse map $a: LGr(X) \to LGr(X)$ changes the co-orientation of the relative cotangent bundle $\mathfrak f: LGr(X) \to X$. Therefore, microsheaves with respect to the inverse Maslov data is simply realized by microsheaves on the image of the antipodal map (in the cosphere bundle). Hence duality and the classification of colimit preserving functors follow from Theorems
    \ref{thm:microsheaf-duality} and \ref{thm:microsheaf-fourier}.
\end{proof}

    By taking the right adjoints as in Theorem \ref{thm:microsheaf-duality-PrR}, we can also show duality and classification of limit preserving functors:

\begin{theorem}
    Let $W_1, W_2$ be Weinstein manifolds with Maslov data $\sigma_1, \sigma_2$. $\msh_{\mathfrak{c}_{W_1,\sigma_1}}(\mathfrak{c}_{W_1})$ is dualizable with dual $\msh_{\mathfrak{c}_{W_1^-,\sigma_1^-}}(\mathfrak{c}_{W_1^-})$ in $\PrRst$, and
    $$\msh_{\mathfrak{c}_{W_1^- \times W_2, \sigma_{12}^-}}(\mathfrak{c}_{W_1^- \times W_2}) \simeq \Fun^\mathrm{R}(\msh_{\mathfrak{c}_{W_1,\sigma_1}}(\mathfrak{c}_{W_1}), \msh_{\mathfrak{c}_{W_2,\sigma_2}}(\mathfrak{c}_{W_2}))^{op}.$$
\end{theorem}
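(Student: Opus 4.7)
The plan is to run the argument for the previous theorem (the $\PrLst$ version, Theorem \ref{thm: weinstein kunneth duality maslov}) verbatim, replacing each invocation of the $\PrLst$-level statement with its $\PrRst$ analogue already recorded earlier in the paper. Concretely: the K\"unneth formula of Theorem \ref{thm:kunneth-microsheaf} together with the descent analysis from the Lagrangian Grassmannian to $B\Pic(\SC)$-torsors (diagram \eqref{eq: LGr to BPic}) does not care whether we view $\msh_{\mathfrak{c}_W}$ in $\PrLst$ or $\PrRst$, since the same presentable stable category underlies both; what differs is only the class of morphisms we keep.

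First I will invoke Theorem \ref{thm:microsheaf-duality-PrR} applied to the Legendrian thickening of $\mathfrak{c}_{W_1} \subset W_1 \times \bR \hookrightarrow S^*\bR^N$ determined by the Maslov data $\sigma_1$; this produces unit and counit in $\PrRst$ exhibiting $\msh_{\mathfrak{c}_{W_1,\sigma_1}}(\mathfrak{c}_{W_1})$ as dualizable with dual $\msh_{\mathfrak{c}_{W_1^-,\sigma_1^-}}(\mathfrak{c}_{W_1^-})$. The independence of this duality datum from the choice of contact embedding and thickening follows exactly as in the proof of Theorem \ref{thm: weinstein kunneth duality maslov}, using that antipodal maps on cosphere bundles correspond to the fiberwise inverse $a: LGr(X) \to LGr(X)$ which flips the co-orientation of $T^*\mathfrak f$.

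For the second assertion, I will combine the K\"unneth equivalence
$$\msh_{\mathfrak{c}_{W_1^- \times W_2,\sigma_{12}^-}}(\mathfrak{c}_{W_1^- \times W_2}) \simeq \msh_{\mathfrak{c}_{W_1^-,\sigma_1^-}}(\mathfrak{c}_{W_1^-}) \otimes \msh_{\mathfrak{c}_{W_2,\sigma_2}}(\mathfrak{c}_{W_2})$$
(obtained exactly as in Theorem \ref{thm: weinstein kunneth duality maslov}, since the K\"unneth formula only uses presentability) with the duality of the first factor to identify the right-hand side with $\Fun^\mathrm{R}(\msh_{\mathfrak{c}_{W_1,\sigma_1}}(\mathfrak{c}_{W_1}), \msh_{\mathfrak{c}_{W_2,\sigma_2}}(\mathfrak{c}_{W_2}))^{op}$. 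The identification of tensor with $\Fun^\mathrm{R}(-,-)^{op}$ for dualizable objects in $\PrRst$ is the standard consequence of $\PrRst = (\PrLst)^{op}$. Alternatively, one may appeal directly to Theorem \ref{thm:microsheaf-fourier-PrR} applied after thickening into $S^*\bR^N$, and then descend Maslov data precisely as in the $\PrLst$ version.

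I do not expect a serious obstacle: every geometric input (thickening, descent from $LGr$ to $B\Pic(\SC)$, behavior of the antipodal map on co-orientations) is shared with the $\PrLst$ proof, and the only novelty is the bookkeeping of passing to right adjoints, which is purely formal via $\PrRst = (\PrLst)^{op}$. The mild subtlety to watch is that the $\PrRst$-duality data produced by Theorem \ref{thm:microsheaf-duality-PrR} uses the \emph{negative} doubling functor $w_\Lambda^-$ rather than $w_\Lambda^+$ and the right adjoint $\iota^!$ rather than $\iota^*$; one should verify that, under the descent along $U/O \to B\Pic(\SC)$, these adjoints are still compatible with the monoidal structure on the fiberwise product in diagram \eqref{eq: LGr to BPic}, but this is immediate from the coherence of right adjoints and six-functor base change cited in the $\PrLst$ proof.
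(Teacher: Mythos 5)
Your proposal matches the paper's own (very brief) argument exactly: the paper proves this theorem via the single remark ``By taking the right adjoints as in Theorem \ref{thm:microsheaf-duality-PrR}, we can also show duality and classification of limit preserving functors,'' which is precisely your plan of re-running the $\PrLst$ proof with $\PrRst$ analogues (Theorems \ref{thm:microsheaf-duality-PrR} and \ref{thm:microsheaf-fourier-PrR}) and the formal equivalence $\PrRst = (\PrLst)^{op}$. Your added note about the negative doubling $w_\Lambda^-$ and $\iota^!$ is a reasonable sanity check that the paper leaves implicit.
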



\section{Compatibilities}

    We explain the relationship of various functors involved in our constructions: the relationship between doubling, K\"unneth, and nearby cycle functors, and the relationship between sheaf quantizations and nearby cycle functors.

\subsection{Doubling, K\"unneth, and nearby cycle}
    We explain the relationship between the K\"unneth formula of microsheaves and doubling functors. We have the following theorem which generalizes \cite[Theorem 1.4 \& Proposition 3.21]{KuoLi-duality}.

    We use the notation for doubles from Definition \ref{def: positive doubling} in the following statement and proof.

\begin{theorem}\label{thm: kunneth-doubling}
    Let $\Lambda_1 \subset S^*M_1, \Lambda_2 \subset S^*M_2$ be compact sufficiently Legendrian subsets. Then for sufficiently small $0 < \epsilon' \ll \epsilon$, there is a commutative diagram of equivalences
    \[\begin{tikzcd}[column sep=50pt]
    \Sh_{(\underline{\Lambda_1 \widehat\times \Lambda_2})_{\cup, \epsilon'}^+}(M_1 \times M_2) \ar[d, "\rotatebox{90}{$\sim$}"] \ar[r, "m_{\Lambda_1 \times \Lambda_2}"] & \msh_{\Lambda_1 \widehat\times \Lambda_2}(\Lambda_1 \times \Lambda_2) \\ 
    \Sh_{(\underline{\Lambda_1})_{\cup,\epsilon}^+ \times (\underline\Lambda_{2})_{\cup,\epsilon}^+}(M_1 \times M_2) \ar[r, "m_{\Lambda_1} \otimes m_{\Lambda_2}"] & \msh_{\Lambda_1}(\Lambda_1) \otimes \msh_{\Lambda_2}(\Lambda_2) \ar[u, "\rotatebox{90}{$\sim$}"] ,
    \end{tikzcd}\]
    where the left vertical equivalence is the composition:     $$\iota_{(\underline\Lambda_{1})^+_{\cup,\epsilon} \times (\underline\Lambda_{2})^+_{\cup,\epsilon}}^*: \Sh_{(\underline{\Lambda_1 \widehat\times \Lambda_2})^+_{\cup, \epsilon'}}(M_1 \times M_2) \hookrightarrow \Sh(M_1 \times M_2) \lr \Sh_{(\underline\Lambda_{1})^+_{\cup,\epsilon} \times (\underline\Lambda_{2})^+_{\cup,\epsilon}}(M_1 \times M_2).$$
    Here, the second map is the left 
    adjoint to the inclusion going in the opposite direction. 
\end{theorem}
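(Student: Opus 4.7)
My plan has three strands. First, I will establish the top, right, and bottom arrows in the diagram as equivalences by direct appeal to earlier results. Second, I will verify the square commutes. Third, I will conclude that the left vertical arrow is an equivalence by two-out-of-three.

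For the three equivalences: I start by verifying that $\Lambda_1 \widehat\times \Lambda_2 \subset S^*(M_1 \times M_2)$ is compact and sufficiently Legendrian, that is, pdff, perturbable to finite position, and positively self-displaceable. Each property is inherited from the factors by product or contact-product isotopies; for example, to positively displace the cotangent fiber $S^*_{(x_1,x_2)}(M_1 \times M_2)$ from $\Lambda_1 \widehat\times \Lambda_2$, the only relevant region is the open subset $S^*_{x_1} M_1 \widehat\times S^*_{x_2} M_2$, where the product of the individual displacing isotopies suffices. With this in hand, the top horizontal equivalence follows from Definition \ref{def: positive doubling} applied to $\Lambda_1 \widehat\times \Lambda_2$, and the assumption $\epsilon' \ll \epsilon$ ensures the microsupport inclusion $(\underline{\Lambda_1 \widehat\times \Lambda_2})^+_{\cup,\epsilon'} \subset (\underline\Lambda_1)^+_{\cup,\epsilon} \times (\underline\Lambda_2)^+_{\cup,\epsilon}$, so that the left vertical $\iota^*$ is well defined as stated. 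The right vertical arrow is the K\"unneth equivalence for microsheaves of Theorem \ref{thm:kunneth-microsheaf}. For the bottom arrow, I combine the K\"unneth equivalence for sheaves with prescribed microsupport (Equation \eqref{bingyu kunneth}) with the tensor product of the per-factor doubling equivalences $m_{\Lambda_i}$; since $\otimes$ in $\PrLst$ preserves equivalences, the composition is an equivalence.

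The hard part is to establish the commutativity of the square coherently. My plan is to exploit that the K\"unneth equivalence for microsheaves was constructed in the proof of Theorem \ref{thm:kunneth-microsheaf} as the sheafification of the evident map of presheaves of quotient categories induced by the sheaf-level K\"unneth. Both compositions in our square therefore factor through this same presheaf-level K\"unneth, and the commutativity then reduces to a comparison of two microlocalization quotients applied to the same sheaf: they differ only by whether one first restricts the microsupport to the product-of-doublings via $\iota^*$. Tracking microsupport estimates shows these two quotients have the same kernel (namely sheaves microsupported off $\Lambda_1 \times \Lambda_2$ in the relevant region), so they agree. To package this into a coherent $\infty$-categorical statement, I would invoke the formalism of (op)lax natural transformations induced by adjunctions, e.g.~\cite[Corollary F]{HaugsengHebestreitLinskensNuiten} as used elsewhere in the paper, in order to lift the pointwise identification to a coherent natural isomorphism of colimit-preserving functors.
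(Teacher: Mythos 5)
Your overall scaffolding---establish that three of the four arrows are equivalences, verify the square commutes, conclude the fourth is an equivalence by two-out-of-three---matches the paper's intent, but the argument you give for commutativity is the weak point, and it is the crux of the theorem.

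Your claim is that ``the commutativity reduces to a comparison of two microlocalization quotients applied to the same sheaf,'' which ``have the same kernel.'' This misreads the structure of the square. The left vertical arrow $\iota_{(\underline\Lambda_1)^+_{\cup,\epsilon} \times (\underline\Lambda_2)^+_{\cup,\epsilon}}^*$ is a wrapping functor (a left adjoint to the tautological inclusion, realized as a colimit of compositions with positive isotopy kernels), \emph{not} a localization with a kernel. It genuinely changes the sheaf: $\iota^*\SF$ is built from $\SF$ by pushing microsupport off a closed set, and the colimit of an unbounded family of wrappings can a priori move microlocal data onto $\Lambda_1 \times \Lambda_2$ from outside. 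What must be shown is precisely that the microstalk of $\iota^*\SF$ along $\Lambda_1 \times \Lambda_2$ agrees with that of $\SF$---an assertion about the counit cofiber, not about kernels of quotients. Your proposal asserts this without establishing it, and no amount of ``tracking microsupport estimates'' at the level of presheaves of quotient categories makes it automatic, because $\iota^*$ does not live on the presheaf side.

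The paper sidesteps this entirely with a uniqueness-of-adjoints argument that you should compare against. From the K\"unneth formula (Theorem \ref{thm:kunneth-microsheaf}) one has $m_{\Lambda_1 \times \Lambda_2}(\SF_1 \boxtimes \SF_2) = m_{\Lambda_1}(\SF_1) \boxtimes m_{\Lambda_2}(\SF_2)$, and since $w_{\Lambda_i}^+$ is the left adjoint of $m_{\Lambda_i}$ (Theorem \ref{thm: doubling adjoint}), uniqueness of adjoints forces the left adjoint of $m_{\Lambda_1 \times \Lambda_2}$ (as a functor into $\Sh_{(\underline\Lambda_1)^+_{\cup,\epsilon}\times(\underline\Lambda_2)^+_{\cup,\epsilon}}(M_1\times M_2)$) to be $w_{\Lambda_1}^+ \boxtimes w_{\Lambda_2}^+$. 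But Theorem \ref{thm: doubling adjoint} also expresses this same left adjoint as $\iota^* \circ w_{\Lambda_1 \widehat\times \Lambda_2}^+$, giving $\iota^* \circ w_{\Lambda_1 \widehat\times \Lambda_2}^+ = w_{\Lambda_1}^+ \boxtimes w_{\Lambda_2}^+$ outright. The commutativity with the $m$'s then follows because the doublings invert the microlocalizations (Theorem \ref{thm: relative-doubling}), and the left vertical equivalence is read off. This is ``2-out-of-3,'' but the commutativity never requires arguing that a wrapping preserves the microstalk directly.

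One smaller point: you state that $\epsilon' \ll \epsilon$ ensures $(\underline{\Lambda_1 \widehat\times \Lambda_2})^+_{\cup,\epsilon'} \subset (\underline\Lambda_1)^+_{\cup,\epsilon} \times (\underline\Lambda_2)^+_{\cup,\epsilon}$ ``so that $\iota^*$ is well defined as stated.'' The functor $\iota^*$ is defined on all of $\Sh(M_1 \times M_2)$ and needs no microsupport containment to exist; and the containment you assert is not obvious (and in fact the paper's proof of Theorem \ref{thm: doubling-small-piece} constructs a contact isotopy to relate the two doublings, rather than relying on a set-theoretic inclusion). The role of $\epsilon' \ll \epsilon$ is in ensuring the two doublings are compatible so that microlocalization in both directions is an equivalence, not in making the wrapping functor land somewhere.
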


\begin{proof}
    For any $\SF_1 \boxtimes \SF_2 \in \Sh_{(\underline\Lambda_1)_{\cup,s}^+ \times (\underline\Lambda_2)_{\cup,s}^+}(M_1 \times M_2)$, by Theorem \ref{thm:kunneth-microsheaf}, we have isomorphisms
    $$m_{\Lambda_{1} \times \Lambda_{2}}(\SF_1 \boxtimes \SF_2) = m_{\Lambda_{1}}(\SF_1) \boxtimes m_{\Lambda_{2}}(\SF_2).$$
    Then, consider the left adjoints of $m_{\Lambda_{1}}$ and $m_{\Lambda_{2}}$ given by the doubling functors $w_{\Lambda_{1}}^+$ and $w_{\Lambda_{2}}^+$ by Theorem \ref{thm: doubling adjoint}. Since the left adjoint is unique, it follows that 
    $$m_{\Lambda_{1} \times \Lambda_{2}}^l(\SF_1 \boxtimes \SF_2) = w_{\Lambda_{1}}^+(\SF_1) \boxtimes w_{\Lambda_{2}}^+(\SF_2).$$
    On the other hand, by Theorem \ref{thm: doubling adjoint}, we also know that $m_{\Lambda_{1} \times \Lambda_{2}}^l$ can be written as the composition of doubling and left adjoint of tautological inclusions 
    $$m_{\Lambda_{1} \times \Lambda_{2}}^l(\SF_1 \boxtimes \SF_2) = \iota_{(\underline\Lambda_1)^+_{\cup,\epsilon} \times (\underline\Lambda_2)^+_{\cup,\epsilon}}^* \circ w_{(\underline{\Lambda_1 \widehat\times \Lambda_2})^+_{\cup,\epsilon}}(\SF_1 \boxtimes \SF_2).$$
    This implies that there is a commutative diagram 
    \[\begin{tikzcd}[column sep=60pt]
    \msh_{\Lambda_1 \widehat\times \Lambda_2}(\Lambda_1 \times \Lambda_2) \ar[r, "w_{\Lambda_{1} \widehat\times \Lambda_{2}}^+"] &  \Sh_{(\underline{\Lambda_1 \widehat\times \Lambda_2})^+_{\cup, \epsilon'}}(M_1 \times M_2) \\ 
    \msh_{\Lambda_1}(\Lambda_1) \otimes \msh_{\Lambda_2}(\Lambda_2) \ar[r, "w_{\Lambda_{1}}^+ \otimes w_{\Lambda_{2}}^+"] \ar[u, "\rotatebox{90}{$\sim$}"] & \Sh_{(\underline{\Lambda_1})^+_{\cup,\epsilon} \times (\underline\Lambda_{2})^+_{\cup,\epsilon}}(M_1 \times M_2) \ar[u]
    \end{tikzcd}\]
    where the right vertical functor is the composition
    $$\iota_{(\underline\Lambda_{1})^+_{\cup,\epsilon} \times (\underline\Lambda_{2})^+_{\cup,\epsilon}}^*: \Sh_{(\underline{\Lambda_1 \widehat\times \Lambda_2})^+_{\cup, \epsilon'}}(M_1 \times M_2) \hookrightarrow \Sh(M_1 \times M_2) \lr \Sh_{(\underline\Lambda_{1})^+_{\cup,\epsilon} \times (\underline\Lambda_{2})^+_{\cup,\epsilon}}(M_1 \times M_2).$$
    Since Theorem \ref{thm: relative-doubling} shows that $w_{\Lambda_{1}}^+ \otimes w_{\Lambda_{2}}^+$ and $w_{\Lambda_{1} \times \Lambda_{2}}^+$ are inverse equivalences to $m_{\Lambda_{1}} \otimes m_{\Lambda_{2}}$ and $m_{\Lambda_{1} \times \Lambda_{2}}$, the above diagram also commutes when we replace the horizontal doubling functors by the microlocalization functors. Since Theorem \ref{thm:kunneth-microsheaf} shows that the left vertical functor is an equivalence, it follows that the right vertical functor is also an equivalence.
\end{proof}

Recall that the left and right adjoints to the tautological inclusion $\iota_{\Lambda *}: \Sh_{\Lambda}(M) \hookrightarrow \Sh(M)$ 
are given `by wrapping' \cite[Theorem 1.2]{Kuo-wrapped-sheaves}. We recall the precise formulation:  
    
\begin{definition}[{\cite[Definition 3.14]{Kuo-wrapped-sheaves}}]\label{def: wrapping}
    Let $\Lambda \subset S^*M$ be a closed subset. The category $W(\Lambda)$ consists of non-negative compactly supported contact isotopies on $S^*M \setminus \Lambda$ as objects, whose $j$-morphisms are $\Delta^j$-families of positive contact isotopies.
\end{definition}

\begin{theorem}[{\cite[Theorem 1.2]{Kuo-wrapped-sheaves}}]\label{rem:wrapping} 
    Let $\Lambda \subset S^*M$ be a closed subset. There is a functor $W(\Lambda) \to \Fun^\mathrm{L}(\Sh(M), \Sh(M)), \varphi \mapsto \SK_\varphi$, and for the tautological inclusion $\iota_{\Lambda *}: \Sh_{\Lambda}(M) \hookrightarrow \Sh(M)$, its adjoint functors are
    $$\iota_{\Lambda}^*\SF = \operatorname{colim}_{\varphi \in W(S^*M \setminus \Lambda)}\SK_{\varphi} \circ \SF, \quad \iota_{\Lambda}^!\SF = \operatorname{lim}_{\varphi \in W(S^*M \setminus \Lambda)}\SK_{\varphi} \circ \SF.$$
\end{theorem}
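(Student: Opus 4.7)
The plan is to build the functor $\varphi \mapsto \SK_\varphi$ using Guillermou--Kashiwara--Schapira quantization (Theorem \ref{thm: GKS sheaf}), then verify the adjunction identities directly by checking that $\SK_\varphi$ acts trivially on $\Sh_\Lambda(M)$. More explicitly, for each non-negative compactly supported contact isotopy $\varphi$ on $S^*M \setminus \Lambda$, I would extend $\varphi$ trivially across $\Lambda$ to a non-negative isotopy of $S^*M$ and apply GKS to obtain a kernel $\SK_\varphi \in \Sh(M \times M \times I)$; integral convolution with $\SK_\varphi$ gives a colimit preserving endofunctor of $\Sh(M)$. The higher coherences in the definition of $W(\Lambda)$, namely the $\Delta^j$-families of positive isotopies, will be handled by applying GKS to the $\Delta^j$-parametrized families, whose resulting kernels fit together via the uniqueness clause of Theorem \ref{thm: GKS sheaf} and assemble into the desired $\infty$-functor $W(\Lambda) \to \Fun^{\mathrm{L}}(\Sh(M), \Sh(M))$.

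The key observation driving the adjunction is that $\SK_\varphi$ acts as the identity on $\Sh_\Lambda(M)$. Indeed, for $\SG \in \Sh_\Lambda(M)$ the convolution $\SK_\varphi \circ \SG \in \Sh(M \times I)$ has microsupport contained in $\Lambda \times 0_I$: since $\varphi$ is supported in $S^*M \setminus \Lambda$ it fixes $\Lambda$ pointwise with vanishing Hamiltonian there, so the family $\varphi^t(\dot{\ss}(\SG))$ is constant in $\Lambda$ and the $\tau$-component of the microsupport vanishes. Such a sheaf is locally constant along $I$, so the positive continuation map $\SG \to \SK_\varphi \circ \SG$ (Proposition \ref{prop: positive continue shv}) is a canonical isomorphism. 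Combined with the invertibility of $\SK_\varphi$ from GKS, this produces
\begin{equation*}
    \Hom(\SK_\varphi \circ \SF, \SG) \simeq \Hom(\SF, \SK_{\varphi^{-1}} \circ \SG) \simeq \Hom(\SF, \SG),
\end{equation*}
naturally in $\varphi$, and the inverse limit over $W(\Lambda)$ yields the universal property $\Hom(\operatorname{colim}_\varphi \SK_\varphi \circ \SF, \SG) \simeq \Hom(\SF, \SG)$ characterizing the left adjoint. The right adjoint identification follows by the dual argument, exchanging colimits for limits.

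To verify that $\operatorname{colim}_\varphi \SK_\varphi \circ \SF$ genuinely lies in $\Sh_\Lambda(M)$, I would combine the microsupport estimate $\dot{\ss}(\SK_\varphi \circ \SF) \subset \varphi(\dot{\ss}(\SF))$ with the cofinality of wrappings that push any given $p \in S^*M \setminus \Lambda$ out of any chosen compact set; a standard propagation argument for filtered colimits of sheaves whose microsupports eventually exit any prescribed compact then confines the microsupport of the colimit to $\Lambda$.

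The main obstacle will be the coherent organization of the GKS kernels into a genuine $\infty$-functor out of $W(\Lambda)$ -- packaging the $\Delta^j$-families, their quantizations, and the continuation maps between them into a coherent simplicial object -- rather than the adjunction identity itself, which is dispatched cleanly by the triviality of kernels on $\Sh_\Lambda(M)$ combined with GKS invertibility.
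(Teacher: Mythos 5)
The paper does not prove this statement; it is recalled verbatim from Kuo's paper \cite[Theorem 1.2]{Kuo-wrapped-sheaves} and cited. Your outline does track the broad structure of Kuo's argument: GKS quantization to produce kernels, the observation that $\SK_\varphi$ acts trivially on $\Sh_\Lambda(M)$ because the isotopy is stationary near $\Lambda$, and a microsupport containment for the colimit. However, two of your steps are underspecified in a way that hides the genuine content.

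First, the microsupport estimate. You assert that ``a standard propagation argument for filtered colimits of sheaves whose microsupports eventually exit any prescribed compact then confines the microsupport of the colimit to $\Lambda$.'' This is the technical heart of the theorem, not a standard corollary. The naive estimate says only that $\ss$ of the colimit lies in the \emph{closure} of the union $\bigcup_\varphi \varphi(\dot\ss(\SF))$, which is useless without control. What one actually needs, and what Kuo proves, is that for every $p \in S^*M \setminus \Lambda$ there is a neighborhood $U$ and a cofinal subcategory of $W(\Lambda)$ over which the continuation maps become isomorphisms when restricted to $\pi(U)$; one then deduces the microsupport bound from non-characteristic propagation. Establishing the eventual-isomorphism condition uses the positivity of the wrapping, the explicit $\tau \leq 0$ microsupport bound for GKS kernels of non-negative isotopies, and the cofinality criterion recalled here as Lemma \ref{lem: wrap cofinal}. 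Your phrase ``push any given $p \in S^*M \setminus \Lambda$ out of any chosen compact set'' also mis-states the geometry: the cofinal wrappings push microsupport \emph{toward} $\Lambda$, and the relevant condition is escape from compacts disjoint from $\Lambda$, which is a statement about the microsupport of $\SF$, not about a single point.

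Second, the universal-property step. Your chain
\begin{equation*}
    \Hom(\SK_\varphi \circ \SF, \SG) \simeq \Hom(\SF, \SK_{\varphi^{-1}}\circ \SG) \simeq \Hom(\SF, \SG)
\end{equation*}
for $\SG \in \Sh_\Lambda(M)$ is correct, but to conclude $\lim_\varphi \Hom(\SK_\varphi\circ\SF,\SG)\simeq\Hom(\SF,\SG)$ one must verify that under these identifications the transition maps of the $W(\Lambda)$-indexed diagram become the \emph{identity} of $\Hom(\SF,\SG)$, and then invoke weak contractibility of the filtered category $W(\Lambda)$. Merely saying ``naturally in $\varphi$'' is not enough: naturality gives a diagram of Homs, not a constant one. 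This check is real work — it amounts to compatibility of the continuation maps with the trivialization over $\Sh_\Lambda(M)$. An alternative route, closer in spirit to localization theory, is to show $\id \to L := \colim_\varphi \SK_\varphi\circ(-)$ becomes an equivalence after applying $L$, so $L$ is a Bousfield localization with image $\Sh_\Lambda(M)$; this bypasses the explicit Hom manipulation but still requires the microsupport estimate above.

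You correctly identify the coherent assembly of the $\Delta^j$-families into an $\infty$-functor as a subtlety, but you should rank the microsupport estimate at least as high — without it the colimit need not even land in $\Sh_\Lambda(M)$ and the statement is meaningless.
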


There is the following criterion for when a cofinal sequence in the colimit computing $i_\Lambda^*$ can be obtained from a given positive contact isotopy:

\begin{proposition}[{\cite[Theorem 5.15]{Kuo-wrapped-sheaves}}]\label{prop:wrapping nearby}
    Let $\Lambda, \Lambda' \subset S^*M$ be compact pdff subsets such that $\Lambda \cap \Lambda' = \varnothing$. Suppose there is a positive contact isotopy $\varphi_t$ in $W(\Lambda)$ such that for any open neighborhood $\Omega$ of $\Lambda$, there is $T \in \bR$ such that $\varphi_t(\Lambda') \subset \Omega$ for $t \geq T$. Then 
    for any $\SF \in \Sh_{\Lambda'}(M)$, 
    $$\iota_\Lambda^*\SF = \operatorname{colim}_{t\to \infty}\SK_{\varphi}^t \circ \SF = \psi(\SK_\varphi \circ \SF),$$
    and the functor $\iota_{\Lambda}^*: \Sh_{\Lambda'}(M) \to \Sh_\Lambda(M)$ is fully faithful.
\end{proposition}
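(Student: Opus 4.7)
The starting point is Theorem~\ref{rem:wrapping}, which expresses $\iota_\Lambda^* \SF$ as a colimit $\operatorname{colim}_{\psi \in W(\Lambda)} \SK_\psi \circ \SF$ over the wrapping category. The plan is to show the hypothesized positive isotopy $\varphi_t$ gives a cofinal one-parameter subsystem, and then identify the resulting colimit with a nearby cycle.

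First I would establish cofinality of $\{\varphi_t\}_{t \geq 0}$ inside $W(\Lambda)$. Given any compactly supported non-negative contact isotopy $\psi \in W(\Lambda)$, the set $\psi_1(\Lambda')$ is a compact subset of $S^*M \setminus \Lambda$, so it is disjoint from some open neighborhood $\Omega \subset S^*M$ of $\Lambda$. By hypothesis there exists $T$ with $\varphi_t(\Lambda') \subset \Omega$ for all $t \geq T$; in particular $\varphi_t(\Lambda') \cap \psi_1(\Lambda') = \varnothing$. Using the compactness of $\Lambda$ and $\Lambda'$ and the pdff condition (which gives us enough room to push in the Reeb direction), I would construct a non-negative compactly supported contact isotopy in $S^*M \setminus \Lambda$ interpolating between $\psi_1$ and some $\varphi_{t'}$ with $t' \geq t$ on a neighborhood of the relevant images. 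This yields the desired morphism in $W(\Lambda)$ from $\psi$ to a term of the sequence, showing cofinality.

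Next I would identify $\operatorname{colim}_{t \to \infty} \SK_\varphi^t \circ \SF$ with $\psi(\SK_\varphi \circ \SF)$. Reparametrize by $s = 1/(1+t)$, turning $t \to \infty$ into $s \to 0^+$, and view $\SK_\varphi \circ \SF$ as a sheaf $\widetilde{\SF} \in \Sh(M \times \bR_{>0})$. Because $\varphi$ is a positive contact isotopy, $\dot\ss(\widetilde{\SF})$ lies in the half-space $\tau \leq 0$, so Proposition~\ref{prop: positive continue shv} identifies the continuation maps with the maps $\Gamma(U \times \bR_{>0}, \widetilde{\SF}) \to \Gamma(U, i_{s_0}^* \widetilde{\SF})$. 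Passing to the limit $s_0 \to 0^+$ gives the stalk of $i^* j_* \widetilde{\SF} = \psi(\widetilde{\SF})$ on $U$, so the filtered colimit of continuation maps is exactly the nearby cycle. Combining with Step 1 yields the two identities in the statement.

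Finally, for full faithfulness of $\iota_\Lambda^*$ restricted to $\Sh_{\Lambda'}(M)$, I would use the adjunction
\[
\Hom_{\Sh_\Lambda(M)}(\iota_\Lambda^* \SF, \iota_\Lambda^* \SG) = \Hom_{\Sh(M)}(\SF, \iota_{\Lambda *}\iota_\Lambda^* \SG) = \operatorname{lim}_{t} \Hom(\SF, \SK_\varphi^t \circ \SG),
\]
and invoke Proposition~\ref{prop: perturb-lim} together with Proposition~\ref{prop: perturb compact}: since $\dot\ss(\SF) \subset \Lambda'$ and $\Lambda \cap \Lambda' = \varnothing$, the non-negative isotopy $\varphi_t$ moves $\Lambda'$ toward $\Lambda$ without crossing $\Lambda'$ itself, so each continuation map $\Hom(\SF, \SG) \to \Hom(\SF, \SG_t)$ is an isomorphism. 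The main obstacle is Step 1: the careful geometric construction of a non-negative interpolation inside $W(\Lambda)$ between an arbitrary compactly supported wrapping and the eventual approach of $\varphi_t(\Lambda')$ to $\Lambda$, which really exploits both the pdff hypothesis and the precise way $\varphi_t$ sweeps $\Lambda'$ into every open neighborhood of $\Lambda$.
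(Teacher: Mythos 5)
The paper does not give a proof of this statement; it is invoked as a citation to \cite[Theorem 5.15]{Kuo-wrapped-sheaves}. So I am assessing your attempt against what that argument must do, not against text in the paper.

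The central gap is in your Step 1. You set out to prove that $\{\varphi_t\}$ is cofinal in all of $W(\Lambda)$, but the hypothesis is too weak for this, and it is in fact false in general. A morphism $\psi \to \varphi_T$ in $W(\Lambda)$ amounts to a positive interpolation from $\psi$ to $\varphi_T$, which forces $\varphi_T$ to dominate $\psi$ \emph{globally} on $S^*M\setminus\Lambda$. The hypothesis controls only what $\varphi_t$ does to $\Lambda'$ and says nothing about its behavior away from $\bigcup_s \varphi_s(\Lambda')$; for instance $\varphi_t$ could be the identity in some region far from that trajectory, and then no $\varphi_T$ dominates a wrapping $\psi$ concentrated there. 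The disjointness $\varphi_t(\Lambda')\cap\psi_1(\Lambda')=\varnothing$ you extract is therefore not the relevant condition, and the pdff hypothesis is about displacing cotangent fibers, not about interpolating between two isotopies that disagree globally. What actually saves the formula is that one only applies it to $\SF\in\Sh_{\Lambda'}(M)$, which cannot detect what $\varphi_t$ does away from $\Lambda'$. The argument must therefore verify the left-adjoint universal property of the specific object $\psi(\SK_\varphi\circ\SF) = \operatorname{colim}_t\SK_\varphi^t\circ\SF$ directly: by Lemma \ref{lem: ss-nearby-cycle} and the hypothesis, its microsupport sits in $\Lambda$; and the cofiber of $\SF\to\psi(\SK_\varphi\circ\SF)$ is left-orthogonal to $\Sh_\Lambda(M)$, which one verifies by non-characteristic propagation, not by cofinality in the abstract wrapping category.

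Two secondary points. In Step 2, the reparametrization $s = 1/(1+t)$ has $ds/dt < 0$ and therefore flips the sign of the $\bR$-covariable, so after reparametrization the microsupport lies in $\{\tau\ge 0\}$, not $\{\tau\le 0\}$; the invocation of Proposition \ref{prop: positive continue shv} must be adjusted accordingly. In Step 3, you write $\Hom(\SF,\iota_{\Lambda*}\iota_\Lambda^*\SG) = \lim_t\Hom(\SF,\SK_\varphi^t\circ\SG)$, but $\iota_\Lambda^*\SG$ is a colimit, and $\Hom(\SF,\operatorname{colim}_t(-))$ is not $\lim_t\Hom(\SF,-)$: commuting $\Hom(\SF,-)$ past this particular filtered colimit is exactly what the nearby-cycle (non-characteristic deformation) argument accomplishes, and needs to be said explicitly; moreover the hypotheses of Propositions \ref{prop: perturb-lim} and \ref{prop: perturb compact} have to be checked with microsupport $\Lambda\cup\Lambda'$, not just $\Lambda'$.
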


Finally, we show that the left adjoint of the tautological inclusion in Theorem \ref{thm: kunneth-doubling} can be realized as a nearby cycle functor.

\begin{theorem}\label{thm: doubling-small-piece}
    Let $\Lambda_1 \subset S^*M_1$, $\Lambda_2 \subset S^*M_2$ be sufficiently Legendrian subsets with boundaries. There is a contact isotopy that defines a family of sufficiently Legendrian subsets $({\Lambda_{1} \,\widehat\times\,\Lambda_2})_{\cup,s} \subset S^*(M_1 \times M_2) \times \bR_{>0}$ with the nearby cycle functor
    $$\psi_{(\underline\Lambda_1)_{\pm \epsilon} \times (\underline\Lambda_2)_{\pm \epsilon}}: \Sh_{(\underline{\Lambda_{1} \widehat\times \Lambda_{2}})_{\cup,\epsilon'}}(M_1 \times M_2) \xrightarrow{\sim} \Sh_{(\underline\Lambda_1)_{\cup,\epsilon} \times (\underline\Lambda_2)_{\cup,\epsilon}}(M_1 \times M_2).$$
    Moreover, it is equivalent to the left adjoint of the tautological inclusion
    $$\iota_{(\underline\Lambda_1)_{\pm \epsilon} \times (\underline\Lambda_2)_{\pm \epsilon}}^*: \Sh_{(\underline{\Lambda_{1} \widehat\times \Lambda_{2} })_{\cup,\epsilon'}}(M_1 \times M_2) \xrightarrow{\sim} \Sh_{(\underline\Lambda_1)_{\cup,\epsilon} \times (\underline\Lambda_2)_{\cup,\epsilon}}(M_1 \times M_2).$$
\end{theorem}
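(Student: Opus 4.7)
The plan is to construct a specific positive contact isotopy on $S^*(M_1 \times M_2)$ that asymptotically pushes the combined doubling $(\underline{\Lambda_1 \widehat\times \Lambda_2})_{\cup,\epsilon'}$ into arbitrary open neighborhoods of the product of individual doublings $(\underline\Lambda_1)_{\cup,\epsilon} \times (\underline\Lambda_2)_{\cup,\epsilon}$, and then to invoke Proposition \ref{prop:wrapping nearby} to identify the resulting nearby cycle functor with the left adjoint of the tautological inclusion. Since Theorem \ref{thm: kunneth-doubling} has already shown this left adjoint is an equivalence, the equivalence in the statement will follow.

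For the construction, I would work in the contact product model $S^*M_1 \widehat\times S^*M_2 \subset S^*(M_1 \times M_2)$. Let $h_1$ and $h_2$ be the positive contact Hamiltonians used to self-displace $\Lambda_1$ and $\Lambda_2$ in their respective cosphere bundles. On the symplectization $(S^*M_1 \times \bR_{>0}) \times (S^*M_2 \times \bR_{>0})$, their weighted sum $h_1/r_1 + h_2/r_2$ is positive and $1$-homogeneous, hence descends to a positive contact Hamiltonian on the contact product, which can be cut off to a positive contact Hamiltonian on $S^*(M_1 \times M_2)$. The combined doubling $(\underline{\Lambda_1 \widehat\times \Lambda_2})_{\cup,\epsilon'}$ arises from the short-time Reeb-type flow of this Hamiltonian (which acts as simultaneous motion in both factors), whereas the product of doublings uses the uncoupled flows of $h_1$ and $h_2$ separately; a positive isotopy, built by scaling $h_1/r_1$ and $h_2/r_2$ independently, spreads the two parallel sheets of the combined doubling into the four components of the product doubling as the parameter grows.

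I would then verify that the resulting family of subsets is sufficiently Legendrian throughout, in the sense of Definition \ref{def: sufficient Legendrian}: pdff, perturbability to finite position, and positive self-displaceability all descend from the corresponding properties of $\Lambda_1$ and $\Lambda_2$ via Lemmas \ref{lem: pdff whitney}, \ref{lem: ptfp whitney}, and \ref{lem: self displace hypersurface} applied to the appropriate product constructions. The microsupport estimate Lemma \ref{lem: ss-nearby-cycle} then constrains the nearby cycle of any sheaf microsupported in $(\underline{\Lambda_1 \widehat\times \Lambda_2})_{\cup,\epsilon'}$ to be microsupported in $(\underline\Lambda_1)_{\cup,\epsilon} \times (\underline\Lambda_2)_{\cup,\epsilon}$. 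Since the isotopy is positive, supported away from the target, and asymptotically drags the source into every open neighborhood of the target, Proposition \ref{prop:wrapping nearby} identifies the nearby cycle functor with $\iota^*_{(\underline\Lambda_1)_{\pm\epsilon}\times(\underline\Lambda_2)_{\pm\epsilon}}$, completing the proof in combination with Theorem \ref{thm: kunneth-doubling}.

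The hard part will be the geometric verification in the first step: one must carefully track the interpolation pieces $\bigcup_{-s\le r\le s}\pi(\Lambda_r)$ appearing in the definition of the $\cup$-doublings and confirm that they also deform correctly under the flow, so that source and target subsets remain disjoint throughout the isotopy and so that the asymptotic condition of Proposition \ref{prop:wrapping nearby} is genuinely met. Compatibility with the contact collar structures (Definition \ref{def: contact collar}) underlying both doubling constructions will demand additional care, and likely a further small perturbation of the Hamiltonian near the boundary regions of the $\Lambda_i$.
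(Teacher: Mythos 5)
Your top-level strategy — build a positive contact isotopy, identify the nearby cycle via Proposition~\ref{prop:wrapping nearby} with $\iota^*$, and invoke Theorem~\ref{thm: kunneth-doubling} to get the equivalence — is the same as the paper's. But the geometric construction, which is essentially the entire content of this theorem, is not carried out, and the specific isotopy you propose is not obviously adequate.

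The first issue is the claimed geometry of the isotopy. The target $(\underline\Lambda_1)_{\cup,\epsilon} \times (\underline\Lambda_2)_{\cup,\epsilon}$ is not a doubling of any Legendrian; it is a product of two doublings, and as such it has non-trivial pieces living in $\dot T^*M_1 \times 0_{M_2}$ and $0_{M_1} \times \dot T^*M_2$, which in $S^*(M_1\times M_2)$ sit \emph{outside} the contact product $S^*M_1 \,\widehat\times\, S^*M_2$. The source $(\underline{\Lambda_1\widehat\times\Lambda_2})_{\cup,\epsilon'}$, by contrast, has all its non-zero-section microsupport inside $S^*M_1\,\widehat\times\, S^*M_2$. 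A Hamiltonian of the form $h_1/r_1 + h_2/r_2$ blows up as $r_1\to 0$ or $r_2\to 0$ and does not extend over $S^*(M_1\times M_2)$; after a cutoff it no longer obviously drags the $\Lambda_1\widehat\times\Lambda_2$ doubling across the region near $S^*M_i\times 0$. The paper resolves exactly this difficulty by introducing $U$-shape Lagrangian fillings $\Lambda\times\cup_{\pm\epsilon}$ that interpolate through the interior of $T^*M_i$, then gluing $(\Lambda_1\times\cup_{\pm\epsilon})\times\Lambda_{2,\pm\epsilon}$ with $\Lambda_{1,\pm\epsilon}\times(\Lambda_2\times\cup_{\pm\epsilon})$ along their common boundary and deforming this glued Legendrian to the relative doubling $(\Lambda_1\widehat\times\Lambda_2)_{\pm s}$. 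Your sketch of ``spreading two parallel sheets into four components'' does not match the actual structure of the target (which has more than four pieces, including zero-section cross-terms) and is never constructed. You are honest that this is the hard part, but without it there is no proof.

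The second issue is that you treat the support estimate as automatic. You write that Lemma~\ref{lem: ss-nearby-cycle} ``constrains the nearby cycle \dots to be microsupported in $(\underline\Lambda_1)_{\cup,\epsilon}\times(\underline\Lambda_2)_{\cup,\epsilon}$.'' That lemma gives $\SS(\psi\SF)\subset\psi(\SS(\SF))$, but whether $\psi$ of your isotoped family actually lands inside the target set is precisely the geometric question, and in particular the estimate on the base support (the zero-section component $\bigcup\pi(\Lambda_{1,r})\times\bigcup\pi(\Lambda_{2,r})$) requires a separate argument: one must check that the base projection of the whole isotoped family of Legendrians stays inside a small neighborhood of that product. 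The paper proves this explicitly for its specific isotopy; your proposed isotopy would need its own verification, which is absent. Until both gaps are filled, the argument does not close.
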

\begin{proof}
    Let $\Lambda \subset S^*M$ be a sufficiently Legendrian subset. Then the $U$-shape filling $\Lambda \times \cup_{\epsilon,\epsilon'}$ of the double copy $\Lambda_{\epsilon} \cup \Lambda_{\epsilon'}$ is defined as follows. Let $f: (\epsilon, \epsilon') \to \bR_{>0}$ be a smooth function such that $f(s) \to +\infty$ when $s \to \epsilon$ or $\epsilon'$. 
    \begin{equation*}
    \Lambda \times \cup_{\epsilon,\epsilon'} = \{(x, r\xi) \mid (x, \xi) \in \Lambda_s \subset S^*M, r = f(s) \in \bR_{>0}\} \subset T^*M.
    \end{equation*}
    We can choose the smooth function $f$ such that $\Lambda \times \cup_{\pm\epsilon}$ is contained in an arbitrary small neighbourhood of $\underline\Lambda_{\cup,\epsilon}$.

    Consider the sufficiently Legendrian subset $(\underline\Lambda_1)_{\cup,\epsilon} \times (\underline\Lambda_2)_{\cup,\epsilon}$. We construct a contact isotopy that induces a family of relative doubling subsets $(\Lambda_1 \,\widehat\times\, \Lambda_2)_{\pm s}$ that are contained in arbitrary small neighbourhoods of $(\underline\Lambda_1)_{\cup,\epsilon} \times (\underline\Lambda_2)_{\cup,\epsilon}$. Consider the decomposition
    \begin{align*}
    (\underline\Lambda_1)_{\cup,\epsilon} \times (\underline\Lambda_2)_{\cup,\epsilon} = &\, \big((\underline\Lambda_1)_{\cup,\epsilon} \times (\Lambda_{2,\pm\epsilon} \times \bR_{>0})\big) \cup \big((\Lambda_{1,\pm\epsilon} \times \bR_{>0}) \times (\underline\Lambda_2)_{\cup,\epsilon} \big) \\
    &\,\cup \Big(\bigcup\nolimits_{-\epsilon < s < \epsilon}\pi_1(\Lambda_{1,s}) \times \bigcup\nolimits_{-\epsilon < s < \epsilon}\pi_2(\Lambda_{2,s}) \Big).
    \end{align*}
    First, consider $(\underline\Lambda_{1})_{\cup,\epsilon} \times \Lambda_{2,\pm \epsilon}$ with a small neighbourhood $D^*_r M_1 \times U_r(\Lambda_{2,\pm \epsilon})$ where $D_r^*M_1$ is the disk bundle of $M$ and $U_r(\Lambda_{2,\pm \epsilon})$ is a neighbourhood of $\Lambda_{2,\pm \epsilon}$. Construct the standard U-shape filling $\Lambda_1 \times \cup_{\pm \epsilon}$ of $\Lambda_{1,\pm \epsilon}$ inside the disk bundle $D_r^*M_1$. We know that it is contained in a small neighbourhood of $(\underline\Lambda_{1})_{\cup,\epsilon}$. For $s \ll \epsilon$ we now have a pdff subset
    $$(\Lambda_1 \times \cup_{\pm \epsilon - s}) \times \Lambda_{2,\pm \epsilon-s} \subset D_r^*M \times U_r(\Lambda_{2,\pm \epsilon}).$$
    Next, consider $\Lambda_{1,\pm \epsilon} \times \underline\Lambda_{2,\cup,\epsilon}$. We can similarly construct a pdff subset for $s \ll \epsilon$
    $$\Lambda_{1,\pm \epsilon-s} \times (\Lambda_2 \times \cup_{\pm \epsilon-s}) \subset U_r(\Lambda_{1,\pm \epsilon}) \times D^*_r M_2.$$
    Both of their boundaries are equal to $\Lambda_{1,\pm\epsilon} \times \Lambda_{2,\pm\epsilon}$ and hence can be glued together which defines a doubling of $\Lambda_{1,-\epsilon} \,\widehat\times\, \Lambda_{2,-\epsilon}$.

    We claim that in $S^*M_1 \,\widehat\times\, S^*M_2$, the doubling is contact isotopic to some relative doubling $(\Lambda_1 \,\widehat\times\, \Lambda_2)_{\pm s}$ in the complement of $(\underline\Lambda_{1})_{\cup,\epsilon} \times (\underline\Lambda_{2})_{\cup,\epsilon}$ for $\epsilon' \ll \epsilon$. First, note that the two branches $\Lambda_{1,\epsilon} \times (\Lambda_2 \times \cup_{\pm\epsilon})$ and $\Lambda_{1,-\epsilon} \times (\Lambda_2 \times \cup_{\pm\epsilon})$ are connected through a contact isotopy $\Lambda_{1,t} \times (\Lambda_2 \times \cup_{\pm \epsilon})$; the filling $\Lambda_1 \times \cup_{\pm \epsilon}$ of $\Lambda_{1,\pm \epsilon}$ and the filling $\Lambda \times \cup_{-\epsilon \pm s}$ of $\Lambda_{\pm\epsilon}$ are connected through a contact isotopy. Therefore, we have the first contact isotopy 
    \begin{align*}
    &(\Lambda_{1,\pm \epsilon} \times (\Lambda_2 \times \cup_{\pm \epsilon})) \cup ((\Lambda_1 \times \cup_{\pm\epsilon}) \times \Lambda_{2,\pm \epsilon}) \\
    &\xrightarrow{\sim} (\Lambda_{1,-\epsilon\pm s} \times (\Lambda_2 \times \cup_{\pm \epsilon})) \cup ((\Lambda_1 \times \cup_{-\epsilon\pm s}) \times \Lambda_{2,\pm \epsilon}).
    \end{align*}
    Second, consider the contact isotopy connecting $\Lambda_{1,-\epsilon \pm s} \times (\Lambda_2 \times \cup_{\pm \epsilon}) \cong \Lambda_{1,-\epsilon \pm s} \times (\Lambda_2 \times \cup_{-\epsilon \pm s})$ and $(\Lambda_{1} \times \Lambda_{2} \times \bR)_{\pm \epsilon'}$, and the isotopy connecting $(\Lambda_1 \times \cup_{-\epsilon\pm s}) \times \Lambda_{2,\pm\epsilon}$ and $(\Lambda_1 \times \cup_{-\epsilon \pm s}) \times \Lambda_{2,-\epsilon \pm s}$.
    This implies that we have a contact isotopy
    $$(\Lambda_{1,-\epsilon\pm s} \times (\Lambda_2 \times \cup_{-\epsilon\pm s})) \cup ((\Lambda_1 \times \cup_{\pm \epsilon}) \times \Lambda_{2,\pm \epsilon} ) \xrightarrow{\sim} (\Lambda_{1} \,\widehat\times\, \Lambda_{2})_{\pm s}.$$
    Thus, by Proposition \ref{prop:wrapping nearby}, we have a nearby cycle functor induced by the contact isotopy $(\Lambda_1 \,\widehat\times\, \Lambda_2)_{\pm s}$, where we write $(\underline\Lambda_i)_{\pm\epsilon} = 0_{M_i} \cup (\Lambda_i)_{\pm\epsilon} \times \bR_{>0}$:
    $$\psi_{(\underline\Lambda_1)_{\pm\epsilon} \times (\underline\Lambda_2)_{\pm\epsilon}} : \Sh_{(\underline{\Lambda_1 \widehat\times \Lambda_2})_{\cup,\epsilon'}}(M_1 \times M_2) \longrightarrow \Sh_{(\underline\Lambda_1)_{\pm\epsilon} \times (\underline\Lambda_2)_{\pm\epsilon}}(M_1 \times M_2).$$
    Moreover, Proposition \ref{prop:wrapping nearby} implies that the nearby cycle functor realizes $\iota_{(\underline\Lambda_1)_{\pm\epsilon} \times (\underline\Lambda_2)_{\pm\epsilon}}^*$ whose image is in $\Sh_{(\underline\Lambda_1)_{\pm\epsilon} \times (\underline\Lambda_2)_{\pm\epsilon}}(M_1 \times M_2)$.
    
    Finally, we will show that the essential image of the above nearby cycle functor lands in $\Sh_{(\underline\Lambda_1)_{\cup,\epsilon} \times (\underline\Lambda_2)_{\cup,\epsilon}}(M_1 \times M_2)$.  We have already established this microsupport estimate away from the zero section; what remains is to estimate the support. We can show that under the contact isotopy and the projection $\pi_{12}: T^*(M_1 \times M_2) \to M_1 \times M_2$, we have
    $$\pi_{12}\big((\Lambda_{1} \,\widehat\times\, \Lambda_{2})_{\pm s} \big) \subset U_r\Big(\bigcup\nolimits_{-\epsilon < s < \epsilon}\pi_1(\Lambda_{1,s}) \times \bigcup\nolimits_{-\epsilon < s < \epsilon}\pi_2(\Lambda_{2,s}) \Big).$$
    Indeed, the contact isotopy $\Lambda_{1,\pm \epsilon} \times (\Lambda_2 \times \cup_{\pm \epsilon}) \xrightarrow{\sim} \Lambda_{1,-\epsilon\pm s} \times (\Lambda_2 \times \cup_{\pm \epsilon})$ is contained in a neighbourhood of $\bigcup_{-\epsilon<s<\epsilon}\Lambda_{1,s} \times (\Lambda_2 \times \cup_{\pm \epsilon})$, and the contact isotopy $((\Lambda_1 \times \cup_{\pm\epsilon}) \times \Lambda_{2,\pm \epsilon}) \xrightarrow{\sim} ((\Lambda_1 \times \cup_{-\epsilon\pm s}) \times \Lambda_{2,\pm \epsilon})$ is contained in a neighbourhood of $\bigcup_{-\epsilon<s<\epsilon}\Lambda_{1,s} \times \Lambda_{2,\pm\epsilon}$. Thus, the projection of the first contact isotopy is contained in a neighbourhood of
    $$\bigcup\nolimits_{-\epsilon<s<\epsilon}\pi_1(\Lambda_{1,s}) \times \pi_2(\Lambda_2 \times \cup_{\pm \epsilon}) \subset \bigcup\nolimits_{-\epsilon < s < \epsilon}\pi_1(\Lambda_{1,s}) \times \bigcup\nolimits_{-\epsilon < s < \epsilon}\pi_2(\Lambda_{2,s}).$$
    Similarly, the projection of the second contact isotopy is contained in a neighbourhood of
    $$\pi_1(\Lambda_{1} \times \cup_{-\epsilon \pm s}) \times \bigcup\nolimits_{-\epsilon<s<\epsilon}\pi_2(\Lambda_{2,s}) \subset \bigcup\nolimits_{-s<t<s}\pi_1(\Lambda_{1,-\epsilon+t}) \times \bigcup\nolimits_{-\epsilon < s < \epsilon}\pi_2(\Lambda_{2,s}).$$
    Therefore, we know that the nearby cycle of sheaves microsupported in the family $(\Lambda_{1} \,\widehat\times\, \Lambda_{2})_{\cup,s} \subset S^*M_1 \,\widehat\times\, S^*M_2$ defined the following functor by Lemma \ref{lem: ss-nearby-cycle}:
    $$\psi_{(\underline\Lambda_1)_{\cup, \epsilon} \times (\underline\Lambda_2)_{\cup, \epsilon}}: \Sh_{(\underline{\Lambda_{1} \widehat\times \Lambda_{2}})_{\cup,s}}(M_1 \times M_2) \hookrightarrow \Sh_{(\underline\Lambda_1)_{\cup,\epsilon} \times (\underline\Lambda_2)_{\cup,\epsilon}}(M_1 \times M_2).$$
    Then we know that the nearby cycle functor that realizes the functor $\iota_{(\underline\Lambda_1)_{\cup, \epsilon} \times (\underline\Lambda_2)_{\cup, \epsilon}}^*$ has image in $\Sh_{(\underline\Lambda_1)_{\cup,\epsilon} \times (\underline\Lambda_2)_{\cup,\epsilon}}(M_1 \times M_2)$, which finishes the proof.
\end{proof}

\begin{figure}
    \centering
    \includegraphics[width=0.8\linewidth]{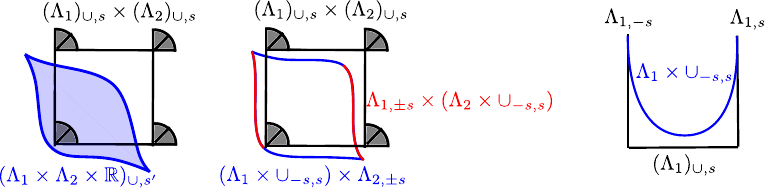}
    \caption{Let the base be $M_1 = M_2 = \bR$ and the Legendrians be $\Lambda_1 = \Lambda_2 = \{(0, 1)\} \subset S^*\bR$. The figure on the left shows the front projection of $(\Lambda_1)_{\cup,s} \times (\Lambda_2)_{\cup,s}$, $(\Lambda_1 \,\widehat\times\, \Lambda_2)_{\cup,s'}$ on the base $\bR^2$. The figure on the right shows the $U$-shape Lagrangian filling $\Lambda_1 \times \cup_{-s,s}$ and $\Lambda_2 \times \cup_{-s,s}$ into $T^*\bR$.}
    \label{fig:enter-label}
\end{figure}

\subsection{Sheaf quantization as nearby cycle} \label{sec:quantization}
    We will now give some variations on the basic microsheaf quantization result \cite[Corollary 9.9]{Nadler-Shende}, in particular giving many presentations of microsheaf quantization categories, and separating out some assertions which remain valid for immersed, rather than embedded, Lagrangians.  
    We introduce the following condition:

\begin{definition}
    A subset of $W \times \bR_t$ is $\epsilon$-thin if it is contained in $W \times (-\epsilon, \epsilon) \subset W$. An exact Lagrangian immersion $\ol{L} \looparrowright W$ is called $\epsilon$-thin if it admits an $\epsilon$-thin Legendrian lift.
\end{definition}

    Note that being $\epsilon$-thin is not a severe restriction. 
    The Liouville vector field $Z$ on $W$ defined by $\iota(Z)d\lambda = \lambda$ lifts to a contact vector field $\partial/\partial t + Z$. Abuse notations and denote the contact flow by $\varphi_Z$ as well. Then we have the following:

\begin{lemma}\label{lem: thinness}
    Let $W$ be a Liouville manifold and $L \subset W \times \bR$ be any eventually conical  subset in the sense of Definition \ref{def: eventually conic}. Let $\varphi_Z^{z}$ be the contact lift of Liouville flow on $W \times \bR$. Then there always exists $z_0 \in \bR$ such that when $z > z_0$, $\varphi_Z^{-z}(L) \subset W \times \bR$ is an $\epsilon$-thin subset. 
\end{lemma}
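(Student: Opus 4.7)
The plan is to show that the contact lift $\varphi_Z^z$ of the Liouville flow squeezes the $\bR_t$-coordinate of $W \times \bR_t$ by an exponentially small factor as $z \to -\infty$, and then to combine this with the boundedness of the $t$-coordinate on any eventually conical $L$. Taken together, these force $\varphi_Z^{-z}(L) \subset W \times (-\epsilon, \epsilon)$ for all sufficiently large $z$.

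First I would pin down the action of $\varphi_Z^z$ on the $\bR_t$ factor by working on the cylindrical end of $W$. In coordinates $\partial W \times \bR_{>0,s}$ the Liouville field is $s \partial_s$ and the contactization carries the form $dt - s \alpha_\partial$. A routine Cartan-calculus check shows that the contact lift of $s \partial_s$ agrees on the cylindrical end with the contact vector field $s \partial_s + t \partial_t$, whose flow sends $(p, s, t)$ to $(p, e^z s, e^z t)$. Thus on the cylindrical end the $t$-coordinate is scaled by $e^z$, while on the compact interior of $W \times \bR_t$ the correction to this scaling is uniformly bounded. Consequently there is a constant $C > 0$ such that the $t$-coordinate of $\varphi_Z^{-z}(x, t)$ has absolute value at most $C e^{-z} \max(|t|, 1)$ for all $(x, t) \in W \times \bR_t$ and all $z \geq 0$.

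Next I would argue that any eventually conical $L$ is bounded in $t$. By Definition \ref{def: eventually conic} the cylindrical part $L \cap (\partial W \times \bR_{>r} \times \bR)$ is contained in $\Lambda \times \bR_{>0} \times \{t_0\}$ and so has constant $t$-coordinate $t_0$. The complementary part lies over the compact region $W \setminus (\partial W \times \bR_{>r})$, and in the setting of the paper $L$ is always closed and compact modulo its cylindrical end (this being the standing convention for Legendrian lifts of exact Lagrangians with cylindrical ends); hence it is contained in $\{|t| \leq T\}$ for some finite $T$. Set $M := \max(|t_0|, T, 1)$, so that $L \subset W \times [-M, M]$.

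Taking $z_0 := \log(CM/\epsilon)$, for every $z > z_0$ the image $\varphi_Z^{-z}(L)$ has $t$-coordinate bounded in absolute value by $C e^{-z} M < \epsilon$, so it is $\epsilon$-thin. The only mildly subtle point is the tacit compactness of $L$ modulo its cylindrical end, without which the $t$-range of $L$ could be unbounded; this assumption is standard in the paper's framework, and would otherwise have to be imposed by hand. Given that, the proof is a direct computation with the contact lift on the cylindrical end.
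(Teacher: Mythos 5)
The paper states Lemma~\ref{lem: thinness} without proof, so there is no official argument to compare against; your proposal supplies a correct one, and it identifies the (only) real point accurately.

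Two small remarks. First, your computation can be streamlined: since $\mathcal{L}_Z\lambda = \lambda$ holds on all of $W$, the vector field $t\,\partial_t + Z$ satisfies $\mathcal{L}_{t\partial_t + Z}(dt-\lambda) = dt-\lambda$ globally, so it is the contact lift of $Z$ everywhere, not only on the cylindrical end, and its flow scales the $t$-coordinate by exactly $e^z$ at every point of $W\times\bR$. There is therefore no ``bounded correction on the compact interior'' to estimate, and your constant $C$ can be taken to be $1$; the $\epsilon$-thinness claim then follows directly from $|e^{-z}t| \le e^{-z}M < \epsilon$ once $z > \log(M/\epsilon)$. (Incidentally, this is also the computation that shows the paper's formula ``$\partial/\partial t + Z$'' just before the lemma is a typo for $t\,\partial_t + Z$: the former is not a contact vector field for $dt - \lambda$.) Second, you are right to flag that Definition~\ref{def: eventually conic} literally constrains $L$ only over the end $\partial W\times\bR_{>r}$ and says nothing about $L$ over the compact part of $W$; without the additional standing assumption that $L$ is closed and compact modulo its cylindrical end, the $t$-range of $L$ could be unbounded and the conclusion would fail (e.g.\ for $L = \{x_0\}\times\bZ$). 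That assumption is indeed tacit in the paper's use of ``eventually conical Legendrian'' throughout, so your argument is correct under the intended reading.
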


\begin{theorem}\label{thm: quantize nearby = microlocalize}
    Let $W$ be a Weinstein manifold with Maslov data and $L \subset W \times \bR$ be an $\epsilon$-thin eventually conical Legendrian. Then there is a diagram induced by nearby cycle functors and microlocalizations
    \[\begin{tikzcd}
    \Sh_{\underline L^+_{\cup,\epsilon}}(\bR^N) \ar[d, "\psi_{W}^L" left] \ar[r, "m_{L}"] & \msh_{L}(L) \\
    \Sh_{(\underline{\mathfrak{c}}_W^L)^+_{\cup,\epsilon}}(\bR^N) \ar[r, "m_{{\mathfrak{c}}_{W}^L}"] & \msh_{\mathfrak{c}_W^L}(\mathfrak{c}_W^L) \ar[u, "i_{L}^*" right].
    \end{tikzcd}\]
    Moreover, the composition $m_{{\mathfrak{c}}_{W}^L} \circ \psi_{\mathfrak{c}_{W}^L}$ is an equivalence. In particular, sheaf quantizations of the Legendrian $L$ are equivalent to objects in the category $\Sh_{\underline L_{\cup,\epsilon}}(\bR^N)$.
\end{theorem}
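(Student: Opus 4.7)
The plan is to construct $\psi_W^L$ as a nearby cycle implementing a Liouville wrapping, verify the diagram commutes by a local argument near $L$, and conclude the composition is an equivalence from the microsheaf doubling theorem combined with an essential surjectivity argument parallel to Theorem \ref{thm: doubling-small-piece}.

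First, I would construct $\psi_W^L$ by taking the contact lift of the (negative) Liouville flow on $W$ to an isotopy on $W \times \bR_t$, then extending via the chosen high codimension embedding $W \times \bR \hookrightarrow S^*\bR^N$ to a positive contact isotopy supported outside a thickened neighborhood of $\mathfrak{c}_W^L$. Since $L$ is $\epsilon$-thin and eventually conical, for $s$ large this isotopy carries $\underline L^+_{\cup,\epsilon}$ into arbitrary neighborhoods of $(\underline{\mathfrak{c}}_W^L)^+_{\cup,\epsilon}$: interior points of $L$ contract to $\mathfrak{c}_{W,\partial L}$ under the Liouville flow, the conical end stays inside the cone part of $\mathfrak{c}_W^L$, and thinness keeps the $t$-coordinate bounded. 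Applying Proposition \ref{prop:wrapping nearby} to the doubled pair then yields the fully faithful nearby cycle functor $\psi_W^L$ realizing the left adjoint of the tautological inclusion.

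Second, commutativity of the diagram follows because the wrapping isotopy is supported in the complement of $\mathfrak{c}_W^L$ and in particular vanishes in a microlocal neighborhood of $L$. Since microlocalization $m_L$ depends only on this local microsupport behavior, one obtains $m_L(\SF) \simeq i_L^* m_{\mathfrak{c}_W^L}(\psi_W^L \SF)$ naturally in $\SF \in \Sh_{\underline L^+_{\cup,\epsilon}}(\bR^N)$, giving the required commutativity.

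Third, for the composition to be an equivalence: the functor $m_{\mathfrak{c}_W^L}$ is already an equivalence by microsheaf doubling (Theorem \ref{thm: relative-doubling}) applied to $\Lambda = \mathfrak{c}_W^L$, which is sufficiently Legendrian by Lemma \ref{lem: universal legendrian}. It therefore suffices to show $\psi_W^L$ is an equivalence. Full faithfulness comes from Proposition \ref{prop:wrapping nearby}; for essential surjectivity, the plan is to construct a quasi-inverse parallel to Theorem \ref{thm: doubling-small-piece}, by running the Liouville wrapping in reverse to contract the doubled skeleton $(\underline{\mathfrak{c}}_W^L)^+_{\cup,\epsilon}$ into the doubled $\underline L^+_{\cup,\epsilon'}$ for $\epsilon' \ll \epsilon$. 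The unit and counit of the resulting adjunction will be isomorphisms because, via the doubling equivalences of Theorem \ref{thm: relative-doubling} on both sides, both compositions realize the identity on the common microsheaf category $\msh_{\mathfrak{c}_W^L}(\mathfrak{c}_W^L)$. The main obstacle is constructing this quasi-inverse with the correct microsupport control: the scales $\epsilon, \epsilon'$ must be chosen compatibly so that the reverse wrapping applied to sheaves with microsupport in $(\underline{\mathfrak{c}}_W^L)^+_{\cup,\epsilon}$ genuinely lands in $\Sh_{\underline L^+_{\cup,\epsilon'}}(\bR^N)$, which parallels the geometric analysis of Theorem \ref{thm: doubling-small-piece} but in the skeleton-vs-Legendrian rather than product-of-Legendrians setting.
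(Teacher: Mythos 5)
Your first two steps are close in spirit to the paper's proof: you construct $\psi_W^L$ from the contact lift of the Liouville flow (the paper uses the lift $Z = Z_\lambda + \tau\,\partial/\partial\tau$ applied to the $U$-shape filling $L \times \cup_{0,\infty}$ in $W \times T^*\bR$), and the commutativity of the square does come from the Liouville flow preserving the conical end of $L$; though note that the flow is not ``supported outside a neighborhood of $\mathfrak{c}_W^L$'' and not ``the identity near $L$'' --- it scales the cone over $L$ --- and what you actually need is that the conical end is preserved setwise (so that the contact transformation equivalence of Lemma \ref{lem:contact-transform-main} renders the induced functor on $\msh_L$ the identity).

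The genuine gap is in your third step, the essential surjectivity of $\psi_W^L$. You propose to construct a quasi-inverse ``by running the Liouville wrapping in reverse to contract the doubled skeleton $(\underline{\mathfrak{c}}_W^L)^+_{\cup,\epsilon}$ into the doubled $\underline L^+_{\cup,\epsilon'}$.'' This cannot work as described: $\mathfrak{c}_W^L$ contains the core $\mathfrak{c}_W \times 0_\bR$ as a subset, and no contact isotopy can carry it (even modulo the conical end) onto (a neighborhood of) $L \times \cup_{0,\infty}$ --- these are topologically distinct Lagrangians, and the backward Liouville flow simply pushes the skeleton out to infinity rather than reconstituting $L$. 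Moreover, the backward flow would be a \emph{negative} isotopy, and nearby cycles along negative isotopies do not preserve the stated microsupport bound, so you cannot conclude the purported quasi-inverse lands in $\Sh_{\underline L^+_{\cup,\epsilon'}}(\bR^N)$. The information lost in the specialization $L \to \mathfrak{c}_W^L$ is not recoverable by any flow; it has to be read off by propagation. The paper sidesteps this by choosing an explicit contact embedding $W \times T^*\bR \hookrightarrow S^*(\bR^{N-1}\times\bR)$ for which the restriction $i_+^*$ to the half-space $\bR^{N-1}\times\bR_{\geq 0}$ meets only the part of the doubled skeleton that ``looks like'' a parametric double of $L$, and then verifies that $(\underline{L}\times\cup_{0,\infty})^+_{\cup,\epsilon}$ folds as a doubling $\underline L^{+,\prec}_{\cup,\epsilon}\cup_{\underline L^+_{\cup,\epsilon}}\underline L^{+,\prec}_{\cup,\epsilon}$, so that $i_+^*$ is itself an equivalence and commutes with $\psi_W^L$. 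That restriction-to-a-half-space argument --- not a reverse wrapping --- is the ingredient you are missing.
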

\begin{proof}
    Consider the doubling of the Lagrangian subset $\mathfrak{c}_W^L$ with respect to the thickening $\sigma$ inside $S^*\bR^N$. By Theorem \ref{thm: relative-doubling}, we know that there is an equivalence
    $$w_{{\mathfrak{c}}_{W}^L}^+ : \msh_{{\mathfrak{c}}_{W}^L}({\mathfrak{c}}_{W}^L) \simeq \Sh_{(\underline{\mathfrak{c}}_{W}^L)^+_{\cup,\epsilon}}(\bR^N) : m_{{\mathfrak{c}}_{W}^L}.$$
    We now construct the nearby cycle functor
    $$\psi_W^L: \Sh_{\underline L^+_{\cup,\epsilon}}(\bR^N) \to \Sh_{(\underline{\mathfrak{c}}_W^L)^+_{\cup,\epsilon}}(\bR^N) .$$
    and show commutativity of the diagram. 
    
    For the Legendrian submanifold $L \subset W \times \bR$, a smooth function $f: \bR_{>0} \to \bR_{>0}$ such that $f(0) \to +\infty$ and $f(+\infty) \to 0$ with $F(r) = \int_0^r f(s)ds$, consider the $U$-shape exact Lagrangian 
    \begin{equation}\label{eq: u-shape filling}
    L \times \cup_{0,+\infty} = \{(\varphi_Z^s(x), t + f^{-1}(s), s) \mid (x, t) \in L, s \in \bR_{>0} \} \subset W \times T^*\bR.
    \end{equation}
    Consider the Legendrian doubling of the $U$-shape Lagrangian filling. There exists a contact isotopy induced by the Liouville flow $Z = Z_\lambda + \tau \partial/\partial \tau$ on $W \times T^*\bR$ that sends $L \times \cup_{0,+\infty}$ to an arbitrary small neighbourhood of $\mathfrak{c}_W^L = (\mathfrak{c}_{W} \times 0_\bR) \cup (L \times \bR_{>0})$. Therefore, the contact movie of the Legendrian doubling $(\underline L \times \cup_{0,\infty})_{Z,\cup,\epsilon}$ satisfies the condition $\psi(\underline L \times \cup_{0,\infty})^+_{Z,\cup,\epsilon} \subset (\underline{\mathfrak{c}}_W^L)^+_{\cup,\epsilon}$. Note that the Legendrian lift of $(L \times \cup_{0,\infty})_Z \subset W \times T^*\bR \times (-\epsilon, \epsilon)$ is gapped with respect to the standard Reeb flow since $L \times \cup_{0,\infty} \subset W \times T^*\bR$ is embedded Lagrangian. Then by Theorem \ref{thm: gapped specialization} that there is a fully faithful embedding
    $$\psi_{W}^L: \Sh_{(\underline{L}\times \cup_{0,\infty})^+_{\cup,\epsilon}}(\bR^N) \xrightarrow{\sim} \Sh_{(\underline{L}\times \cup_{0,\infty})_{Z,\cup,\epsilon}^+}(\bR^N \times \bR_{>0}) \hookrightarrow \Sh_{(\underline{\mathfrak{c}}_{W}^L)_{\cup,\epsilon}^+}(\bR^N).$$
    Since the contact isotopy induced by the Liouville flow preserves the conical end of the Legendrian $L \times \cup_{0,+\infty} \subset W \times T^*\bR$, we have a commutative diagram
    \[\begin{tikzcd}
    \Sh_{(\underline{L}\times \cup_{0,\infty})^+_{\cup,\epsilon}}(\bR^N) \ar[r, "\sim"] \ar[d, "m_{L}"] & \Sh_{(\underline{L}\times \cup_{0,\infty})^+_{Z,\cup,\epsilon}}(\bR^N \times \bR_{>0}) \ar[r] \ar[d, "m_{L \times \bR_{>0}}"] & \Sh_{(\underline{\mathfrak{c}}_{W}^L)^+_{\cup,\epsilon}}(\bR^N) \ar[d, "m_{L}"] \\
    \msh_{L}(L) \ar[r,"\sim"] & \msh_{L \times \bR_{>0}}(L \times \bR_{>0}) \ar[r,"\sim"] & \msh_{L}(L)
    \end{tikzcd}\]
    where the bottom horizontal functors are isomorphisms by  Lemma \ref{lem:contact-transform-main}. This shows that the diagram in the proposition commutes.
    
    Finally, we show that the nearby cycle functor is not only fully faithful but an equivalence. Due to the contact invariance of the categories of sheaves and microsheaves, the nearby cycle functor does not depend on the contact embedding we choose. Without loss of generality, fixing an exact embedding $\varphi : W \hookrightarrow S^*\bR^{N-1}$ and a non-negative smooth function $f: \bR \to \bR_{\geq 0}$ increasing on $\bR_{\geq 0}$, we consider an exact embedding 
    $$W \times T^*\bR \hookrightarrow S^*(\bR^N \times \bR), (x, t, \tau) \mapsto (\varphi(x); t+f'(\tau), \tau; f(\tau)-2\epsilon, 1).$$
    Then we know $\mathfrak{c}_{W \times T^*\bR} \cap S^*(\bR^{N-1} \times \bR_{\geq 0}) = \varnothing$ and $\mathfrak{c}_{W \times T^*\bR, L} \cap T^*(\bR^{N-1} \times \bR_{\geq 0}) = L \times \bR_{\geq 0}$ and the doubling satisfies $(\underline{\mathfrak{c}}_{W \times T^*\bR})^+_{\cup,\epsilon} \cap T^*(\bR^{N-1} \times \bR_{\geq 0}) = \varnothing$ and 
    $$(\underline{\mathfrak{c}}_{W \times T^*\bR, L})^+_{\cup,\epsilon} \cap T^*(\bR^{N-1} \times \bR_{\geq 0}) = (\underline L \times \cup_{0,\infty})^+_{\cup,\epsilon} \cap T^*(\bR^{N-1} \times \bR_{\geq 0}) \cong \underline L_{\cup,\epsilon}^{+,\prec}.$$
    This is because for large enough $r > 0$, the intersection $(L \times \cup_{0,+\infty}) \cap (W \times T^*_{>r}\bR)$ is contact isotopic to $L \times \bR_{>r}$. Thus the doubling $(L \times \cup_{0,+\infty})^+_{\pm\epsilon} \cap S^*(\bR^{N-1} \times \bR_{>0})$ is contact isotopic to the union of $(L \times \bR_{>r})^+_{\pm\epsilon}$ where the contact push-off is supported away from the conical ends.
    Consider the restriction of the doubling to $\bR^{N-1} \times \bR_{>0}$. The above condition implies
    $$i_+^*: \Sh_{(\underline{\mathfrak{c}}_{W}^L)^+_{\cup,\epsilon}}(\bR^N) \hookrightarrow \Sh_{\underline L_{\cup,\epsilon}^{+,\prec}}(\bR^{N-1} \times \bR_{>0}).$$
    Moreover, there is a commutative diagram of restriction functors
    \[\begin{tikzcd}
    \Sh_{(\underline L\times \cup_{0,\infty})^+_{\cup,\epsilon}}(\bR^N) \ar[d, "i_+^*"] \ar[r, "\psi_{W}^L"] & \Sh_{(\underline{\mathfrak{c}}_{W}^L)^+_{\cup,\epsilon}}(\bR^N) \ar[d, "i_+^*"] \\ \Sh_{L_{\cup,\epsilon}^{+,\prec}}(\bR^{N-1} \times \bR_{>0}) \ar[r, "="] & \Sh_{\underline L_{\cup,\epsilon}^{+,\prec}}(\bR^{N-1} \times \bR_{>0}).
    \end{tikzcd}\]
    Since $(\underline L\times \cup_{0,\infty})^+_{\cup,\epsilon}$ can be realized as the doubling $\underline L_{\cup,\epsilon}^{+,\prec} \cup_{\underline L_{\cup,\epsilon}^+} \underline L_{\cup,\epsilon}^{+,\prec}$, the restriction functor 
    $i_+^*: \Sh_{(\underline L\times \cup_{0,\infty})^+_{\cup,\epsilon}}(\bR^N) \rightarrow \Sh_{\underline L_{\cup,\epsilon}^{+,\prec}}(\bR^{N-1} \times \bR_{>0})$
    is an equivalence. We know that $i_+^*$ is the inverse equivalence of the nearby cycle functor.
\end{proof}

\begin{theorem}\label{thm: nearby = restriction at infty}
    Let $W$ be a Weinstein manifold with Maslov data and ${L} \subset W \times \bR$ be an $\epsilon$-thin eventually conical sufficiently Legendrian. Then there is a commutative diagram
    \begin{equation}\label{eq: sheaf quantization}
    \begin{tikzcd}[row sep=10pt]
    & \Sh_{\underline L_{\cup,\epsilon}}(\bR^N) \ar[dl, "m_L" above] \ar[dd, "\psi_W^L"] \ar[dr, "\psi_{W,\partial L}"] & \\
    \msh_{L}(L) & & \msh_{\mathfrak{c}_{W,\partial L}}(\mathfrak{c}_{W,\partial L}) \\
    & \msh_{\mathfrak{c}_{W}^L}(\mathfrak{c}_W^L) \ar[ul, "i_L^*"] \ar[ur, "i_\infty^*" below] &
    \end{tikzcd}
    \end{equation}
    When $L \subset W$ is an exact Lagrangian embedding, $i_L^*$ and $m_L$ are equivalences and $i_\infty^*$ and $\psi_{\mathfrak{c}_{W,\partial L}}$ are fully faithful, and going from left to right in the diagram  agrees with the functor of \cite[Corollary 9.9]{Nadler-Shende}: 
    $$\psi_{W,\partial L}: \msh_{L}(L) \hookrightarrow \msh_{\mathfrak{c}_{W,\partial L}}(\mathfrak{c}_{W,\partial L}).$$
\end{theorem}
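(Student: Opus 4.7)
The plan is to produce $\psi_{W,\partial L}$ as a second nearby-cycle functor analogous to $\psi_W^L$, then deduce both triangles of \eqref{eq: sheaf quantization} as compatibilities between restrictions and parametric nearby cycles, and finally extract the embedded case from the gapped full faithfulness results already recalled above.

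\textbf{Step 1 (construction of $\psi_{W,\partial L}$, and commutativity of the right triangle).}  I would replace the $U$-shape filling of Formula \eqref{eq: u-shape filling} by one that carries $L$ directly into $\mathfrak{c}_{W,\partial L}$: the Liouville vector field $Z_\lambda$ on $W$ lifts to a contact vector field on $W \times \bR$, whose $\epsilon$-thin preservation is exactly what the hypothesis guarantees. Under forward flow of $Z_\lambda$, the Legendrian $L$ is swept into an arbitrarily small neighbourhood of $\partial L \times \bR_{>0}$ while $\mathfrak{c}_W \times \{0\}$ is fixed. Exactly as in the proof of Theorem \ref{thm: quantize nearby = microlocalize}, the corresponding contact movie gives a gapped family so Theorem \ref{thm: gapped specialization} produces a fully faithful nearby cycle
$$\Sh_{\underline{L}_{\cup,\epsilon}}(\bR^N) \hookrightarrow \Sh_{(\underline{\mathfrak{c}}_{W,\partial L})_{\cup,\epsilon}^+}(\bR^N),$$
and after composing with the microlocalization equivalence of Theorem \ref{thm: relative-doubling} we define $\psi_{W,\partial L}$. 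The two deformations involved (into $\mathfrak{c}_W^L$ and then restricting the cone piece to infinity) fit into a two-parameter family: the right triangle $\psi_{W,\partial L} = i_\infty^* \circ \psi_W^L$ is then the statement that the iterated and direct nearby cycles agree, which is exactly Proposition \ref{prop: nearby commute micro} (with the intermediate commutativity of Lemma \ref{nearby commute with restrict} handled along the way). The left triangle $m_L = i_L^* \circ \psi_W^L$ is literally the commuting square of Theorem \ref{thm: quantize nearby = microlocalize}.

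\textbf{Step 2 (embedded case and Nadler--Shende identification).} For embedded $L$, $i_L^*$ is an equivalence by the standard dévissage: $\mathfrak{c}_W^L$ is glued from $\mathfrak{c}_W \times 0$ at $t=-\infty$ and $Cone(L)$ at $t=+\infty$; the vanishing-stalk condition near $-\infty$ kills the contribution of $\mathfrak{c}_W$, while the cone structure on $L \times \bR_{>0}$ identifies sections there with their microlocalization on $L$ (this is the content behind \cite[Corollary 9.9]{Nadler-Shende}, and may be verified directly using Theorem \ref{thm: doubling adjoint} to reduce to a sheaf-theoretic restriction). Equivalence of $m_L$ then follows from the already-established left triangle together with the equivalence $m_{\mathfrak{c}_W^L} \circ \psi_W^L$ of Theorem \ref{thm: quantize nearby = microlocalize}. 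Full faithfulness of $\psi_{W,\partial L}$ is an instance of Theorem \ref{gapped nearby cycle}: when $L$ is embedded, the Liouville deformation exhibits $(L, \mathfrak{c}_{W,\partial L})$ as a positively gapped family with contact collar along $\partial L \times \bR_{>0}$, and $\epsilon$-thinness prevents Reeb chord length from collapsing. Full faithfulness of $i_\infty^*$ then follows from the right triangle together with invertibility of $\psi_W^L$. Finally, the functor of \cite[Corollary 9.9]{Nadler-Shende} is built from the same geometric input (positive Liouville push-off of an embedded Lagrangian into its conical end), so identifying it with $\psi_{W,\partial L} \circ m_L^{-1}$ reduces to interpolating between the two specific contact isotopies used and invoking the invariance built into Theorem \ref{gapped nearby cycle}.

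\textbf{Main obstacle.} The hard step is the commutativity of the right triangle $\psi_{W,\partial L} = i_\infty^* \circ \psi_W^L$: it requires organizing two distinct Liouville-type deformations into a coherent two-parameter family and verifying simultaneously the non-characteristic, pdff, and positively-gapped-with-contact-collar hypotheses needed to invoke both Proposition \ref{prop: nearby commute micro} (on commuting nearby cycles with restriction) and Theorem \ref{thm: gapped specialization}. The $\epsilon$-thin hypothesis is precisely what keeps the deformed Legendrians inside a region where the Liouville flow does not interact with the $\bR_t$ direction, and verifying that the gappedness survives along the whole two-parameter interpolation is the main bookkeeping to be done.
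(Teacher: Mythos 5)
Your overall strategy is the same as the paper's: construct $\psi_{W,\partial L}$ as a nearby cycle along the Liouville flow, organize the two deformations into a two-parameter family, and deduce the right triangle from general results on commuting nearby cycles. The left triangle correctly being identified as literally Theorem~\ref{thm: quantize nearby = microlocalize} also matches. But two points deviate or fall short.

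First, for the right triangle you invoke only Proposition~\ref{prop: nearby commute micro}, which commutes a nearby cycle with a \emph{restriction} $i_s^*$. That alone is not enough. The paper factors $i_\infty^*$ through the intermediate skeleton $\mathfrak{c}_{W,\partial L} \times \sqcup_{0,\infty}$ using Proposition~\ref{prop: nearby commute micro}, then refactors through a contact isotopy carrying $\sqcup_{0,\infty}$ to $\cup_{0,\infty}$ in $T^*\bR$, and finally needs Lemma~\ref{nearby commute Nadler} to actually commute the two resulting (genuinely distinct) nearby cycle functors, since the family $\underline{\mathfrak{c}}_{W,\partial L}\times\sqcup_{0,\infty}$ and the $U$-shape filling give two independent $\bR_{>0}$-parameters. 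Your phrasing ``iterated and direct nearby cycles agree, which is exactly Proposition~\ref{prop: nearby commute micro}'' glosses over this: that proposition addresses a different shape of square.

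Second, your handling of the embedded case is inverted relative to the paper, and the direct step you propose is not really filled in. You attempt a ``dévissage'' of $\mathfrak{c}_W^L$ into $\mathfrak{c}_W \times 0$ and $Cone(L)$ to prove $i_L^*$ is an equivalence directly, then deduce $m_L$. But $\mathfrak{c}_W^L = (\mathfrak{c}_{W,\partial L} \times (-N,\infty)) \cup Cone(L)$ has the core along the entire $\bR$-direction, not just at $t = -\infty$, so the stated dévissage would require real work to substantiate. The paper instead observes that an embedded exact Lagrangian gives a Legendrian $L$ with \emph{no Reeb chords of length} $< \epsilon$, so Theorem~\ref{thm: relative-doubling} immediately yields that $m_L$ is an equivalence; since $\psi_W^L$ is already an equivalence by Theorem~\ref{thm: quantize nearby = microlocalize}, the left triangle then forces $i_L^*$ to be an equivalence. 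This is significantly cleaner and is the route you should take. Your subsequent points (full faithfulness of $\psi_{W,\partial L}$ via Theorem~\ref{gapped nearby cycle}, full faithfulness of $i_\infty^*$ via the right triangle plus invertibility of $\psi_W^L$) are consistent with the paper.
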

\begin{proof}
    The commutativity of the left triangle in Formula \eqref{eq: sheaf quantization} is the content of Theorem \ref{thm: quantize nearby = microlocalize}. Hence, it suffices to show the commutativity of the right square in Formula \eqref{eq: sheaf quantization}. We consider the Hamiltonian flow on $W \times T^*\bR$ induced by the lift of the Liouville flow $Z = Z_\lambda + t\partial/\partial t$ which sends $(\mathfrak{c}_{W,\partial L} \times \bR) \cup (L \times \bR_{>0})$ to $(\mathfrak{c}_{W,\partial L} \times \bR) \cup (\mathfrak{c}_{W,\partial L} \times \bR_{>0})$. We write it as $\mathfrak{c}_{W,\partial L} \times \sqcup_{0,\infty}$ where $\sqcup_{0,\infty} = \bR_{\geq 0} \cup (0 \times \bR_{>0}) \subset T^*\bR$. Since the contact flow fixes (a neighborhood of) $\mathfrak{c}_{W,\partial L} \times \infty$, by Proposition \ref{prop: nearby commute micro}, we know that $i_\infty^*$ factors as
    \begin{equation}\label{eq: factor nearby cycle}
    \msh_{\mathfrak{c}_{W}^L}(\mathfrak{c}_W^L) \xrightarrow{\psi_Z} \msh_{\mathfrak{c}_{W,\partial L} \times \sqcup_{0,\infty}}(\mathfrak{c}_{W,\partial L} \times \sqcup_{0,\infty}) \xrightarrow{i_\infty^*} \msh_{\mathfrak{c}_{W,\partial L}}(\mathfrak{c}_{W,\partial L}).
    \end{equation}
    On the other hand, we can consider a contact isotopy from $\sqcup_{0,\infty}$ to an arbitary small neighborhood of $\cup_{0,\infty}$ in $T^*\bR$. Consider the nearby cycle functor $\psi_{Z'}$ induced by the contact flow $Z'$. Then as it fixes (a neighborhood of) $\mathfrak{c}_{W,\partial L} \times \infty$, by Proposition \ref{prop: nearby commute micro}, we know that the second restriction functor $i_\infty^*$ in Formula \eqref{eq: factor nearby cycle} further factors as
    \begin{equation*}
    \msh_{\mathfrak{c}_{W,\partial L} \times \sqcup_{0,\infty}}(\mathfrak{c}_{W,\partial L} \times \sqcup_{0,\infty}) \xrightarrow{\psi_{Z'}} \msh_{\mathfrak{c}_{W,\partial L} \times \cup_{0,\infty}}(\mathfrak{c}_{W,\partial L} \times \cup_{0,\infty}) \xrightarrow{\sim} \msh_{\mathfrak{c}_{W,\partial L}}(\mathfrak{c}_{W,\partial L}).
    \end{equation*}
    Since the family $\underline{\mathfrak{c}}_{W,\partial L}\times\sqcup_{0,\infty}$ are Lagrangian skeleta of $W \times T^*\bR$ with respect to a family of Liouville forms, we can consider the family of Liouville flows that send $L \times \cup_{0,\infty}$ to neighbourhoods of the skeleton. Then, by Proposition \ref{nearby commute Nadler}, we can get the natural commutative diagram between nearby cycle functors and their restrictions
    \[\begin{tikzcd}
    \Sh_{(\underline L \times \cup_{0,\infty})_{\cup,\epsilon}}(\bR^N) \ar[d, "\psi_{W}" left] \ar[r, "\psi_{W,\partial L}"] & \Sh_{(\underline{\mathfrak{c}}_{W,\partial L} \times \cup_{0,\infty})_{\cup,\epsilon}}(\bR^N) \\
    \Sh_{(\underline{\mathfrak{c}}_{W}^L)_{\cup,\epsilon}}(\bR^N) \ar[r, "\psi_Z"] & \Sh_{(\underline{\mathfrak{c}}_{W,\partial L}\times\sqcup_{0,\infty})_{\cup,\epsilon}}(\bR^N) \ar[u, "\psi_{Z'}" right].
    \end{tikzcd}\]
    This shows the commutativity of the right triangle about nearby cycles and restriction at infinity in Formula \eqref{eq: sheaf quantization}, and it is clear that the top vertical functor is exactly the nearby cycle functor in \cite[Corollary 9.9]{Nadler-Shende}. 
    
    Finally, when $\ol{L} \hookrightarrow W$ is an exact Lagrangian embedding, it has no Reeb chords of length less than $\epsilon$, and by Theorem \ref{thm: relative-doubling} we know that the microlocalization defines an equivalence
    $$m_{L}: \Sh_{\underline L_{\cup,\epsilon}^+}(\bR^N) \simeq \msh_{L}(L): w_{L}^+.$$
    Then by the left commutative triangle of Formula \eqref{eq: sheaf quantization}, the restriction $i_{L}^*$ defines an equivalence and the inverse can be realized by the nearby cycle functor $\psi_W^L$. By the right commutative triangle of Formula \eqref{eq: sheaf quantization}, the restriction $i_\infty^* \circ \psi_W^L$ can also be realized by the functor $\psi_{W,\partial L}$ and this completes the proof. 
\end{proof}

\begin{figure}
    \centering
    \includegraphics[width=1.0\linewidth]{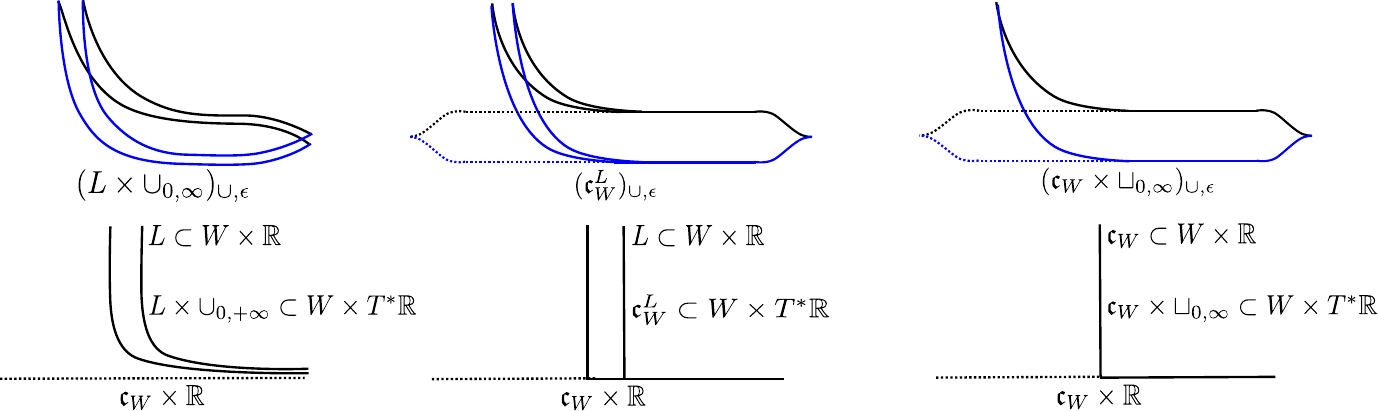}
    \caption{Let $W = pt$ and $L \subset W \times \bR$ be a Legendrian consisting of two points whose Reeb chord has length less than $\epsilon$. The figure on the bottom shows the Lagrangians $L \times \cup_{0,+\infty}$, $\mathfrak{c}_W^L = (\mathfrak{c}_{W} \times \bR) \cup (L \times \bR_{>0})$ and $\mathfrak{c}_W \times \sqcup_{0,+\infty} \subset W \times T^*\bR$, and the figure on the top shows the front projection of the doublings $(\underline{L} \times \cup_{0,+\infty})_{\cup,\epsilon}$, $(\underline{\mathfrak{c}}{}_{W}^L)_{\cup,\epsilon}$, and $(\underline{\mathfrak{c}}{}_{W} \times \sqcup_{0,+\infty})_{\cup,\epsilon} \subset S^*\bR^2$ under the contact embedding onto the base $\bR^2$.}
    \label{fig:double_quantization}
\end{figure}

    When $W = T^*M$ and $L \subset T^*M$ is an exact eventually conical Lagrangian, the sheaf quantizations of $L$ define sheaves in $M \times \bR$. Such notions have been studied extensively in previous works \cite{Guillermou-survey,Jin-Treumann}, and we now discuss the compatibility between the sheaf quantization in \cite{Nadler-Shende} and the sheaf quantizations therein.

\begin{example}\label{ex: guillermou quantize} \label{ex: quantizate cotangent bundle}
    Consider $W = T^*M$, a compact exact Lagrangian $L\subset T^*M$, and the diagram
    $$\msh_{L}(L) \xleftarrow{m_L} \msh_{\mathfrak{c}_{T^*M}^L}(\mathfrak{c}_{T^*M}^L) = \Sh_L(M \times \bR)_0 \xrightarrow{i_\infty^*} \Loc(M).$$
    Guillermou showed the left arrow is an equivalence and the right arrow is fully faithful \cite[Theorem 12.4.3 \& 12.4.4]{Guillermou-survey}. 
    Theorem \ref{thm: nearby = restriction at infty} recovers this result, and moreover asserts that $i_\infty^* \circ m_L^{-1}$  is realized by a nearby cycle functor, and hence matches the corresponding construction in  \cite{Nadler-Shende}.

    More generally, when we allow $L$ to be eventually conical, in the corresponding  diagram
    $$\msh_{L}(L) \xleftarrow{m_L}  \msh_{\mathfrak{c}_{T^*M}^L}(\mathfrak{c}_{T^*M}^L) = \Sh_{L \cup (\partial L \times \bR)}(M \times \bR)_0 \xrightarrow{i_\infty^*} \Sh_{\partial L}(M),$$
    Theorem \ref{thm: nearby = restriction at infty} continues to assert that 
    the left arrow is an equivalence and the right arrow is fully faithful, and that $i_\infty^* \circ m_L^{-1}$ is realized by a nearby cycle functor, and hence matches the corresponding construction in \cite{Nadler-Shende}.
\end{example}

\begin{remark}
    The case of eventually conical $L \subset T^*M$ was previously studied by Jin and Treumann \cite[Theorem 3.17 \& Section 3.18]{Jin-Treumann}.  
    
    Their construction was as follows. Consider perturbation in $S^*M \times (r, +\infty)$ by the Hamiltonian flow with $H_\epsilon(x, \xi) = \epsilon/|\xi|$ for $|\xi| \geq r+1$ and $H(x, \xi) = 0$ for $|\xi| \leq r$. The primitive of such perturbation $L_\epsilon$ of the Lagrangian $L$ is bounded from below as in \cite[Section 3.6 Corollary (i)]{Jin-Treumann}. They showed that in diagram
    $$\msh_{L_\epsilon}(L_\epsilon) \xleftarrow{m_L} \Sh_{L_\epsilon}(M \times \bR)_0 \xrightarrow{i_\infty^*} \Loc(M),$$
    the left arrow is an equivalence and the right arrow is fully faithful. For such Lagrangian perturbations $L_\epsilon$, Theorem \ref{thm: nearby = restriction at infty} still applies and it asserts that $i_\infty^* \circ m_L^{-1}$ is also realized by a nearby cycle functor.
    
    A precise comparison of the quantization in Example \ref{ex: quantizate cotangent bundle} and that of Jin and Treumann is as follows. Observe $$\bigcap\nolimits_{\epsilon>0}\ol{\bigcup\nolimits_{0<t<\epsilon}L_t} \subset L \cup (\partial L \times \bR).$$
    Therefore, by Lemma \ref{lem: ss-nearby-cycle}, the nearby cycle induced by the Legendrian isotopy by Theorem \ref{thm: GKS sheaf} sends $\Sh_{L_\epsilon}(M \times \bR)_0$ to $\Sh_{L \cup (\partial L \times \bR)}(M \times \bR)_0$, one can show a commutative diagram
    \[\begin{tikzcd}
    \msh_{L_\epsilon}(L_\epsilon) \ar[d, "\psi_L"] & \Sh_{L_\epsilon}(M \times \bR)_0 \ar[l, "m_{L_\epsilon}" above] \ar[r, "i_\infty^*"] \ar[d, "\psi_{L \cup (\partial L \times \bR)}"] & \Sh_{\partial L}(M) \ar[d, "\rotatebox{90}{=}"] \\
    \msh_{L}(L) & \Sh_{L \cup (\partial L \times \bR)}(M \times \bR)_0 \ar[l, "m_L" above] \ar[r, "i_\infty^*"] & \Sh_{\partial L}(M).
    \end{tikzcd}\]
    Here $\psi_L: \msh_{L_\epsilon}(L_\epsilon) \to \msh_{L}(L)$ is an equivalence because $L_\epsilon \cap T^*_{<r}M \times \bR = L \cap T^*_{<r}M \times \bR$ and it is a deformation retract of $L_\epsilon$ and $L$ respectively.
\end{remark}

We now explain how to compute homomorphisms of sheaf quantizations of exact Lagrangian immersions via doubling and why this is compatible with the sheaf quantization by the nearby cycle functor.

\begin{definition}\label{def: cofinal position} 
    Let $W$ be a Liouville manifold and $L \subset W \times \bR$ and $K \subset W \times \bR$ be eventually conical Legendrian immersions. Then we say that $L \ll K$ if there are no Reeb chords from $K$ to $L$.
\end{definition}

\begin{lemma}
    Let $W$ be a Liouville manifold and $L \subset W \times \bR$ and $K \subset W \times \bR$ be eventually conical Legendrians. We have $L \ll K_t$ for $t \gg 0$, where $K_t$ is the time $t$ Reeb pushoff of $K$. 
\end{lemma}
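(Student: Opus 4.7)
The plan is to translate ``no Reeb chord from $K_t$ to $L$'' into a statement about bounded height functions. In the contactization $W \times \bR$ with contact form $dt - \lambda$, the Reeb vector field is $\partial_t$, so a positive Reeb chord from $K_t$ to $L$ is precisely a pair $(p, q) \in K \times L$ with $\pi_W(p) = \pi_W(q)$ and $h_L(q) - h_K(p) - t > 0$, where $\pi_W \colon W \times \bR \to W$ is the projection and $h_L, h_K$ denote the $\bR$-coordinate functions on $L$ and $K$ respectively (noting that the Reeb translation shifts $h_K$ by $t$).

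First I would show that $h_L$ and $h_K$ are bounded functions. By Definition \ref{def: eventually conic}, there exist $r_L > 0$ and $t_L \in \bR$ such that $L \cap (\partial W \times \bR_{>r_L} \times \bR) \subset \Lambda_L \times \bR_{>0} \times \{t_L\}$; in particular, on this conic end the $\bR$-coordinate is constant equal to $t_L$. On the complement, $L$ projects into the compact set $W \setminus (\partial W \times \bR_{>r_L})$, and since $L$ is a (closed, eventually conical) Legendrian, the portion of $L$ over this compact set is compact, so $h_L$ is bounded there. Hence $h_L$ is globally bounded on $L$, and the same argument applied to $K$ shows $h_K$ is globally bounded on $K$.

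Setting $C = \sup_{q \in L} h_L(q) - \inf_{p \in K} h_K(p) < \infty$, for any $t > C$ the strict inequality $h_L(q) - h_K(p) > t$ fails for every pair $(p, q) \in K \times L$, so in particular for every pair with $\pi_W(p) = \pi_W(q)$. Consequently, no positive Reeb chord from $K_t$ to $L$ exists, i.e., $L \ll K_t$.

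The argument is essentially immediate once one unpacks the definitions; the only potentially subtle point is verifying that the $\bR$-coordinate is bounded on the non-conic portion of each Legendrian, which follows at once from the eventually conical condition together with properness of the Lagrangian projection away from the conic end.
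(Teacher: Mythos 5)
Your proof is correct and takes essentially the same approach as the paper: both arguments reduce the claim to the observation that eventually conical Legendrians have bounded $\R$-coordinate, so a sufficiently large Reeb pushoff of $K$ lies entirely above $L$ in the $\R$-direction, eliminating all chords. The paper phrases this more tersely as $L \subset W \times (-t/2, t/2)$ and $K \subset W \times (-t, t)$ for some $t > 0$ and then invokes the pushoff; you unpack the same bound via the functions $h_L, h_K$.
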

\begin{proof}
    Since $L$ and $K$ are eventually conical, we know there exists $t > 0$ such that $L \subset W \times (-t/2, t/2)$ and $K \subset W \times (-t, t)$.
\end{proof}

\begin{theorem}\label{thm: lagrangian immersion tensor}
    Let $W$ be a Weinstein manifold with Maslov data, and $L, K \subset W \times \bR$ be any $\epsilon$-thin eventually concial sufficiently Legendrian subsets with conical ends contained in a sufficiently Legendrian $\Lambda \subset \partial W$ such that $L \ll K$. Then we have a commutative diagram
    \[\begin{tikzcd}[column sep=40pt]
    \Sh_{\underline{L}^-_{\cup,\epsilon}}(\bR^N) \otimes \Sh_{\underline{K}_{\cup,\epsilon}}(\bR^N) \ar[r, "\Gamma_c(-\otimes-)"] \ar[d, "\psi_{W^-}^{L^-} \otimes \psi_{W}^K" left] & \SC \ar[d, "\rotatebox{90}{=}"] \\
    \msh_{\mathfrak{c}_{W^-}^{L^-}}(\mathfrak{c}_{W^-}^{L^-}) \otimes \msh_{\mathfrak{c}_{W}^{K}}(\mathfrak{c}_{W}^{K}) \ar[d, "i_\infty^* \otimes i_\infty^*" left] \ar[r, "\Gamma_c(-\otimes-)"] & \SC \ar[d, "\rotatebox{90}{=}"] \\
    \msh_{\mathfrak{c}_{W^-,\Lambda}}(\mathfrak{c}_{W^-,\Lambda}) \otimes \msh_{\mathfrak{c}_{W,\Lambda}}(\mathfrak{c}_{W,\Lambda}) \ar[r, "\Gamma_c(-\otimes-)"] & \SC.
    \end{tikzcd}\]
\end{theorem}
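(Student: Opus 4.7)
The plan is to factor both vertical compositions as sequences of (sheaf-level) nearby cycle functors together with the microlocalization--doubling equivalence of Theorem \ref{thm: relative-doubling}, and then invoke the ``$\otimes$''-version of gapped nearby cycles (Theorem \ref{thm: gapped specialization}, or directly Theorem \ref{nearby fully faithful}) to commute $\Gamma_c(- \otimes -)$ past each nearby cycle, with the hypothesis $L \ll K$ supplying the required positive gappedness.

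For the upper square, Theorem \ref{thm: quantize nearby = microlocalize} identifies $\psi_{W^-}^{L^-}$ and $\psi_W^K$ with the composition of microlocalization and sheaf-level nearby cycle functors $\psi^{sh}_{W^-}^{L^-}, \psi^{sh}_W^K$ coming from the Liouville flows on $W^- \times T^*\bR$ and $W \times T^*\bR$ applied to the doublings of the $U$-shape fillings $L^- \times \cup_{0,+\infty}$ and $K \times \cup_{0,+\infty}$. Since $w^+_{\mathfrak{c}}\circ m_{\mathfrak{c}} = \mathrm{id}$ on sheaves with microsupport in the appropriate doubling, the second-row composition simplifies to $\Gamma_c\bigl(\psi^{sh}_{W^-}^{L^-}\SF \otimes \psi^{sh}_W^K\SG\bigr)$. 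Applying Theorem \ref{nearby fully faithful} (equivalently, Theorem \ref{thm: gapped specialization}) to the joint Liouville deformation in $W^- \times W \times T^*\bR$ then yields $\Gamma_c\bigl(\psi^{sh}(\widetilde\SF \otimes \widetilde\SG)\bigr) = \Gamma_c(\SF \otimes \SG)$, where $\widetilde\SF, \widetilde\SG$ are the Liouville-flowed families; this establishes commutativity of the top square.

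For the lower square, the restriction $i_\infty^*$ is itself realized by nearby cycle functors: as in the proof of Theorem \ref{thm: nearby = restriction at infty}, the Liouville flow $Z = Z_\lambda + t\,\partial/\partial t$ on $W\times T^*\bR$ retracts $\mathfrak{c}_W^K$ onto $\mathfrak{c}_{W,\partial K}\times\sqcup_{0,\infty}$, and a subsequent contact isotopy retracts $\sqcup_{0,\infty}$ onto $\cup_{0,\infty}$; microlocalization of the composite nearby cycle is $i_\infty^*$. The same doubling identity $w^+\circ m = \mathrm{id}$ and one more application of Theorem \ref{thm: gapped specialization} to these retraction flows yields commutativity of the lower square, giving the full diagram.

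The principal obstacle is verifying the positive gappedness of the various families of doubled Legendrians needed to invoke Theorem \ref{thm: gapped specialization}. The hypothesis $L \ll K$ precisely asserts the absence of Reeb chords from $K$ to $L$ in $W\times\bR$, which gives an initial positive Reeb gap between $-L$ and $K$ (equivalently, between $L^-$ and $K$) in the joint contact manifold. Because the Liouville flow is contact and rescales Reeb chord lengths by the exponential of flow time, this positive gap is preserved (in fact, enhanced toward the skeleton) throughout both of the deformations used above, and standard uniformity of the doubling and $U$-shape constructions near the conical ends provides gapped contact collars in the sense of Definition \ref{def: gapped collar}. After the high-codimension contact embedding $W\times\bR \hookrightarrow S^*\bR^N$ and the associated symplectic-normal Legendrian thickenings, the corresponding pairs of doublings in $T^*\bR^N$ remain positively gapped with gapped collar, so the hypotheses of Theorem \ref{thm: gapped specialization} are satisfied at each step, and the coherence of the diagram follows from the naturality built into Proposition \ref{prop: nearby commute micro} and Corollary \ref{rem: nearby commute higher}.
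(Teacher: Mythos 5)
Your proposal is correct and follows essentially the same route as the paper: both vertical composites are realized as gapped nearby cycle functors (via Theorems \ref{thm: quantize nearby = microlocalize} and \ref{thm: nearby = restriction at infty}), and commutativity is then obtained from the $\otimes$-version of gapped specialization (Theorem \ref{thm: gapped specialization} at the microsheaf level, which reduces to the sheaf-level Theorem \ref{nearby fully faithful}).

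One imprecision worth flagging. You write that the positive Reeb gap coming from $L \ll K$ is ``preserved (in fact, enhanced toward the skeleton)'' under the Liouville retraction. This is backwards: the negative Liouville flow scales the contact form, hence Reeb chord lengths, down exponentially, so any \emph{finite} gap would collapse toward zero along the family. What actually saves the argument is that $L \ll K$ asserts the absence of Reeb chords from $K$ to $L$ altogether — an infinite gap — and this absence is what persists under the contact lift of the Liouville flow; a mere lower bound on chord length would not suffice to apply Theorem \ref{thm: gapped specialization} uniformly in the family parameter. Relatedly, the paper keeps separate accounts of the two gappedness verifications for the two squares: for one square, it invokes that the moving families ($U$-shape fillings and Weinstein skeleta, together with their doublings) sit inside exact symplectic hypersurfaces and are therefore automatically self-gapped with gapped collar (Lemma \ref{lem: self displace hypersurface}), with no appeal to $L \ll K$; the $\epsilon$-thinness and $L \ll K$ are reserved for the other square. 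Your proposal funnels everything through $L \ll K$, which is not wrong (you do also mention the gapped collar coming from the doubling near the conical ends), but the paper's split makes it more transparent which hypothesis does which work.
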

\begin{proof}
    Using Theorem \ref{thm: nearby = restriction at infty}, we can replace both vertical functors by the nearby cycle functors. Since the Lagrangian $\mathfrak{c}_{W^-,\Lambda}, \mathfrak{c}_{W,\Lambda}$ and $\mathfrak{c}_{W^-}^{L^-}, \mathfrak{c}_{W}^K$ are contained in exact symplectic hypersurfaces, they are automatically gapped with respect to some Reeb flow. Thus the commutativity of the upper square follows from Theorem \ref{thm: gapped specialization}. Since $L$ and $K$ are $\epsilon$-thin and  $L \ll K$, they are also gapped with respect to some Reeb flow. Thus the commutativity of the lower square follows from Theorem \ref{nearby fully faithful}.
\end{proof}

\begin{theorem}\label{thm: lagrangian immersion hom}
    Let $W$ be a Weinstein manifold with Maslov data, and $L, K \subset W \times \bR$ be any $\epsilon$-thin eventually conical sufficiently Legendrian subsets with conical ends contained in the sufficiently Legendrian $\Lambda \subset \partial W$ such that $L \ll K$. Then we have a commutative diagram
    \[\begin{tikzcd}[column sep=40pt]
    \Sh_{\underline{L}_{\cup,\epsilon}}(\bR^N) \otimes \Sh_{\underline{K}_{\cup,\epsilon}}(\bR^N) \ar[r, "{\Hom(-,-)}"] \ar[d, "\psi_{W}^{L} \otimes \psi_{W}^K" left] & \SC \ar[d, "\rotatebox{90}{=}"]\\
    \msh_{\mathfrak{c}_{W}^{L}}(\mathfrak{c}_{W}^{L}) \otimes \msh_{\mathfrak{c}_{W}^{K}}(\mathfrak{c}_{W}^{K}) \ar[d, "i_\infty^* \otimes i_\infty^*" left] \ar[r, "{\Hom(-,-)}"] & \SC \ar[d, "\rotatebox{90}{=}"] \\
    \msh_{\mathfrak{c}_{W,\Lambda}}(\mathfrak{c}_{W,\Lambda}) \otimes \msh_{\mathfrak{c}_{W, \Lambda}}(\mathfrak{c}_{W,\Lambda}) \ar[r, "{\Hom(-,-)}"] & \SC.
    \end{tikzcd}\]
\end{theorem}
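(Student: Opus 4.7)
The plan parallels the proof of Theorem \ref{thm: lagrangian immersion tensor}, substituting the Hom-direction gappedness criteria (Theorem \ref{fullfaithful nearby} at the sheaf level and Theorem \ref{gapped nearby cycle} at the microsheaf level) for the tensor-direction criteria used there (Theorem \ref{nearby fully faithful} and Theorem \ref{thm: gapped specialization}). First I would use Theorem \ref{thm: nearby = restriction at infty} to replace both vertical compositions by nearby cycle functors in the ambient $\bR^N$: the top-to-middle arrow $\psi_W^L \otimes \psi_W^K$ is already realized by sheaf-level nearby cycles on sheaves microsupported in $\underline{L}_{\cup,\epsilon}$ and $\underline{K}_{\cup,\epsilon}$, and the middle-to-bottom arrow $i_\infty^* \otimes i_\infty^*$ is realized by the further specialization $\psi_{W,\partial L}$ used in the right triangle of Formula \eqref{eq: sheaf quantization}. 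Thus both squares reduce to commutativity of some $\Hom$ pairing with a nearby cycle functor in $\bR^N$, applied to the doublings of the relevant Legendrians.

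For the upper square, the limiting microsupports at infinity in the $\bR_{>0}$-family are (the thickenings of) $\mathfrak{c}_W^L$ and $\mathfrak{c}_W^K$, which lie inside an exact symplectic hypersurface of the ambient cosphere bundle --- namely the image of $W \times 0 \subset W \times T^*\bR$ under the chosen contact embedding. By Lemma \ref{lem: self displace hypersurface} these subsets are positively self-displaceable with gapped collar, so the pair $(\dot{\ss}_\pi(\cdot),\dot{\ss}_\pi(\cdot))$ is positively gapped in the Hom direction along the entire family. The microsheaf-level Hom-gappedness Theorem \ref{gapped nearby cycle} then asserts full faithfulness of this nearby cycle functor, which is exactly the commutativity of the upper square of $\Hom$'s to $\SC$.

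For the lower square, the second nearby cycle sends microsheaf doublings of $\mathfrak{c}_W^L,\,\mathfrak{c}_W^K$ to doublings of $\mathfrak{c}_{W,\Lambda}$, and the relevant microsupport constraints along the family involve the thin Legendrians $L$ and $K$ themselves. The hypothesis $L \ll K$ says that there are no Reeb chords from $K$ to $L$, which is precisely the positive gappedness of the pair $(\dot{\ss}_\pi(\cdot),\dot{\ss}_\pi(\cdot))$ required in the Hom-direction version of Theorem \ref{fullfaithful nearby}; the $\epsilon$-thinness and eventual conicity of $L, K$ propagate this gappedness uniformly in the parameter, and the conical ends contained in the sufficiently Legendrian $\Lambda \subset \partial W$ match up under the specialization to $\mathfrak{c}_{W,\Lambda}$. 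Applying Theorem \ref{fullfaithful nearby} then yields the commutativity of the lower square.

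The main technical point to be careful about is that the gappedness condition for Hom is oriented opposite to that for tensor: here one wants chords from the second argument to the first to be bounded below, rather than chords from the \emph{negative} of one to the other. Consequently no $W^-$ appears in our diagram, whereas one was inserted in Theorem \ref{thm: lagrangian immersion tensor}. Once this orientation is tracked, the verification of the positive gappedness along the two nearby-cycle families is routine: for the upper square it comes from the exact symplectic hypersurface containing the skeleta, and for the lower square it is a direct consequence of $L \ll K$ together with $\epsilon$-thinness, exactly as in the tensor-product case.
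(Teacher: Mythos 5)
Your proposal is correct and follows essentially the same route as the paper's proof: replace both vertical functors by nearby-cycle functors via Theorem \ref{thm: nearby = restriction at infty}, then invoke the microsheaf-level $\Hom$-gappedness statement (Theorem \ref{gapped nearby cycle}, with gappedness supplied by the skeleta lying in exact symplectic hypersurfaces) for the upper square and the sheaf-level $\Hom$-gappedness statement (Theorem \ref{fullfaithful nearby}, with gappedness supplied by $L \ll K$ and $\epsilon$-thinness) for the lower square. Your closing observation about the orientation of the gappedness condition in the $\Hom$ direction, and the consequent absence of $W^-$ relative to Theorem \ref{thm: lagrangian immersion tensor}, is accurate.

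One small inaccuracy to flag: the exact symplectic hypersurface controlling the self-displaceability of $\mathfrak{c}_W^L$ and $\mathfrak{c}_W^K$ is not the image of $W \times 0 \subset W \times T^*\bR$ --- indeed $\mathfrak{c}_W^L = \mathfrak{c}_{W \times T^*\bR, L}$ contains $\operatorname{Cone}(L)$, which is not contained in the zero covector locus $W \times 0$. The correct source of the hypersurface is the general fact, via Giroux's Lefschetz fibration construction and established here as Lemma \ref{lem: universal legendrian}, that the (thickened) relative core of a Weinstein manifold is contained in an exact symplectic hypersurface of the ambient cosphere bundle. This does not affect the structure of your argument, only the specific justification for the gappedness input.
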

\begin{proof}
    Similar to the previous theorem, we can replace both vertical functors by the nearby cycle functors. Since the Lagrangian $\mathfrak{c}_{W,\Lambda}$ and $\mathfrak{c}_{W}^{L}, \mathfrak{c}_{W}^K$ are contained in exact symplectic hypersurfaces, they are automatically gapped with respect to some Reeb flow. Thus the commutativity of the upper square follows from Theorem \ref{gapped nearby cycle}. Since $L$ and $K$ are $\epsilon$-thin and  $L \ll K$, they are also gapped with respect to some Reeb flow. Thus the commutativity of the lower square follows from Theorem \ref{fullfaithful nearby}.
\end{proof}


\section{Gapped composition of sheaves}\label{sec: gapped composition}

    Here we give a geometric criterion in terms of microsupports (`gappedness') for when compositions of sheaf kernels commute with nearby cycle functors.    
    These are relative versions of the gapped specialization theorem of \cite{Nadler-Shende}, recalled as Theorems  \ref{fullfaithful nearby} and  \ref{nearby fully faithful} above.

\subsection{Microsupport estimates for composition}

    Let $M_1, M_2, M_3$ be manifolds and write $\pi_{ij}: M_1 \times M_2 \times M_3 \to M_i \times M_j$ and $\Delta_2: M_1 \times M_2 \times M_3 \hookrightarrow M_1 \times M_2 \times M_2 \times M_3$ be the diagonal embedding. Let $\SF_{12} \in \Sh(M_1 \times M_2)$ and $\SF_{23} \in \Sh(M_2 \times M_3)$. We define the composition of sheaves to be 
    $$\SF_{23} \circ \SF_{12} = \pi_{13!}\Delta_2^*(\SF_{12} \boxtimes \SF_{23}).$$

\begin{definition}
    For $\Lambda_{12} \subset T^*(M_1 \times M_2)$ and $\Lambda_{23} \subset T^*(M_2 \times M_3)$, we define the composition to be the following subset in $T^*(M_1 \times M_3)$, where $\pi_{13}: T^*(M_1 \times M_2 \times M_2 \times M_3) \to T^*(M_1 \times M_3)$ is the projection:
    \begin{align}
    \Lambda_{23} \circ \Lambda_{12} := \pi_{13}((\Lambda_{12} \times \Lambda_{23}) \cap T^*M_1 \times T^*_\Delta(M_2 \times M_2) \times T^*M_3).
    \end{align}
\end{definition}

\begin{lemma}\label{lem: ss-composition}
    Let $\SF_{12} \in \Sh(M_1 \times M_2)$ and $\SF_{23} \in \Sh(M_2 \times M_3)$.
    Assume that $\pi_{13}$ is proper on $\pi_{12}^{-1} \supp(\SF_{12}) \cap \pi_{23}^{-1} \supp(\SF_{23})$. Then, we have $\SS(F_{23} \circ F_{12}) \subset \SS(F_{23}) \circ \SS(F_{12})$. 
\end{lemma}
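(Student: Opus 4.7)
The plan is to decompose the sheaf composition into three elementary operations and then chain the standard microsupport estimates recalled earlier in the paper. Writing
$$\SF_{23} \circ \SF_{12} = \pi_{13!}\Delta_2^*(\SF_{12} \boxtimes \SF_{23}),$$
I would track the microsupport through each step: exterior tensor product on $M_1 \times M_2 \times M_2 \times M_3$, pullback along the closed diagonal embedding $\Delta_2$, and finally proper pushforward along $\pi_{13}$.

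First, because the two factors of the box product live on disjoint coordinates, the Künneth identity gives $\SS(\SF_{12} \boxtimes \SF_{23}) = \SS(\SF_{12}) \times \SS(\SF_{23})$. This is essentially a special case of Formula~\eqref{lem: ss-hom} applied to two sheaves whose microsupports live in complementary directions. Next, I would apply the pullback estimate of Formula~\eqref{lem: ss-pull back} to the closed embedding $\Delta_2: M_1 \times M_2 \times M_3 \hookrightarrow M_1 \times M_2 \times M_2 \times M_3$, obtaining
$$\SS\!\bigl(\Delta_2^*(\SF_{12}\boxtimes \SF_{23})\bigr) \subset \Delta_2^\#\bigl(\SS(\SF_{12}) \times \SS(\SF_{23})\bigr).$$
By unwinding the definition of $\Delta_2^\#$ together with the identification $T^*_{\Delta}(M_2 \times M_2) \cong T^*M_2$, the right-hand side is contained in the image under the natural projection $T^*(M_1 \times M_2 \times M_3) \to T^*(M_1 \times M_3) \times T^*M_2$ of the intersection $(\SS(\SF_{12}) \times \SS(\SF_{23})) \cap (T^*M_1 \times T^*_\Delta(M_2 \times M_2) \times T^*M_3)$.

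For the final step, the properness hypothesis on $\pi_{13}|_{\pi_{12}^{-1}\supp(\SF_{12}) \cap \pi_{23}^{-1}\supp(\SF_{23})}$ ensures that $\pi_{13}$ is proper on the support of $\Delta_2^*(\SF_{12}\boxtimes\SF_{23})$, so $\pi_{13!} = \pi_{13*}$ on this sheaf and the proper pushforward estimate $\SS(f_*\SG) \subset \pi_{13}(\SS(\SG))$ applies. Composing this with the previous bound gives exactly the defined subset
$$\SS(\SF_{23}) \circ \SS(\SF_{12}) = \pi_{13}\bigl((\SS(\SF_{12}) \times \SS(\SF_{23})) \cap (T^*M_1 \times T^*_\Delta(M_2 \times M_2) \times T^*M_3)\bigr).$$

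I do not anticipate a serious obstacle here: this is the standard Kashiwara--Schapira composition estimate, and all three ingredients appear verbatim in the paper's recollections. The only mildly delicate point is the bookkeeping in step two, verifying that $\Delta_2^\#$ applied to a product set cuts out precisely the codirections along the $M_2$-diagonal with matched covectors; this reduces to an unwinding of the definition of $\Delta_2^\#$ using the identification of the conormal bundle of the diagonal with $T^*M_2$ via $(x,\xi) \mapsto (x,x;\xi,-\xi)$.
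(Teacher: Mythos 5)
Your overall decomposition — exterior tensor, diagonal pullback, proper pushforward — is the same route the paper takes.  The gap is in step two, and it is a real one: $\Delta_2^\#$ applied to $\SS(\SF_{12}) \times \SS(\SF_{23})$ is \emph{not} contained in the pullback of the intersection with $T^*_{\Delta}(M_2 \times M_2)$.  The operation $f^\#\Lambda$ of \cite[Proposition 6.2.4]{KS} is built from limits of sequences $(y_n,\eta_n)\in\Lambda$ with $f^*\eta_n \to \xi$; it does not require that $\eta_n$ itself converge.  In your situation the $M_2$-components $\xi_{2,n}$ and $\eta_{2,n}$ can diverge in magnitude while their sum $\xi_{2,n}+\eta_{2,n}$ stays bounded, and then the limit point is genuinely new — not captured by intersecting with the diagonal conormal.

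Concretely: take $M_1 = M_3 = \mathrm{pt}$, $M_2 = \R$, $\Lambda_{12} = \{(0,\xi):\xi\ge 0\}$ and $\Lambda_{23} = \{(0,\eta):\eta\le 0\}$.  With $x_n = 0 = y_n$, $\xi_n = n$, $\eta_n = -n+c$, one finds $(0,c)\in\Delta_2^\#(\Lambda_{12}\times\Lambda_{23})$ for every $c\in\R$, i.e.\ the entire cotangent fiber over $0$.  Yet the intersection $(\Lambda_{12}\times\Lambda_{23})\cap T^*_{\Delta}(\R\times\R)$ pulls back to only the zero covector over $0$.  So the inclusion you assert at step two fails, and the remaining argument does not salvage it.

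What the paper does differently — and what you would need — is to refrain from simplifying $\Delta_2^\#$ before the pushforward.  The proper pushforward constrains the sum $\xi_{2,n} + \eta_{2,n}$ to converge to zero; only after imposing this does the paper argue (``we may assume that $\xi_{2,n}\to\xi_2$'') that a subsequence of the individual covectors converges, so that the limit point is a bona fide point of the set-theoretic composition $\SS(\SF_{23})\circ\SS(\SF_{12})$.  Notice in my example above that after killing $\xi_2$ (i.e.\ taking $c=0$) both estimates agree, which is consistent with the lemma; the issue is only your intermediate claim.  So: keep the limiting sequences explicit through all three steps, and place the convergence/normalization argument after the pushforward, where the extra constraint is available.

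A small secondary point: your statement of the pushforward estimate, $\SS(f_*\SG)\subset\pi_{13}(\SS(\SG))$, is a loose paraphrase; the correct form intersects with the covectors pulled back from $T^*(M_1\times M_3)$ (equivalently, sets the $M_2$-covector to zero) before projecting.  This is implicit in what you wrote, but it is exactly the constraint that makes the limiting argument go through, so it deserves to be stated precisely.
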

\begin{proof}
    By Formulae \eqref{lem: ss-pull back} and \eqref{lem: ss-hom}, we know that $\SS(\Delta_2^*(\SF_{12} \boxtimes \SF_{23}))$ is contained in the set of points $(x_1, x_2, x_3, \xi_1, \xi_2, \xi_3) \in T^*(M_1 \times M_2 \times M_3)$ such that there exist 
    $(x_{1,n}, x_{2,n}, \xi_{1,n}, \xi_{2,n}) \in \SS(\SF_{12})$, $(y_{2,n}, y_{3,n}, \eta_{2,n}, \eta_{3,n}) \in \SS(\SF_{23})$ with
\begin{gather*}
    x_{i,n} \to x_i,\quad y_{i,n} \to x_i, \quad
    (\xi_{1,n}, \xi_{2,n} + \eta_{2,n}, \eta_{3,n}) \to (\xi_1, \xi_2, \xi_3), \\
    |(x_1, x_2, x_2, x_3) - (x_{1,n}, x_{2,n}, y_{2,n}, y_{3,n})| |(\xi_{1,n}, \xi_{2,n}, \eta_{2,n}, \eta_{3,n})| \to 0.
\end{gather*}
    By Formula \eqref{lem: ss-push forward}, since $\pi_{13}$ is proper on $\pi_{23}^{-1} \supp(\SF_{23}) \cap \pi_{12}^{-1} \supp(\SF_{12})$, $\SS(\SF_{23} \circ \SF_{12})$ is contained in the set of points $(x_1, x_3, \xi_1, \xi_3) \in T^*(M_1 \times M_3)$ such that there exists $x_2 \in M_2$ with $(x_1, x_2, x_3, \xi_1, 0, \xi_3) \in \SS(\Delta_2^*(\SF_{12} \boxtimes \SF_{23}))$. This means that there exist $(x_{1,n}, x_{2,n}, \xi_{1,n}, \xi_{2,n}) \in \SS(\SF_{12})$, $(y_{2,n}, y_{3,n}, \eta_{2,n}, \eta_{3,n}) \in \SS(\SF_{23})$ with
\begin{gather*}
    x_{i,n} \to x_i,\quad y_{i,n} \to x_i, \quad
    (\xi_{1,n}, \xi_{2,n} + \eta_{2,n}, \eta_{3,n}) \to (\xi_1, 0, \xi_3), \\
    |(x_1, x_2, x_2, x_3) - (x_{1,n}, x_{2,n}, y_{2,n}, y_{3,n})| |(\xi_{1,n}, \xi_{2,n}, \eta_{2,n}, \eta_{3,n})| \to 0.
\end{gather*}
This implies that $\xi_{2,n} - \eta_{2,n} \to 0$ and $|x_{2,n} - y_{2,n}||\xi_{2,n}| \to 0$. We may assume that $\xi_{2,n} \to \xi_2$ and thus $\eta_{2,n} \to -\xi_2$. Then $(x_{1,n}, x_{2,n}, \xi_{1,n}, \xi_{2,n}) \to (x_1, x_2, \xi_1, \xi_2)$ and $(y_{2,n}, y_{3,n}, \eta_{2,n}, \eta_{3,n}) \to (x_2, x_3, -\xi_2, \xi_3)$. Since microsupports are closed subsets, this implies that there exists $(x_2, \xi_2) \in T^*M_2$ such that 
$$(x_1, x_2, \xi_1, \xi_2) \in \SS(\SF_{12}), \quad (x_2, x_3, -\xi_2, \xi_3) \in \SS(\SF_{23}).$$
Therefore, $(x_1, x_3, \xi_1, \xi_3) \in \SS(\SF_{23} \circ \SF_{12})$ if and only if the above condition holds.
\end{proof}

Similarly, we can also show the following:

\begin{lemma}\label{lem: ss-hom-composition}
    Let $\SF_{12} \in \Sh(M_1 \times M_2)$ and $\SF_{23} \in \Sh(M_2 \times M_3)$.
    Assume that $\pi_{13}$ is proper on $\pi_{12}^{-1} \supp(\SF_{12}) \cap \pi_{23}^{-1} \supp(\SF_{23})$. Then, we have $\SS(\sHom^\circ(\SF_{12}, \SF_{23})) \subset \SS(\SF_{23}) \circ (-\SS(\SF_{12}))$. 
\end{lemma}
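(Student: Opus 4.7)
My plan is to mirror the proof of Lemma~\ref{lem: ss-composition}, tracking microsupport through a three-step presentation of $\sHom^\circ$, with the only modification being the sign that the Hom microsupport estimate introduces on $\SS(\SF_{12})$. Concretely, I will take as the operative definition
\[
\sHom^\circ(\SF_{12}, \SF_{23}) \;=\; \pi_{13*}\,\sHom(\pi_{12}^{*}\SF_{12},\, \pi_{23}^{!}\SF_{23}),
\]
where $\pi_{ij}\colon M_1 \times M_2 \times M_3 \to M_i \times M_j$ are the projections; this matches the pattern already used for $\sHom^\circ_a$ in Theorem~\ref{thm:microsheaf-fourier-PrR}.

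The first step is to apply \eqref{lem: ss-pull back} to the submersions $\pi_{12}$ and $\pi_{23}$, giving
\[
\SS(\pi_{12}^{*}\SF_{12}) = \{(x_1,x_2,x_3;\xi_1,\xi_2,0) : (x_1,x_2;\xi_1,\xi_2)\in\SS(\SF_{12})\},
\]
and symmetrically for $\pi_{23}^{!}\SF_{23}$ (i.e., pullback along a submersion inserts a zero in the orthogonal direction). The second step is to apply the Hom microsupport estimate \eqref{lem: ss-hom}:
\[
\SS(\sHom(\pi_{12}^{*}\SF_{12},\pi_{23}^{!}\SF_{23})) \;\subseteq\; -\SS(\pi_{12}^{*}\SF_{12})\,\widehat+\,\SS(\pi_{23}^{!}\SF_{23}),
\]
unwound as the usual Kashiwara--Schapira limiting-sequence condition.

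The third step is to push forward along $\pi_{13}$. Since $\supp \sHom(\SG,\SH) \subseteq \supp\SG \cap \supp\SH$, the support of the intermediate sheaf lies inside $\pi_{12}^{-1}\supp(\SF_{12}) \cap \pi_{23}^{-1}\supp(\SF_{23})$, on which $\pi_{13}$ is proper by hypothesis; hence $\pi_{13*}$ agrees with $\pi_{13!}$ here, and the proper pushforward estimate applies. Following the closing paragraph of the proof of Lemma~\ref{lem: ss-composition} verbatim, if $(x_1,x_3;\xi_1,\xi_3) \in \SS(\sHom^\circ(\SF_{12},\SF_{23}))$ there will exist $x_2\in M_2$ and sequences $(x_{1,n},x_{2,n};\xi_{1,n},\xi_{2,n})\in -\SS(\SF_{12})$ (the sign being the only change from the tensor case) together with $(y_{2,n},y_{3,n};\eta_{2,n},\eta_{3,n})\in\SS(\SF_{23})$ satisfying $(\xi_{1,n},\xi_{2,n}+\eta_{2,n},\eta_{3,n})\to(\xi_1,0,\xi_3)$ along with the thin-diagonal estimate. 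After extracting a subsequence with $\xi_{2,n}\to\xi_2$, closedness of microsupport will yield $(x_1,x_2;\xi_1,\xi_2)\in -\SS(\SF_{12})$ and $(x_2,x_3;-\xi_2,\xi_3)\in\SS(\SF_{23})$, which is exactly the condition for $(x_1,x_3;\xi_1,\xi_3)\in\SS(\SF_{23})\circ(-\SS(\SF_{12}))$.

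I do not anticipate any genuine obstacle beyond careful sign and index bookkeeping; each microsupport estimate invoked has just appeared in Lemma~\ref{lem: ss-composition}. The only point of substance that is new is transferring the properness hypothesis from $\SF_{12}\boxtimes\SF_{23}$ to the intermediate $\sHom(\pi_{12}^{*}\SF_{12},\pi_{23}^{!}\SF_{23})$, which is immediate from the elementary support inclusion noted above.
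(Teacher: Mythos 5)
The paper does not write out a proof for this lemma; it simply states \textquotedblleft Similarly, we can also show the following\textquotedblright{} after the proof of Lemma~\ref{lem: ss-composition}, leaving the reader to adapt that argument. Your proposal carries out exactly that adaptation: you use the definition $\sHom^\circ(\SF_{12}, \SF_{23}) = \pi_{13*}\sHom(\pi_{12}^*\SF_{12}, \pi_{23}^!\SF_{23})$, apply the submersion pullback estimate, replace the $\otimes$-estimate by the Hom estimate $\SS(\sHom(\SF,\SG)) \subset -\SS(\SF)\,\widehat+\,\SS(\SG)$ (which is where the sign $-\SS(\SF_{12})$ enters), and then push forward. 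The one item that is genuinely new relative to the tensor case — transferring the properness hypothesis to the intermediate sheaf — you handle correctly via the support inclusion $\supp\,\sHom(\SG,\SH) \subseteq \supp\SG \cap \supp\SH$, which justifies both applying the proper pushforward estimate and identifying $\pi_{13*}$ with $\pi_{13!}$ there. The sign bookkeeping in your closing paragraph checks out: the extracted subsequence yields $(x_1,x_2;\xi_1,\xi_2)\in -\SS(\SF_{12})$ and $(x_2,x_3;-\xi_2,\xi_3)\in \SS(\SF_{23})$, which is precisely the membership condition for $\SS(\SF_{23})\circ(-\SS(\SF_{12}))$ under the definition of $\circ$ via the conormal $T^*_\Delta(M_2\times M_2)$. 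This is essentially the same approach the paper intends.
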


\subsection{Gapped \texorpdfstring{$\otimes$}{Tensor} composition}
    We will show the commutativity of compositions of integral kernels and nearby cycles under gappedness assumptions.

    We recall the colimit preserving composition of integral kernels $\SF_{12}\in \Sh(M_1 \times M_2)$ and $\SF_{23} \in \Sh(M_2 \times M_3)$ \cite[Section 3.6]{KS}:\footnote{Our convention of composition differs from \cite[Section 3.6]{KS}. What we denote by $\SF_{23} \circ \SF_{12}$ here is instead denoted by $\SF_{12} \circ \SF_{23}$ there.} 
    $$\SF_{23} \circ \SF_{12} = \pi_{13!}(\pi_{12}^*\SF_{12} \otimes \pi_{23}^*\SF_{23}).$$
    
\begin{definition}\label{def: gap composition}
   Consider manifolds $M_1, M_2, M_3$. We say that the conic subsets $\Lambda_{12} \subset T^*(M_1 \times M_2 \times \bR_{>0})$ and $\Lambda_{23} \subset T^*(M_2 \times M_3 \times \bR_{>0})$ have {\em (positively) gapped composition} if 
   \begin{enumerate}
       \item $\Lambda_{12} \subset T^*(M_1 \times M_2 \times \bR_{>0})$, $\Lambda_{23} \subset T^*(M_2 \times M_3 \times \bR_{>0})$ are $\bR_{>0}$-non-characteristic,
       \item the composition of the pair $\psi_{23}(\Lambda_{23}) \circ \psi_{12}(\Lambda_{12}) \subset \dot{T}^*(M_1 \times M_3)$ is pdff, and
       \item the pair
       $$(\Lambda_{12} \times_{\bR_{>0}} \Lambda_{23}, 0_{M_1} \times \dot{T}^*_\Delta{M_2^2} \times 0_{M_3} \times \bR_{>0}) $$
       is (positively) gapped for a Reeb flow on $\dot{T}^*(M_1 \times M_2 \times M_2 \times M_3)$.
   \end{enumerate}
   For sheaves $\SF_{12} \in \Sh(M_1 \times M_2 \times \bR_{>0})$ and $\SF_{23} \in \Sh(M_2 \times M_3 \times \bR_{>0})$, we say that they have {\em (positively) gapped composition} if $\SS(\SF_{12})$ and $\SS(\SF_{23})$ have positively gapped composition.
\end{definition}

    Similar to Theorem \ref{nearby fully faithful}, the main technical ingredient is a base change formula between push-foward and pull-backs that is a family version of Lemma \ref{gapped basechange}.

    Here is the model case.
    Consider the maps
    \[\begin{tikzcd}
        \bR_{>0} \times 0 \times M \ar[d, "{i}_1" left] \ar[r, "j_2"] & \bR_{\geq 0} \times 0 \times M \ar[d, "\ol{i}_1"] \\
        \bR_{>0} \times \bR_{\geq 0} \times M \ar[r, "\ol{j}_2"] & \bR_{\geq 0} \times \bR_{\geq 0} \times M
    \end{tikzcd}\]
    Using non-characteristic deformation (Lemma \ref{lem: non-char}), we can show the following lemma (which plays the same role as Lemma \ref{lem: non-proper base change}):
    
\begin{lemma}\label{lem: non-proper base change compose}
    For a sheaf $\SF \in \Sh(\bR_{>0} \times \bR_{\geq 0} \times M)$ such that 
    \begin{enumerate}
        \item $\SS(\SF)$ is non-characteristic with respect to $\pi_1: \bR_{>0} \times \bR_{\geq 0} \times M \to \bR_{>0} \times 0 \times M$, 
        \item $\Pi_M(\ol{\SS_{\pi_1}(\SF)}) \cap T^*M$ is pdff for $\Pi_M: T^*(\bR_{\geq 0} \times M) \times \bR_{\geq 0} \to T^*M \times \bR_{\geq 0}^2$, and
        \item $\SS_{\pi_1}(\SF)$ is gapped from $\dot N^*_{out}(0 \times M) \times \bR_{>0}$ as an $\bR_{>0}$-family of subsets,
    \end{enumerate}
    the natural transformation defines an isomorphism
    $$\ol{i}_{1}^*\ol{j}_{2*}\SF \xrightarrow{\sim} j_{2*} {i}_1^*\SF.$$
\end{lemma}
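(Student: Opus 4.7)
The plan is to prove Lemma \ref{lem: non-proper base change compose} as the $M$-parametric extension of Lemma \ref{lem: non-proper base change}: first reduce to a pointwise comparison of stalks at the non-trivial locus, then invoke the non-characteristic deformation lemma in the $\bR_{\geq 0}$-direction.

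First I would check that the base change morphism $\ol{i}_1^* \ol{j}_{2*}\SF \to j_{2*} i_1^*\SF$ is automatically an isomorphism on the open stratum $\bR_{>0} \times 0 \times M \subset \bR_{\geq 0} \times 0 \times M$, since there $\ol{j}_2$ and $j_2$ are already isomorphisms onto their images. Thus the problem reduces to checking the isomorphism at each stalk over $(0, 0, x)$, $x \in M$. Unwinding the definitions of pushforward along an open embedding and pullback along a closed embedding, both stalks are expressed as (nested) limits of sections $\Gamma((0, \epsilon) \times [0, \delta) \times B_\epsilon(x), \SF)$ as $\epsilon, \delta \to 0^+$: the left-hand side as the case $\delta = \epsilon$, and the right-hand side as the inner colimit in $\delta \to 0^+$ followed by the outer limit in $\epsilon \to 0^+$. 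The claim reduces to showing that for $\epsilon > 0$ sufficiently small, the restriction
$$\Gamma((0, \epsilon) \times [0, \epsilon) \times B_\epsilon(x), \SF) \lr \Gamma((0, \epsilon) \times [0, \delta) \times B_\epsilon(x), \SF)$$
is an equivalence for every $0 < \delta < \epsilon$.

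I would then apply the open-set version of Lemma \ref{lem: non-char} to the family $U_s = (0, \epsilon) \times [0, s) \times B_\epsilon(x)$, $s \in [\delta, \epsilon]$, open in $\bR_{>0} \times \bR_{\geq 0} \times M$. The only moving face is at $v = s$, whose outward codirection is the pure $+dv$ direction; hypothesis (1), non-characteristicity with respect to $\pi_1$, rules out any contribution from $\dot\SS(\SF)$ there. Hypothesis (2) selects a cofinal system of radii $\epsilon > 0$ for the balls $B_\epsilon(x)$---the ones distinguished by the pdff property of $\Pi_M(\ol{\SS_{\pi_1}(\SF)}) \cap T^*M$---on which the lateral radial face causes no issue, exactly as in Lemma \ref{stalk}.

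The main obstacle is that $\bR_{>0}$ is non-compact in the $u$-direction, so $\ol{U_s}$ is not relatively compact in $\bR_{>0} \times \bR_{\geq 0} \times M$ and the non-characteristic deformation lemma cannot be applied naively. Hypothesis (3)---the $\bR_{>0}$-family-uniform gappedness of $\SS_{\pi_1}(\SF)$ from $\dot N^*_{out}(0 \times M) \times \bR_{>0}$---resolves this exactly as positive gappedness does in the proofs of Lemma \ref{gapped basechange} and Theorem \ref{nearby fully faithful}: it ensures that, uniformly as $u \to 0^+$, no short Reeb chord between the outward conormal to $0 \times M$ and the microsupport can develop. One can therefore arrange the $\epsilon$-parameters so that the outward-conormal condition of Lemma \ref{lem: non-char} holds uniformly over the $u$-slices, pass to the limit $u \to 0^+$ using this uniformity, and conclude the required isomorphism.
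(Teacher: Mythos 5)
Your overall skeleton -- reduce to stalks over $\{0\} \times \{0\} \times M$, then apply non\-characteristic deformation to shrink the $v$-interval -- matches the paper's. However, you have misassigned the roles of the hypotheses in a way that produces a genuine gap.

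First, hypothesis (1) does \emph{not} rule out the outward conormal of the moving face $v = s$. Non\-characteristicity with respect to $\pi_1$ says $\SS(\SF)$ avoids the pulled-back covectors $\pi_1^* T^*(\bR_{>0} \times M)$, i.e.\ covectors with \emph{zero} $dv$-component. The outward conormal at $v = s$ is the pure $+dv$ direction, which has a \emph{nonzero} $dv$-component, so (1) is silent about it. What (1) actually controls are the faces whose conormal has no $dv$-part: the lateral $M$-face, the $u = \epsilon$ face, and the $u$--$M$ corner. (It also furnishes the key injection $\SS(\SF) \hookrightarrow \SS_{\pi_1}(\SF)$ which lets one pass from the ambient to the relative microsupport.) The face at $v=s$ is controlled by hypothesis (3) alone -- which you also, redundantly and somewhat inconsistently, invoke for it later in the paragraph framed around non-compactness. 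You should drop the appeal to (1) for the $v$-face entirely.

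Second, and more seriously, you omit the corner analysis, which is the real content of the lemma and the real use of hypothesis (2). At the corner where the face $v = s$ meets $\partial B_\epsilon(x)$, the outward conormal consists of sums $\alpha\,(0, dv, 0) + \beta\,(0, 0, \xi_M)$ with $\alpha, \beta > 0$. Neither (1) (which needs the $dv$-component to vanish) nor (3) (which controls the \emph{pure} $+dv$ direction) excludes such covectors. The paper handles this by a separate argument: since (3) rules out pure $dv$-covectors in $\SS_{\pi_1}(\SF)$, the corner intersection injects under $\Pi_M$ into $T^*M$; then, letting $u \to 0$ and $\epsilon' \to 0$, the intersection limits into $\Pi_M(\ol{\SS_{\pi_1}(\SF)}) \cap \dot N^*_{out} U_\delta(x)$, and the pdff hypothesis (2) is precisely what lets one choose $\delta$ so this is empty, and then work back to a uniform $\epsilon'$. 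Your use of (2) merely to ``select radii for the lateral face'' misses this; the lateral face by itself was already fine by (1), and it is only the corner that needs (2).

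Finally, the $u$-noncompactness issue you flag is real, but it is resolved in the paper by invoking Lemma \ref{stalk} at the outset to compute the stalk over $(0,0,x)$ as a section on a definite box $(0,\epsilon) \times [0, \epsilon') \times U_\delta(x)$ (this is where (1) and (2) feed in via the pdff and noncharacteristicity conditions of that lemma); only afterwards does one run the noncharacteristic deformation in the $v$-variable, now with $u$ confined to the fixed bounded interval $(0,\epsilon)$. Your proposal to ``pass to the limit $u \to 0^+$ using this uniformity'' is not a proof as written -- you would need to reproduce the argument of Lemma \ref{stalk} or invoke it directly.
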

\begin{proof}
    Since $\SS(\SF)$ is non-characteristic with respect to the projection $\pi_1: \bR_{>0} \times \bR_{\geq 0} \times M \to \bR_{>0} \times 0 \times M$ and the projection of $\Pi_M(\ol{\SS_{\pi_1}(\SF)}) \cap T^*M$ is a pdff subset, by Lemma \ref{stalk}, on the left hand side, the stalks of the sheaf at $0 \times x$ is
    $$\Gamma((0, \epsilon) \times 0 \times U_{\delta}(x), \SF).$$
    On the right hand side, the stalks of the sheaf at $0 \times x$ is
    $$\Gamma((0, \epsilon) \times [0, \epsilon') \times U_\delta(x), \SF).$$
    The result will follow if we can establish the necessary microsupport estimate to apply the noncharacteristic deformation lemma to the family $(0, \epsilon) \times [0, \epsilon') \times U_\delta(x)$ for $\epsilon' \to 0$.

    Since $\SF$ is non-characteristic with respect to the projection $\pi_1: \bR_{>0} \times \bR_{\geq 0} \times M \to \bR_{>0} \times 0 \times M$, there is an injection $\Pi_1: \SS(\SF) \hookrightarrow \SS_{\pi_1}(\SF)$. Thus, it suffices to estimate the intersection of the relative singular supports $\SS_{\pi_1}(\SF)$ with $\dot N^*_{out}([0, \epsilon') \times U_\delta(x)) \times (0, \epsilon)$.

    First, consider outward conormal vectors of $[0, \epsilon')$. Since $\SS_{\pi_1}(\SF)$ is gapped from $\dot N^*_{out}(\bR_{>0} \times 0 \times M)$, it follows that for $\epsilon' > 0$ sufficiently small
    $$\SS_{\pi_1}(\SF) \cap (\dot N^*_{out}[0, \epsilon') \times 0_M) \times (0, \epsilon) = \varnothing.$$

    Then, we consider the intersection of the microsupport with the outward conormal vectors of $U_\delta(x)$ and $[0, \epsilon)$ at the corner:
    $$\SS_{\pi_1}(\SF) \cap (\dot N^*_{out}[0, \epsilon') + \dot N^*_{out}U_\delta(x)) \times (0, \epsilon).$$
    Since $\SS_{\pi_1}(\SF) \cap (\dot N^*_{out}[0, \epsilon') \times 0_M) \times (0, \epsilon) = \varnothing$, we know that such intersection admits a further injective projection via
    $\Pi_M: \dot T^*(\bR_{\geq 0} \times M) \times [0, \epsilon) \to \dot T^*M \times \bR_{\geq 0} \times [0, \epsilon).$
    When $\epsilon$ and $\epsilon' \to 0$, by compactness, we know that the points in the intersection converges to points inside
    \begin{align*}
        \Pi_M: \ol{\SS_{\pi_1}(\SF)} \cap (\dot N^*_{out}0 + \dot N^*_{out}U_\delta(x)) \hookrightarrow \Pi_M(\ol{\SS_{\pi_1}(\SF)}) \cap \dot N^*_{out}U_\delta(x) \subset \dot T^*M.
    \end{align*}
    Since the projection $\Pi_M(\ol{\SS_{\pi_1}(\SF)}) \cap \dot T^*M$ is pdff, we know thhat when $\delta > 0$ is sufficiently small, the above intersection must be empty. Therefore, when $\delta > 0$ is sufficiently small, we can find $\epsilon' > 0$ sufficiently small such that the original intersection is also empty. This therefore completes the proof.
\end{proof}

    The following consequence of the above lemma is the key step for the commutativity of nearby cycle functors and compositions.

\begin{lemma}\label{gapped composition basechange}
    Let $\SF_{12} \in \Sh(M_1 \times M_2 \times \bR_{>0})$ and $\SF_{23} \in \Sh(M_2 \times M_3 \times \bR_{>0})$ have positively 
    gapped composition for some Reeb flow. Then
    $$\pi_{13 !}(\psi_{123}\, \Delta_{M_2/\bR_{>0}}^*(\SF_{12} \boxtimes_{\bR_{>0}} \SF_{23})) \simeq \pi_{13!}(\Delta_{M_2}^*(\psi_{1223}(\SF_{12} \boxtimes_{\bR_{>0}} \SF_{23})).$$
\end{lemma}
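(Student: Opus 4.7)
The plan is to recognize this as a parametric version of Lemma \ref{gapped basechange}, with parameter space $M_1 \times M_3$. The strategy is to reduce via proper base change along $\pi_{13}$ to computing stalks over $M_1 \times M_3$; at each stalk the statement becomes the commutativity of nearby cycles with restriction to the diagonal, which follows from non-characteristic deformation applied uniformly using the positive gappedness hypothesis. Structurally, this is the analogue of the route taken in Theorem \ref{nearby fully faithful}, with Lemma \ref{lem: non-proper base change compose} replacing Lemma \ref{lem: non-proper base change} to accommodate the additional spatial parameter.

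Concretely, I would fix $(x_1, x_3) \in M_1 \times M_3$ and compute the stalks of each side there. Assuming tameness at infinity (Conventions), proper base change along $\pi_{13}$ turns the LHS stalk into
\begin{equation*}
\Gamma_c(\{x_1\} \times M_2 \times \{x_3\}, \psi_{123} \Delta_{M_2/\bR_{>0}}^*(\SF_{12} \boxtimes_{\bR_{>0}} \SF_{23})),
\end{equation*}
and then, since $\psi_{23}(\ss(\SF_{23})) \circ \psi_{12}(\ss(\SF_{12}))$ is pdff and $\ss(\SF_{ij})$ are $\bR_{>0}$-non-characteristic, Lemma \ref{stalk} rewrites this as
\begin{equation*}
\Gamma_c(\{x_1\} \times M_2 \times \{x_3\} \times (0,\epsilon], \Delta_{M_2/\bR_{>0}}^*(\SF_{12} \boxtimes_{\bR_{>0}} \SF_{23})).
\end{equation*}

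For the RHS, the same proper base change and an application of Lemma \ref{stalk} to $\psi_{1223}$ (using the pdff hypothesis and $\bR_{>0}$-non-characteristic hypothesis on the exterior product) give the stalk as
\begin{equation*}
\Gamma_c(\{x_1\} \times \overline{U}_{\epsilon'}(\Delta_{M_2}) \times \{x_3\} \times (0,\epsilon], \SF_{12} \boxtimes_{\bR_{>0}} \SF_{23})
\end{equation*}
for a small tubular neighborhood $\overline{U}_{\epsilon'}(\Delta_{M_2})$ of the diagonal. To match the LHS, I would run the non-characteristic deformation lemma as $\epsilon' \to 0$, shrinking the tube to the diagonal. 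The outward conormal family at each scale $\epsilon'$ is $R_{\epsilon'}(\dot{T}^*_\Delta M_2^2)$, the image of the diagonal's conormal under the Reeb flow. The positive gappedness of the pair
\begin{equation*}
(\Lambda_{12} \times_{\bR_{>0}} \Lambda_{23},\; 0_{M_1} \times \dot{T}^*_\Delta M_2^2 \times 0_{M_3} \times \bR_{>0})
\end{equation*}
says exactly that for $\epsilon' < \epsilon_0$ there are no short Reeb chords between the two subsets, so the deformation applies and yields $\Gamma_c(\{x_1\} \times \Delta_{M_2} \times \{x_3\} \times (0,\epsilon], \SF_{12} \boxtimes_{\bR_{>0}} \SF_{23})$, matching the LHS.

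The main obstacle will be verifying that the microsupport estimates and the non-characteristic deformation can be carried out uniformly in the parameter $(x_1, x_3)$: in particular, that the bound $\epsilon_0$ on chord lengths is independent of the fiber. This is the whole point of framing the hypothesis as a \emph{family} gappedness rather than a fiberwise one. A cleaner packaging is to apply Lemma \ref{lem: non-proper base change compose} with $M$ there playing the role of $M_1 \times M_2 \times M_3$ (and the two $\bR_{\geq 0}$ factors being the nearby-cycle time and the normal coordinate to $\Delta_{M_2}$), whose hypotheses translate directly into the conditions of Definition \ref{def: gap composition} and produce the desired base change in one step.
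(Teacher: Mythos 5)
Your overall strategy matches the paper --- proper base change to reduce to a base-change-past-nearby-cycles statement, then non-characteristic deformation powered by the positive gappedness. But there are two places where the details, as written, fail.

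First, your stalk computation works with the fibers $\{x_1\}\times M_2\times\{x_3\}$ over single points. That is the \emph{correct} description of the stalk of $\pi_{13!}(\cdot)$ via proper base change, but it is the \emph{wrong} shape over which to run non-characteristic deformation as $\epsilon'\to 0$: the conormal to a point $\{x_1\}$ is the whole fiber $T^*_{x_1}M_1$, so there is no hope of showing that the outward/inward conormal family avoids the microsupport. The correct shape is $\overline B_\delta(x_1)\times\overline U_{\epsilon'}(\Delta_{M_2})\times\overline B_\delta(x_3)\times(0,\epsilon]$ with small balls in the $M_1$ and $M_3$ directions. To arrive at small balls, one must first commute $\pi_{13!}$ past $\psi$ (which is allowed by proper base change applied in the $\bR_t$-directions), and then use Lemma \ref{stalk} for the sheaf $\pi_{13!}(\cdots)$ on $M_1\times M_3\times\bR_{>0}$. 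Once the balls are present, a genuinely new issue arises that you do not address: in the non-characteristic deformation one must kill not only $\dot N^*_{out}U_{\epsilon'}(\Delta_{M_2})$ (your condition (3), the gappedness) but also the \emph{sum} conormals at the corners $\dot N^*_{out}(U_\delta(x_1)\times U_\delta(x_3))+\dot N^*_{out}U_{\epsilon'}(\Delta_{M_2})$. Avoidance of the latter is exactly where condition (2) of Definition \ref{def: gap composition} --- pdff of the composed microsupport in $\dot T^*(M_1\times M_3)$ --- is needed, and your proof does not invoke it at this step.

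Second, your ``cleaner packaging'' takes $M=M_1\times M_2\times M_3$ in Lemma \ref{lem: non-proper base change compose}. That choice does not match the hypotheses you have: condition (2) of that lemma requires the projection of the (limiting) microsupport to $T^*M$ to be pdff, but Definition \ref{def: gap composition} gives you pdff only for the composed microsupport $\psi_{23}(\ss(\SF_{23}))\circ\psi_{12}(\ss(\SF_{12}))\subset\dot T^*(M_1\times M_3)$, not for anything in $T^*(M_1\times M_2\times M_3)$. The paper therefore first pushes forward along the map $\pi_{13,d}\colon M_1\times M_2\times M_2\times M_3\times\bR_t\to M_1\times M_3\times\bR_d\times\bR_t$ (collapsing both copies of $M_2$ into the distance coordinate $\bR_d$), so that the application of Lemma \ref{lem: non-proper base change compose} is with $M=M_1\times M_3$; only then do the Definition \ref{def: gap composition} hypotheses translate one-for-one into the hypotheses of that lemma. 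The $M_2$-collapse is not an aesthetic choice --- without it, the pdff hypothesis cannot be verified.
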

\begin{proof}
    We need to show that
    $$\pi_{13 !}(i^*j_* \Delta_{M_2/\bR_{>0}}^*(\SF_{12} \boxtimes_{\bR_{>0}} \SF_{23})) \simeq \pi_{13!}(i^*\Delta_{M_2}^*j_*(\SF_{12} \boxtimes_{\bR_{>0}} \SF_{23})).$$
    We consider $d: M_2 \times M_2 \to \bR_d$ to be the distance function to the diagonal $\Delta_{M_2}$ and write 
    \begin{gather*}
    \pi_{13,d}: M_1 \times M_2 \times M_2 \times M_3 \times \bR_t \to M_1 \times M_3 \times \bR_d \times \bR_t,\\ 
    (x_1, x_2, x_2', x_3, t) \mapsto (x_1, x_3, d(x_2, x_2'), t).
    \end{gather*}
    Under the assumption that $\SF_{12}$ and $\SF_{23}$ have proper supports, we may apply proper base change formula, and then it suffices to show that
    $$i^* j_* i_0^*\pi_{13,d!}(\SF_{12} \boxtimes_{\bR_{>0}} \SF_{23}) \simeq i^* \ol{i}_0^*\ol{j}_*\pi_{13,d!}(\SF_{12} \boxtimes_{\bR_{>0}} \SF_{23}),$$
    where $i_0$, $\ol{i}_0$, $j_*$ and $\ol{j}_*$ are the natural embeddings defined by the following diagram:
    \[\begin{tikzcd}
    \bR_{t>0} \times 0 \times M_1 \times M_3 \ar[r, "j"] \ar[d, "i_0"] & \bR_t \times 0 \times M_1 \times M_3 \ar[d, "\ol{i}_0"] \\
    \bR_{t>0} \times \bR_d \times M_1 \times M_3 \ar[r, "\ol{j}"] & \bR_t \times \bR_d \times M_1 \times M_3.
    \end{tikzcd}\]
    We will consider sheaves in the above diagram and show the Conditions (1)--(3) in Lemma \ref{lem: non-proper base change compose} using Conditions (1)--(3) in Definition \ref{def: gap composition} for gapped compositions as follows: 
    \begin{enumerate}
        \item Given the assumption that $\SS(\SF_{12})$ and $\SS(\SF_{23})$ are $\bR_{t>0}$-non-characteristic, we know that $\SS(\pi_{13,d!}(\SF_{12} \boxtimes_{\bR_{>0}} \SF_{23}))$ is also $\bR_{t>0}$-non-characteristic. 
        \item Given the assumption that $\psi(\SS(\SF_{23})) \circ \psi(\SS(\SF_{12}))$ is a pdff subset, we know that the projection $\Pi_{13}(\ol{\SS(\pi_{13,d!}(\SF_{12} \boxtimes_{\bR_{>0}} \SF_{23})}) \cap \dot T^*(M_1 \times M_3)$ is also a pdff subset. 
        \item Given the assumption that $\SS_\pi(\SF_{12}) \times_{\bR_{t>0}} \SS_\pi(\SF_{23})$ and $0_{M_1} \times \dot N^*\Delta_{M_2} \times 0_{M_3} \times \bR_{>0}$ are gapped, $\SS_\pi(\pi_{13,d!}(\SF_{12} \boxtimes_{\bR_{>0}} \SF_{23}))$ and $\dot N^*_{out}(0 \times M_1 \times M_3) \times \bR_{>0}$ are also gapped.
    \end{enumerate}
    Applying Lemma \ref{lem: non-proper base change compose} completes the proof.
    \end{proof}

\begin{remark}
    As Lemma \ref{gapped composition basechange}
     is central to our argument, we wish to further clarify what its proof is doing by repeating the proof of Lemma \ref{lem: non-proper base change compose} (as opposed to just invoking its assertion) in this specific setting. Since $\ss(\SF_{12})$ and $\ss(\SF_{23})$ are $\bR_{>0}$-non-characteristic, and $\psi(\ss(\SF_{23})) \circ \psi(\ss(\SF_{12}))$ is pdff, the stalks at $x_1 \times x_3 \times 0$ can be computed by the sections on small open neighbourhoods by Lemma \ref{stalk}. On the left hand side, the stalk is
    $$\Gamma(U_\delta(x_1) \times \Delta_{M_2} \times U_\delta(x_3) \times (0, \epsilon), \SF_{12} \boxtimes_{\bR_{>0}} \SF_{23}).$$
    On the right hand side, considering tubular neighbourhoods $U_{\epsilon'}(\Delta_{M_2})$ of $\Delta_{M_2}$ with smooth boundary, the stalk is
    $$\Gamma(U_\delta(x_1) \times U_{\epsilon'}(\Delta_{M_2}) \times U_\delta(x_3) \times (0, \epsilon), \SF_{12} \boxtimes_{\bR_{>0}} \SF_{23}).$$
    Hence the result will follow if we can establish the necessary microsupport estimate to apply the noncharacteristic deformation lemma to the family $U_\delta(x_1) \times U_{\epsilon'}(\Delta_{M_2}) \times U_\delta(x_3) \times (0, \epsilon)$ for $\epsilon' \to 0$. This is exactly ensured by the singular support estimates of Lemma \ref{lem: non-proper base change compose}, which we can unpack again as follows.
    
    Since $\ss(\SF_{12})$ and $\ss(\SF_{23})$ are $\bR_{>0}$-non-characteristic, it suffices to estimate the intersection of the relative microsupport $\ss_\pi(\SF_{12}) \times \ss_\pi(\SF_{23})$ with $\dot{N}^*_{out}(U_\delta(x_1) \times U_{\epsilon'}(\Delta_{M_2}) \times U_\delta(x_3)) \times (0, \epsilon)$.
    
    Consider the intersection of the microsupport with the conormal bundle of $U_{\epsilon'}(\Delta_{M_2})$ (see the red boundary of the slices in Figure \ref{fig:gapped composition base change}). Since the pair $(\ss_\pi(\SF_{12}) \times \ss_\pi(\SF_{23}), 0_{M_1} \times \dot{N}^*\Delta_{M_2} \times 0_{M_3} \times (0, \epsilon))$ is positively gapped, we know that for any $\epsilon' > 0$ small enough,
    $$\ss_\pi(\SF_{12}) \times \ss_\pi(\SF_{23}) \cap R_\epsilon(0_{M_1} \times \dot{N}^*\Delta_{M_2} \times 0_{M_3}) \times (0, \epsilon) = \varnothing.$$

    Then consider the intersection of the microsupport with the sum of the outward conormal bundle of $U_\delta(x_1) \times U_\delta(x_3)$ and of $U_{\epsilon'}(\Delta_{M_2})$ at the corner (see the blue boundary and dots of the slices in Figure \ref{fig:gapped composition base change}):
    $$\ss_\pi(\SF_{12}) \times \ss_\pi(\SF_{23}) \cap (\dot{N}_{out}^*(U_\delta(x_1) \times U_\delta(x_3)) + \dot{N}_{out}^*U_{\epsilon'}(\Delta_{M_2})) \times (0, \epsilon).$$
    When $\epsilon$ and $\epsilon' \to 0$, by compactness, we know that the intersection converges to intersection points of the form
    \begin{align*}
    \Pi_{13}: &\, \psi(\ss_\pi(\SF_{12})) \times \psi(\ss_\pi(\SF_{23})) \cap (\dot{N}^*(U_\delta(x_1) \times U_\delta(x_3)) \times \dot{N}^*\Delta_{M_2}) \\
    &\, \hookrightarrow \Pi_{13}(\psi(\ss_\pi(\SF_{12})) \times \psi(\ss_\pi(\SF_{23})) \cap \dot{N}^*(U_\delta(x_1) \times U_\delta(x_3)) \times \dot{N}^*\Delta_{M_2}) \\
    &\, \hookrightarrow \psi(\ss(\SF_{23})) \circ \psi(\ss(\SF_{12})) \cap \dot{N}^*(U_\delta(x_1) \times U_\delta(x_3)) \subset \dot{T}^*(M_1 \times M_3).
    \end{align*}
    Since $\psi(\ss(\SF_{23})) \circ \psi(\ss(\SF_{12}))$ is pdff, we know that when $\delta > 0$ is sufficiently small, the intersection must be empty. Therefore, when $\delta > 0$ sufficiently small, we can find $\epsilon' > 0$ sufficiently small, so that the original intersection is also empty:
    $$\ss_\pi(\SF_{12}) \times \ss_\pi(\SF_{23}) \cap (\dot{N}_{out}^*(U_\delta(x_1) \times U_\delta(x_3)) + \dot{N}_{out}^*U_{\epsilon'}(\Delta_{M_2})) \times (0, \epsilon) = \varnothing.$$
    This finishes the discussion on the necessary singular support estimations, and our discussion of the proof of Lemma \ref{gapped composition basechange}.
\end{remark}

\begin{figure}
    \centering
    \includegraphics[width=0.75\linewidth]{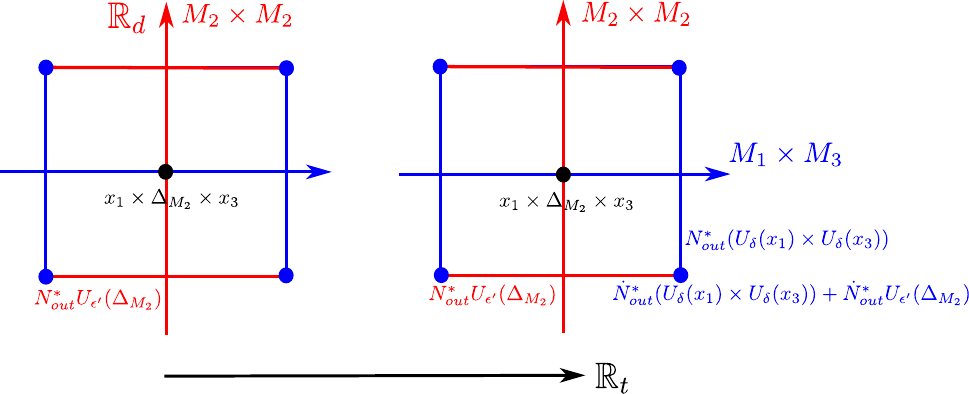}
    \caption{The geometric setting in Lemma \ref{gapped composition basechange}. The black horizontal direction is the $\bR_t$-direction, the red vertical direction on each slice is the $\bR_d$ direction, and the blue horiontal direction on each slice is the $M_1 \times M_3$ direction. The square on each slice is the open neighbourhood $U_\delta(x_1) \times U_\delta(x_3) \times U_{\epsilon'}(\Delta_{M_2})$.}
    \label{fig:gapped composition base change}
\end{figure}

\begin{theorem}\label{thm:gapped composition}
    Let $\SF_{12} \in \Sh(M_1 \times M_2 \times \bR_{>0})$ and $\SF_{23} \in \Sh(M_2 \times M_3 \times \bR_{>0})$ be sheaves with positively gapped composition for some Reeb flow, and suppose $\psi(\ss_\pi(\SF_{12})) \times \psi(\ss_\pi(\SF_{23}))$ is pdff. Then
    $$\psi_{13} (\SF_{23} \circ \SF_{12}) \simeq \psi_{23} \SF_{23} \circ \psi_{12} \SF_{12}.$$
\end{theorem}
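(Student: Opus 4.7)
The plan is to unravel the composition into its three constituent operations — exterior tensor product, pullback along the partial diagonal in $M_2$, and proper pushforward along the projection to $M_1 \times M_3$ — and then commute $\psi$ past each one in turn. Explicitly, by definition
\begin{equation*}
\SF_{23}\circ\SF_{12} \;=\; \pi_{13!}\,\Delta_{M_2/\bR_{>0}}^{\,*}(\SF_{12}\boxtimes_{\bR_{>0}}\SF_{23}),
\end{equation*}
and the target $\psi_{23}\SF_{23}\circ\psi_{12}\SF_{12}$ may similarly be written as $\pi_{13!}\Delta_{M_2}^{*}(\psi_{12}\SF_{12}\boxtimes\psi_{23}\SF_{23})$. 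The goal is therefore to produce a natural chain of isomorphisms
\begin{equation*}
\psi_{13}\pi_{13!}\Delta_{M_2/\bR_{>0}}^{\,*}(\SF_{12}\boxtimes_{\bR_{>0}}\SF_{23})
\xrightarrow{\sim}
\pi_{13!}\Delta_{M_2}^{*}(\psi_{12}\SF_{12}\boxtimes\psi_{23}\SF_{23}).
\end{equation*}

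First, I would move $\psi_{13}$ inside $\pi_{13!}$ using proper base change: since we assume compact supports throughout, proper base change against the square defining $\psi$ gives $\psi_{13}\pi_{13!}\simeq\pi_{13!}\psi_{123}$, where $\psi_{123}$ denotes the nearby cycles applied in the $\bR_{>0}$-direction over $M_1\times M_2\times M_3$. Next, I would apply Lemma \ref{gapped composition basechange} — this is precisely the gapped base change statement that lets one interchange $\psi$ with the partial diagonal pullback $\Delta_{M_2}^{*}$; the positively gapped composition hypothesis on $(\SS(\SF_{12}),\SS(\SF_{23}))$ together with the pdff condition on $\psi(\ss(\SF_{23}))\circ\psi(\ss(\SF_{12}))$ are exactly what that lemma consumes. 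The result is that $\psi_{123}\Delta_{M_2/\bR_{>0}}^{\,*}(\SF_{12}\boxtimes_{\bR_{>0}}\SF_{23})\simeq \Delta_{M_2}^{*}\psi_{1223}(\SF_{12}\boxtimes_{\bR_{>0}}\SF_{23})$ after applying $\pi_{13!}$.

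Finally, I would commute $\psi_{1223}$ past the exterior tensor product using Proposition \ref{nearby cycle product}: the hypotheses there — that each $\ss(\SF_{ij})$ is $\bR_{>0}$-non-characteristic and that the two limiting microsupports $\psi(\ss_{\pi}(\SF_{12}))$ and $\psi(\ss_{\pi}(\SF_{23}))$ are each pdff — are guaranteed by our standing assumptions (the product of pdffs being pdff yields both factors individually pdff). This produces $\psi_{1223}(\SF_{12}\boxtimes_{\bR_{>0}}\SF_{23})\simeq \psi_{12}\SF_{12}\boxtimes\psi_{23}\SF_{23}$, and combining the three steps yields the desired isomorphism.

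The main obstacle is really all concentrated in the middle step, which is Lemma \ref{gapped composition basechange}; the other two steps (proper base change and the K\"unneth-type Proposition \ref{nearby cycle product}) are relatively soft once the microsupport hypotheses have been arranged. What makes the middle step delicate is that $\Delta_{M_2}^{*}$ is a closed embedding and hence nearby cycles and $\Delta_{M_2}^{*}$ do not commute in general — the positive gappedness of $(\SS_\pi(\SF_{12})\times_{\bR_{>0}}\SS_\pi(\SF_{23}),\,0_{M_1}\times \dot T^*_\Delta M_2^2\times 0_{M_3}\times\bR_{>0})$ is exactly the geometric input needed to apply the non-characteristic deformation lemma to the shrinking tubular neighborhood of the diagonal, and the pdff hypothesis on the composition controls the behavior at the corner between the tubular neighborhood and the compact sets defining the stalks. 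Once Lemma \ref{gapped composition basechange} is in hand, the theorem drops out by routine manipulations.
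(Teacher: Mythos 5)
Your proposal matches the paper's proof essentially step-for-step: both decompose $\circ$ into $\pi_{13!}\circ\Delta^*\circ\boxtimes$, commute $\psi_{13}$ past $\pi_{13!}$ by proper base change, past $\Delta_{M_2}^*$ by Lemma~\ref{gapped composition basechange}, and past the external tensor by Proposition~\ref{nearby cycle product}. The identification of Lemma~\ref{gapped composition basechange} as the heart of the matter, and the roles you assign to the gappedness and pdff hypotheses, agree with the paper's argument.
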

\begin{proof}
    The isomorphism follows from the following compositions:
    \begin{align*}
     \psi_{13} (\SF_{23} \circ \SF_{12}) & \xrightarrow{\,\,\sim\,\,}  \pi_{13!}  (\psi_{123}  \Delta_{M_2/\bR_{>0}}^* (\SF_{12} \boxtimes_{\bR_{>0}} \SF_{23})) \\
    & \xrightarrow{ \ref{gapped composition basechange} } \pi_{13 !} \Delta_{M_2}^* (\psi_{1223} ( \SF_{12} \boxtimes_{\bR_{>0}} \SF_{23})) \\
    & \xrightarrow{ \,\ref{nearby cycle product} \,} \pi_{13!}\Delta_{M_2}^* (\psi_{12}\SF_{12} \boxtimes \psi_{23}\SF_{23}) \\
    & \xrightarrow{ \,\,\sim\,\,} \psi_{23} \SF_{23} \circ \psi_{12} \SF_{12}. \qedhere
    \end{align*}
\end{proof}

    More generally, consider the sheaf composition functor as a functor $\Delta_1^{\times k-1} \to \Cat_{st}$ (which is used to encode all possible orders of compositions). We have:

\begin{definition}\label{def: gap composition pairwise}
   Consider manifolds $M_1, M_2, \dots, M_k$. We say that the conic subsets $\Lambda_{12} \subset T^*(M_1 \times M_2) \times \bR_{>0}, \dots, \Lambda_{k-1,k} \subset T^*(M_{k-1} \times M_k) \times \bR_{>0}$ (with compositions $\Lambda_{ik} = \Lambda_{jk} \circ_{\bR_{>0}} \Lambda_{ij} \subset T^*(M_i \times M_k) \times \bR_{>0}$) have {\em (positively) gapped composition} if any pair
   $$(\Lambda_{ij} \times_{\bR_{>0}} \Lambda_{jk}, 0_{M_i} \times \dot{T}^*_\Delta{M_j^2} \times 0_{M_k} \times \bR_{>0}) $$
   is (positively) gapped for a Reeb flow on $\dot{T}^*(M_i \times M_j \times M_j \times M_k)$, and the composition of this pair
   $$\psi_{jk}\Lambda_{jk} \circ \psi_{ij}\Lambda_{ij} \subset \dot T^*(M_i \times M_k)$$
   is a pdff subset. For sheaves $\SF_{12} \in \Sh(M_1 \times M_2 \times \bR_{>0}), \dots, \SF_{k-1,k} \in \Sh(M_{k-1} \times M_k \times \bR_{>0})$, we say that they have {\em (positively) gapped composition} if the singular supports are $\bR_{>0}$-non-characteristic, and $\SS_\pi(\SF_{12}), \dots, \SS_\pi(\SF_{k-1,k})$ have positively gapped composition.
\end{definition}

\begin{corollary}\label{rem: gapped composition higher}
    Let $\SF_{12} \in \Sh(M_1 \times M_2 \times \bR_{>0}), \dots, \SF_{k-1,k} \in \Sh(M_{k-1} \times M_k \times \bR_{>0})$ be sheaves with positively gapped composition for some Reeb flow, and suppose $\psi(\SS_\pi(\SF_{12})) \times \dots \times \psi(\SS_\pi(\SF_{k-1,k}))$ is pdff. Then for any $1 < l_1 < \dots < l_r < k$, we have natural isomorphisms 
    $$\psi_{12\cdots k}(\SF_{k-1,k} \circ \dots \circ \SF_{12}) \simeq \psi_{l_r\cdots k}(\SF_{k-1,k} \circ \dots \circ \SF_{l_r,l_r+1}) \circ \dots \circ \psi_{12\cdots l_1}(\SF_{l_1-1,l_1} \circ \dots \circ \SF_{12})$$
    that fit into a natural transformation, from the diagram  $\Delta_1^{\times k-1} \to \Cat_{st}$ satisfying the Segal condition that sends the vertices to $\Sh(M_1 \times M_{l_1} \times \bR_{>0}) \otimes \dots \otimes \Sh(M_{l_r} \times M_k \times \bR_{>0})$ and the edges to the composition functors, to the diagram  $\Delta_1^{\times k-1} \to \Cat_{st}$ satisfying the Segal condition that sends the vertices to $\Sh(M_1 \times M_{l_1}) \otimes \dots \otimes \Sh(M_{l_r} \times M_k)$ and the edges to the composition functors, such that all the 2-morphisms are equivalences.
\end{corollary}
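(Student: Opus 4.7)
The plan is to upgrade the binary isomorphism of Theorem \ref{thm:gapped composition} to a coherent diagram in exactly the same way that Corollary \ref{rem: nearby commute higher} and Corollary \ref{rem: box tensor commute higher} were deduced from their binary counterparts (Lemma \ref{nearby commute Nadler} and Proposition \ref{nearby cycle product} respectively). First, I would assemble the $k$-fold composition $\SF_{k-1,k}\circ\cdots\circ\SF_{12}$ as a single operation, namely $\pi_{1,k !}\Delta_{M_2\cdots M_{k-1}}^*(\SF_{12}\boxtimes_{\bR_{>0}}\cdots\boxtimes_{\bR_{>0}}\SF_{k-1,k})$, and note that an arbitrary parenthesization indexed by a flag $1<l_1<\cdots<l_r<k$ can be obtained by first applying the analogous formula on each contiguous block and then composing the results via a single outer diagonal pull/proper pushforward.

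Second, I would build the diagram $\mathrm{Flag}_k\to\Fun^{ex}\bigl(\Sh(\prod_j(M_j\times M_{j+1}\times\bR_{>0})),\Sh(\prod_j(M_j\times M_{j+1}))\bigr)$ exactly as in the proofs of the referenced corollaries: each edge of $\mathrm{Flag}_k$ corresponds to refining (or coarsening) a flag by splitting one nearby-cycle $\psi_{i\cdots j}$ into $\psi_{i\cdots m}\circ\overline{\psi}_{m+1\cdots j}$ on the appropriate block. Every such refinement is implemented by a non-proper base-change square built out of $i^*,j_*,\Delta^*,\pi_!$ and the corresponding open/closed inclusions, and the universal coherence machinery of \cite[Corollary F]{HaugsengHebestreitLinskensNuiten} (compare \cite[App.~A \S12.3]{Gaitsgory-Rozenblyum}) produces an (op)lax natural transformation of diagrams $\Delta_1^{\times k-1}\to\Cat_{st}$ between the Segal-like diagram of composition functors for sheaves on $M_i\times M_j\times\bR_{>0}$ and the one for sheaves on $M_i\times M_j$.

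Third, to show that all 2-morphisms in this (op)lax diagram are invertible, it suffices, by 2-out-of-3 and the Segal condition, to check invertibility on each elementary square, which reduces to one of three model cases: (i) the base-change isomorphism of Lemma \ref{nearby basechange}, used as in Lemma \ref{nearby commute Nadler}; (ii) the exterior-tensor commutation of Proposition \ref{nearby cycle product} and its extension Corollary \ref{rem: box tensor commute higher}; and (iii) the non-proper base change Lemma \ref{gapped composition basechange} applied in each ``middle'' slot where a diagonal $\Delta_{M_j}$ meets the nearby cycles. The gapped-composition hypothesis of Definition \ref{def: gap composition pairwise} is exactly what is needed to invoke Lemma \ref{gapped composition basechange} on each adjacent pair $(\SF_{j-1,j},\SF_{j,j+1})$, while the pdff hypothesis on the full product $\psi(\ss_\pi(\SF_{12}))\times\cdots\times\psi(\ss_\pi(\SF_{k-1,k}))$ propagates to all iterated compositions and supplies the pdff input for both Lemma \ref{gapped composition basechange} and Corollary \ref{rem: box tensor commute higher}.

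The main obstacle is bookkeeping rather than analysis: one must verify that each elementary square of the diagram truly falls into one of the three model cases, and that the composites of these squares give the stated isomorphism independent of the chosen decomposition. The cleanest execution is to argue by induction on $k$, reducing the edge $(\text{flag refining at position } m)$ to Theorem \ref{thm:gapped composition} applied to the already-composed blocks on either side of $m$, whose singular support estimates follow from Lemma \ref{lem: ss-composition} together with the hypothesis in Definition \ref{def: gap composition pairwise} that every induced pair remains positively gapped with pdff composition. Once invertibility is checked on these generating edges, the coherence of the resulting diagram $\Delta_1^{\times k-1}\to\Cat_{st}$ is automatic from \cite[Corollary F]{HaugsengHebestreitLinskensNuiten}.
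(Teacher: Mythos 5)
Your proposal is correct and follows essentially the same route as the paper: build the diagram of non-proper/smooth base-change squares, invoke the coherence machinery of \cite[Corollary F]{HaugsengHebestreitLinskensNuiten} (as the paper does, with reference to its Figures for the iterated exterior tensor products and for the gapped composition base change), and then check invertibility of the 2-morphisms edge-by-edge via Lemma \ref{gapped composition basechange} together with Proposition \ref{nearby cycle product} / Corollary \ref{rem: box tensor commute higher} and Lemma \ref{nearby basechange}, exactly as in the binary case of Theorem \ref{thm:gapped composition}. Your added emphasis on the three model cases and the induction on $k$ spells out the ``iteratively applying Lemma \ref{gapped composition basechange}'' step of the paper's proof without changing its substance.
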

\begin{proof}
    We have a diagram of (non-proper or smooth) base changes induced by adjunctions as in Figure \ref{fig:nearby exterior tensor} for iterated exterior tensor products and Figure \ref{fig:gap sheaf composition diagram} for the gapped composition base change formula, which forms a diagram of (op)lax natural transformations by \cite[Corollary F]{HaugsengHebestreitLinskensNuiten}. Then, by iteratively applying Lemma \ref{gapped composition basechange}, we can show that the natural transformations are equivalences and hence we have a commutative diagram. 
\end{proof}

\begin{figure}
    \centering
    \includegraphics[width=1\linewidth]{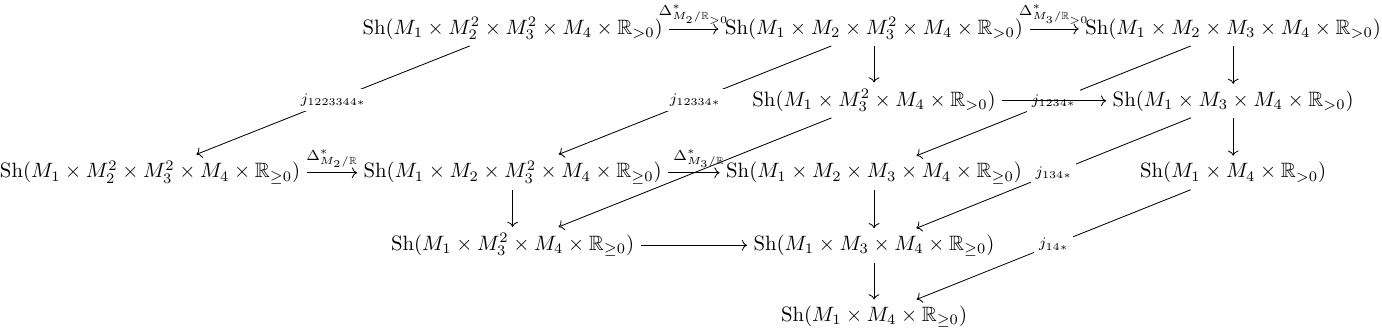}
    \caption{Part of the diagram of functors that appear in the iterated gapped composition of sheaves, where the natural transformation in each square is induced by the adjunctions of six-functors.}
    \label{fig:gap sheaf composition diagram}
\end{figure}

\begin{remark}\label{rem: gap composition higher alternative}
    Consider the set of divisions of the $k$-simplex $\mathrm{Div}_k$ with the partial order by refinements (which is used to encode all possible orders of compositions). We note that the natural transformation between the two coherent diagrams $\Delta_1^{\times k-1} \to \Cat_{st}$ is also the same as the coherent diagram $\mathrm{Div}_k \to \Fun^{ex}(\bigotimes_{j=1}^{k-1}\Sh(M_j \times M_{j+1} \times \bR_{>0}), \Sh(M_1 \times M_k))$ where a division $1 < l_1 < \dots < l_r < k$ is sent to the functor on the right hand side of the equation.
\end{remark}

\subsection{Gapped Hom composition}
We recall the limit-preserving composition of integral kernels  $\SF_{12}\in \Sh(M_1 \times M_2)$ and $\SF_{23} \in \Sh(M_2 \times M_3)$ \cite[Section 3.6]{KS}: 
    $$\sHom^\circ(\SF_{12}, \SF_{23}) = \pi_{13*}\sHom(\pi_{12}^*\SF_{12}, \pi_{23}^!\SF_{23}).$$
    
\begin{definition}\label{def: gap hom composition}
   Consider manifolds $M_1, M_2, M_3$. We say that conic subsets $\Lambda_{12} \subset T^*(M_1 \times M_2 \times \bR_{>0})$ and $\Lambda_{23} \subset T^*(M_2 \times M_3 \times \bR_{>0})$ have {\em (positively) gapped hom composition} if 
   \begin{enumerate}
       \item $\Lambda_{12} \subset T^*(M_1 \times M_2 \times \bR_{>0})$, $\Lambda_{23} \subset T^*(M_2 \times M_3 \times \bR_{>0})$ are $\bR_{>0}$-non-characteristic,
       \item the composition of the pair $\psi_{23}(\Lambda_{23}) \circ \psi_{12}(-\Lambda_{12}) \subset \dot{T}^*(M_1 \times M_3)$ is pdff, and
       \item the pair
       $$(-\Lambda_{12} \times_{\bR_{>0}} \Lambda_{23}, 0_{M_1} \times \dot{T}^*_\Delta{M_2^2} \times 0_{M_3} \times \bR_{>0}) $$
       is (positively) gapped for a Reeb flow on $\dot{T}^*(M_1 \times M_2 \times M_2 \times M_3)$.
   \end{enumerate}
\end{definition}

    Similar to the $\otimes$ version, the main technical lemma we need is a family version of \cite[Lemma 2.24 \& Proposition 2.25]{Nadler-Shende}.

    Here is the model case. 
    Consider the maps
    \[\begin{tikzcd}
        0 \times 0 \times M \ar[d, "{y}_1" left] \ar[r, "y_2"] & \bR_{\geq 0} \times 0 \times M \ar[d, "{i}_1"] \\
        0 \times \bR_{\geq 0} \times M \ar[r, "{i}_2"] & \bR_{\geq 0} \times \bR_{\geq 0} \times M
    \end{tikzcd}\]
    Using non-characteristic deformation Lemma \ref{lem: non-char}, we can show the following lemma (this should also be related to the hyperbolic localizations \cite{Braden}):
    
\begin{lemma}\label{lem: non-proper base change hom}
    For a sheaf $\SF \in \Sh(\bR_{\geq 0}^2 \times M)$ such that 
    \begin{enumerate}
        \item $\SS(\SF)$ is non-characteristic with respect to $\pi_1: \bR^2_{\geq 0} \times M \to \bR_{\geq 0} \times 0 \times M$, 
        \item $\Pi_M(\SS(\SF)) \cap T^*M$ is pdff for $\Pi_M: T^*(\bR_{\geq 0} \times M) \times \bR_{\geq 0} \to T^*M \times \bR_{\geq 0}^2$, and
        \item $\SS_{\pi_1}(\SF)$ is gapped from $\dot N^*_{out}(0 \times M) \times \bR_{>0}$ as an $\bR_{\geq 0}$-family of subsets,
    \end{enumerate}
    the natural transformation defines an isomorphism
    $$y_{1}^*{i}_{2}^!\SF \xrightarrow{\sim} y_{2}^! {i}_1^*\SF.$$
\end{lemma}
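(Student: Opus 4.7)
The plan is to reduce this closed-embedding base change to the open-embedding base change proved in Lemma \ref{lem: non-proper base change compose}, using the standard presentation of $i^!$ as a fiber. Let $j_2: \bR_{>0} \times \bR_{\geq 0} \times M \hookrightarrow \bR_{\geq 0}^2 \times M$ and $v_2: \bR_{>0} \times 0 \times M \hookrightarrow \bR_{\geq 0} \times 0 \times M$ denote the open complements of $i_2$ and $y_2$, and $\bar{i}_1: \bR_{>0} \times 0 \times M \hookrightarrow \bR_{>0} \times \bR_{\geq 0} \times M$ the closed embedding completing the Cartesian square $(i_1, v_2, j_2, \bar{i}_1)$. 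From the standard fiber sequence $i_2^! \SF \to i_2^* \SF \to i_2^* j_{2*} j_2^* \SF$ and its analogue for $y_2^!$, using that $y_1^* i_2^* = i_0^* = y_2^* i_1^*$ where $i_0$ denotes the corner inclusion, one obtains
$$y_1^* i_2^! \SF = \fib(i_0^* \SF \to y_1^* i_2^* j_{2*} j_2^* \SF), \quad y_2^! i_1^* \SF = \fib(i_0^* \SF \to y_2^* v_{2*} v_2^* i_1^* \SF).$$

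The Beck--Chevalley map $y_1^* i_2^! \to y_2^! i_1^*$ is compatible with these fiber sequences and is the identity on the common first term $i_0^* \SF$, so it is an isomorphism if and only if its induced map on second terms is. Using the identity $v_2^* i_1^* = (i_1 v_2)^* = (j_2 \bar{i}_1)^* = \bar{i}_1^* j_2^*$ coming from the pullback square, the right-hand second term equals $y_2^* v_{2*} \bar{i}_1^* j_2^* \SF$, and the required comparison is exactly $y_2^*$ applied to the non-proper base change map $i_1^* j_{2*}(j_2^* \SF) \to v_{2*} \bar{i}_1^*(j_2^* \SF)$. By Lemma \ref{lem: non-proper base change compose} applied to the sheaf $j_2^* \SF \in \Sh(\bR_{>0} \times \bR_{\geq 0} \times M)$, this map is an isomorphism provided the three hypotheses of that lemma hold for $j_2^* \SF$.

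The hardest part, and the only place any care is needed, is verifying those three hypotheses transfer to $j_2^* \SF$. Since $j_2$ is an open embedding, restricting along it only shrinks microsupport, so all the conditions pass through directly: hypothesis (1), non-characteristicity with respect to $\bR_{>0} \times \bR_{\geq 0} \times M \to \bR_{>0} \times M$, is inherited from hypothesis (1) for $\SF$; hypothesis (2), pdff of $\Pi_M(\ol{\SS_{\pi_1}(j_2^*\SF)}) \cap T^*M$, follows from our hypothesis (2) together with $\Pi_M(\ol{\SS_{\pi_1}(j_2^*\SF)}) \cap T^*M \subset \Pi_M(\SS(\SF)) \cap T^*M$; and hypothesis (3), gappedness of $\SS_{\pi_1}(j_2^* \SF)$ from $\dot{N}^*_{out}(0 \times M) \times \bR_{>0}$ as an $\bR_{>0}$-family, is the restriction to $\bR_{>0}$ of the $\bR_{\geq 0}$-family gappedness assumed for $\SF$ in hypothesis (3). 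This completes the reduction.
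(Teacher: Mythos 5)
Your proof is correct, but there is an important wrinkle in the comparison: the paper does not in fact prove this lemma. Immediately after stating it, the text says \emph{``Instead of proving the above lemma, we will prove the following one which is the precise situation we are in,''} and then proves Lemma \ref{gapped hom composition basechange} directly by the same stalkwise non-characteristic-deformation method used for Lemma \ref{lem: non-proper base change compose}. So the paper leaves Lemma \ref{lem: non-proper base change hom} unproved, with only a hint that it ``should'' follow from the non-characteristic deformation lemma; your argument fills that gap.

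The route you take is different from what the paper's hint suggests and is arguably cleaner: rather than redoing the stalk computation, you present $i_2^!$ (resp.\ $y_2^!$) as the fiber of the unit map to $j_{2*}j_2^*$ (resp.\ $v_{2*}v_2^*$), identify the common first term via $y_1^* i_2^* = i_0^* = y_2^* i_1^*$, and reduce the Beck--Chevalley comparison on the second terms to Lemma \ref{lem: non-proper base change compose} applied to $j_2^*\SF$, using the identity $v_2^* i_1^* = \bar{i}_1^* j_2^*$ coming from the complementary open Cartesian square. This reuses the proven result instead of duplicating its proof. The hypothesis transfer is handled correctly: since $j_2$ is an open embedding, non-characteristicity, the pdff condition on the limiting reduced microsupport, and gappedness all restrict to $j_2^*\SF$, with the $\bR_{\geq 0}$-family gap hypothesis yielding the needed $\bR_{>0}$-family one. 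The only step you assert without elaboration is that the Beck--Chevalley map is compatible with the two fiber sequences via the identifications above; this is a formal compatibility of the mate construction with recollement (the square whose cofiber is compared really does commute), and a sentence saying so explicitly would make the argument airtight.
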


    Instead of proving the above lemma, we will prove the following one which is the precise situation we are in:

\begin{lemma}\label{gapped hom composition basechange}
    Let $\SF_{12} \in \Sh(M_1 \times M_2 \times \bR_{>0})$ and $\SF_{23} \in \Sh(M_2 \times M_3 \times \bR_{>0})$ have gapped composition for some Reeb flow. Then
    $$\pi_{13 *}(\psi_{123}\, \Delta_{M_2/\bR_{>0}}^! \sHom^{\boxtimes/\bR_{>0}}(\SF_{12}, \SF_{23})) \simeq \pi_{13*}\Delta_{M_2}^!(\psi_{1223}\sHom^{\boxtimes/\bR_{>0}}(\SF_{12}, \SF_{23})).$$
\end{lemma}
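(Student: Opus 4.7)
The plan is to mirror the proof of Lemma \ref{gapped composition basechange}, dualizing throughout: $!$-pushforwards become $*$-pushforwards, $*$-pullbacks become $!$-pullbacks, and the microsupport estimate $\SS(\SF\otimes\SG)\subset \SS(\SF)\,\widehat+\,\SS(\SG)$ is replaced by $\SS(\sHom(\SF,\SG))\subset -\SS(\SF)\,\widehat+\,\SS(\SG)$ from Formula \eqref{lem: ss-hom}. This sign flip is exactly why Definition \ref{def: gap hom composition} imposes the gappedness/pdff hypotheses on $-\Lambda_{12}$ rather than $\Lambda_{12}$.

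First I would introduce the distance-to-the-diagonal function $d\colon M_2\times M_2\to\bR_d$ and the map
$$\pi_{13,d}\colon M_1\times M_2\times M_2\times M_3\times\bR_t\longrightarrow M_1\times M_3\times\bR_d\times\bR_t.$$
Under the hypothesized properness of supports, $\pi_{13,d*}$ coincides with $\pi_{13,d!}$ on the relevant sheaves, so by proper base change the claim reduces to a base change for the sheaf $\SG:=\pi_{13,d*}\sHom^{\boxtimes/\bR_{>0}}(\SF_{12},\SF_{23})$ in the half-plane $\bR_{\geq 0}\times\bR_{\geq 0}\times(M_1\times M_3)$: namely, the canonical comparison map
$$y_1^*\,i_2^!\,\SG\;\xrightarrow{\sim}\;y_2^!\,i_1^*\,\SG.$$
(This is the content of Lemma \ref{lem: non-proper base change hom} in the specific form needed here.)

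Second I would establish this half-plane base change by the closed-set version of the non-characteristic deformation lemma (second half of Lemma \ref{lem: non-char}), writing both sides as co-sections on a shrinking family of closed corners $[0,\epsilon']\times[0,\epsilon)\times\bar U_\delta(x)$. What must be checked is that $\dot N^*_{in}$ of these corners is disjoint from $\SS(\SG)$ as $\epsilon'\to 0$: (a) the bottom-edge contribution $\dot N^*_{in}(\{0\}\times\bR\times(M_1\times M_3))$ is killed by the gappedness hypothesis; (b) the corner mixing term $\dot N^*_{in}[0,\epsilon']+\dot N^*_{in}\bar U_\delta(x)$ projects, as $\epsilon'\to 0$, injectively into $\Pi_{13}(\overline{\SS(\SG)})\cap\dot N^*\bar U_\delta(x)$, and is then killed for $\delta$ small by the pdff-ness of this projection. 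The three required ingredients are verified from the three clauses of Definition \ref{def: gap hom composition}: $\bR_{t>0}$-noncharacteristicity of $\SS(\SG)$ from Formulae \eqref{lem: ss-hom}--\eqref{lem: ss-push forward}; pdff-ness of $\Pi_{13}(\overline{\SS(\SG)})\cap\dot T^*(M_1\times M_3)$ from the composition pdff condition on $\psi(\Lambda_{23})\circ\psi(-\Lambda_{12})$; and gappedness of $\SS_\pi(\SG)$ with $\dot N^*_{in}(\{0\}\times(M_1\times M_3))\times\bR_{>0}$ from the positive gappedness of the pair $(-\Lambda_{12}\times_{\bR_{>0}}\Lambda_{23},\;0_{M_1}\times\dot T^*_{\Delta}M_2^{\,2}\times 0_{M_3}\times\bR_{>0})$.

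The main obstacle is step two: setting up the dual (inward-conormal, closed-family) form of the non-characteristic deformation lemma correctly for $y_2^!\,\bar j_*$ and verifying that the corner microsupport estimate dualizes cleanly from the $\otimes$-case, keeping careful track of the sign flip in Formula \eqref{lem: ss-hom}. Once this Hom half-plane base change is in place, the three hypotheses fall out of Definition \ref{def: gap hom composition} in precise parallel with Lemma \ref{gapped composition basechange}, yielding the stated isomorphism.
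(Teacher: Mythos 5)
Your proposal is correct and matches the paper's own proof in all essential respects: the paper likewise reduces to a half-plane base change over $\pi_{13,d}$ after proper base change, computes the stalk/co-stalk on shrinking corners, and applies non-characteristic deformation, with the three hypothesis checks coming from the three clauses of Definition~\ref{def: gap hom composition} exactly as you list them. The only cosmetic difference is that the paper never formally proves the model case Lemma~\ref{lem: non-proper base change hom}, instead directly computing $\Hom(1_{U_\delta(x_1)\times\overline U_{\epsilon'}(\Delta_{M_2})\times U_\delta(x_3)\times(0,\epsilon)},\sHom^{\boxtimes/\bR_{>0}}(\SF_{12},\SF_{23}))$ and phrasing the microsupport condition in terms of outward conormals disjoint from $-\SS(\SF_{12})\times\SS(\SF_{23})$, which is equivalent to your inward-conormal formulation after the sign flip.
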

\begin{proof} 
    Using the same notation as in Lemma \ref{gapped composition basechange}, it suffices to show that
    $$i^*(\pi_{13} \circ j)_*\Delta_{M_2/\bR_{>0}}^!\sHom^{\boxtimes/\bR_{>0}}(\SF_{12}, \SF_{23}) \simeq i_0^!(d_{13} \circ j)_*\sHom^{\boxtimes/\bR_{>0}}(\SF_{12}, \SF_{23}).$$
    On the left hand side, one can show that the stalk is
    $$\Hom(1_{U_\delta(x_1) \times \Delta_{M_2} \times U_\delta(x_3) \times (0,\epsilon)}, \sHom^{\boxtimes/\bR_{>0}}(\SF_{12}, \SF_{23})).$$
    On the right hand side, the stalk is
    $$\Hom(1_{U_\delta(x_1) \times \overline{U}_{\epsilon'}(\Delta_{M_2}) \times U_\delta(x_3) \times (0,\epsilon)}, \sHom^{\boxtimes/\bR_{>0}}(\SF_{12}, \SF_{23})).$$
    The result will then follow if we can apply noncharacteristic deformation (Lemma \ref{lem: non-char}) to the family $U_\delta(x_1) \times \overline{U}_{\epsilon'}(\Delta_{M_2}) \times U_\delta(x_3) \times (0,\epsilon)$ as $\epsilon' \to 0$.  To do so we must show that for some sufficiently small $\epsilon > 0, \delta> 0$, as $\epsilon' \to 0$, the outward conormals $N^*_{out}(U_\delta(x_1) \times \overline{U}_{\epsilon'}(\Delta_{M_2}) \times U_\delta(x_3) \times (0,\epsilon))$ are disjoint from $-\SS(\SF_{12}) \times \SS(\SF_{23})$. The rest of the argument is singular support estimations and is the same as Lemma \ref{gapped composition basechange}.
\end{proof}

\begin{theorem}\label{thm:gapped hom composition}
    Let $\SF_{12} \in \Sh(M_1 \times M_2 \times \bR_{>0})$ and $\SF_{23} \in \Sh(M_2 \times M_3 \times \bR_{>0})$ be sheaves with poritive gapped hom composition for some Reeb flow, and suppose $\psi(\dot{\ss}_\pi(\SF_{12})) \times \psi(\dot{\ss}_\pi(\SF_{23}))$ is pdff. Then
    $$\psi_{13} \sHom^\circ(\SF_{12}, \SF_{23}) \simeq \sHom^\circ(\psi_{12} \SF_{12}, \psi_{23} \SF_{23}).$$
\end{theorem}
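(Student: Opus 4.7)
The plan is to mirror the argument of Theorem \ref{thm:gapped composition}, chaining together a sequence of commutativities of the nearby cycle functor with the four operations used to build $\sHom^\circ$. Concretely, I would unfold
\[
\sHom^\circ(\SF_{12},\SF_{23}) \;=\; \pi_{13 *}\,\Delta^{!}_{M_2/\bR_{>0}}\,\sHom^{\boxtimes/\bR_{>0}}(\SF_{12},\SF_{23}),
\]
and then attempt to push $\psi_{13}$ inside step by step until it lands on each factor separately.

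First I would commute $\psi_{13}$ past the proper pushforward $\pi_{13 *}$. The hypothesis that $\psi(\dot\ss_\pi(\SF_{12})) \times \psi(\dot\ss_\pi(\SF_{23}))$ is pdff, together with the gappedness assumption (which forces $\pi_{13}$ to be proper on the relevant supports), reduces this to a standard proper base change between $\pi_{13 *}$ and the nearby cycle $i^*j_*$. Next I would commute $\psi$ past $\Delta^{!}_{M_2/\bR_{>0}}$; this is exactly the content of Lemma \ref{gapped hom composition basechange}, which uses gapped hom composition as in Definition \ref{def: gap hom composition} to ensure that the noncharacteristic deformation lemma applies to the family of tubular neighborhoods $\overline{U}_{\epsilon'}(\Delta_{M_2})$ as $\epsilon' \to 0$. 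Finally I would apply Proposition \ref{nearby cycle hom product} to commute $\psi$ with $\sHom^{\boxtimes/\bR_{>0}}$, producing $\sHom^\boxtimes(\psi_{12}\SF_{12}, \psi_{23}\SF_{23})$, and then refold through $\Delta^!_{M_2}$ and $\pi_{13*}$ to recover $\sHom^\circ(\psi_{12}\SF_{12}, \psi_{23}\SF_{23})$.

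Concatenated, the chain of isomorphisms should read
\begin{align*}
\psi_{13}\sHom^\circ(\SF_{12},\SF_{23})
&\xrightarrow{\,\,\sim\,\,} \pi_{13 *}\bigl(\psi_{123}\,\Delta^{!}_{M_2/\bR_{>0}}\sHom^{\boxtimes/\bR_{>0}}(\SF_{12},\SF_{23})\bigr) \\
&\xrightarrow{\ref{gapped hom composition basechange}} \pi_{13 *}\,\Delta^{!}_{M_2}\bigl(\psi_{1223}\sHom^{\boxtimes/\bR_{>0}}(\SF_{12},\SF_{23})\bigr) \\
&\xrightarrow{\ref{nearby cycle hom product}} \pi_{13 *}\,\Delta^{!}_{M_2}\sHom^{\boxtimes}(\psi_{12}\SF_{12},\psi_{23}\SF_{23}) \\
&\xrightarrow{\,\,\sim\,\,} \sHom^\circ(\psi_{12}\SF_{12},\psi_{23}\SF_{23}).
\end{align*}
The first and last arrows are formal rewritings of $\sHom^\circ$; the hypothesis that $\psi(\dot\ss_\pi(\SF_{12})) \times \psi(\dot\ss_\pi(\SF_{23}))$ is pdff feeds directly into applying Proposition \ref{nearby cycle hom product}, and the $\bR_{>0}$-non-characteristic piece of gapped hom composition feeds into the first step via proper base change.

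The main obstacle is the middle step, the Hom-analogue of the gapped composition base change (Lemma \ref{gapped hom composition basechange}). As in the tensor case, the essential content is a noncharacteristic deformation over a shrinking family of tubular neighborhoods of $\Delta_{M_2}$: one must rule out that the outward conormals to $U_\delta(x_1) \times \overline{U}_{\epsilon'}(\Delta_{M_2}) \times U_\delta(x_3) \times (0,\epsilon)$ meet $-\ss(\SF_{12}) \times \ss(\SG_{23})$ (note the sign flip from the Hom functor's contravariance in its first argument, which is why gappedness is imposed on $-\Lambda_{12} \times_{\bR_{>0}} \Lambda_{23}$ rather than $\Lambda_{12} \times_{\bR_{>0}} \Lambda_{23}$). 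Gappedness controls Reeb chords to $\dot T^*_\Delta M_2^2$, and the pdff assumption on $\psi_{23}(\Lambda_{23}) \circ \psi_{12}(-\Lambda_{12})$ handles the corner contributions from the outward conormals of $U_\delta(x_i)$, exactly as in the discussion following Lemma \ref{gapped composition basechange}. Once that lemma is established, the remainder of the proof is formal assembly.
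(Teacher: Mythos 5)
Your proposal reproduces the paper's proof exactly: the same four-step chain (unfold $\sHom^\circ$, apply Lemma \ref{gapped hom composition basechange} to commute $\psi$ past $\Delta^!_{M_2/\bR_{>0}}$, apply Proposition \ref{nearby cycle hom product} to commute $\psi$ with $\sHom^{\boxtimes/\bR_{>0}}$, refold), with the correct identification of where gappedness, the pdff hypothesis, and the sign flip enter. It is correct and matches the paper's argument.
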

\begin{proof}
    The isomorphism follows from the following compositions:
    \begin{align*}
     \psi_{13} \sHom^\circ(\SF_{12}, \SF_{23}) & \xrightarrow{\,\,\sim\,\,}  \pi_{13*}  (\psi_{123}  \Delta_{M_2/\bR_{>0}}^! \sHom^{\boxtimes/\bR_{>0}}(\SF_{12}, \SF_{23})) \\
    & \xrightarrow{ \ref{gapped hom composition basechange} } \pi_{13 *} \Delta_{M_2}^! (\psi_{1223} \sHom^{\boxtimes/\bR_{>0}}( \SF_{12}, \SF_{23})) \\
    & \xrightarrow{ \,\ref{nearby cycle hom product} \,} \pi_{13*}\Delta_{M_2}^! \sHom^{\boxtimes}(\psi_{12}\SF_{12}, \psi_{23}\SF_{23}) \\
    & \xrightarrow{ \,\,\sim\,\,} \sHom^\circ(\psi_{12} \SF_{12}, \psi_{23} \SF_{23}). \qedhere
    \end{align*}
\end{proof}

\subsection{Examples of gapped compositions}
    One class of examples that we are interested in is the case when the manifold is $N \times \bR_t$ and the microsupport of the sheaf $\SS(\SF)$ is a Legendrian contained in $S^*_{\tau\geq 0}(N \times \bR_t)$. We will give some examples of compositions of sheaves with microsupports on such Legendrians and illustrate when the gappedness condition holds. Later in Sections \ref{sec:quantization} and \ref{ssec:main theorems}, we will explain how this is related to our result of exact Lagrangian correspondences in cotangent bundles.

\begin{example}\label{ex: cotangent bundle composition}
    Let $M_i = N_i \times \bR_t$ for $1 \leq i \leq 3$ and consider the diffeotopy $\varphi(x_i, t; s) = \varphi_s(x_i, t) = (x_i, st)$ for $s\in \bR_{>0}$. Consider $\SF_{ij} \in \Sh_{\tau>0}(N_i \times N_j \times \bR_t \times \bR_{>0})$ be the pull-back of some sheaves in $\Sh_{\tau>0}(N_i \times N_j \times \bR_t)$ via the diffeotopy $\varphi: N_i \times N_j \times \bR_t \times \bR_{>0} \to N_i \times N_j \times \bR_t$. Write $s_{ij}: N_i \times \bR \times N_j \times \bR \to N_i \times N_j \times \bR, (x_i, t_i, x_j, t_j) \mapsto (x_i, x_j, t_i - t_j)$. Then consider the sheaves
    $$\ol{\SF}_{ij} = s_{ij}^*\SF_{ij}.$$
    We can show that $(\ss_\pi(\ol{\SF}_{12}) \times \ss_\pi(\ol{\SF}_{23}), 0_{N_1 \times \bR_{t_1}} \times \Delta_{N \times \bR_{t_2}} \times 0_{N_3 \times \bR_{t_3}})$ is always gapped. 
    Thus, we always have
    $$\psi_{13}(\ol{\SF}_{12} \circ \ol{\SF}_{23}) \simeq \psi_{12}\ol{\SF}_{12} \circ \psi_{23}\ol{\SF}_{23}.$$
    This is a basic example where the composition of sheaves singularly supported on the Legendrians $L_{ij} \subset S^*(N_i \times N_j \times \bR)$ is compatible with nearby cycle functors. Later we will see that it is a special case of Theorem \ref{thm: main composition}.
\end{example}

\begin{example}\label{ex: cotangent bundle hom recover}
    Consider $M_1 = M_3 = \bR$ and $M_2 = N \times \bR_t$, and the diffeotopy $\varphi(x, t; s) = \varphi_s(x, t) = (x, st)$ for $s\in \bR_{>0}$. Consider $\SF, \SG \in \Sh_{\tau>0}(N \times \bR_t \times \bR_{>0})$ to be the pull-back of some sheaves in $\Sh_{\tau>0}(N \times \bR_t)$ via the diffeotopy $\varphi$. Write $s_{12}: M_1 \times M_2 \to M_2, (t_1, x, t_2) \mapsto (x, t_1 - t_2)$. Then consider the sheaves
    $$\ol{\SF} = s_{23}^*\SF, \; \ol{\SG} = s_{12}^*\SG.$$
    By Example \ref{ex: cotangent bundle composition}, we know that $(\ss_\pi(\ol{\SF}) \times \ss_\pi(\ol{\SG}), 0_{\bR_{t_1}} \times \Delta_{N \times \bR_{t_2}} \times 0_{\bR_{t_3}})$ is always gapped. Thus, we always have
    $$\psi_{13}(\ol{\SF} \circ_{N \times \bR} \ol{\SG}) \simeq \psi_{23}\ol{\SF} \circ_{N \times \bR} \psi_{12}\ol{\SG}.$$
    The resulting sheaf is the pull-back of a sheaf on $\bR$ via the map $s_{13}: \bR^2 \to \bR$. This shows that in the setting of Example \ref{ex: cotangent bundle composition}, when $N_1 = N_3 = pt$, the composition of sheaves in $N \times \bR$ with microsupports in Legendrians $L, K \subset S^*(N \times \bR)$ yields a sheaf on $\bR$ or a $\bR$-filtered object in $\SC$. Later, we will see that this is a special case of Theorem \ref{cor: recover tensor}.
    
    We can also further take composition $\ol{\SF} \circ_{N \times \bR^2} \ol{\SG}$ over $N \times \bR \times \bR$, by pulling back $\ol{\SF} \circ_{N \times \bR} \ol{\SG}$ to the diagonal and then pushing forward to a point. Reeb chords between $\ss(\ol{\SF} \circ_{N \times \bR} \ol{\SG})$ and $\dot{T}^*_{\Delta}\bR^2$ correspond bijectively to Reeb chords between $\ss_\pi({\SF}) \times \ss_\pi({\SG})$ and $\dot{T}^*_\Delta (N \times \bR)^2$. When there are no Reeb chords from $-\ss_\pi(\SF)$ to $\ss_\pi(\SG)$, by Proposition \ref{gapped composition basechange} we have
    $$\psi_1\Delta_\bR^*(\ol{\SF} \circ_{N \times \bR} \ol{\SG})) \simeq \Delta_\bR^*\psi_{13}(\ol{\SF} \circ_{N \times \bR} \ol{\SG})).$$
    can show that when there are no Reeb chords from $-\ss_\pi(\SF)$ to $\ss_\pi(\SG)$,
    \begin{align*}
    \ol{\SF} \circ_{N \times \bR^2} \ol{\SG} &\simeq \pi_{\bR!}\psi_1\Delta_\bR^*(\ol{\SF} \circ_{N \times \bR} \ol{\SG}))) \simeq \pi_{\bR!}\Delta_\bR^*\psi_{13}(\ol{\SF} \circ_{N \times \bR} \ol{\SG}) \\
    &\simeq \pi_{\bR!}\Delta_\bR^*(\psi_{23}\ol{\SF} \circ_{N \times \bR} \psi_{12}\ol{\SG}) \simeq \psi_{23}\ol{\SF} \circ_{N \times \bR^2} \psi_{12}\ol{\SG}.
    \end{align*}
    This shows that in the setting of Example \ref{ex: cotangent bundle composition}, when $N_1 = N_3 = pt$, from the composition of sheaves with microsupports in Legendrians $L, K \subset S^*(N \times \bR)$ which yields a sheaf on $\bR$, we can further take composition and nearby cycle to obtain an actual object in $\SC$. Later, we will see that this is a special case of Theorem \ref{prop: recover tensor full faithful}.
\end{example}

\begin{example}
\label{rem: necessity not restrict to fiber 1}
    One could have tried to prove Theorems \ref{thm:gapped composition} and \ref{thm:gapped hom composition} by restricting to each fiber $i_{x_1,x_3}: x_1 \times M_2^2 \times x_3 \hookrightarrow M_1 \times M_2^2 \times M_3$ using Proposition \ref{nearby commute with restrict}, and then applying Theorems \ref{fullfaithful nearby} and \ref{nearby fully faithful} to each fiber.  This fails: the gapped condition is not preserved by passing to the fiber. 
    
    For example, let $M_1 = M_2 = M_3 = \bR$ and $M_2 = pt$. 
      Let us write  $i_{x_1}: x_1 \times M_2 \hookrightarrow M_1 \times M_2$ and $i_{x_3}: M_2 \times x_3 \hookrightarrow M_2 \times M_3$.  
    Consider 
    $$\SF_{12} = 1_{x_1 = x_2} \in \Sh(\bR^2 \times \bR_{>0}), \quad \SF_{23} = 1_{x_2=x_3+s} \in \Sh(\bR^2 \times \bR_{>0}).$$
    Observe that
    $$i_0^*\SF_{12} = 1_{x_2 = 0} \in \Sh(\bR \times \bR_{>0}), \quad i_0^*\SF_{23} = 1_{x_2 = s} \in \Sh(\bR \times \bR_{>0}).$$
    Now $(\ss_\pi(i_0^*\SF_{12}) \times \ss_\pi(i_0^*\SF_{23}), \dot{T}^*_\Delta\bR^2 )$ are not gapped, 
    and hence Theorem \ref{fullfaithful nearby} does not apply. 
    Nevertheless, $(\ss_\pi(\SF_{12}) \times \ss_\pi(\SF_{23}), 0_\bR \times \dot{T}^*_\Delta\bR^2 \times 0_\bR)$ are gapped -- there are no Reeb chords between the pair for any $s > 0$ -- hence Theorem \ref{thm:gapped composition} applies.  By said theorem, or a direct computation, one has
    $$\psi\SF_{23} \circ \psi\SF_{12} = 1_{x_2=x_3} \circ 1_{x_1=x_2} = 1_{x_1=x_3} = \psi(\SF_{23} \circ \SF_{12}).$$
\end{example}

%
%
%
%
%


\section{Gapped composition of microsheaves}\label{sec: gapped composition microsheaf}

    We will define the composition of microsheaves with prescribed sufficiently Legendrian supports, using doubling and the gapped composition theorem from the previous Section \ref{sec: gapped composition}. 

    Moreover, we discuss the relationship between gapped compositions of microsheaves and integral transforms of microsheaves in Section \ref{ssec: microsheaf duality}, and the relation between gapped compositions and stop removal functors in Section \ref{ssec: doubling}.

\subsection{Contact composition}

    We recall the definition of contact compositions of subsets. In the whole section, we assume that all the contact manifolds are co-oriented.
    
    First, we describe contact reductions along coisotropic submanifolds. We recall that for a contact manifold $(X, \xi)$, a submanifold $C \subset X$ is said to be {\it coisotropic} if $TC \cap \xi|_C \subset \xi|_C$ is a coisotropic distribution, and {\it regular coisotropic} if, in addition, $TC \cap \xi|_C$ has constant rank.

\begin{definition}
    Let $X$ be a contact manifold and $C \subset X$ a regular coisotropic submanifold of dimension at least $n + 1$. Then $X \times \bR_{>0}$ is a homogeneous symplectic manifold and $C \times \bR_{>0}$ is a homogeneous coisotropic submanifold. Let $\mathcal{F}_{C \times \bR_{>0}}$ be the characteristic foliation on $C \times \bR_{>0}$. The contact reduction of $X$ with respect to $Y$ is defined by
    $$X_{red} \times \bR_{>0} = (C \times \bR_{>0}) / \cF_{C \times \bR_{>0}},$$
    where there is a natural projection map $\pi_{red}: C \rightarrow X_{red}$.
\end{definition}

\begin{example}
    For a contact manifold $X_i$, let $X^-_i$ be the contact manifold with the same distribution but opposite co-orientation. Consider $X_1^- \,\widehat\times\, X_2 \,\widehat\times\, X_2^- \,\widehat\times\, X_3$ and the regular coisotropic
    $$X_1^- \,\widehat\times\, \Delta_{X_2} \,\widehat\times\, X_3 = \{[x_1, r_1; x_2, r_2; x_2, r_2; x_3, r_3] \mid (x_i, r_i) \in X_i \times \bR_{>0} \}.$$
    Then the corresponding contact reduction is the contact product $X_1 \,\widehat\times\, X_3$. When $X_i = S^*M_i = \dot{T}^*M_i / \bR_{>0}$, the regular coisotropic submanifold is the $\bR_{>0}$-quotient of
    $$\dot{T}^*X_1 \times \dot{T}^*_\Delta({M_2} \times M_2) \times \dot{T}^*X_3 \subset \dot{T}^*(M_1 \times M_2 \times M_2 \times M_3),$$
    and the contact reduction is the product $S^*M_1 \,\widehat\times\, S^*M_3 \subset S^*(M_1 \times M_3)$.
\end{example}

    We are interested in contact compositions of Legendrian subsets and contact reductions of Legendrian subsets.

\begin{definition}
    Let $X$ be a contact manifold, $C$ be a regular coisotropic submanifold and $\Lambda \subset X$ be a subset. Then for the contact reduction $\pi_{red}: C \to X_{red}$, the reduction of $\Lambda$ with respect to $C$ is
    $$\Lambda_{red} = \pi_{red}(\Lambda \cap C).$$
\end{definition}

\begin{definition}\label{def: contact composition}
    Let $\Lambda_{12} \subset X_1^- \,\widehat\times\, X_2, \Lambda_{23} \subset X_2^- \,\widehat\times\, X_3$ be two subsets, their composition is the reduction of $\Lambda_{12} \,\widehat\times\, \Lambda_{23}$ with respect to the regular coisotropic $X_1^- \,\widehat\times\, \Delta_{M_2} \,\widehat\times\, X_3$, where $\pi_{13}: X_1^- \,\widehat\times\, X_2 \,\widehat\times\, X_2^- \,\widehat\times\, X_3 \to X_1^- \,\widehat\times\, X_3$ is the projection map that defines the reduction:
    \begin{align}
    \Lambda_{23} \circ \Lambda_{12} := \pi_{13}(\Lambda_{12} \,\widehat\times\, \Lambda_{23} \cap X_1^- \,\widehat\times\, \Delta_{M_2} \,\widehat\times\, X_3).
    \end{align}
\end{definition}

    We relate compositions of conic subsets in cotangent bundles to compositions of subsets in the contact products of cosphere bundles.

    Let $M_1, M_2$ and $M_3$ be smooth manifolds. Consider subsets $\Lambda_{12} \subset S^*M_1 \,\widehat\times\, S^*M_2$ and $\Lambda_{23} \subset S^*M_2 \,\widehat\times\, S^*M_3$, and conic subsets
    $$\underline\Lambda_{12} = 0_{12} \cup \Lambda_{12} \times \bR_{>0} \subset T^*(M_1 \times M_2), \; \underline\Lambda_{23} = 0_{23} \cup \Lambda_{23} \times \bR_{>0} \subset T^*(M_2 \times M_3),$$
    where $0_{12} \subset 0_{M_1 \times M_2}$ and $0_{23} \subset 0_{M_2 \times M_3}$ are some prescribed subsets.

\begin{lemma}\label{ss of composition for doubling 1}
    Let $\Lambda_{12} \subset S^*M_1 \,\widehat\times\, S^*M_2$ and $\Lambda_{23} \subset S^*M_2 \,\widehat\times\, S^*M_3$ be any subsets. Then
    \begin{align*}
    (\underline\Lambda_{12} \times \underline\Lambda_{23}) \cap (T^*{M_1} \times \dot{T}^*_\Delta{M_2^2} \times T^*{M_3}) = (\Lambda_{12} \,\widehat\times\, \Lambda_{23} \cap S^*M_1 \,\widehat\times\, S^*_\Delta M_2^2 \,\widehat\times\, S^*M_3) \times \bR_{>0}.
    \end{align*}
\end{lemma}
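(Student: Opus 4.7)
The proof is essentially a definition-unwinding exercise, and the plan is as follows. First, I would decompose each $\underline{\Lambda}_{ij} = 0_{ij} \cup (\Lambda_{ij} \times \bR_{>0})$ and expand the LHS as a union of four pieces according to whether we take the zero-section or the cone part in each of the two factors. Then I would use the fact that a point in $\dot{T}^*_\Delta M_2^2$ has the form $(x_2, \xi_2, x_2, -\xi_2)$ with $\xi_2 \neq 0$, so both $M_2$-fiber coordinates of any point in the LHS intersection are non-zero. This immediately rules out the three pieces involving at least one zero section: the covector condition forces us into the cone-cone piece $(\Lambda_{12} \times \bR_{>0}) \times (\Lambda_{23} \times \bR_{>0})$.

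Next, I would note that since $\Lambda_{12}$ lies in the contact product $S^*M_1 \,\widehat\times\, S^*M_2$, which is the open subset of $S^*(M_1 \times M_2)$ on which both covector components are non-zero (per the Example following Definition \ref{def: contact product}), the cone $\Lambda_{12} \times \bR_{>0}$ is automatically contained in $\dot T^*M_1 \times \dot T^*M_2$, and analogously for $\Lambda_{23} \times \bR_{>0}$. Combining with the previous step,
\[
\mathrm{LHS} \;=\; \bigl((\Lambda_{12} \times \bR_{>0}) \times (\Lambda_{23} \times \bR_{>0})\bigr) \cap \bigl(\dot T^*M_1 \times \dot T^*_\Delta M_2^2 \times \dot T^*M_3\bigr).
\]

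Finally, I would match this with the RHS. By Definition \ref{def: contact product leg}, the conification of $\Lambda_{12} \,\widehat\times\, \Lambda_{23}$ inside the symplectization $\dot T^*M_1 \times \dot T^*M_2 \times \dot T^*M_2 \times \dot T^*M_3$ is precisely $(\Lambda_{12} \times \bR_{>0}) \times (\Lambda_{23} \times \bR_{>0})$, and similarly the conification of $S^*M_1 \,\widehat\times\, S^*_\Delta M_2^2 \,\widehat\times\, S^*M_3$ is $\dot T^*M_1 \times \dot T^*_\Delta M_2^2 \times \dot T^*M_3$. Since conification is preimage under the quotient $\dot T^* \to S^*$ and therefore commutes with intersections, the ``$\times \bR_{>0}$'' on the RHS distributes over the intersection and recovers the same expression. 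The only genuine bookkeeping obstacle is to make sure the notation ``$\times \bR_{>0}$'' is interpreted consistently as conification in all places where it appears; once that is pinned down, the argument is entirely formal.
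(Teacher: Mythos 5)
Your proof is correct and follows essentially the same path as the paper: the nonvanishing of the $M_2$-covectors forced by $\dot T^*_\Delta M_2^2$ kills the zero-section contributions, and then one identifies the remaining intersection with the conification of the contact-product on the right-hand side.

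One small remark: the paper phrases the first step more compactly — it just observes that $\underline\Lambda_{12} \subset 0_{12} \cup (\dot T^*M_1 \times \dot T^*M_2)$ (and similarly for $\underline\Lambda_{23}$), which immediately yields
\[
(\underline\Lambda_{12} \times \underline\Lambda_{23}) \cap (T^*M_1 \times \dot T^*_\Delta M_2^2 \times T^*M_3) = (\underline\Lambda_{12} \times \underline\Lambda_{23}) \cap (\dot T^*M_1 \times \dot T^*_\Delta M_2^2 \times \dot T^*M_3)
\]
without explicitly enumerating and discarding the three zero-section-containing pieces of the four-fold decomposition; the second displayed equation is then read off Definition~\ref{def: contact product leg}. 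Your version, which spells out the four pieces and the ``conification commutes with intersections'' step, is a slightly more verbose bookkeeping of the same argument. Both are fine.
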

\begin{proof}
    Since $\underline\Lambda_{12} \subset 0_{12} \cup (\dot{T}^*M_1 \times \dot{T}^*{M_2})$ and $\underline\Lambda_{23} \subset 0_{23} \cup (\dot{T}^*M_2 \times \dot{T}^*{M_3})$, we have the identity
    $$(\underline\Lambda_{12} \times \underline\Lambda_{23}) \cap (T^*{M_1} \times \dot{T}^*_\Delta{M_2^2} \times T^*{M_3}) = (\underline\Lambda_{12} \times \underline\Lambda_{23}) \cap (\dot{T}^*{M_1} \times \dot{T}^*_\Delta{M_2^2} \times \dot{T}^*{M_3}).$$
    Using Definition \ref{def: contact product leg} of coisotropics, we can write 
    $$\dot{T}^*{M_1} \times \dot{T}^*_\Delta{M_2^2} \times \dot{T}^*{M_3} = (S^*M_1 \,\widehat\times\, S^{*}_\Delta{M_2^2} \,\widehat\times\, S^*M_3) \times \bR_{>0}.$$
    Then, using Definition \ref{def: contact product leg} of contact products of Legendrians, we can also write
    \begin{align*}
    (\underline\Lambda_{12} \times \underline\Lambda_{23}) \cap (\dot{T}^*{M_1} \times \dot{T}^*_\Delta{M_2^2} \times \dot{T}^*{M_3}) = (\Lambda_{12} \,\widehat\times\, \Lambda_{23} \cap S^*M_1 \,\widehat\times\, S^*_\Delta M_2^2 \,\widehat\times\, S^*M_3) \times \bR_{>0}.
    \end{align*}
    This completes the proof.
\end{proof}

We also try to understand the relations between the compositions of the conic Lagrangian subsets and compositions of the corresponding Legendrian subsets. 

\begin{lemma}\label{ss of composition for doubling 2}
    Let $\underline\Lambda_{12} \subset T^*M_1 \times T^*M_2$, $\underline\Lambda_{23} \subset T^*M_1 \times T^*M_2$ be conic subsets and $\Lambda_{12} \subset S^*M_1 \,\widehat\times\, S^*M_2$, $\Lambda_{23} \subset S^*M_2 \,\widehat\times\, S^*M_3$ be the corresponding subsets at infinity. Then
    \begin{align*}
    &(\underline\Lambda_{12} \times \underline\Lambda_{23}) \cap (T^*{M_1} \times {T}^*_\Delta{M_2^2} \times T^*{M_3}) \\
    &= ((\Lambda_{12} \,\widehat\times\, \Lambda_{23} \cap S^*M_1 \,\widehat\times\, S^*_\Delta M_2^2 \,\widehat\times\, S^*M_3) \times \bR_{>0}) \cup (0_{12} \times 0_{23}).
    \end{align*}
    In particular, $\underline\Lambda_{12} \circ \underline\Lambda_{23} = (\Lambda_{12} \circ \Lambda_{23} \times \bR_{>0}) \cup (0_{12} \circ 0_{23})$.
\end{lemma}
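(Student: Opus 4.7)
The plan is to stratify the coisotropic $T^*_\Delta M_2^2$ as the disjoint union of its zero section $0_{\Delta M_2^2}$ and its punctured part $\dot T^*_\Delta M_2^2$, reducing the intersection on the left-hand side to two separate pieces. The punctured part has already been computed in the previous Lemma \ref{ss of composition for doubling 1}, which identifies
\[
(\underline\Lambda_{12} \times \underline\Lambda_{23}) \cap (T^*M_1 \times \dot T^*_\Delta M_2^2 \times T^*M_3) = (\Lambda_{12} \,\widehat\times\, \Lambda_{23} \cap S^*M_1 \,\widehat\times\, S^*_\Delta M_2^2 \,\widehat\times\, S^*M_3) \times \bR_{>0},
\]
so the only new work is the intersection against the zero section piece $0_{\Delta M_2^2}$.

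For the zero section contribution, a point must have the form $(x_1, x_2, x_2, x_3, \xi_1, 0, 0, \xi_3)$ with $(x_1, x_2, \xi_1, 0) \in \underline\Lambda_{12}$ and $(x_2, x_3, 0, \xi_3) \in \underline\Lambda_{23}$. Here I invoke the cone decomposition $\underline\Lambda_{ij} = 0_{ij} \sqcup (\Lambda_{ij} \times \bR_{>0})$ from the hypothesis: since $\Lambda_{12} \subset S^*M_1 \,\widehat\times\, S^*M_2$, its conification $\Lambda_{12} \times \bR_{>0}$ lies inside $\dot T^*M_1 \times \dot T^*M_2$, so any point with zero $T^*M_2$-component must belong to $0_{12}$, which in turn forces $\xi_1 = 0$. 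The same argument applied to $\underline\Lambda_{23}$ forces $\xi_3 = 0$. Hence the intersection over the zero section is precisely the subset of $0_{M_1} \times 0_{\Delta M_2^2} \times 0_{M_3}$ consisting of tuples $(x_1, x_2, x_2, x_3)$ with $(x_1, x_2) \in 0_{12}$ and $(x_2, x_3) \in 0_{23}$, which is the fiber-product-like subset abbreviated as $0_{12} \times 0_{23}$ in the statement.

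For the ``in particular'' assertion, I apply the projection $\pi_{13}$ piecewise: by Definition \ref{def: contact composition} the conic part projects to $(\Lambda_{12} \circ \Lambda_{23}) \times \bR_{>0}$, while the zero-section part projects to the set-theoretic composition $0_{12} \circ 0_{23}$ of subsets in $0_{M_1 \times M_2}$ and $0_{M_2 \times M_3}$. The one bookkeeping point that needs care -- and the only place the argument is not formal -- is verifying that no ``mixed'' contribution sneaks in from points of $\Lambda_{12} \times \bR_{>0}$ meeting the diagonal with one nonzero factor, which is ruled out cleanly by the cone hypothesis above.
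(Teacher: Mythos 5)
Your proof is correct and follows essentially the same route as the paper's: the paper decomposes $T^*(M_1\times M_2^2\times M_3)$ into its zero section and punctured part, while you decompose $T^*_\Delta M_2^2$, but under the standing cone hypothesis $\underline\Lambda_{ij}\subset 0_{ij}\cup(\dot T^*M_i\times\dot T^*M_j)$ these give the identical decomposition of the intersection, and both proofs then invoke Lemma~\ref{ss of composition for doubling 1} for the punctured piece and the cone hypothesis to pin the zero-section piece to $0_{12}\times 0_{23}$. Your write-up is if anything slightly more explicit than the paper's (in particular in noting that $0_{12}\times 0_{23}$ is really the fiber product over the $M_2$ diagonal, and in isolating the cone hypothesis as the key step ruling out mixed contributions).
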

\begin{proof}
    Since $\underline\Lambda_{12} \subset 0_{12} \cup (\dot{T}^*M_1 \times \dot{T}^*{M_2})$ and $\underline\Lambda_{23} \subset 0_{23} \cup (\dot{T}^*M_2 \times \dot{T}^*{M_3})$, we have the identity
    \begin{align*}
        &(\underline\Lambda_{12} \times \underline\Lambda_{23}) \cap (T^*M_1 \times T^*_\Delta M_2^2 \times T^*M_3) \cap (\dot{T}^*{(M_1 \times M_2^2 \times M_3)}) \\
        &=(\underline\Lambda_{12} \times \underline\Lambda_{23}) \cap (T^*M_1 \times T^*_\Delta M_2^2 \times T^*M_3) \cap (\dot{T}^*{(M_1 \times M_2)} \times \dot{T}^*(M_2 \times M_3)) \\
        &= (\underline\Lambda_{12} \times \underline\Lambda_{23}) \cap (\dot{T}^*M_1 \times \dot{T}^*_\Delta M_2^2 \times \dot{T}^*M_3).
    \end{align*}
    Then we complete the proof using Lemma \ref{ss of composition for doubling 1}. The statement about Lagrangian composition follows by  taking the projection to $\dot T^*M_1 \times \dot T^*M_3$ and the quotient by $\bR_{>0}$. 
\end{proof}

Finally, we will need to show that products of conic subsets are positively gapped with respect to the conormal of the diagonal $0_{M_1} \times \dot{S}^*_\Delta{M_2^2} \times 0_{M_3}$.

\begin{lemma}\label{ss gapped for doubling}
    Let $\Lambda_{12} \subset S^*M_1 \,\widehat\times\, S^*M_2$ and $\Lambda_{23} \subset S^*M_2 \,\widehat\times\, S^*M_3$ be any subsets. Let $R_t$ be the Reeb flow on $S^*(M_1 \times M_2 \times M_2 \times M_3)$. Then for any $t$ sufficiently small,
    $$(\underline\Lambda_{12} \times \underline\Lambda_{23}) \cap R_t(0_{M_1} \times \dot{T}^*_\Delta{M_2^2} \times 0_{M_3}) = \varnothing.$$
\end{lemma}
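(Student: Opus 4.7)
The plan is twofold. First, I will show that the two subsets in the statement are already \emph{disjoint} in $\dot T^*(M_1 \times M_2 \times M_2 \times M_3)$, so there is no Reeb chord of length zero between them. Second, I will choose a Reeb flow that preserves the defining conditions $\xi_1 = 0$ and $\xi_3 = 0$; then the disjointness extends from $t = 0$ to all $t > 0$, which is stronger than required.

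For the disjointness at $t = 0$, I decompose $\underline\Lambda_{ij} = 0_{ij} \cup (\Lambda_{ij} \times \bR_{>0})$, so that $\underline\Lambda_{12} \times \underline\Lambda_{23}$ splits into four pieces. The key input is Definition \ref{def: contact product leg}: the condition $\Lambda_{12} \subset S^*M_1 \,\widehat\times\, S^*M_2$ forces every point of the conification $\Lambda_{12} \times \bR_{>0}$ to have \emph{both} $\xi_1 \neq 0$ and $\xi_2 \neq 0$, and analogously for the 23-factor. Meanwhile, a point of $0_{M_1} \times \dot T^*_\Delta M_2^2 \times 0_{M_3}$ has $\xi_1 = 0 = \xi_3$ but $\xi_2 = -\xi_2' \neq 0$. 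Comparing conditions across the four cases yields a contradiction in each one: the piece $(\Lambda_{12} \times \bR_{>0}) \times (\Lambda_{23} \times \bR_{>0})$ demands $\xi_1 \neq 0$ against $\xi_1 = 0$; the piece $0_{12} \times (\Lambda_{23} \times \bR_{>0})$ forces $\xi_1 = \xi_2 = 0$ against $\xi_2 \neq 0$; and similarly for the other two pieces.

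For the second step, I fix a product Riemannian metric $g = g_1 \oplus g_2 \oplus g_2' \oplus g_3$ on $M_1 \times M_2 \times M_2 \times M_3$ and take the Reeb flow $R_t$ induced by the Hamiltonian $|\xi|_g$ on $\dot T^*$. Since $g$ is a product, the expressions $|\xi_i|^2_{g_i}(x_i,\xi_i) = g_i^{jk}(x_i)\xi_{i,j}\xi_{i,k}$ decouple across the four factors. Hamilton's equations give $\dot x_i = \partial_{\xi_i}|\xi|_g$ and $\dot \xi_i = -\partial_{x_i}|\xi|_g$, and both vanish identically wherever $\xi_i = 0$ (the relevant derivatives of $|\xi_i|^2_{g_i}$ kill two factors of $\xi_i$). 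Thus the loci $\{\xi_1 = 0\}$ and $\{\xi_3 = 0\}$ are flow-invariant, so
$$R_t\bigl(0_{M_1} \times \dot T^*_\Delta M_2^2 \times 0_{M_3}\bigr) \subseteq \{\xi_1 = 0,\; \xi_3 = 0,\; (\xi_2, \xi_2') \neq 0\}$$
for every $t$, and the case analysis of the previous paragraph shows this locus is disjoint from $\underline\Lambda_{12} \times \underline\Lambda_{23}$.

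There is no real obstacle here. The only subtlety is verifying that one may choose a Reeb flow respecting the product decomposition of the cotangent bundle, which is automatic for the product-metric geodesic flow; one could alternatively argue more abstractly that any Reeb flow suffices for sufficiently small $t$ by continuity, provided one imposes mild compactness on $\Lambda_{ij}$ so that the disjointness at $t=0$ is uniform.
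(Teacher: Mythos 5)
Your proof is correct and follows essentially the same route as the paper, just more carefully. The paper's proof of this lemma is extremely terse — it simply observes that $\underline\Lambda_{12} \subset 0_{12} \cup (\dot T^*M_1 \times \dot T^*M_2)$ and $\underline\Lambda_{23} \subset 0_{23} \cup (\dot T^*M_2 \times \dot T^*M_3)$ and asserts that the intersection with $R_t(0_{M_1} \times \dot T^*_\Delta M_2^2 \times 0_{M_3})$ is empty, leaving implicit two things that you have made explicit: (i) the case analysis over the four pieces of $\underline\Lambda_{12} \times \underline\Lambda_{23}$, and (ii) the fact that the Reeb flow can be taken to preserve the loci $\{\xi_1 = 0\}$ and $\{\xi_3 = 0\}$. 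Your choice of the product-metric geodesic flow is precisely what makes (ii) hold, and with that choice you actually get disjointness for all $t$, not just small $t$, matching what the paper's proof implicitly establishes. Your closing remark is also on point: since the lemma allows arbitrary (possibly noncompact) $\Lambda_{ij}$, one cannot simply appeal to continuity from the $t=0$ disjointness for an arbitrary Reeb flow; the flow-invariance argument is what makes the statement unconditional, and the paper should be read as tacitly using such a flow.
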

\begin{proof}
    Since $\underline\Lambda_{12} \subset 0_{12} \cup (\dot{T}^*M_1 \times \dot{T}^*{M_2})$ and $\underline\Lambda_{23} \subset 0_{23} \cup (\dot{T}^*M_2 \times \dot{T}^*{M_3})$, it follows that the intersection is always empty as 
    $$(\underline\Lambda_{12} \times \underline\Lambda_{23}) \cap R_t(0_{M_1} \times \dot{T}^*_\Delta{M_2^2} \times 0_{M_3}) = \varnothing.$$
    This then completes the proof.
\end{proof}

\subsection{Definition and associativity}
    We define microlocal gapped composition in this section, upon the following additional assumption on the pair of sufficiently Legendrian subsets:

\begin{definition}\label{def: composable}
    Let $\Lambda_{12} \subset S^*(M_1 \times M_2)$ and $\Lambda_{23} \subset S^*(M_2 \times M_3)$ be relatively compact and sufficiently Legendrian. We say that $\Lambda_{12}$ and $\Lambda_{23}$ are {\em composable} if their contact composition $\Lambda_{13}:=\Lambda_{23} \circ \Lambda_{12}$ is also sufficiently Legendrian, and, moreover, in terms of the positive isotopy that displaces $\Lambda_{12}$ and $\Lambda_{23}$, we have that $\Lambda_{23,\epsilon'} \circ \Lambda_{12,\epsilon}$ is disjoint from $\Lambda_{23} \circ \Lambda_{12}$ for sufficiently small $\epsilon \geq 0, \epsilon' \geq 0$ such that $\epsilon + \epsilon' > 0$.
\end{definition}

\begin{lemma}\label{lem: composable openness}
    Let $\Lambda_{12} \subset S^*(M_1 \times M_2)$ and $\Lambda_{23} \subset S^*(M_2 \times M_3)$ be relatively compact sufficiently Legendrian subsets that are composable in the sense of Definition \ref{def: composable}. Then for the relative doublings $(\underline\Lambda_{23})_{\cup,\epsilon}^+$ and $(\underline\Lambda_{12})_{\cup,\epsilon}^+$,
    $$\Lambda_{23} \circ \Lambda_{12} \subset (\underline\Lambda_{23})_{\cup,\epsilon}^+ \circ (\underline\Lambda_{12})_{\cup,\epsilon}^+$$
    as an open subset. 
\end{lemma}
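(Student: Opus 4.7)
The plan is to separately verify the containment and the openness, combining the structural description of the positive doubling from Definition \ref{def: positive doubling} with the composition formula of Lemma \ref{ss of composition for doubling 2}. The containment is immediate: each doubling $(\underline\Lambda_{ij})_{\cup,\epsilon}^+$ contains the conification $\overline{\Lambda}_{ij} \times \bR_{>0}$ of the original Legendrian, so the composition of the doublings contains the composition of these conifications, which by Lemma \ref{ss of composition for doubling 2} agrees (away from the zero section) with the conification of $\Lambda_{23} \circ \Lambda_{12}$.

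For openness, I would first observe that the part of $(\underline\Lambda_{ij})_{\cup,\epsilon}^+$ lying in $\dot T^*(M_i \times M_j)$ is exactly $\overline{\Lambda}_{ij} \times \bR_{>0} \cup \overline{\Lambda}_{ij,\epsilon} \times \bR_{>0}$, while the remaining points lie in the zero section $0_{M_i \times M_j}$. A second application of Lemma \ref{ss of composition for doubling 2}, combined with the distributivity of the composition operation over unions in each argument, then identifies the part of $(\underline\Lambda_{23})_{\cup,\epsilon}^+ \circ (\underline\Lambda_{12})_{\cup,\epsilon}^+$ lying in $\dot T^*(M_1 \times M_3)$ as the conification of
$$\bigcup_{a, b \in \{0,\epsilon\}} L_{a,b}, \qquad L_{a,b} := \Lambda_{23,b} \circ \Lambda_{12,a},$$
the remaining part being contained in $0_{M_1 \times M_3}$.

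Any point of $\Lambda_{23} \circ \Lambda_{12} = L_{0,0}$, viewed in its conification, lies in $\dot T^*(M_1 \times M_3)$, so I may work in a conic open set that automatically avoids the zero section. The composability hypothesis of Definition \ref{def: composable} says precisely that $L_{0,0}$ is disjoint from each of $L_{0,\epsilon}, L_{\epsilon,0}, L_{\epsilon,\epsilon}$. Since $\Lambda_{12}, \Lambda_{23}$ are relatively compact, the four sets $L_{a,b}$ have compact closures in $S^*(M_1 \times M_3)$, and pairwise disjoint compacta in a Hausdorff space admit a positive lower bound on their mutual distances. Hence I can pick a conic open neighborhood of the conification of $L_{0,0}$ in $\dot T^*(M_1 \times M_3)$ which is bounded away from the conifications of $L_{0,\epsilon}, L_{\epsilon,0}, L_{\epsilon,\epsilon}$; such a neighborhood intersects the composition of doublings only inside the conification of $L_{0,0}$, which is exactly the content of the openness claim.

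I do not foresee a significant technical obstacle; the only subtlety is ensuring that the four composites $L_{a,b}$ really exhaust the non-zero-section content of the composition of doublings, which is transparent from Lemma \ref{ss of composition for doubling 2} applied with the doublings expressed as unions of their four cone branches.
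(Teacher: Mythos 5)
Your proof follows the same route as the paper's: rewrite the composition of the doublings away from the zero section as the union of the four pieces $L_{a,b} = \Lambda_{23,b} \circ \Lambda_{12,a}$ (via the distributivity coming from Lemma \ref{ss of composition for doubling 2}), and then invoke composability to separate $L_{0,0}$ from the other three. The paper states this decomposition and then asserts the relative openness in one line, citing composability and relative compactness, so at the level of main ideas the two arguments coincide.

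The one place I would caution you is the last step, where you argue that the four $L_{a,b}$ ``have compact closures,'' are ``pairwise disjoint,'' and therefore sit at positive distance from each other. That inference would be correct for genuinely disjoint compact sets, but here it is not quite what you have. The $\Lambda_{ij}$ are relatively compact, not closed, and the doubling is built from the closures $\overline{\Lambda}_{ij}$ and $\overline{\Lambda}_{ij,\epsilon}$. The push-off isotopy is required by construction to vanish on $\partial\Lambda_{ij} = \overline{\Lambda}_{ij}\setminus\Lambda_{ij}$, so $\overline{\Lambda}_{ij,\epsilon}$ and $\overline{\Lambda}_{ij}$ agree along $\partial\Lambda_{ij}$; consequently the closures $\overline{L_{a,b}}$ will in general share boundary points, and the ``mutual distance'' is zero. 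Composability as stated in Definition \ref{def: composable} gives $L_{0,0}\cap L_{a,b}=\varnothing$ for the open pieces, which is the disjointness you need, but it does not by itself assert $L_{0,0}\cap\overline{L_{a,b}}=\varnothing$, which is what the distance argument implicitly requires. What actually closes the gap is that points of $L_{0,0}$ arise from interior points of $\Lambda_{12}$, $\Lambda_{23}$, where the isotopy is strictly positive; so a sequence of preimages in $\overline{\Lambda}_{12,a}\widehat\times\overline{\Lambda}_{23,b}$ converging to a preimage of $p\in L_{0,0}$ either lands in the open strata (contradicting composability) or accumulates on the boundary stratum, and the latter is ruled out because the boundary composites are disjoint from $L_{0,0}$ by the ``all small $\epsilon, \epsilon'$'' clause of the definition. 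Your proof would be tightened by replacing the distance-between-compacta step with this sequential/limit-of-preimages argument (which does use the relative compactness, in the form of sequential compactness of the closures), so that the boundary identifications do not derail the openness claim.

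Aside from that one point, your containment step and your identification of the four-piece decomposition are fine, and the structure matches the intended proof.
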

\begin{proof}
    We know that for the relative doublings of $\Lambda_{12}$ and $\Lambda_{23}$, their composition is $$(\underline\Lambda_{23})_{\cup,\epsilon}^+ \circ (\underline\Lambda_{12})_{\cup,\epsilon}^+ \setminus 0_{M_1 \times M_3} = (\Lambda_{23} \circ \Lambda_{12}) \cup (\Lambda_{23,\epsilon} \circ \Lambda_{12}) \cup (\Lambda_{23} \circ \Lambda_{12,\epsilon}) \cup (\Lambda_{23,\epsilon} \circ \Lambda_{12,\epsilon}).$$
    Since $\Lambda_{12}$ and $\Lambda_{23}$ are composable and relatively compact subsets, we know that $\Lambda_{23} \circ \Lambda_{12}$ is a relative open subset in $(\underline\Lambda_{23})_{\cup,\epsilon}^+ \circ (\underline\Lambda_{12})_{\cup,\epsilon}^+$.
\end{proof}

    The condition that $\Lambda_{23,\epsilon'} \circ \Lambda_{12,\epsilon}$ is disjoint from $\Lambda_{23} \circ \Lambda_{12}$ can easily fail: 

\begin{example}
    Let $M_1 = M_3 = pt$ and $M_2 = \bR^2$. Then for $\Lambda, \Lambda' \subset S^*\bR^2$, the composition $\Lambda \circ \Lambda'$ is given by the intersection points $\Lambda \cap \Lambda'$. Let $f: \bR \to \bR$ be a smooth function such that $f^{-1}(0) = 0$ and the critical values have an accumulation point at $0$. Consider 
    $$\Lambda = \{(x, 0; 0, 1) \mid x \in \bR\}, \quad \Lambda' = \{(x, f(x); f'(x), 1) \mid x \in \bR\}.$$
    Then $\Lambda \circ \Lambda'$ is a single point. However, for any sufficiently small $\epsilon$ and $\epsilon' > 0$, the Reeb push-off $\Lambda_\epsilon \circ \Lambda'_{\epsilon'}$ is non-empty and hence cannot be disjoint from $\Lambda \circ \Lambda'$.
\end{example}

    Let $\Lambda_{12} \subset S^*(M_1 \times M_2)$ and $\Lambda_{23} \subset S^*(M_2 \times M_3)$ be relatively compact sufficiently Legendrian subsets. Consider the doubling functor in Definition \ref{def: positive doubling}
    $$w_{\Lambda_{ij}}^+: \msh_{\Lambda_{12}}(\Lambda_{12}) \hookrightarrow \Sh_{(\underline\Lambda_{12})_{\cup,\epsilon}^+}(M_i \times M_j).$$
    Then for $\SS(w_{\Lambda_{12}}^+\SF_{12}) \subset (\underline\Lambda_{12})_{\cup,\epsilon}^+$ and $\SS(w_{\Lambda_{23}}^+\SF_{23}) \subset (\underline\Lambda_{23})_{\cup,\epsilon}^+$, by Lemma \ref{lem: ss-composition}, we know that
    $$\SS(w_{\Lambda_{23}}^+\SF_{23} \circ w_{\Lambda_{12}}^+\SF_{12}) \subset (\underline\Lambda_{23})_{\cup,\epsilon}^+ \circ (\underline\Lambda_{12})_{\cup,\epsilon}^+.$$
    Then, Lemma \ref{lem: composable openness} allows us to define compositions of microsheaves supported on composable pairs of sufficient Legendrians as follows:

\begin{definition-proposition}\label{def: gap micro composition}
    Let $\Lambda_{12} \subset S^*(M_1 \times M_2)$ and $\Lambda_{23} \subset S^*(M_2 \times M_3)$ be a composable pair of relatively compact sufficiently Legendrian subsets as in Definition \ref{def: composable} and let $\Lambda_{13} := \Lambda_{23} \circ \Lambda_{12} \subset S^*(M_1 \times M_3)$. 
    Then for any $\SF_{12} \in \msh_{\Lambda_{12}}(\Lambda_{12}), \SF_{23} \in \msh_{\Lambda_{23}}(\Lambda_{23})$, and for sufficiently small doubling parameter $\epsilon > 0$ such that $\Lambda_{12}, \Lambda_{23}$ and $\Lambda_{13}$ have no Reeb chords of length less than $\epsilon$, the microsheaves
    $$m_{\Lambda_{13}}(w_{\Lambda_{23,\epsilon}}^+\SF_{23} \circ w_{\Lambda_{12,\epsilon}}^+\SF_{12}) \in \msh_{\Lambda_{13}}(\Lambda_{13})$$
    are canonically isomorphic. We define it as $\SF_{23} \circ_g \SF_{12}$. 
\end{definition-proposition}
\begin{proof}
    Let $\epsilon > 0$ be smaller than the shortest Reeb chords on $\Lambda_{12}, \Lambda_{23}$ and $\Lambda_{23}$. Then for any $\SF_{ij} \in \msh_{\Lambda_{ij}}(\Lambda_{ij})$, following notations in Definition \ref{def: positive doubling}, we have $\SS(w_{\Lambda_{ij}}^{\prec,+}\SF_{ij}) \subset (\underline\Lambda_{ij})_{\cup,\epsilon}^{\prec,+} \subset T^*(M_i \times M_j \times \bR_{\leq \epsilon})$. Then by Lemma \ref{lem: ss-composition}, we have
    $$\SS(w_{\Lambda_{23}}^{\prec,+}\SF_{23} \circ w_{\Lambda_{12}}^{\prec,+}\SF_{12}) \subset (\underline\Lambda_{23})_{\cup,\epsilon}^{\prec,+} \circ_{\bR_{\leq\epsilon}} (\underline\Lambda_{12})_{\cup,\epsilon}^{\prec,+}.$$
    Since the pair is composable in the sense of Definition \ref{def: composable}, $\Lambda_{13} \times \bR_{\leq \epsilon} =(\Lambda_{23} \circ \Lambda_{12}) \times \bR_{\leq\epsilon} \subset (\underline\Lambda_{23})_{\cup,\epsilon}^{\prec,+} \circ_{\bR_{\leq\epsilon}} (\underline\Lambda_{12})_{\cup,\epsilon}^{\prec,+}$ is an open subset that is sufficiently Legendrian, we know by the contact transformation Lemma \ref{lem:contact-transform-main} that the microlocalization along $\Lambda_{13} \times \bR_{\leq \epsilon}$ is constant on $[0, \epsilon]$. This completes the proof.
\end{proof}

\begin{definition-proposition} \label{def: gap micro hom composition} 
    Let $\Lambda_{12} \subset S^*(M_1 \times M_2)$ and $\Lambda_{23} \subset S^*(M_2 \times M_3)$ be a composable pair of relative compact sufficiently Legendrian subsets as in Definition \ref{def: composable} and let $\Lambda_{\bar13} := \Lambda_{23} \circ (-\Lambda_{12}) \subset S^*(M_1 \times M_3)$. Then for any $\SF_{12} \in \msh_{\Lambda_{12}}(\Lambda_{12}), \SF_{23} \in \msh_{\Lambda_{23}}(\Lambda_{23})$, and any sufficiently small $\epsilon > 0$ such that $\Lambda_{12}, \Lambda_{23}$ and $\Lambda_{\bar 13}$ have no Reeb chords of length less than $2\epsilon$, the microsheaves
    $$m_{\Lambda_{13}}\sHom^\circ(w_{\Lambda_{12}}^+\SF_{12}, w_{\Lambda_{23}}^+\SF_{23}) \in \msh_{\Lambda_{\bar13}}(\Lambda_{\bar13})$$
    are canonically isomorphic. We define it as $\sHom^\circ_g(\SF_{12}, \SF_{23})$. 
\end{definition-proposition}

\begin{remark}
    One important feature in the definition is that $\epsilon > 0$ not only depends on the self Reeb chords of $\Lambda_{12}, \Lambda_{23}$, but the self Reeb chords of their compositions $\Lambda_{13}$ as well.
\end{remark}

\begin{definition}\label{def: composable pairwise}
    Let $\Lambda_{12} \subset S^*(M_1 \times M_2), \Lambda_{23} \subset S^*(M_2 \times M_3), \dots , \Lambda_{k-1,k} \subset S^*(M_{k-1} \times M_k)$ be sufficiently Legendrian subsets. We say that $\Lambda_{12}, \Lambda_{23}, \dots, \Lambda_{k-1,k}$ are \emph{composable} if for any $1\leq i < l < j \leq k$, the iterated contact compositions defined inductively as $\Lambda_{ij} := \Lambda_{lj} \circ \Lambda_{il} \subset S^*(M_i \times M_j)$ are sufficiently Legendrian, and moreover, in terms of the positive isotopies that displace $\Lambda_{il}$ and $\Lambda_{lj}$, we have that $\Lambda_{lj,\epsilon} \circ \Lambda_{il,\epsilon'}$ is disjoint from $\Lambda_{lj} \circ \Lambda_{il}$ for sufficiently small $\epsilon \geq 0, \epsilon' \geq 0$ such that $\epsilon+\epsilon' > 0$.
\end{definition}

\begin{proposition}\label{prop: gapped composition associative}
    Let $\Lambda_{12} \subset S^*(M_1 \times M_2), \Lambda_{23} \subset S^*(M_2 \times M_3)$ and $\Lambda_{34} \subset S^*(M_3 \times M_4)$ be  composable relative compact sufficiently Legendrian subsets  
    in the sense of in Definition \ref{def: composable pairwise}. Then for any $\SF_{12} \in \msh_{\Lambda_{12}}(\Lambda_{12}), \SF_{23} \in \msh_{\Lambda_{23}}(\Lambda_{23})$ and $\SF_{34} \in \msh_{\Lambda_{34}}(\Lambda_{34})$, there is a canonical isomorphism
    $$(\SF_{34} \circ_g \SF_{23}) \circ_g \SF_{12} \simeq \SF_{34} \circ_g (\SF_{23} \circ_g \SF_{12}).$$
\end{proposition}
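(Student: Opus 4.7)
The plan is to reduce the statement to the evident associativity of the underlying sheaf composition by unfolding Definition--Proposition~\ref{def: gap micro composition} and absorbing the intermediate applications of $w^+ \circ m$ into a single triple doubling. Concretely, I would choose $\epsilon > 0$ small enough that Definition--Proposition~\ref{def: gap micro composition} applies to each of the pairs $(\SF_{12},\SF_{23})$, $(\SF_{23},\SF_{34})$, and also to each of the pairs $(\SF_{34}\circ_g \SF_{23},\SF_{12})$, $(\SF_{34},\SF_{23}\circ_g \SF_{12})$ (using the triple composability condition of Definition~\ref{def: composable pairwise} to ensure that $\Lambda_{14} = \Lambda_{34}\circ\Lambda_{23}\circ\Lambda_{12}$ is still an open sufficiently Legendrian piece of every iterated doubled composition).

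The main claim is then that both iterated compositions agree with
\[
m_{\Lambda_{14}}\!\bigl(w_{\Lambda_{34}}^{+}\SF_{34} \,\circ\, w_{\Lambda_{23}}^{+}\SF_{23}\,\circ\, w_{\Lambda_{12}}^{+}\SF_{12}\bigr),
\]
which is well-defined because ordinary sheaf composition over $\Sh(M_1\times M_2\times M_3\times M_4)$ is associative on the nose via the standard six-functor coherences. For instance, unfolding $(\SF_{34}\circ_g \SF_{23})\circ_g \SF_{12}$ gives
\[
m_{\Lambda_{14}}\!\bigl(w_{\Lambda_{24}}^{+} m_{\Lambda_{24}}(w_{\Lambda_{34}}^{+}\SF_{34}\circ w_{\Lambda_{23}}^{+}\SF_{23}) \,\circ\, w_{\Lambda_{12}}^{+}\SF_{12}\bigr),
\]
and I would exhibit a natural map from the triple composition above to this expression (or vice versa) and show it becomes an isomorphism after applying $m_{\Lambda_{14}}$.

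The key input is that, by Theorem~\ref{thm: doubling adjoint} and Corollary~\ref{rem: double vs wrap}, the composition $w_{\Lambda_{24}}^{+}\circ m_{\Lambda_{24}}$ on $\Sh_{(\underline{\Lambda}_{34})^+_{\cup,\epsilon}\circ (\underline{\Lambda}_{23})^+_{\cup,\epsilon}}(M_2\times M_4)$ is the left adjoint to the tautological inclusion from $\Sh_{(\underline{\Lambda}_{24})^+_{\cup,\epsilon}}$, and its comparison map to the identity has cofiber with microsupport contained in $\bigl((\underline{\Lambda}_{34})^+_{\cup,\epsilon}\circ (\underline{\Lambda}_{23})^+_{\cup,\epsilon}\bigr)\setminus (\underline{\Lambda}_{24})^+_{\cup,\epsilon}$. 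Applying the microsupport estimate for sheaf composition (Lemma~\ref{lem: ss-composition}) in the $M_2$-factor, the cofiber of the induced map after composing with $w_{\Lambda_{12}}^{+}\SF_{12}$ has microsupport in the corresponding difference inside $(\underline{\Lambda}_{34})^+_{\cup,\epsilon}\circ (\underline{\Lambda}_{23})^+_{\cup,\epsilon}\circ (\underline{\Lambda}_{12})^+_{\cup,\epsilon}$, and the triple composability hypothesis (Definition~\ref{def: composable pairwise}) guarantees that this difference is disjoint from $\Lambda_{14}$ for sufficiently small $\epsilon$. Hence the cofiber vanishes after microlocalization by $m_{\Lambda_{14}}$. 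The symmetric argument applies on the other side, comparing with $\SF_{34}\circ_g (\SF_{23}\circ_g \SF_{12})$.

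The main technical hurdle will be the bookkeeping of microsupports for the iterated doubled compositions and checking that the composability condition genuinely keeps the error microsupports away from $\Lambda_{14}$ (rather than merely away from $\Lambda_{24}$ or $\Lambda_{13}$). A convenient presentation would be to parametrize the doubling parameter $\epsilon$ carefully so that the three doubling neighborhoods $(\underline{\Lambda}_{ij})^+_{\cup,\epsilon}$ ($ij=12,23,34$) are small enough that their iterated compositions only meet $0_{M_1\times M_4}\cup\Lambda_{14}\times\bR_{>0}$ in the prescribed composition piece. Canonicity of the resulting isomorphism follows because every intermediate map is induced by a six-functor natural transformation, so the output of the argument automatically assembles into a canonical equivalence in $\msh_{\Lambda_{14}}(\Lambda_{14})$.
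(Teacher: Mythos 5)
Your proposal follows essentially the same strategy as the paper's own proof: unfold each parenthesized composition into an expression containing an inner $w^+\circ m$ (the counit of a doubling adjunction from Theorem~\ref{thm: doubling adjoint}), compare to the bare triple sheaf composition $m_{\Lambda_{14}}(w^+\SF_{34}\circ w^+\SF_{23}\circ w^+\SF_{12})$, and kill the cofiber of the comparison map by showing its microsupport misses $\Lambda_{14}$ (via Lemma~\ref{lem: ss-composition} and the composability hypothesis), then apply $m_{\Lambda_{14}}$. The symmetric treatment of the two parenthesizations and the appeal to six-functor naturality for canonicity are exactly as in the paper. One small imprecision worth flagging: you claim the cofiber of the counit has microsupport in $\bigl((\underline{\Lambda}_{34})^+_{\cup,\epsilon}\circ(\underline{\Lambda}_{23})^+_{\cup,\epsilon}\bigr)\setminus(\underline{\Lambda}_{24})^+_{\cup,\epsilon}$, but what the doubling adjunction actually gives is only that the cofiber microlocalizes to zero along $\Lambda_{24}$, i.e.\ its microsupport avoids $\Lambda_{24}$ itself; it may well meet the pushed-off strands $\Lambda_{24,\pm\epsilon}$ or the zero-section portion of the doubling. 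The argument still goes through because, after composing with $(\underline{\Lambda}_{12})^+_{\cup,\epsilon}$, the composability condition of Definition~\ref{def: composable pairwise} (applied with $\epsilon'=0$) guarantees that $\Lambda_{24,\epsilon}\circ\Lambda_{12}$ stays away from $\Lambda_{14}$, and the zero-section pieces only contribute to the zero section of $M_1\times M_4$; but you should state the microsupport estimate as ``disjoint from $\Lambda_{24}$'' rather than disjoint from the whole doubling, and make the resulting case analysis explicit.
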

\begin{proof}
    First, by Theorem \ref{thm: doubling adjoint}, we know that there is an adjunction 
    $$m_{\Lambda_{13}}: \Sh_{(\underline\Lambda_{23})^+_{\cup,\epsilon} \times (\underline\Lambda_{12})^+_{\cup,\epsilon}}(M_1 \times M_3) \rightleftharpoons \msh_{\Lambda_{13}}(\Lambda_{13}): w_{\Lambda_{13}}^+.$$
    Hence, for any $\SF_{12} \in \msh_{\Lambda_{12}}(\Lambda_{12}), \SF_{23} \in \msh_{\Lambda_{23}}(\Lambda_{23})$, the counit of the adjunction induces
    $$w_{\Lambda_{13}}^+m_{\Lambda_{13}}(w_{\Lambda_{12}}^+\SF_{12} \circ w_{\Lambda_{23}}^+\SF_{23}) \to w_{\Lambda_{23}}^+\SF_{23} \circ w_{\Lambda_{12}}^+\SF_{12}.$$
    Moreover, since $m_{\Lambda_{13}}w_{\Lambda_{13}}^+m_{\Lambda_{13}} \to m_{\Lambda_{13}}$ is an isomorphism by Theorem \ref{thm: relative-doubling}, the microsupport of the cofiber of the counit satisfies is disjoint from $\Lambda_{13}$.
    Now, we show that the natural transformation given by the counit induces a canonical isomorphism
    \begin{gather*}
    m_{\Lambda_{14}} (w_{\Lambda_{24}}^+m_{\Lambda_{24}}(w_{\Lambda_{34}}^+\SF_{34} \circ w_{\Lambda_{23}}^+\SF_{23}) \circ w_{\Lambda_{12}}^+ \SF_{12})
    \xrightarrow{\sim} m_{\Lambda_{14}}(w_{\Lambda_{34}}^+\SF_{34} \circ w_{\Lambda_{23}}^+\SF_{23} \circ w_{\Lambda_{12}}^+ \SF_{12}).
    \end{gather*}
    Consider the cofiber of the counit of the adjunction. 
    The microsupport estimate of Lemma \ref{lem: ss-composition} shows that after composition with $w_{\Lambda_{34}}^+ \SF_{34}$, the microsupport of
    the cofiber
    \begin{gather*}
    w_{\Lambda_{34}}^+ \SF_{34} \circ w_{\Lambda_{13}}^+m_{\Lambda_{13}}(w_{\Lambda_{23}}^+\SF_{23} \circ w_{\Lambda_{12}}^+\SF_{12})
    \to w_{\Lambda_{34}}^+ \SF_{34} \circ  w_{\Lambda_{23}}^+\SF_{23} \circ w_{\Lambda_{12}}^+\SF_{12}
    \end{gather*}
    is disjoint from $\Lambda_{14}$. Thus, the microlocalization of the natural transformation along $\Lambda_{14}$ is an isomorphism. For the same reason, we also know that the natural transformation
    \begin{gather*}
    m_{\Lambda_{14}} (w_{\Lambda_{34}}^+\SF_{34} \circ w_{\Lambda_{13}}^+ m_{\Lambda_{13}}(w_{\Lambda_{23}}^+ \SF_{23} \circ w_{\Lambda_{12}}^+ \SF_{12})) \\
    \xrightarrow{\sim} m_{\Lambda_{34} \circ \Lambda_{23} \circ \Lambda_{12}}(w_{\Lambda_{34}}^+ \SF_{34} \circ w_{\Lambda_{23}}^+\SF_{23} \circ w_{\Lambda_{12}}^+ \SF_{12}).
    \end{gather*}
    induces a canonical isomorphism. This shows that $(\SF_{34} \circ_g \SF_{23}) \circ_g \SF_{12} \simeq \SF_{34} \circ_g (\SF_{23} \circ_g \SF_{12}).$
\end{proof}

\begin{corollary}\label{rem: associativity}
    In general, let $\Lambda_{12} \subset S^*(M_1 \times M_2),  \dots \Lambda_{k-1,k} \subset S^*(M_{k-1} \times M_k)$ be relative compact stratified Legendrian subsets and let $\Lambda_{ij}:= \Lambda_{lj} \circ \Lambda_{il} \subset S^*(M_i \times M_j)$ such that they are all composable as in Definition \ref{def: composable}. Then for any $\SF_{12} \in \msh_{\Lambda_{12}}(\Lambda_{12}), \dots, \SF_{k-1,k} \in \msh_{\Lambda_{k-1,k}}(\Lambda_{k-1,k})$, we have isomorphisms for any $1 < l_1 < \dots < l_r < k$
    $$\SF_{k-1,k} \circ_g \dots \circ_g \SF_{12} \simeq (\SF_{k-1,k} \circ_g \dots \circ_g \SF_{l_r,l_r+1}) \circ_g \dots \circ_g (\SF_{l_1-1,l_1} \circ_g \dots \circ_g \SF_{12})$$
    that fit into a coherent diagram $\Delta_1^{\times k-1} \to \Cat_{st}$ satisfying the Segal condition, which sends the vertices to $\msh_{\Lambda_{1,l_1}}(\Lambda_{1,l_1}) \otimes \dots \otimes \msh_{\Lambda_{l_r,k}}(\Lambda_{l_r,k})$ and the edges to the composition functors.
\end{corollary}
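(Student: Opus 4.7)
The plan is to bootstrap the 3-fold associativity of Proposition \ref{prop: gapped composition associative} to arbitrary length chains, and then package the resulting isomorphisms into a coherent diagram via the same (op)lax natural transformation formalism used in Corollary \ref{rem: gapped composition higher} and \cite[Corollary F]{HaugsengHebestreitLinskensNuiten}.

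First I would construct, for each partition $1 < l_1 < \dots < l_r < k$, the candidate composition functor by iterated application of $\circ_g$ from Definition-Proposition \ref{def: gap micro composition}. Concretely, I assign to the vertex labelled by this partition the stable category $\msh_{\Lambda_{1,l_1}}(\Lambda_{1,l_1}) \otimes \cdots \otimes \msh_{\Lambda_{l_r,k}}(\Lambda_{l_r,k})$, and to each edge (corresponding to merging two adjacent blocks) the gapped microlocal composition functor built from the adjunction $m_{\Lambda_{ij}} \dashv w_{\Lambda_{ij}}^+$ of Theorem \ref{thm: doubling adjoint}. At the category level, each such edge is a composite of $w_\bullet^+$'s, ordinary sheaf composition $\circ$, and a final $m_\bullet$; by \cite[Corollary F]{HaugsengHebestreitLinskensNuiten} this assembles into a lax natural diagram of functors with natural transformations coming from the counits $w_{\Lambda_{ij}}^+ m_{\Lambda_{ij}} \to \id$.

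Next I would verify that all the intermediate lax natural transformations are in fact equivalences, which reduces to the microsupport argument already used in Proposition \ref{prop: gapped composition associative}. The key observation to iterate is: for any inner triple $i < l < j$ and any microsheaves $\SF_{il}, \SF_{lj}$, the cofiber of the counit
\[
w_{\Lambda_{ij}}^+ m_{\Lambda_{ij}}\bigl(w_{\Lambda_{lj}}^+\SF_{lj} \circ w_{\Lambda_{il}}^+\SF_{il}\bigr) \longrightarrow w_{\Lambda_{lj}}^+\SF_{lj} \circ w_{\Lambda_{il}}^+\SF_{il}
\]
has microsupport disjoint from $\Lambda_{ij}$. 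Further composing this cofiber on the left or right with additional $w_{\Lambda_{j,j'}}^+\SF_{j,j'}$ or $w_{\Lambda_{i',i}}^+\SF_{i',i}$, Lemma \ref{lem: ss-composition} together with the composable hypothesis of Definition \ref{def: composable pairwise} (which ensures that $\Lambda_{i',j'} = \Lambda_{l'j'} \circ \Lambda_{i'l'}$ is isolated inside $(\underline{\Lambda}_{l'j'})_{\cup,\epsilon}^+ \circ (\underline{\Lambda}_{i'l'})_{\cup,\epsilon}^+$ for all sub-triples) keeps the cofiber microsupport disjoint from the outermost $\Lambda_{i',j'}$. Applying the outermost $m_{\Lambda_{i',j'}}$ therefore kills the cofiber, upgrading each counit to an isomorphism.

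The final step is to pass from the lax natural diagram to a strict diagram $\Delta_1^{\times k-1} \to \Cat_{\mathrm{st}}$ satisfying the Segal condition. Since every structure map has been identified as an equivalence, the standard rectification argument (inverting a subclass of morphisms in a lax diagram, using the $\infty$-categorical machinery of \cite[Corollary F]{HaugsengHebestreitLinskensNuiten} and \cite[Appendix A Section 12.3]{Gaitsgory-Rozenblyum}) produces the coherent diagram, with the Segal condition built in from the structure of the counits. The main obstacle, as in Corollaries \ref{rem: nearby commute higher}, \ref{rem: box tensor commute higher} and \ref{rem: gapped composition higher}, is purely organizational: one must be careful that the adjunctions are compatible with all possible bracketings simultaneously, but this is precisely what the cited general coherence result supplies once the pointwise microsupport computation above has verified that each counit is invertible on the targets of interest.
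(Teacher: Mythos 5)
Your proposal is correct and follows essentially the same route as the paper: assemble the counits of the adjunctions $m_{\Lambda_{ij}} \dashv w_{\Lambda_{ij}}^+$ into a lax natural diagram via \cite[Corollary F]{HaugsengHebestreitLinskensNuiten}, then verify the structure maps are equivalences by iterating the cofiber-microsupport argument from Proposition \ref{prop: gapped composition associative}. You are actually a bit more explicit than the paper's terse proof (which compresses the microsupport verification into ``iteratively using $m_{\Lambda_{ij}} \circ w_{\Lambda_{ij}}^+ \circ m_{\Lambda_{ij}} \simeq m_{\Lambda_{ij}}$'') in naming the disjointness of the cofiber's microsupport after further composition as the key thing to check, which is indeed the substance.
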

\begin{proof}
    For iterated microlocal compositions, we have a diagram of composition functors that are related by the natural transformations induced by the counits of the adjunction $m_{\Lambda_{ij}} \dashv w_{\Lambda_{ij}}^+$ as in Figure \ref{fig:microlocal_composition}, which form a diagram of (op)lax natural transformations by \cite[Corollary F]{HaugsengHebestreitLinskensNuiten}.
    Then, by iteratively using the isomorphism $m_{\Lambda_{ij}} \circ w_{\Lambda_{ij}}^+ \circ m_{\Lambda_{ij}} \simeq m_{\Lambda_{ij}}$, we can show that the natural transformations induced by adjunctions are equivalences and hence we have a commutative diagram.
\end{proof}

\begin{figure}
    \centering
    \includegraphics[width=1\linewidth]{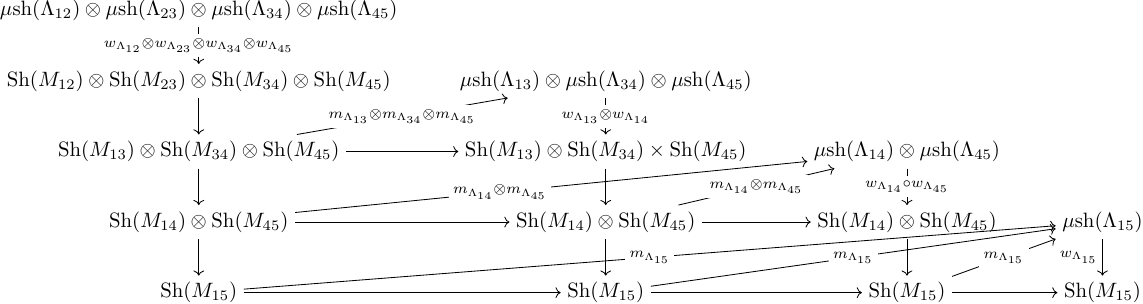}
    \caption{The diagram of functors that appears in the microlocal gapped composition of microsheaves, where we write $\msh(\Lambda_{ij})$ for $\msh_{\Lambda_{ij}}(\Lambda_{ij})$ and write $\Sh(M_{ij})$ for $\Sh(M_i \times M_j)$. For each square the natural transformation is the identity, and for each triangle the natural transformation is induced by the counit of the adjunction $m_{\Lambda_{ij}} \dashv w_{\Lambda_{ij}}^+$.}
    \label{fig:microlocal_composition}
\end{figure}

\begin{remark}
    It is not generally true that
    \begin{equation} 
    \label{associativity problem} w_{\Lambda_{13}}^+(\SF_{23} \circ_g \SF_{12}) \stackrel{?}{=} w_{\Lambda_{23}}^+\SF_{23} \circ_g w_{\Lambda_{12}}^+\SF_{12},\end{equation} 
    as we will see presently.  Our proof of Proposition \ref{prop: gapped composition associative} proceeded by showing that both sides of Formula \eqref{associativity problem} nevertheless agree after microlocalization along $\Lambda_{13}$.

    We now give the counterexample. 
    Let $M_1 = M_2 = M_3 = \bR$ and $\Lambda_{12} = \Lambda_{23} = \{(x, x, -\xi, \xi) \mid x \in \bR, \xi \in \bR_{>0}\}$. Then $\Lambda_{23}\circ \Lambda_{12} = \{(x, x, -\xi, \xi) \mid x \in \bR, \xi \in \bR_{>0}\}$. Consider the standard Reeb flow defined by the geodesic flow. We have $\Lambda_{12,\epsilon} = \Lambda_{23,\epsilon} = \{(x-\epsilon, x+\epsilon, -\xi, \xi) \mid x \in \bR, \xi \in \bR_{>0}\}$, and 
    $\Lambda_{23,\epsilon} \circ \Lambda_{12,\epsilon'} = \Lambda_{13,\epsilon+\epsilon'}.$
    Consider the rank 1 objects $\SF_{12} = 1_{\Lambda_{12}} \in \msh_{\Lambda_{12}}(\Lambda_{12})$ and $\SF_{23} = 1_{\Lambda_{23}} \in \msh_{\Lambda_{23}}(\Lambda_{23})$. Then
    $$w_{\Lambda_{12}}^+\SF_{12} = w_{\Lambda_{23}}^+\SF_{23} = 1_{\{(x, y) \mid -\epsilon < x - y \leq 0\}}.$$
    By direct computation, we know that
    $$w_{\Lambda_{23}}^+\SF_{23} \circ w_{\Lambda_{12}}^+\SF_{12} = 1_{\{(x, y) \mid -4\epsilon < x - y \leq -2\epsilon\}}[1] \oplus 1_{\{(x, y) \mid -2\epsilon < x - y \leq 0\}}.$$
    Therefore, by microlocalizing along $\Lambda_{13}$, we know that 
    $$\SF_{23} \circ_g \SF_{12} = m_{\Lambda_{13}}(w_{\Lambda_{23}}^+\SF_{23} \circ w_{\Lambda_{12}}^+\SF_{12}) = 1_{\Lambda_{23} \circ \Lambda_{12}}.$$
    However, $w_{\Lambda_{13}}^+(\SF_{23} \circ_g \SF_{12}) \neq w_{\Lambda_{23}}^+\SF_{23} \circ w_{\Lambda_{12}}^+\SF_{12}$. Instead, the former is only a direct summand of the latter. In fact, this is a general phenomenon when defining the composition using doublings.
\end{remark}

\begin{remark}\label{rem: composition KS}
    Finally, we explain the relation between our gapped composition and the theory of microlocal compositions in the work of Kashiwara and Schapira \cite[Section 7.3]{KS}. 
    Those authors defined the microlocal composition pointwise on the stalks of microsheaves when the microsupports are `microlocal composable'. More precisely, when $\Lambda_{12} \subset \dot T^*(M_1 \times M_2)$ and $\Lambda_{23} \subset \dot T^*(M_2 \times M_3)$ satisfy the condition that
    \begin{equation}\label{eq: microlocal composable}
    \Lambda_{12} \times \Lambda_{23} \cap (p_1 \times T^*_\Delta M_2^2 \times p_3) = (p_1, p_2, p_2, p_3),
    \end{equation}
    they are called `microlocally composable' at $(p_1, p_2, p_2, p_3)$. Under the above condition, they defined a microlocal composition functor for $\Lambda_{13} = \Lambda_{23} \circ \Lambda_{12}$ in \cite[Proposition 7.3.1]{KS}:
    $$\circ_\mu : \msh_{\Lambda_{12}}(p_1,p_2) \times \msh_{\Lambda_{23}}(p_2,p_3) \to \msh_{\Lambda_{13}}(p_1, p_3),$$
    by the pro-object in the category $\Fun^{ex}(\msh_{\Lambda_{13}}(p_1, p_3), \SC)$ for $\SF'_{ij} \in \msh^{pre}(S^*(M_i \times M_j))$ such that $m_{(p_i, p_j)}\SF'_{ij} = \SF_{ij}$:
    $$\SF_{23} \circ_\mu \SF_{12} = m_{(p_1,p_3)}(\lim\nolimits_{\SF'_{23} \to \SF_{23},\SF'_{12} \to \SF_{12}} \SF'_{23} \circ \SF'_{12}),$$
    However, the microlocally composable condition in Formula \eqref{eq: microlocal composable} is extremely restrictive (and fails for many examples of interest to us).

    Nevertheless, when the supports $\Lambda_{12}$ and $\Lambda_{23}$ are composable sufficiently Legendrian subsets in the sense of Definition \ref{def: composable} which are moreover microlocally composable in the sense of Kashiwara and Schapira, our gapped composition agrees with the microlocal composition in \cite[Proposition 7.3.1]{KS}. In fact, by Theorems \ref{thm: relative-doubling} and \ref{thm: doubling adjoint}, for any $\SF'_{ij} \in \msh^{pre}_{\Lambda_{ij}}(S^*(M_i \times M_j))$ such that $m_{(p_i, p_j)}\SF'_{ij} = \SF_{ij}$, there is a natural morphism $w_{\Lambda_{ij,-s}} \circ m_{\Lambda_{ij,-s}}\SF_{ij}' \to \SF_{ij}'$ for $s > 0$ sufficiently small. This implies that 
    $$\SF_{12} \circ_\mu \SF_{23} = m_{(\Lambda_{12}\circ\Lambda_{23})_{-s}}(\lim\nolimits_{s \to 0} w_{\Lambda_{12,-s}}\SF_{12} \circ w_{\Lambda_{23,-s}}\SF_{23}).$$
    When $\Lambda_{12}$ and $\Lambda_{23}$ are only perturbable to finite positions, then the same argument still works if we follow Remark \ref{rem: composition limit} as define the composition via the pro-object as a limit of compositions of doublings as $s \to 0$.
\end{remark}

\begin{remark}\label{rem: composition limit}
   If we had only assumed that the $\Lambda_{ij}$ were perturbable to finite position (and perhaps not pdff or self displaceable), we could have still used the doubling in Remark \ref{rem: doubling limit} and defined the microlocal composition by taking
    $$\SF_{23} \circ_g \SF_{12} = m_{\Lambda_{13} \times 0}(w_{\Lambda_{23}}^{\prec,+}\SF_{23} \circ_{\bR} w_{\Lambda_{12}}^{\prec,+}\SF_{12}) = m_{\Lambda_{13}}(\lim\nolimits_{\epsilon \to 0} w_{\Lambda_{23}}^{+}\SF_{23} \circ_{\bR} w_{\Lambda_{12}}^{+}\SF_{12}).$$
    We expect that many of the arguments would still work.  
\end{remark}

\subsection{Microlocal gapped composition commutes with microlocal nearby cycle}
    We will consider families of sufficiently Legendrian subsets that satisfy Definitions \ref{def: gap}, \ref{def: gap composition} and \ref{def: composable} and show commutativity of gapped composition and gapped nearby cycles. For a subset $\Lambda \subset S^*(M \times \bR_{>0})$, we use the notations $\Pi(\Lambda)$ and $\pi(\Lambda)$ defined in Formula \eqref{eq: nearby subset}.

\begin{definition}\label{def: gap microlocal composition}
    Let $\Lambda_{12} \subset S^*(M_1 \times M_2 \times \bR_{>0})$ and $\Lambda_{23} \subset S^*(M_2 \times M_3 \times \bR_{>0})$ be $\bR_{>0}$-non-characteristic subsets that are relatively compact and sufficiently Legendrian. Let $\Lambda_{13} = \Lambda_{23} \circ_{\bR_{>0}} \Lambda_{12} \subset S^*(M_1 \times M_3)$. We say that $\Lambda_{12}$ and $\Lambda_{23}$ have \emph{(positively) gapped microlocal composition} if 
    \begin{enumerate}
        \item $\Pi_{12}(\Lambda_{12})$, $\Pi_{23}(\Lambda_{23})$, $\Pi_{13}(\Lambda_{13})$ are self gapped as in Definition \ref{def: gap},
        \item $\Pi_{12}(\Lambda_{12})$ and $\Pi_{23}(\Lambda_{23})$ are (positively) composable as in Definition \ref{def: composable}, 
        \item $\psi_{12}(\Lambda_{12})$ and $\psi_{23}(\Lambda_{23})$ are (positively) composable as in Definition \ref{def: composable}, and
        \item $\Pi_{12}(\Lambda_{12})$ and $\Pi_{23}(\Lambda_{23})$ have (positively) gapped composition as in Definition \ref{def: gap composition}.
    \end{enumerate}
\end{definition}

\begin{theorem}\label{thm:gapped composition microsheaf} 
    Let $\Lambda_{12} \subset S^*(M_1 \times M_2 \times \bR_{>0})$ and $\Lambda_{23} \subset S^*(M_2 \times M_3 \times \bR_{>0})$ be relatively compact sufficiently Legendrian subsets that have positively gapped microlocal composition. Then the following diagram commutes:
    \begin{equation}\label{eq: gap composition microsheaf}
    \begin{tikzcd}
        \msh_{\Lambda_{12}}(\Lambda_{12}) \times \msh_{\Lambda_{23}}(\Lambda_{23}) \ar[r, " \circ_g"] \ar[d, "\psi_{12} \times \psi_{23}" left] & \msh_{\Lambda_{13}}(\Lambda_{13}) \ar[d, "\psi_{13}"] \\
        \msh_{\psi_{12}(\Lambda_{12})}(\psi_{12}(\Lambda_{12})) \times \msh_{\psi_{23}(\Lambda_{23})}(\psi_{23}\Lambda_{23}) \ar[r, " \circ_g"]  & \msh_{\psi_{13}(\Lambda_{13})}(\psi_{23}(\Lambda_{13})).
    \end{tikzcd}
    \end{equation}
\end{theorem}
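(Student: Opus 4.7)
The plan is to reduce the microsheaf-level identity to its sheaf-level counterpart Theorem \ref{thm:gapped composition}, by unfolding $\circ_g$ and the microlocal nearby cycle $\psi$ in terms of the doubling functors $w^+$ and microlocalization $m$. Set $\SG_{ij} := w^+_{\Lambda_{ij}}\SF_{ij}$, a sheaf on $M_i \times M_j \times \bR_{>0}$ with $\ss(\SG_{ij}) \subseteq (\underline\Lambda_{ij})^+_{\cup,\epsilon}$. Using Definition-Proposition \ref{def: gap micro composition} and Definition \ref{def: gap micro nearby}, the two sides of diagram \eqref{eq: gap composition microsheaf} become
\begin{equation*}
\psi_{13}(\SF_{23} \circ_g \SF_{12}) = m_{\psi_{13}(\Lambda_{13})}\Big(\psi_{13}\big(w^+_{\Lambda_{13}} m_{\Lambda_{13}}(\SG_{23} \circ \SG_{12})\big)\Big)
\end{equation*}
and
\begin{equation*}
\psi_{23}(\SF_{23}) \circ_g \psi_{12}(\SF_{12}) = m_{\psi_{13}(\Lambda_{13})}\Big(w^+_{\psi_{23}(\Lambda_{23})} m_{\psi_{23}(\Lambda_{23})}\psi_{23}(\SG_{23}) \,\circ\, w^+_{\psi_{12}(\Lambda_{12})} m_{\psi_{12}(\Lambda_{12})}\psi_{12}(\SG_{12})\Big).
\end{equation*}

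The first step is to apply Theorem \ref{thm:gapped composition} to $\SG_{12}$ and $\SG_{23}$, yielding the sheaf-level isomorphism $\psi_{13}(\SG_{23} \circ \SG_{12}) \simeq \psi_{23}(\SG_{23}) \circ \psi_{12}(\SG_{12})$. Its hypotheses are exactly the contents of Definition \ref{def: gap microlocal composition} transported to the doubled sheaves: Lemmas \ref{ss of composition for doubling 1} and \ref{ss gapped for doubling} upgrade positive gappedness of the pair $(\Lambda_{12} \times_{\bR_{>0}} \Lambda_{23},\, 0_{M_1} \times \dot T^*_\Delta M_2^2 \times 0_{M_3} \times \bR_{>0})$ to positive gappedness of the analogous pair for the conic doublings, while Lemma \ref{ss of composition for doubling 2} identifies $\psi(\ss(\SG_{23})) \circ \psi(\ss(\SG_{12}))$ with $\psi_{13}(\Lambda_{13}) \times \bR_{>0}$ off the zero section, which is pdff by hypothesis.

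The second step is to \emph{absorb} the $w^+ m$ insertions after microlocalizing along $\psi_{13}(\Lambda_{13})$, exactly as in the proof of Proposition \ref{prop: gapped composition associative}. On the right-hand side, each counit $w^+_{\psi_{ij}(\Lambda_{ij})} m_{\psi_{ij}(\Lambda_{ij})}(-) \to (-)$ from Theorem \ref{thm: doubling adjoint} has cofiber microsupported away from $\psi_{ij}(\Lambda_{ij})$; by Lemma \ref{lem: ss-composition} the cofiber of the composition is then microsupported away from $\psi_{13}(\Lambda_{13}) = \psi_{23}(\Lambda_{23}) \circ \psi_{12}(\Lambda_{12})$, and so vanishes under $m_{\psi_{13}(\Lambda_{13})}$. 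On the left-hand side, the cofiber of $w^+_{\Lambda_{13}} m_{\Lambda_{13}}(\SG_{23}\circ \SG_{12}) \to \SG_{23}\circ \SG_{12}$ is microsupported inside $((\underline\Lambda_{23})^+_{\cup,\epsilon} \circ (\underline\Lambda_{12})^+_{\cup,\epsilon}) \setminus \Lambda_{13}$, which is separated from $\Lambda_{13}$ by Lemma \ref{lem: composable openness}; pushing through $\psi_{13}$ via Lemma \ref{lem: ss-nearby-cycle} and the composability of $\psi_{12}(\Lambda_{12})$ with $\psi_{23}(\Lambda_{23})$, the resulting microsupport misses $\psi_{13}(\Lambda_{13})$ and is likewise killed by the outer microlocalization. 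Chaining the two steps yields the claimed identification.

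The main obstacle is the microsupport bookkeeping on the left-hand side: one must verify that after the cofiber of $w^+_{\Lambda_{13}} m_{\Lambda_{13}} \to \id$ is pushed through the nearby cycle $\psi_{13}$, it still avoids $\psi_{13}(\Lambda_{13})$. This is not automatic from the $\Lambda_{13}$-level estimate alone, because a priori limiting microsupport could land on $\psi_{13}(\Lambda_{13})$. The composability conditions (2) and (3) of Definition \ref{def: gap microlocal composition} are designed precisely to exclude this: they ensure that both $\Lambda_{13}$ and its limit $\psi_{13}(\Lambda_{13})$ are open pieces of the ambient doubled compositions, so that disjointness at the family level propagates to disjointness in the limit.
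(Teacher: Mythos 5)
Your proposal is correct and takes essentially the same route as the paper: apply the sheaf-level Theorem \ref{thm:gapped composition} to the doubled representatives $w^+_{\Lambda_{ij}}\SF_{ij}$, then microlocalize along $\psi_{13}(\Lambda_{13})$. Your second step, absorbing the $w^+_{\Lambda} m_{\Lambda}$ counits after the outer microlocalization via the argument of Proposition \ref{prop: gapped composition associative}, is a reasonable and more explicit unpacking of what the paper compresses into the phrase ``microlocalizing along $\psi_{13}(\Lambda_{13})$, by Definition \ref{def: gap micro composition}.''
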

\begin{proof}
    Fix $\epsilon > 0$ sufficiently small. Since $\Lambda_{12}$ and $\Lambda_{23}$ are $\epsilon$-gapped  in the sense of Definition \ref{def: gap}, the gapped nearby cycle functors in Formula \eqref{eq: gap composition microsheaf} are well defined. Since $\Lambda_{12}, \Lambda_{23}$ and $\psi_{12}(\Lambda_{12}), \psi_{23}(\Lambda_{23})$ are both composable in the sense of Definition \ref{def: composable}, the composition functors in Formula \eqref{eq: gap composition microsheaf} are well-defined. Since the pair $(\Lambda_{12}, \Lambda_{23})$ has $\epsilon$-gapped composition in the sense of Definition \ref{def: gap composition},   the pair of doublings $((\underline\Lambda_{12})^+_{\cup,\epsilon}, (\underline\Lambda_{23})^+_{\cup,\epsilon})$ also has $\epsilon$-gapped composition. Then by Theorem \ref{thm:gapped composition}, we know that for $\SF_{12} \in \msh_{\Lambda_{12}}(\Lambda_{12})$ and $\SF_{23} \in \msh_{\Lambda_{23}}(\Lambda_{23})$,
    $$\psi_{23}(w_{\Lambda_{23}}^+\SF_{23}) \circ \psi_{12}(w_{\Lambda_{12}}^+\SF_{12}) = \psi_{13}(w_{\Lambda_{23}}^+\SF_{23} \circ w_{\Lambda_{12}}^+\SF_{12}).$$
     Since $\psi_{ij}\SF_{ij} = m_{\psi(\Lambda_{ij})}(\psi_{ij}(w_{\Lambda_{ij}}^+\SF_{ij}))$ by Definition \ref{def: gap micro nearby}, microlocalizing along $\psi_{13}(\Lambda_{13})$, by Definition \ref{def: gap micro composition}, we have
    $$\psi_{23}(\SF_{23}) \circ_g \psi_{12}(\SF_{12}) = \psi_{13}(\SF_{23} \circ_g \SF_{12}).$$
    This completes the proof of the theorem.
\end{proof}

    More generally, similar to Corollary \ref{rem: gapped composition higher}, consider the gapped microsheaf composition functor as a functor $\Delta_1^{\times k-1} \to \Cat_{st}$ (which is used to encode all possible orders of compositions). We have:

\begin{corollary}\label{rem: gapped micro composition higher}
    Let $\Lambda_{12} \subset S^*(M_1 \times M_2 \times \bR_{>0}), \dots, \Lambda_{l-1,l} \subset S^*(M_{k-1} \times M_k \times \bR_{>0})$ be relatively compact sufficiently Legendrians such that all compositions $\Lambda_{ij} = \Lambda_{lj} \circ_{\bR_{>0}} \Lambda_{il} \subset S^*(M_i \times M_j \times \bR_{>0})$ have (positively) gapped microlocally composition. Then there are natural isomorphisms for any $1 < l_1 < \dots < l_r < k$ 
    $$\psi_{12\dots k}(\SF_{k-1,k} \circ_g \dots \circ_g \SF_{12}) \simeq \psi_{l_r\dots k}(\SF_{l_r,l_r+1} \circ_g \dots \circ_g \SF_{k,k+1}) \circ_g \dots \circ_g \psi_{1\dots l_1}(\SF_{l_1-1,l_1} \circ_g \dots \circ_g \SF_{12})$$
    that fit into a natural transformation, from the coherent diagram $\Delta_1^{\times k-1} \to \Cat_{st}$ satisfying the Segal condition that sends the vertices to $\msh(\Lambda_{1,l_1}) \otimes \dots \otimes \msh(\Lambda_{l_r,k})$ and the edges to the composition functors, to the coherent diagram $\Delta_1^{\times k-1} \to \Cat_{st}$ satisfying the Segal condition, which sends the vertices to $\msh(\psi_{1,l_1}(\Lambda_{1,l_1})) \otimes \dots \otimes \msh(\psi_{l_r,k}(\Lambda_{l_r,k}))$ and the edges to the composition functors, such that all the 2-morphisms are equivalences.
\end{corollary}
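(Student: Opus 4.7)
The plan is to reduce the higher coherent statement to the binary case (Theorem \ref{thm:gapped composition microsheaf}) together with the associativity/coherence of microlocal gapped composition (Corollary \ref{rem: associativity}) and the higher coherent commutativity of sheaf-theoretic gapped composition with nearby cycles (Corollary \ref{rem: gapped composition higher}). First, I would lift everything to the sheaf level via the doubling functors $w_{\Lambda_{ij}}^{+}$. For each pair $(i,j)$, Theorem \ref{thm: doubling adjoint} and Proposition \ref{prop: gapped composition associative} (plus its proof) express $\SF_{k-1,k} \circ_g \cdots \circ_g \SF_{12}$ and all its bracketings as microlocalizations (along the appropriate composed Legendrians $\Lambda_{ij}$) of iterated sheaf compositions $w_{\Lambda_{k-1,k}}^{+}\SF_{k-1,k} \circ \cdots \circ w_{\Lambda_{12}}^{+}\SF_{12}$. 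The same is true after applying $\psi_{ij}$, using Definition \ref{def: gap micro nearby}, since the microsheaf nearby cycle functor is defined as the microlocalization of the sheaf-theoretic nearby cycle functor applied to the double.

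Next, I would assemble the data into two coherent diagrams $\Delta_1^{\times k-1} \to \Cat_{st}$, the source sending a partition to the appropriate tensor product of microsheaf categories $\msh(\Lambda_{i,l})$ and the target sending it to $\msh(\psi_{i,l}(\Lambda_{i,l}))$, with the Segal-type edges being the microlocal gapped compositions $\circ_g$. Exactly as in the proofs of Corollaries \ref{rem: associativity} and \ref{rem: gapped composition higher}, the diagrams of non-proper base changes, microlocalizations and adjunctions between doubling and tautological inclusions assemble into (op)lax natural transformations by \cite[Corollary F]{HaugsengHebestreitLinskensNuiten}. The key point is that at the sheaf level, Corollary \ref{rem: gapped composition higher} already furnishes a coherent diagram of natural equivalences relating the iterated sheaf composition of $w_{\Lambda_{ij}}^{+}\SF_{ij}$ with its nearby cycles; we need only transport this coherence through the microlocalization/doubling adjunctions.

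The core checks are then the following. (1) All the compositions are defined: the assumption of positively gapped microlocal composition for every triple $(i,l,j)$, with the composable hypothesis both for $\Pi_{ij}(\Lambda_{ij})$ and for $\psi_{ij}(\Lambda_{ij})$, ensures that each binary composition $\circ_g$ appearing in either diagram is well-defined (Definitions \ref{def: gap micro composition}, \ref{def: composable pairwise}), and the self-gappedness with collar (implicit in the hypothesis and Lemma \ref{lem: gapped collar}) makes all the nearby cycle functors $\psi_{ij}$ well-defined in the sense of Definition \ref{def: gap micro nearby}. (2) The 2-cells are invertible: this reduces, via iteration, to two facts, namely that $m_{\Lambda_{ij}} \circ w_{\Lambda_{ij}}^{+} \circ m_{\Lambda_{ij}} \xrightarrow{\sim} m_{\Lambda_{ij}}$ (used in the proof of Proposition \ref{prop: gapped composition associative} to kill the counit cofibers after microlocalization) and Theorem \ref{thm:gapped composition microsheaf} (which produces the binary 2-isomorphism $\psi_{ij}(\SF_{lj} \circ_g \SF_{il}) \simeq \psi_{lj}\SF_{lj} \circ_g \psi_{il}\SF_{il}$). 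Iterating these across the cube $\Delta_1^{\times k-1}$ exactly as in Corollaries \ref{rem: associativity} and \ref{rem: gapped composition higher} yields the required commutative diagram of equivalences.

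The main obstacle I expect is bookkeeping rather than mathematics: verifying that the hypothesis of \emph{positively gapped microlocal composition} for all triples $(i,l,j)$ in the chain propagates correctly through iterated composition, so that each intermediate Legendrian $\Lambda_{ij}$ and its nearby cycle $\psi_{ij}(\Lambda_{ij})$ are again sufficiently Legendrian, composable, self-gapped with gapped collar, and positively gapped with respect to the next factor. This is needed to legitimately apply Theorem \ref{thm:gapped composition microsheaf} at every successive stage of the iteration. Once this combinatorial verification is in place, the coherent diagram is obtained mechanically by pasting together the binary squares along the cube $\Delta_1^{\times k-1}$ and invoking \cite[Corollary F]{HaugsengHebestreitLinskensNuiten} to upgrade the pointwise equivalences to a natural transformation of Segal-type diagrams, all of whose 2-morphisms are invertible.
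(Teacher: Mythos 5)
Your proposal follows essentially the same route as the paper: lift to the sheaf level via the adjunction $m_{\Lambda_{ij}} \dashv w_{\Lambda_{ij}}^{+}$, assemble the (op)lax coherent diagram via \cite[Corollary F]{HaugsengHebestreitLinskensNuiten}, and iteratively invoke the binary isomorphisms (Theorem \ref{thm:gapped composition microsheaf}, $m_{\Lambda_{ij}}\circ w_{\Lambda_{ij}}^{+}\circ m_{\Lambda_{ij}}\simeq m_{\Lambda_{ij}}$, and the sheaf-level Corollary \ref{rem: gapped composition higher}) to verify that every 2-cell is invertible. The paper states this more tersely but the ingredients and the strategy match yours.
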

\begin{proof}
    For iterated microlocal compositions, we have a diagram of composition functors and nearby cycle functors that are related by the natural transformations induced by the (non-proper or smooth) base changes and the counits of the adjunction $m_{\Lambda_{ij}} \dashv w_{\Lambda_{ij}}^+$, which form a diagram of (op)lax natural transformations by \cite[Corollary F]{HaugsengHebestreitLinskensNuiten}. Then, by iteratively using the isomorphisms above, we can show that the natural transformations induced by adjunctions are equivalences and hence we have a commutative diagram. 
\end{proof}

\begin{remark}\label{rem: gap micro composition higher alternative}
    Consider the set of divisions of the $k$-simplex $\mathrm{Div}_k$ with the partial order by refinements (which is used to encode all possible orders of compositions). We note that the natural transformation between the two coherent diagrams $\Delta_1^{\times k-1} \to \Cat_{st}$ is also the same as the coherent diagram $\mathrm{Div}_k \to \Fun^{ex}(\bigotimes_{j=1}^{k-1}\msh_{\Lambda_{j,j+1}}(\Lambda_{j,j+1}), \msh_{\Lambda_{1,k}}(\Lambda_{1,k}))$ where a division $1 < l_1 < \dots < l_r < k$ is sent to the functor on the right hand side of the equation.
\end{remark}

\begin{remark}
    Compared to Theorem \ref{thm:gapped composition} in the sheaf setting, the above Theorem \ref{thm:gapped composition microsheaf} has the additional hypothesis that $\Lambda_{12}$ and $\Lambda_{23}$ need to be gapped with themselves, necessary to use the gapped specialization of microsheaves (Definition \ref{def: gap micro nearby}). 

    Let us give an example where this additional hypothesis does not hold. 
    Let $\Delta = \{(x, y) \mid x = y\} \subset \bR^2$ be the diagonal and $U(\Delta \times \bR_{>0}) = \{(x, y, t) \mid |x - y| < t\} \subset \bR^2$ be the neighbourhood of the diagonal of radius $t$. Consider the conic Lagrangian subset $\Lambda_{12} = \dot N^*(\Delta \times \bR_{>0})$ and $\Lambda_{23} = \dot{N}^*(\Delta \times \bR_{>0}) \cup \dot N^*_{out}U(\Delta \times \bR_{>0})$ where here $N^*_{out}U$ means the outward conormal bundle of the smooth boundary $\partial U$. Then the pair $(\Lambda_{12}, \Lambda_{23})$ has gapped composition in the sense of Definition \ref{def: gap composition}.  For any $\SF_{12} \in \Sh_{\Lambda_{12}}(\bR^2 \times \bR_{>0})$ and $\SF_{23} \in \Sh_{\Lambda_{23}}(\bR^2 \times \bR_{>0})$, by Theorem \ref{thm:gapped composition}, we have
    $$\psi_{13}(\SF_{23} \circ \SF_{12}) = \psi_{23}\SF_{23} \circ \psi_{12}\SF_{12}.$$
    However, as explained in Example \ref{ex: gapped specialization undefined}, $\Pi(\Lambda_{23})$ is not gapped with itself because the family has self Reeb chords of length $t \to 0$.
\end{remark}

    We also have compatibility with restriction:

\begin{theorem}\label{thm: gapped composition restrict}
    Let $\Lambda_{12} \subset S^*(M_1 \times M_2)$ and $\Lambda_{23} \subset S^*(M_2 \times M_3)$ be relatively compact sufficiently Legendrian subsets that are composable. Let $\Lambda_{13} = \Lambda_{23} \circ \Lambda_{12}$. Suppose given open subsets $\Sigma_{ij} \subset \Lambda_{ij}$ such that 
    $$\Sigma_{12} \,\widehat\times\,\Sigma_{23} = \pi_{13}^{-1}(\Sigma_{13}) \cap \Lambda_{12} \,\widehat\times\, \Lambda_{23}.$$
    Then the following diagram commutes:
    \[\begin{tikzcd}
    \msh_{\Lambda_{12}}(\Lambda_{12}) \otimes \msh_{\Lambda_{23}}(\Lambda_{23}) \ar[r, " \circ_g"] \ar[d, "i_{12}^* \otimes i_{23}^*" left] & \msh_{\Lambda_{13}}(\Lambda_{13}) \ar[d, "i_{13}^*"] \\
    \msh_{\Sigma_{12}}(\Sigma_{12}) \otimes \msh_{\Sigma_{23}}(\Sigma_{23}) \ar[r, " \circ_g"] & \msh_{\Sigma_{13}}(\Sigma_{13}).
    \end{tikzcd}\]
\end{theorem}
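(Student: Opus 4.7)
The plan is to identify both sides of the claimed equality as microlocalizations along $\Sigma_{13}$ of specific sheaves on $M_1 \times M_3$, and then show that the comparison map between those sheaves becomes an equivalence after microlocalization, via a microsupport argument using the compatibility hypothesis. First, I would check that $(\Sigma_{12}, \Sigma_{23})$ inherits composability (Definition \ref{def: composable}) from $(\Lambda_{12}, \Lambda_{23})$: the compatibility assumption gives $\Sigma_{23} \circ \Sigma_{12} = \Sigma_{13}$, which is open in $\Lambda_{13}$ and hence sufficiently Legendrian, and Reeb disjointness is inherited. With this in hand, for a doubling parameter $\epsilon$ small enough, Definition \ref{def: gap micro composition} together with the presheaf structure of $\msh_\Lambda$ give
\begin{align*}
    i_{13}^*(\SF_{23} \circ_g \SF_{12}) &= m_{\Sigma_{13}}(w_{\Lambda_{23}}^+ \SF_{23} \circ w_{\Lambda_{12}}^+ \SF_{12}), \\
    (i_{23}^* \SF_{23}) \circ_g (i_{12}^* \SF_{12}) &= m_{\Sigma_{13}}(w_{\Sigma_{23}}^+(i_{23}^* \SF_{23}) \circ w_{\Sigma_{12}}^+(i_{12}^* \SF_{12})).
\end{align*}

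The comparison map is then constructed via adjunction: by Corollary \ref{cor: restrict doubling} and the fact that $w_{\Sigma}^+ \circ m_{\Sigma} = \id$ on $\Sh_{(\underline\Sigma)_{\cup,\epsilon}^+}$, one has the identification $w_{\Sigma_{ij}}^+(i_{ij}^* \SF_{ij}) = \iota_{(\underline\Sigma_{ij})_{\cup,\epsilon}^+}^!(w_{\Lambda_{ij}}^+ \SF_{ij})$, and the counit of $\iota^! \dashv \iota_*$ yields maps $w_{\Sigma_{ij}}^+(i_{ij}^* \SF_{ij}) \to w_{\Lambda_{ij}}^+ \SF_{ij}$ whose cofibers have microsupport in $(\underline\Lambda_{ij})_{\cup,\epsilon}^+ \setminus (\underline\Sigma_{ij})_{\cup,\epsilon}^+$. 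Composing one variable at a time produces a natural transformation
$$w_{\Sigma_{23}}^+(i_{23}^* \SF_{23}) \circ w_{\Sigma_{12}}^+(i_{12}^* \SF_{12}) \longrightarrow w_{\Lambda_{23}}^+ \SF_{23} \circ w_{\Lambda_{12}}^+ \SF_{12}$$
whose total cofiber admits a two-step filtration with associated graded pieces obtained by composing one counit cofiber with the other side. By Lemma \ref{lem: ss-composition}, their microsupports are contained in
$$(\underline\Sigma_{23})_{\cup,\epsilon}^+ \circ \big((\underline\Lambda_{12})_{\cup,\epsilon}^+ \setminus (\underline\Sigma_{12})_{\cup,\epsilon}^+\big) \text{ and } \big((\underline\Lambda_{23})_{\cup,\epsilon}^+ \setminus (\underline\Sigma_{23})_{\cup,\epsilon}^+\big) \circ (\underline\Lambda_{12})_{\cup,\epsilon}^+,$$
and for $\epsilon$ small enough the compatibility hypothesis forces both sets to be disjoint from $\Sigma_{13}$. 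Consequently $m_{\Sigma_{13}}$ kills the cofiber, and the comparison map becomes the desired equivalence.

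The main technical obstacle is the microsupport estimate in this last step: the doublings $(\underline\Lambda_{ij})_{\cup,\epsilon}^+$ comprise not only $\Lambda_{ij}$ and its Reeb push-offs $\Lambda_{ij,\pm\epsilon}$ but also the U-shape fillings joining them, and one must argue that the compatibility condition, stated only for the $\Lambda_{ij}$ themselves, persists to these perturbations when $\epsilon$ is small enough. I would handle this by separately controlling the three types of pieces: the unperturbed contribution is excluded directly by the contrapositive of compatibility; each Reeb-perturbed piece $\Lambda_{ij,\pm\epsilon}$ misses $\Lambda_{13}$ by the composability assumption on $(\Lambda_{12}, \Lambda_{23})$ and therefore also misses $\Sigma_{13}$; and the U-shape filling portion lies in an arbitrarily thin neighborhood of $\Lambda_{ij}$, so the openness of $\Sigma_{13}$ inside $\Lambda_{13}$ ensures that its composition also avoids $\Sigma_{13}$ once $\epsilon$ is taken sufficiently small. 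The same adjunction-and-microsupport argument is natural in $\SF_{12}, \SF_{23}$ and hence gives the stated commutative diagram at the level of functors.
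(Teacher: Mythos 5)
Your route is genuinely different from the paper's. The paper also begins by writing the restriction $i_{ij}^*$ via Corollary \ref{cor: restrict doubling} as $m_{\Sigma_{ij}} \circ \iota^!_{(\underline\Sigma_{ij})^+_{\cup,\epsilon}} \circ w_{\Lambda_{ij}}^+$, but then invokes Proposition \ref{prop:wrapping nearby} to realize $\iota^!_{(\underline\Sigma_{ij})^+_{\cup,\epsilon}}$ as a \emph{nearby cycle functor} associated with the negative contact push-off sending $(\Lambda_{ij})_\epsilon$ back to $(\Lambda_{ij}\setminus\Sigma_{ij})\cup(\Sigma_{ij})_\epsilon$, and finally commutes $\circ$ with these nearby cycles using the gapped composition machinery (Corollary \ref{thm: gapped composition family}). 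You instead try to go directly via the counit $\iota_*\iota^!\to\id$ and control the cofiber by microsupport. This is a tempting shortcut, but there is a gap in the central microsupport estimate.

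The problem is the claim that $\mathrm{Cof}\bigl(w_{\Sigma_{ij}}^+(i_{ij}^*\SF_{ij})\to w_{\Lambda_{ij}}^+\SF_{ij}\bigr)$ has microsupport contained in $(\underline\Lambda_{ij})^+_{\cup,\epsilon}\setminus(\underline\Sigma_{ij})^+_{\cup,\epsilon}$. Two things are off here. First, this is literally false on the zero section: the cofiber of a counit of this type generically does acquire microsupport along $0_{M_i\times M_j}$, which belongs to \emph{both} doublings (the ``U-shape filling'' $\bigcup_{0\le r\le\epsilon}\pi(\Lambda_r)$ appearing in $\underline\Lambda_{\cup,\epsilon}^+$ is, by the definition recalled in Section \ref{ssec: doubling}, a subset of the zero section, not a Lagrangian neighborhood of $\Lambda$ as your last paragraph suggests). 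You can repair this because compositions of zero-section microsupports with the other factor stay near the zero section and thus miss the Legendrian $\Sigma_{13}$, but you should say so explicitly rather than fold it into the complement set. Second, and more seriously, even the at-infinity half of the claim --- that the cofiber has no microsupport over $\Sigma_{ij}\cup(\Sigma_{ij})_\epsilon$ --- is not automatic from the abstract adjunction. In general $\iota^!$ is only a filtered limit of wrapping kernels, and limits of this kind need not have controlled microsupport; the content of Proposition \ref{prop:wrapping nearby} is precisely that a cofinal wrapping sequence stabilizes so the limit can be computed as a nearby cycle, whose microsupport \emph{is} controlled by Lemma \ref{lem: ss-nearby-cycle}. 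Your argument implicitly relies on this stabilization without naming it; as written, the phrase ``whose cofibers have microsupport in $\ldots$'' has no supporting reasoning. Once you supply this (for instance by observing that $m_{\Sigma_{ij}}$ and $m_{(\Sigma_{ij})_\epsilon}$ annihilate the cofiber, which follows from the identity $m_{\Sigma_{ij}}\circ\iota^!\circ w_{\Lambda_{ij}}^+=i_{ij}^*$ and the doubling equivalence), the rest of your filtration and composability bookkeeping goes through. But the honest version of that observation amounts to re-deriving the nearby-cycle realization the paper uses, so you are not actually bypassing Theorem \ref{thm: gapped composition family}, only re-deriving it in a special case.
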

\begin{proof}
    First, we use Corollary \ref{cor: restrict doubling} to realize the microlocal restriction functors via the doubling functor in Theorems \ref{thm: relative-doubling} and \ref{thm: doubling adjoint}:
    $$m_{\Sigma_{ij}} \circ \iota^!_{(\underline\Sigma_{ij})_{\cup,\epsilon}^+} \circ w_{\Lambda_{ij}^+}: \msh_{\Lambda_{ij}}(\Lambda_{ij}) \to \Sh_{(\underline\Lambda_{ij})_{\cup,\epsilon}^+}(M) \to \Sh_{(\underline\Sigma_{ij})_{\cup,\epsilon}^+}(M) \to \msh_{\Sigma_{ij}}(\Sigma_{ij}).$$
    By Theorem \ref{rem:wrapping} and Proposition \ref{prop:wrapping nearby}, the right adjoint functor $$\iota^!_{(\underline\Sigma_{ij})_{\cup,\epsilon}^+}: \Sh_{(\underline\Lambda_{ij})_{\cup,\epsilon}^+}(M) \to \Sh_{(\underline\Sigma_{ij})_{\cup,\epsilon}^+}(M)$$
    can be realized by the nearby cycle functor $\psi_{ij}$ of the negative contact push-off that sends $(\Lambda_{ij})_\epsilon$ back to $(\Lambda_{ij} \setminus \Sigma_{ij}) \cup (\Sigma_{ij})_\epsilon$. Then for any $\SF_{12} \in \msh_{\Lambda_{12}}(\Lambda_{12})$ and $\SF_{23} \in \msh_{\Lambda_{12}}(\Lambda_{12})$, their microlocal gapped composition is
    $$i_{23}^*\SF_{23} \circ_g i_{12}^*\SF_{12} = m_{\Sigma_{23} \circ \Sigma_{12}}(\psi_{23}(w_{\Lambda_{23}^+}\SF_{23}) \circ \psi_{12}( w_{\Lambda_{12}^+}\SF_{12})).$$
    Since $\Sigma_{12} \,\widehat\times\, \Sigma_{23} = \pi_{13}^{-1}(\Sigma_{13}) \cap \Lambda_{12}\,\widehat\times\,\Lambda_{23}$, we know the composition of $\Lambda_{ij} \setminus \Sigma_{ij}$ is disjoint from $\Sigma_{13}$. Since $\Lambda_{12}, \Lambda_{23}$ are composable, we know that for the negative contact push-off from $(\Lambda_{ij})_\epsilon$ back to $(\Lambda_{ij} \setminus \Sigma_{ij}) \cup (\Sigma_{ij})_\epsilon$, their composition is also disjoint from $\Sigma_{13}$. Therefore, by Theorem \ref{thm: gapped composition family}, we can conclude that 
    $$i_{23}^*\SF_{23} \circ_g i_{12}^*\SF_{12} = m_{\Sigma_{13}}(\psi_{13}(w_{\Lambda_{23}^+}\SF_{23} \circ w_{\Lambda_{12}^+}\SF_{12})) = i_{13}^*(\SF_{23} \circ_g \SF_{12}).$$
    This completes the proof.
\end{proof}

\begin{remark}
    When $\Lambda_{12}, \Lambda_{23}$ and $\Sigma_{12}, \Sigma_{23}$ are both conic subsets in cotangent bundles containing part of the zero sections with $\pi(\Lambda_{ij}) \subset \Lambda_{ij}$ and $\pi(\Sigma_{ij}) \subset \Sigma_{ij}$, the above formula can be deduced from proper base change.
\end{remark}

Similarly, using Theorem \ref{thm:gapped hom composition}, we can prove the following result:

\begin{theorem}\label{thm:gapped hom composition microsheaf}  
    Let $\Lambda_{12} \subset S^*(M_1 \times M_2 \times \bR_{>0})$ and $\Lambda_{23} \subset S^*(M_2 \times M_3 \times \bR_{>0})$ define families of relatively compact sufficiently Legendrians subsets with gapped microlocal hom composition. Then there exists a commutative diagram between nearby cycle functors and compositions functors
    \[\begin{tikzcd}
        \msh_{\Lambda_{12}}(\Lambda_{12}) \otimes \msh_{\Lambda_{23}}(\Lambda_{23}) \ar[r, " \sHom^\circ_g"] \ar[d, "\psi_{12} \otimes \psi_{23}" left] & \msh_{\Lambda_{\bar 13}}(\Lambda_{\bar 13}) \ar[d, "\psi_{13}"] \\
        \msh_{\psi_{12}(\Lambda_{12})}(\psi_{12}(\Lambda_{12})) \otimes \msh_{\psi_{23}(\Lambda_{23})}(\psi_{23}(\Lambda_{23})) \ar[r, "\sHom^\circ_g"]  & \msh_{\psi_{13}(\Lambda_{\bar 13})}(\psi_{13}(\Lambda_{\bar 13})).
    \end{tikzcd}\]
\end{theorem}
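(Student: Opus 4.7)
The plan is to parallel the proof of Theorem \ref{thm:gapped composition microsheaf}, substituting the Hom version (Theorem \ref{thm:gapped hom composition}) of the gapped composition theorem for the tensor version. First I would fix $\epsilon > 0$ small enough that: each family $\Lambda_{ij}$ has no self-Reeb chords of length less than $2\epsilon$; the positive doubling $w^+_{\Lambda_{ij}}$ of Definition \ref{def: positive doubling} is defined; the microlocal nearby cycle functors $\psi_{ij}$ of Definition \ref{def: gap micro nearby} make sense (via self-gappedness supplied by the hypothesis); and both gapped hom composition functors $\sHom^\circ_g$ in the square are defined via Definition-Proposition \ref{def: gap micro hom composition} (using the composability clauses of the hom analogue of Definition \ref{def: gap microlocal composition}, which hold both at $\Lambda_{ij}$ and at $\psi_{ij}(\Lambda_{ij})$).

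The core step is to apply Theorem \ref{thm:gapped hom composition} to the doubled sheaves $w^+_{\Lambda_{12}}\SF_{12}$ and $w^+_{\Lambda_{23}}\SF_{23}$. This requires verifying that the pair $\bigl((\underline\Lambda_{12})^+_{\cup,\epsilon},(\underline\Lambda_{23})^+_{\cup,\epsilon}\bigr)$ of microsupports has positively gapped Hom composition in the sense of Definition \ref{def: gap hom composition}. The $\bR_{>0}$-non-characteristicity transfers automatically from $\Lambda_{ij}$; the pdff hypothesis on $\psi_{23}((\underline\Lambda_{23})^+_{\cup,\epsilon})\circ\psi_{12}(-(\underline\Lambda_{12})^+_{\cup,\epsilon})$ follows from composability of $\psi_{ij}\Lambda_{ij}$ via Lemma \ref{ss of composition for doubling 2}; and the gappedness of
$$\bigl(-(\underline\Lambda_{12})^+_{\cup,\epsilon}\times_{\bR_{>0}}(\underline\Lambda_{23})^+_{\cup,\epsilon},\ 0_{M_1}\times\dot T^*_\Delta M_2^2\times 0_{M_3}\times\bR_{>0}\bigr)$$
follows from the assumed microlocal gappedness together with Lemma \ref{ss gapped for doubling}, after shrinking $\epsilon$ below the gap. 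Theorem \ref{thm:gapped hom composition} then yields
$$\psi_{13}\,\sHom^\circ\!\bigl(w^+_{\Lambda_{12}}\SF_{12},\,w^+_{\Lambda_{23}}\SF_{23}\bigr)\simeq\sHom^\circ\!\bigl(\psi_{12}(w^+_{\Lambda_{12}}\SF_{12}),\,\psi_{23}(w^+_{\Lambda_{23}}\SF_{23})\bigr).$$

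Finally, I would microlocalize both sides along $\psi_{13}(\Lambda_{\bar13})$. Using Definition \ref{def: gap micro nearby} for $\psi_{ij}\SF_{ij}=m_{\psi(\Lambda_{ij})}\psi_{ij}(w^+_{\Lambda_{ij}}\SF_{ij})$, together with Definition-Proposition \ref{def: gap micro hom composition} expressing $\sHom^\circ_g$ as the microlocalization of $\sHom^\circ$ applied to doubled objects, this microlocalization transports the sheaf-level identity to the desired microsheaf identity
$$\psi_{13}\,\sHom^\circ_g(\SF_{12},\SF_{23})\simeq\sHom^\circ_g(\psi_{12}\SF_{12},\,\psi_{23}\SF_{23}),$$
which is exactly the commutativity of the square. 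The main obstacle is bookkeeping the passage of hypotheses from $\Lambda_{ij}$ to their doublings: in particular ensuring that $\epsilon$ can simultaneously be chosen small enough that both the self-gappedness needed for $\psi_{ij}$, and the gapped hom composition of the doublings needed for Theorem \ref{thm:gapped hom composition}, survive. These verifications are guided precisely by the analogous checks in the tensor case, and the fact that Lemmas \ref{ss of composition for doubling 2} and \ref{ss gapped for doubling} apply equally well with signs flipped (since $-\underline\Lambda_{12}$ is again a conic subset containing a zero-section piece plus a piece at infinity).
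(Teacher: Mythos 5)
Your proposal is correct and follows exactly the approach the paper intends: the paper gives no written proof for this theorem, merely stating that it follows ``similarly, using Theorem~\ref{thm:gapped hom composition}'' in place of Theorem~\ref{thm:gapped composition}, which is precisely the substitution your argument carries out, mirroring the proof of Theorem~\ref{thm:gapped composition microsheaf} step by step. The verifications you flag (transfer of the three conditions of Definition~\ref{def: gap hom composition} to the doublings, the $2\epsilon$ constraint from Definition-Proposition~\ref{def: gap micro hom composition}, and the final microlocalization along $\psi_{13}(\Lambda_{\bar13})$) are exactly the bookkeeping left implicit in the paper.
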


    

%
%
%
%
%

\subsection{Gapped composition computes the integral transform}

    Let $\Lambda_1 \subset S^*M_1$, $\Lambda_2 \subset S^*M_2$ and $S^*M_3 \subset S^*M_3$ be sufficiently Legendrian and let $\Lambda_{\bar 12} = -\Lambda_1 \,\widehat\times\, \Lambda_2, \Lambda_{\bar 23} = -\Lambda_2 \,\widehat\times\, \Lambda_3$ and $\Lambda_{\bar13} = -\Lambda_1 \,\widehat\times\, \Lambda_3$. They have gapped composition and are composable:

\begin{lemma}\label{lem: product composable}
    Let $\Lambda_1 \subset S^*M_1$, $\Lambda_2 \subset S^*M_2$ and $S^*M_3 \subset S^*M_3$ be sufficiently Legendrian subsets. Then $\Lambda_{\bar 12} = -\Lambda_1 \,\widehat\times\, \Lambda_2, \Lambda_{\bar 23} = -\Lambda_2 \,\widehat\times\, \Lambda_3$ are composable in the sense of Definition \ref{def: composable} and have gapped composition in the sense of Definition \ref{def: gap composition}, and $\Lambda_{\bar 12}, \Lambda_{\bar 23}, \Lambda_{\bar 13}$ are sufficiently Legendrian.
\end{lemma}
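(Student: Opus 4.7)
The plan is to verify each of the three claims about the contact products in turn: sufficient Legendrianness of $\Lambda_{\bar 12}, \Lambda_{\bar 23}, \Lambda_{\bar 13}$; composability of the pair $(\Lambda_{\bar 12}, \Lambda_{\bar 23})$; and gapped composition of that pair. Throughout I would exploit the fact that each $\Lambda_{\bar ij}$ is, by construction, a contact product of sufficiently Legendrians drawn from the $\Lambda_i$.

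First, I would show that the contact product $-\Lambda_i \,\widehat\times\, \Lambda_j$ satisfies each of the three conditions of Definition \ref{def: sufficient Legendrian} separately. For pdff, the positive Legendrian isotopies displacing each cotangent fiber $S^*_{x_i}M_i$ from $\Lambda_i$ can be combined (via their natural lifts to the contact product) to yield a positive Legendrian isotopy that displaces the cotangent fiber $S^*_{(x_1,x_2)}(M_1 \times M_2)$ from $\Lambda_1 \,\widehat\times\, \Lambda_2$, using that this intersection is a proper isotropic subset of the fiber. For perturbability to finite position, a small generic contact perturbation (e.g. one whose effect is to scale the $M_2$-fiber coordinate by a function of the $M_1$ variable) breaks the $\bR$-degeneracy of the projection $\Lambda_1 \,\widehat\times\, \Lambda_2 \to M_1 \times M_2$ and renders it finite-to-one. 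For positive self-displaceability, I would use the product topological basis $\Omega = \Omega_1 \,\widehat\times\, \Omega_2$ (with $\Omega_i$ in the basis for $\Lambda_i$), and displace such an $\Omega$ using the self-displacing isotopy of either factor lifted to the contact product.

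Second, I would verify the composability condition of Definition \ref{def: composable}. A direct calculation with the definition of contact composition shows $\Lambda_{\bar 23} \circ \Lambda_{\bar 12} = -\Lambda_1 \,\widehat\times\, \Lambda_3 = \Lambda_{\bar 13}$, which is sufficiently Legendrian by the first step. For the push-off disjointness requirement, I would choose the positive self-displacing isotopy on $\Lambda_{\bar 12}$ to act only on its $\Lambda_1$ factor, and the one on $\Lambda_{\bar 23}$ to act only on its $\Lambda_3$ factor. Under this choice the middle $\pm\Lambda_2$ factors are fixed, and the composition of the displaced subsets computes to $-\Lambda_{1,\epsilon} \,\widehat\times\, \Lambda_{3,\epsilon'}$, which is disjoint from $-\Lambda_1 \,\widehat\times\, \Lambda_3$ whenever $\epsilon + \epsilon' > 0$ by the self-displaceability of $\Lambda_1$ and $\Lambda_3$.

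Third, I would check gapped composition from Definition \ref{def: gap composition} by viewing $\Lambda_{\bar ij}$ as the constant $\bR_{>0}$-family. Conditions (1) (non-characteristic) and (2) (pdff of the composition $\Lambda_{\bar 13}$) are then immediate from the construction and the first step. Condition (3) is precisely the assertion that the pair $(\Lambda_{\bar 12} \times_{\bR_{>0}} \Lambda_{\bar 23},\ 0_{M_1} \times \dot T^*_\Delta M_2^2 \times 0_{M_3} \times \bR_{>0})$ is positively gapped, and this follows directly from Lemma \ref{ss gapped for doubling}: the conifications $\underline\Lambda_{\bar 12} \subset \dot T^*M_1 \times \dot T^*M_2$ and $\underline\Lambda_{\bar 23} \subset \dot T^*M_2 \times \dot T^*M_3$ have precisely the form required by that lemma (with trivial $0_{ij}$ parts), so its conclusion applies to give the required emptiness of intersection under any small Reeb push-off. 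The main obstacle will be the careful verification of sufficient Legendrianness for contact products in the first step, especially self displaceability: one must arrange compatible families of precompact opens and displacing isotopies on the contact product, but this follows standard patterns and the individual factor displacements suffice.
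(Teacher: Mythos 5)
Your proposal is essentially correct and takes the same broad approach as the paper, but differs in two notable ways.

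The paper's proof proceeds by defining a single \emph{product contact flow} $R_{ij}^t(x_i,\xi_i,r_i;x_j,\xi_j,r_j) = (R_i^{tr_i}(x_i,\xi_i,r_i), R_j^{tr_j}(x_j,\xi_j,r_j))$ on the contact product, where the scaling by $r_i$ and $r_j$ ensures the vector field is genuinely contact for the form $r_1\alpha_1 + r_2\alpha_2$; this one flow is then used to establish pdff and self-displaceability simultaneously. For composability, the paper derives $\Lambda_{\bar 23,\epsilon'}\circ\Lambda_{\bar 12,\epsilon} = \Lambda_{\bar 13,\epsilon+\epsilon'}$ under this product flow and then invokes self-displaceability of $\Lambda_{\bar 13}$. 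You instead propose displacing isotopies that move only the $\Lambda_1$ factor (for $\Lambda_{\bar 12}$) and only the $\Lambda_3$ factor (for $\Lambda_{\bar 23}$), which gives the tidier conclusion $\Lambda_{\bar 23,\epsilon'}\circ\Lambda_{\bar 12,\epsilon} = -\Lambda_{1,\epsilon}\,\widehat\times\,\Lambda_{3,\epsilon'}$ and reduces disjointness from $\Lambda_{\bar 13}$ to self-displaceability of $\Lambda_1$ and $\Lambda_3$ separately, rather than of the product $\Lambda_{\bar 13}$. This is a legitimate alternative, but you should verify carefully that such one-factor isotopies really are positive contact isotopies on the contact product; because the contact form on $S^*M_1\,\widehat\times\,S^*M_2$ couples the two factors through the $r_i$, a flow moving only one factor must still be built with the appropriate $r_i$-dependence and cut-offs (this is exactly what the paper's explicit $R_{ij}$ formula handles), and you must also check the support condition in Definition \ref{def: self displaceable} when lifting an isotopy supported in $\overline{U_1}$ to one supported in $\overline{U_1\,\widehat\times\,U_2}$. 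Your third step (invoking Lemma \ref{ss gapped for doubling}) matches the paper exactly.

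Two further remarks. First, you are right to flag perturbability to finite position as a separate thing to check --- the projection $\Lambda_1\,\widehat\times\,\Lambda_2\cong\Lambda_1\times\Lambda_2\times\bR\to M_1\times M_2$ collapses the $\bR$-direction, so the contact product is manifestly not in finite position and a genuine perturbation is needed; the paper's proof does not spell this out. However, your proposed perturbation (``scale the $M_2$-fiber coordinate by a function of the $M_1$ variable'') is not clearly a contact isotopy of $S^*(M_1\times M_2)$; you should produce a genuine contact Hamiltonian (or appeal to a transversality/Thom--Boardman argument as in Lemma \ref{lem: ptfp whitney}) that makes the projection of the perturbed contact product finite-to-one. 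Second, your check of Definition \ref{def: gap composition}(1)--(2) is correct but deserves a sentence explaining why the $\bR_{>0}$-constant family is non-characteristic and why $\Lambda_{\bar 13}$ (already shown sufficiently Legendrian) is pdff.
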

\begin{proof}
    First, we show that $\Lambda_{\bar 12} = -\Lambda_1 \,\widehat\times\, \Lambda_2, \Lambda_{\bar 23} = -\Lambda_2 \,\widehat\times\, \Lambda_3$ and $\Lambda_{\bar 13} = -\Lambda_1 \,\widehat\times\, \Lambda_3$ are sufficiently Legendrian. Consider the contact flows $R_i$ on $S^*M_i$ or equivalently on $\dot T^*M_i$. Then we can define the product contact flow by
    $$R_{ij}^t(x_i, \xi_i, r_i; x_j, \xi_j, r_j) = (R_i^{tr_i}(x_i, \xi_i, r_i), R_j^{tr_j}(x_j, \xi_j, r_j)), \quad (x_i, \xi_i) \in S^*M_i, \; r_i^2 + r_j^2 = 1.$$
    For the contact flows $R_i$ that displace $\Lambda_i$ from cotangent fibers (resp.~from itself), the product flow $R_{ij}$ displaces $\Lambda_{ij}$ from cotangent fibers (resp.~from itself). Thus the product $\Lambda_{ij}$ are sufficiently Legendrian. They are composable because
    $\Lambda_{\bar 13} = -\Lambda_1 \,\widehat\times\, \Lambda_3 = (-\Lambda_2 \,\widehat\times\, \Lambda_3 ) \circ (-\Lambda_1 \,\widehat\times\, \Lambda_2 ) = \Lambda_{\bar 23} \circ \Lambda_{\bar 12}$, and thus if we consider the product Reeb flow, 
    $$\Lambda_{\bar23,\epsilon'} \circ \Lambda_{\bar12,\epsilon} = \Lambda_{\bar13,\epsilon+\epsilon'}.$$
    Since $\Lambda_{\bar 13}$ is self displaceable, we know that $\Lambda_{\bar13,\epsilon+\epsilon'}$ is disjoint from $\Lambda_{\bar 13}$ and thus the pair is composable in the sense of Definition \ref{def: composable}. Finally, by Lemma \ref{ss gapped for doubling}, the pair has gapped composition in the sense of Definition \ref{def: gap composition}.
\end{proof}
    
    One can define the algebraic composition of microsheaves with sufficient Legendrian supports
    using duality and K\"unneth formula in Definition \ref{def: alg composition}, or define the gapped composition using doubling in Definition \ref{def: gap micro composition}.
    Our main theorem in this section is that, in the above special case, the gapped composition agrees with the algebraic composition:

\begin{theorem}\label{thm: gap composition kunneth}
    Let $\Lambda_1 \subset S^*M_1$, $\Lambda_2 \subset S^*M_2$ and $\Lambda_3 \subset S^*M_3$ be relatively compact sufficiently Legendrian subsets and let $\Lambda_{\bar 12} = -\Lambda_1 \,\widehat\times\, \Lambda_2, \Lambda_{\bar 23} = -\Lambda_2 \,\widehat\times\, \Lambda_3$ and $\Lambda_{\bar 13} = -\Lambda_1 \,\widehat\times\, \Lambda_3$. Then there is a commutative diagram
    \[\begin{tikzcd}
    \msh_{\Lambda_{\bar 12}}(\Lambda_{\bar 12}) \otimes \msh_{\Lambda_{\bar 23}}(\Lambda_{\bar 23}) \ar[r, "\circ_g"] \ar[d, "\rotatebox{90}{$=$}" left] & \msh_{\Lambda_{13}}(\Lambda_{13}) \ar[d, "\rotatebox{90}{$=$}"] \\
    \msh_{\Lambda_{\bar 12}}(\Lambda_{\bar 12}) \otimes \msh_{\Lambda_{\bar 23}}(\Lambda_{\bar 23}) \ar[r, "\circ_a"] & \msh_{\Lambda_{\bar 13}}(\Lambda_{\bar 13}).
    \end{tikzcd}\]
    between the composition in Definition \ref{def: alg composition} and the gapped composition in Definition \ref{def: gap micro composition}.
\end{theorem}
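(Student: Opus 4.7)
The plan is to unwind both compositions into explicit sheaf-theoretic formulas using doubling functors and identify them via Theorem \ref{thm: kunneth-doubling}.

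First I would unwind the algebraic composition of Definition \ref{def: alg composition}. Using the K\"unneth equivalence (Theorem \ref{thm:kunneth-microsheaf}) to identify $\msh_{\Lambda_{\bar 12}} \simeq \msh_{-\Lambda_1} \otimes \msh_{\Lambda_2}$ and similarly for $\Lambda_{\bar 23}$ and $\Lambda_{\bar 13}$, together with the explicit formula for the counit $\epsilon_{-\Lambda_2}$ from Corollary \ref{thm:microsheaf-duality}, the composition $\circ_a$ takes the sheaf-theoretic form
\[
\SF_{\bar 23} \circ_a \SF_{\bar 12} \;=\; m_{\Lambda_{\bar 13}}\,\pi_{13!}\,\Delta_{M_2}^{\,*}\bigl(\widetilde\SA \boxtimes \widetilde\SB\bigr),
\]
where, writing $\SF_{\bar 12} = \SG_1 \boxtimes \SG_2$ and $\SF_{\bar 23} = \SH_2 \boxtimes \SH_3$ under K\"unneth, one has $\widetilde\SA = w^+_{-\Lambda_1}\SG_1 \boxtimes w^+_{\Lambda_2}\SG_2$ and $\widetilde\SB = w^+_{-\Lambda_2}\SH_2 \boxtimes w^+_{\Lambda_3}\SH_3$ — the product-of-doubling representatives, doubled separately in each K\"unneth factor.

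By Definition--Proposition \ref{def: gap micro composition}, the gapped composition is
\[
\SF_{\bar 23} \circ_g \SF_{\bar 12} \;=\; m_{\Lambda_{\bar 13}}\,\pi_{13!}\,\Delta_{M_2}^{\,*}\bigl(\SA \boxtimes \SB\bigr),
\]
with $\SA = w^+_{\Lambda_{\bar 12}}\SF_{\bar 12}$ and $\SB = w^+_{\Lambda_{\bar 23}}\SF_{\bar 23}$ the doublings of the full product kernels. By Theorem \ref{thm: kunneth-doubling}, applied to $-\Lambda_1 \widehat\times \Lambda_2$ and to $-\Lambda_2 \widehat\times \Lambda_3$, the left adjoint $\iota^*$ of tautological inclusion gives equivalences of sheaf categories fitting into a commutative diagram with microlocalization, under which $\SA$ corresponds to $\widetilde\SA$ and $\SB$ to $\widetilde\SB$.

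To finish, I would apply Theorem \ref{thm: kunneth-doubling} once more to the four-factor contact product $\Lambda_{\bar 12} \widehat\times \Lambda_{\bar 23}$, showing that the sheaves $\SA\boxtimes\SB$ and $\widetilde\SA\boxtimes\widetilde\SB$ on $M_1 \times M_2 \times M_2 \times M_3$ likewise correspond under $\iota^*$. Lemma \ref{lem: ss-composition} then constrains the microsupport of $\pi_{13!}\Delta_{M_2}^*$ of either to lie inside $(\underline\Lambda_{\bar 12})^+_{\cup,\epsilon} \circ (\underline\Lambda_{\bar 23})^+_{\cup,\epsilon}$, whose restriction to infinity agrees with $\Lambda_{\bar 13}$ on an open neighborhood of $\Lambda_{\bar 13}$ (by the composability established in Lemma \ref{lem: product composable}), so that $m_{\Lambda_{\bar 13}}$ annihilates any discrepancy between the two representatives. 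This yields $\circ_g = \circ_a$.

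The main obstacle is the bookkeeping in this last step: the two sheaves $\SA\boxtimes\SB$ and $\widetilde\SA\boxtimes\widetilde\SB$ are genuinely different objects of $\Sh(M_1 \times M_2 \times M_2 \times M_3)$ — they live in different full subcategories with different microsupport constraints — and their $\pi_{13!}\Delta_{M_2}^*$-images on $M_1 \times M_3$ differ as sheaves. What has to be verified is that, upon microlocalization along $\Lambda_{\bar 13}$, the difference is washed out. The essential input for this is the commutativity of the K\"unneth/doubling square of Theorem \ref{thm: kunneth-doubling}, promoted to the iterated and composed settings via the naturality of the integral-kernel operations $\pi_{13!}\Delta_{M_2}^*$.
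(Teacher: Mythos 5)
Your setup is right and matches the paper's: unwind both compositions via doubling functors, use Theorem \ref{thm: kunneth-doubling} to relate the product-of-doublings representative $\widetilde\SA$ to the whole-kernel-doubling representative $\SA$, and the same for $\widetilde\SB$ and $\SB$. But there is a genuine gap at the finish, and it sits precisely where the real difficulty of the theorem lives.

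After establishing that $\SA \boxtimes \SB$ maps to $\widetilde\SA \boxtimes \widetilde\SB$ under $\iota^*_{(-\underline\Lambda_1)^+_{\cup,\epsilon} \times (\underline\Lambda_2)^+_{\cup,\epsilon} \times (-\underline\Lambda_2)^+_{\cup,\epsilon} \times (\underline\Lambda_3)^+_{\cup,\epsilon}}$, you assert that $m_{\Lambda_{\bar 13}}\pi_{13!}\Delta_{M_2}^*$ of both agree because the microsupport is constrained by Lemma \ref{lem: ss-composition}, "via the naturality of the integral-kernel operations." This does not work: $\iota^*$ is a (nonproper) wrapping functor that does not commute with $\pi_{13!}\Delta_{M_2}^*$ in any formal or "natural" way — there is no base change for it, and the two sheaves $\pi_{13!}\Delta_{M_2}^*(\SA\boxtimes\SB)$ and $\pi_{13!}\Delta_{M_2}^*(\widetilde\SA\boxtimes\widetilde\SB)$ do not come equipped with a comparison map. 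Knowing both microsupports land in $(\underline\Lambda_{\bar 23})^+_{\cup,\epsilon} \circ (\underline\Lambda_{\bar 12})^+_{\cup,\epsilon}$ does not let you conclude that their microlocalizations along the open piece $\Lambda_{\bar 13}$ agree — you need an actual isomorphism in a neighborhood of $\Lambda_{\bar 13}$, constructed, not inferred from a microsupport estimate.

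What the paper does at this point, and what your argument is missing, is twofold. First, it invokes Theorem \ref{thm: doubling-small-piece} to realize the map $\Sh_{(\underline\Lambda_{\bar ij})^+_{\cup,s}} \to \Sh_{(-\underline\Lambda_i)^+_{\cup,\epsilon} \times (\underline\Lambda_j)^+_{\cup,\epsilon}}$ concretely as a \emph{nearby cycle functor} $\psi_{ij}$ associated to an explicit contact isotopy collapsing the doubling of the contact product to the product of the doublings. Second — and this is the essential input — it then applies Theorem \ref{thm:gapped composition}, the gapped-composition-commutes-with-nearby-cycles theorem, to obtain
\[
\psi_{23}(w_{\Lambda_{\bar 23}}^+\SF_{23}) \circ \psi_{12}(w_{\Lambda_{\bar 12}}^+\SF_{12}) \;\simeq\; \psi_{13}\bigl(w_{\Lambda_{\bar 23}}^+\SF_{23} \circ w_{\Lambda_{\bar 12}}^+\SF_{12}\bigr),
\]
and only then microlocalizes along $\Lambda_{\bar 13}$, using Lemma \ref{lem:contact-transform-main} and the openness from Lemma \ref{lem: product composable}. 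The gappedness hypotheses required to apply Theorem \ref{thm:gapped composition} are verified via Lemma \ref{ss gapped for doubling} and the explicit geometry of the isotopy. This is the step your "naturality" is standing in for, and it is not formal — it is the main technical content of the paper, and without invoking it the proof is not complete.
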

\begin{proof}
    First, by Theorem \ref{thm:microsheaf-duality}, we know that there is a commutative diagram of algebraic compositions of sheaves and microsheaves in Definition \ref{def: alg composition}:
    \[\begin{tikzcd}
    \msh_{\Lambda_{\bar 12}}(\Lambda_{\bar 12}) \otimes \msh_{\Lambda_{\bar 23}}(\Lambda_{\bar 23}) \ar[r, "\circ_a"] \ar[d, "(w_{-\Lambda_1}^+ \otimes w_{\Lambda_2}^+) \otimes (w_{-\Lambda_2}^+ \otimes w_{\Lambda_3}^+)" left] & \msh_{\Lambda_{\bar 13}}(\Lambda_{\bar 13}) \\
    \Sh_{(-\underline\Lambda_{1})^+_{\cup,\epsilon} \times (\underline\Lambda_{2})^+_{\cup,\epsilon}}(M_1 \times M_2) \otimes \Sh_{(-\underline\Lambda_{2})^+_{\cup,\epsilon} \times (\underline\Lambda_{3})^+_{\cup,\epsilon}}(M_2 \times M_3) \ar[r] & \Sh_{(-\underline\Lambda_{1})^+_{\cup,\epsilon} \times (\underline\Lambda_{3})^+_{\cup,\epsilon}}(M_1 \times M_3)  \ar[u, "m_{-\Lambda_1} \otimes m_{\Lambda_3}" right].
    \end{tikzcd}\]
    Then, per Definition \ref{def: gap micro composition}, we know that the gapped composition is defined via the following commutative diagram
    \[\begin{tikzcd}
    \msh_{\Lambda_{\bar 12}}(\Lambda_{\bar 12}) \otimes \msh_{\Lambda_{\bar 23}}(\Lambda_{\bar 23}) \ar[r, "\circ_g"] \ar[d, "w_{\Lambda_{\bar 12}}^+ \otimes w_{\Lambda_{\bar 23}}^+" left] & \msh_{\Lambda_{\bar 13}}(\Lambda_{\bar 13}) \\
    \Sh_{(\underline\Lambda_{\bar 12})^+_{\cup,\epsilon'}}(M_1 \times M_2) \otimes \Sh_{(\underline\Lambda_{\bar 23})^+_{\cup,\epsilon'}}(M_2 \times M_3) \ar[r] & \Sh_{(\underline\Lambda_{\bar 23})^+_{\cup,\epsilon'} \circ (\underline\Lambda_{\bar 12})^+_{\cup,\epsilon'}}(M_1 \times M_3) \ar[u, "m_{\Lambda_{\bar 13}}"].
    \end{tikzcd}\]
    Therefore, by Theorem \ref{thm: kunneth-doubling}, it suffices to show that the following diagram commutes:
    \[\begin{tikzcd}[column sep=10pt]
    \Sh_{(\underline\Lambda_{\bar 12})^+_{\cup,\epsilon'}}(M_1 \times M_2) \otimes \Sh_{(\underline\Lambda_{\bar 23})^+_{\cup,\epsilon'}}(M_2 \times M_3) \ar[r] \ar[d, "(w_{-\Lambda_1}^+ \otimes w_{\Lambda_2}^+ \otimes w_{-\Lambda_2}^+ \otimes w_{\Lambda_3}^+) \circ (m_{\Lambda_{\bar 12}} \otimes m_{\Lambda_{\bar 23}})" left] & \Sh_{(\underline\Lambda_{\bar 23})^+_{\cup,\epsilon'} \circ (\underline\Lambda_{\bar 12})^+_{\cup,\epsilon'}}(M_1 \times M_3) \ar[d, "(w_{-\Lambda_1}^+ \otimes w_{\Lambda_3}^+) \circ m_{\Lambda_{13^-}}"]  \\
    \Sh_{(-\underline\Lambda_{1})^+_{\cup,\epsilon} \times (\underline\Lambda_{2})^+_{\cup,\epsilon}}(M_1 \times M_2) \otimes \Sh_{(-\underline\Lambda_{2})^+_{\cup,\epsilon} \times (\underline\Lambda_{3})^+_{\cup,\epsilon}}(M_2 \times M_3) \ar[r] & \Sh_{(-\underline\Lambda_{1})^+_{\cup,\epsilon} \times (\underline\Lambda_{3})^+_{\cup,\epsilon}}(M_1 \times M_3).
    \end{tikzcd}\]
    By Theorem \ref{thm: doubling-small-piece}, we know that the vertical functors are realized by nearby cycle functors
    $$\psi_{ij} : \Sh_{(\underline\Lambda_{\bar ij})^+_{\cup,s}}(M_i \times M_j) \xrightarrow{\sim} \Sh_{(-\underline\Lambda_{i})^+_{\cup,\epsilon} \times (\underline\Lambda_{j})^+_{\cup,\epsilon}}(M_i \times M_j).$$
    Moreover, from the proof of Theorem \ref{thm: doubling-small-piece}, we know that we can find an contact isotopy of sufficiently Legendrian subsets $(\underline\Lambda_{\bar ij})^+_{\cup,s}$ into an arbitrary small neighbourhood of $(-\underline\Lambda_{i})^+_{\cup,\epsilon} \times (\underline\Lambda_{j})^+_{\cup,\epsilon}$. This means that
    $\psi_{ij}((\underline\Lambda_{\bar ij})^+_{\cup,s}) \subset (-\underline\Lambda_{i})^+_{\cup,\epsilon} \times (\underline\Lambda_{j})^+_{\cup,\epsilon}.$
    Then, by Theorem \ref{thm:gapped composition}, we can show that for any $\SF_{12} \in \msh_{\Lambda_{\bar 12}}(\Lambda_{\bar 12}), \SF_{23} \in \msh_{\Lambda_{\bar 23}}(\Lambda_{\bar 23})$, we have
    $$\psi_{23}(w_{\Lambda_{\bar 23}}^+\SF_{23}) \circ \psi_{12}(w_{\Lambda_{\bar 12}}^+\SF_{12}) = \psi_{13}(w_{\Lambda_{\bar 23}}^+\SF_{23} \circ w_{\Lambda_{\bar 12}}^+\SF_{12}).$$
    Since $\Lambda_{\bar 23} \circ \Lambda_{\bar 12} \subset ((\underline\Lambda_{\bar 23})^+_{\cup,s} \circ (\underline\Lambda_{\bar 12})^+_{\cup,s})$ is an open subset for any $s > 0$ by Lemma \ref{lem: product composable}, we can microlocalize along $\Lambda_{\bar 13} = \Lambda_{\bar 23} \circ \Lambda_{\bar 12}$ on the right hand side show by Lemma \ref{lem:contact-transform-main} that
    $$m_{\Lambda_{\bar 13}}(w_{\Lambda_{\bar 23}}^+\SF_{23} \circ w_{\Lambda_{\bar 12}}^+\SF_{12}) = m_{\Lambda_{\bar 13}} \circ \psi_{13}(w_{\Lambda_{\bar 23}}^+\SF_{23} \circ w_{\Lambda_{\bar 12}}^+\SF_{12}).$$
    This then shows the commutativity of the diagram, which completes the proof.
\end{proof}

    Combining Corollary \ref{rem: gapped micro composition higher} with the coherence of associativity of compositions of colimit preserving functors in Formula \eqref{rem: associativity cat}, we can also conclude that:

\begin{corollary}\label{rem: gap composition kunneth higher}
    Let $\Lambda_1 \subset S^*M_1, \dots, \Lambda_k \subset S^*M_k$ be relatively compact sufficiently Legendrian subsets and let $\Lambda_{ij}^- = -\Lambda_i \,\widehat\times\, \Lambda_j$. Then there are natural equivalences between the composition in Definition \ref{def: alg composition} and the gapped composition in Definition \ref{def: gap micro composition}
    \[\begin{tikzcd}
    \msh_{\Lambda_{\bar 12}}(\Lambda_{\bar 12}) \otimes \dots \otimes \msh_{\Lambda_{\ol{k-1},k}}(\Lambda_{\ol{k-1},k}) \ar[r, "\circ_g"] \ar[d, "\rotatebox{90}{$=$}" left] & \msh_{\Lambda_{\bar 1k}}(\Lambda_{\bar 1k}) \ar[d, "\rotatebox{90}{$=$}"] \\
    \msh_{\Lambda_{\bar 12}}(\Lambda_{\bar 12}) \otimes \dots \otimes \msh_{\Lambda_{\ol{k-1},k}}(\Lambda_{\ol{k-1},k}) \ar[r, "\circ_a"] & \msh_{\Lambda_{\bar 1k}}(\Lambda_{\bar 1k}).
    \end{tikzcd}\]
    that fit into an equivalence of coherent diagrams $\Delta_1^{\times k-1} \to \Cat_{st}$ where the vertices are sent to $\msh_{\Lambda_{\bar 1l_1}}(\Lambda_{\bar 1l_1}) \otimes \dots \otimes \msh_{\Lambda_{\bar l_r+1,k}}(\Lambda_{\bar l_r+1,k})$ and the arrows are sent to the composition functors.
\end{corollary}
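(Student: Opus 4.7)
The plan is to bootstrap off the two coherent structures already constructed earlier in the paper. The algebraic composition $\circ_a$ assembles into a coherent diagram $\Delta_1^{\times k-1} \to \Cat_{st}$ by the abstract associativity for compositions of colimit-preserving functors (Formula \eqref{rem: associativity cat}), applied to the dualizable presentable stable categories $\msh_{\Lambda_i}(\Lambda_i)$ (whose duals are identified with $\msh_{-\Lambda_i}(-\Lambda_i)$ in Theorem \ref{thm:microsheaf-duality}). The gapped composition $\circ_g$ assembles into a coherent diagram by Corollary \ref{rem: gapped micro composition higher}; the required composability and gappedness hypotheses at every level are verified as in Lemma \ref{lem: product composable}, upgraded to iterated compositions by taking the product of the individual displacement flows on each $S^*M_i$. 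So what remains is to exhibit an equivalence between these two coherent diagrams extending the vertex-wise equivalence of Theorem \ref{thm: gap composition kunneth}.

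First I would realize both diagrams at the sheaf level via the doubling functor. Using Theorems \ref{thm: relative-doubling}, \ref{thm: doubling adjoint} and \ref{thm: kunneth-doubling}, the algebraic composition factors through the sheaf categories $\Sh_{(-\underline\Lambda_i)^+_{\cup,\epsilon} \times (\underline\Lambda_j)^+_{\cup,\epsilon}}(M_i \times M_j)$ via $m_{-\Lambda_i} \otimes m_{\Lambda_j}$ and its left adjoint $w_{-\Lambda_i}^+ \otimes w_{\Lambda_j}^+$, with the counits $m\circ w \circ m \simeq m$ assembling the higher associators. The gapped composition, per Definition \ref{def: gap micro composition}, factors through $\Sh_{(\underline\Lambda_{\bar{ij}})^+_{\cup,\epsilon'}}(M_i \times M_j)$ via $m_{\Lambda_{\bar{ij}}}$ and $w_{\Lambda_{\bar{ij}}}^+$. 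Both factorizations are coherent for iterated compositions by the argument of Corollary \ref{rem: gapped micro composition higher}, i.e.\ by assembling the counits and the (non-proper or smooth) base-changes of the six-functor formalism into (op)lax diagrams of natural transformations via \cite[Corollary F]{HaugsengHebestreitLinskensNuiten}, each 2-morphism becoming invertible after microlocalizing along the appropriate Legendrian.

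Next I would bridge the two sheaf-level presentations by the nearby cycle equivalence $\psi_{ij} : \Sh_{(\underline\Lambda_{\bar{ij}})^+_{\cup,\epsilon'}}(M_i \times M_j) \xrightarrow{\sim} \Sh_{(-\underline\Lambda_i)^+_{\cup,\epsilon} \times (\underline\Lambda_j)^+_{\cup,\epsilon}}(M_i \times M_j)$ produced by Theorem \ref{thm: doubling-small-piece}. The crucial coherence input is Corollary \ref{rem: gapped composition higher}: iterated compositions of sheaf kernels commute coherently with nearby cycles, provided the family has positively gapped iterated composition. In the present setting this gappedness is automatic because the families $(\underline\Lambda_{\bar{ij}})^+_{\cup,s}$ arise from doublings of products $-\Lambda_i \,\widehat\times\, \Lambda_j$, so their compositions satisfy the hypotheses of Definition \ref{def: gap composition pairwise} by Lemmas \ref{ss of composition for doubling 2} and \ref{ss gapped for doubling}. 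After microlocalizing along $\Lambda_{\bar{1k}} \subset (\underline\Lambda_{\ol{k-1,k}})^+_{\cup,s} \circ \cdots \circ (\underline\Lambda_{\bar{12}})^+_{\cup,s}$ (this inclusion as an open subset being Lemma \ref{lem: composable openness} applied iteratively), the two resulting iterated compositions are identified. Using the alternative description of Remark \ref{rem: gap micro composition higher alternative} (as a $\mathrm{Div}_k$-diagram in $\Fun^{ex}(\bigotimes_j \msh_{\Lambda_{\bar{j,j+1}}}(\Lambda_{\bar{j,j+1}}), \msh_{\Lambda_{\bar{1k}}}(\Lambda_{\bar{1k}}))$), the comparison with the analogously described $\circ_a$ diagram is transparent.

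The main obstacle I expect is bookkeeping: the doubling functors are not strictly compatible with kernel composition, only up to the counits $m \circ w \circ m \simeq m$, and the nearby cycle bridge introduces further base-change 2-morphisms. Following the template of Corollary \ref{rem: gapped micro composition higher} and Figure \ref{fig:microlocal_composition}, I would assemble all counits, nearby-cycle base-changes, and K\"unneth isomorphisms into a single (op)lax diagram of natural transformations and then verify, using Lemma \ref{lem: ss-composition} and the microsupport estimates from Theorem \ref{thm: doubling-small-piece}, that every 2-morphism becomes invertible after the final microlocalization along $\Lambda_{\bar{1k}}$. Once this is in place, the equivalence of the two coherent diagrams $\Delta_1^{\times k-1} \to \Cat_{st}$ follows from uniqueness of the comparison maps provided by iterated application of Theorem \ref{thm: gap composition kunneth}.
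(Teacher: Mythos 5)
Your proposal is correct and follows essentially the same route as the paper, which dispatches this corollary with a one-line remark (``Combining Corollary~\ref{rem: gapped micro composition higher} with the coherence of associativity of compositions of colimit preserving functors in Formula~\eqref{rem: associativity cat}\dots'') leaving the bridge between the two coherent structures implicit; what the paper leaves implicit is precisely the sheaf-level realization by doubling, the nearby-cycle identification of Theorem~\ref{thm: doubling-small-piece}, and the final microlocalization following the template of the pairwise Theorem~\ref{thm: gap composition kunneth}, all of which your plan spells out correctly.
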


\subsection{Gapped composition and stop removal}\label{ssec: stop removal}

In applications, our desired kernel originates as some (only eventually conic) Lagrangian, whose conification is still not supported in 
$\Lambda_1 \,\widehat\times\, \Lambda_2$, but
rather in some larger $L_{12}$ containing $\Lambda_1 \,\widehat\times\, \Lambda_2$.  For instance, if $-\Lambda_1 = \Lambda_2$ is the core of some Weinstein manifold, then the diagonal of the Weinstein manifold will not be contained in $\Lambda_1 \times \Lambda_2$. 

Thus, we need to understand the relation between gapped compositions and the localization functor
$$\iota_{L_{12}}^*: \msh_{L_{12}}(L_{12}) \to \msh_{\Lambda_1 \,\widehat\times\, \Lambda_2}(\Lambda_1 \,\widehat\times\, \Lambda_2).$$
where we assume that $\Lambda_1 \,\widehat\times\, \Lambda_2 \subset L_{12}$ is a closed subset.

\begin{lemma}\label{lem: composition wrap}
    Let $\underline\Lambda_i \subset T^*M_i$ be closed conic subsets, $\underline\Lambda_{\bar ij} = -\underline\Lambda_i \times \underline\Lambda_j$, and $\underline{L}_{ij} \subset T^*(M_i \times M_j)$ be closed subsets containing $\underline{\Lambda}_{\bar ij}$ such that $\underline{L}_{\bar 13} = \underline{L}_{\bar 23} \circ \underline{L}_{\bar 12}$ and
    $\underline{L}_{ij} \circ \underline\Lambda_i \subset \underline\Lambda_j.$
    Then we have a commutative diagram
    \[\begin{tikzcd}
    \Sh_{\underline{L}_{\bar 12}}(M_1 \times M_2) \otimes \Sh_{\underline{L}_{\bar 23}}(M_2 \times M_3) \ar[r, "\circ"] \ar[d, "\iota_{\underline\Lambda_{\bar 12}}^* \otimes \iota_{\underline\Lambda_{\bar 23}}^*" left] & \Sh_{\underline{L}_{\bar 13}}(M_1 \times M_3) \ar[d, "\iota_{\underline\Lambda_{\bar 13}}^*"] \\
    \Sh_{-\underline\Lambda_1 \times \underline\Lambda_2}(M_1 \times M_2) \otimes \Sh_{-\underline\Lambda_2 \times \underline\Lambda_3}(M_2 \times M_3) \ar[r, "\circ"] & \Sh_{-\underline\Lambda_1 \times \underline\Lambda_3}(M_1 \times M_3).
    \end{tikzcd}\]
\end{lemma}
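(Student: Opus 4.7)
My plan is to construct the natural comparison map from the units of adjunction and then verify it is an isomorphism by invoking the wrapping description of $\iota^*$ from Theorem \ref{rem:wrapping}.

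First, for each $(i,j)$ the unit of adjunction gives a canonical map $\eta_{ij}\colon \SF_{ij} \to \iota_{\underline\Lambda_{\bar ij}*}\iota_{\underline\Lambda_{\bar ij}}^*\SF_{ij}$. Composing these in the two kernel variables and applying $\iota_{\underline\Lambda_{\bar 13}}^*$ produces a map
$$\iota_{\underline\Lambda_{\bar 13}}^*(\SF_{23} \circ \SF_{12}) \longrightarrow \iota_{\underline\Lambda_{\bar 13}}^*(\iota_{*}\iota_{\underline\Lambda_{\bar 23}}^*\SF_{23} \circ \iota_{*}\iota_{\underline\Lambda_{\bar 12}}^*\SF_{12}) \simeq \iota_{\underline\Lambda_{\bar 23}}^*\SF_{23} \circ \iota_{\underline\Lambda_{\bar 12}}^*\SF_{12},$$
where the last isomorphism uses the microsupport estimate of Lemma \ref{lem: ss-composition} together with the direct computation $\underline\Lambda_{\bar 23}\circ \underline\Lambda_{\bar 12} \subset \underline\Lambda_{\bar 13}$ (so the middle term already lies in $\Sh_{\underline\Lambda_{\bar 13}}$, on which $\iota^*$ acts as the identity).

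To see this map is an equivalence, I would expand both sides as filtered colimits using Theorem \ref{rem:wrapping}, $\iota_{\underline\Lambda_{\bar ij}}^*\SF_{ij} = \colim_{\varphi_{ij}} \SK_{\varphi_{ij}} \circ \SF_{ij}$. The key observation is that since the stops $\underline\Lambda_{\bar ij} = -\underline\Lambda_i \times \underline\Lambda_j$ are themselves product subsets, the subsystem of \emph{product wrappings} $\varphi_i \times \varphi_j$ (arising from non-negative wrappings of $S^*M_i \setminus \underline\Lambda_i$ and $S^*M_j \setminus \underline\Lambda_j$ separately) is cofinal in the wrapping category $W(S^*(M_i\times M_j) \setminus \underline\Lambda_{\bar ij})$. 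For such product wrappings, $\SK_{\varphi_i \times \varphi_j} = \SK_{\varphi_i} \boxtimes \SK_{\varphi_j}$ and convolution factors as $\SK_{\varphi_i \times \varphi_j} \circ \SF_{ij} = \SK_{\varphi_j} \circ_{M_j} \SF_{ij} \circ_{M_i} \SK_{\varphi_i}$. Under this presentation both sides reduce to colimits indexed only by wrappings on the outer factors $M_1$ and $M_3$.

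The wrappings on the middle factor $M_2$ that appear on the right-hand side are absorbed by the hypothesis $\underline L_{ij}\circ\underline\Lambda_i \subset \underline\Lambda_j$: this hypothesis ensures that composition with $\SF_{12}$ sends $\Sh_{\underline\Lambda_1}$ into $\Sh_{\underline\Lambda_2}$, and since a wrapping supported away from $\underline\Lambda_2$ acts as the identity on any sheaf microsupported in $\underline\Lambda_2$, the contributions from the $\varphi_2$-wrappings stabilize in the colimit. Matching the two colimits then yields the asserted commutativity.

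The main obstacle is making the cofinality of product wrappings precise: product wrappings are not literally cofinal in $W(\cdot)$ as diagrammatic objects, but become cofinal after the identifications above, and verifying this requires careful control of Reeb flows in complements of $\underline\Lambda_{\bar ij}$ together with an application of the non-characteristic deformation lemma to reduce general wrappings to products. An alternative I would consider is to bypass the cofinality step by arguing that both sides satisfy the same universal property as the Bousfield localization of $\SF_{23}\circ\SF_{12}$ into $\Sh_{\underline\Lambda_{\bar 13}}$, using the microsupport hypothesis to show that the relevant left orthogonal classes of cofibers are preserved under kernel composition.
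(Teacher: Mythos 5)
Your proposal takes a genuinely different route from the paper, and the route you chose has a real gap that you yourself flag.

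The paper's proof is short and formal: it invokes the classification of colimit-preserving functors by integral kernels (the duality framework, cf. Theorem \ref{thm:microsheaf-fourier} and Theorem \ref{thm: duality sheaf}), so that two kernels in $\Sh_{-\underline\Lambda_1 \times \underline\Lambda_3}(M_1\times M_3)$ coincide if and only if they induce the same functor $\Sh_{\underline\Lambda_1}(M_1) \to \Sh_{\underline\Lambda_3}(M_3)$. One then checks the induced functors agree on a test object $\SF_1 \in \Sh_{\underline\Lambda_1}(M_1)$ and a cotest object $\SG_3 \in \Sh_{\underline\Lambda_3}(M_3)$ by a six-step chain of standard adjunctions, using the identity $\iota_{\underline\Lambda_{\bar ij}}^*\SF_{ij} \circ \SF_i = \iota_{\underline\Lambda_j}^*(\SF_{ij}\circ\SF_i)$ for $\SF_i \in \Sh_{\underline\Lambda_i}(M_i)$ and the hypothesis $\underline{L}_{\bar ij}\circ\underline\Lambda_i\subset\underline\Lambda_j$. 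This is an argument about objects and Hom's; no wrapping diagrams appear and there is nothing to check about cofinality.

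Your proposal, by contrast, tries to prove the isomorphism at the level of kernels directly, constructing the comparison map from the units of adjunction (which is fine, and matches the expected direction) and then expanding both sides as filtered colimits via the wrapping formula of Theorem \ref{rem:wrapping}. The linchpin you need, as you note, is that product wrappings $\varphi_i\times\varphi_j$ are cofinal in $W(S^*(M_i\times M_j)\setminus\underline\Lambda_{\bar ij})$. This claim is \emph{not} something that can be waved through: a generic non-negative wrapping in the product moves "diagonally" and is not dominated by any product wrapping as a literal diagram in the wrapping category. Establishing cofinality of product wrappings is essentially a form of the K\"unneth theorem for categories of wrapped sheaves (Zhang's result quoted as Equation \eqref{bingyu kunneth}, or in the Floer setting \cite[Thm.~1.5]{Ganatra-Pardon-Shende1}), which is a substantial theorem in its own right. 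You correctly identify this as "the main obstacle," but as written the proposal simply names the obstacle rather than resolving it. Moreover, even granting cofinality, the passage "the contributions from the $\varphi_2$-wrappings stabilize in the colimit" needs more care: you need to show that wrapping away from $\underline\Lambda_2$, after convolution with $\SF_{12}\circ\SF_1\in\Sh_{\underline\Lambda_2}$, is eventually constant along the filtered diagram, which is again essentially what the paper's "fact" $\iota_{\underline\Lambda_{\bar ij}}^*\SF_{ij} \circ \SF_i = \iota_{\underline\Lambda_j}^*(\SF_{ij}\circ\SF_i)$ encodes.

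Your mentioned alternative -- arguing via the universal property of the Bousfield localization, using the microsupport hypothesis to control orthogonal complements -- is much closer in spirit to the paper's actual argument (the paper's adjunction chain is precisely a computation in that localization picture), and had you developed it you would likely have converged on the same proof. As it stands, the wrapping approach you chose to elaborate requires re-deriving a nontrivial K\"unneth-type result, which is a bigger project than this one lemma, whereas the duality/adjunction route in the paper gets the result with no new geometric input.
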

\begin{proof}
    This was proven in \cite[Lemma 4.6]{KuoLi-duality} for $M_3 = pt$; the proof in the present case follows the same argument.  We give it for completeness. 

    Per Theorem \ref{thm:microsheaf-fourier}, colimit preserving functors are classified by integral kernels.  Thus it suffices to check that for any $\SF_{12} \in \Sh_{\underline{L}_{12}}(M_1 \times M_2)$, $\SF_{23} \in \Sh_{\underline{L}_{23}}(M_2 \times M_3)$ and $\SF_1 \in \Sh_{\underline\Lambda_1}(M_1)$, we have
    $$\iota_{\underline\Lambda_{\bar 23}}^*\SF_{23} \circ \iota_{\underline\Lambda_{\bar 12}}^*\SF_{12} \circ \SF_1 \simeq \iota_{\underline\Lambda_{\bar 13}}^*(\SF_{23} \circ \SF_{12}) \circ \SF_1.$$
    Consider $\SG_3 \in \Sh_{\underline\Lambda_3}(M_3)$. Using adjunctions and in particular the fact that $\iota_{\underline\Lambda_{\bar ij}}^*\SF_{ij} \circ \SF_i = \iota_{\underline\Lambda_j}^*(\SF_{ij} \circ \SF_i)$ when $\SF_i \in \Sh_{\underline\Lambda_i}(M_i)$, we have
    \begin{align*}
    \Hom(\iota_{\underline\Lambda_{\bar 23}}^*\SF_{23} \circ \iota_{\underline\Lambda_{\bar 12}}^*\SF_{12} \circ \SF_1, \SG_3) &= \Hom(\iota_{\underline\Lambda_{3}}^*(\SF_{23} \circ (\iota_{\underline\Lambda_{\bar 12}}^*\SF_{12} \circ \SF_1)), \SG_3) \\
    &\simeq \Hom(\SF_{23} \circ (\iota_{\underline\Lambda_{\bar 12}}^*\SF_{12} \circ \SF_1), \SG_3) \\
    & = \Hom(\iota_{\underline\Lambda_{\bar 12}}^*\SF_{12} \circ \SF_1, \sHom^\circ(\SF_{23}, \SG_3)) \\
    &= \Hom(\iota_{\underline\Lambda_2}^*(\SF_{12} \circ \SF_1), \sHom^\circ(\SF_{23}, \SG_3)) \\
    &\simeq \Hom(\SF_{12} \circ \SF_1, \sHom^\circ(\SF_{23}, \SG_3)) \\
    &= \Hom(\SF_1, \sHom^\circ(\SF_{23} \circ \SF_{12}, \SG_3)).
    \end{align*}
    Here, we use the fact that $\underline{L}_{\bar ij} \circ \underline\Lambda_i \subset \underline\Lambda_j$ in the second and fifth isomorphism. On the other hand, we also have
    \begin{align*}
    \Hom(\iota_{\underline\Lambda_{\bar 13}}^*(\SF_{23} \circ \SF_{12}) \circ \SF_1, \SG_3) &= \Hom(\iota_{\underline\Lambda_3}^*((\SF_{23} \circ \SF_{12}) \circ \SF_1), \SG_3) \\
    &\simeq \Hom((\SF_{23} \circ \SF_{12}) \circ \SF_1, \SG_3) \\
    &= \Hom(\SF_1, \sHom^\circ(\SF_{23} \circ \SF_{12}, \SG_3)).
    \end{align*}
    Here, we use the fact that $\underline{L}_{\bar ij} \circ \underline\Lambda_i \subset \underline\Lambda_j$ in the second isomorphism. This then completes the proof.
\end{proof}

\begin{theorem}\label{thm: gap composition wrap kunneth}
    Let $\Lambda_1 \subset S^*M_1$, $\Lambda_2 \subset S^*M_2$ and $\Lambda_3 \subset S^*M_3$ be relatively compact sufficiently Legendrian subsets and let $\Lambda_{\bar 12} = -\Lambda_1 \,\widehat\times\, \Lambda_2, \Lambda_{\bar 23} = -\Lambda_2 \,\widehat\times\, \Lambda_3$ and $\Lambda_{\bar 13} = -\Lambda_1 \,\widehat\times\, \Lambda_3$. Let ${L}_{\bar ij} \subset S^*(M_i \times M_j)$ be composable relatively compact sufficiently Legendrian subsets containing $\Lambda_{\bar ij}$ such that ${L}_{\bar 13} = {L}_{\bar 23} \circ {L}_{\bar 12}$ and
    ${L}_{\bar ij} \circ \Lambda_i \subset \Lambda_j$. Then there is an equivalence between the algebraic composition in Definition \ref{def: alg composition} and the gapped composition in Definition \ref{def: gap micro composition}
    \[\begin{tikzcd}
    \msh_{L_{\bar 12}}(L_{\bar 12}) \otimes \msh_{L_{\bar 23}}(L_{\bar 23}) \ar[d, "\iota_{\Lambda_{\bar 12}}^* \otimes \iota_{\Lambda_{\bar 23}}^*" left]\ar[r,"\circ_g"] & \msh_{L_{\bar 13}}(L_{\bar 13}) \ar[d, "\iota_{\Lambda_{\bar 13}}^*"] \\
    \msh_{\Lambda_{\bar 12}}(\Lambda_{\bar 12}) \otimes \msh_{\Lambda_{\bar 23}}(\Lambda_{\bar 23}) \ar[r,"\circ_a"] & \msh_{\Lambda_{\bar 13}}(\Lambda_{\bar 13}).
    \end{tikzcd}\]
\end{theorem}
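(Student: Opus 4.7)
The plan is to reduce the assertion to Lemma \ref{lem: composition wrap} at the level of representing sheaves on $M_i \times M_j$, by means of the doubling functors, and then invoke Theorem \ref{thm: gap composition kunneth} to recognize the resulting bottom-row composition as the algebraic one $\circ_a$.

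First I would apply $w^+_{L_{\bar ij}}$ to lift microsheaves on $L_{\bar ij}$ to sheaves on $M_i\times M_j$ microsupported in $(\underline L_{\bar ij})^+_{\cup,\epsilon}$. By definition of the gapped composition (Definition-Proposition \ref{def: gap micro composition}), the top horizontal arrow $\circ_g$ is the composite of sheaf composition $\circ$ on doubles followed by microlocalization along $L_{\bar 13}$. By Corollary \ref{rem: double stop removal} (which gives precisely the square expressing that stop removal $\iota^*_{\Lambda_{\bar ij}}$ for microsheaves intertwines via doubling with the sheaf-level left adjoint $\iota^*_{(-\underline\Lambda_i)^+_{\cup,\epsilon}\times (\underline\Lambda_j)^+_{\cup,\epsilon}}$ applied to $w^+_{L_{\bar ij}}$), the left vertical arrow $\iota_{\Lambda_{\bar ij}}^*$ lifts to this sheaf-level stop-removal functor on $(\underline L_{\bar ij})^+_{\cup,\epsilon}$.

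Second I would verify the hypothesis of Lemma \ref{lem: composition wrap} for the doubled subsets: that $(\underline L_{\bar ij})^+_{\cup,\epsilon} \circ (-\underline\Lambda_i)^+_{\cup,\epsilon}\subset (-\underline\Lambda_j)^+_{\cup,\epsilon}$ (and the analogous inclusion on the right factor). Because doubling is a small perturbation by a Hamiltonian pushoff that is supported in a neighborhood disjoint from the zero section together with the original $\Lambda$, and because the original inclusion ${L}_{\bar ij}\circ \Lambda_i\subset\Lambda_j$ is stable under such small pushoffs, this follows by combining the elementary closure under zero-section parts (Lemma \ref{ss of composition for doubling 2}) with the observation that the pushoffs used in building $(\cdot)^+_{\cup,\epsilon}$ can be chosen compatibly on all factors. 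Applying Lemma \ref{lem: composition wrap} then yields a commutative square
\[\begin{tikzcd}[column sep=18pt]
\Sh_{(\underline L_{\bar 12})^+_{\cup,\epsilon}}(M_{12})\otimes \Sh_{(\underline L_{\bar 23})^+_{\cup,\epsilon}}(M_{23}) \ar[r,"\circ"] \ar[d] & \Sh_{(\underline L_{\bar 13})^+_{\cup,\epsilon}}(M_{13}) \ar[d] \\
\Sh_{(-\underline\Lambda_1)^+_{\cup,\epsilon}\times(\underline\Lambda_2)^+_{\cup,\epsilon}}(M_{12})\otimes \Sh_{(-\underline\Lambda_2)^+_{\cup,\epsilon}\times(\underline\Lambda_3)^+_{\cup,\epsilon}}(M_{23}) \ar[r,"\circ"] & \Sh_{(-\underline\Lambda_1)^+_{\cup,\epsilon}\times(\underline\Lambda_3)^+_{\cup,\epsilon}}(M_{13})
\end{tikzcd}\]
where $M_{ij}=M_i\times M_j$ and the vertical arrows are the sheaf-level left adjoints of the tautological inclusions.

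Finally I would post-compose with microlocalization and combine the two squares above: the sheaf-level bottom row, after applying $m_{-\Lambda_1}\otimes m_{\Lambda_3}$ and using Theorem \ref{thm: gap composition kunneth} (which states exactly that microlocalization of sheaf composition on doubled products of $(-\underline\Lambda_i)^+_{\cup,\epsilon}\times(\underline\Lambda_j)^+_{\cup,\epsilon}$ computes $\circ_a$), produces the algebraic composition $\circ_a$ on the bottom of the stated diagram; meanwhile the microlocalization of the top row reproduces $\circ_g$ by Definition-Proposition \ref{def: gap micro composition}. The main obstacle I anticipate is the verification in the second step: matching the two sheaf-level left adjoints (the one arising from stop removal on microsheaves via Corollary \ref{rem: double stop removal}, and the one arising from the geometric condition $\underline L\circ\underline\Lambda_i\subset\underline\Lambda_j$ used in Lemma \ref{lem: composition wrap}) and, in particular, choosing the doubling pushoffs on $L_{\bar ij}$ and on $\Lambda_i,\Lambda_j$ coherently so that the inclusion of doubles behaves well under composition. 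Once this bookkeeping is done, the diagram chase proceeds formally and may be upgraded to the coherent $\Delta_1^{\times k-1}$-level statement by the same argument as in Corollary \ref{rem: gap composition kunneth higher}.
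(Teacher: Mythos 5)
Your proposal is correct and follows essentially the same route as the paper: both reduce to Lemma \ref{lem: composition wrap} at the level of representing sheaves via doubling, use Corollary \ref{rem: double vs wrap} / Corollary \ref{rem: double stop removal} to intertwine the microsheaf-level $\iota^*_{\Lambda_{\bar ij}}$ with its sheaf-level cousin, and then invoke Theorem \ref{thm: gap composition kunneth} to identify the resulting bottom row with $\circ_a$. The only notable difference is a bookkeeping choice: you run Lemma \ref{lem: composition wrap} against the Cartesian product of the individual doublings $(-\underline\Lambda_i)^+_{\cup,\epsilon}\times(\underline\Lambda_j)^+_{\cup,\epsilon}$, which matches its stated hypotheses more literally, whereas the paper writes the bottom row in terms of the doubling $(\underline\Lambda_{\bar ij})_{\cup,\epsilon}$ of the contact product (the two are reconciled by Theorems \ref{thm: kunneth-doubling} and \ref{thm: doubling-small-piece}), so your explicit concern about choosing the pushoffs coherently is exactly the non-trivial bookkeeping both proofs must verify.
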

\begin{proof}
    First, by Lemma \ref{lem: composition wrap}, we have a commutative diagram
    \[\begin{tikzcd}
    \Sh_{(\underline L_{\bar 12})_{\cup,\epsilon}}(M_1 \times M_2) \otimes \Sh_{(\underline L_{\bar 23})_{\cup,\epsilon}}(M_2 \times M_3) \ar[d, "\iota_{(\underline\Lambda_{\bar 12})_{\cup,\epsilon}}^* \otimes \iota_{(\underline\Lambda_{\bar 23})_{\cup,\epsilon}}^*" left]\ar[r,"\circ"] & \Sh_{(\underline L_{\bar 13})_{\cup,\epsilon}}(M_1 \times M_3) \ar[d, "\iota_{(\underline\Lambda_{\bar 13})_{\cup,\epsilon}}^*"] \\
    \Sh_{(\underline\Lambda_{\bar 12})_{\cup,\epsilon}}(M_1 \times M_2) \otimes \Sh_{(\underline\Lambda_{\bar 23})_{\cup,\epsilon}}(M_2 \times M_3) \ar[r,"\circ"] & \Sh_{(\underline \Lambda_{\bar 13})_{\cup,\epsilon}}(M_1 \times M_3).
    \end{tikzcd}\]
    Then, by Theorem \ref{thm: relative-doubling} and Corollary \ref{rem: double vs wrap} (see also Example \ref{rem: double stop removal}), we can conclude that there is a commutative diagram between the geometric compositions
    \[\begin{tikzcd}
    \msh_{L_{\bar 12}}(L_{\bar 12}) \otimes \msh_{L_{\bar 23}}(L_{\bar 23}) \ar[d, "\iota_{\Lambda_{\bar 12}}^* \otimes \iota_{\Lambda_{\bar 23}}^*" left]\ar[r,"\circ_g"] & \msh_{L_{\bar 13}}(L_{\bar 13}) \ar[d, "\iota_{\Lambda_{\bar 13}}^*"] \\
    \msh_{\Lambda_{\bar 12}}(\Lambda_{\bar 12}) \otimes \msh_{\Lambda_{\bar 23}}(\Lambda_{\bar 23}) \ar[r,"\circ_g"] & \msh_{\Lambda_{\bar 13}}(\Lambda_{\bar 13}).
    \end{tikzcd}\]
    Then, Theorem \ref{thm: gap composition kunneth} implies the commutative diagram between gapped compositions and algebraic compositions, so we can finish the proof.
\end{proof}

\begin{remark} 
    The assertions of Lemma \ref{lem: composition wrap} and  Theorem \ref{thm: gap composition wrap kunneth} are structurally similar, and indeed, when the subsets involved in the Lemma satisfy the additional hypotheses of the Theorem, the assertion of the Lemma becomes a special case of the Theorem.  

    Nevertheless the Theorem is rather subtler -- it requires  Legendrian hypotheses, and appeals to the results on gapped composition.  The basic source for the additional difficulty is that in the microsheaf setting, we only have a K\"unneth formula with prescribed microsupports.  As a consequence, the microsheaves correspondences -- which are conic, but not biconic -- are not determined by the functors which they induce.  
\end{remark}

\begin{corollary}\label{thm: gapped hom composition family} 
    Let $\Lambda_1 \subset S^*M_1$, $\Lambda_2 \subset S^*M_2$ and $S^*M_3 \subset S^*M_3$ be relatively compact sufficiently Legendrian subsets. Let ${L}_{\bar ij} \subset S^*(M_i \times M_j \times \bR_{>0})$ be relatively compact sufficiently Legendrian subsets containing $\Lambda_{\bar ij} \times 0_{\bR_{>0}}$ such that 
    $\psi_{ij}({L}_{\bar ij}) \circ \Lambda_i \subset \Lambda_j$. Write $L_{\bar 13} = L_{\bar 23} \circ_{\bR_{>0}} L_{\bar 12}$.
    
    When $(L_{\bar 12}, L_{\bar 23})$ have gapped microlocal compositions in the sense of Definition \ref{def: gap microlocal composition}, then there is an equivalence
    \[\begin{tikzcd}
    \msh_{L_{\bar 12}}(L_{\bar 12}) \otimes \msh_{L_{\bar 23}}(L_{\bar23}) \ar[d, "(\psi_{12} \otimes \psi_{23}) \circ (\iota_{\Lambda_{\bar 12}}^* \otimes \iota_{\Lambda_{\bar 23}}^*)" left]\ar[r,"\circ_g"] & \msh_{L_{\bar 13}}(L_{\bar 13}) \ar[d, "\psi_{13} \circ \iota_{\Lambda_{\bar 13}}^*"] \\
    \msh_{\Lambda_{\bar 12}}(\Lambda_{\bar 12}) \otimes \msh_{\Lambda_{\bar 23}}(\Lambda_{\bar 23}) \ar[r,"\circ_a"] & \msh_{\Lambda_{\bar 13}}(\Lambda_{\bar 13}).
    \end{tikzcd}\]
\end{corollary}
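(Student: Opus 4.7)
The plan is to stack two prior commutative squares. First, since $(L_{\bar 12}, L_{\bar 23})$ has gapped microlocal composition by hypothesis, Theorem \ref{thm:gapped composition microsheaf} gives commutativity of the gapped composition $\circ_g$ with the microlocal nearby cycles $\psi_{ij}$: for $\SF_{ij} \in \msh_{L_{\bar ij}}(L_{\bar ij})$,
\[
\psi_{13}(\SF_{23} \circ_g \SF_{12}) \simeq \psi_{23}\SF_{23} \circ_g \psi_{12}\SF_{12}
\]
inside $\msh_{\psi_{13}(L_{\bar 13})}(\psi_{13}(L_{\bar 13}))$, where $\psi_{13}(L_{\bar 13}) = \psi_{23}(L_{\bar 23}) \circ \psi_{12}(L_{\bar 12})$ follows from the stated identity $L_{\bar 13} = L_{\bar 23} \circ_{\bR_{>0}} L_{\bar 12}$ together with the microsupport estimate for composition (Lemma \ref{lem: ss-composition}) and the gappedness of the pair.

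Next I would apply Theorem \ref{thm: gap composition wrap kunneth} to the sufficiently Legendrian triple $\psi_{ij}(L_{\bar ij}) \supset \Lambda_{\bar ij}$ in $S^*(M_i \times M_j)$. The composability of the $\psi_{ij}(L_{\bar ij})$ is exactly condition (3) of Definition \ref{def: gap microlocal composition}, and the hypothesis $\psi_{ij}(L_{\bar ij}) \circ \Lambda_i \subset \Lambda_j$ is stated explicitly in the corollary. That theorem then asserts that the restriction functors $\iota^*_{\Lambda_{\bar ij}}$ intertwine the geometric composition $\circ_g$ on microsheaves supported on the $\psi_{ij}(L_{\bar ij})$ with the algebraic composition $\circ_a$ on microsheaves supported on the $\Lambda_{\bar ij}$. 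Splicing this square beneath the first one yields the desired diagram; the fact that $\iota^*_{\Lambda_{\bar ij}}$ and $\psi_{ij}$ commute in the sense required to identify the composite vertical arrow with the one labeled in the statement follows from Proposition \ref{prop: nearby commute micro} applied over the open locus where $\Lambda_{\bar ij}$ lives inside $\psi_{ij}(L_{\bar ij})$.

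The main obstacle is not finding the argument but rather threading the hypotheses correctly through the two theorems. Definition \ref{def: gap microlocal composition} is designed precisely to package everything needed (self-gappedness of the $\Pi(L_{\bar ij})$, composability of both $\Pi(L_{\bar ij})$ and $\psi_{ij}(L_{\bar ij})$, and gapped composition) so that Theorems \ref{thm:gapped composition microsheaf} and \ref{thm: gap composition wrap kunneth} apply in sequence without additional verification. Higher coherence, if desired, would follow by composing the natural transformations of Corollaries \ref{rem: gapped micro composition higher} and \ref{rem: gap composition kunneth higher} in the same stacked fashion.
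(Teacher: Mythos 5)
Your proposal follows essentially the same route as the paper: first stack the square from Theorem \ref{thm:gapped composition microsheaf} (which is proved there by Theorem \ref{thm:gapped composition} together with the relative doubling Theorem \ref{thm: relative-doubling}) to pass along $\psi_{ij}$, then stack the square from Theorem \ref{thm: gap composition wrap kunneth} using that $\psi_{ij}(L_{\bar ij}) \circ \Lambda_i \subset \Lambda_j$ and that the $\psi_{ij}(L_{\bar ij})$ are composable (condition (3) of Definition \ref{def: gap microlocal composition}). That is exactly the paper's two-line argument.

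One small misfire: you invoke Proposition \ref{prop: nearby commute micro} to reconcile the composite vertical arrow with the way it is written in the statement, but that proposition is about commuting the gapped nearby cycle with restriction to a fiber $s \in J$ in a parameter family, not about interchanging a stop removal $\iota^*_{\Lambda}$ with $\psi$. In fact no such interchange is needed or invoked by the paper---stacking the two squares directly yields the composite $\iota^*_{\Lambda_{\bar ij}}\circ\psi_{ij}$, and the label in the statement should simply be read in that order. So drop the appeal to Proposition \ref{prop: nearby commute micro}; it is the wrong tool, and its absence leaves no gap.
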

\begin{proof}
    When $(L_{\bar12}, L_{\bar23})$ have gapped microlocal compositions in the sense of Definition \ref{def: gap micro composition}, by Theorems \ref{thm:gapped composition} and \ref{thm: relative-doubling}, we know that there is a commutative diagram
    \[\begin{tikzcd}
    \msh_{L_{\bar 12}}(L_{\bar 12}) \otimes \msh_{L_{\bar 23}}(L_{\bar 23}) \ar[d, "\psi_{12} \otimes \psi_{23}" left]\ar[r,"\circ_g"] & \msh_{L_{\bar 13}}(L_{\bar 13}) \ar[d, "\psi_{13}"] \\
    \msh_{\psi_{12}(L_{\bar 12})}(\psi_{12}(L_{\bar 12})) \otimes \msh_{\psi_{23}(L_{\bar 23})}(\psi_{23}(L_{\bar 23})) \ar[r,"\circ_g"] & \msh_{\psi_{13}(L_{\bar 13})}(\psi_{13}(L_{\bar 13})).
    \end{tikzcd}\]
    Then since $\psi_{ij}({L}_{\bar ij}) \circ \Lambda_i \subset \Lambda_j$, the result follows from Theorem \ref{thm: gap composition wrap kunneth}. 
\end{proof}

    Finally, one can replace the composition by the hom composition and prove similar results:

\begin{theorem}\label{thm: gap hom composition wrap kunneth}
    Let $\Lambda_1 \subset S^*M_1$, $\Lambda_2 \subset S^*M_2$ and $S^*M_3 \subset S^*M_3$ be relative compact sufficiently Legendrians and let $\Lambda_{12} = \Lambda_1 \,\widehat\times\, \Lambda_2, \Lambda_{23} = \Lambda_2 \,\widehat\times\, \Lambda_3$ and $\Lambda_{\bar 13} = -\Lambda_1 \,\widehat\times\, \Lambda_3$. Let ${L}_{ij} \subset S^*(M_i \times M_j)$ be composable pairs of sufficiently Legendrian subsets containing $\Lambda_{ij}$ as closed subsets such that ${L}_{\bar 13} = {L}_{23} \circ (-{L}_{12})$ and
    ${L}_{ij} \circ (-\Lambda_i) \subset \Lambda_j$. Then there is an equivalence between the algebraic hom composition in Definition \ref{def: alg composition} and the gapped hom composition in Definition \ref{def: gap micro composition}
    \[\begin{tikzcd}
    \msh_{L_{12}}(L_{12}) \otimes \msh_{L_{23}}(L_{23}) \ar[d, "\iota_{\Lambda_{12}}^* \otimes \iota_{\Lambda_{23}}^*" left]\ar[r,"\sHom^\circ_g"] & \msh_{L_{\bar 13}}(L_{\bar 13}) \ar[d, "\iota_{\Lambda_{\bar 13}}^*"] \\
    \msh_{\Lambda_{12}}(\Lambda_{12}) \otimes \msh_{\Lambda_{23}}(\Lambda_{23}) \ar[r,"\sHom^\circ_a"] & \msh_{\Lambda_{\bar 13}}(\Lambda_{\bar 13}).
    \end{tikzcd}\]
\end{theorem}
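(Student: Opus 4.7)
The plan is to mirror the proof of Theorem \ref{thm: gap composition wrap kunneth} step by step, replacing tensor-style constructions by their Hom-style counterparts. The overall strategy has three pieces: a sheaf-level commutativity, a descent through doubling, and an identification of the resulting bottom arrow with $\sHom^\circ_a$.

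First, I would establish a Hom-analog of Lemma \ref{lem: composition wrap}: under the hypothesis $\underline{L}_{ij} \circ (-\underline\Lambda_i) \subset \underline\Lambda_j$, the localizations $\iota_{-\underline\Lambda_i \times \underline\Lambda_j}^*$ commute with $\sHom^\circ$ at the level of sheaves with prescribed microsupport. The proof follows the same pattern as Lemma \ref{lem: composition wrap}, using the adjunction
\[
\Hom(\SF_{12} \circ \SF_1,\; \sHom^\circ(\SF_{23}, \SG_3)) \;=\; \Hom(\SF_1,\; \sHom^\circ(\SF_{23} \circ \SF_{12}, \SG_3))
\]
together with $\iota_{-\underline\Lambda_i \times \underline\Lambda_j}^*\SF_{ij} \circ \SF_i = \iota_{\underline\Lambda_j}^*(\SF_{ij} \circ \SF_i)$ when $\SF_i \in \Sh_{\underline\Lambda_i}(M_i)$, with the hypothesis used to push the localization through.

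Second, I would prove the Hom analog of Theorem \ref{thm: gap composition kunneth}: for the pure products $\Lambda_{12} = \Lambda_1 \,\widehat\times\, \Lambda_2$ and $\Lambda_{23} = \Lambda_2 \,\widehat\times\, \Lambda_3$, the algebraic hom composition (via duality and K\"unneth in Theorems \ref{thm:microsheaf-duality-PrR} and \ref{thm:microsheaf-fourier-PrR}) coincides with the gapped hom composition of Definition--Proposition \ref{def: gap micro hom composition}. The argument runs parallel to Theorem \ref{thm: gap composition kunneth}, but with the negative doublings $(\underline\Lambda_{ij})_{\cup,\epsilon}^-$ in place of positive ones; one needs the Hom version of Theorem \ref{thm: doubling-small-piece}, obtained by a nearby cycle functor that sends a small biconic neighborhood of $(-\underline\Lambda_1)_{\cup,\epsilon}^- \times (\underline\Lambda_2)_{\cup,\epsilon}^-$ into the $(\underline{\Lambda_1 \widehat\times \Lambda_2})_{\cup,\epsilon'}^-$ double, and one applies Theorem \ref{thm:gapped hom composition} (in place of Theorem \ref{thm:gapped composition}) to commute this nearby cycle past $\sHom^\circ$. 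Lemma \ref{ss gapped for doubling} applied to $-\Lambda_{12}$ and $\Lambda_{23}$ supplies the gappedness needed for the hypothesis of Definition \ref{def: gap hom composition}.

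Finally, I would assemble the two ingredients: the sheaf-level Hom analog of Lemma \ref{lem: composition wrap} lifts through the doubling equivalences of Theorem \ref{thm: relative-doubling}, via the limit-preserving analog of Corollary \ref{rem: double vs wrap} (using the right adjoint $\iota_{\Lambda_{ij}}^!$ intertwined with negative doubling as in Theorem \ref{thm: doubling adjoint}), producing commutativity of the gapped Hom compositions on top with gapped Hom compositions on the bottom, after which the Hom analog of Theorem \ref{thm: gap composition kunneth} identifies the bottom arrow with $\sHom^\circ_a$. The main obstacle will be carrying out the Hom analog of Theorem \ref{thm: gap composition kunneth} with the correct choice of signs and doubling parameters: one must verify that the $\epsilon' \ll \epsilon$ nesting of doubles in Theorem \ref{thm: kunneth-doubling} goes through for \emph{negative} doublings, and that the resulting nearby cycle lands in the prescribed $(-\underline\Lambda_1)_{\cup,\epsilon}^- \times (\underline\Lambda_2)_{\cup,\epsilon}^-$ microsupport, so that the gapped Hom composition can be microlocalized along $\Lambda_{\bar 13}$ as in Definition--Proposition \ref{def: gap micro hom composition}. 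Once these estimates are in place, the assembly is formal.
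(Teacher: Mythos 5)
Your proposal is essentially the approach the paper intends: the paper does not give an explicit proof of Theorem~\ref{thm: gap hom composition wrap kunneth}, but simply announces, before stating it, that ``one can replace the composition by the hom composition and prove similar results.'' You have unpacked exactly what that mutatis mutandis argument requires: (i) a Hom analog of Lemma~\ref{lem: composition wrap}, which is a straightforward adjunction manipulation under the hypothesis $L_{ij}\circ(-\Lambda_i)\subset\Lambda_j$; (ii) a Hom analog of Theorem~\ref{thm: gap composition kunneth} using the $\PrRst$ duality (Theorems~\ref{thm:microsheaf-duality-PrR}, \ref{thm:microsheaf-fourier-PrR}), negative doublings $w^-$ in place of positive ones, a right-adjoint version of Theorem~\ref{thm: doubling-small-piece}, and Theorem~\ref{thm:gapped hom composition} in place of Theorem~\ref{thm:gapped composition}; and (iii) the lift through the doubling equivalences via the right adjoint $\iota^!$. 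You correctly flag the only genuinely nonautomatic point, namely checking that the $\epsilon'\ll\epsilon$ nesting and the microsupport estimate for the interpolating isotopy persist under the sign change to negative doublings; this does go through because Lemma~\ref{ss gapped for doubling} and Theorem~\ref{thm: relative-doubling} are sign-symmetric, and Theorem~\ref{rem:wrapping} gives $\iota^!$ as a limit over the same wrapping category. In short, the proposal is correct and follows the paper's intended route.
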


\begin{corollary}\label{thm: gapped composition family}
    Let $\Lambda_1 \subset S^*M_1$, $\Lambda_2 \subset S^*M_2$ and $\Lambda_3 \subset S^*M_3$ be relative compact sufficiently Legendrians. Let ${L}_{ij} \subset S^*(M_i \times M_j \times \bR_{>0})$ be a composable pair of relatively compact sufficiently Legendrians containing $\Lambda_{ij} \times 0_{\bR_{>0}}$ such that 
    $\psi_{ij}({L}_{ij}) \circ (-\Lambda_i) \subset \Lambda_j$. Write ${L}_{\bar 13} = {L}_{23} \circ_{\bR_{>0}} (-{L}_{12})$.
    
    When $(L_{12}, L_{23})$ have gapped hom compositions in the sense of Definition \ref{def: gap microlocal composition}, then there is an equivalence
    \[\begin{tikzcd}
    \msh_{L_{12}}(L_{12}) \otimes \msh_{L_{23}}(L_{23}) \ar[d, "(\psi_{12} \otimes \psi_{23}) \circ (\iota_{\Lambda_{12}}^* \otimes \iota_{\Lambda_{23}}^*)" left]\ar[r,"\sHom^\circ_g"] & \msh_{L_{13}^-}(L_{13}^-) \ar[d, "\psi_{13} \circ \iota_{\Lambda_{13}}^*"] \\
    \msh_{\Lambda_{12}}(\Lambda_{12}) \otimes \msh_{\Lambda_{23}}(\Lambda_{23}) \ar[r,"\sHom^\circ_a"] & \msh_{\Lambda_{\bar 13}}(\Lambda_{\bar 13}).
    \end{tikzcd}\]
\end{corollary}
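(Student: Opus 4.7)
The plan is to mirror the proof of Corollary \ref{thm: gapped hom composition family}, replacing the tensor composition by the hom composition and invoking the ``hom'' versions of the two key ingredients used there. Concretely, the target square will be obtained by vertically stacking two smaller squares: an upper square that commutes gapped hom composition of microsheaves past the microlocal nearby cycle, and a lower square that identifies the resulting gapped hom composition with the algebraic one after stop removal.

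For the upper square, I would invoke Theorem \ref{thm:gapped hom composition microsheaf} applied to the pair $(L_{12}, L_{23})$. By hypothesis this pair has gapped hom microlocal composition, so the theorem yields a commutative diagram
\[
\begin{tikzcd}
\msh_{L_{12}}(L_{12}) \otimes \msh_{L_{23}}(L_{23}) \ar[r,"\sHom^\circ_g"] \ar[d,"\psi_{12}\otimes\psi_{23}" left] & \msh_{L_{\bar 13}}(L_{\bar 13}) \ar[d,"\psi_{13}"] \\
\msh_{\psi_{12}(L_{12})}(\psi_{12}(L_{12})) \otimes \msh_{\psi_{23}(L_{23})}(\psi_{23}(L_{23})) \ar[r,"\sHom^\circ_g"] & \msh_{\psi_{13}(L_{\bar 13})}(\psi_{13}(L_{\bar 13}))
\end{tikzcd}
\]
where on the right one uses $\psi_{13}(L_{\bar 13}) = \psi_{23}(L_{23}) \circ (-\psi_{12}(L_{12}))$, which follows from $L_{\bar 13} = L_{23} \circ_{\bR_{>0}} (-L_{12})$ together with the compatibility of contact composition with nearby cycles under the gappedness hypothesis. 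For the lower square, the images $\psi_{ij}(L_{ij}) \subset S^*(M_i\times M_j)$ contain the closed stops $\Lambda_{ij}$ (since $\Lambda_{ij}\times 0_{\bR_{>0}} \subset L_{ij}$), and by hypothesis $\psi_{ij}(L_{ij}) \circ (-\Lambda_i) \subset \Lambda_j$; these are exactly the hypotheses of Theorem \ref{thm: gap hom composition wrap kunneth}, which then produces a commutative square identifying $\sHom^\circ_g$ on $\psi_{ij}(L_{ij})$ with $\sHom^\circ_a$ on $\Lambda_{ij}$ via the stop-removal functors $\iota^*_{\Lambda_{ij}}$ and $\iota^*_{\Lambda_{\bar 13}}$.

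Stacking these two squares gives the desired diagram. The main technical obstacle is verifying that the middle row of the stack is well-formed: that the pair $(\psi_{12}(L_{12}), \psi_{23}(L_{23}))$ is composable in the sense of Definition \ref{def: composable}, so that the gapped hom composition $\sHom^\circ_g$ makes sense on the middle row, and that its hom-composition in $S^*(M_1\times M_3)$ really is $\psi_{13}(L_{\bar 13})$. Both are built into Definition \ref{def: gap microlocal composition} (clauses (2) and (3), together with the self-gappedness and finite-position assumptions), but they must be matched carefully against the inputs of Theorems \ref{thm:gapped hom composition microsheaf} and \ref{thm: gap hom composition wrap kunneth} so that the composite diagram type-checks: one uses the ``before nearby cycle'' composability to apply the upper theorem, and the ``after nearby cycle'' composability to apply the lower.
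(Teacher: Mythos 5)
Your proposal is correct and follows exactly the structure the paper intends. The paper states this corollary without proof as the hom-analogue of Corollary~\ref{thm: gapped hom composition family}, whose proof stacks an upper nearby-cycle square (there via Theorem~\ref{thm:gapped composition} plus Theorem~\ref{thm: relative-doubling}) on a lower stop-removal square (via Theorem~\ref{thm: gap composition wrap kunneth}); you have simply made the obvious replacement of these by Theorem~\ref{thm:gapped hom composition microsheaf} and Theorem~\ref{thm: gap hom composition wrap kunneth} respectively, and your identification of the middle row and verification of the hypotheses for the lower square are exactly the (routine but necessary) checks implied by Definition~\ref{def: gap microlocal composition} and the hypothesis $\psi_{ij}(L_{ij}) \circ (-\Lambda_i) \subset \Lambda_j$.
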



%
%
%
%

\section{Elementary contact geometry of products}\label{appen: contact}

    In this section, we discuss contact products, reductions and correspondences, and their relations with products, reductions and correspondences of exact symplectic manifolds. We also explain a simple property of contact flows in the product that lifts Liouville flows in the product of Liouville manifolds. We will write $W_1, W_2, \dots$ for exact symplectic manifolds and $X_1, X_2, \dots$ for contact manifolds.

\subsection{Relation of symplectic and contact compositions}
    Consider exact symplectic manifolds $W_i$ (and $W_i^-$ be the same manifolds with opposite symplectic forms) and exact subsets $\ol{L}_{12} \looparrowright W_1^- \times W_2, \ol{L}_{23} \looparrowright W^-_2 \times W_3$, where being exact subsets means they come with prescribed liftings to subsets in the contactization
    $$L_{12} \subset W_1^- \times W_2 \times \bR, \quad L_{23} \subset W_2^- \times W_3 \times \bR.$$

\begin{definition}\label{def: contact composition final}
    Let $W_1, W_2$ and $W_3$ be exact symplectic manifolds and $L_{12} \subset W_1 \times W_2 \times \bR, L_{23} \subset W_2 \times W_3 \times \bR$ be any subsets in the contactization. Then we define their contact composition to be
    \begin{equation}\label{eq: contact composition final}
    L_{23} \circ L_{12} := \{(x_1, x_3, f_{23}(x_1, x_2) - f_{12}(x_2, x_3)) \mid (x_1, x_2) \in L_{12}, (x_2, x_3) \in L_{23}\}.
    \end{equation}
\end{definition}

    However, it is not immediately clear how to interpret the above ad-hoc definition by contact compositions as in Definition \ref{def: contact composition} since $W_1^- \times W_2 \times \bR$ and $W_2^- \times W_3 \times \bR$ do not appear to be products of contact manifolds as in Definition \ref{def: contact product}.
    We would like to understand their compositions via compositions of the products of contact manifolds $W_1 \times \bR$, $W_2 \times \bR$ and $W_3 \times \bR$. This requires us to add one dimension to the Legendrian $L_{12} \subset W_1^- \times W_2 \times \bR$ to get a Legendrian in $(W_1^- \times \bR) \,\widehat\times\, (W_2 \times \bR)$.

    We define Reeb extension of exact subsets in exact symplectic manifolds. In particular, the Reeb extension of exact Lagrangians in $W_1^- \times W_2$ or Legendrians in $W_1^- \times W_2 \times \bR$ will be Legendrians in the contact product $(W_1^- \times \bR) \,\widehat\times\, (W_2 \times \bR)$.

\begin{definition}\label{def: reeb extension}
    Let $L \looparrowright X_1 \times X_2$ be an exact subset with primitive $f_L: L \rightarrow \bR$. Let $R_i^s$ the Reeb flow on $X_i$. Then the Reeb extension of $L$ is defined by
    $$L_{ex} = \{[R_1^{-s}(x_1), r; R_2^{s-f_L(x_1, x_2)}(x_2), r] \mid (x_1, x_2) \in L\} \subset X_1 \,\widehat\times\, X_2.$$
\end{definition}

\begin{remark}
    In general, the Reeb extension of an exact locally closed subset is not necessarily a locally closed subset (due to the complicated dynamics of the Reeb flow). However, for $\epsilon > 0$ sufficiently small, if $L \subset X_1 \,\widehat\times\, X_2$ is locally closed, then the Reeb extension for $(-\epsilon, \epsilon)$
    $$L_{ex} = \{[R_1^{-s}(x_1), r; R_2^{s-f_L(x_1, x_2)}(x_2), r] \mid (x_1, x_2) \in L , s \in (-\epsilon, \epsilon) \} \subset X_1 \,\widehat\times\, X_2$$
    is also a locally closed subset.
\end{remark}

\begin{lemma}
    Let $X_1, X_2$ be contact manifold. The Reeb extension $L_{ex} \subset X_1 \,\widehat\times\, X_2$ of an exact submanifold $L \subset X_1 \,\widehat\times\, X_2$ is an isotropic submanifold.
\end{lemma}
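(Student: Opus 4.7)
The plan is to lift the entire picture to the symplectization so the contact form becomes an explicit 1-form, carry out the tangent-space computation there, and descend back to the contact quotient. So first I would pass to $X_1 \times \bR_{>0} \times X_2 \times \bR_{>0}$ with symplectic primitive $\beta := r_1 \alpha_1 + r_2 \alpha_2$; then $L_{ex}$ is the image in $X_1 \,\widehat\times\, X_2$ of the $\bR_{>0}$-invariant lift
\[
\tilde L_{ex} = \{(R_1^{-s}(x_1),\,r,\,R_2^{s-f_L(x_1,x_2)}(x_2),\,r) \mid (x_1,x_2) \in L,\ s \in \bR,\ r \in \bR_{>0}\}.
\]
Since isotropicity in the contact quotient is equivalent to the vanishing of $\beta$ on tangent vectors of the $\bR_{>0}$-invariant lift (modulo the Euler/rescaling direction), it suffices to show $\beta|_{\tilde L_{ex}} = 0$.

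Second, I would enumerate the three natural families of tangent directions coming from the parametrization $(x_1,x_2,s,r)$, and evaluate $\beta$ on each. For the rescaling direction $\partial_r$, the result is zero since the $\alpha_i$ are horizontal for the projections $X_i \times \bR_{>0} \to X_i$. For the Reeb direction $\partial_s$, the pushforward is $(-R_1, 0, R_2, 0)$, and the contraction gives $r\bigl(-\alpha_1(R_1) + \alpha_2(R_2)\bigr) = r(-1+1) = 0$. For a tangent vector $v = (v_1,v_2) \in T_{(x_1,x_2)}L$, the pushforward is $(dR_1^{-s}\,v_1,\,0,\,dR_2^{s-f_L}(v_2 - v(f_L)R_2),\,0)$; using that the Reeb flows preserve their contact forms, $(R_i^t)^*\alpha_i = \alpha_i$, the contraction simplifies to
\[
r\bigl(\alpha_1(v_1) + \alpha_2(v_2) - v(f_L)\bigr) = r\bigl((\alpha_1+\alpha_2)|_L - df_L\bigr)(v).
\]

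Third, I would observe that this final expression vanishes precisely because of the defining exactness condition on $L$: the hypothesis that $L \subset X_1 \times X_2$ is exact with primitive $f_L$ should be interpreted (via the natural embedding $X_1 \times X_2 \hookrightarrow X_1 \,\widehat\times\, X_2$ at $r_1 = r_2 = 1$, which pulls back $\beta$ to $\alpha_1 + \alpha_2$) as $(\alpha_1 + \alpha_2)|_L = df_L$. Combining the three calculations, $\beta$ annihilates all tangent directions of $\tilde L_{ex}$, so the contact form vanishes on $L_{ex}$, giving the required isotropicity.

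The only genuine subtlety will be the submanifold claim: the map $L \times \bR \times \bR_{>0} \to X_1 \times \bR_{>0} \times X_2 \times \bR_{>0}$ need not be injective when there are Reeb chords within $L$, and the $\bR_{>0}$-quotient must behave well. I would address this by working locally (say on a relatively compact piece of $L$ with no short self-Reeb chords as already remarked after the definition), where the parametrization by $(x_1,x_2,s)$ modulo the $\bR_{>0}$-action is an immersion of the correct dimension $\dim L + 1$ onto $L_{ex}$; the global statement then follows by the standard openness of the absence of Reeb chords on compacta. Once $L_{ex}$ is locally a submanifold, the vanishing of $\alpha$ established above is the isotropicity assertion.
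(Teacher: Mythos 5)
Your proof is correct and takes essentially the same approach as the paper: both lift to the symplectization and check that the primitive $r_1\alpha_1 + r_2\alpha_2$ pulls back to zero on the lift of $L_{ex}$, using Reeb-invariance of the $\alpha_i$, the normalization $\alpha_i(R_i)=1$, and the exactness relation $(\alpha_1+\alpha_2)|_L = df_L$. The paper's write-up is more compressed (it suppresses the cancellation of the $ds$-terms that you carry out explicitly in your second step), but the underlying computation is identical, and your note about the local submanifold issue is consistent with the remark the paper makes immediately after Definition \ref{def: reeb extension}.
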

\begin{proof}
    We know that $(r_1\alpha_1 + r_2\alpha_2) |_{L_{ex}} = r \alpha_1 + r \alpha_2 - r \alpha_2(R_2) df_L = 0$ because $\alpha_2(R_2) \equiv 1$ and thus $rdf_L = (r_1\alpha_1 + r_2\alpha_2)|_L$ (since $R_2$ is the Reeb vector field for the contact form $\alpha_2$).
\end{proof}

    The following example is the main class of examples we are interested in, namely Reeb extensions of exact subsets in the product of exact symplectic manifolds, as suggested in the beginning of the subsection.

\begin{example}\label{example reeb extension}
    Let $\underline L \looparrowright T^*N_1 \times T^*N_2$ be an exact subsets with primitive $f_L$, which lift to a subset
    $$L \subset T^*N_1 \times T^*N_2 \times \bR.$$
    Consider the embedding $L \subset (T^*N_1 \times 0) \,\widehat\times\, (T^*N_2 \times 0) \subset J^1N_1 \,\widehat\times\, J^1N_2$. Then under the standard Reeb vector fields $R_i = \partial/\partial {t_i}$, the Reeb extension of $L$ is 
    $$L_{ex} = \{[x_1, s, r; x_2, f_L(x_1, x_2) - s, r] \mid (x_1, x_2) \in L, s\in \bR, r \in \bR_{>0}\}.$$

    More generally, let $W_1, W_2$ be exact symplectic manifolds and $\underline L \subset W_1 \times W_2$ be an exact Lagrangian submanifold with primitive $f_L$, which lift to a subset
    $$L \subset W_1 \times W_2 \times \bR.$$
    Consider the embedding $L \subset (W_1 \times 0) \,\widehat\times\, (W_2 \times 0) \subset (W_1 \times \bR_{t_1}) \,\widehat\times\, (W_2 \times \bR_{t_2})$. Then under the standard Reeb vector fields $R_i = \partial/\partial {t_i}$, the Reeb extension of $L$ is 
    $$L_{ex} = \{[x_1, s, r; x_2, f_L(x_1, x_2) - s, r] \mid (x_1, x_2) \in L, s \in \bR, r \in \bR_{>0}\}.$$
\end{example}

    Let $W_1, W_2, W_3$ be exact symplectic manifolds. Let $L_{12} \subset W_1^- \times W_2$, $L_{23} \subset W_2^- \times W_3$ be exact subsets, or equivalently, subsets in the contactizations 
    $$L_{12} \subset W_1^- \times W_2 \times \bR, \quad L_{23} \subset W_2^- \times W_3 \times \bR.$$
    Now let $X_i = W_i \times \bR$, $X_i^- = W_i^- \times \bR$, and $L_{12,ex} \subset X_1^- \,\widehat\times\, X_2$, $L_{23,ex} \subset X_2^- \,\widehat\times\, X_3$ be their Reeb extensions as in the above example. We will show the composition $L_{13} \subset W_1^- \times W_3 \times \bR$ has Reeb extension $L_{13,ex}$ that equals the contact composition:
    $L_{13,ex} = L_{23,ex} \circ L_{12,ex} \hookrightarrow X_1^- \,\widehat\times\, X_3.$
    as in Definition \ref{def: contact product leg}.

    First, we compare the intersections of the Lagrangian product with the coisotropic $W_1^- \times \Delta_{W_2} \times W_3 \subset W_1^- \times W_2 \times W_2^- \times W_3$ and the intersections of the product of Reeb extensions with the coisotropic $X_1^- \,\widehat\times\, \Delta_{X_2} \,\widehat\times\, X_3 \subset X_1^- \,\widehat\times\, X_2 \,\widehat\times\, X_2^- \,\widehat\times\, X_3$ defined by Definition \ref{def: contact product}.

    It is important to notice that, once we take the Reeb extension of exact Lagrangians along the extra contact factors, the intersection between the product Lagrangian and the coisotropic in $W_1^- \times W_2 \times W_2^- \times W_3$ no longer correspond to Reeb chords of the Legendrians and the coisotropic in $X_1^- \,\widehat\times\, X_2 \,\widehat\times\, X_2^- \,\widehat\times\, X_3$. All the intersection points in the symplectic manifold will be visible through the extension by the Reeb vector field.

    As a warm-up exercise for the general case, we recall the following lemma:
    
\begin{lemma}\label{reeb chord Lagrangian intersection}
    Let $L, K \subset W \times \bR$ be Legendrians and $\ol{L}, \ol{K} \looparrowright W$ be the Lagrangian projections.
    Then there is a bijection between Lagrangian intersections of $\ol{L}$ and $\ol{K}$ and the Reeb chords of $L$ and $ K$. Moreover, action of the Lagrangian intersection corresponds to the length of the Reeb chords.
\end{lemma}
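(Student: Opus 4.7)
The plan is to unpack both sides of the asserted bijection in coordinates and observe that they are literally the same data. Fix the contactization $W \times \bR_t$ with contact form $\alpha = dt - \lambda$ (the Liouville form being $\lambda$ on $W$), whose Reeb vector field is $R = \partial/\partial t$. Any connected Legendrian $L \subset W \times \bR$ satisfies $dt|_L = \lambda|_L$, so the restriction of the $t$-coordinate to $L$ is a primitive for the pullback of $\lambda$. In particular, on any component where the Lagrangian projection $\pi_W \colon L \to \ol L$ is injective (which is what we need to talk about Reeb chord endpoints unambiguously), we may write $L = \{(x,f_L(x)) : x\in \ol L\}$ for a function $f_L \colon \ol L \to \bR$ with $df_L = \lambda|_{\ol L}$, and similarly $K = \{(y,f_K(y)) : y\in \ol K\}$.

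Next I would spell out what a Reeb chord from $L$ to $K$ is: a nonconstant integral curve of $R$ of the form $s \mapsto (x, t_0 + s)$, $s \in [0,\ell]$, with endpoints $(x,t_0)\in L$ and $(x,t_0+\ell)\in K$. Under the parametrizations above, these conditions force $t_0 = f_L(x)$ and $t_0+\ell = f_K(x)$, so the data of a Reeb chord is exactly the data of a point $x \in \ol L \cap \ol K$ together with the number $\ell = f_K(x) - f_L(x)$ (which is permitted to be any real number if we allow chords of arbitrary sign).

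The bijection is then given by the mutually inverse assignments
\[
x \in \ol L \cap \ol K \;\longleftrightarrow\; \bigl((x,f_L(x)) \to (x,f_K(x))\bigr),
\]
and the length of the chord on the right equals $f_K(x)-f_L(x)$, which is by definition the action of the intersection point $x$ with respect to the chosen primitives (i.e.\ the difference of primitives at $x$). This matches the two assertions of the lemma.

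There is no substantial obstacle here — the content is bookkeeping in the standard model of the contactization. The only thing to be slightly careful about is the sign convention for $\alpha$ and $R$, and the implicit hypothesis that the Lagrangian projections $L \to \ol L$ and $K \to \ol K$ restrict to embeddings on the components meeting a given chord, so that the primitives $f_L,f_K$ are well-defined functions on the relevant sheets (for immersed $\ol L, \ol K$ one simply replaces $\ol L \cap \ol K$ by the fiber product of the immersions, and the same computation goes through sheet by sheet).
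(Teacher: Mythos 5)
Your proof is correct and takes essentially the same route as the paper: write $L$ and $K$ as graphs of primitives $f_L, f_K$ for $\lambda$, observe that a Reeb chord is a vertical segment $(x,f_L(x)) \to (x,f_K(x))$, and read off the bijection with $\ol L \cap \ol K$ and the length formula $f_K(x)-f_L(x)$. The paper's proof is just a two-line compression of your argument; your extra care about the immersed case (fiber product of the immersions, sheet-by-sheet primitives) is a reasonable clarification but not a different method.
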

\begin{proof}
    Let $f_L$ and $f_K$ be the primitives of $L$ and $K$. Reeb chords between $L$ and $K$ are between $(x, f_L(x)) \in L$ and $(x, f_K(x)) \in K$, so $x \in \ol{L} \cap \ol{K}$ and the length is $f_K(x) - f_L(x)$.
\end{proof}

\begin{lemma}\label{reeb extension intersection}
    Let $W_1, W_2, W_3$ be exact symplectic manifolds, $X_1, X_2, X_3$ be the contactizations and ${L}_{12} \subset W_1^- \times W_2 \times \bR, {L}_{23} \subset W_2^- \times W_3 \times \bR$ be any subsets. There is a bijection between the intersections of the pair
    $$\big(\ol{L}_{12} \times \ol{L}_{23}, \, W_1^- \times \Delta_{W_2} \times W_3 \big)$$
    and the intersections of the following pair quotient by a free $\bR$-action
    $$\big(L_{12,ex} \,\widehat\times \,L_{23,ex},\, X_1^- \,\widehat\times\, \Delta_{X_2}\, \widehat\times\, X_3 \big).$$
\end{lemma}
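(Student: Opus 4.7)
The plan is to verify the bijection by unwrapping both pairs in explicit coordinates, identifying the one extra free parameter on the contact side, and exhibiting an $\bR$-action on that parameter whose quotient realizes the required bijection.

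First I would use the formula from Example \ref{example reeb extension} to parametrize a general point of $L_{12,ex} \,\widehat\times\, L_{23,ex} \subset X_1^- \,\widehat\times\, X_2 \,\widehat\times\, X_2^- \,\widehat\times\, X_3$ as
$$[x_1, s_1, r;\ x_2, f_{12}(x_1,x_2)-s_1, r;\ x_2', s_2, r';\ x_3, f_{23}(x_2',x_3)-s_2, r'],$$
with $(x_1,x_2) \in \overline{L}_{12}$, $(x_2',x_3) \in \overline{L}_{23}$, $s_1, s_2 \in \bR$, $r, r' \in \bR_{>0}$, taken modulo the overall $\bR_{>0}$ scaling of the contact product. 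I would then impose the diagonal condition on the middle two factors: by Definition \ref{def: contact product} and the definition of $\Delta_{X_2}$ as the image of the diagonal in the symplectization, this amounts to the three equalities $x_2 = x_2'$, $f_{12}(x_1,x_2)-s_1 = s_2$, and $r = r'$. Thus intersection points are parametrized by tuples $(x_1, x_2, x_3, s_1)$ with $(x_1,x_2) \in \overline{L}_{12}$, $(x_2,x_3) \in \overline{L}_{23}$, and $s_1 \in \bR$ free (with $s_2 = f_{12}(x_1,x_2) - s_1$ determined).

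Next I would identify the free $\bR$-action as simultaneous Reeb translation in the two $X_2$-factors, $(s_1, s_2) \mapsto (s_1 + s, s_2 - s)$; by construction this preserves the intersection locus, acts freely on the affine line $\{s_1 + s_2 = f_{12}(x_1,x_2)\}$, and has quotient in canonical bijection with the classical intersection $\overline{L}_{12} \times \overline{L}_{23} \cap W_1^- \times \Delta_{W_2} \times W_3$, recording the triples $(x_1, x_2, x_3)$ with $(x_1, x_2) \in \overline{L}_{12}$ and $(x_2,x_3) \in \overline{L}_{23}$. The bijection is then the composition $(x_1, x_2, x_3, s_1) \mapsto (x_1, x_2, x_2, x_3)$ passed to the quotient.

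The main obstacle is purely bookkeeping: reconciling the sign convention for the primitive $f_{23}$ induced by the $W_2^-$ factor, verifying that the four-factor $\bR_{>0}$ quotient interacts compatibly with the two scale parameters $r, r'$, and confirming that the symplectization diagonal really does translate into the three point-equalities above. Once these conventions are fixed, no further geometry is needed and the statement reduces to the direct coordinate identification sketched above.
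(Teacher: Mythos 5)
Your proposal is correct and follows the same route as the paper's proof: parametrize $L_{12,ex} \,\widehat\times\, L_{23,ex}$ and the coisotropic via the coordinates of Definition \ref{def: reeb extension} and Example \ref{example reeb extension}, impose the diagonal constraints $x_2 = x_2'$, $f_{12}(x_1,x_2) - s_1 = s_2$, $r = r'$, and identify $s_1$ as the one free $\bR$-parameter whose translation is quotiented out. The only addition is your interpretation of the $\bR$-action as a compensated Reeb translation (which, note, actually moves all four factors, not just the two $X_2$-factors — on the intersection locus the $X_1^-$ and $X_3$ coordinates also shift to keep the point on $L_{12,ex}\widehat\times L_{23,ex}$), but this is accurate color and does not affect the argument.
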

\begin{proof}
    Let the primitives of the exact subsets $L_{12}$ and $L_{23}$ be $f_{12}$ and $f_{23}$. The contact product of the Reeb extensions of Legendrians is given by
    \begin{align*}
    L_{12,ex} \,\widehat\times\, L_{23,ex} = \{ &\, [x_1, x_2, s_{12}, f_{12}(x_1, x_2)-s_{12}, r_{12}, r_{12}; x_2', x_3, s_{23}, f_{23}(x_2', x_3)-s_{23}, r_{23}, r_{23}] \\
    & \mid (x_1, x_2) \in \underline L_{12}, (x_2', x_3) \in \underline L_{23}, s_{12}, s_{23} \in \bR, r_{12}, r_{23} \in \bR_{>0}\},
    \end{align*}
    and the coisotropic submanifold is given by 
    $$X_1 \,\widehat\times\, \Delta_{X_2} \,\widehat\times\, X_3 = \{[x_1, x_2, s_1, s_2, r_1, r_2; x_2, x_3, s_2, s_3, r_2, r_3] \mid (x_i, s_i) \in X_i, r_i \in \bR_{>0}\}.$$
    Therefore, the intersection points between $L_{12,ex} \,\widehat\times\, L_{23,ex}$ and $ X_1 \,\widehat\times\, \Delta_{X_2} \,\widehat\times\, X_3$ satisfy
    \begin{gather*}
    x_2 = x_2',\; r_1 = r_2 = r_3 = r_{12} = r_{23}, \; s_1 = s_{12}, \; s_2 = s_{23}, \\
    f_{12}(x_1, x_2) - s_{12} = s_2, \; f_{23}(x_2', x_3) - s_{23} = s_3.
    \end{gather*}
    This is determined by points $(x_1, x_2) \in L_{12}$ and $(x_2, x_3) \in L_{23}$, and $s_1, s_2, s_3$ are determined via the following relations, with $s_1$ being a free variable:
    $$s_2 = f_{12}(x_1, x_2) - s_1, \; s_3 = f_{23}(x_2, x_3) - s_2.$$
    This shows the bijection.
\end{proof}

    Then we can understand the relation between symplectic reductions and contact reductions, and in particular Lagrangian and Legendrian compositions.

\begin{lemma}\label{reeb extension composition}
    Let $W_1, W_2, W_3$ and $X_1, X_2, X_3$ be as above. Then the Legendrian composition of the Reeb extension $L_{23,ex} \circ L_{12,ex} \subset X_1^- \,\widehat\times\, X_3$ is the Reeb extension of the Lagrangian composition ${L}_{23} \circ {L}_{12} \subset W_1^- \times W_3 \times \bR$.
\end{lemma}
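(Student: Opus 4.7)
The plan is to unpack both sides of the claimed identity explicitly, using the parametrization supplied by Lemma \ref{reeb extension intersection}, and then match them by a direct computation of coordinates. First, using Example \ref{example reeb extension}, I would write the two Reeb extensions as
\[
L_{12,ex} = \{[x_1, s_1, r;\, x_2, f_{12}(x_1,x_2) - s_1, r] \mid (x_1,x_2) \in \ol{L}_{12},\ s_1 \in \bR,\ r \in \bR_{>0}\}
\]
inside $X_1^- \,\widehat\times\, X_2$, and similarly
\[
L_{23,ex} = \{[x_2, s_2, r;\, x_3, f_{23}(x_2,x_3) - s_2, r] \mid (x_2,x_3) \in \ol{L}_{23},\ s_2 \in \bR,\ r \in \bR_{>0}\}
\]
inside $X_2^- \,\widehat\times\, X_3$, where $f_{12}, f_{23}$ are the primitives. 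The computation in the proof of Lemma \ref{reeb extension intersection} then shows that the points of $L_{12,ex} \,\widehat\times\, L_{23,ex}$ lying in the coisotropic $X_1^- \,\widehat\times\, \Delta_{X_2} \,\widehat\times\, X_3$ are parametrized by $(x_1,x_2) \in \ol{L}_{12}$, $(x_2,x_3) \in \ol{L}_{23}$ with matching middle point, by a common $r \in \bR_{>0}$, and by a single free parameter $s_1 \in \bR$ (the direction of the characteristic foliation), subject to $s_2 = f_{12}(x_1,x_2) - s_1$ and $s_3 = f_{23}(x_2,x_3) - s_2$.

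Next I would apply the contact reduction $\pi_{red}$ dropping the middle $X_2$-coordinates to obtain
\[
L_{23,ex} \circ L_{12,ex} = \{[x_1,\, s_1,\, r;\, x_3,\, f_{23}(x_2,x_3) - f_{12}(x_1,x_2) + s_1,\, r]\},
\]
ranging over the parameters above. Separately, by Definition \ref{def: contact composition final}, the composition $L_{23} \circ L_{12}$ is the exact subset of $W_1^- \times W_3 \times \bR$ whose sheet indexed by the intermediate point $x_2$ has primitive $f_{13}(x_1,x_2,x_3) = f_{23}(x_2,x_3) - f_{12}(x_1,x_2)$ (after correcting the evident typographical slip). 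Applying Example \ref{example reeb extension} once more gives
\[
(L_{23} \circ L_{12})_{ex} = \{[x_1,\, s,\, r;\, x_3,\, f_{13}(x_1,x_2,x_3) - s,\, r]\},
\]
with parameters $(x_1,x_2) \in \ol{L}_{12},\ (x_2,x_3) \in \ol{L}_{23},\ s \in \bR,\ r \in \bR_{>0}$.

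Finally, I would identify the two parametrizations by the substitution $s = s_1$. Under this identification, the $(x_1, x_3, r)$ coordinates match tautologically, the $t_1$-coordinates agree by construction, and the $t_3$-coordinate $f_{23}(x_2,x_3) - f_{12}(x_1,x_2) + s_1$ of the left side equals $f_{13}(x_1,x_2,x_3) - s_1$ of the right side by the definition of $f_{13}$. Thus the two subsets of $X_1^- \,\widehat\times\, X_3$ coincide as parametrized families. The main obstacle is not conceptual but notational: keeping the opposite contact orientations on $X_1^-$ and $X_2^-$ consistent with the Reeb extension formula (which is written in Example \ref{example reeb extension} for the standard orientation), and verifying that the free parameter in the contact reduction -- coming from the characteristic foliation of $X_1^- \,\widehat\times\, \Delta_{X_2} \,\widehat\times\, X_3$ -- is exactly the free Reeb parameter appearing in the Reeb extension of the composition. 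Once these sign and parameter correspondences are fixed, the identity is immediate from the two explicit descriptions.
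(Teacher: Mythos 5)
Your proposal is correct and takes essentially the same route as the paper's proof: both invoke the explicit parametrization of $(L_{12,ex} \,\widehat\times\, L_{23,ex}) \cap (X_1^- \,\widehat\times\, \Delta_{X_2} \,\widehat\times\, X_3)$ from Lemma \ref{reeb extension intersection}, compute the quotient along the characteristic direction to get $L_{23,ex} \circ L_{12,ex} = \{[x_1, s_1, r; x_3, s_3, r] : s_3 - s_1 = f_{23}(x_2,x_3) - f_{12}(x_1,x_2)\}$, and match this against Definitions \ref{def: contact composition final} and \ref{def: reeb extension}. Your observation about the typographical slip in the primitive in Definition \ref{def: contact composition final} and your attentiveness to sign conventions on $X_1^-$ and $X_2^-$ are well placed.
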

\begin{proof}
    Using Proposition \ref{reeb extension intersection} and Definition \ref{def: contact composition}, the Legendrian correspondence $L_{23,ex} \circ L_{12,ex}$ is given by
    \begin{align*}
    L_{23,ex} \circ L_{12,ex} = \{ & \,[x_1, s_1, r; x_3, s_3, r] \mid (x_1, x_2) \in L_{12}, (x_2, x_3) \in L_{23}, \\
    &s_3 - s_1 = f_{23}(x_2, x_3) - f_{12}(x_1, x_2), r \in \bR_{>0}\}.
    \end{align*}
    This is the Reeb extension of the Lagrangian composition ${L}_{23} \circ {L}_{12} \subset W_1^- \times W_3 \times \bR$ by Definitions \ref{def: contact composition final} and \ref{def: reeb extension}.
\end{proof}

\begin{example}\label{ex: geometry composition pt}
    Let $W_1 = W_3 = pt$ and $W_2 = T^*N_2$. Consider Legendrians $L_{12} \subset T^*N_2 \times \bR$ and $L_{23} \subset T^*N_2 \times \bR$ with primitives $f_{12}$ and $f_{23}$. Then
    $$J^1pt\, \,\widehat\times\, J^1N_2 \cong J^1(N_2 \times \bR),$$
    and the Reeb extensions of the Lagrangian $L_{12}$ is given by
    $$L_{12,ex} = \{[s, r; x_2, \xi_2, f_{12}(x_2, \xi_2) - s, -r] \mid (x_2, \xi_2, f_{12}(x_2, \xi_2)) \in L_{12}, s \in \bR, r \in \bR_{>0}\}.$$
    Therefore, the intersection between $L_{12,ex} \widehat{\times} L_{23,ex}$ and $J^1pt\,  \,\widehat\times\, \Delta_{J^1N_2} \,\widehat\times\, J^1pt$ are given by pairs
    \begin{gather*}
    [s, -r; x_2, \xi_2, f(x_2, \xi_2) - s, r; x_2', -\xi_2', f'(x_2', \xi_2') - s', -r'; s', r'] \\
    = [s_1, -r_1; x_2, \xi_2, s_2, r_2; x_2, -\xi_2, s_2, -r_2; s_3, r_3]
    \end{gather*}
    such that the following equations hold:
    \begin{gather*}
    s_1 = s, \; s_2 = f(x_2, \xi_2) - s = f'(x_2', \xi_2') - s', \; s_3 = s', \\
    r_1 = r_2 = r_3 = r = r', \; x_2 = x_2', \; \xi_2 = \xi_2'.
    \end{gather*}
    We get that $x_2, \xi_2, x_2', \xi_2'$ and $s - s'$ are determined by the following set of equations:
    $$x_2 = x_2', \; \xi_2 = \xi_2', \; f(x_2, \xi_2) = f'(x_2', \xi_2') + (s - s'),$$
    $s$ is a free variable, and $s_1, s_2, s_3$, $r_1, r_2, r_3$, $r$ and $r'$ are all uniquely determined by the above variables. This is an example for Proposition \ref{reeb extension intersection}.
    
    Then, by definition, the Legendrian composition $L_{23,ex} \circ L_{12,ex}$ is given by
    $$L_{23,ex} \circ L_{12,ex} = \{[s, -r; s + f'(x_2, \xi_2) - f(x_2, \xi_2), r] \mid (x_2, \xi_2) \in \ol{L}_{12} \cap \ol{L}_{23}, r \in \bR_{>0}\}.$$
    It is the Legendrian extension of the composition $L_{23} \circ L_{12} \subset pt\, \times pt$, whose primitive is determined by the actions of  the Lagrangian intersection $\ol{L}_{23} \cap \ol{L}_{12}$. This is an example for Corollary \ref{reeb extension composition}.
    
    Consider the Reeb extension $L_{23,ex} \circ L_{12,ex} \subset J^1pt \,\widehat\times\, J^1pt$. The Reeb chords between the pair of Legendrians 
    $$L_{23,ex} \circ L_{12,ex}, \Delta_{J^1pt}$$
    correspond to Lagrangian intersections between the pair $(\ol{L}_{23} \circ \ol{L}_{12}, \Delta_{pt})$, and the lengths of Reeb chords correspond to the actions of $\ol{L}_{12} \cap \ol{L}_{23}$ as in Lemma \ref{reeb chord Lagrangian intersection}.
\end{example}

    Finally, one can show by direct computation that taking Reeb extensions commutes with Reeb push-offs of the Legendrians. In particular, this shows that Reeb extensions are composable in the sense of Definition \ref{def: composable}:

\begin{lemma}\label{reeb extension composition pushoff}
    Let $W_1, W_2, W_3$ and $X_1, X_2, X_3$ be as above. Let $L_{12,\epsilon}$ and $L_{23,\epsilon'}$ be Reeb push-offs of $L_{12} \subset W_1^- \times W_2 \times \bR$ and $L_{23} \subset W_2^- \times W_3 \times \bR$. Then the Legendrian composition of the Reeb extension $L_{23,\epsilon',ex} \circ L_{12,\epsilon,ex} \subset X_1^- \,\widehat\times\, X_3$ is the Reeb extension of the Lagrangian composition $(L_{23} \circ L_{12})_{\epsilon+\epsilon'} = {L}_{23,\epsilon'} \circ {L}_{12,\epsilon} \subset W_1^- \times W_3 \times \bR$.
\end{lemma}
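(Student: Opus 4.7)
The plan is to carry out the same coordinate calculation as in the proof of Lemma \ref{reeb extension composition}, but tracking how the Reeb push-offs translate primitives. In the contactization $W_i^- \times W_j \times \bR$ the Reeb vector field is $\partial/\partial t$, so a Reeb push-off simply shifts the primitive by a constant. Accordingly, if $f_{12}$ is the primitive of $L_{12}$, then $L_{12,\epsilon}$ has primitive $f_{12}+\epsilon$ (up to the overall sign convention chosen for the contactization over $W_1^-$), and similarly $L_{23,\epsilon'}$ has primitive $f_{23}+\epsilon'$.

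First I would substitute these shifted primitives into the explicit Reeb extension formula of Example \ref{example reeb extension}, obtaining
\begin{align*}
L_{12,\epsilon,ex} &= \{[\xi_1, s, r;\, \xi_2, f_{12}(\xi_1,\xi_2)+\epsilon-s, r] : (\xi_1,\xi_2)\in \ol{L}_{12},\, s\in\bR,\, r\in\bR_{>0}\}, \\
L_{23,\epsilon',ex} &= \{[\xi_2, s, r;\, \xi_3, f_{23}(\xi_2,\xi_3)+\epsilon'-s, r] : (\xi_2,\xi_3)\in \ol{L}_{23},\, s\in\bR,\, r\in\bR_{>0}\}.
\end{align*}
Next I would repeat verbatim the intersection calculation from the proof of Lemma \ref{reeb extension intersection}, applied to $L_{12,\epsilon,ex}\,\widehat\times\, L_{23,\epsilon',ex}$ against the coisotropic $X_1^-\,\widehat\times\,\Delta_{X_2}\,\widehat\times\, X_3$. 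The system of equations that appeared there is identical, except that the two relations involving the primitives now read $f_{12}(\xi_1,\xi_2)+\epsilon-s_{12}=s_2$ and $f_{23}(\xi_2,\xi_3)+\epsilon'-s_{23}=s_3$. Applying the contact reduction $\pi_{13}$ and reading off the primitive of the resulting Legendrian in $X_1^-\,\widehat\times\, X_3$ then gives a primitive of the form $(f_{23}-f_{12})+(\epsilon+\epsilon')$ after the appropriate sign normalization on each factor, which matches the primitive of the Reeb push-off $(L_{23}\circ L_{12})_{\epsilon+\epsilon'}$ as defined via Definition \ref{def: contact composition final}.

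The only substantive obstacle is the purely notational one of correctly normalizing the sign of the primitive on each of $W_1^-$, $W_2$, $W_2^-$, and $W_3$ so that the two Reeb constants add, rather than subtract; no geometric input is required beyond the formulas of Example \ref{example reeb extension} and the proofs of Lemmas \ref{reeb extension intersection} and \ref{reeb extension composition}. Once the identity $L_{23,\epsilon',ex}\circ L_{12,\epsilon,ex}=(L_{23}\circ L_{12})_{\epsilon+\epsilon',ex}$ is in hand, composability of the Reeb extensions in the sense of Definition \ref{def: composable} is immediate: the displaced composition is the Reeb push-off of the undisturbed one by $\epsilon+\epsilon'$, hence disjoint from $L_{23,ex}\circ L_{12,ex}$ whenever $\epsilon+\epsilon'>0$.
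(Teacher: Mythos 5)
Your proposal is correct and takes essentially the same approach as the paper's proof: both observe that a Reeb push-off in a contactization simply shifts the primitive by a constant, substitute the shifted primitives $f_{12}+\epsilon$ and $f_{23}+\epsilon'$ into the explicit Reeb-extension coordinate formulas (Example \ref{example reeb extension}), and then invoke the computation of Lemma \ref{reeb extension composition}/\ref{reeb extension intersection}. The paper's version is simply more terse, writing out $L_{ij,\epsilon,ex}$ once and saying the result then follows directly.

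One remark on the issue you flagged: you are right that the "sign normalization" is the only place where the argument can trip. Taking Definition \ref{def: contact composition final} literally (with the $f_{23}-f_{12}$ sign) and substituting the shifted primitives would yield $(f_{23}-f_{12})+(\epsilon'-\epsilon)$, not the $\epsilon+\epsilon'$ claimed in the statement. The consistent resolution is that the primitive of the composed Lagrangian $\ol{L}_{23}\circ\ol{L}_{12}$ in $W_1^-\times W_3$ is $f_{12}+f_{23}$ (coming from $(-\lambda_1+\lambda_3)=(-\lambda_1+\lambda_2)+(-\lambda_2+\lambda_3)$), with both Reeb push-offs shifting in the same direction, so the constants do add as you concluded. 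So your hedge about sign normalization is well placed and your conclusion matches the paper's.
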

\begin{proof}
    Let the Reeb push-off be $L_{ij,\epsilon} = \{(x_i, x_j, f_{ij} + \epsilon) \mid (x_i, x_j, f_{ij}) \in L_{ij}\}$. Then with respect to the Reeb extension, we can write
    $L_{ij,\epsilon,ex} = \{[x_i, x_j, s_{ij}, f_{ij} + \epsilon - s_{ij}, r_{ij}, -r_{ij}] \mid (x_i, x_j, f_{ij}) \in L_{ij}, s_{ij} \in \bR, r_{ij} \in \bR_{>0}\}.$
    Hence the result follows directly from the formula in Lemma \ref{reeb extension composition}.
\end{proof}

\subsection{A lemma on limits of mixed conic flows}\label{appen: liouville}


    
    Let $W_1, W_2$ be Liouville manifolds. The Liouville flows on the product commutes:
    $$\varphi_{Z_1}^{t_1} \circ \varphi_{Z_2}^{t_2} = \varphi_{Z_2}^{t_2} \circ \varphi_{Z_1}^{t_1}, \quad t_1, t_2 \in \bR.$$
    Let $L \hookrightarrow W_1 \times W_2 \times \bR$ be a Legendrian subset. We will give a sufficient condition for limits under the mixed Liouville flows to be Legendrian subsets in the corresponding Liouville manifolds:

\begin{lemma}\label{lem: liouville flow pdff} 
    Let $W_1, W_2$ be Liouville manifolds and $X_1 = W_1 \times \bR, X_2 = W_2 \times \bR$ be their contactizations. Let ${L} \hookrightarrow W_1 \times W_2 \times \bR$ be a subset such that $\ol{L} = \ol{L}_1 \times \ol{L}_2$ outside a compact set, where $\ol{L}_i \subset W_i$ are eventually conical subsets. Then
    $$\lim_{t_1, t_2 \to \infty} \varphi_{Z_1}^{-t_1} \circ \varphi_{Z_2}^{-t_2}(\ol{L}) \subset \mathfrak{c}_{W_1, \partial L_1} \times \mathfrak{c}_{W_2, \partial L_2}.$$
    When $W_i$ are Weinstein and $L_i$ are stratified Legendrian, then the limit is stratified Lagrangian.
\end{lemma}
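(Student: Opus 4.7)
The plan is to decompose $\bar L$ into a compact core plus an eventually conical product piece, and then exploit that the Liouville flows $\varphi_{Z_1}^{-t_1}$ and $\varphi_{Z_2}^{-t_2}$ commute and act only on their respective factors, so that the double limit can be analyzed factor-by-factor. Throughout I will interpret $\lim$ as $\bigcap_{T > 0}\overline{\bigcup_{t_1, t_2 \geq T}\varphi_{Z_1}^{-t_1}\varphi_{Z_2}^{-t_2}(\bar L)}$, which is finitely additive over closed subsets.

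By hypothesis, choose a compact $K_0 \subset W_1 \times W_2$ such that $\bar L \setminus K_0 \subset \bar L_1 \times \bar L_2$, and write $\bar L = (\bar L \cap K_0) \cup (\bar L \setminus K_0)$. Handle the product piece first: since the two flows act factorwise,
$$\varphi_{Z_1}^{-t_1}\circ\varphi_{Z_2}^{-t_2}(\bar L_1 \times \bar L_2) = \varphi_{Z_1}^{-t_1}(\bar L_1) \times \varphi_{Z_2}^{-t_2}(\bar L_2).$$
Because $\bar L_i$ is eventually conical with conical end $\partial L_i \subset \partial W_i$, decompose it as $(\bar L_i \cap W_i^0) \cup (\partial L_i \times [r_0, \infty))$ where $W_i^0 \subset W_i$ is a relatively compact piece. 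Under $\varphi_{Z_i}^{-t_i}$, the first summand limits into $\mathfrak{c}_{W_i}$ by the very definition of the core, while the cylindrical end is rescaled to $\partial L_i \times [e^{-t_i} r_0, \infty)$ whose limit as $t_i \to \infty$ is $Cone(\partial L_i)$. Hence
$$\lim_{t_i \to \infty}\varphi_{Z_i}^{-t_i}(\bar L_i) \subset \mathfrak{c}_{W_i} \cup Cone(\partial L_i) = \mathfrak{c}_{W_i, \partial L_i},$$
and taking products handles the product piece of $\bar L$.

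For the compact piece $\bar L \cap K_0$, commutativity of the two Liouville flows lets me reduce to a factorwise argument. For any sequence $(x_n, y_n) \in \bar L \cap K_0$ with $(s_n, t_n) \to (\infty, \infty)$ such that $(\varphi_{Z_1}^{-s_n}(x_n), \varphi_{Z_2}^{-t_n}(y_n))$ converges to $(x_\infty, y_\infty)$, the compactness of $K_0$ together with the defining property of $\mathfrak{c}_{W_1}$ (accumulation points of inverse Liouville orbits lie in the core) forces $x_\infty \in \mathfrak{c}_{W_1}$, and analogously $y_\infty \in \mathfrak{c}_{W_2}$. Thus this piece also limits into $\mathfrak{c}_{W_1} \times \mathfrak{c}_{W_2} \subset \mathfrak{c}_{W_1,\partial L_1} \times \mathfrak{c}_{W_2,\partial L_2}$. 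For the Weinstein stratified refinement: when $W_i$ is Weinstein with a gradient-like Morse function, $\mathfrak{c}_{W_i}$ is canonically a stratified isotropic subset whose strata are the unstable manifolds of the critical points; if $\partial L_i$ is stratified Legendrian, then $Cone(\partial L_i)$ is stratified Lagrangian in the cylindrical end, so $\mathfrak{c}_{W_i,\partial L_i}$ is stratified Lagrangian, and hence so is the product $\mathfrak{c}_{W_1,\partial L_1} \times \mathfrak{c}_{W_2,\partial L_2}$.

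The main obstacle is the compact piece: because $t_1$ and $t_2$ tend to infinity independently, one cannot simply invoke the one-parameter inverse Liouville flow on $W_1 \times W_2$ to land in $\mathfrak{c}_{W_1 \times W_2} = \mathfrak{c}_{W_1} \times \mathfrak{c}_{W_2}$. The argument above circumvents this by using that the flows commute and are supported on disjoint sets of coordinates, which lets one run the two one-variable accumulation arguments in succession on arbitrary subsequences; the factor independence is the essential geometric input, and nothing stronger than the defining property of $\mathfrak{c}_{W_i}$ (plus compactness of $K_0$) is needed.
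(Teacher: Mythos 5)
Your proof is correct and follows essentially the same route as the paper's: decompose $\bar L$ into a compact piece (which the two inverse Liouville flows force into $\mathfrak{c}_{W_1}\times\mathfrak{c}_{W_2}$) and a product piece outside a compact set (handled factorwise using eventual conicality), with the Weinstein/stratified claim following from stratification of the individual cores. One small convention note: the strata of $\mathfrak{c}_{W_i}$ are the \emph{stable} manifolds of the Liouville vector field (equivalently, per the paper's citation of Nicolaescu, the stable submanifolds of the gradient-like Morse function), not unstable manifolds as you write, but this does not affect the argument.
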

\begin{proof}
    Consider a compact subset $K \subset W_1 \times W_2$. Then, within the compact subset $K$, the statement is automatically true because by the definition of the core we have
    $$\lim_{t_1, t_2 \to \infty} \varphi_{Z_1}^{-t_1} \circ \varphi_{Z_2}^{-t_2} (L \cap K) \subset \mathfrak{c}_{W_1} \times \mathfrak{c}_{W_2}.$$
    Outside the compact subset $K \subset W_1 \times W_2$, one may note that
    $$\varphi_{Z_1}^{-t_1} \varphi_{Z_2}^{-t_2} (L \setminus K) = \varphi_{Z_1}^{-t_1}(L_1) \times \varphi_{Z_2}^{-t_2} (L_2) \setminus \varphi_{Z_1}^{-t_1} \varphi_{Z_2}^{-t_2} (K).$$
    Let $\partial{L}_{1}, \partial{L}_{2}$ denote the (conic) boundaries of $L_1$ and $L_2$. When $t_1, t_2 \gg 0$, we know that $\varphi_{Z_1}^{-t_1}(L_1)$ is contained in small neighbourhoods of $\mathfrak{c}_{W_1, \partial{L}_{1}}$, and $\varphi_{Z_2}^{-t_2}(L_2)$ is contained in small neighbourhoods of $\mathfrak{c}_{W_2,\partial{L}_{2}}$. Therefore, their contact product
    $$\lim_{t_1, t_2 \to \infty} \varphi_{Z_1}^{-t_1}(L_1) \times \varphi_{Z_2}^{-t_2} (L_2) \subset \mathfrak{c}_{W_1,\partial{L}_{1}} \times \mathfrak{c}_{W_2,\partial{L}_{2}}.$$
    This then completes the proof. When $W_i$ are Weinstein and $L_i$ are stratified Lagrangian, $\mathfrak{c}_{W_1,\partial{L}_{1}} \times \mathfrak{c}_{W_2,\partial{L}_{2}}$ is stratified Lagrangian.
\end{proof}

    Let $X_1, X_2$ be the contactizations of the Liouville manifolds $W_1, W_2$. On the contact product $X_1 \,\widehat\times\, X_2$, we can define the Hamiltonian vector field defined by ${Z}_1$ and ${Z}_2$. by the contact Hamiltonian
    $$H_1(x_1, t_1, r_1, x_2, t_2, r_2) = r_1t_1, \quad H_2(x_1, t_1, r_1, x_2, t_2, r_2) = r_2t_2.$$
    The Hamiltonians are Poisson commutative. Hence the contact lifts of the Liouville flows commute:
    $$\varphi_{Z_1}^{t_1} \circ \varphi_{Z_2}^{t_2} = \varphi_{Z_2}^{t_2} \circ \varphi_{Z_1}^{t_1}, \quad t_1, t_2 \in \bR.$$
    The above lemma also gives a sufficient condition for limits under the mixed Liouville flows of contact products to be Legendrian subsets:
    
\begin{lemma}\label{lem: liouville flow pdff contact}
    Let $W_1, W_2$ be Liouville manifolds and $X_1 = W_1 \times \bR, X_2 = W_2 \times \bR$ be their contactizations. Let $L \hookrightarrow X_1 \times X_2$ be a subset such that $L = {L}_1 \,\widehat\times\, {L}_2$ outside a compact set, where ${L}_i \subset X_i$ are eventually conical subsets. Then
    $$\lim_{t_1, t_2 \to \infty} \varphi_{Z_1}^{-t_1} \circ \varphi_{Z_2}^{-t_2}({L}) \subset \mathfrak{c}_{W_1, \partial L_1} \,\widehat\times\, \mathfrak{c}_{W_2, \partial L_2}.$$
    When $W_i$ are Weinstein and $L_i$ are stratified Legendrian, then the limit is stratified Legendrian.
\end{lemma}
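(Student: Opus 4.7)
The plan is to reduce this claim to Lemma \ref{lem: liouville flow pdff} by projecting the contact product $X_1 \,\widehat\times\, X_2 \cong X_1 \times X_2 \times \bR_{>0}$ (cf.\ Definition \ref{def: contact product}) down to the symplectic product $W_1 \times W_2$. The contact Hamiltonians $H_i = r_i t_i$ generating the contact lifts of the Liouville flows on $X_1 \,\widehat\times\, X_2$ were arranged to Poisson-commute and to cover the Liouville flows $\varphi_{Z_i}^{t_i}$ on $W_i$ under the projection $\pi\colon X_1 \,\widehat\times\, X_2 \to W_1 \times W_2$. So both sides of the desired limit are naturally $\pi$-compatible with the limit already controlled by Lemma \ref{lem: liouville flow pdff}.

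First, I would unpack eventual conicality of $L_i \subset X_i = W_i \times \bR$ (Definition \ref{def: eventually conic}): outside a compact subset of $W_i$ the $\bR_t$-coordinate of $L_i$ is bounded by some $t_0 \in \bR$, so the projection $\pi(L) \subset W_1 \times W_2$ agrees with $\bar L_1 \times \bar L_2$ outside a compact set. Applying Lemma \ref{lem: liouville flow pdff} to $\pi(L)$ yields
$$\lim_{t_1, t_2 \to \infty} \varphi_{Z_1}^{-t_1} \circ \varphi_{Z_2}^{-t_2}(\pi(L)) \subset \mathfrak{c}_{W_1, \partial L_1} \times \mathfrak{c}_{W_2, \partial L_2}.$$
To lift this to the contact product, I would use that the contact lift of $Z_i$ on $X_i = W_i \times \bR$ is $Z_{\lambda_i} + t_i \partial/\partial t_i$, which exponentially damps the $t_i$ coordinate under $\varphi_{Z_i}^{-t_i}$ as $t_i \to +\infty$; thus the extra $\bR$-factor of $X_i$ contributes only bounded data in the limit. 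Combined with the identification
$$\mathfrak{c}_{W_1, \partial L_1} \,\widehat\times\, \mathfrak{c}_{W_2, \partial L_2} \;=\; \big((\mathfrak{c}_{W_1, \partial L_1} \times \bR_{>0}) \times (\mathfrak{c}_{W_2, \partial L_2} \times \bR_{>0})\big)/\bR_{>0}$$
from Definition \ref{def: contact product leg}, this produces the desired containment.

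For the Weinstein/stratified Legendrian assertion, I would appeal directly to the corresponding case of Lemma \ref{lem: liouville flow pdff}: $\mathfrak{c}_{W_1, \partial L_1} \times \mathfrak{c}_{W_2, \partial L_2}$ is stratified Lagrangian there, and taking its contact (Reeb) extension preserves the stratification, yielding a stratified Legendrian containing the contact-product limit.

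The main obstacle is genuinely just bookkeeping of the $(t_i, r_i)$ coordinates under the contact lift: one must verify that the homogeneous-degree-one Hamiltonian flow of $H_i = r_i t_i$ scales these coordinates in a controlled way on the cylindrical end of $L = L_1 \,\widehat\times\, L_2$, so that no escape to infinity occurs beyond what is already allowed by the cone $\mathfrak{c}_{W_i, \partial L_i} \times \bR_{>0}$. Given the explicit form of the contact lift, this reduces to the same estimate already used in Lemma \ref{lem: liouville flow pdff}, and no new geometric input is required.
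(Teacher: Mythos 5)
The paper gives no written proof of this lemma; it is stated as an immediate consequence of Lemma~\ref{lem: liouville flow pdff}, as the preceding sentence (``The above lemma also gives a sufficient condition\ldots'') makes clear. Your proposal spells out exactly that reduction and is essentially the argument the authors have in mind. A few small remarks: the contact product $\mathfrak{c}_{W_1,\partial L_1}\,\widehat\times\,\mathfrak{c}_{W_2,\partial L_2}$ is not a Reeb extension (that is a different construction, Definition~\ref{def: reeb extension}), so that phrase should be replaced simply by ``contact product''; and the conclusion you need for the $t_i$-coordinates is not ``bounded'' but $t_i\to 0$, which the exponential damping $t_i\mapsto e^{-s}t_i$ under the backward flow of $Z_i+t_i\,\partial/\partial t_i$ gives once you note that eventual conicality (Definition~\ref{def: eventually conic}) already forces $t_i$ to be uniformly bounded on $L_i$. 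With those tweaks the argument is complete.
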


    Note that when the exact Lagrangians or Legendrians are not of product type at infinity, the limit may not be Lagrangian or Legendrian any more:

\begin{example}\label{ex: limit liouville flow not pdff}
    Consider the diagonal $\Delta_{T^*\bR} \subset T^*\bR \times T^*\bR$ where the Liouville vector fields are $Z_i = \xi_i\partial/\partial \xi_i$. By considering different paths of the form $t_1 = \gamma t_2$ for all $\gamma \in \bR$, one can easily compute that
    $$\lim_{t_1, t_2 \to\infty}\varphi_{Z_1}^{-t_1}\varphi_{Z_2}^{-t_2}(\Delta_{T^*\bR}) = T^*\bR \times T^*\bR.$$
    In general, this is not true for Lagrangians $L$ that is tangent to the product Liouville vector field on the product ${Z}_{12}$. We will see in the next section that this will result in failure of the condition in gapped compositions Definition \ref{def: gap composition} that the limit should be pdff.
\end{example}

%
%
%
%
%


\section{Composition of Lagrangian correspondences}\label{sec:main theorems}

    In this section, we prove our main theorems showing that sheaf quantization intertwines the algebraic composition of microsheaves and the geometric gapped compositions of microsheaves.

\subsection{Gapped compositions of Legendrian correspondences}\label{ssec:main theorems}
    First, 
    we consider Legendrian subsets in contact manifolds and define their microsheaf composition, following Definition \ref{def: gap micro composition}:

\begin{definition}\label{def: gap composition weinstein} 
    Let $X_1, X_2$ and $X_3$ be contact manifolds with Maslov data. Let $\Lambda_{12} \subset X_1^- \,\widehat\times\, X_2$ and $\Lambda_{23} \subset X_2^- \,\widehat\times\, X_3$ be relatively compact sufficiently Legendrian subsets that are composable. Then for $\SF_{12} \in \msh_{\Lambda_{12}}(\Lambda_{12})$ and $\SF_{23} \in \msh_{\Lambda_{23}}(\Lambda_{23})$, we pick some contact embedding $X_i \hookrightarrow S^*\bR^N$ for $1 \leq i \leq 3$ as in Section \ref{ssec: weinstein} and define the gapped composition by the gapped microlocal composition in Definition \ref{def: gap microlocal composition}
    $$\SF_{23} \circ_g \SF_{12} = m_{\Lambda_{13}}(w_{\Lambda_{23}}^+\SF_{23} \circ w_{\Lambda_{12}}^+\SF_{12}),$$
    where the doubling functor $w_{\Lambda_{12}}^+$ and $w_{\Lambda_{23}}^+$ are defined as the relative doubling in the sense of Theorem \ref{thm: relative-doubling} for the Legendrian thickening $\sigma_{12}$ and $\sigma_{23}$ of $\Lambda_{12}$ and $\Lambda_{23}$ under the contact embeddings in the sense of Section \ref{ssec: weinstein}.
\end{definition}

    Let $W_1, W_2$ and $W_3$ be Weinstein manifolds with Maslov data and $X_1, X_2, X_3$ be the contactizations. Let $L_{12} \subset W_1^- \times W_2 \times \bR$ and $L_{23} \subset W_2^- \times W_3 \times \bR$ be relatively compact universally sufficiently Legendrian subsets. Then we define the composition of microsheaves by compositions of their Reeb extensions:
    \begin{equation}\label{eq: composition weinstein}
    \begin{tikzcd}
    \msh_{L_{12}}(L_{12}) \otimes \msh_{L_{23}}(L_{23}) \ar[r, "\circ"] \ar[d, "\rotatebox{90}{$\sim$}" left] & \msh_{L_{13}}(L_{13}) \ar[d, "\rotatebox{90}{$\sim$}"] \\
    \msh_{L_{12,ex}}(L_{12,ex}) \otimes \msh_{L_{23,ex}}(L_{23,ex}) \ar[r, "\circ"] & \msh_{L_{13,ex}}(L_{13,ex}).
    \end{tikzcd}
    \end{equation}
    Here the vertical isomorphisms follow from the compatibility of the compositions in Lemma \ref{reeb extension composition} and the invariance of microsheaves in Lemma \ref{lem:contact-transform-main}.

    The main class of examples we will work with is universally sufficiently Legendrian subsets in the contactizations of Weinstein manifolds. 

\begin{lemma}\label{lem: weinstein composable}
    Let $W_1, W_2, W_3$ be Liouville manifolds with Maslov data. Let $L_{12} \subset W_1^- \times W_2 \times \bR$, $L_{23} \subset W_2^- \times W_3 \times \bR$ be relatively compact universally sufficiently Legendrian subsets with universally sufficiently Legendrian compositions. Then the Reeb extensions of the Legendrian thickenings $\mathfrak{c}_{W_1^- \times W_2,ex}^{L_{12}}, \mathfrak{c}_{W_2^- \times W_3,ex}^{L_{23}}$ are composable in the sense of Definitions \ref{def: composable} and
    $$\mathfrak{c}_{W_2^- \times W_3,ex}^{L_{23}} \circ \mathfrak{c}_{W_1^- \times W_2,ex}^{L_{12}} \subset \mathfrak{c}_{W_1^- \times W_3,ex}^{L_{13}}.$$
\end{lemma}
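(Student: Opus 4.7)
The plan is to decompose the assertion into two pieces and reduce each to manipulations in the symplectic picture via the compatibility lemmas established in the preceding subsection.

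First I would handle the containment
$$\mathfrak{c}_{W_2^- \times W_3,ex}^{L_{23}} \circ \mathfrak{c}_{W_1^- \times W_2,ex}^{L_{12}} \subset \mathfrak{c}_{W_1^- \times W_3,ex}^{L_{13}}.$$
By Lemma \ref{reeb extension composition}, Reeb extension intertwines contact composition in $X_1^- \,\widehat\times\, X_2 \,\widehat\times\, X_3$ with Lagrangian composition in the base. So it suffices to establish the corresponding containment in $W_1^- \times W_3 \times T^*\bR$. Decompose each core as $\mathfrak{c}_{W_i^- \times W_j}^{L_{ij}} = (\mathfrak{c}_{W_i^- \times W_j,\partial L_{ij}} \times (-N,\infty) \times \{0\}) \cup \mathrm{Cone}(L_{ij})$. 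Since the intersection defining Lagrangian composition requires the $\tau$-coordinates on the two $T^*\bR$ factors to match after reduction along $\Delta_{W_2}$, the mixed pieces (one at $\tau = 0$ and the other at $\tau > 0$) contribute only along the boundary $\tau = 0$. The $\mathrm{Cone}$-$\mathrm{Cone}$ composition is $\mathrm{Cone}(L_{23} \circ L_{12}) = \mathrm{Cone}(L_{13})$ since coning commutes with Lagrangian composition in the $W_i^- \times W_j$ factors.

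Next I would handle the zero-section components. The composition of $\mathfrak{c}_{W_1^- \times W_2, \partial L_{12}}$ and $\mathfrak{c}_{W_2^- \times W_3, \partial L_{23}}$ should be contained in $\mathfrak{c}_{W_1^- \times W_3, \partial L_{13}}$. For this I would apply Lemma \ref{lem: liouville flow pdff} to $W_1 \times W_2 \times W_3$: the relative cores $\mathfrak{c}_{W,\partial L}$ are characterized as the limits under the inverse Liouville flow of the relative skeleton extended by the conical boundary $\partial L$. Since the Liouville flows on the three factors commute, the limit of the composition equals the composition of the limits, which lands in the required core. The conical boundaries match because $\partial L_{13}$ is (contained in) the composition $\partial L_{23} \circ \partial L_{12}$ by the eventual conicity of the $L_{ij}$ (Definition \ref{def: eventually conic}).

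For composability, by Lemma \ref{reeb extension composition pushoff} Reeb pushoff of Reeb extensions equals Reeb extension of Reeb pushoff, so
$$\mathfrak{c}_{W_2^- \times W_3, ex, \epsilon'}^{L_{23}} \circ \mathfrak{c}_{W_1^- \times W_2, ex, \epsilon}^{L_{12}}$$
is the Reeb extension of the $(\epsilon+\epsilon')$-Reeb pushoff of $\mathfrak{c}_{W_1^- \times W_3}^{L_{13}}$. Since $L_{13}$ is universally sufficiently Legendrian by hypothesis, Lemma \ref{lem: universal legendrian} combined with the positive self displaceability built into Definition \ref{def: sufficient Legendrian} gives that this pushoff is disjoint from $\mathfrak{c}_{W_1^- \times W_3, ex}^{L_{13}}$ for $\epsilon + \epsilon' > 0$ sufficiently small. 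The sufficient Legendrianness of the composition is precisely the assumption on $L_{13}$.

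The main obstacle will be the careful bookkeeping at infinity, namely ensuring that the decomposition of $\mathfrak{c}_W^L$ into its $\tau = 0$ and $\tau > 0$ pieces interacts correctly with the composition and that the Liouville-limit argument produces no spurious contributions outside $\mathfrak{c}_{W_1^- \times W_3, \partial L_{13}}$; this is essentially a direct verification but must be organized around the eventual conicity of the $L_{ij}$ (so that, away from a compact set, the cores genuinely split as products along the base directions to which Lemma \ref{lem: liouville flow pdff} applies).
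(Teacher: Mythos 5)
Your proposal correctly identifies both of the paper's central geometric tools---Lemma \ref{reeb extension composition}/\ref{reeb extension composition pushoff} to move between the Reeb-extended and base pictures, and the self-displaceability built into the sufficiently Legendrian hypothesis for composability---and the part of your argument establishing composability is essentially the paper's own argument, so that piece is sound.

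The containment, however, is where your proposal goes off track. The paper dispatches both assertions (composability and containment) by a single citation to Lemma \ref{reeb extension composition pushoff}, because that lemma says that under Reeb extension, contact composition and Reeb pushoff both intertwine exactly with the corresponding Lagrangian-level operations, so the whole question becomes a direct set-theoretic identity built from the definition of $\mathfrak{c}_W^L$ together with $L_{13} = L_{23} \circ L_{12}$. You instead try to verify the containment in the base by decomposing $\mathfrak{c}_W^L$ into its $\tau=0$ and $\tau>0$ pieces and invoking Lemma \ref{lem: liouville flow pdff}. There are two concrete problems with this route. First, Lemma \ref{lem: liouville flow pdff} is a statement about the limit under the inverse Liouville flow of a \emph{single} Lagrangian that is of product type outside a compact set: it tells you $\lim \varphi_{Z_1}^{-t_1}\varphi_{Z_2}^{-t_2}(\bar L) \subset \mathfrak{c}_{W_1,\partial L_1} \times \mathfrak{c}_{W_2,\partial L_2}$. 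It is not a statement about the set-theoretic \emph{composition} of two relative cores, and there is no obvious way to extract the needed containment $\mathfrak{c}_{W_1^- \times W_2, \partial L_{12}} \circ \mathfrak{c}_{W_2^- \times W_3, \partial L_{23}} \subset \mathfrak{c}_{W_1^- \times W_3, \partial L_{13}}$ from it without a separate argument identifying that composition as such a limit. Second, your claim about conical boundaries is back to front: for the cone piece of the containment you would need $\partial L_{23} \circ \partial L_{12} \subset \partial L_{13}$, not the inclusion you write, and in fact neither inclusion is automatic (take $W_1 = W_3 = pt$ and compactly-disjoint $\bar L, \bar K$ whose ideal boundaries nonetheless intersect: then $\partial L_{13} = \varnothing$ but $\partial L_{23} \circ \partial L_{12} \neq \varnothing$). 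The honest way to organize the piece-by-piece argument you are attempting is to run it entirely through Lemma \ref{reeb extension composition pushoff}: the cone pieces compose to $\mathrm{Cone}(L_{13,ex})$ by that lemma applied with $\epsilon = \epsilon' = 0$, the zero-section pieces and mixed pieces are then controlled by the same lemma applied to the relevant pushoffs, and no auxiliary Liouville-flow limiting argument is required; the "careful bookkeeping at infinity" you flag as the obstacle is precisely what Lemma \ref{reeb extension composition pushoff} is designed to absorb.
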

\begin{proof}
    Consider the composition by Reeb extensions in Definition \ref{def: reeb extension}. For their composability, we know $\mathfrak{c}_{W_1^-\times W_2,ex}^{L_{12}}$ and $\mathfrak{c}_{W_2^-\times W_3,ex}^{L_{23}}$ are composable in the sense of Definition \ref{def: composable} by Lemma \ref{reeb extension composition pushoff}. For their composition, we have
    $$\mathfrak{c}_{W_2^-\times W_3,ex}^{L_{23}} \circ \mathfrak{c}_{W_1^-\times W_2,ex}^{L_{12}} \subset \mathfrak{c}_{W_1^-\times W_3,ex}^{L_{13}}.$$
    by Lemma \ref{reeb extension composition pushoff}. This completes the proof.
\end{proof}
    
    The associativity of gapped compositions follows immediately from Proposition \ref{prop: gapped composition associative} and Corollary \ref{rem: associativity}:

\begin{proposition}\label{prop: gapped composition associative weinstein}
    Let $W_1, W_2, W_3$ and $W_4$ be Weinstein manifolds with Maslov data. Let $L_{12} \subset W_1^- \times W_2 \times \bR$, $L_{23} \subset W_2^- \times W_3 \times \bR$ and $L_{34} \subset W_3^- \times W_4 \times \bR$ be relatively compact universally sufficiently Legendrian subsets with universally sufficiently Legendrian compositions. Then for $\SF_{12} \in \msh_{L_{12}}(L_{12}), \SF_{23} \in \msh_{L_{23}}(L_{23})$ and $\SF_{34} \in \msh_{L_{34}}(L_{34})$, there is a canonical isomorphism
    $$(\SF_{34} \circ \SF_{23}) \circ \SF_{12} \simeq \SF_{34} \circ (\SF_{23} \circ \SF_{12}).$$
\end{proposition}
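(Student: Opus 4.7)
The plan is to reduce to the already-established associativity of gapped compositions for sufficiently Legendrian subsets in cosphere bundles (Proposition \ref{prop: gapped composition associative}), by passing through Reeb extensions and the standard contact embeddings of Section \ref{ssec: weinstein}. First, I would fix contact embeddings $X_i := W_i \times \bR \hookrightarrow S^*\bR^{N_i}$ with Lagrangian thickenings of their symplectic normal bundles, and assemble the induced contact embeddings of the contact products $X_i^- \,\widehat\times\, X_j$ into cosphere bundles. Under these, the Reeb extensions $L_{12,ex}$, $L_{23,ex}$, $L_{34,ex}$ of Definition \ref{def: reeb extension} thicken to sufficiently Legendrian subsets of the ambient cosphere bundles by the hypothesis that the original $L_{ij}$ (and their composition) are universally sufficiently Legendrian; this is exactly the data required by Definition \ref{def: gap composition weinstein} to make the compositions $\SF_{23} \circ \SF_{12}$, $\SF_{34} \circ \SF_{23}$, etc., well-defined.

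Next, I would verify that the triple of Reeb-extended Legendrians satisfies the composability condition of Definition \ref{def: composable pairwise}. Here Lemmas \ref{reeb extension composition} and \ref{reeb extension composition pushoff} do the essential work: the former identifies iterated Legendrian compositions of Reeb extensions with Reeb extensions of iterated Lagrangian compositions (so $L_{34,ex} \circ L_{23,ex} \circ L_{12,ex}$ is the Reeb extension of $L_{34} \circ L_{23} \circ L_{12}$), while the latter shows that Reeb push-offs commute with Reeb extensions, hence that the positive displaceability displacing $\Lambda_{ij,\epsilon} \circ \Lambda_{jk,\epsilon'}$ off $\Lambda_{jk} \circ \Lambda_{ij}$ transports correctly. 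Combined with the universal sufficient Legendrianness hypothesis and Lemma \ref{lem: weinstein composable}, this gives the pairwise composability required to apply Proposition \ref{prop: gapped composition associative}.

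With these structural facts in place, applying Proposition \ref{prop: gapped composition associative} to the Reeb-extended Legendrians yields a canonical isomorphism
\[
(\SF_{34,ex} \circ_g \SF_{23,ex}) \circ_g \SF_{12,ex} \;\simeq\; \SF_{34,ex} \circ_g (\SF_{23,ex} \circ_g \SF_{12,ex})
\]
in $\msh_{L_{14,ex}}(L_{14,ex})$. Pulling this back along the vertical equivalences of diagram \eqref{eq: composition weinstein} (whose commutativity is supplied by Lemmas \ref{reeb extension composition} and \ref{lem:contact-transform-main}) delivers the required canonical isomorphism at the Weinstein level.

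The main obstacle I anticipate is bookkeeping the Maslov data across all the contact embeddings and Reeb extensions: the secondary Maslov data on each Reeb-extended Legendrian, induced from the product Maslov data on $X_i^- \,\widehat\times\, X_j$ as in Theorem \ref{thm: weinstein kunneth duality maslov}, must be shown compatible with the Maslov data on the original $L_{ij}$, and the higher compatibilities must propagate coherently through the triple composition so that the resulting isomorphism is canonical (and, with a view toward Corollary \ref{rem: associativity}, fits into the $\mathrm{Div}_k$-coherent diagram). This is essentially a naturality check for the constructions of Section \ref{ssec: weinstein} under the Reeb extension operation, but it is the only place where genuine care is required beyond invoking the cotangent-bundle statement.
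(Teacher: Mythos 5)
Your proposal is correct and follows essentially the same route as the paper: verify composability of the Reeb extensions via Lemma \ref{lem: weinstein composable} and then apply Proposition \ref{prop: gapped composition associative}. The paper's proof is just these two steps, leaving the Maslov-data compatibility (which you flag) implicit in the framework of Section \ref{ssec: weinstein}.
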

\begin{proof}
    It follows from Lemma \ref{lem: weinstein composable} that the Reeb extensions of $L_{12}, L_{23}$ and $L_{34}$ are composable in the sense of Definition \ref{def: composable pairwise}. Then the result follows from Proposition \ref{prop: gapped composition associative}.
\end{proof}

\begin{corollary}
    Let $W_1, \dots, W_k$ be Weinstein manifolds with Maslov data. Let $L_{i,i+1} \hookrightarrow W_i^- \times W_{i+1} \times \bR$ be relatively compact universally sufficiently Legendrian subsets with arbitrary universally sufficiently Legendrian compositions. Then for $\SF_{12} \in \msh_{L_{12}}(L_{12}), \dots, \SF_{k-1,k} \in \msh_{L_{k-1,k}}(L_{k-1,k})$, there are canonical isomorphisms for any $1 < l_1 < \dots < l_r < k$
    $$\SF_{k-1,k} \circ \dots \circ \SF_{12} \simeq (\SF_{k-1,k} \circ \dots \circ \SF_{l_r,l_r+1}) \circ \dots \circ (\SF_{l_1-1,l_1} \circ \dots \circ \SF_{12})$$
    that fit into a commutative diagram $\Delta_1^{\times k} \to \Cat_{st}$ which sends the vertices to $\msh_{L_{1,l_1}}(L_{1,l_1}) \otimes \dots \otimes \msh_{L_{l_r,k}}(L_{l_r,k})$ and the arrows to the composition on the right hand side of the equation.
\end{corollary}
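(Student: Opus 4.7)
The plan is to reduce the assertion to the higher coherence result for microlocal gapped compositions, Corollary~\ref{rem: associativity}, via the Reeb extension construction of Section~\ref{appen: contact}. The case $k=4$ is precisely Proposition~\ref{prop: gapped composition associative weinstein}; what remains is to upgrade its single associativity constraint to a coherent diagram indexed by $\Delta_1^{\times k-1}$.

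First I would verify that the Reeb extensions $L_{i,i+1,ex} \subset X_i^- \,\widehat\times\, X_{i+1}$, where $X_i = W_i \times \bR$, satisfy the composability hypothesis of Definition~\ref{def: composable pairwise}. By Lemma~\ref{reeb extension composition pushoff} the Reeb extension operation commutes with both contact composition and Reeb push-off, so inductively every iterated contact composition $L_{ij,ex} = L_{l_r j,ex} \circ \dots \circ L_{i,l_1,ex}$ is the Reeb extension of $L_{ij}$. Applying Lemma~\ref{lem: weinstein composable} to every consecutive triple then shows all these are universally sufficiently Legendrian and that the Reeb pushoffs displace their compositions away from the unperturbed ones.

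Second, I would fix contact embeddings $X_i \hookrightarrow S^*\bR^{N_i}$ with Legendrian thickenings as in Section~\ref{ssec: weinstein}. Under these embeddings, Definition~\ref{def: gap composition weinstein} identifies the Weinstein gapped composition $\SF_{jk} \circ \SF_{ij}$ with the microlocal gapped composition (Definition~\ref{def: gap micro composition}) of doubled representatives, and Formula~\eqref{eq: composition weinstein} shows that the equivalence $\msh_{L_{ij}}(L_{ij}) \simeq \msh_{L_{ij,ex}}(L_{ij,ex})$ supplied by Lemma~\ref{lem:contact-transform-main} intertwines the two geometric compositions. With every composition in sight expressed microlocally in $S^*(\bR^{N_i} \times \bR^{N_k})$, Corollary~\ref{rem: associativity} directly furnishes the desired coherent diagram $\Delta_1^{\times k-1} \to \Cat_{st}$ on the Reeb-extended side, and transporting back along the vertical equivalences of Formula~\eqref{eq: composition weinstein} produces the required diagram for $\msh_{L_{ij}}(L_{ij})$.

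The main obstacle is not any single associativity isomorphism --- these were already handled in Proposition~\ref{prop: gapped composition associative weinstein} --- but rather the coherence: one must check that the Reeb extension functor, the doubling adjunction $m_{\Lambda_{ij}} \dashv w_{\Lambda_{ij}}^+$ of Theorem~\ref{thm: doubling adjoint}, and the contact transformation equivalence of Lemma~\ref{lem:contact-transform-main} assemble into a natural equivalence between the two $\Delta_1^{\times k-1}$-shaped diagrams (one built from Weinstein categories, one from Reeb extensions in cotangent bundles). This amounts to unraveling the (op)lax natural transformation structure of Figure~\ref{fig:microlocal_composition} in the presence of the Reeb extension; concretely, one invokes \cite[Corollary F]{HaugsengHebestreitLinskensNuiten} as in the proof of Corollary~\ref{rem: associativity}, applied to the diagram obtained by stacking the Reeb extension squares on top of the microlocal composition squares.
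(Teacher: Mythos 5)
Your proof is correct and follows essentially the same route as the paper's: verify composability of the Reeb extensions via Lemma~\ref{lem: weinstein composable}, then invoke the coherent associativity of gapped microlocal compositions (Corollary~\ref{rem: associativity}, which encapsulates the stacking of (op)lax natural transformations you describe from \cite[Corollary F]{HaugsengHebestreitLinskensNuiten}). The paper's one-line proof is more terse, but the content matches; you have simply spelled out the intertwining through Reeb extension and doubling that the paper leaves implicit.
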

\begin{proof}
    It follows from Lemma \ref{lem: weinstein composable} that the Reeb extensions of $L_{12}, L_{23}$ and $L_{34}$ are always composable. Then the corollary follows from Corollary \ref{rem: associativity cat}.
\end{proof}

    Consider Weinstein manifolds $W_1, W_2, W_3$ with Maslov data, and universally sufficiently Legendrians $L_{12} \subset W_1^- \times W_2 \times \bR$, $L_{23} \subset W_2^- \times W_3 \times \bR$ and $L_{13} = L_{12} \circ L_{23} \subset W_1^- \times W_3 \times \bR$ that are eventually conical. Then as described in Theorem \ref{thm: nearby = restriction at infty}, sheaf quantizations on $L_{ij}$ induce the following diagram:
    \begin{equation}
    \msh_{L_{ij}}(L_{ij}) \xleftarrow{i_{L_{ij}}^*} \msh_{\mathfrak{c}_{W_i^-\times W_j}^{L_{ij}}}(\mathfrak{c}_{W_i^-\times W_j}^{L_{ij}}) \xrightarrow{i_\infty^*} \msh_{\mathfrak{c}_{W_i^-\times W_j,\partial L_{ij}}}(\mathfrak{c}_{W_i^-\times W_j,\partial L_{ij}}).
    \end{equation}
    Considering the Reeb extensions, we can define their contact compositions as in Formula \eqref{eq: composition weinstein}. Our main theorm shows that the above diagram is compatible with compositions:

\begin{theorem}\label{thm: main composition}
    Let $W_1, W_2, W_3$ be Weinstein manifolds with Maslov data. Let $L_{12} \subset W_1^- \times W_2 \times \bR$ and $L_{23} \subset W_2^- \times W_3 \times \bR$ be eventually conic and universally sufficiently Legendrian and $L_{13} = L_{23} \circ L_{12} \subset W_1^- \times W_3 \times \bR$ also be universally sufficiently Legendrian. Then there is a commutative diagram
    \[\begin{tikzcd}
    \msh_{L_{12}}(L_{12}) \otimes \msh_{L_{23}}(L_{23}) \ar[r, "\circ_g"] & \msh_{L_{13}}(L_{13}) \\
    \msh_{\mathfrak{c}_{W_1^- \times W_2}^{L_{12}}}(\mathfrak{c}_{W_1^- \times W_2}^{L_{12}}) \otimes \msh_{\mathfrak{c}_{W_2^- \times W_3}^{L_{23}}}(\mathfrak{c}_{W_2^- \times W_3}^{L_{23}}) \ar[d, "i_\infty^* \otimes i_\infty^*" left] \ar[u, "i_{L_{12}}^* \otimes i_{L_{23}}^*"] \ar[r, "\circ_g"] & \msh_{\mathfrak{c}_{W_1^- \times W_3}^{L_{13}}}(\mathfrak{c}_{W_1^- \times W_3}^{L_{13}}) \ar[d, "i_\infty^*"] \ar[u, "i_{L_{13}}^*" right] \\
    \msh_{\mathfrak{c}_{W_1^-\times W_2,\partial L_{12}}}(\mathfrak{c}_{W_1^- \times W_2}) \otimes \msh_{\mathfrak{c}_{W_2^-\times W_3,\partial L_{23}}}(\mathfrak{c}_{W_2^- \times W_3}) \ar[r, "\circ_g"] & \msh_{\mathfrak{c}_{W_1^-\times W_3,\partial L_{13}}}(\mathfrak{c}_{W_1^- \times W_3}).
    \end{tikzcd}\]
\end{theorem}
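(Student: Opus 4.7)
The plan is to realize both $i_{L_{ij}}^*$ and $i_\infty^*$ as gapped nearby cycle functors and then apply Theorem \ref{thm:gapped composition microsheaf}, which asserts that gapped microlocal composition commutes with such nearby cycles. Throughout, Lemmas \ref{reeb extension composition} and \ref{reeb extension composition pushoff} allow me to pass from the ambient symplectic compositions defining $\circ_g$ to contact compositions of Reeb extensions in $X_i \,\widehat\times\, X_j$, where $X_i = W_i \times \bR$ denote the contactizations; the associated microsheaf categories are canonically identified via Lemma \ref{lem:contact-transform-main}. Lemma \ref{lem: weinstein composable} then ensures that the Reeb extensions in question are composable in the sense of Definition \ref{def: composable}, so the gapped compositions appearing in all three rows of the diagram are defined.

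For the upper square, Theorem \ref{thm: quantize nearby = microlocalize} realizes $i_{L_{ij}}^*$ as a nearby cycle functor associated to the (contact lift of the) Liouville flow on $W_i^- \times W_j$, which deforms the U-shape filling $L_{ij} \times \cup_{0,\infty}$ to a neighborhood of $\mathfrak{c}_{W_i^- \times W_j}^{L_{ij}}$. I would apply Theorem \ref{thm:gapped composition microsheaf} to the $\bR_{>0}$-family of Legendrians produced by this deformation, checking the three clauses of Definition \ref{def: gap microlocal composition}: self-gappedness with gapped collar holds because the Reeb extensions of the cores live inside exact symplectic hypersurfaces (Lemma \ref{lem: self displace hypersurface}); composability of the limits $\psi_{12}(\Lambda_{12})$ and $\psi_{23}(\Lambda_{23})$ follows from the hypothesis that $L_{13}$ is universally sufficiently Legendrian together with Lemma \ref{reeb extension composition pushoff}; and the essential pdff condition on the composition of the limits is supplied by Lemma \ref{lem: liouville flow pdff contact}, whose hypothesis is met because the eventually conical structure of each $L_{ij}$ produces the required product structure at infinity.

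For the lower square, Theorem \ref{thm: nearby = restriction at infty} (via the factorization in Formula \eqref{eq: factor nearby cycle}) realizes $i_\infty^*$ as a composition of nearby cycle functors coming from contact isotopies which fix a neighborhood of infinity: first the contact Liouville lift on the $T^*\bR$ contactization factor, deforming $\mathfrak{c}_{W_i^- \times W_j}^{L_{ij}}$ to $\mathfrak{c}_{W_i^- \times W_j,\partial L_{ij}} \times \sqcup_{0,\infty}$, and then the contraction of $\sqcup_{0,\infty}$ to a neighborhood of $\cup_{0,\infty}$. I would apply Theorem \ref{thm:gapped composition microsheaf} at each stage, chaining the commutativities via Proposition \ref{prop: nearby commute micro}. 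The verification of Definition \ref{def: gap microlocal composition} proceeds exactly as above: self-gappedness from the exact hypersurface structure, composability from universal sufficient Legendrianness of $L_{13}$, and the pdff condition from Lemma \ref{lem: liouville flow pdff contact}, now applied to the cores $\mathfrak{c}_{W_i,\partial L_i}$ which are automatically in product form at infinity.

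The main obstacle is the verification of the pdff condition for the composed limits under the mixed Liouville flows. Example \ref{ex: limit liouville flow not pdff} cautions that for Lagrangians which are not of product form at infinity, the limit under such flows need not even be Lagrangian; it is precisely the eventually conical hypothesis on $L_{12}$ and $L_{23}$ that supplies the product structure at infinity required by Lemma \ref{lem: liouville flow pdff contact}. Once this geometric input is secured, the remainder of the argument is structural, reducing to the gapped composition machinery already assembled in Sections \ref{sec: gapped composition microsheaf} and \ref{sec:microsheaf recollections}.
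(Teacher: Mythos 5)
Your treatment of the lower square matches the paper's own proof: realize $i_\infty^*$ via the contact lift of the Liouville flows $Z_{ij}$, verify the hypotheses of Definition \ref{def: gap microlocal composition} via Lemmas \ref{ss gapped for doubling}, \ref{lem: liouville flow pdff contact} and \ref{lem: weinstein composable}, and apply Theorem \ref{thm:gapped composition microsheaf}.

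However, there is a genuine gap in your argument for the upper square. You claim that Theorem \ref{thm: quantize nearby = microlocalize} ``realizes $i_{L_{ij}}^*$ as a nearby cycle functor,'' but that theorem constructs the nearby cycle $\psi_W^L$ going from $\Sh_{\underline L^+_{\cup,\epsilon}}(\bR^N)$ to $\Sh_{(\underline{\mathfrak{c}}_W^L)^+_{\cup,\epsilon}}(\bR^N)$ --- downward in your diagram --- whereas $i_{L_{ij}}^*$ is a \emph{restriction} functor pointing upward, from the core $\mathfrak{c}_{W_i^-\times W_j}^{L_{ij}}$ to the open sublocus $L_{ij}$. These are not inverse to one another when $L_{ij}$ is merely immersed: by Theorem \ref{thm: nearby = restriction at infty}, $m_{L_{ij}}$ and $i_{L_{ij}}^*$ are equivalences only for embedded $L_{ij}$. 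So commutativity of the upper square does not follow from Theorem \ref{thm:gapped composition microsheaf}. The paper instead applies Theorem \ref{thm: gapped composition restrict}: since the Reeb extensions of the $\mathfrak{c}^{L_{ij}}$ are composable and the preimage of the cone over $L_{13,ex}$ inside $\mathfrak{c}^{L_{12}}_{ex}\,\widehat\times\,\mathfrak{c}^{L_{23}}_{ex}$ equals the cone over $L_{12,ex}\,\widehat\times\, L_{23,ex}$, gapped composition commutes with microlocal restriction to these open subsets. That restriction-compatibility theorem, not the nearby-cycle theorem, is the ingredient your proposal is missing. (An alternative route through nearby cycles at the ambient sheaf level does exist --- see the remark after Theorem \ref{thm: main composition} in the paper --- but it additionally requires reducing to $\epsilon$-thin $L_{ij}$, a step your proposal omits.)
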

\begin{proof}
    First, we show the commutativity of the upper square. This follows from the commutativity of gapped compositions and restrictions in the target. Consider the composition by Reeb extensions in Definition \ref{def: reeb extension}. By Lemma \ref{lem: weinstein composable}, we know $\mathfrak{c}_{W_1^-\times W_2,ex}^{L_{12}}$ and $\mathfrak{c}_{W_2^-\times W_3,ex}^{L_{23}}$ are composable in the sense of Definition \ref{def: composable}, and
    $$\mathfrak{c}_{W_2^-\times W_3,ex}^{L_{23}} \circ \mathfrak{c}_{W_1^-\times W_2,ex}^{L_{12}} \subset \mathfrak{c}_{W_1^-\times W_3,ex}^{L_{13}}.$$
    Moreover, we know that in the above composition, the preimage of the cone of $L_{13,ex}$ is equal to the cone of $L_{12,ex} \,\widehat\times\, L_{23,ex}$. Therefore, by Theorem \ref{thm: gapped composition restrict}, we have the commutative diagram
    \[\begin{tikzcd}
    \msh_{L_{12}}(L_{12}) \otimes \msh_{L_{23}}(L_{23}) \ar[r, "\circ_g"] & \msh_{L_{13}}(L_{13}) \\
    \msh_{\mathfrak{c}_{W_1^- \times W_2}^{L_{12}}}(\mathfrak{c}_{W_1^- \times W_2}^{L_{12}}) \otimes \msh_{\mathfrak{c}_{W_2^- \times W_3}^{L_{23}}}(\mathfrak{c}_{W_2^- \times W_3}^{L_{23}}) \ar[u, "i_{L_{12}}^* \otimes i_{L_{23}}^*"] \ar[r, "\circ_g"] & \msh_{\mathfrak{c}_{W_1^- \times W_3}^{L_{13}}}(\mathfrak{c}_{W_1^- \times W_3}^{L_{13}}) \ar[u, "i_{L_{13}}^*" right].
    \end{tikzcd}\]
    
    Then, we show the commutativity of the lower square. Note that the same argument does not apply as the preimage of $\mathfrak{c}_{W_1^- \times W_3,ex}$ is not contained in the product $\mathfrak{c}_{W_2^- \times W_3,ex} \,\widehat\times\, \mathfrak{c}_{W_1^- \times W_2,ex}$. We therefore construct a nearby cycle functor by a contact flow that realizes the restriction functor. Consider the contact lift of the Liouville flows by $Z_{ij} = Z_{-\lambda_i \times \lambda_j} + t\partial/\partial t$ inside $W_i^- \times W_j \times T^*\bR$. By Lemmas \ref{ss gapped for doubling} and \ref{lem: liouville flow pdff contact}, we know that the Legendrian thickening of the contact movies always have gapped composition in the sense of Definition \ref{def: gap composition}. Then by Lemma \ref{lem: weinstein composable}, we know that the contact movies $(\mathfrak{c}_{W_1^-\times W_2}^{L_{12}})_{Z_{12}}$ and $(\mathfrak{c}_{W_2^-\times W_3}^{L_{23}})_{Z_{23}}$ have gapped microlocal composition in Definition \ref{def: gap micro composition} and 
    $$\psi_{12}(\mathfrak{c}_{W_1^-\times W_2}^{L_{12}})_{Z_{12}} \subset \mathfrak{c}_{W_1^-\times W_2,\partial L_{12}} \times \sqcup_{0,\infty}, \quad \psi_{23}(\mathfrak{c}_{W_2^-\times W_3}^{L_{23}})_{Z_{23}} \subset \mathfrak{c}_{W_2^-\times W_3,\partial L_{23}} \times \sqcup_{0,\infty},$$
    where $\sqcup_{0,\infty} = 0_{\bR_{\geq 0}} \times (0 \times \bR_{>0}) \subset T^*\bR$ as in the proof of Proposition \ref{thm: quantize nearby = microlocalize} and Theorem \ref{thm: nearby = restriction at infty}.
    Theorem \ref{thm: nearby = restriction at infty} implies that the gapped nearby cycle functors are induced by restrictions at infinity. Therefore, 
    we can apply Theorem \ref{thm:gapped composition microsheaf} to conclude that the lower square of the diagram commutes:
    \[\begin{tikzcd}
    \msh_{\mathfrak{c}_{W_1^- \times W_2}^{L_{12}}}(\mathfrak{c}_{W_1^- \times W_2}^{L_{12}}) \otimes \msh_{\mathfrak{c}_{W_2^- \times W_3}^{L_{23}}}(\mathfrak{c}_{W_2^- \times W_3}^{L_{23}}) \ar[d, "i_\infty^* \otimes i_\infty^*" left] \ar[r, "\circ_g"] & \msh_{\mathfrak{c}_{W_1^- \times W_3}^{L_{13}}}(\mathfrak{c}_{W_1^- \times W_3}^{L_{13}}) \ar[d, "i_\infty^*"] \\
    \msh_{\mathfrak{c}_{W_1^-\times W_2,\partial L_{12}}}(\mathfrak{c}_{W_1^- \times W_2}) \otimes \msh_{\mathfrak{c}_{W_2^-\times W_3,\partial L_{23}}}(\mathfrak{c}_{W_2^- \times W_3}) \ar[r, "\circ_g"] & \msh_{\mathfrak{c}_{W_1^-\times W_3,\partial L_{13}}}(\mathfrak{c}_{W_1^- \times W_3}).
    \end{tikzcd}\]
    This completes the proof.
\end{proof}
\begin{remark}
    Moreover, when $L_{12}, L_{23}$ and $L_{23}$ are $\epsilon$-thin, the upper commutative square in the main theorem also factors as the following commutative diagram by Theorem \ref{thm: nearby = restriction at infty}, where the bottom nearby cycle functors define equivalences
    \[\begin{tikzcd}
    \msh_{L_{12}}(L_{12}) \otimes \msh_{L_{23}}(L_{23}) \ar[r, "\circ_g"] & \msh_{L_{13}}(L_{13}) \\
    \Sh_{(\underline L_{12} \times \cup_{0,\infty})_{\cup,\epsilon}^+}(\bR^N) \otimes \Sh_{(\underline L_{23}\times \cup_{0,\infty})_{\cup,\epsilon}^+}(\bR^N) \ar[d, "\psi_{12} \otimes \psi_{23}" left] \ar[u, "m_{L_{12} \circ L_{23}}"] \ar[r, dashed] & \Sh_{(\underline L_{13}\times \cup_{0,\infty})_{\cup,\epsilon}^+}(\bR^N) \ar[d, "\psi_{13}"] \ar[u, "m_{L_{13}}" right] \\
    \msh_{\mathfrak{c}_{W_1^- \times W_2}^{L_{12}}}(\mathfrak{c}_{W_1^- \times W_2}^{L_{12}}) \otimes \msh_{\mathfrak{c}_{W_2^- \times W_3}^{L_{23}}}(\mathfrak{c}_{W_2^- \times W_3}^{L_{23}}) \ar[r, "\circ_g"] & \msh_{\mathfrak{c}_{W_1^- \times W_3}^{L_{13}}}(\mathfrak{c}_{W_1^- \times W_3}^{L_{13}}).
    \end{tikzcd}\]
\end{remark}

\begin{corollary}\label{thm: main composition higher} 
    Let $W_1, W_2, \dots, W_k$ be Weinstein manifolds with Maslov data. Let $L_{12} \subset W_1^- \times W_2 \times \bR, \dots, L_{k-1,k} \subset W_{k-1}^- \times W_k \times \bR$ and $L_{ij} = L_{kj} \circ L_{ik} \subset W_i^- \times W_j \times \bR$ be eventually conic universally sufficient Legendrians. Then there is commutative diagram of natural transformations
    \[\begin{tikzcd}[column sep=15pt]
    \msh_{L_{12}}(L_{12}) \otimes \dots \otimes \msh_{L_{k-1,k}}(L_{k-1,k}) \ar[r, "\circ_g"] & \msh_{L_{1k}}(L_{1k}) \\
    \msh_{\mathfrak{c}_{W_1^- \times W_2}^{L_{12}}}(\mathfrak{c}_{W_1^- \times W_2}^{L_{12}}) \otimes \dots \otimes \msh_{\mathfrak{c}_{W_{k-1}^- \times W_k}^{L_{k-1,k}}}(\mathfrak{c}_{W_{k-1}^- \times W_k}^{L_{k-1,k}}) \ar[d, "i_\infty^* \otimes \dots \otimes i_\infty^*" left] \ar[u, "i_{L_{12}}^* \otimes \dots \otimes i_{L_{k-1,k}}^*"] \ar[r, "\circ_g"] & \msh_{\mathfrak{c}_{W_1^- \times W_k}^{L_{1k}}}(\mathfrak{c}_{W_1^- \times W_k}^{L_{1k}}) \ar[d, "i_\infty^*"] \ar[u, "i_{L_{1k}}^*" right] \\
    \msh_{\mathfrak{c}_{W_1^-\times W_2,\partial L_{12}}}(\mathfrak{c}_{W_1^- \times W_2}) \otimes \dots \otimes \msh_{\mathfrak{c}_{W_{k-1}^-\times W_k,\partial L_{k-1,k}}}(\mathfrak{c}_{W_{k-1}^- \times W_k}) \ar[r, "\circ_g"] & \msh_{\mathfrak{c}_{W_1^-\times W_k,\partial L_{1k}}}(\mathfrak{c}_{W_1^- \times W_k}),
    \end{tikzcd}\]
    where each horizontal arrow is interpreted as a coherent diagram $\Delta_1^{\times k-1} \to \Cat_{st}$ satisfying the Segal condition and each square defines 2-morphisms that are equivalences.
\end{corollary}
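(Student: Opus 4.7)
The plan is to reduce Corollary \ref{thm: main composition higher} to its binary counterpart, Theorem \ref{thm: main composition}, together with the coherence machinery supplied by Corollary \ref{rem: gapped micro composition higher} (and \cite[Corollary F]{HaugsengHebestreitLinskensNuiten}). First I would set up iterated composability: by Lemma \ref{lem: weinstein composable}, the Reeb extensions $\mathfrak{c}_{W_i^-\times W_{i+1},ex}^{L_{i,i+1}}$ are pairwise composable in the sense of Definition \ref{def: composable}, with the nesting $\mathfrak{c}_{W_i^-\times W_j,ex}^{L_{ij}} \supset \mathfrak{c}_{W_l^-\times W_j,ex}^{L_{lj}}\circ\mathfrak{c}_{W_i^-\times W_l,ex}^{L_{il}}$ for all $i<l<j$. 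Hence the entire family $\{L_{i,i+1,ex}\}_{i=1}^{k-1}$ is composable in the sense of Definition \ref{def: composable pairwise}, the iterated gapped composition is well-defined, and by Proposition \ref{prop: gapped composition associative weinstein} together with Corollary \ref{rem: associativity} it assembles into a coherent diagram $\Delta_1^{\times k-1}\to\Cat_{st}$ satisfying the Segal condition, with vertices $\bigotimes_{s}\msh_{L_{l_{s-1}l_s}}(L_{l_{s-1}l_s})$. The analogous statements at the levels of $\msh_{\mathfrak{c}_{W_i^-\times W_j}^{L_{ij}}}$ and $\msh_{\mathfrak{c}_{W_i^-\times W_j,\partial L_{ij}}}$ are obtained by applying Corollary \ref{rem: associativity} to the corresponding composable families.

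For the upper square I would apply Theorem \ref{thm: gapped composition restrict} iteratively along the nested opens $L_{ij}\subset \mathfrak{c}_{W_i^-\times W_j,ex}^{L_{ij}}$, reusing the observation from the proof of Theorem \ref{thm: main composition} that the preimage of the cone of $L_{ij,ex}$ inside the product of Reeb-extended cores equals the product of the cones of the intermediate $L_{l_{s-1}l_s,ex}$. The fact that this commutativity lifts coherently to a morphism of $\Delta_1^{\times k-1}$-diagrams is precisely the content of Corollary \ref{rem: gapped micro composition higher} once one interprets the restriction $i_{L_{ij}}^*$ as a microlocalization along an open sub-Legendrian: both $i_{L_{ij}}^*$ and $\circ_g$ arise from the six-functor formalism applied to the doubled sheaves, and \cite[Corollary F]{HaugsengHebestreitLinskensNuiten} supplies the requisite coherent lax diagram whose 2-morphisms are invertible wherever Theorem \ref{thm: gapped composition restrict} applies.

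For the lower square I would replay the contact-movie construction from the proof of Theorem \ref{thm: main composition} simultaneously in every factor: replace each $\mathfrak{c}_{W_i^-\times W_{i+1}}^{L_{i,i+1}}$ by its movie $(\mathfrak{c}_{W_i^-\times W_{i+1}}^{L_{i,i+1}})_{Z_{i,i+1}}$ under the contact lift $Z_{i,i+1}$ of the product Liouville flow. Lemmas \ref{ss gapped for doubling} and \ref{lem: liouville flow pdff contact}, combined with Lemma \ref{lem: weinstein composable}, guarantee that these movies form a family of relatively compact sufficient Legendrians with pairwise gapped microlocal composition in the sense of Definition \ref{def: gap microlocal composition}, and the gapped nearby cycle $\psi_{ij}$ of Theorem \ref{thm: nearby = restriction at infty} lands in $\mathfrak{c}_{W_i^-\times W_j,\partial L_{ij}}\times \sqcup_{0,\infty}$ and computes the restriction at infinity. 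Corollary \ref{rem: gapped micro composition higher} then produces a coherent equivalence of $\Delta_1^{\times k-1}$-diagrams between the $\circ_g$-diagram at finite time and the $\circ_g$-diagram on the cores at infinity, which is exactly the commutativity of the lower square together with the claimed 2-morphism invertibility.

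The main obstacle is verifying coherence at all higher levels rather than pointwise: both $i_{L_{ij}}^*$, $i_\infty^*$ and $\circ_g$ are initially defined via adjunctions and nearby cycles, which only organize themselves as \emph{lax} natural transformations of diagrams. The crucial input is that under the microsupport gappedness hypotheses provided by Lemmas \ref{lem: weinstein composable}, \ref{ss gapped for doubling} and \ref{lem: liouville flow pdff contact}, the lax diagrams produced by \cite[Corollary F]{HaugsengHebestreitLinskensNuiten} become strict, so one is free to invoke Corollary \ref{rem: gapped micro composition higher} (and its upper-square analog via Theorem \ref{thm: gapped composition restrict}) at the level of the entire cube indexed by $\Delta_1^{\times k-1}$; the remaining work is bookkeeping the composability of all iterated Reeb extensions, which is handled uniformly by Lemma \ref{reeb extension composition pushoff}.
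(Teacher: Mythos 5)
Your proposal is correct and follows essentially the same route as the paper's own (very terse) argument: the paper simply invokes Corollaries \ref{rem: associativity} and \ref{rem: gapped micro composition higher} for the higher coherence, with the square-by-square content supplied implicitly by the proof of Theorem \ref{thm: main composition}. You have unpacked precisely which lemmas from that proof (Lemma \ref{lem: weinstein composable}, Theorem \ref{thm: gapped composition restrict}, Lemmas \ref{ss gapped for doubling} and \ref{lem: liouville flow pdff contact}, etc.) are being reused, and attached them to the correct coherence statements, so this is an expanded but faithful version of the intended argument.
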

\begin{proof}
    This follows from Corollaries \ref{rem: associativity} and \ref{rem: gapped micro composition higher}.  
\end{proof}

\begin{example} The result of Theorem \ref{thm: main composition} is already new for cotangent bundles; let us unpack it.  
Take $W_1 = T^*M_1$, $W_2 = T^*M_2$ and $W_3 = T^*M_3$, and exact Lagrangians $L_{12} \subset T^*(M_1 \times M_2)$, $L_{23} \subset T^*(M_2 \times M_3)$ and $L_{13} = L_{12} \circ L_{23}$. Following Example \ref{ex: guillermou quantize}, we can consider sheaf quantizations in $\Sh_{L_{ij}}(M_i \times M_j \times \bR)$ where we use $L_{ij} \subset T^*(M_i \times M_j) \times \bR$ for the Legendrian lift of the exact Lagrangians. Then Theorem \ref{thm: main composition} says that the following diagram commutes:
    \begin{equation}\label{eq: composition quantization cotangent}
    \begin{tikzcd}
    \msh_{L_{12}}(L_{12}) \otimes \msh_{L_{23}}(L_{23}) \ar[r, "\circ_g"] & \msh_{L_{13}}(L_{13}) \\    \Sh_{{L}_{12}}(M_1 \times M_2 \times \bR) \otimes \Sh_{{L}_{23}}(M_2 \times M_3 \times \bR) \ar[r, "\circ_g"] \ar[d, "i_\infty^* \otimes i_\infty^*" left] \ar[u, "m_{L_{12}} \otimes m_{L_{23}}"] & \Sh_{\tilde{L}_{13}}(M_1 \times M_3 \times \bR^2) \ar[d, "i_\infty^*"] \ar[u, "m_{L_{13}}" right] \\
    \Sh_{\partial L_{12}}(M_1 \times M_2) \otimes \Sh_{\partial L_{23}}(M_2 \times M_3) \ar[r, "\circ"] & \Sh_{\partial L_{13}}(M_1 \times M_3).
    \end{tikzcd}
    \end{equation}

    We now unpack the definitions of the microsheaf composition functors and rewrite the above diagram in a purely sheaf theoretic way. Consider the contact product $(T^*M_i \times \bR) \,\widehat\times\, (T^*M_j \times \bR)$, the Reeb extension ${L}_{ij,ex} \subset T^*(M_i \times M_j) \times T^*\bR \times \bR$ is defined by the product $L_{ij} \times \bR$. Let $s_{ij}: M_i \times M_j \times \bR^2 \to M_i \times M_j \times \bR, (x_i, x_j, t_i, t_j) \mapsto (x_i, x_j, t_i - t_j)$. Then there is an equivalence $s_{ij}^*: \Sh_{{L}_{12}}(M_1 \times M_2 \times \bR) \xrightarrow{\sim} \Sh_{{L}_{12,ex}}(M_1 \times M_2 \times \bR^2)$. We claim that the gapped (microlocal) composition functor can be realized as the composition
    \begin{gather*}
    \circ_g: \Sh_{{L}_{12,ex}}(M_1 \times M_2 \times \bR^2) \otimes \Sh_{{L}_{23,ex}}(M_2 \times M_3 \times \bR^2) \to \Sh_{{L}_{13,ex}}(M_1 \times M_3 \times \bR^2) \\
    \SF_{12} \circ_g \SF_{23} = s_{13*}(s_{12}^*\SF_{12} \circ s_{23}^*\SF_{23}).
    \end{gather*}
    Indeed, to show that the above composition agrees with the composition defined using doubling, we can consider the canonical contact embedding
    $$T^*(M_i \times \bR) \times \bR \hookrightarrow S^*(M_i \times \bR^2), \quad (x, t, \xi, \tau) \mapsto (x, t - f'(\tau), f(\tau), \xi, \tau, 1).$$
    for some smooth function $f: \bR \to \bR_{\geq 0}$ that is increasing on $\bR_{>0}$ and decreasing on $\bR_{<0}$. Then we have the embedding of contact products in Definition \ref{def: contact product} 
    $$(T^*(M_i \times \bR) \times \bR) \,\widehat\times\, (T^*(M_j \times \bR) \times \bR) \hookrightarrow S^*(M_i \times \bR^2) \,\widehat\times\, S^*(M_j \times \bR^2).$$
    Since the essential image of the doubling functor has zero stalk on $M_i \times M_j \times \{(t_i, t_j) \mid t_i - t_j > 0\}$, the microlocalization along $M_i \times M_j \times \Delta$ can be identified with the restriction:
    $$m_{0_{M_i \times M_j \times \bR}} = i_{M_i \times M_j \times \Delta}^*: \Sh_{(\underline{L}_{ij} \times N^*_+\Delta_\bR)_{\cup,\epsilon}^+}(M_i \times M_j \times \bR^2) \to \Sh_{L_{ij}}(M_i \times M_j \times \bR).$$
    Then we can compute by proper base change that the composition commutes with microlocalization, i.e. $\SF_{12} \circ_g \SF_{23} = s_{13*}(s_{12}^*\SF_{12} \circ s_{23}^*\SF_{23}).$
    This completes the reformulation of the middle arrow in Formula \eqref{eq: composition quantization cotangent} in terms of sheaf theoretic operations.
    
    Note that the lower square is not simply a  proper base change:  we are taking restriction functors not for the first and last component but also in the middle components.  We can prove it without explicit mention of microsheaves (i.e. appealing to our sheaf results which are precursors to  Theorem \ref{thm: main composition} rather than to the theorem itself) as follows.  The commutativity of the lower square can be translated into the commutativity of compositions and nearby cycle functors as indicated in Example \ref{ex: guillermou quantize}.  This commutativity then reduces to Example \ref{ex: cotangent bundle composition} as a special case of the gapped composition Theorem \ref{thm:gapped composition}.
\end{example}

    When the Lagrangian correspondences satisfy an additional assumption, we can show the compatibility between compositions and stop removals, which shows that for conic microsheaves, algebraic compositions intertwine with geometric gapped compositions:

\begin{theorem}\label{thm: main composition stop removal}
    Let $W_1, W_2, W_3$ be Weinstein manifolds with Maslov data. Let $L_{12} \subset W_1^- \times W_2 \times \bR$ and $L_{23} \subset W_2^- \times W_3 \times \bR$ be eventually conic universally sufficient Legendrians and $L_{13} = L_{23} \circ L_{12} \subset W_1^- \times W_3 \times \bR$. When
    \begin{equation}\label{stop removal hypothesis main}
    \mathfrak{c}_{W_i^- \times W_j,\partial L_{ij}} \circ \mathfrak{c}_{W_i} \subset \mathfrak{c}_{W_j}
    \end{equation}
    for any $i \leq j$, we have a further commutative diagram
    \[\begin{tikzcd}
    \msh_{\mathfrak{c}_{W_1^-\times W_2,\partial L_{12}}}(\mathfrak{c}_{W_1^- \times W_2}) \otimes \msh_{\mathfrak{c}_{W_2^-\times W_3,\partial L_{23}}}(\mathfrak{c}_{W_2^- \times W_3}) \ar[r, "\circ_g"] \ar[d, "\iota_{\partial L_{12}}^* \otimes \iota_{\partial L_{23}}^*" left] & \msh_{\mathfrak{c}_{W_1^-\times W_3,\partial L_{13}}}(\mathfrak{c}_{W_1^- \times W_3}) \ar[d, "\iota_{\partial L_{13}}^*"] \\
    \msh_{\mathfrak{c}_{W_1^-\times W_2}}(\mathfrak{c}_{W_1^- \times W_2}) \otimes \msh_{\mathfrak{c}_{W_2^-\times W_3}}(\mathfrak{c}_{W_2^- \times W_3}) \ar[r, "\circ_a"] & \msh_{\mathfrak{c}_{W_1^-\times W_3}}(\mathfrak{c}_{W_1^- \times W_3}).
    \end{tikzcd}\]
\end{theorem}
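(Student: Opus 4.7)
The plan is to deduce the result from Theorem \ref{thm: gap composition wrap kunneth} (more precisely, its microlocal-family incarnation Corollary \ref{thm: gapped hom composition family}), which already provides a precise comparison between the algebraic composition $\circ_a$ (built from duality and K\"unneth) and the geometric gapped composition $\circ_g$ (built from doubling), under the hypothesis $L_{\bar ij} \circ \Lambda_i \subset \Lambda_j$. The point of the argument is that the hypothesis \eqref{stop removal hypothesis main} of the theorem is exactly the Reeb-extension translation of this latter hypothesis.

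First I would set up the contact-theoretic picture. After fixing contact embeddings $W_i \times \bR \hookrightarrow S^*\bR^{N_i}$ and Lagrangian thickenings as in Section \ref{ssec: weinstein}, apply the Reeb extension construction (Definition \ref{def: reeb extension}, Example \ref{example reeb extension}) to encode $L_{ij}$ and $\mathfrak{c}_{W_i^- \times W_j, \partial L_{ij}}$ as subsets of the contact product $(W_i\times\bR)^-\,\widehat\times\,(W_j\times\bR)$. Lemma \ref{reeb extension composition} identifies the contact composition of Reeb extensions with the Reeb extension of the symplectic composition, so the hypothesis \eqref{stop removal hypothesis main} becomes the inclusion $\mathfrak{c}_{W_i^- \times W_j, \partial L_{ij}, ex} \circ \mathfrak{c}_{W_i,ex} \subset \mathfrak{c}_{W_j,ex}$. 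Here I take $\Lambda_i := \mathfrak{c}_{W_i,ex}$, so that $-\Lambda_i \,\widehat\times\, \Lambda_j$ matches $\mathfrak{c}_{W_i^- \times W_j,ex}$ after using the K\"unneth identification of Theorem \ref{thm: weinstein kunneth duality maslov} (the fact that product Maslov data is implemented via a contact product of Legendrian thickenings).

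Second I would verify the sufficient-Legendrian hypotheses. By Lemma \ref{lem: universal legendrian} (or \ref{lem: universal legendrian analytic}), each $\mathfrak{c}_{W_j, \partial L_{ij}}$ is universally sufficiently Legendrian, so its Reeb extension after contact embedding is sufficiently Legendrian. Composability of the relevant Reeb extensions follows from Lemma \ref{lem: weinstein composable}, and the product-type gappedness of $-\mathfrak{c}_{W_i,ex}\,\widehat\times\,\mathfrak{c}_{W_j,ex}$ follows from Lemmas \ref{ss gapped for doubling} and \ref{lem: liouville flow pdff contact}. Thus the whole package of hypotheses of Corollary \ref{thm: gapped hom composition family} (equivalently Theorem \ref{thm: gap composition wrap kunneth} applied at infinity after the contact flow of the Liouville lift used already in the proof of Theorem \ref{thm: main composition}) is in place.

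Third I would invoke that corollary, which directly outputs the commutative square comparing $\iota_{\partial L_{ij}}^*$-localization followed by the algebraic composition with the gapped composition followed by $\iota_{\partial L_{13}}^*$. Under the K\"unneth/duality identification of Theorem \ref{thm: weinstein kunneth duality maslov}, the target algebraic composition is exactly the natural composition $\msh_{\mathfrak{c}_{W_1}}^\vee \otimes \msh_{\mathfrak{c}_{W_2}} \otimes \msh_{\mathfrak{c}_{W_2}}^\vee \otimes \msh_{\mathfrak{c}_{W_3}} \to \msh_{\mathfrak{c}_{W_1}}^\vee \otimes \msh_{\mathfrak{c}_{W_3}}$ built from the counit $\epsilon_{\mathfrak{c}_{W_2}}$, yielding the ``moreover'' clause of the intro-statement of Theorem II.

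The main obstacle, as in the proof of Theorem \ref{thm: main composition}, is the translation between the intrinsic Weinstein picture (where $\mathfrak{c}_{W_i^- \times W_j}$ is not literally a product) and the contact-product picture demanded by Theorem \ref{thm: gap composition wrap kunneth}. Concretely, one must keep track of the Liouville-flow contact movie $Z_{ij} = Z_{-\lambda_i \times \lambda_j} + t \partial_t$ used in the proof of Theorem \ref{thm: main composition} to realize the stop-at-infinity restriction as a nearby cycle, check that the composition of these movies at infinity lands inside $-\mathfrak{c}_{W_i,ex}\,\widehat\times\,\mathfrak{c}_{W_j,ex}$ precisely under hypothesis \eqref{stop removal hypothesis main}, and verify that the gapped microlocal composition of the movies (Definition \ref{def: gap microlocal composition}) matches the hypothesis of Corollary \ref{thm: gapped hom composition family} via Lemmas \ref{ss gapped for doubling}--\ref{lem: liouville flow pdff contact}. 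Once this bookkeeping is done, the commutativity of the desired square is a direct application of Theorem \ref{thm: gap composition wrap kunneth}.
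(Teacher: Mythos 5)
Your proposal is essentially correct and uses the same key input as the paper: the hypothesis~\eqref{stop removal hypothesis main}, after Reeb extension, is exactly the condition $L_{\bar ij} \circ \Lambda_i \subset \Lambda_j$ of Theorem~\ref{thm: gap composition wrap kunneth} with $\Lambda_i = \mathfrak{c}_{W_i,ex}$ and $L_{\bar ij} = \mathfrak{c}_{W_i^- \times W_j, \partial L_{ij},ex}$, and that theorem directly gives the required square. The one place you overcomplicate is the detour through the one-parameter-family version, Corollary~\ref{thm: gapped hom composition family}: that corollary is meant for the case where the correspondences still need to be pushed to infinity via a Liouville-flow nearby cycle, but in Theorem~\ref{thm: main composition stop removal} the top row is already $\msh_{\mathfrak{c}_{W_i^- \times W_j,\partial L_{ij}}}$ (i.e.\ that nearby-cycle step was already carried out in the proof of Theorem~\ref{thm: main composition}), and the vertical arrows are only the stop-removal functors $\iota^*$ --- there is no further $\bR_{>0}$-family. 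So the non-family statement, Theorem~\ref{thm: gap composition wrap kunneth}, applies as-is, which is what the paper does. Your bookkeeping discussion (Reeb extension via Lemma~\ref{reeb extension composition}, composability via Lemma~\ref{lem: weinstein composable}, K\"unneth/product Maslov data via Theorem~\ref{thm: weinstein kunneth duality maslov}) is correct and matches the implicit content of the paper's one-line proof; the paper additionally cites Corollary~\ref{rem: gap composition kunneth higher} to handle the higher-coherence/iterated-composition version stated as Corollary~\ref{thm: main composition stop removal higher}.
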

\begin{proof}
    Since $\mathfrak{c}_{W_i^- \times W_j,\partial L_{ij}} \circ \mathfrak{c}_{W_i} \subset \mathfrak{c}_{W_j}$, we obtain a further commutative diagram by directly applying Theorem \ref{thm: gap composition wrap kunneth}. Finally, for iterated compositions, we can show that they fit into a commutative diagram by Corollary \ref{rem: gap composition kunneth higher}. This completes the proof.
\end{proof}

\begin{remark}\label{rem: gao proper condition}
    The interaction of wrapping and Floer-theoretic composition is studied in Gao's work \cite{Gao1,Gao2}.  
    The condition in Formula \eqref{stop removal hypothesis main} is related to Gao's assumption that the Lagrangian correspondence projects to the target properly in the case of Liouville manifolds with no stops \cite{Gao1} in the following sense: Any Lagrangian correspondence that projects to the domain properly satisfies our assumption.
    When $W_i$ and $W_j$ are Weinstein manifolds, we know that $\mathfrak{c}_{W_i^- \times W_j, \partial L_{ij}} \circ \mathfrak{c}_{W_i} \subset \mathfrak{c}_{W_j}$ if and only if the projection $L_{ij} \to W_i$ is proper: First, we know that the projection of $\mathfrak{c}_{W_i^- \times W_j, \partial L_{ij}} \circ \mathfrak{c}_{W_i}$ contains the projection of $\partial L_{ij} \cap (\mathfrak{c}_{W_i} \times \partial W_j)$; for the projection to be contained in $\mathfrak{c}_{W_j}$, it is therefore necessary and sufficient that $\partial L_{ij} \cap (\mathfrak{c}_{W_i} \times \partial W_j) = \varnothing$. Then, this implies that $\partial L_{ij} \times \bR_{>0}$ is transverse to the fiber of the projection $W_i \times W_j \to W_i$. This means the projection $L_{ij} \to W_i$ is proper, which is exactly the condition imposed by Gao \cite{Gao1} except that the domain and the target are switched (which is because the functors Gao and we consider are adjoint to each other).

    Finally, note that Gao's properness assumption is used by him in two different places: First, it ensures that compositions of Lagrangians commute with large positive Hamiltonian perturbations near the boundary (i.e. positive wrappings); second, it also ensures that the colimit preserving functor induced by the Lagrangian in fact preserves compact objects (equivalently, the right adjoint functor preserves limits and colimits). Here we have used the hypothesis only for the first purpose; and have not discussed here when our functors preserve compact objects. 
\end{remark}

\begin{corollary}\label{thm: main composition stop removal higher}
    Let $W_1, W_2, \dots, W_k$ be Weinstein manifolds with Maslov data. Let $L_{12} \subset W_1^- \times W_2 \times \bR, \dots, L_{k-1,k} \subset W_{k-1}^- \times W_k \times \bR$ and $L_{ij} = L_{kj} \circ L_{ik} \looparrowright W_1^- \times W_3 \times \bR$ be eventually conic universally sufficient Legendrians. When
    \begin{equation}
    \mathfrak{c}_{W_i^- \times W_j,\partial L_{ij}} \circ \mathfrak{c}_{W_i} \subset \mathfrak{c}_{W_j}
    \end{equation}
    for any $i \leq j$, we have a further commutative diagram
    \[\begin{tikzcd}[column sep=15pt]
    \msh_{\mathfrak{c}_{W_1^-\times W_2,\partial L_{12}}}(\mathfrak{c}_{W_1^- \times W_2}) \otimes \dots \otimes \msh_{\mathfrak{c}_{W_{k-1}^-\times W_k,\partial L_{k-1,k}}}(\mathfrak{c}_{W_{k-1}^- \times W_k}) \ar[r, "\circ_g"] \ar[d, "\iota_{\partial L_{12}}^* \otimes \dots \otimes \iota_{\partial L_{k-1,k}}^*" left] & \msh_{\mathfrak{c}_{W_1^-\times W_k,\partial L_{1k}}}(\mathfrak{c}_{W_1^- \times W_k}) \ar[d, "\iota_{\partial L_{1k}}^*"] \\
    \msh_{\mathfrak{c}_{W_1^-\times W_2}}(\mathfrak{c}_{W_1^- \times W_2}) \otimes \dots \otimes  \msh_{\mathfrak{c}_{W_{k-1}^-\times W_k}}(\mathfrak{c}_{W_{k-1}^- \times W_k}) \ar[r, "\circ_a"] & \msh_{\mathfrak{c}_{W_1^-\times W_k}}(\mathfrak{c}_{W_1^- \times W_k}),
    \end{tikzcd}\]
    where each horizontal arrow is interpreted as a coherent diagram $\Delta_1^{\times k-1} \to \Cat_{st}$ satisfying the Segal condition and each square defines 2-morphisms that are equivalences.
\end{corollary}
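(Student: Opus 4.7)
The plan is to bootstrap from the single-composition case (Theorem \ref{thm: main composition stop removal}) to the iterated coherent statement by combining the higher associativity for gapped compositions (Corollary \ref{thm: main composition higher}) with the higher K\"unneth-with-stop-removal result (Corollary \ref{rem: gap composition kunneth higher}). Throughout, the hypothesis $\mathfrak{c}_{W_i^- \times W_j, \partial L_{ij}} \circ \mathfrak{c}_{W_i} \subset \mathfrak{c}_{W_j}$ for every $i \leq j$ is what is needed to apply Theorem \ref{thm: gap composition wrap kunneth} (equivalently the higher version) pairwise, since it ensures that on each factor the doubled conic subset appearing in $\iota_{\partial L_{ij}}^*$ is controlled by the conic cores appearing in the algebraic composition.

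First I would take the coherent diagram $\Delta_1^{\times k-1} \to \Cat_{st}$ on the top row, built from the gapped compositions $\msh_{\mathfrak{c}_{W_{i-1}^-\times W_i,\partial L_{i-1,i}}} \otimes \cdots \otimes \msh_{\mathfrak{c}_{W_{j-1}^-\times W_j,\partial L_{j-1,j}}} \to \msh_{\mathfrak{c}_{W_{i-1}^-\times W_j,\partial L_{i-1,j}}}$, whose Segal-coherence is the content of Corollary \ref{thm: main composition higher} (applied to the middle row, then composed with $i_\infty^*$, which itself satisfies Segal coherence via Corollary \ref{rem: gapped micro composition higher}). Next I would take the corresponding Segal-coherent diagram on the bottom row built from algebraic compositions $\msh_{\mathfrak{c}_{W_{i-1}^-\times W_i}} \otimes \cdots \otimes \msh_{\mathfrak{c}_{W_{j-1}^-\times W_j}} \to \msh_{\mathfrak{c}_{W_{i-1}^-\times W_j}}$, whose Segal-coherence follows from Formula \eqref{rem: associativity cat} and the duality/K\"unneth package of Theorem \ref{thm: weinstein kunneth duality maslov}.

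The vertical arrows are the stop removal functors $\iota_{\partial L_{ij}}^*$ on every vertex; on edges one needs a 2-morphism identifying the two ways of composing. For every edge (a single merging step), Theorem \ref{thm: main composition stop removal} produces such a 2-morphism which is an equivalence. To promote this to a globally coherent transformation of diagrams $\Delta_1^{\times k-1} \to \Cat_{st}$, I would invoke Corollary \ref{rem: gap composition kunneth higher}: it provides exactly such a coherent natural equivalence between the geometric gapped composition and the algebraic composition (after stop removal), packaged as an equivalence of Segal-coherent diagrams. Inserting the $k$-fold pre-composition with $\iota_{\partial L_{12}}^* \otimes \cdots \otimes \iota_{\partial L_{k-1,k}}^*$ and using naturality, this equivalence degenerates on each edge to the 2-morphism produced by Theorem \ref{thm: main composition stop removal}, giving the required coherent commutative square of diagrams.

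The main obstacle is purely bookkeeping: checking that the higher coherence data built by stringing together Corollary \ref{thm: main composition higher} (top row, geometric side) and Corollary \ref{rem: gap composition kunneth higher} (identification with the algebraic side) genuinely assembles into one coherent natural transformation of $\Delta_1^{\times k-1}$-diagrams, rather than just matching on vertices and edges. This is ensured by the (op)lax naturality coming from \cite[Corollary F]{HaugsengHebestreitLinskensNuiten}, exactly as in the proofs of Corollaries \ref{rem: gapped micro composition higher} and \ref{rem: gap composition kunneth higher}: all the intermediate non-proper base changes and adjunction counits assemble into a lax transformation, and our hypotheses make every 2-morphism invertible, so the lax transformation becomes a coherent equivalence. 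No further new input is needed once the edge-level statement of Theorem \ref{thm: main composition stop removal} is in hand.
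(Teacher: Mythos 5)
Your proposal is correct and follows essentially the same route as the paper: the paper's proof simply cites Corollaries \ref{rem: gapped micro composition higher} and \ref{rem: gap composition kunneth higher}, which is precisely the engine you invoke (your appeal to Corollary \ref{thm: main composition higher} is just a repackaging of \ref{rem: gapped micro composition higher}, and you likewise point to \ref{rem: gap composition kunneth higher} as supplying the coherent natural equivalence after stop removal). The extra scaffolding you add — Theorem \ref{thm: main composition stop removal} for the edge case and the lax-naturality machinery from \cite[Corollary F]{HaugsengHebestreitLinskensNuiten} — is a faithful unwinding of what those two corollaries already encapsulate, not a different argument.
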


\begin{proof}
    This follows from Corollaries \ref{rem: gapped micro composition higher} and \ref{rem: gap composition kunneth higher}.
\end{proof}

    Similarly, using Theorem \ref{thm:gapped hom composition}, one can deduce the following theorem on hom compositions of Lagrangian correspondences.

\begin{theorem}\label{thm: main composition hom}
    Let $W_1, W_2, W_3$ be Weinstein manifolds with Maslov data. Let $L_{12} \subset W_1 \times W_2 \times \bR$ and $L_{23} \subset W_2 \times W_3 \times \bR$ be any eventually conical universally sufficiently Legendrians and $L_{\bar 13} = L_{23} \circ (-L_{12}) \subset W_1^- \times W_3 \times \bR$. Then there is a commutative diagram
    \[\begin{tikzcd}
    \msh_{L_{12}}(L_{12}) \otimes \msh_{L_{23}}(L_{23}) \ar[r, "\sHom^\circ_g"] & \msh_{L_{\bar 13}}(L_{\bar 13}) \\    \msh_{\mathfrak{c}_{W_1 \times W_2}^{L_{12}}}(\mathfrak{c}_{W_1 \times W_2}^{L_{12}}) \otimes \msh_{\mathfrak{c}_{W_2 \times W_3}^{L_{23}}}(\mathfrak{c}_{W_2 \times W_3}^{L_{23}}) \ar[d, "i_\infty^* \otimes i_\infty^*" left] \ar[r, "\sHom^\circ_g"] \ar[u, "i_{L_{12}}^* \otimes i_{L_{23}}^*"] & \msh_{\mathfrak{c}_{W_1^- \times W_3}^{L_{\bar 13}}}(\mathfrak{c}_{W_1 \times W_3}^{L_{\bar 13}}) \ar[d, "i_\infty^*"] \ar[u, "i_{L_{\bar 13}}^*" right] \\
    \msh_{\mathfrak{c}_{W_1\times W_2,\partial L_{12}}}(\mathfrak{c}_{W_{12}}) \otimes \msh_{\mathfrak{c}_{W_2\times W_3,\partial L_{23}}}(\mathfrak{c}_{W_2 \times W_3}) \ar[r, "\sHom^\circ_g"] & \msh_{\mathfrak{c}_{W_1^-\times W_3,\partial L_{\bar 13}}}(\mathfrak{c}_{W_1^- \times W_3}) \\
    \end{tikzcd}\]
    Moreover, when $\mathfrak{c}_{W_i \times W_j,\partial L_{ij}} \circ \mathfrak{c}_{W_i} \subset \mathfrak{c}_{W_j}$, we have a further commutative diagram
    \[\begin{tikzcd}
    \msh_{\mathfrak{c}_{W_1\times W_2,\partial L_{12}}}(\mathfrak{c}_{W_{12}}) \otimes \msh_{\mathfrak{c}_{W_2\times W_3,\partial L_{23}}}(\mathfrak{c}_{W_2 \times W_3}) \ar[r, "\sHom^\circ_g"] \ar[d, "\iota_{\partial L_{12}}^* \otimes \iota_{\partial L_{\bar 23}}^*" left] & \msh_{\mathfrak{c}_{W_1^-\times W_3,\partial L_{13}}}(\mathfrak{c}_{W_1^- \times W_3}) \ar[d, "\iota_{\partial L_{\bar 13}}^*"] \\
    \msh_{\mathfrak{c}_{W_1\times W_2}}(\mathfrak{c}_{W_{12}}) \otimes \msh_{\mathfrak{c}_{W_2\times W_3}}(\mathfrak{c}_{W_2 \times W_3}) \ar[r, "\sHom^\circ_a"] & \msh_{\mathfrak{c}_{W_1^-\times W_3}}(\mathfrak{c}_{W_1^- \times W_3}).
    \end{tikzcd}\]
\end{theorem}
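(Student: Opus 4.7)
The proof follows the same three-step pattern as Theorems \ref{thm: main composition} and \ref{thm: main composition stop removal}, replacing the tensor composition of microsheaves by the hom composition $\sHom^\circ$ throughout, with the geometric composition $L_{23} \circ L_{12}$ on the correspondence side replaced by $L_{\bar 13} = L_{23} \circ (-L_{12})$. The three microsheaf compositions in the statement fit into the diagram once we produce, for each row, a Reeb-extension model and verify the appropriate composability/gappedness hypotheses, then invoke in turn the microsheaf hom-composition results (Theorem \ref{thm:gapped hom composition microsheaf}, its restriction-compatibility consequence, and Theorem \ref{thm: gap hom composition wrap kunneth}).

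For the upper square, I pass to Reeb extensions. As in Lemma \ref{lem: weinstein composable}, the Reeb extensions $\mathfrak{c}_{W_1\times W_2,ex}^{L_{12}}$ and $\mathfrak{c}_{W_2\times W_3,ex}^{L_{23}}$ (with the sign convention dictated by hom composition, so that $-L_{12,ex}$ replaces $L_{12,ex}$ in the coisotropic intersection) are composable in the sense of Definition \ref{def: composable}; the combinatorics are identical to the $\otimes$-case because negation of a Legendrian only exchanges Reeb push-off directions. The key geometric input is that the preimage of the cone of $L_{\bar 13,ex}$ inside the product of the correspondences is exactly the cone of $(-L_{12,ex})\,\widehat\times\, L_{23,ex}$; this is the hom-analogue of the input used in Theorem \ref{thm: main composition}. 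Applying the hom-restriction version of Theorem \ref{thm: gapped composition restrict} (which goes through verbatim for $\sHom^\circ_g$ using duality: dualize the first factor and apply the tensor version) gives commutativity of the upper square.

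For the lower square, I use exactly the nearby-cycle construction from the proof of Theorem \ref{thm: main composition}: the contact lifts $Z_{ij} = Z_{\lambda_i \times \lambda_j} + t\,\partial/\partial t$ on $W_i \times W_j \times T^*\bR$ produce contact movies whose Legendrian thickenings have gapped composition by Lemmas \ref{ss gapped for doubling} and \ref{lem: liouville flow pdff contact}, and whose nearby-cycle limits are contained in $\mathfrak{c}_{W_i\times W_j,\partial L_{ij}} \times \sqcup_{0,\infty}$. Because these movies are relatively doubled over products of skeleta (which sit in exact symplectic hypersurfaces), they are also self-gapped with gapped collar in the sense of Definition \ref{def: gapped collar}, so the gapped microlocal hom composition is defined. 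Theorem \ref{thm: nearby = restriction at infty} identifies the resulting gapped nearby-cycle functor with $i_\infty^*$, and then Theorem \ref{thm:gapped hom composition microsheaf} gives the commutativity of the lower square.

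For the second diagram, I apply Theorem \ref{thm: gap hom composition wrap kunneth} directly: the hypothesis $\mathfrak{c}_{W_i\times W_j,\partial L_{ij}} \circ \mathfrak{c}_{W_i} \subset \mathfrak{c}_{W_j}$ is precisely the $L_{ij}\circ(-\Lambda_i)\subset\Lambda_j$ condition (with $\Lambda_i = \mathfrak{c}_{W_i}$) needed by that theorem. This identifies the geometric gapped hom composition with the algebraic hom composition given by duality/K\"unneth (Theorem \ref{thm: weinstein kunneth duality maslov}). The main obstacle I anticipate is bookkeeping of signs and co-orientations: hom composition introduces a negation $L_{12} \mapsto -L_{12}$, and one has to check that the Reeb-extension formalism in Section \ref{appen: contact} (in particular Lemmas \ref{reeb extension intersection}, \ref{reeb extension composition}, and \ref{reeb extension composition pushoff}) is compatible with this negation so that the Reeb extension of $L_{23}\circ(-L_{12})$ matches $L_{23,ex}\circ(-L_{12,ex})$; this is a direct computation using the formulas of Definition \ref{def: reeb extension} and Definition \ref{def: contact composition final} but requires care with primitives. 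Everything else propagates to the $k$-fold coherent version using Corollaries \ref{rem: gapped micro composition higher} and \ref{rem: gap composition kunneth higher} exactly as in Corollaries \ref{thm: main composition higher} and \ref{thm: main composition stop removal higher}.
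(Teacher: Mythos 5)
Your proposal is correct and follows exactly the strategy the paper intends (but only sketches with the word ``similarly''): it transports the three-step argument of Theorems \ref{thm: main composition} and \ref{thm: main composition stop removal} to the hom setting by substituting Theorems \ref{thm:gapped hom composition microsheaf} and \ref{thm: gap hom composition wrap kunneth} for their tensor counterparts and passing through Reeb extensions with the $L_{12}\mapsto -L_{12}$ negation. The points you flag as needing care --- the unstated hom-analogue of Theorem \ref{thm: gapped composition restrict} (obtained by dualizing the first input) and the compatibility of Reeb extensions with negation of primitives --- are indeed the only substantive bookkeeping, and your treatment of them is sound.
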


    Finally, we explain the necessity to develop the relative version of nearby cycle functors (as opposed to applying the gapped nearby cycle functor on each fiber in \cite[Section 8]{Nadler-Shende}).

\begin{remark}\label{rem: necessity not restrict to fiber} 
    Instead of using Theorem \ref{thm:gapped composition microsheaf} (based on Theorem \ref{thm:gapped composition}) to obtain the isomorphism of sheaves, one may consider restricting to each fiber, use the Liouville flows only on the second factor $X_2 \,\widehat\times\, X_2 \subset S^*\bR^N \,\widehat\times\, S^*\bR^N$, and then try to apply Theorem \ref{thm: gapped specialization} (based on Theorem \ref{nearby fully faithful}) to each fiber and then use Proposition \ref{prop: nearby commute micro} (based on Proposition \ref{nearby commute with restrict}) to deduce the compatibility between the global and fiberwise nearby cycle functors. 
    
    This approach fails for at least two reasons. First, the gapped condition does not hold fiberwise after passing to the contact reductions as explained in Remark \ref{rem: necessity not restrict to fiber 1}. Second, the Lagrangian correspondence are not conic with respect to the Liouville flows on the first and third factors $X_2 \,\widehat\times\, X_2 \subset S^*\bR^N \,\widehat\times\, S^*\bR^N$. Indeed, Lemma \ref{lem: liouville flow pdff} and Example \ref{ex: limit liouville flow not pdff} show that the limit of the Lagrangian correspondence under the Liouville flow of only the second factor is not a Lagrangian subset in general, and so we cannot invoke Theorems \ref{nearby fully faithful} and \ref{nearby commute with restrict}.
\end{remark}

\subsection{Maslov data and  gapped composition}

    Given a Maslov data for a contact manifold $X$, we can consider the secondary Maslov data for a Legendrian submanifold $\Lambda \subset X$ as follows \cite[Remark 11.20]{Nadler-Shende}.

    Let $X$ be a contact manifold with Maslov data $\sigma$. Then for a Legendrian $L \subset X$, we have a natural inclusion
    $$\msh_{L_\sigma} \rightarrow \msh_{X_\sigma}|_{L}.$$
    Consider the standard Weinstein neighborhood of $L \subset X$ which is contactomorphic to $J^1L$. There is a natural fiber polarization which determines a Maslov data $\tau$ on $J^1L$. We have a natural isomorphism between sheaves of categories
    $$\msh_{L_\tau} \simeq \Loc_L.$$
    Then the secondary Maslov obstruction of $L \subset X$ is the map $\sigma - \tau : L \to BPic(\SC)$. Since $\msh_{L_\tau} \simeq \Loc_L$ is the constant sheaf of categories, secondary Maslov data are classified by global simple objects
    $\SF \in \msh_{L_\sigma}(L)$, where an object $\SF$ is simple if the stalk $\SF_p = 1_\SC$. In fact, a global simple object defines an equivalence by
    $$\msh_{L_\sigma}(L) \xrightarrow{\sim} \Loc(L), \quad \SG \mapsto \mu hom(\SF, \SG)$$
    where $\mu hom$ is the internal hom of the sheaf of microsheaves as in \cite[Definition 4.1]{KS}.

    We will now show that secondary Maslov data on given Lagrangians naturally induce  secondary Maslov data in their composition under certain non-degeneracy condition on the Lagrangian intersections.

\begin{theorem}
    Let $W_1, W_2, W_3$ be Weinstein manifolds with Maslov data. Let $L_{12} \subset W_1^- \times W_2 \times \bR$ and $L_{23} \subset W_2^- \times W_3 \times \bR$ be eventually conical smooth Legendrians. Suppose 
    $$L_{12} \times L_{23} \pitchfork W_1^- \times \Delta_{W_2} \times W_3.$$
    Then for any simple objects $\SF_{12} \in \msh_{L_{12}}(L_{12})$ and $\SF_{23} \in \msh_{L_{23}}(L_{23})$, $\SF_{12} \circ \SF_{23} \in \msh_{L_{13}}(L_{13})$ is also simple. In particular, under the identification $\msh_{L_{ij}}(L_{ij}) \simeq \Loc(L_{ij})$, the composition
    $$\Loc(L_{12}) \otimes \Loc(L_{23}) \longrightarrow \Loc(L_{13})$$
    is defined by the pull-back functor via $L_{13} \xrightarrow{\sim} (L_{12} \times L_{23}) \cap (W_1^- \times \Delta_{W_2} \times W_3)$.
\end{theorem}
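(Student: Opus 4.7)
The plan is to work locally near an arbitrary point $p_{13}\in L_{13}$, which under the transversality hypothesis corresponds uniquely to a point
$p=(p_{12},p_{23})\in (L_{12}\times L_{23})\cap (W_1^-\times\Delta_{W_2}\times W_3)$. Since being simple is a stalkwise condition, and since under $\msh_{L_{ij}}(L_{ij})\simeq\Loc(L_{ij})$ the rank-one local systems generate the target via colimits, it suffices to identify the stalk $(\SF_{12}\circ\SF_{23})_{p_{13}}$ with $1_\cC$ and check functoriality on morphisms between rank-one simple objects. By Theorem \ref{thm: main composition}, this stalk computation can equivalently be carried out at the level of the top row $\circ_g\colon \msh_{L_{12}}(L_{12})\otimes\msh_{L_{23}}(L_{23})\to\msh_{L_{13}}(L_{13})$.

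First I would unwind Definition \ref{def: gap composition weinstein}: $\SF_{12}\circ_g\SF_{23}=m_{L_{13}}(w_{L_{23}}^+\SF_{23}\circ w_{L_{12}}^+\SF_{12})$. By the K\"unneth-doubling compatibility of Theorem \ref{thm: kunneth-doubling}, the external product of the doublings $w_{L_{12}}^+\SF_{12}\boxtimes w_{L_{23}}^+\SF_{23}$ computes the doubling of the external product $\SF_{12}\boxtimes\SF_{23}\in\msh_{L_{12}\widehat\times L_{23}}(L_{12}\times L_{23})$. The subsequent restriction to the coisotropic and contact reduction (Definition \ref{def: contact composition}) is, by the transversality hypothesis, a local diffeomorphism onto $L_{13}$ near $p_{13}$; combined with the identification $L_{13}\xrightarrow{\sim}(L_{12}\times L_{23})\cap(W_1^-\times\Delta_{W_2}\times W_3)$ from Lemma \ref{reeb extension intersection}, this says that geometrically the gapped composition models a restriction along a smooth embedding.

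Next I would invoke the contactomorphism invariance of microsheaves (Lemma \ref{lem:contact-transform-main}) to reduce to a linear normal form. Since $L_{12}$ and $L_{23}$ are smooth Legendrians whose product meets the coisotropic transversely, at $(p_{12},p_{23})$ there exists a contactomorphism carrying the triple $(L_{12}\,\widehat\times\,L_{23},\,W_1^-\,\widehat\times\,\Delta_{W_2}\,\widehat\times\,W_3,\,L_{13})$ into a standard linear model inside a jet space, where all three are coordinate linear subspaces. In this model the simple microsheaves on $L_{12}$ and $L_{23}$ become the constant rank-one objects; by direct sheaf computation (essentially Example \ref{ex: geometry composition pt} together with the tensor-hom calculus for constant sheaves on linear strata) the composition of the doublings is itself a standard sheaf whose microlocalization at $L_{13}$ is again the constant rank-one microsheaf. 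Functoriality on morphisms then follows because the identification $\msh_{L_{ij}}(L_{ij})\simeq\Loc(L_{ij})$ already sends morphisms of rank-one objects to morphisms of local systems, and in the linear model the composition of these morphisms is manifestly the pullback along $L_{13}\hookrightarrow L_{12}\times L_{23}$.

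The main obstacle will be the bookkeeping of Maslov data. The equivalence $\msh_{L_{ij}}(L_{ij})\simeq\Loc(L_{ij})$ depends on chosen secondary Maslov data, and to assert that $\circ_g$ is the \emph{naive} pullback one must verify that the secondary Maslov data on $L_{13}$ induced from those on $L_{12}, L_{23}$ via product, restriction to the coisotropic, and contact reduction, agrees with the restriction of the product secondary data along $L_{13}\hookrightarrow L_{12}\times L_{23}$. In the linear normal form this reduces to the algebraic fact that the natural map on stable Lagrangian Grassmannians is a homomorphism of $BPic(\cC)$-torsors (a consequence of the compatibility diagram \eqref{eq: LGr to BPic} in the proof of Theorem \ref{thm: weinstein kunneth duality maslov}): the transverse directions in the normal bundle of the intersection contribute equal and opposite polarizations in the two identifications, cancelling modulo the $U/O\to BPic(\cC)$ map. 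Once this cancellation is checked in the linear model, the general case follows by the contactomorphism invariance already invoked.
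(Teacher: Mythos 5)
Your proposal takes a genuinely different route from the paper. The paper's proof passes to Reeb extensions and Legendrian thickenings in $S^*\bR^N$, then invokes the Kashiwara--Schapira theorem on compositions of simple kernels along transverse correspondences (\cite[Theorem 7.5.11]{KS}) to get simplicity, and the canonical $\mu hom$ composition map from \cite[Proposition 4.4.11]{KS} (together with Theorem 7.5.11 again, which ensures that map is a stalk isomorphism under transversality) to identify $\circ_g$ with pullback of local systems. You instead reduce to a contactomorphism-normal-form around the transverse intersection point and compute in a linear model. Both strategies exploit transversality; the paper black-boxes the harder sheaf theory in an existing theorem, which is why its proof is shorter and avoids any normal-form gymnastics.

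The gap is in your functoriality step. Proving the diagram commutes means producing a \emph{global natural transformation} between the two functors $\circ_g$ and pullback along $L_{13}\hookrightarrow L_{12}\times L_{23}$, and then showing it is an equivalence. Your linear-model computation gives you agreement of stalks (and of pointwise identifications), but it does not by itself hand you a canonical morphism of functors: the linearizing contactomorphism is neither canonical nor global, so the pointwise identifications you obtain are \emph{a priori} basepoint-dependent, and one must show they patch. You say the natural transformation is "manifest in the linear model," but this is precisely what must be constructed coherently and is not automatic. The paper avoids this by using the canonical $\mu hom$ composition morphism $\pi_{13!}\Delta_2^*(\mu hom(\SF_{12},\SG_{12})\boxtimes\mu hom(\SF_{23},\SG_{23}))\to\mu hom(\SF_{12}\circ\SF_{23},\SG_{12}\circ\SG_{23})$, which exists without any choices, and then checks it is a stalk isomorphism — that is the step your approach is missing. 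Your Maslov-data discussion is on target (the paper addresses it with the diagram \eqref{eq: LGr to BPic} much as you sketch), and the normal-form existence for the transverse triple is fine; but you should not assert the theorem proved until you have a coordinate-free construction of the comparison morphism, not just pointwise equality of objects.
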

\begin{proof}
    Let $X_i = W_i \times \bR$ be the contactization and $L_{ij,ex} \subset X_i \,\widehat\times\, X_j$ be the Reeb extension of $L_{ij} \subset W_i \times W_j$ as in Definition \ref{def: reeb extension}. Since $L_{12} \times L_{23} \pitchfork W_1^- \times \Delta_{W_2} \times W_3$, we know by Lemma \ref{reeb extension intersection} that for the Legendrian thickening of the Reeb extension $L_{12,\sigma_{12}}$ and $L_{23,\sigma_{23}} \subset S^*\bR^{N} \,\widehat\times\, S^*\bR^N$, we have
    $$L_{12,ex} \,\widehat\times\, L_{23,ex} \pitchfork X_1^- \,\widehat\times \Delta_{X_2} \,\widehat\times \, X_3, \quad L_{12,ex,\sigma_{12}} \,\widehat\times\, L_{23,ex,\sigma_{23}} \pitchfork S^*\bR^N \,\widehat\times\, S^*_\Delta\bR^{2N} \,\widehat\times\, S^*\bR^N.$$
    By Lemma \ref{ss of composition for doubling 1} and \ref{ss of composition for doubling 2}, we then know that 
    $$L_{12,ex,\sigma_{12}} \,\widehat\times\, L_{23,ex,\sigma_{23}} \pitchfork T^*\bR^N \times \dot T^*_\Delta\bR^{2N} \times T^*\bR^N.$$
    Then, for simple sheaves $\SF_{12} \in \msh_{L_{12,\sigma_{12}}}(L_{12})$ and $\SF_{23} \in \msh_{L_{23,\sigma_{23}}}(L_{23})$, it follows from \cite[Theorem 7.5.11]{KS} that $\SF_{23} \circ \SF_{12} \in \msh_{L_{13,\sigma_{13}}}(L_{13})$ is a simple object. We know that the internal hom with the simple objects define equivalences between microsheaves on these Legendrians and local systems on the Legendrians:
    $$\msh_{L_{ij,\sigma_{ij}}}(L_{ij}) \xrightarrow{\sim} \Loc(L_{ij}), \quad \SG_{ij} \mapsto \mu hom(\SF_{ij}, \SG_{ij}).$$
    Then, considering the natural map $\pi_{13!}\Delta_2^*(\sHom(\SF_{12}, \SG_{12}) \boxtimes \sHom(\SF_{23}, \SG_{23})) \to \sHom(\SF_{12} \circ \SF_{23}, \SG_{12} \circ \SG_{23})$, by microlocalization, there is a natural map as in \cite[Proposition 4.4.11]{KS}:
    $$\pi_{13!}\Delta_2^*(\mu hom(\SF_{12}, \SG_{12}) \boxtimes \mu hom(\SF_{23}, \SG_{23})) \to \mu hom(\SF_{12} \circ \SF_{23}, \SG_{12} \circ \SG_{23}).$$
    Similar to the above discussion, \cite[Theorem 7.5.11]{KS} shows that the natural map induces isomorphisms on stalks. This identifies the composition $\msh_{L_{12,\sigma_{12}}}(L_{12}) \otimes \msh_{L_{23,\sigma_{23}}}(L_{23}) \to \msh_{L_{12} \circ L_{23,\sigma_{13}}}(L_{12} \circ L_{23})$ with the functor
    $$\Loc(L_{12}) \otimes \Loc(L_{23}) \longrightarrow \Loc(L_{13})$$
    defined by the pull-back functor via $L_{13} \xrightarrow{\sim} (L_{12} \times L_{23}) \cap (W_1^- \times \Delta_{W_2} \times W_3)$.
\end{proof}

    More generally, for a contact manifold $X$ and a Legendrian $L \subset X$, using the embedding via the universal polarization as in Section \ref{ssec: weinstein}, $\msh$ can be defined as a locally constant sheaf of categories on $LGr(X)|_L$ is the pull-back of a locally constant sheaf of categories on $BPic(\SC)(X)|_L$. When $X$ has Maslov data, we get a locally constant sheaves on $L$. 

    Consider the contact embedding of the universal polarization $LGr(X)|_L$ as a Lagrangian submanifold whose Lagrangian Gauss map $LGr(X)|_L \to LGr$ defines a universal $\bZ \times BO$-bundle $\mathscr{P}(LGr(X)|_L) \to LGr(X)|_L$, and with respect to the universal classifying map $\chi: \bZ \times BO \to Pic(\SC)$, we have
    $$\msh_{LGr(X)|_L} = (\Loc_{\mathscr{P}(LGr(X)|_L)})^\chi,$$
    as shown by Jin \cite[Proposition 1.3]{Jin2} (following Guillermou \cite[Proposition 10.5.3 \& 10.6.2]{Guillermou-survey}). Therefore, setting the pull-back bundle to $L$ to be $\mathscr{P}_L$ and applying the descent \cite[Theorem 11.14]{Nadler-Shende} \cite[Proposition 3.14]{CKNS}, we can identify microsheaves on $L$ as twisted local systems
    $$\msh_{L} = (\Loc_{\mathscr{P}(L)})^\chi.$$
    Under this identification, the composition can be identified as
    \begin{equation}
    (\Loc_{\mathscr{P}(L_{12})})^\chi \otimes (\Loc_{\mathscr{P}(L_{23})})^\chi \to (\Loc_{\mathscr{P}(L_{13})})^\chi,
    \end{equation}
    induced by the natural map $\mathscr{P}(L_{13}) \hookrightarrow \mathscr{P}(L_{12} \times L_{23}) \xleftarrow{\circ} \pi_{12}^{-1}\mathscr{P}(L_{12}) \times \pi_{23}^*\mathscr{P}(L_{23})$, where the second map is by fiberwise multiplication, which preserves $\chi$-invariance by Formula \eqref{eq: LGr to BPic}.

\subsection{Gapped compositions specialize to filtered morphisms} 
    While, for sheaves, the main result on gapped composition and nearby cycles (Theorem \ref{thm:gapped composition}) directly specializes to our previous result on full faithfulness of gapped specialization (Theorem \ref{fullfaithful nearby}). 
    
    It is somewhat more subtle in what sense the corresponding microsheaf assertion on gapped composition (Theorem \ref{thm: main composition}) recovers Theorem \ref{thm: lagrangian immersion tensor}; explaining this is the purpose of the present subsection.

    Let us first see what Theorem \ref{thm: main composition} asserts when $W_1 = W_3 = pt$. We take $W_2 = W$ and $L \subset W^- \times \bR$, $K \subset W \times \bR$ to be eventually conical Legendrians. Their composition is the image of the map from the Lagrangian intersection points $\ol{L} \cap \ol{K}$ to $\bR$ sending a Lagrangian intersection point in $\ol{L} \cap \ol{K}$ to its action, by Lemma \ref{reeb chord Lagrangian intersection}.

\begin{corollary}\label{cor: recover tensor}
    Let $W$ be a Weinstein manifold with Maslov data, and $L \subset W^- \times \bR, K \subset W \times \bR$ be $\epsilon$-thin eventually conical Legendrians. There is a commutative diagram
    \[\begin{tikzcd}
    \Sh_{\underline L_{\cup,\epsilon}^+}(\bR^{2N}) \otimes \Sh_{\underline K_{\cup,\epsilon}^+}(\bR^{2N}) \ar[d, "\psi \otimes \psi" left] \ar[r, "\circ"] & \Sh_{(\underline{\ol{L} \cap \ol{K}})_{\cup,\epsilon}^+}(\bR^{2N}) \ar[d, "\psi"] \\
    \msh_{\mathfrak{c}_{W^-}^{L}}(\mathfrak{c}_{W^-}^L) \otimes \msh_{\mathfrak{c}_{W}^K}(\mathfrak{c}_{W}^K) \ar[r, "\circ"] \ar[d, "i_\infty^* \otimes i_\infty^*" left] & \msh_{\mathfrak{c}_{pt}^{\ol{L} \cap \ol{K}}}(\mathfrak{c}_{pt}^{L \cap K}) \ar[d, "i_\infty^*"] \\
    \msh_{\mathfrak{c}_{W^-,\partial L}}(\mathfrak{c}_{W^-}) \otimes \msh_{\mathfrak{c}_{W,\partial K}}(\mathfrak{c}_{W}) \ar[r, "\circ "] & \SC.
    \end{tikzcd}\]
\end{corollary}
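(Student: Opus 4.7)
The plan is to obtain Corollary~\ref{cor: recover tensor} as the specialization of Theorem~\ref{thm: main composition} (together with the remark on $\epsilon$-thin factorization following its proof) to $W_1 = W_3 = \mathrm{pt}$, $W_2 = W$, $L_{12} = L$, $L_{23} = K$. First I would identify the entries of the diagram under this reduction. The product cores collapse: $\mathfrak{c}_{\mathrm{pt}^- \times W}^{L} = \mathfrak{c}_{W^-}^{L}$ and symmetrically for $K$. Lemma~\ref{reeb chord Lagrangian intersection} identifies the contact composition $K \circ L \subset \mathrm{pt}^- \times \mathrm{pt} \times \bR$ of Definition~\ref{def: contact composition final} with $\ol L \cap \ol K$ decorated by the action differences $f_K - f_L$. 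The $\epsilon$-thin hypothesis forces $L$ and $K$ to lie in $W^- \times (-\epsilon,\epsilon)$ and $W \times (-\epsilon,\epsilon)$ respectively, so their conical ends in $\partial W$ are disjoint, $\ol L \cap \ol K$ is compact, and $\partial(K \circ L) = \varnothing$. This gives $\msh_{\mathfrak{c}_{\mathrm{pt}^- \times \mathrm{pt}, \partial(K \circ L)}} = \SC$ for the bottom-right entry, and Lemma~\ref{lem: sufficient legendrian whitney} shows $K \circ L$ is universally sufficient Legendrian.

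Under these identifications, the commutativity of the middle and bottom squares of the corollary's diagram is precisely the content of Theorem~\ref{thm: main composition}. For the top square I would invoke the remark immediately following that theorem, which observes that when $L$, $K$, and $K \circ L$ are $\epsilon$-thin, the upper square factors through sheaf-level categories via the equivalences
\[
m_L \circ \psi_W^L \colon \Sh_{(\underline{L} \times \cup_{0,\infty})^+_{\cup,\epsilon}}(\bR^{N}) \xrightarrow{\sim} \msh_{\mathfrak{c}_{W^-}^L}(\mathfrak{c}_{W^-}^L)
\]
of Theorem~\ref{thm: quantize nearby = microlocalize}, and similarly for $K$. Up to a contact isotopy, the doubling $(\underline L \times \cup_{0,\infty})^+_{\cup,\epsilon}$ may be replaced by $\underline L^+_{\cup,\epsilon}$ via Lemma~\ref{lem:contact-transform-main}; under the ambient contact embedding of $W^- \times W \times \bR$ this lifts to $\Sh_{\underline L^+_{\cup,\epsilon}}(\bR^{2N})$ by extending along the irrelevant factor (cf.\ Theorem~\ref{thm: doubling-small-piece}). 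The target $\Sh_{(\underline{\ol L \cap \ol K})^+_{\cup,\epsilon}}(\bR^{2N})$ is correctly identified by combining the microsupport estimate of Lemma~\ref{lem: ss-composition} with Theorem~\ref{thm: doubling-small-piece} and Lemma~\ref{ss of composition for doubling 2} applied to the product doubling.

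The main technical obstacle is verifying the compatibility of nearby cycle functors with sheaf-level composition in the top square. This is ensured by the gapped composition Theorem~\ref{thm:gapped composition}, whose hypotheses are satisfied for the pair $((\underline L)^+_{\cup,\epsilon}, (\underline K)^+_{\cup,\epsilon})$: these sets are $\bR_{>0}$-non-characteristic by construction of the doubling, pdff-ness of the limit follows from Lemma~\ref{lem: liouville flow pdff contact} together with the universally sufficient Legendrian hypothesis on $L$ and $K$, and the positive gapped condition against the conormal of the diagonal is supplied by Lemma~\ref{ss gapped for doubling}. Putting these together yields the top commuting square and thus the full diagram of the corollary.
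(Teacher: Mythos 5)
Your proposal follows essentially the same route as the paper: Corollary~\ref{cor: recover tensor} is obtained by specializing Theorem~\ref{thm: main composition} to $W_1 = W_3 = \mathrm{pt}$, with the top (sheaf-level) square supplied by the remark immediately following that theorem together with Theorem~\ref{thm: quantize nearby = microlocalize}. One small slip worth noting: $\epsilon$-thinness controls the $\bR$-coordinate of $L$ and $K$, not their conical ends in $\partial W$, so it does \emph{not} force those ends to be disjoint; fortunately this is harmless for the bottom-right entry, since $\mathfrak{c}_{\mathrm{pt},\,\Sigma} = \mathrm{pt}$ for any $\Sigma$ (the point has no boundary), and the relevant hypothesis actually needed is that $K \circ L$ be universally sufficiently Legendrian, which is implicit in the setup rather than a consequence of thinness. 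Your further elaboration of the gappedness conditions for the top square reproves something the remark already packages via Theorem~\ref{thm: nearby = restriction at infty}, but is not wrong.
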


\begin{corollary}\label{cor: recover hom}
    Let $W$ be a Weinstein manifold with Maslov data, and $L \subset W \times \bR, K \subset W \times \bR$ be $\epsilon$-thin eventually conical Legendrians. There is a commutative diagram
    \[\begin{tikzcd}
    \Sh_{\underline L_{\cup,\epsilon}^+}(\bR^{2N}) \otimes \Sh_{\underline K_{\cup,\epsilon}^+}(\bR^{2N}) \ar[d, "\psi \otimes \psi" left] \ar[r, "{\sHom^\circ(-,-)}"] & \Sh_{(\underline{\ol{L} \cap \ol{K}})_{\cup,\epsilon}^+}(\bR^{2N}). \ar[d, "\psi"] \\
    \msh_{\mathfrak{c}_{W^-}^{L}}(\mathfrak{c}_{W^-}^L) \otimes \msh_{\mathfrak{c}_{W}^K}(\mathfrak{c}_{W}^K) \ar[r, "{\sHom^\circ(-,-)}"] \ar[d, "i_\infty^* \otimes i_\infty^*" left] & \msh_{\mathfrak{c}_{pt}^{\ol{L} \cap \ol{K}}}(\mathfrak{c}_{pt}^{L \cap K}) \ar[d, "i_\infty^*"] \\
    \msh_{\mathfrak{c}_{W^-,\partial L}}(\mathfrak{c}_{W^-}) \otimes \msh_{\mathfrak{c}_{W,\partial K}}(\mathfrak{c}_{W}) \ar[r, "{\Hom(-, -)}"] & \SC.
    \end{tikzcd}\]
\end{corollary}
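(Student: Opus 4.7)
The assertion is the $\sHom^\circ$--analogue of Corollary~\ref{cor: recover tensor}, and the plan is to deduce it from Theorem~\ref{thm: main composition hom} by specializing to $W_1 = W_3 = \mathrm{pt}$, with $W_2 = W$ and $L_{12} = L$, $L_{23} = K$. Under this specialization, one has $L_{\bar13} = K \circ (-L) \subset \mathrm{pt}^- \times \mathrm{pt} \times \bR \cong \bR$, which by Lemma~\ref{reeb chord Lagrangian intersection} records the Lagrangian intersection points $\ol L \cap \ol K$ decorated by their actions; hence the relative core $\mathfrak{c}_{\mathrm{pt}\times\mathrm{pt}}^{L_{\bar13}}$ coincides with $\mathfrak{c}_{\mathrm{pt}}^{\ol L\cap \ol K}$. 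Since $\sHom^\circ$ over a point reduces to $\Hom(-,-)$, the commutativity of the middle and bottom squares of the target diagram is exactly the specialization of the two squares of Theorem~\ref{thm: main composition hom} to this setting.

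For the top square, connecting the sheaf-theoretic doubling categories $\Sh_{\underline L^+_{\cup,\epsilon}}$ and $\Sh_{\underline K^+_{\cup,\epsilon}}$ to the microsheaf quantization categories $\msh_{\mathfrak c^L_{W^-}}$ and $\msh_{\mathfrak c^K_W}$, the plan is to invoke Theorem~\ref{thm: quantize nearby = microlocalize}, which identifies the sheaf quantization of an $\epsilon$-thin eventually conical Legendrian as the composite of doubling/microlocalization and a nearby cycle functor $\psi$ induced by the contact lift of the Liouville flow. The commutativity then reduces to an assertion that sheaf-theoretic $\sHom^\circ$ composition commutes with this nearby cycle, and this is precisely the scope of Theorem~\ref{thm:gapped hom composition} (or, equivalently at the microsheaf level, Theorem~\ref{thm:gapped hom composition microsheaf}).

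The main obstacle is checking the gappedness hypothesis of Definition~\ref{def: gap hom composition} for the doubled subsets appearing in the nearby-cycle description: one must verify that the families $\underline L^+_{\cup,\epsilon}$ and $\underline K^+_{\cup,\epsilon}$ (viewed as $\bR_{>0}$-families under the relevant flow) are positively gapped against the conormal to the diagonal, and that the composition of their nearby limits is pdff. Since $L$ and $K$ are $\epsilon$-thin and eventually conical and the Weinstein skeleton $\mathfrak c_W$ sits in an exact symplectic hypersurface (as used in Lemma~\ref{lem: self displace hypersurface} and in the proof of Theorem~\ref{thm: lagrangian immersion hom}), both properties hold for suitably chosen Reeb flows. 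With this in hand, Theorem~\ref{thm:gapped hom composition microsheaf} yields the top square, and concatenating with the lower squares deduced from Theorem~\ref{thm: main composition hom} completes the argument.
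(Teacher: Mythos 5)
The overall plan is correct and very close to what the paper implicitly intends (the statement is labeled a corollary of Theorem~\ref{thm: main composition hom}). Let me flag two issues of precision before concluding that the substance is right.

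First, there is an internal inconsistency in the framing. You write that ``the commutativity of the middle and bottom squares of the target diagram is exactly the specialization of the two squares of Theorem~\ref{thm: main composition hom}''; but a three--row diagram has only two squares, and in your own accounting the top square cannot be such a direct specialization, since its top row is $\Sh_{\underline L^+_{\cup,\epsilon}}(\bR^{2N}) \otimes \Sh_{\underline K^+_{\cup,\epsilon}}(\bR^{2N})$ and its vertical arrows are the quantization functors $\psi$, whereas the top row of Theorem~\ref{thm: main composition hom} is $\msh_{L_{12}}(L_{12}) \otimes \msh_{L_{23}}(L_{23})$ and its vertical arrows are the restrictions $i_{L_{ij}}^*$ pointing the other way. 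You do correct this by treating the top square separately, but the opening claim should be that only the \emph{bottom} square of the corollary is the direct specialization of the lower square of the theorem.

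Second, a cleaner argument is available for the top square, and it is probably what the paper intends. By Theorem~\ref{thm: relative-doubling}, the microlocalization $m_L$ identifies $\Sh_{\underline L^+_{\cup,\epsilon}}(\bR^{2N})$ with $\msh_L(L)$; under this identification and Definition~\ref{def: gap micro hom composition}, the sheaf-level $\sHom^\circ$ in the top row of the corollary corresponds to the microsheaf $\sHom^\circ_g$ in the top row of Theorem~\ref{thm: main composition hom}. The commuting triangle of Theorem~\ref{thm: quantize nearby = microlocalize} gives $m_L = i_L^* \circ \psi$ with both $m_L$ and $\psi$ equivalences, so $i_L^*$ is also an equivalence; transporting the up-arrow to a down-arrow via its inverse, the top square of the corollary becomes literally the top square of the specialized theorem. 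Your route, re-deriving the top square via Theorem~\ref{thm:gapped hom composition} and re-checking gappedness, is also valid and does avoid invoking invertibility of $i_L^*$, but it re-does work that the upper square of Theorem~\ref{thm: main composition hom} already encapsulates (that square was established via Theorem~\ref{thm: gapped composition restrict}, not via the nearby-cycle gappedness argument). Either way the assertion holds; your gappedness checks for $\epsilon$-thin eventually conical $L,K$ and the exact-symplectic-hypersurface property of $\mathfrak c_W$ are the correct things to invoke if you go the longer way.
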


    Observe that  $\msh_{\mathfrak{c}_{pt}^{\ol{L} \cap \ol{K}}}(\mathfrak{c}_{pt}^{\ol{L} \cap \ol{K}}) \simeq \Sh_{\ol{L} \cap \ol{K}}(\bR)_0 \subset \Sh_{\tau\geq 0}(\bR)_0$ consists of $\bR$-filtered objects in $\SC$, such that the jumps of the filtration
    occur at the actions of the Lagrangian intersection $\ol{L} \cap \ol{K}$ by Lemma \ref{reeb chord Lagrangian intersection}.

    From the lower commutative square of Corollary \ref{cor: recover tensor}, we can recover the lower commutative square of Theorem \ref{thm: lagrangian immersion tensor}. 
    To recover the lower square:

\begin{proposition}\label{prop: recover tensor full faithful}
    Let $W$ be a Weinstein manifold with Maslov data, and $L^- \subset W^- \times \bR$, $K \subset W \times \bR$ be $\epsilon$-thin eventually conic Legendrians with $-L \ll K$ where $-L$ is the image of $L$ under the involution along the $\bR$-factor,  there is a commutative diagram
    \[\begin{tikzcd}
    \Sh_{(\underline{\ol{L} \cap \ol{K}})_{\cup,\epsilon}^+}(\bR^{2N}) \ar[d, "\psi" left] \ar[r, "\pi_!\Delta^*"] & \SC \ar[d, "\rotatebox{90}{=}" right] \\
    \msh_{\mathfrak{c}_{pt}^{\ol{L} \cap \ol{K}}}(\mathfrak{c}_{pt}^{L \cap K}) \ar[r, "i_\infty^*"] & \SC.
    \end{tikzcd}\]
\end{proposition}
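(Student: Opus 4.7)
The plan is to identify both routes in the square as computing the same colimit/global sections of the given sheaf. First, I would unpack the target category. Since $W_1 = W_3 = pt$, we have $\mathfrak{c}_{pt}^{\ol L \cap \ol K} \subset T^*\bR$ equals the union of the zero section $0_\bR$ and the positive cones $\{a_i\} \times \bR_{\geq 0}$ where the $a_i \in \bR$ are the action values of the Lagrangian intersection points $\ol L \cap \ol K$ (via Lemma \ref{reeb chord Lagrangian intersection}). Under the tautological identification $\msh_{\mathfrak{c}_{pt}^{\ol L \cap \ol K}}(\mathfrak{c}_{pt}^{\ol L \cap \ol K}) \simeq \Sh_{\ol L \cap \ol K}(\bR)_0$ (sheaves on $\bR$ with microsupport in the cone over $\{a_i\}$ and zero stalk near $-\infty$), the restriction $i_\infty^*$ becomes the colimit/value-at-$+\infty$ functor. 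This is the special case of Example \ref{ex: quantizate cotangent bundle} for $M = \{*\}$.

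Next I would realize $i_\infty^* \circ \psi$ geometrically as a single nearby cycle functor. Applying Theorem \ref{thm: nearby = restriction at infty} with $W = pt$ and the Legendrian $\ol L \cap \ol K \subset \bR$, the composition $i_\infty^* \circ \psi$ factors through the contact Hamiltonian flow generated by the lift of the Liouville vector field on $T^*\bR$, which pushes the cones over the $a_i$ out to $+\infty$ in the base $\bR_t$. Combining Theorem \ref{thm: quantize nearby = microlocalize} with Proposition \ref{nearby commute Nadler}, this identifies $i_\infty^* \circ \psi \SH$ with the global sections of $\SH$ over a sufficiently large open set in $\bR^{2N}$ containing the entire doubling microsupport except an arbitrarily small neighborhood of the $+\infty$ end. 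Explicitly, one obtains $i_\infty^* \psi \SH \simeq \operatorname{colim}_{t \to +\infty} \Gamma(\overline{U}_{\epsilon}(\Delta_{\bR^N}) \times \bR_{\le t}, \SH)$ for the relevant tubular neighborhood of the diagonal.

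Finally I would match this to $\pi_! \Delta^* \SH = \Gamma_c(\bR^N, \Delta^* \SH)$. Because $\SH$ has microsupport in the doubling $(\underline{\ol L \cap \ol K})_{\cup,\epsilon}^+$, the non-characteristic deformation lemma (Lemma \ref{lem: non-char}) applied to the family of thickenings $\overline U_\delta(\Delta_{\bR^N})$ shrinking down to $\Delta_{\bR^N}$ shows that $\Gamma(\overline U_\delta(\Delta_{\bR^N}), \SH) \simeq \Gamma(\Delta_{\bR^N}, \Delta^*\SH)$; here the non-characteristic condition uses exactly the positive gappedness coming from the hypothesis $-L \ll K$ together with $\epsilon$-thinness, which ensures that the outward conormals of the tubular neighborhoods of the diagonal meet the doubling microsupport in only finitely many short Reeb chords that disappear in the limit. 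A further proper base change gives $\pi_! \Delta^* \SH$.

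The main obstacle is the careful identification of the contact Hamiltonian flow used to realize $i_\infty^* \circ \psi$ with the deformation that collapses the doubling neighborhood in $\bR^{2N}$ onto a tubular neighborhood of the diagonal. Making this precise requires tracking the doubling microsupport $(\underline{\ol L \cap \ol K})_{\cup,\epsilon}^+$ through the two-step specialization (first along the Liouville flow in the contact embedding, then across the diagonal collapse) and invoking gappedness—via the hypothesis $-L \ll K$—at each step to ensure non-characteristic propagation. An alternative route, bypassing this obstacle for the special case $\SH = \SF \circ \SG$, is to factor through Corollary \ref{cor: recover tensor}: Theorem \ref{thm: lagrangian immersion tensor} identifies $i_\infty^* \circ \psi$ applied to such $\SH$ with $\Gamma_c(\SF \otimes \SG) = \pi_! \Delta^*(\SF \boxtimes \SG)$. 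Extending to general $\SH$ then amounts to showing that every object of $\Sh_{(\underline{\ol L \cap \ol K})_{\cup,\epsilon}^+}(\bR^{2N})$ is a colimit of such compositions, or appealing directly to the microlocal argument outlined above.
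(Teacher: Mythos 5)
Your proposal is essentially the paper's argument: both hinge on the observation that $-L\ll K$ gives positive gappedness of the doubling with respect to $S^*_\Delta(\bR^N\times\bR^N)$, and both use (i) a noncharacteristic deformation collapsing a tubular neighborhood of the diagonal onto the diagonal, and (ii) the contact flow $Z = t\,\partial/\partial t$ to realize $i_\infty^*$ geometrically as a further nearby cycle $\psi_{pt}$. The difference is one of organization: the paper commutes $\pi_!\Delta^*$ past each of the two specialization functors in turn, applying the gapped base-change Lemma \ref{gapped basechange} twice, whereas you first compute $i_\infty^*\circ\psi$ as a single colimit and then match it to $\pi_!\Delta^*$ at the end. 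Your intermediate formula $i_\infty^*\psi\SH \simeq \operatorname{colim}_{t\to+\infty}\Gamma(\overline{U}_\epsilon(\Delta_{\bR^N})\times\bR_{\le t},\SH)$ does not type-check as written — $\SH$ lives on $\bR^{2N}$, so the extra $\bR_{\le t}$ factor is out of place — and that is precisely why the paper opts to keep the two deformations separate: Lemma \ref{gapped basechange} is engineered to commute the noncharacteristic collapse with a nearby-cycle family, so applying it once for $\psi$ and once for $\psi_{pt}$ avoids ever needing an explicit description of the composite. Your final paragraph's fallback through Corollary \ref{cor: recover tensor} and Theorem \ref{thm: lagrangian immersion tensor} is a correct observation but is not needed once the two-step commutation is in place.
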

\begin{proof}
    Let $\SH \in \Sh_{(\underline{\ol{L} \cap \ol{K}})^+_{\cup,\epsilon}}(\bR^{2N})$ and consider $\pi_{!}\Delta^*\SH \in \SC$. Since $-L \ll K$, we can use Lemma \ref{reeb chord Lagrangian intersection} to show that $(\ol{L} \cap \ol{K} \times \bR^{2N-1})^+_{\pm\epsilon}$ and $S^*_{\Delta}(\bR^N \times \bR^N)$ are positively gapped. Then using the gapped base change formula Proposition \ref{gapped basechange}, we have a commutative diagram
    \[\begin{tikzcd}
    \Sh_{(\underline{\ol{L} \cap \ol{K}})_{\cup,\epsilon}^+}(\bR^{2N}) \ar[d, "\psi" left] \ar[r, "\pi_!\Delta^*"] & \SC \ar[d, "\rotatebox{90}{=}" right]. \\
    \Sh_{(\underline{\mathfrak{c}}_{pt}^{\ol{L} \cap \ol{K}})_{\cup,\epsilon}^+}(\bR^{2N}) \ar[r, "\pi_!\Delta^*"] & \SC.
    \end{tikzcd}\]
    Finally, we consider the nearby cycle functor $\psi_{pt}$ that sends the Legendrian lift of $\ol{L} \cap \ol{K} \to \bR$ to a single point via the flow of the contact vector field $Z = t\partial/\partial t$. By Theorem \ref{thm: nearby = restriction at infty}, we know that $\psi_{pt} \simeq i_\infty^*$, Moreover, under the contact flow, by Lemma \ref{reeb chord Lagrangian intersection}, $(\ol{L} \cap \ol{K} \times \bR^{2N-1})_{\pm\epsilon}$ and $S^*_{\Delta}(\bR^N \times \bR^N)$ remain positively gapped. Thus, we know there is a commutative diagram
    \[\begin{tikzcd}
    \Sh_{(\underline{\mathfrak{c}}_{pt}^{\ol{L} \cap \ol{K}})_{\cup,\epsilon}^+}(\bR^{2N}) \ar[d, "\psi_{pt} \otimes \psi_{pt}" left]  \ar[r, "\pi_!\Delta^*"] & \SC. \ar[d, "\rotatebox{90}{=}" right] \\    \Sh_{(\underline{\mathfrak{c}}_{pt}^+)_{\cup,\epsilon}}(\bR^{2N}) \ar[r, "\pi_!\Delta^*"] & \SC.
    \end{tikzcd}\]
    Since the upper horizontal functor is clearly an equivalence, we can conclude the commutativity of the diagram.
\end{proof}

    Finally, we can now conclude that Theorem \ref{thm: lagrangian immersion tensor} can be recovered from Theorem \ref{thm: main composition}, via Corollary \ref{cor: recover tensor} and Proposition \ref{prop: recover tensor full faithful}. 
    The same results holds for Theorem \ref{thm: lagrangian immersion hom} and Theorem \ref{thm: main composition hom}, and we will omit the statements.

\appendix

\section{Comparison of $G$-actions} \label{comparison of G-actions}

Oh and Tanaka have shown that the (infinite dimensional) group of all exact symplectomorphisms acts topologically on the Fukaya category of a Liouville manifold \cite[Theorem 1.0.3]{Oh-Tanaka1}.  Their approach does not use Lagrangian correspondences: instead, they construct a symplectic fibration over (an appropriate simplicial set that realizes) $BG$ with fiber $W$, and `take Fukaya categories' to  construct a local system of categories over $BG$. In \cite[Section 6]{McbreenShendeZhou}, this approach was adapted to the microsheaf setting for $G$ actions which preserve the skeleton (i.e. are conic as opposed to just eventually conic). 

Here we show how to compare our construction of $G$-actions by Lagrangian correspondences (Theorem \ref{thm: group action}) with this sort of approach.

\begin{lemma}
    Let $W$ be a Weinstein domain. Let $G$ be a (finite dimensional) Lie group with an eventually conic Hamiltonian $G$-action $\mu$ on $W$. Then there is a Liouville bundle $E_{BG} \to BG$. For any smooth map $\Delta \to BG$ (where $BG$ is considered as a diffeological space), the pull-back bundle is a Liouville bundle $E_{\Delta} \to \Delta$.
\end{lemma}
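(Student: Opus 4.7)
The plan is to realize $E_{BG}$ as the Borel construction $EG \times_G W$ equipped with the minimal coupling Liouville form of Sternberg--Weinstein, interpreted in the setting of diffeological spaces. First I would model $BG$ as the geometric realization of the simplicial nerve $B_\bullet G$ with $B_n G = G^n$, carrying its standard diffeology. A smooth map $\sigma\colon \Delta \to BG$ then corresponds to a principal $G$-bundle $P_\sigma \to \Delta$, which is trivializable since $\Delta$ is contractible.

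For the Liouville structure on $E_\sigma := P_\sigma \times_G W \to \Delta$, I would choose a principal connection $A \in \Omega^1(P_\sigma;\g)$ and form, on $P_\sigma \times W$, the 1-form $\widetilde\lambda := \mathrm{pr}_W^*\lambda_W - \langle \mu, A\rangle$. This form is $G$-invariant and horizontal with respect to the diagonal $G$-action (using the moment map identity $d\mu^X = -\iota_{X_W}d\lambda_W$ together with the exactness of the action), hence descends to a 1-form $\lambda_\sigma$ on $E_\sigma$. The standard minimal coupling computation shows that $d\lambda_\sigma$ restricts to the Liouville symplectic form on each fiber, and that the fiberwise Liouville vector field lifts to a horizontal plus fiberwise Liouville vector field on the total space. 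Eventual conicity of the $G$-action ensures that this total Liouville vector field is eventually conic on $E_\sigma$ along the cylindrical end inherited from $W$, so $E_\sigma$ is a Liouville bundle in the sense of the paper.

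The main obstacle will be verifying naturality and simplicial coherence. Different connections yield canonically isomorphic Liouville structures via a Moser-type argument: the difference $A - A'$ is a basic $\g$-valued 1-form, and $\langle \mu, A - A'\rangle$ realizes an explicit Liouville homotopy interpolating $\lambda_\sigma$ and $\lambda'_\sigma$. The assignment $\sigma \mapsto (E_\sigma, \lambda_\sigma)$ must then be compatible with pullback along face and degeneracy maps of simplices; I would build a simplicially coherent family of connections via a partition of unity on $B_\bullet G$, so that the pullback of $\lambda_\sigma$ along $\Delta' \to \Delta \to BG$ equals $\lambda_{\sigma'}$ up to a specified Liouville homotopy. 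Once these coherences are in place, the system $\{(E_\sigma, \lambda_\sigma)\}_\sigma$ assembles into the desired diffeological Liouville bundle $E_{BG} \to BG$, and by construction its pullback along any $\Delta \to BG$ is the Liouville bundle $E_\sigma \to \Delta$.
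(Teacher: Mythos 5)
Your minimal-coupling route is a genuinely different strategy from the paper's proof: the paper models $BG$ by the Milnor join $\|\,G^\bullet\times\Delta^\bullet\,\|$ and builds $E_{BG}$ by an explicit clutching construction, with trivial fiberwise Liouville form on each chart $W\times G^j\times\Delta^j$ and transition maps $(x,g_1,\dots,g_j)\mapsto(\varphi_\mu^{g_1}(x),g_1,\dots,g_j)$, etc.; since those transition maps are only \emph{exact} symplectomorphisms (they change $\lambda_W$ by $d$ of compactly supported functions), the global $1$-form on $E_{BG}$ is then produced by a partition of unity gluing the fiberwise $1$-forms. Your proposal to use the Sternberg--Weinstein minimal coupling form $\widetilde\lambda=\mathrm{pr}_W^*\lambda_W-\langle\mu,A\rangle$ on $P_\sigma\times W$ is the classical way to put a Liouville structure on an associated bundle, and would, if it worked, give a cleaner and more coordinate-free statement.

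However, there is a gap precisely where you invoke \emph{exactness} to justify that $\widetilde\lambda$ descends. For $\widetilde\lambda$ to be basic for the diagonal $G$-action one needs (i) $\iota_{\widehat X}\widetilde\lambda=0$ for all $X\in\mathfrak g$, which after the standard computation reduces to $\mu^X=\iota_{X_W}\lambda_W$, and (ii) $G$-invariance of $\widetilde\lambda$, which requires $g^*\lambda_W=\lambda_W$ on the nose. Both conditions hold exactly when the action \emph{preserves} $\lambda_W$, so that $\mathcal L_{X_W}\lambda_W=0$ and the canonical moment map $\mu^X=\iota_{X_W}\lambda_W$ may be used. But the paper's hypothesis is only that the action is an exact, eventually conic Hamiltonian action: $g^*\lambda_W-\lambda_W$ is exact (and compactly supported), and in general $\iota_{X_W}\lambda_W-\mu^X=h_X\neq 0$ is the primitive measuring $\mathcal L_{X_W}\lambda_W=dh_X$. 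In this generality $\widetilde\lambda$ is neither $G$-invariant nor horizontal, so it does not descend, and no Moser argument on the connection can repair the problem since it already occurs for the trivial connection. You would need either to first replace $\lambda_W$ by a $G$-invariant primitive of $\omega_W$ (averaging, hence effectively requiring $G$ compact), or to insert the same kind of partition-of-unity patching that the paper performs explicitly. As written, the claim that the ``moment map identity together with exactness'' gives descent is the precise point that fails, and your simplicial-coherence and naturality steps would also have to contend with it.
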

\begin{proof}
    We use the Milnor join construction \cite{Milnor-classify-ii} (equipped with the diffeological structure \cite{MagnotWatts}) in place of the Segal simplicial construction for $BG$ \cite{Segal} (they are known to be weak equivalent):
    $$\cdots \,\substack{\longrightarrow \\[-0.8em] \longrightarrow \\[-0.8em] \longrightarrow \\[-0.8em] \longrightarrow }\, G \times G \times G  \times \Delta^3 \,\substack{\longrightarrow \\[-0.8em] \longrightarrow \\[-0.8em] \longrightarrow}\, G \times G \times \Delta^2 \,\substack{\longrightarrow \\[-0.8em] \longrightarrow}\, G \times \Delta^1 \rightarrow *.$$
    Consider the trivial Liouville bundle structure on $W \times G^{j} \times \Delta^j$ by the pull-back of Liouville 1-form on $W$. Define the face maps by the group action and multiplications, and define the degeneracy maps by units. Equivalently, we can describe the universal bundle via the following local trivilizations. Consider a tubular neighbourhood of a face $U(G^{j-1} \times \Delta^{j-1}) \subset G^{j} \times \Delta^j$ which is homeomorphic to $G^{j-1} \times \Delta^{j-1} \times (G \times \Delta^1/G \times 0)$. We can extend the fiberwise Liouville structure trivially from $W \times G^{j-1} \times \Delta^{j-1}$ to $W \times U(G^{j-1} \times \Delta^j_i$. Then define transition maps $\varphi_{ij}: W \times U(G^{j-1} \times \Delta^{j-1}) \to W \times U(G^{j-1} \times \Delta^{j-1})$ of the Liouville bundles between neighbourhoods of different faces $U(G^{j-1} \times \Delta^{j-1})$ by
    \begin{gather*}
        \varphi_{01}(x, g_1, \dots, g_j) = (\varphi_\mu^{g_1}(x), g_1, \dots, g_j), \\
        \varphi_{i,i+1}(x, \dots, g_{i-1}, g_i, \dots, g_j) = (x, \dots, g_{i-1}g_i^{-1}, g_ig_{i+1}, \dots, g_j),
    \end{gather*}
    and all the maps are trivial on the $\Delta^j$ factors. Since $\mu$ is an eventually conic Hamiltonian action, the transition maps determined by $\varphi_\mu$ are eventually conic exact symplectomorphisms. Then by taking the geometric realization, this defines a Liouville bundle $E_{BG} \to BG$. Since the transition maps identifies the fiberwise Liouville forms up to a family of exact 1-forms, $E_{BG} \to BG$ is equipped with the 1-form determined by gluing fiberwise Liouville 1-forms via partition of unity.
\end{proof}

\begin{remark}
    When we truncate the join construction at the first step, the above construction is simply the clutching construction, where on the neighbourhood of each vertex $U(\star)$ homeomorphic to $G \times \Delta^1/G \times 0$, we consider the trivial Liouville bundle $W \times U(\star)$, and define the transition map by
    $\varphi_{12}(x, g) = (\varphi_\mu^g(x), g)$; see \cite{Seidel-automorphism} for the case when $G = S^1$.
\end{remark}

\begin{lemma}
    Let $W$ be a Weinstein domain. Let $G$ be a (finite dimensional) Lie group with an eventually conic Hamiltonian $G$-action $\mu$ on $W$. Then there is a Liouville bundle $W_{BG} \to T^*BG$ where $T^*BG$ is the Milnor construction. For any smooth map $\Delta \to BG$ (where $BG$ is considered as a diffeological space), the pull-back determines a Liouville bundle with Liouville total space $W_{\Delta} \to T^*\Delta$, and $W_{\Delta} = E_\Delta \times_\Delta T^*\Delta$.
\end{lemma}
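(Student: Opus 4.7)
The plan is to construct $W_{BG}$ simplicially piece-by-piece, extending the Milnor-join construction of $E_{BG}$ from the previous lemma by adjoining cotangent directions on each simplex, and then combining the fiberwise Liouville form with the canonical Liouville form of $T^*\Delta^j$ to obtain a genuine Liouville structure on the total space.

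On the $j$-th Milnor piece $G^j \times \Delta^j$, the previous lemma produces the trivial Liouville bundle $W \times G^j \times \Delta^j \to G^j \times \Delta^j$.  I would replace this with the trivial bundle
$$W \times G^j \times T^*\Delta^j \longrightarrow G^j \times T^*\Delta^j,$$
and equip the total space with the Liouville $1$-form $\pi_W^*\lambda_W + \pi_{T^*\Delta^j}^*\lambda_{T^*\Delta^j}$.  The fiber over a point in $G^j \times T^*\Delta^j$ is canonically identified with $W$ and carries the pulled-back Liouville form, while the extra $T^*\Delta^j$ direction ensures that the combined $1$-form on the total space has nondegenerate differential and hence defines a Liouville structure on $W \times G^j \times T^*\Delta^j$.

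The key step is the gluing.  The transition maps from the previous lemma, of the form $\varphi_{01}(x,g_1,\dots,g_j) = (\varphi_\mu^{g_1}(x),g_1,\dots,g_j)$ and $\varphi_{i,i+1}(x,\dots,g_{i-1},g_i,\dots)=(x,\dots,g_{i-1}g_i^{-1},g_ig_{i+1},\dots)$, act by eventually conic exact symplectomorphisms, and so pull the fiber Liouville form $\lambda_W$ back to $\lambda_W + df$ for some function $f$ on $W \times G^j$ depending smoothly on the $G$-parameters.  To promote these to transition maps preserving the \emph{total} Liouville form, I would lift them to $W \times G^j \times T^*\Delta^j$ by composing with a fiberwise translation in the cotangent fibers of $T^*\Delta^j$ that subtracts exactly the $\Delta^j$-derivative of $f$.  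This uses the exactness hypothesis on the Hamiltonian $G$-action in an essential way and mirrors the standard trick for turning a fiber bundle of Liouville manifolds into an honest Liouville manifold by ``pairing with the base cotangent directions''.  Verifying the cocycle condition on overlaps reduces to the compatibility of the primitives $f$ with the group law, which follows from the $G$-equivariance of the Liouville form on $E_{BG}$ established in the previous lemma.

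Taking geometric realization of the resulting simplicial diagram then yields a Liouville bundle $W_{BG} \to T^*BG$ with Liouville total space.  For the final assertion, given a smooth map $\Delta \to BG$ (with $BG$ treated diffeologically), pullback commutes with the fiber-product construction by its universal property: on each Milnor piece, pulling back $W \times G^j \times T^*\Delta^j = (W \times G^j \times \Delta^j)\times_{\Delta^j} T^*\Delta^j$ yields $E_\Delta \times_\Delta T^*\Delta$, and the transition maps assemble into the required identification $W_\Delta = E_\Delta \times_\Delta T^*\Delta$ over $T^*\Delta$.  The main obstacle I anticipate is the bookkeeping around the cotangent-shift gluing, in particular checking that the primitives can be chosen coherently across faces so as to define a global $1$-form on the diffeological realization; this should follow from the fact that the $f$'s are constructed functorially from the $G$-action and depend smoothly on all parameters.
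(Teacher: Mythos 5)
Your approach does not match the paper's, and it contains a genuine gap that would make the construction fail.

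The crucial difference is the choice of local model on the $j$-th Milnor piece. You take $W \times G^j \times T^*\Delta^j$, whereas the paper takes $W \times T^*(G^j \times \Delta^j) = W \times T^*G^j \times T^*\Delta^j$. Your model is, generically, \emph{not a symplectic manifold at all}: if $\dim G$ is odd (e.g.\ $G=S^1$) and $j$ is odd, then $\dim W + j\dim G + 2j$ is odd. Even when the parity happens to work out, the $2$-form $d(\pi_W^*\lambda_W + \pi_{T^*\Delta^j}^*\lambda_{T^*\Delta^j})$ has the entire $G^j$ direction in its kernel, so it is degenerate. The claim in your proposal that ``the extra $T^*\Delta^j$ direction ensures that the combined $1$-form on the total space has nondegenerate differential'' is simply false; to get nondegeneracy one must also double the $G$ directions by passing to $T^*G^j$.

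Once the $T^*G^j$ factor is in place, the second problem with your gluing becomes clearer. The exactness defect of each transition map from the previous lemma is a primitive $f$ living on $W \times G^j$; the transition maps are \emph{trivial} in the $\Delta^j$ factor, so $f$ has no $\Delta^j$-dependence and the ``$\Delta^j$-derivative of $f$'' you propose to subtract off in the $T^*\Delta^j$ fibers vanishes identically. Your cotangent shift is therefore a no-op and does not make the transition maps exact symplectomorphisms of the total space. The paper's construction shifts the $T^*G$ fibers instead, by the moment map: in coordinates $(g_i,h_i) \in G\times\mathfrak g^* \cong T^*G$, the first transition map acts by
\[
\varphi_{01}(x, g_1, h_1, \dots) = (\varphi_\mu^{g_1}(x),\ g_1,\ h_1 - \mu(\varphi_\mu^{g_1}(x)),\ \dots),
\]
and the higher transition maps shift the $h_i$ by appropriate coadjoint-twisted combinations. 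This is the standard ``graph of the moment map'' mechanism: the defect of $\varphi_\mu^{g}$ as an exact symplectomorphism of $W$ is precisely a function of $g$, so it must be compensated in the cotangent direction \emph{dual to $g$}, i.e.\ in $T^*G$, not in $T^*\Delta$. In short: the ``pairing with base cotangent directions'' trick you invoke is the right idea, but you are pairing with the wrong cotangent directions. Correcting these two points recovers the paper's argument, and the formula $W_\Delta = E_\Delta \times_\Delta T^*\Delta$ then follows as you intend.
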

\begin{proof}
    We again use the Milnor join construction \cite{Milnor-classify-ii}. Consider a tubular neighbourhood of a face $U(G^{j-1} \times \Delta^{j-1}) \subset G^{j} \times \Delta^j$ which is homeomorphic to $G^{j-1} \times \Delta^{j-1} \times (G \times \Delta^1/G \times 0)$. We extend the Liouville structure from $W \times T^*(G^{j-1} \times \Delta^{j-1})$ to $W \times T^*U(G^{j-1} \times \Delta^j_i)$ (with respect to the diffeology structure \cite{MagnotWatts})
    $$\lambda_{W \times T^*U(G^{j-1} \times \Delta^{j-1})} = \lambda_{W \times T^*(G^{j-1} \times \Delta^{j-1})} \times \lambda_{T^*(G \times \Delta^1)}.$$
    Then we define transition maps $\varphi_{ij}$ of the Liouville bundles between neighbourhoods of different faces $U(G^{j-1} \times \Delta^{j-1})$ by
    \begin{gather*}
        \varphi_{01}(x, g_1, h_1, \dots, g_j, h_j) = (\varphi_\mu^{g_1}(x), g_1, h_1 - \mu(\varphi_\mu^{g_1}(x)), \dots, g_j, h_j), \\
        \varphi_{i,i+1}(x, \dots, g_{i-1}, h_{i-1}, g_i, h_i, \dots) = (x, \dots, g_{i-1}g_i^{-1}, h_{i-1} - Ad_{g_{i-1}}h_i, g_ig_{i+1}, h_i + Ad_{g_i}h_{i+1}, \dots),
    \end{gather*}
    and all the maps are trivial on the $\Delta^j$ factors. Since $\mu$ is an eventually conic Hamiltonian action, the transition maps determined by $\varphi_\mu$ are eventually conic exact symplectomorphisms. by taking the geometric realization, this defines a Liouville bundle $W_{BG} \to T^*BG$, and by construction, it is straight forward to see that
    $$W_{BG} = E_{BG} \times_{BG} T^*BG.$$
    Since the transition maps identifies the global Liouville forms up to an exact 1-form, $W_{BG}$ is equipped with a Liouville 1-form determined on the total space by gluing the Liouville 1-forms via partition of unity.
\end{proof}

\begin{corollary} \label{cor: G action comparison}
    Let $W$ be a Weinstein domain. Let $G$ be a (finite dimensional) Lie group with an eventually conic Hamiltonian $G$-action $\mu$ on $W$.  Fix $G$-equivariant   Maslov data. Then Theorem \ref{thm: group action} determines a local system of categories in $\Fun(BG, \PrLst)$ such that for any smooth map $\Delta \to BG$, the sections on the manifold are $\msh_{\mathfrak{c}_{W_\Delta}}(\mathfrak{c}_{W_\Delta})$.
\end{corollary}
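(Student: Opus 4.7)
The strategy is to identify, for each simplex $\sigma: \Delta^n \to BG$ given by group elements $(g_1, \dots, g_n)$, the sections $\Gamma(\Delta^n, \sigma^*F)$ of the local system $F: BG \to \PrLst$ produced in Theorem \ref{thm: group action} with the Weinstein microsheaf category $\msh_{\mathfrak{c}_{W_\sigma}}(\mathfrak{c}_{W_\sigma})$, in a way which is compatible with face and degeneracy maps. Since $\Delta^n$ is contractible, the sections of $\sigma^*F$ are canonically a single copy of $\msh_{\mathfrak{c}_W}(\mathfrak{c}_W)$ together with specified identifications at each vertex; the simplicial structure of $F$ records that these identifications at adjacent vertices differ by the autoequivalences $\Phi_\mu^{g_i}$ of Theorem \ref{thm: group action}. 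The content of the statement is that the same description also computes $\msh_{\mathfrak{c}_{W_\sigma}}(\mathfrak{c}_{W_\sigma})$.

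First, I would compute $\msh_{\mathfrak{c}_{W_\sigma}}(\mathfrak{c}_{W_\sigma})$ by descent. Cover $\Delta^n$ by contractible open tubular neighborhoods $U_0, \dots, U_n$ of the vertices, chosen small enough that the local trivializations of $E_\sigma$ from the first lemma of Appendix A yield identifications $W_\sigma|_{T^*U_i} \cong W \times T^*U_i$ as Liouville manifolds. By Theorem \ref{thm:kunneth-microsheaf} (K\"unneth for microsheaves) combined with $\msh_{\mathfrak{c}_{T^*U_i}}(\mathfrak{c}_{T^*U_i}) \simeq \Loc(U_i) \simeq \SC$, each $\msh_{\mathfrak{c}_{W\times T^*U_i}}(\mathfrak{c}_{W\times T^*U_i}) \simeq \msh_{\mathfrak{c}_W}(\mathfrak{c}_W)$. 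Sheaf descent for $\msh$ (Corollary \ref{cor: presentable doubling}) then expresses $\msh_{\mathfrak{c}_{W_\sigma}}(\mathfrak{c}_{W_\sigma})$ as the totalization of the \v{C}ech cosimplicial diagram with terms $\prod_{i_0 < \cdots < i_k} \msh_{\mathfrak{c}_{W \times T^*U_{i_0 \cdots i_k}}}(\mathfrak{c}_{W\times T^*U_{i_0 \cdots i_k}}) \simeq (\msh_{\mathfrak{c}_W}(\mathfrak{c}_W))^{\binom{n+1}{k+1}}$, where the restriction maps on overlaps are given by the transition data $\varphi_\mu^{g}$ of the Liouville bundle.

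Next, I would identify these transition functors with the autoequivalences $\Phi_\mu^g$. On a double overlap $U_i \cap U_j$, the change of trivialization is effected by the Hamiltonian diffeomorphism $\varphi_\mu^{g}$ (for the appropriate product of the $g_k$'s). By Proposition \ref{prop: subdomain}, the Viterbo-type restriction associated to a Liouville embedding is implemented by the Lagrangian correspondence given by the graph; by Proposition \ref{prop: group action}, applied to the eventually conic Hamiltonian $g$-action on $W$, this graph is precisely the Lagrangian correspondence whose microsheaf quantization is $\SK_\mu^g$, inducing $\Phi_\mu^g$. Thus each edge of the descent diagram for $\msh_{\mathfrak{c}_{W_\sigma}}(\mathfrak{c}_{W_\sigma})$ matches the corresponding edge of the diagram computing $\Gamma(\Delta^n, \sigma^*F)$ from the local system $F$. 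The Segal simplicial picture used to define $F$ in Theorem \ref{thm: group action} is canonically isomorphic to the \v{C}ech one after choosing the cover above, so the two totalizations agree.

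The main obstacle is higher coherence: one must check that these identifications vary naturally as $\sigma$ ranges over all simplices, that is, assemble into a morphism of functors $\Sing^\infty(BG) \to \PrLst$. This amounts to verifying that the compositions of Viterbo restrictions along chains of Liouville embeddings $W \hookrightarrow W \times T^*U_i \hookrightarrow W_\sigma$ agree on overlaps with the iterated composition of the kernels $\SK_\mu^{g_k}$; here I would use the functoriality of Viterbo restriction (Corollary \ref{functoriality of embedding}), the associativity of gapped composition of microsheaf kernels (Proposition \ref{prop: gapped composition associative weinstein}), and the higher-coherent compatibility recorded in Corollary \ref{thm: main composition higher}. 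Combining these three coherence results, the two simplicial diagrams agree not just termwise but as functors $BG \to \PrLst$, completing the identification.
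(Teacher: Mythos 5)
Your proposal shares the basic structure of the paper's argument — trivialize the Liouville bundle over contractible pieces, identify each local piece with $\msh_{\mathfrak{c}_W}(\mathfrak{c}_W)$, and match the transition functors with the autoequivalences $\Phi_\mu^g$ — but it diverges in a way that makes the coherence step harder than you acknowledge. The paper works with Milnor's join model of $BG$ rather than the Segal bar construction, precisely so that the \v{C}ech-style descent over tubular neighborhoods of faces of $G^j \times \Delta^j$ directly \emph{is} the simplicial data; the paper then only needs to observe that the automorphism $\pi_1^*\SK_\mu \circ -$ induced by the Liouville transition map coincides with the face maps in the simplicial diagram of Theorem \ref{thm: group action}. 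In your version you assert that the "Segal simplicial picture...is canonically isomorphic to the \v{C}ech one after choosing the cover above," but this is nontrivial: the Segal $k$-simplices are $G^{\times k}$, not overlaps of an open cover of a single $\Delta^n$, and the equivalence of the two models needs to be invoked rather than treated as obvious. Your use of Proposition \ref{prop: subdomain} is also misdirected: the transition on an overlap $U_i \cap U_j$ is effected by a compactly-supported-at-infinity Hamiltonian \emph{symplectomorphism} $\varphi_\mu^g$ of $W$, not a Liouville subdomain inclusion, so the Viterbo-type quantization does not apply; the correct tool is the quantization of contact isotopies, i.e.\ Corollary \ref{cor: GKS kernel invertible} together with Proposition \ref{prop: group action}, which is what the paper cites. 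For the higher coherence, assembling Corollaries \ref{functoriality of embedding} and \ref{thm: main composition higher} is overkill and also slightly off-target; what is actually used is the coherence already built into the simplicial diagram of Theorem \ref{thm: group action} (via the iterated isomorphisms $m^*\SK_{\mu_\infty} \simeq \pi_1^*\SK_{\mu_\infty} \circ \pi_2^*\SK_{\mu_\infty}$), which matches term-by-term with the cocycle condition for the Liouville transition maps.

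In short: your strategy is viable, and the \v{C}ech-descent framing is a reasonable way to make precise the claim (which the paper's own proof states but does not spell out) that sections of the constructed local system over a simplex $\Delta$ agree with $\msh_{\mathfrak{c}_{W_\Delta}}(\mathfrak{c}_{W_\Delta})$. However, as written there is a genuine gap where you assert the \v{C}ech and Segal pictures agree without argument; closing it would either require switching to the Milnor join model (as the paper does), or carrying out an explicit comparison of the two resolutions of $BG$. You should also replace the Viterbo-transfer argument with the $\SK_\mu$-kernel argument throughout.
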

\begin{proof}
    We will construct a local system of categories from the universal Liouville bundle $W_{BG} \to BG$ and compare it with the local system of categories constructed from the Lagrangian correspondence in Theorem \ref{thm: group action}. Consider the Milnor join construction. For any tubular neighbourhood of a face $U(G^{j-1} \times \Delta^{j-1}) \subset G^{j} \times \Delta^j$ which is homeomorphic to $G^{j-1} \times \Delta^{j-1} \times (G \times \Delta^1/G \times 0)$, we define the local system of categories to be the constant sheaf of categories $\msh_{\mathfrak{c}_W}(\mathfrak{c}_W)$. On the intersection between neighbourhoods of different faces $U(G^{j-1} \times \Delta^{j-1})$, by Theorem \ref{cor: GKS kernel invertible}, the Liouville transition map induces the automorphism
    $$\pi_1^*\SK_\mu \circ -: \msh_{\mathfrak{c}_{W}}(\mathfrak{c}_W) \otimes \Loc(G^{\times j}) \to \msh_{\mathfrak{c}_{W}}(\mathfrak{c}_W) \otimes \Loc(G^{\times j}).$$
    This defines a local system of categories over $BG$ such that for any smooth map $\Delta \to BG$, the sections on the manifold are $\msh_{\mathfrak{c}_{W_\Delta}}(\mathfrak{c}_{W_\Delta})$.
    
    Then we compare the above construction through the universal Liouville bundle with the construction in Theorem \ref{thm: group action}. We know that the the group action in Theorem \ref{thm: group action} is also induced by functor defined by the Lagrangian correspondence
    $$\pi_1^*\SK_\mu \circ -: \msh_{\mathfrak{c}_{W}}(\mathfrak{c}_W) \otimes \Loc(G)^{\otimes j} \to \msh_{\mathfrak{c}_{W}}(\mathfrak{c}_W) \otimes \Loc(G)^{\otimes j-1}.$$
    Writing the above functor as the following composition implies that it factors as the automorphism induced by the Liouville transition map and the inclusion of trivial local systems:
    $$\pi_1^*\SK_\mu \circ -: \msh_{\mathfrak{c}_{W}}(\mathfrak{c}_W) \otimes \Loc(G^{\times j}) \to \msh_{\mathfrak{c}_{W}}(\mathfrak{c}_W) \otimes \Loc(G^{\times j}) \to \msh_{\mathfrak{c}_{W}}(\mathfrak{c}_W) \otimes \Loc(U(G^{\times j-1})).$$
    This therefore identifies the two constructions of local systems of categories.
\end{proof}

To give a direct comparison with the theorem of Oh and Tanaka, it would remain only to ``functorially apply the sheaf/Fukaya dictionary \cite{Ganatra-Pardon-Shende3} to Corollary \ref{cor: G action comparison}'' and also to show that resulting covariant maps on Fukaya categories from sector inclusions agree with the ``parallel transport'' maps that constructed by hand in \cite{Oh-Tanaka2}.


\section{Isotopies of microsheaves in Weinstein manifolds}
    Here we develop two additional results: first, an interpretation of sheaf quantization in Weinstein manifolds as a form of doubling, and second, a continuation map for the sheaf quantization of contact isotopies in Weinstein manifolds, and the corresponding analogue of Kuo's wrapping formula.  These results are not used in body of the article. 

\subsection{More on noncharacteristic propagation}

    For a sheaf $\SF \in \Sh(M \times I)$, when the microsupport is contained in $S^*M \times T^*_{\tau\leq 0}I$, we can use non-characteristic propagation to define continuation maps between sheaves $i_0^*\SF \to i_1^*\SF$ (see Proposition \ref{prop: positive continue shv}):

\begin{proposition}[{\cite[Proposition 4.8]{Guillermou-Kashiwara-Schapira} \cite[Proposition 3.2]{Kuo-wrapped-sheaves}}]\label{prop: positive continue shv app} 
    Fix $\Lambda \subset S^*M \times T^*_{\tau \leq 0}I$.  We write $\Lambda_0$ and $\Lambda_1$ for the contact reductions of $\Lambda$ onto $S^*M \times 0$ and $S^*M \times 1$. Then for any $\SF \in \Sh_\Lambda(M \times I)$, there is a natural continuation morphism $i_{0}^*\SF \to i_{1}^*\SF$ such that for any $U \subset M$, its induced map on sections is 
    $$\Gamma(U, i_0^*\SF) \xleftarrow{\sim} \Gamma(U \times I, \SF) \rightarrow \Gamma(U, i_1^*\SF).$$
\end{proposition}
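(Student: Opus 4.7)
The plan is to define the continuation morphism as a zigzag through the projection pushforward and then verify that one leg is an isomorphism under the microsupport hypothesis. Let $p: M \times I \to M$ be the projection, so that $p \circ i_k = \id_M$ for $k=0,1$. Applying $p_*$ to the adjunction unit $\SF \to i_{k*}i_k^*\SF$ gives canonical morphisms $p_*\SF \to i_k^*\SF$; on sections over an open $U \subset M$ both become the obvious restrictions $\Gamma(U \times I, \SF) \to \Gamma(U, i_k^*\SF)$. This construction is manifestly natural in $\SF$, so it will suffice to define the continuation morphism as the composition $i_0^*\SF \xleftarrow{\sim} p_*\SF \to i_1^*\SF$, once the left arrow is shown to be an isomorphism.

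Next I would verify that $p_*\SF \to i_0^*\SF$ is an isomorphism by computing sections over an arbitrary open $U \subset M$. The key geometric observation is that for the family of open subsets $V_s := U \times (I \cap (-\infty,s)) \subset M \times I$ one has $V_s = \bigcup_{s' < s} V_{s'}$, and the outward conormal $\dot N^*_{out}V_s$ at the only ``moving'' face $\{t=s\}$ lies in the half-space $\{\tau>0\}$, which is disjoint from $\dot \ss(\SF) \subset \{\tau \leq 0\}$ by hypothesis. Applying the open-set version of Lemma~\ref{lem: non-char} to this family yields $\Gamma(U \times I, \SF) \xrightarrow{\sim} \Gamma(V_s, \SF)$ for every $s$ in $I$ above $0$; passing to the colimit as $s \to 0^+$ identifies the common value with $\Gamma(U, i_0^*\SF)$, which is what we need. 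The second arrow $\Gamma(U \times I, \SF) \to \Gamma(U, i_1^*\SF)$ is simply restriction, and composing the inverse of the first with this restriction recovers the section-level formula stated in the proposition.

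The main technical delicacy is not serious but is a matter of bookkeeping: one must arrange the family $\{V_s\}$ so that its only outward conormal contributions come from the moving face $\{t=s\}$, rather than also from a spurious lower face at $\{t=0\}$. That amounts to choosing a convenient model for $I$ (e.g.\ working inside $M \times \bR$ and extending $\SF$ by zero if $I$ is compact), after which the proof reduces to a direct application of noncharacteristic propagation, as carried out in the cited sources \cite{Guillermou-Kashiwara-Schapira, Kuo-wrapped-sheaves}.
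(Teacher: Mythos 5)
Your zigzag construction of the continuation through $p_*\SF$, followed by noncharacteristic propagation to show the left leg is an isomorphism, is the standard route in the cited sources; the paper itself simply cites \cite{Guillermou-Kashiwara-Schapira, Kuo-wrapped-sheaves} without reproducing a proof, so you are filling in an argument where the paper has none.

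There is, however, a genuine gap in the NCDL step that you do not address. The paper's Lemma~\ref{lem: non-char} requires $\dot N^*_{out}V_s \cap \dot\ss(\SF) = \varnothing$ for the \emph{entire} outward conormal of $V_s$, and you only verify disjointness at the moving face $\{t=s\}$, where the conormal lies in $\{\tau>0\}$. But $V_s = U\times(I\cap(-\infty,s))$ also has stationary side boundary along $\partial U\times[0,s)$, whose outward conormals have the form $(\xi,0)$ with $\xi\in\dot N^*_{out}U$. Since the hypothesis $\Lambda\subset S^*M\times T^*_{\tau\le 0}I$ permits $\tau=0$ together with arbitrary $\xi\neq 0$, these stationary-face conormals can meet $\dot\ss(\SF)$ whenever $\partial U$ sits badly relative to the base projection of $\Lambda$; and $U$ is an arbitrary open, so this cannot be avoided. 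The ``delicacy'' you flag (a possible spurious lower face at $\{t=0\}$) is in fact moot when $I$ is a closed interval with $0$ as an endpoint, since $V_s$ then has no outward conormal along $\{t=0\}$; the real obstruction is the $\partial U$ boundary. To close the gap one should either invoke the finer hypotheses of \cite[Prop.~2.7.2]{KS}, which constrain the microsupport only along the whisker $Z_s=\bigcap_{t>s}\overline{V_t\setminus V_s}=\overline U\times\{s\}$ in the deformation direction so that stationary faces do not enter, or first push forward along $q:M\times I\to I$: since $\dot\ss(\SF)\cap(0_M\times\dot T^*I)=\varnothing$ and, under the paper's compact-support convention, $q$ is proper on $\operatorname{supp}(\SF)$, the microsupport estimate for proper pushforward gives $\dot\ss(Rq_*\SF)=\varnothing$, so $Rq_*\SF$ is locally constant on $I$; this handles $U=M$ directly, and a further local argument is then needed to descend to arbitrary $U$.
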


\begin{proposition}[{\cite[Proposition 3.22]{Kuo-wrapped-sheaves}}]\label{prop: positive canonical shv}
    Fix $\Lambda \subset S^*M \times T^*_{\tau \leq 0}I \times T^*J$.  Assume the the contact reduction of $\Lambda$ onto $S^*(M \times J) \times 0$ is $\Lambda_0 \times 0_J$ and the reduction onto $S^*(M \times J) \times 1$ is $\Lambda_1 \times 0_J$. Then for any $\SF \in \Sh_\Lambda(M \times I \times J)$ and any $s, s' \in J$, the continuation morphisms fit into a commutative diagram
    \[\begin{tikzcd}
    i_{s,0}^*\SF \ar[r] \ar[d, "\rotatebox{90}{$\sim$}" left] & i_{s,1}^*\SF \ar[d, "\rotatebox{90}{$\sim$}"] \\
    i_{s',0}^*\SF \ar[r] & i_{s',1}^*\SF.
    \end{tikzcd}\]
\end{proposition}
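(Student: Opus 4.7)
The plan is to apply Proposition \ref{prop: positive continue shv app} globally (not fiberwise in $J$) and then obtain the vertical isomorphisms by noncharacteristic propagation in the $J$-direction.  Concretely, since $\ss(\SF) \subset S^*(M \times J) \times T^*_{\tau \leq 0} I$, I would apply Proposition \ref{prop: positive continue shv app} with base manifold $M \times J$ in place of $M$; this produces a continuation morphism $c : i_0^*\SF \to i_1^*\SF$ in $\Sh(M \times J)$, where $i_k$ denotes the inclusion at $I = k$.  The horizontal arrows in the target square are then the restrictions $c|_{M \times \{s\}}$ and $c|_{M \times \{s'\}}$.

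To produce the vertical isomorphisms, I would use the hypothesis on the contact reductions to observe that $\ss(i_k^*\SF) \subset \Lambda_k \times 0_J$ for $k = 0, 1$, so that the $J$-conormal vanishes.  Applying the non-characteristic deformation Lemma \ref{lem: non-char} to the family of open boxes $U \times V_t$, where $U \subset M$ is a small open and $V_t \subset J$ is a neighborhood contracting to $\{s\}$, yields canonical restriction isomorphisms $\Gamma(U \times V, i_k^*\SF) \xrightarrow{\sim} \Gamma(U \times \{s\}, i_{s,k}^*\SF)$ for any sufficiently small $V \ni s$.  Composing such isomorphisms for $s$ and $s'$ in the same $V$ gives the canonical vertical isomorphism $i_{s,k}^*\SF \xrightarrow{\sim} i_{s',k}^*\SF$ for each $k \in \{0, 1\}$.

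Commutativity of the square then reduces to a functoriality statement.  Both the horizontal continuation map restricted to slices of $J$ and the vertical propagation isomorphism are obtained by pulling back from a common object $\Gamma(U \times V \times I, \SF)$: the former via the chain $\Gamma(U \times V, i_0^*\SF) \xleftarrow{\sim} \Gamma(U \times V \times I, \SF) \to \Gamma(U \times V, i_1^*\SF)$ provided by Proposition \ref{prop: positive continue shv app}, and the latter via the restriction $\Gamma(U \times V, i_k^*\SF) \to \Gamma(U \times \{s\}, i_{s,k}^*\SF)$ provided by Lemma \ref{lem: non-char}.  Since pullback along the slice inclusion $U \times \{s\} \hookrightarrow U \times V$ commutes with the morphism $c$ viewed as a map of sheaves on $M \times J$, the two composites in the square agree.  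The only obstacle I anticipate is purely notational bookkeeping, in keeping track of which direction of non-characteristic propagation is used where; there is no substantive new microlocal input beyond Proposition \ref{prop: positive continue shv app} and Lemma \ref{lem: non-char}.
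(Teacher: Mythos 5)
The paper attributes this statement to Kuo's paper and does not supply its own proof, so I evaluate your argument on its own merits; it is essentially correct and gives what I take to be the intended proof.

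Applying Proposition \ref{prop: positive continue shv app} over the base $M \times J$ to obtain a single continuation morphism $c \colon i_0^*\SF \to i_1^*\SF$ in $\Sh(M\times J)$ is the right move (the hypothesis $\Lambda \subset S^*M \times T^*_{\tau\le 0}I \times T^*J$ does imply the microsupport condition with $M$ replaced by $M\times J$, and noncharacteristicity at $I=0,1$ holds since $\xi_M\neq 0$ on $\Lambda$). The central observation — that both the restriction of $c$ to slices $M\times\{s\}$ and the vertical identifications $i_{s,k}^*\SF\simeq i_{s',k}^*\SF$ are realized on sections by restriction maps from the common object $\Gamma(U\times V\times I,\SF)$, and hence commute by functoriality of restriction — is the entire content of the commutativity, and you state it clearly.

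Two minor imprecisions, neither fatal. First, your invocation of Lemma \ref{lem: non-char} to identify $\Gamma(U\times V, i_k^*\SF)$ with $\Gamma(U, i_{s,k}^*\SF)$ is slightly off target: that lemma controls stability of sections under shrinking a nested family of open sets and does not directly produce restriction to a closed slice. The cleaner statement is that $\dot\ss(i_k^*\SF)\subset\Lambda_k\times 0_J$ forces $i_k^*\SF$ to be $J$-locally constant (propagation in both $J$-directions, or \cite[Prop.~5.4.5]{KS}), hence pulled back from $M$ over any contractible $V\subset J$; the identifications you want then follow at once — Lemma \ref{stalk simple} is closer in letter to what you are actually using. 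Second, the vertical isomorphism for arbitrary $s,s'$ requires patching along a chain of small $V$'s, so it implicitly relies on $J$ being connected and, for canonicity independent of the chain, simply connected. In the paper's applications $J$ is always an interval or a simplex so this is automatic, but since it is not stated in the proposition you should flag it.
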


    Using doubling functor (Theorem \ref{thm: relative-doubling}), we descend these notions to microsheaves with sufficiently Legendrian supports (in other words, pdff, perturbable to finite positions and self displaceable):

\begin{proposition}\label{prop: positive continue}
    Let $\Lambda \subset S^*M \times T^*_{\tau \leq 0}I$ be a compact sufficiently Legendrian subset. Then for any $\SF \in \msh_\Lambda(\Lambda)$, there is a canonical continuation morphism $i_0^*\SF \to i_1^*\SF$ in $\msh(S^*M)$ (compatible with the continuation of the doubling sheaves).
\end{proposition}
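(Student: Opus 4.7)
The plan is to descend the sheaf-level continuation of Proposition \ref{prop: positive continue shv} to microsheaves by means of the doubling functor. Since $\Lambda$ is compact sufficiently Legendrian in $S^*(M \times I)$, Theorem \ref{thm: relative-doubling} and Definition \ref{def: positive doubling} furnish, for $\epsilon > 0$ sufficiently small, a doubling equivalence
$$w_\Lambda^+ : \msh_\Lambda(\Lambda) \xrightarrow{\sim} \Sh_{\underline\Lambda{}^+_{\cup,\epsilon}}(M \times I),$$
whose inverse is the microlocalization $m_\Lambda$. The first step is to arrange that the doubling contact flow is chosen as a contact isotopy of $S^*M$ (extended trivially to the $I$-factor and cut off appropriately); this is possible because $\Lambda \subset S^*M \times T^*_{\tau \leq 0} I$ and since $\Lambda$ being sufficiently Legendrian in $S^*(M\times I)$ is compatible with choosing the Reeb-type displacement inside $S^*M$. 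With such a choice, the doubled microsupport still satisfies $\underline\Lambda{}^+_{\cup,\epsilon} \subset (\dot T^*M \times T^*_{\tau \leq 0} I) \cup 0_{M \times I}$.

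With this microsupport constraint in place, I would then apply Proposition \ref{prop: positive continue shv} directly to the sheaf $w_\Lambda^+\SF \in \Sh(M \times I)$ to obtain a natural morphism $i_0^* w_\Lambda^+ \SF \to i_1^* w_\Lambda^+ \SF$ in $\Sh(M)$. To promote this to a morphism of microsheaves, I would apply the microlocalization functor $m$ and invoke the compatibility of restriction with doubling and microlocalization (Proposition \ref{prop: nearby commute micro} in the parametric form, or more directly Lemma \ref{lem:contact-transform-main} applied fiberwise) to identify $m_{\Lambda_s}(i_s^* w_\Lambda^+ \SF) \simeq i_s^* \SF$ for $s \in \{0, 1\}$. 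This produces the desired morphism $i_0^* \SF \to i_1^* \SF$ in $\msh(S^*M)$, compatible by construction with the continuation of the doubling sheaves.

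For canonicity, I would argue that the morphism is independent of the auxiliary choices (doubling parameter $\epsilon$, cut-off functions defining the contact isotopy). Given two such choices, one can connect them by a one-parameter family of doublings parametrized by an auxiliary $J$, whose combined microsupport still lies in $\dot T^*M \times T^*_{\tau \leq 0} I \times T^*J$ of the required form; Proposition \ref{prop: positive canonical shv} then produces a canonical identification at the sheaf level, which descends under microlocalization to the desired canonical identification at the microsheaf level.

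The main obstacle will be step one, namely verifying that a contact self-displacement of $\Lambda$ can be chosen to act only in the $S^*M$ direction so as to preserve the constraint $\underline\Lambda{}^+_{\cup,\epsilon} \subset (\dot T^*M \times T^*_{\tau \leq 0} I) \cup 0_{M \times I}$. This is what makes the sheaf-level hypothesis of Proposition \ref{prop: positive continue shv} applicable after doubling; once this is in place, the remaining steps are formal consequences of the doubling equivalence and the coherence of continuation maps under restriction.
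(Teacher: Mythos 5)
Your proposal follows the same route the paper takes: double via $w_\Lambda^+$ to reduce to the sheaf level, invoke Proposition~\ref{prop: positive continue shv} there, and microlocalize back; the paper's proof is exactly these three steps. Your added observations — identifying $m_{\Lambda_s}(i_s^*w_\Lambda^+\SF)\simeq i_s^*\SF$ via compatibility of restriction with doubling, and the parametric argument via Proposition~\ref{prop: positive canonical shv} to obtain independence of the doubling choices — are reasonable expansions of what the paper leaves implicit.

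The one place where you depart from the paper, and where your fix is not quite right, is step one. You propose to arrange the self-displacing contact isotopy to act purely in the $S^*M$-direction. This is neither what the paper does (it simply asserts $\underline\Lambda_{\cup,s}\subset T^*M\times T^*_{\tau\leq 0}I$) nor is it in general achievable: for an arbitrary compact sufficiently Legendrian $\Lambda\subset S^*M\times T^*_{\tau\leq 0}I$, there is no reason the projection to $S^*M$ should be self-displaceable by an $S^*M$-only contact flow, and the definition of sufficiently Legendrian (Definition~\ref{def: self displaceable}) only provides a displacing isotopy of $S^*(M\times I)$. What is actually needed is weaker: the doubling push-off need only preserve the closed locus $\{\tau\leq 0\}$. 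This is a property of the flow, not a requirement that the flow have no $I$-component. For instance, the fiberwise geodesic Reeb flow on $S^*(M\times I)$ (and suitable compactly supported cut-offs thereof near $\Lambda$) carries a covector $(x,t,\xi,\tau)$ along a trajectory on which $\tau$ is constant, so preserves $\tau\leq 0$ even though it does move the $I$-coordinate. Replacing your ``$S^*M$-only'' requirement by ``preserves $\tau\leq 0$'' closes the gap you flag, and makes your proposal match the paper's intent.
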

\begin{proof}
    Since $\Lambda$ is compact and sufficiently Legendrian, by Theorem \ref{thm: relative-doubling}, it suffices to show that there exists a continuation morphism
    $$i_{0}^*w_\Lambda(\SF) \to i_{1}^*w_\Lambda(\SF).$$
    Since $\Lambda \subset S^*M \times T^*_{\tau \leq 0}I$, we know that $\underline\Lambda_{\cup,s} \subset T^*M \times T^*_{\tau \leq 0}I$. Therefore, the continuation map $i_{0}^*w_\Lambda(\SF) \to i_{1}^*w_\Lambda(\SF)$ is constructed in Proposition \ref{prop: positive continue shv}.
\end{proof}

\begin{proposition}\label{prop: positive canonical}
    Let $\Lambda \subset S^*M \times T^*_{\tau \leq 0}I \times T^*J$ be a relative compact sufficiently Legendrian subsets. Assume the the contact reduction of $\Lambda$ onto $S^*(M \times J) \times 0$ is $\Lambda_0 \times 0_J$ and the reduction onto $S^*(M \times J) \times 1$ is $\Lambda_1 \times 0_J$. Then for any $\SF \in \msh_\Lambda(\Lambda)$ and any $s \in J$, there is a commutative diagram of continuation morphisms in $S^*M$ 
    \[\begin{tikzcd}
    i_{s,0}^*\SF \ar[r] \ar[d, "\rotatebox{90}{$\sim$}" left] & i_{s,1}^*\SF \ar[d, "\rotatebox{90}{$\sim$}"] \\
    i_{s',0}^*\SF \ar[r] & i_{s',1}^*\SF.
    \end{tikzcd}\]
\end{proposition}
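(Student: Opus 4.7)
The strategy mirrors that of Proposition \ref{prop: positive continue}: reduce to the sheaf-level statement via the doubling functor and then transport the resulting commutative diagram back to microsheaves by microlocalization. The key observation is that the hypothesis ``reduction onto $S^{*}(M\times J)\times k$ equals $\Lambda_{k}\times 0_{J}$ for $k=0,1$'' produces a situation, after doubling, which is precisely the one covered by Proposition \ref{prop: positive canonical shv}.

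First, since $\Lambda$ is a relatively compact sufficiently Legendrian subset of $S^{*}M\times T^{*}_{\tau\le 0}I\times T^{*}J$, Theorem \ref{thm: relative-doubling} gives an equivalence $m_{\Lambda}\colon \Sh_{\underline{\Lambda}_{\cup,s}}(M\times I\times J)\xrightarrow{\sim} \msh_{\Lambda}(\Lambda)$ with inverse the doubling functor $w_{\Lambda}$. Because $\Lambda$ is supported in $S^{*}M\times T^{*}_{\tau\le 0}I\times T^{*}J$ and the doubling is performed in the $M$-direction using a Reeb flow on $S^{*}M$, the doubled subset $\underline{\Lambda}_{\cup,s}$ is again contained in $T^{*}M\times T^{*}_{\tau\le 0}I\times T^{*}J$. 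Moreover, because the contact reduction of $\Lambda$ to $\{I=k\}$ is $\Lambda_{k}\times 0_{J}$, the intersection of $\underline{\Lambda}_{\cup,s}$ with $T^{*}M\times T^{*}_{I=k}\times T^{*}J$ is $\underline{\Lambda}_{k,\cup,s}\times 0_{J}$ for $k=0,1$.

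Second, apply Proposition \ref{prop: positive canonical shv} to the sheaf $w_{\Lambda}(\SF)\in \Sh_{\underline{\Lambda}_{\cup,s}}(M\times I\times J)$. This produces, for every $s,s'\in J$, a commutative square
\[
\begin{tikzcd}
i_{s,0}^{*}w_{\Lambda}(\SF) \ar[r] \ar[d,"\wr"'] & i_{s,1}^{*}w_{\Lambda}(\SF) \ar[d,"\wr"]\\
i_{s',0}^{*}w_{\Lambda}(\SF) \ar[r] & i_{s',1}^{*}w_{\Lambda}(\SF)
\end{tikzcd}
\]
in $\Sh(M)$, whose vertical arrows are isomorphisms (these come from noncharacteristic propagation along the $J$-direction, using that the microsupport of $w_{\Lambda}(\SF)$ has zero component in $T^{*}J$ at the endpoints $I=0,1$) and whose horizontal arrows are the continuation morphisms of Proposition \ref{prop: positive continue shv app}. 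Microlocalizing this diagram along the reductions $\Lambda_{0}$ and $\Lambda_{1}$, and using that $m_{\Lambda_{k}}\circ i_{s,k}^{*}=i_{s,k}^{*}\circ m_{\Lambda}$ (a consequence of Proposition \ref{prop: nearby commute micro} applied to restriction in $J$, or equivalently the compatibility of microlocalization with pullback along the fiber inclusions $\{s\}\hookrightarrow J$), yields the required commutative diagram in $\msh(S^{*}M)$, and by construction its horizontal arrows are the continuation maps of Proposition \ref{prop: positive continue}.

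The only subtlety is verifying that the restriction functors $i_{s,k}^{*}$ commute with microlocalization, i.e.\ that the doubling used to define the continuation map at the slice $\{s\}\subset J$ is compatible with the global doubling on $M\times I\times J$. This is where the hypothesis on the contact reduction of $\Lambda$ at $I=0,1$ being of the product form $\Lambda_{k}\times 0_{J}$ is essential: it guarantees that the global doubling, when restricted to a slice $\{I=k,\,J=s\}$, agrees with the doubling of $\Lambda_{k}$, so that the continuation morphism produced by the sheaf-level Proposition \ref{prop: positive canonical shv} is indeed the microsheaf-level continuation of Proposition \ref{prop: positive continue} at the slice $s$. Given this compatibility, the commutativity and invertibility of the vertical arrows follow directly from the corresponding assertions for sheaves.
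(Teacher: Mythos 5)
Your proposal is correct and follows essentially the same route as the paper's own proof: reduce to the sheaf level via the doubling functor (Theorem \ref{thm: relative-doubling}), observe that the doubled support $\underline\Lambda_{\cup,s}$ remains inside $T^*M\times T^*_{\tau\leq 0}I\times T^*J$ with product-form reductions at $I=0,1$, and then apply the sheaf-level statement Proposition \ref{prop: positive canonical shv}. Your extra discussion of why $i_{s,k}^*$ commutes with microlocalization is a worthwhile elaboration of a step the paper leaves implicit, but does not change the underlying argument.
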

\begin{proof}
    Since $\Lambda$ is compact sufficiently Legendrian, by Theorem \ref{thm: relative-doubling}, it suffices to show that for $s \in J$, the continuation morphisms
    $$i_{s,0}^*w_\Lambda(\SF) \to i_{s,1}^*w_\Lambda(\SF)$$
    are all homotopic. However, since $\underline\Lambda \subset S^*M \times T^*_{\tau \leq 0}I \times T^*J$, we know that $\Lambda_{\cup,s} \subset T^*M \times T^*_{\tau \leq 0}I \times T^*J$. Therefore, the result follows from the fact that the continuation map $i_{s,0}^*w_\Lambda(\SF) \to i_{s,1}^*w_\Lambda(\SF)$ is induced by by Proposition \ref{prop: positive continue shv} which commutes with the restriction maps to any $s \in J$ by Proposition \ref{prop: positive canonical shv}.
\end{proof}

\begin{proposition}\label{prop: hom deformation}
    Let $\Lambda, \Lambda' \subset S^*M \times T^*I$ be compact sufficiently Legendrian subsets. Suppose $\Lambda \cap \Lambda' \subset \Sigma$ and the complement $(-\Lambda + \Lambda') \setminus \Sigma \subset S^*M \times T^*_{\tau\leq 0}I$ and $(-\Lambda_0 + \Lambda'_0) \setminus \Sigma_0 \subset S^*M \times 0$. Then for any $\SF \in \msh_\Lambda(\Lambda)$ and $\SG \in \msh_{\Lambda'}(\Lambda')$, 
    $$\Hom_{\msh}(\SF, \SG) \simeq \Hom_{\msh}(i_0^*\SF, i_0^*\SG).$$
\end{proposition}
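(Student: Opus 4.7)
The plan is to use the doubling functor to convert the question into a sheaf theoretic computation on $M \times I$, and then to apply non-characteristic deformation along the $I$ direction, using the hypothesis on $-\Lambda + \Lambda'$ to control the microsupport of the internal hom sheaf. The argument parallels the templates of Propositions \ref{prop: positive continue} and \ref{prop: perturb compact}.

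First I would invoke Theorem \ref{thm: relative-doubling} to realize $\SF$ and $\SG$ as sheaves $\tilde\SF = w_\Lambda^+ \SF$ and $\tilde\SG = w_{\Lambda'}^+ \SG$ on $M \times I$, with microsupports contained in the doublings $\underline\Lambda_{\cup,\epsilon}^+$ and $\underline\Lambda'_{\cup,\epsilon}^+$ respectively. Since doubling is an equivalence with the corresponding sheaf categories, the microsheaf Hom is realized as a sheaf theoretic Hom, and the microsheaf restriction $i_0^*$ corresponds, under the doubling equivalence, to restriction to $M \times \{0\}$ on the sheaf side followed by microlocalization. The problem thus reduces to showing that the restriction morphism $\Gamma(M \times I, \sHom(\tilde\SF, \tilde\SG)) \to \Gamma(M \times \{0\}, \sHom(\tilde\SF, \tilde\SG))$ is an isomorphism after microlocalizing at the relevant Legendrian.

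The key microsupport input is Formula \eqref{lem: ss-hom}, which gives
$$\dot\SS(\sHom(\tilde\SF, \tilde\SG)) \subset -\underline\Lambda_{\cup,\epsilon}^+ \,\widehat+\, \underline\Lambda'_{\cup,\epsilon}^+.$$
By the first hypothesis and after shrinking $\epsilon$, this microsupport lies in $T^*M \times T^*_{\tau \leq 0}I$ away from a small thickening $\underline\Sigma_{\cup,\epsilon}$ of $\Sigma$; by the second hypothesis, at $t = 0$ and outside $\Sigma_0$ it is further contained in $T^*M \times \{\tau = 0\}$. Consequently, the outward conormals $T^*M \times \{\tau > 0\}$ to the family $U_s = M \times [0, s)$ are disjoint from this microsupport outside the $\underline\Sigma$ region, so non-characteristic deformation (Lemma \ref{lem: non-char}) applies there to identify $\Gamma(M \times I, \sHom) \xleftarrow{\sim} \Gamma(M \times \{0\}, \sHom)$.

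The main obstacle is absorbing the contribution of $\Sigma$, on which the hypotheses give no direct microsupport control. To handle it I would perturb to a clean situation using the self-displaceability of $\Lambda'$ (which holds since $\Lambda'$ is sufficiently Legendrian): for a small positive Reeb parameter $\delta > 0$, the pushoff $\Lambda'_\delta$ is disjoint from $\Lambda$, so $\Sigma$ disappears entirely and the non-characteristic argument of the previous paragraph applies cleanly to $\Hom_\msh(\SF, \SG_\delta)$ and $\Hom_\msh(i_0^*\SF, i_0^*\SG_\delta)$. A limit $\delta \to 0^+$, justified by the microlocal analogue of Proposition \ref{prop: perturb-lim} (as realized in Proposition \ref{prop: perturb compact}) applied to the pair $(\Lambda, \Lambda')$, then recovers the equivalence for the original $\SG$. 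Alternatively, one may observe that near $\Sigma$ the doublings $\tilde\SF$ and $\tilde\SG$ are controlled by the same local data, so the germ of $\sHom(\tilde\SF, \tilde\SG)$ near $\Sigma$ is essentially locally constant in the $I$ direction and restricts isomorphically to $t = 0$ for this reason alone.
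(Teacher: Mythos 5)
Your proof follows essentially the same route as the paper's: pass to doublings via Theorem \ref{thm: relative-doubling}, push $\SG$ off by a small Reeb parameter so that the doubled microsupports disjoin and $\sHom(w_\Lambda^+\SF, T_\epsilon w_{\Lambda'}^+\SG)$ has microsupport in $T^*M \times T^*_{\tau\le 0}I$, then apply non-characteristic deformation along the $I$-direction, with Proposition \ref{prop: perturb-lim} (stabilized as in Proposition \ref{prop: perturb compact}) absorbing the pushoff. Your reordering---establishing the restriction isomorphism for each pushed-off $\SG_\delta$ and then passing to $\delta \to 0^+$, rather than replacing $\SG$ by $T_\epsilon\SG$ once at the outset as the paper does---is a harmless change; note however that the concluding ``alternatively'' sketch about $\sHom$ being ``essentially locally constant near $\Sigma$'' is not justified by the stated microsupport hypotheses and should not be relied on.
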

\begin{proof}
    Since $\Lambda$ is compact and sufficiently Legendrian, by Theorem \ref{thm: relative-doubling}, it suffices to show that
    $$\Hom(w_\Lambda(\SF), w_\Lambda(\SG)) \simeq \Hom(i_1^*w_\Lambda(\SF), i_1^*w_\Lambda(\SG)).$$
    Since $\underline\Lambda_{\cup,s}, \underline\Lambda'_{\cup,s}$ are compact and sufficiently Legendrian, we know that when $\epsilon > 0$ is sufficiently small, by Proposition \ref{prop: perturb-lim},
    $$\Hom(w_\Lambda(\SF), w_\Lambda(\SG)) = \Hom(w_\Lambda(\SF), T_\epsilon w_\Lambda(\SG)).$$
    Since $\Lambda \cap \Lambda' \subset \Lambda_0$ and the complement $(-\Lambda + \Lambda') \setminus \Lambda_0 \subset S^*M \times T^*_{\tau\leq 0}I$, we know that after the small pushoff, $\Lambda_{\pm s} \cap T_\epsilon\Lambda'_{\pm s} = \varnothing$, and $-\underline\Lambda_{\cup,s} + T_\epsilon\underline\Lambda'_{\cup,s} \subset T^*M \times T^*_{\tau\leq 0}I$. Then by the singular support estimate Formula \eqref{lem: ss-hom} we know that
    $$\sHom(w_\Lambda(\SF), T_\epsilon w_\Lambda(\SG)) \subset T^*M \times T^*_{\tau\leq 0}I,$$
    and the result follows from non-characteristic deformation Proposition \ref{prop: positive continue}.
\end{proof}

\subsection{Continuation and wrapping in Weinstein manifolds}

    We construct continuation morphisms for sheaf quantizations of contact isotopies, generalizing  \cite[Section 4.3]{Guillermou-Kashiwara-Schapira} and \cite[Section 3.1]{Kuo-wrapped-sheaves} from cotangent bundles to Weinstein manifolds. 

\begin{proposition}
    Let $W$ be a Weinstein domain with contact boundary $\partial W$ with Maslov data, $\varphi_{H_\infty}^t, \varphi_{H'_\infty}^t: \partial W \to \partial W$ be contact Hamiltonian isotopies such that $H_\infty \leq H'_\infty$. Then there is a canonical continuation morphism in $\msh_{W^-\times W \times T^*I}(\mathfrak{c}_{W^-\times W \times T^*I})$:
    $$\SK_{H_\infty} \to \SK_{H'_\infty}.$$
\end{proposition}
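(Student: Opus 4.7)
The plan is to build a parametric family of graph Legendrians for a monotone interpolation between $H_\infty$ and $H'_\infty$, observe that monotonicity forces the $J$-conormal component of its microsupport to have a definite sign, and then extract the continuation morphism using the non-characteristic propagation machinery of Proposition \ref{prop: positive continue} applied through the Weinstein embedding of Section \ref{ssec: weinstein}.

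First I would extend $H_\infty$ and $H'_\infty$ to cutoff Hamiltonians $H, H' : W \to \bR$ as in the construction preceding Proposition \ref{prop: GKS}, then pick a smooth reparametrization $\rho : J \to [0,1]$ with $\rho' \ge 0$ that equals $0$ near $s=0$ and $1$ near $s=1$, and form $H^s = (1-\rho(s))H + \rho(s)H'$. Since $\rho' \ge 0$ and $H_\infty \leq H'_\infty$, we have $\partial_s H^s \ge 0$ on a neighborhood of $\partial W$. Consider the associated Legendrian movie
$$\Gamma_{H_J} = \{(x, \varphi^t_{H^s}(x), t, -H^s\circ\varphi^t_{H^s}(x), s, \tau_J(x,t,s)) \mid x \in W, t \in I, s \in J\} \subset W^- \times W \times T^*I \times T^*J \times \bR,$$
where $\tau_J$ is determined by the condition that the $\bR$-component be a primitive for the pullback of the contact form. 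A direct computation, analogous to \cite[Proposition 4.8]{Guillermou-Kashiwara-Schapira}, gives $\tau_J = -\int_0^t (\partial_s H^s)\circ \varphi^u_{H^s}(x)\, du$, which is non-positive near the contact boundary; hence (near infinity) $\Gamma_{H_J}$ is contained in $W^- \times W \times T^*I \times T^*_{\tau_J \le 0}J \times \bR$.

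Next I would apply Definition \ref{def: GKS} in the $J$-parametric setting: by Lemma \ref{lem: universal legendrian} the relative core $\mathfrak{c}^{\Gamma_{H_J}}_{W^- \times W \times T^*I \times T^*J}$ is universally sufficiently Legendrian, so the rank-one local system on $\Gamma_{H_J}$ microlocalizes to a kernel $\SK_{H_J} \in \msh_{\mathfrak{c}^{\Gamma_{H_J}}_{W^- \times W \times T^*I \times T^*J}}(\mathfrak{c}^{\Gamma_{H_J}}_{W^- \times W \times T^*I \times T^*J})$. The sign condition $\tau_J \leq 0$ at infinity is inherited by the microsupport of $\SK_{H_J}$. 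Then Proposition \ref{prop: positive continue}, transported to the Weinstein setting via the doubling equivalence of Theorem \ref{thm: relative-doubling}, produces a canonical morphism $i_0^* \SK_{H_J} \to i_1^* \SK_{H_J}$. Corollary \ref{cor: Hamilton invariance quantization} identifies the two endpoints with $\SK_{H_\infty}$ and $\SK_{H'_\infty}$, yielding the desired continuation morphism. Canonicity is obtained by running the same construction on a $J \times J'$-parametric interpolation between any two given choices of $(\rho, H, H')$ and invoking Proposition \ref{prop: positive canonical}, which shows the resulting two continuation morphisms are homotopic.

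The main obstacle will be verifying the microsupport sign condition $\tau_J \leq 0$ in a neighborhood of the entire relative core rather than just at the contact boundary, since the interior cutoff $H^s$ need not be monotone in $s$ away from $\partial W$. This is handled by restricting attention to the microsupport of the quantization (which lives over $\mathfrak{c}^{\Gamma_{H_J}}_{W^- \times W \times T^*I \times T^*J}$) and observing, as in the proof of Theorem \ref{thm: nearby = restriction at infty}, that the relevant non-characteristic propagation applies along the cylindrical ends where the sign condition does hold.
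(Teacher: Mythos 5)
Your proof follows the same route as the paper's: interpolate by a monotone $J$-family of contact Hamiltonians, quantize parametrically via Proposition \ref{prop: GKS}, use the sign condition $\tau_J \le 0$ to invoke the non-characteristic propagation of Proposition \ref{prop: positive continue}, and establish canonicity via higher-parametric families and Proposition \ref{prop: positive canonical}. You spell out the interpolation and the explicit formula for $\tau_J$ in more detail than the paper does, and the worry you raise about the interior cutoff is resolved exactly as you suggest — the kernel is microsupported on the relative core (via Definition \ref{def: GKS}), whose $J$-conormal component is zero on the actual core and nonpositive on the cone over the boundary graph, so the sign condition holds throughout.
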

\begin{proof}
    Consider any family of increasing family of contact Hamiltonians $G_\infty^s$ such that $G_\infty^0 = H_\infty$ and $G_\infty^1 = H'_\infty$. By Proposition \ref{prop: GKS}, there is a canonical sheaf quantization $\SK_{G_\infty} \in \msh_{\mathfrak{c}_{W^-\times W \times T^*I \times T^*J,\Gamma_{G_\infty}}}(\mathfrak{c}_{W^-\times W \times T^*I \times T^*J,\Gamma_{G_\infty}})$ such that 
    $$i_0^*\SK_{G_\infty} \simeq \SK_{H_\infty}, \quad i_1^*\SK_{G_\infty} \simeq \SK_{H'_\infty}.$$
    For the increasing family of contact Hamiltonians $G_\infty^s$, by the non-characteristic deformation Proposition \ref{prop: positive continue}, there is a continuation morphism
    $$i_0^*\SK_{G_\infty} \rightarrow i_1^*\SK_{G_\infty}.$$
    Moreover, by considering higher parametric families of contact Hamiltonians, we know by Proposition \ref{prop: positive canonical} that the continuation morphism is canonically determined by the homotopy class of the path of increasing Hamiltonians, which completes the proof.
\end{proof}

\begin{proposition}\label{prop: continuation hom microsheaf}
    Let $W$ be a Weinstein domain with contact boundary $\partial W$ with Maslov data, $\Lambda \subset \partial W$ a compact sufficiently Legendrian subset, and $\varphi_{H_\infty}^t: \partial W \to \partial W$ be a positive contact Hamiltonian isotopy. Then for any $\SF, \SG \in \msh_{\mathfrak{c}_{W,\Lambda}}(\mathfrak{c}_{W,\Lambda})$ and sufficiently small $\epsilon > 0$,
    $$\Hom(\SF, \SG) \simeq \lim_{t\to 0^+}\Hom(\SF, \SK^t_{H_\infty} \circ \SG) \simeq \Hom(\SF, \SK^\epsilon_{H_\infty} \circ \SG).$$
\end{proposition}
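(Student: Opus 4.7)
The plan is to reduce the microsheaf statement to its sheaf counterparts, Propositions \ref{prop: perturb-lim} and \ref{prop: perturb compact}, via the doubling functor. Concretely, I would fix a contact embedding $W \times \bR \hookrightarrow S^*\bR^N$ compatible with the Maslov data as in Section \ref{ssec: weinstein}; Theorem \ref{thm: relative-doubling} then provides a fully faithful doubling
$$w^+ \;=\; w^+_{\mathfrak{c}_{W,\Lambda}} \;:\; \msh_{\mathfrak{c}_{W,\Lambda}}(\mathfrak{c}_{W,\Lambda}) \hookrightarrow \Sh_{(\underline{\mathfrak{c}}_{W,\Lambda})^+_{\cup,\epsilon}}(\bR^N).$$
Under $w^+$, both the Hom spaces in the statement and the action $\SG \mapsto \SK^t_{H_\infty} \circ \SG$ are faithfully represented at the sheaf level. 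The latter is because, by Proposition \ref{prop: GKS} and Definition \ref{def: GKS}, the kernel $\SK_{H_\infty}$ is itself constructed from a sheaf quantization of the contact movie of $H_\infty$ in a cylindrical neighborhood of $\partial W$, so at the sheaf level the composition reduces to the Guillermou--Kashiwara--Schapira kernel of Theorem \ref{thm: GKS sheaf} acting on $w^+\SG$ near the cylindrical end.

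For the first isomorphism, I would view $\{\SK^t_{H_\infty} \circ \SG\}_{t>0}$ as the fibers of a family on $\bR^N \times \bR_{>0}$ whose doubling has microsupport contained in $S^*\bR^N \times T^*_{\tau \le 0}\bR_{>0}$ in a neighborhood of the cylindrical end (and is independent of $t$ elsewhere), by the positivity of $H_\infty$. Proposition \ref{prop: perturb-lim} then gives
$$\Hom(w^+\SF, w^+\SG) \;\simeq\; \lim_{t\to 0^+}\Hom(w^+\SF, w^+(\SK^t_{H_\infty} \circ \SG)),$$
which translates back to the first claimed equivalence by full faithfulness of $w^+$.

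For the second isomorphism, the point is that for any sufficiently small $\epsilon > 0$, continuation on $(0,\epsilon]$ is already an isomorphism on Hom. To see this, I would combine the positive isotopy $\varphi^t_{H_\infty}$ with a positive self-displacing isotopy of $\Lambda$ supplied by $\Lambda$ being sufficiently Legendrian; via Lemma \ref{lem: gapped collar} this produces a positive self-displacing isotopy of the ambient doubled Legendrian $(\underline{\mathfrak{c}}_{W,\Lambda})^+_{\cup,\epsilon}$, supported near $\partial W$. Proposition \ref{prop: perturb compact} then yields $\Hom(w^+\SF, w^+(\SK^t_{H_\infty} \circ \SG)) \xrightarrow{\sim} \Hom(w^+\SF, w^+(\SK^\epsilon_{H_\infty} \circ \SG))$ for all $0 < t \le \epsilon$, and combining with the first step gives the second isomorphism.

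The main obstacle is the bookkeeping required to verify that the doubled composition $w^+(\SK^t_{H_\infty} \circ \SG)$ is genuinely realized by the Guillermou--Kashiwara--Schapira sheaf quantization of $\varphi^t_{H_\infty}$ acting on $w^+\SG$, so that the microsupport estimate $\tau \le 0$ (needed for Proposition \ref{prop: perturb-lim}) and the self displaceability (needed for Proposition \ref{prop: perturb compact}) can be read off at the ambient level in $S^*\bR^N$. This compatibility is implicit in Definition \ref{def: GKS} of $\SK_{H_\infty}$ and in the associativity of gapped composition (Proposition \ref{prop: gapped composition associative weinstein}), provided one invokes that positive self displaceability passes through doubling in the presence of a gapped contact collar (Lemma \ref{lem: gapped collar}).
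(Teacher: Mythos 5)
Your handling of the first isomorphism is fine and is essentially what the paper does: in the paper it is phrased as $\SG \simeq \lim_{t\to 0^+}\SK^t_{H_\infty}\circ\SG$ and then one applies $\Hom(\SF, -)$, which amounts to the same reduction to Proposition \ref{prop: perturb-lim}.

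The second isomorphism is where your argument has a genuine gap. Proposition \ref{prop: perturb compact} requires the displacing isotopy $\phi_s$ to be \emph{supported in the closure of an open set $U$} with $\Omega = \Lambda^{\mathrm{ambient}}\cap U$ and $\phi_s(\Omega)\cap\Lambda^{\mathrm{ambient}}=\varnothing$ for small $s\neq 0$; in particular the isotopy must be cut off so that it vanishes on the part of $\Lambda^{\mathrm{ambient}}$ that is to remain fixed. You nowhere perform such a cut-off: you say you will ``combine'' $\varphi^t_{H_\infty}$ with a positive self-displacing isotopy, but $\varphi^t_{H_\infty}$ is not supported away from $\Lambda$ (or from the doubled $\widetilde\Lambda$), so as written the hypotheses of Proposition \ref{prop: perturb compact} are not met. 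Also, Lemma \ref{lem: gapped collar} asserts that gappedness of a family with gapped collar passes to its doubling — it does not produce a self-displacing isotopy, so its invocation here is misplaced.

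The paper resolves this differently and does not pass through Proposition \ref{prop: perturb compact} at all. It uses compactness of $\Lambda$ to choose $\epsilon>0$ so that $\Lambda\cap\Lambda_t=\varnothing$ for $\epsilon/2\le t\le\epsilon$, and then replaces $H_\infty$ by a cut-off $H'_\infty$ supported in a neighborhood of $\bigcup_{\epsilon/2\le t\le\epsilon}\Lambda_t$ (in particular away from $\Lambda$). Then $\varphi^s_{H'_\infty}$ is the identity near $\Lambda$ and coincides with $\varphi^s_{H_\infty}$ near $\Lambda_t$, so by the contact-transformation invariance of microsheaves (Lemma \ref{lem:contact-transform-main}) the associated quantized kernel of Proposition \ref{prop: GKS} satisfies $\SK^s_{H'_\infty}\circ\SF = \SF$ and $\SK^s_{H'_\infty}\circ\SK^t_{H_\infty}\circ\SG = \SK^{s+t}_{H_\infty}\circ\SG$ for $\epsilon/2\le t\le s+t\le\epsilon$. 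Since $\SK^s_{H'_\infty}$ is invertible (Corollary \ref{cor: GKS kernel invertible}), applying it to both sides of $\Hom(\SF,\SK^t_{H_\infty}\circ\SG)$ gives the isomorphism $\Hom(\SF,\SK^t_{H_\infty}\circ\SG)\simeq\Hom(\SF,\SK^{s+t}_{H_\infty}\circ\SG)$, and iterating covers $t\in(0,\epsilon]$; no reduction to the sheaf-level self-displaceability statement is needed. Your approach could perhaps be repaired by carrying out exactly this cut-off in the doubled picture and \emph{then} invoking Proposition \ref{prop: perturb compact}, but at that point the argument collapses onto the one in the paper with extra steps, so the appeal to that proposition is not buying you anything.
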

\begin{proof}
    For the first identify, it follows from the fact that $\SG \simeq \lim_{t \to 0^+}\SK^t_{H_\infty} \circ \SG$. For the second identity, since $\Lambda$ is compact and $\Lambda$ and $\Lambda_t$ are disjoint for any $\epsilon/2 \leq t \leq \epsilon$, by considering a cut-off $H'_\infty$ of the Hamiltonian $H_\infty$ that is supported in an open neighborhood of $\bigcup_{\epsilon/2 \leq t \leq \epsilon}\Lambda_t$, we have
    $$\varphi_{H'_\infty}^s(\Lambda) = \Lambda, \quad \varphi_{H'_\infty}^s(\Lambda_t) = \Lambda_{s+t}, \quad \epsilon/2 \leq t \leq s+t \leq \epsilon.$$
    By Proposition \ref{prop: GKS}, we have the microsheaf quantization $\SK_{H'_\infty}$. Consider the microsheaves $\SK_{H'_\infty} \circ \SF \in \msh_{\mathfrak{c}_{W \times T^*I,\Lambda \times I}}(\mathfrak{c}_{W \times T^*I, \Lambda \times I})$ and $\SK_{H_\infty} \circ (\SK^t_{H_\infty} \circ \SF) \in \msh_{\mathfrak{c}_{W \times T^*I,\Lambda_{H_\infty}}}(\mathfrak{c}_{W \times T^*I, \Lambda_{H_\infty}})$. By Lemma \ref{lem:contact-transform-main}, we have
    $$\SK_{H'_\infty}^s \circ \SF = \SF, \quad \SK_{H'_\infty}^s \circ (\SK_{H_\infty}^t \circ \SF) = \SK_{H_\infty}^{s+t} \circ \SF, \quad \epsilon/2 \leq t \leq s+t \leq \epsilon.$$
    By Corollary \ref{cor: GKS kernel invertible}, this shows that 
    $$\Hom(\SF, \SK^t_{H_\infty} \circ \SG) \simeq \Hom(\SF, \SK^{s+t}_{H_\infty} \circ \SG), \quad \epsilon/2 \leq t \leq s+t \leq \epsilon,$$
    and thus completes the proof.
\end{proof}

    We recall the definition of the wrapping category, which will be used to define the wrapping functors (see also Definition \ref{def: wrapping}):

\begin{definition}[{\cite[Definition 3.14]{Kuo-wrapped-sheaves}}]
    Let $\Lambda \subset \partial W$ be a closed subset. The category $W(\Lambda)$ consists of non-negative compactly supported contact isotopies on $\partial W \setminus \Lambda$ as objects, whose $j$-morphisms are $\Delta^j$-families of positive contact isotopies.
\end{definition}

    The wrapping category is filtered and there is a simple criterion on cofinality:

\begin{lemma}[{\cite[Lemma 3.27]{Ganatra-Pardon-Shende1}}]\label{lem: wrap cofinal} 
    The category $W(\Lambda)$ is $\bR$-filtered. Let $\Lambda' \subset \partial W$ be a closed subset disjoint from $\Lambda$. Then there exists a contact isotopy $\Lambda'_t$ such that the assoicated contact vector field $X_t$ satisfies
    $$\int_{0}^\infty \inf_{p \in \Lambda'_t}\alpha(X_t)(p) dt = \infty,$$
    and such a contact isotopy $\{\Lambda'_t\}_{t\geq 0}$ is cofinal.
\end{lemma}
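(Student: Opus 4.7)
The plan is to establish the two assertions in sequence, following the strategy of \cite[Lemma 3.27]{Ganatra-Pardon-Shende1}.

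First, for the $\bR$-filteredness of $W(\Lambda)$, I would show that any finite collection of non-negative compactly supported contact Hamiltonians $H_1, \dots, H_k$ on $\partial W \setminus \Lambda$ admits a common upper bound, namely $H_1 + \dots + H_k$ (or alternatively the pointwise maximum, smoothed out). Since sums of non-negative Hamiltonians with support in $\partial W\setminus\Lambda$ remain non-negative with support in $\partial W\setminus\Lambda$, this supplies the cone condition for filteredness. The $\bR$-parametrization comes from the natural time-scaling action: for any $\varphi \in W(\Lambda)$ with Hamiltonian $H$, the family of rescalings $\{sH\}_{s \ges 0}$ produces a continuous $\bR_{\ges 0}$-action on the (nerve of the) category, and similarly one can interpolate between any $\varphi^1\les \varphi^2$ along a one-parameter family of non-negative Hamiltonians.

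Next, for cofinality of $\{\Lambda'_t\}$, I would fix an arbitrary $\psi \in W(\Lambda)$ with non-negative compactly supported contact Hamiltonian $H_\psi$, and exhibit, for $t$ sufficiently large, a non-negative compactly supported contact isotopy $\theta^t$ in $\partial W\setminus\Lambda$ carrying $\psi(\Lambda')$ to $\Lambda'_t$. The accumulated Reeb action of $\psi$ along $\Lambda'$, namely $\sup_{p\in\Lambda'}\int_0^1 \alpha(X_{H_\psi})(\psi_s(p))\,ds$, is finite (bounded by some constant $C$) since $\psi$ has compact support. The divergence hypothesis on the integral $\int_0^\infty \inf_{p\in\Lambda'_t}\alpha(X_t)(p)\,dt$ ensures that for sufficiently large $t$, the infimal accumulated action on $\Lambda'_t$ exceeds $C$. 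This action gap is exactly what permits the construction of $\theta^t$: one can write down an interpolating non-negative Hamiltonian explicitly using a partition of unity supported in a small tubular neighborhood of the trajectory from $\psi(\Lambda')$ to $\Lambda'_t$, scaled to have the right integrated Reeb action.

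The main obstacle is making precise the mechanism by which a sufficient gap in accumulated action between $\Lambda'_t$ and $\psi(\Lambda')$ guarantees the existence of a non-negative interpolating contact isotopy disjoint from $\Lambda$. This uses that non-negative contact isotopies form a convex cone closed under concatenation, and that positive accumulated action with respect to the fixed contact form translates to a monotonicity statement for the generating Hamiltonians which can be arranged by convexity and cut-off arguments, keeping the support away from $\Lambda$ throughout by using that $\Lambda'$ and $\Lambda$ are disjoint and $\Lambda$ is closed.
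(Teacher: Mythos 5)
Note first that the paper offers no proof of this lemma: it is stated as a direct citation of \cite[Lemma 3.27]{Ganatra-Pardon-Shende1} and used as a black box, so there is no ``paper's own proof'' to compare against. The review below therefore assesses your reconstruction of the GPS argument on its own terms.

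Your sketch identifies the right two-part structure, but both halves have genuine gaps. For $\bR$-filteredness, you argue that $H_1 + \dots + H_k$ (or a smoothed pointwise maximum) provides a common upper bound. To actually produce a morphism from $\varphi_{H_j}$ to a proposed upper bound one must exhibit a non-negative path of isotopies connecting the time-$1$ maps, and neither the linear interpolation $s \mapsto H_j + s\sum_{i\neq j}H_i$ nor a convexity appeal immediately gives that the resulting path of time-$1$ flows is generated by a non-negative Hamiltonian throughout; the cleaner move, and the one GPS use, is to take the composed map $\varphi_{H_k}^1 \circ \dots \circ \varphi_{H_1}^1$ as the upper bound, since each factor admits an evident non-negative concatenating isotopy. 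More seriously, your ``time-scaling action'' argument for the $\bR$-structure does not work: rescaling $H \mapsto sH$ does not produce a cofinal family (a Hamiltonian vanishing at a point of $\partial W \setminus \Lambda$ stays zero there however you scale it), and in any case $\bR$-filteredness is established precisely by exhibiting a cofinal sequence such as the one in the second part of the lemma, so the two halves are not independent the way your outline suggests.

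For the cofinality of $\{\Lambda'_t\}$, you correctly identify the accumulated-action comparison as the mechanism, but you also correctly flag the construction of the interpolating non-negative isotopy $\theta^t$ as ``the main obstacle'' and then do not supply it. This is the real content of the lemma: an inequality of accumulated actions does not by itself produce a non-negative contact isotopy carrying $\psi(\Lambda')$ to $\Lambda'_t$ with support in $\partial W \setminus \Lambda$. One needs an explicit construction that (i) turns an action surplus into a non-negative interpolating Hamiltonian, and (ii) keeps the support away from $\Lambda$ even though the trajectory of $\Lambda'$ under wrapping may approach $\Lambda$; the phrase ``convexity and cut-off arguments'' names this territory without crossing it. As written the proposal correctly outlines the strategy but leaves the decisive step as a placeholder.
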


    Given the above results, we give a description of the left (resp.~right) adjoint of the tautological inclusion functor via wrapping by positive (resp.~negative) contact Hamiltonian isotopies, which generalizes the result of Kuo \cite[Theorem 1.2]{Kuo-wrapped-sheaves} (see Theorem \ref{rem:wrapping}):

\begin{theorem}
    Let $W$ be a Weinstein domain and $\Lambda \subset \Lambda' \subset \partial W$ be universally sufficient Legendrian subsets. Let
    $\iota_{\Lambda*}: \msh_{\mathfrak{c}_{W,\Lambda}}(\mathfrak{c}_{W,\Lambda}) \hookrightarrow \msh_{\mathfrak{c}_{W,\Lambda'}}(\mathfrak{c}_{W,\Lambda'})$
    be the tautological inclusion. Then the inclusion functor admits a left and right adjoint given by
    $$\iota_\Lambda^*(\SF) = \operatornamewithlimits{colim}_{H_\infty \in W(\Lambda)}\SK_{H_\infty}^1 \circ \SF, \quad \iota_\Lambda^!(\SF) = \lim_{H_\infty \in W(\Lambda)}\SK_{H_\infty}^{-1} \circ \SF.$$
\end{theorem}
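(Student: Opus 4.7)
The plan is to adapt Kuo's wrapping argument for cotangent bundles (Theorem \ref{rem:wrapping}) to the Weinstein setting, with two main inputs from the excerpt: the invertibility and canonical identification of the sheaf quantizations $\SK_{H_\infty}$ (Proposition \ref{prop: GKS} and Corollary \ref{cor: GKS kernel invertible}), and the cofinality criterion Lemma \ref{lem: wrap cofinal}. I will write the argument for $\iota_\Lambda^*$, as $\iota_\Lambda^!$ follows from the dual argument with negative isotopies (and a $\lim$ in place of a $\colim$).

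First, I would establish the key invariance property: for any $H_\infty\in W(\Lambda)$ and any $\SG\in\msh_{\mathfrak{c}_{W,\Lambda}}(\mathfrak{c}_{W,\Lambda})$, one has a canonical isomorphism $\SK_{H_\infty}^t\circ\SG\simeq\SG$. Since $H_\infty$ is compactly supported on $\partial W\setminus\Lambda$, any extension to a symplectic Hamiltonian on $W$ (cut off near $\partial W$) restricts to the identity on $\mathfrak{c}_{W,\Lambda}$. By Proposition \ref{prop: GKS} (the $J$-parametric part, applied to a homotopy from $H_\infty$ to zero through Hamiltonians vanishing on $\Lambda$), the microsheaf kernels of all such Hamiltonians are canonically isomorphic, and the identity Hamiltonian yields the identity functor on $\msh_{\mathfrak{c}_{W,\Lambda}}$.

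Second, I would define the candidate $\iota_\Lambda^*(\SF):=\operatornamewithlimits{colim}_{H_\infty\in W(\Lambda)}\SK_{H_\infty}^1\circ\SF$ and check that it lies in $\msh_{\mathfrak{c}_{W,\Lambda}}$. The microsupport estimate for filtered colimits places $\ss(\iota_\Lambda^*\SF)$ in a certain closure of the union of $\ss(\SK_{H_\infty}^1\circ\SF)\subset\mathfrak{c}_{W,\varphi_{H_\infty}^1(\Lambda')}$. To show no microsupport remains on $\mathfrak{c}_{W,\Lambda'}\setminus\mathfrak{c}_{W,\Lambda}$, fix a point $p$ there and apply Lemma \ref{lem: wrap cofinal} to obtain a cofinal wrapping whose integral diverges at $p$; this produces, for any neighborhood $U$ of $p$, a cofinal tail of $H_\infty$'s for which $\SK_{H_\infty}^1\circ\SF$ has trivial microsupport in $U$. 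Since $W(\Lambda)$ is $\bR$-filtered by the same lemma, the stalks (and microstalks) of the colimit at $p$ vanish, whence $p\notin\ss(\iota_\Lambda^*\SF)$.

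Third, I would verify the adjunction $\Hom(\iota_\Lambda^*\SF,\SG)=\Hom(\SF,\iota_{\Lambda*}\SG)$ for $\SG\in\msh_{\mathfrak{c}_{W,\Lambda}}$. Using that filtered colimits take Hom's to limits, together with the invertibility of $\SK_{H_\infty}$ from Corollary \ref{cor: GKS kernel invertible} and the identity action from the first step:
\begin{equation*}
\Hom(\iota_\Lambda^*\SF,\SG)\simeq\lim_{H_\infty}\Hom(\SK_{H_\infty}^1\circ\SF,\SG)\simeq\lim_{H_\infty}\Hom(\SF,\SK_{H_\infty}^{-1}\circ\SG)\simeq\lim_{H_\infty}\Hom(\SF,\SG).
\end{equation*}
One must check that the transition maps in this inverse system become the identity: the continuation morphism $\SK_{H_\infty}\to\SK_{H_\infty'}$ for $H_\infty\le H_\infty'$ (Proposition \ref{prop: positive continue}) induces $\SK_{H_\infty}^1\circ\SF\to\SK_{H_\infty'}^1\circ\SF$, which after passing to adjoints and using $\SK_{H_\infty}^{-1}\circ\SG=\SG$ agrees with the identity; the canonical nature of these identifications (Proposition \ref{prop: GKS} and Proposition \ref{prop: positive canonical}) implies that the indicated diagram commutes, so the inverse limit collapses to $\Hom(\SF,\SG)$.

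The main obstacle will be the microsupport argument in the second step: the microsupport of a filtered colimit is not, in general, the intersection of the microsupports of cofinal tails, so one must work stalk-by-stalk and appeal to the compactness of $\Lambda'\setminus\Lambda$ together with the $\bR$-filteredness in Lemma \ref{lem: wrap cofinal} to extract a single cofinal wrap that simultaneously escapes every compact subset of $\mathfrak{c}_{W,\Lambda'}\setminus\mathfrak{c}_{W,\Lambda}$. A minor additional subtlety is that the wrapping lives on $\partial W$, so one also needs to invoke that the extended symplectic Hamiltonian does not disturb the compact core $\mathfrak{c}_W$; this follows from the cylindrical-neighborhood cutoff already used in Section 2 and in Proposition \ref{prop: GKS}.
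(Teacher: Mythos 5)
Your proposal has the right high-level shape (wrapping by kernels $\SK_{H_\infty}$, invertibility from Corollary \ref{cor: GKS kernel invertible}, cofinality from Lemma \ref{lem: wrap cofinal}, then the $\Hom$/colimit computation), but it has a genuine gap that the paper's proof is precisely engineered to close: you never say \emph{where} the colimit $\operatornamewithlimits{colim}_{H_\infty}\SK_{H_\infty}^1\circ\SF$ is being taken, and this is not innocuous. The kernel $\SK_{H_\infty}^1\circ\SF$ is microsupported on $\mathfrak{c}_{W,\varphi_{H_\infty}^1(\Lambda')}$, which moves off $\mathfrak{c}_{W,\Lambda'}$ as you wrap; so the diagram does not live inside any single $\msh_{\mathfrak{c}_{W,\Lambda''}}(\mathfrak{c}_{W,\Lambda''})$ where presentability is known from Corollary \ref{cor: presentable doubling}. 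The category $\msh_{W\times\bR}(W\times\bR)$ of microsheaves on the whole Weinstein manifold, with no Legendrian support hypothesis, is not known to be cocomplete (this is exactly the caveat the paper flags at the top of its proof). Consequently one cannot ``define the candidate colimit and then estimate its microsupport'': the object has to be constructed before it can be tested. The paper resolves this by applying the relative doubling term-by-term, taking the colimit in $\Sh(\bR^N)$ (presentable) after a high-codimension contact embedding, identifying that colimit with the wrapping colimit of Theorem \ref{rem:wrapping} on the cotangent bundle (this is where the contact-collar compatibility and Proposition \ref{prop:wrapping nearby} enter), and then microlocalizing back. Your ``stalk-by-stalk'' remedy for the microsupport step does not substitute for this: the issue it addresses (a filtered colimit's microsupport escaping the union of the terms') is a second-order worry and does not touch the existence question.

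A secondary point: in your third step, the chain $\lim_{H_\infty}\Hom(\SK_{H_\infty}^1\circ\SF,\SG)\simeq\lim_{H_\infty}\Hom(\SF,\SG)$ is morally right but again presupposes that $\Hom(\operatornamewithlimits{colim},-)=\lim\Hom(-,-)$ in a category where the colimit is known to exist. Once the doubling device is in place, this becomes exactly the paper's computation, which reduces to the adjunction $m_{\mathfrak{c}_{W,\Lambda}}\dashv w_{\mathfrak{c}_{W,\Lambda}}^+$ on the representative level and then Kuo's formula applied on $\bR^N$ — so your intended identities become provable, but only after the object is built. So the missing idea is specifically: \emph{build the colimit via doubling representatives in the ambient presentable sheaf category and microlocalize back}; everything else you wrote can be salvaged on top of that.
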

\begin{proof}
    We only show that the left adjoint is $\iota_\Lambda^*(\SF) = \colim_{H_\infty \in W(\Lambda)}\SK_{H_\infty}^1 \circ \SF$. The result for the right adjoint is similar. First, we show that $\colim_{H_\infty \in W(\Lambda)}\SK_{H_\infty}^1 \circ \SF$ is well defined as a microsheaf in $\msh_{\mathfrak{c}_{W,\Lambda}}(\mathfrak{c}_{W,\Lambda})$. Since we do not know the cocompleteness of microsheaves on the Weinstein domain $\msh_{W}(W)$ (without sufficiently Legendrian support condition), we need to specify where the colimit takes place. In fact, we will consider
    $$\operatornamewithlimits{colim}_{H_\infty \in W(\Lambda)}\SK_{H_\infty}^1 \circ \SF := m_{W}\Big(\operatornamewithlimits{colim}_{H_\infty \in W(\Lambda)}w_{\mathfrak{c}_{W,{\Lambda'}_{H_\infty}^1}}^+(\SK_{H_\infty}^1 \circ \SF)\Big) \in \msh_{W}(W).$$
    We will show that the (micro)support 
    \begin{equation}\label{eq: wrapping microsupport}
    \SS(\colim_{H_\infty \in W(\Lambda)}\SK_{H_\infty}^1 \circ \SF) \subset \mathfrak{c}_{W,\Lambda}.
    \end{equation}
    Suppose this is the case. Then we can write the colimit as microlocalization along $\mathfrak{c}_{W,\Lambda}$ (instead of along $W$). Since $\mathfrak{c}_{W,\Lambda}$ is sufficiently Legendrian, we can conclude that
    \begin{align*}
    \Hom_{\msh} &\Big(m_{\mathfrak{c}_{W,\Lambda}}\big(\operatornamewithlimits{colim}_{H_\infty \in W(\Lambda)}w_{\mathfrak{c}_{W,{\Lambda'}_{H_\infty}^1}}^+(\SK^1_{H_\infty} \circ \SF)\big), \SG\Big) \\
    &= \Hom_{\Sh}\Big(\operatornamewithlimits{colim}_{H_\infty \in W(\Lambda)}w_{\mathfrak{c}_{W,{\Lambda'}_{H_\infty}^1}}^+(\SK^1_{H_\infty} \circ \SF), w_{\mathfrak{c}_{W,\Lambda}}^+(\SG)\Big) \\
    &= \lim_{H_\infty \in W(\Lambda)}\Hom_{\msh}(w_{\mathfrak{c}_{W,{\Lambda'}_{H_\infty}^1}}^+(\SK_{H_\infty}^1 \circ \SF), w_{\mathfrak{c}_{W,\Lambda}}^+(\SG)) \\
    &= \lim_{H_\infty \in W(\Lambda)}\Hom_{\msh}(\SK_{H_\infty}^1 \circ \SF, \SG).
    \end{align*}
    Then $\colim_{H_\infty \in W(\Lambda)}\SK_{H_\infty}^1 \circ \SF$ co-represents the functor $\lim_{H_\infty \in W(\Lambda)}\Hom_{\msh}(\SK_{H_\infty}^1 \circ \SF, -)$.

    Now we show that the singular support Formula \eqref{eq: wrapping microsupport} holds: $\SS(\colim_{H_\infty \in W(\Lambda)}\SK_{H_\infty}^1 \circ \SF) \subset \mathfrak{c}_{W,\Lambda}$. By Lemma \ref{lem: wrap cofinal}, we choose a cofinal positive contact isotopy $H_{\infty,t}$ for $\Lambda'$ such that
    $$\int_0^\infty \min_{p\in \Lambda'}\alpha(X_{H_{\infty,t}})(p) dt = \infty.$$
    Using cut-off functions, we extend it to a positive contact isotopy $H_s$ on the Weinstein domain $W$ such that $H_t(p) \to H_{\infty,t}(p)$ for any $p \notin \mathfrak{c}_{W,\Lambda}$. Under the contact embedding, we can extend the positive contact isotopy $H_t$ to the cosphere bundle $S^*\bR^N$. Since $H_t$ is compatible with the contact collar $\partial W \times T^*\bR$, we know that on the doubling of $\mathfrak{c}_{W,\Lambda'}$,
    $$\int_0^\infty \min_{p\in (\mathfrak{c}_{W,\Lambda'})_{\pm\epsilon}}\alpha(X_{H_t})(p) dt = \infty.$$
    In other words, $H_t$ defines a cofinal wrapping of $(\mathfrak{c}_{W,\Lambda'})_{\pm \epsilon}$ in the wrapping category of $(\mathfrak{c}_{W,\Lambda})_{\pm \epsilon}$. Therefore, for any $\SF \in \msh_{\mathfrak{c}_{W,\Lambda'}}(\mathfrak{c}_{W,\Lambda'})$ and $\SG \in \msh_{\mathfrak{c}_{W,\Lambda}}(\mathfrak{c}_{W,\Lambda})$, by Theorem \ref{rem:wrapping}, for the sheaf quantization $\SK_H^s$ in Theorem \ref{thm: GKS sheaf}, as in Proposition \ref{prop: GKS}, we know that we have sheaf quantizations $\SK_H^s \in \msh_{\Gamma_H^s}(\Gamma_H^s)$, such that
    $\SK_H^s \circ w_{\mathfrak{c}_{W,\Lambda'}}^+(\SF) = w_{\mathfrak{c}_{W,\Lambda'_s}}^+(\SK_H^s \circ \SF)$. By Proposition \ref{prop: GKS}, for the nearby cycle functor $\psi$ defined by the Liouville flow on $W$, we have $w_{\mathfrak{c}_{W,\Lambda'_s}}^+(\SK_{H_\infty}^s \circ \SF) = \psi( w_{\mathfrak{c}_{W,\Lambda'_s}}^+(\SK_{H}^s \circ \SF))$. By Proposition \ref{prop:wrapping nearby}, the colimit of wrappings by $H_s$ can be realized by a nearby cycle functor. Then Lemma \ref{nearby commute Nadler} implies that
    \begin{align*}
    \operatornamewithlimits{colim}_{s \to \infty}w_{\mathfrak{c}_{W,\Lambda'_s}}^+(\SK_{H_\infty}^s \circ \SF) = \operatornamewithlimits{colim}_{s \to \infty}w_{\varphi_{H}^s(\mathfrak{c}_{W,\Lambda'})}^+(\SK_H^s \circ \SF) = \operatornamewithlimits{colim}_{s \to \infty}\SK_H^s \circ w_{\mathfrak{c}_{W,\Lambda'}}^+(\SF).
    \end{align*}
    Therefore, by Theorem \ref{rem:wrapping}, we know that $\dot\SS(\operatornamewithlimits{colim}_{s \to \infty}\SK_H^s \circ \SF) \subset \mathfrak{c}_{W,\Lambda}$. Then, given the singular support estimation, we can show that the functor $\SF \to \colim_{H_\infty \in W(\Lambda)}\SK_{H_\infty}^1 \circ \SF$ is the left adjoint of the tautological inclusion. We can compute that
    \begin{align*}
    \Hom\Big(\operatornamewithlimits{colim}_{s \to \infty}\SK_{H_\infty}^s \circ \SF, \SG \Big) &= \Hom\Big(m_{\mathfrak{c}_{W,\Lambda}}\big(\operatornamewithlimits{colim}_{s \to \infty}w_{\mathfrak{c}_{W,\Lambda'_s}}^+(\SK_{H_\infty}^s \circ \SF), \SG \Big) \\
    &= \Hom\Big(\operatornamewithlimits{colim}_{s \to \infty}\SK_H^s \circ w_{\mathfrak{c}_{W,\Lambda'}}^+(\SF), w_{\mathfrak{c}_{W,\Lambda}}^+\SG\Big) \\
    &= \Hom\big(\iota_{(\underline{\mathfrak{c}}_{W,\Lambda})_{\cup,\epsilon}^+}^*w_{\mathfrak{c}_{W,\Lambda'}}^+(\SF), w_{\mathfrak{c}_{W,\Lambda}}^+\SG \big) = \Hom(\SF, \SG ).
    \end{align*}
    This shows that the left adjoint is given by $\iota_\Lambda^*(\SF) = \colim_{H_\infty \in W(\Lambda)}\SK_{H_\infty}^1 \circ \SF$.
\end{proof}


\subsection{Sheaf quantization by doubling in Weinstein manifolds}
    Let $W$ be a Weinstein manifold. Given an eventually conic sufficiently Legendrian $L \hookrightarrow W \times \bR$, we define the doubling movie of $L$ inside $W \times T^*\bR \times T^*\bR$ to be 
    $$L_{\cup_{W}}^\prec = L_{-} \cup L_+ \cup \bigcup_{s\geq 0,-s\leq r\leq s}\psi_{\mathfrak{c}_{W \times T^*\bR}}(L_r) \times s.$$
    Here $\Lambda_{\pm s}$ is the Hamiltonian pushoff of $\Lambda$ along the Reeb flow by time $\pm s^{3/2}$, $\Lambda_\pm \subset W \times \bR$ is the movie of the Hamiltonian pushoff along the Reeb flow under time $\pm s^{3/2}$, and $\psi_{\mathfrak{c}_{W \times T^*\bR}}(L_r)$ is the limit set under the negative Liouville flow on $W \times T^*\bR$. Note that for $t \gg 0$ there is an inclusion $i_{-,t}: \mathfrak{c}_W^L \hookrightarrow L_{\cup_W}$ by
    $$i_{-,s}: \mathfrak{c}_W^L \hookrightarrow L_{\cup_W}, \quad (x, t, \tau) \mapsto (x, t - s^{3/2}, s, \tau, 3s^{1/2}\tau/2).$$

    Recall that using the contact transformation Lemma \ref{lem:contact-transform-main}, we can construct $T_-$ and $T_+$ to be the functors induced by the negative and positive push-off by the Reeb flow in $W \times \bR$ with time $\pm s^{3/2}$, with a natural transformation $T_- \to T_+$ induced by Proposition \ref{prop: positive continue}. We set
    $$T_\prec = \mathrm{Fib}(T_- \to T_+).$$

\begin{proposition}\label{prop: fiber sequence weinstein}
    Let $W$ be a Weinstein manifold 
    with Maslov data and ${L} \subset W \times \bR$ be an $\epsilon$-thin eventually conical universally sufficiently Legendrian. Then for $\SF \in \msh_{\mathfrak{c}_{W}^{L}}(\mathfrak{c}_{W}^{L})$ and $\SG \in \msh_{L_{\cup_W}^\prec}(L_{\cup_W}^\prec)$, then for the inclusion $i_{-,t}: \mathfrak{c}_W^L \hookrightarrow L_{\cup_W}$, there is an adjunction when $t \geq 2 \epsilon$:
    $$\Hom(T_\prec \SF, \SG) = \Hom(\SF, T_{2t}(i_{-,t}^*\SG)).$$
\end{proposition}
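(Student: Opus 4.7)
The plan is to translate the stated microsheaf adjunction into a sheaf-theoretic identity via the doubling equivalence of Theorem \ref{thm: relative-doubling}, and then to establish it by combining six-functor adjunctions with non-characteristic propagation arguments of the type appearing in Proposition \ref{prop: positive continue} and Proposition \ref{prop: hom deformation}.

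First I would identify $T_\prec \SF$ concretely as a microsheaf supported on $L_{\cup_W}^\prec$.  The movie $L_{\cup_W}^\prec$ is built so that each slice at parameter $s > 0$ incorporates both Reeb pushoffs $\Lambda_{\pm s^{3/2}}$ together with the Liouville limit $\psi_{\mathfrak{c}_{W \times T^*\bR}}(L_r)$.  Applying the parametric doubling functor in the $s$-direction to $\SF$ thus produces a sheaf whose lower branch computes $T_-\SF$ and whose upper branch computes $T_+\SF$, with the canonical continuation morphism of Proposition \ref{prop: positive continue} realized microsupport-theoretically by propagation across the movie.  Taking the fiber construction geometrically on the movie then realizes $T_\prec \SF = \mathrm{Fib}(T_-\SF \to T_+\SF)$ as a canonical element of $\msh_{L_{\cup_W}^\prec}(L_{\cup_W}^\prec)$.

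For the adjunction proper, I would apply Theorem \ref{thm: relative-doubling} to both sides and convert $\Hom(T_\prec\SF, \SG)$ into a sheaf Hom in the ambient category of sheaves with prescribed microsupport.  The doubling representative of $T_\prec \SF$ being supported on a ``half'' of the movie (the fiber of a continuation morphism), this Hom can be computed by restricting $\SG$ to the single slice at movie-parameter $s = t$, where the lower branch is precisely the image of $i_{-,t}$.  The Reeb shift $T_{2t}$ then appears as the move realigning the pushed-off branch back to the reference position of $\SF$; the resulting microsheaf is $T_{2t}(i_{-,t}^*\SG)$, and the standard sheaf-level adjunction furnishes the identity with $\Hom(\SF, T_{2t}(i_{-,t}^*\SG))$.

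The main obstacle will be verifying, throughout the movie parameter $s \in [0,t]$, the microsupport conditions that license collapsing the computation to the single slice at $s = t$.  What is needed is that the singular support of the relevant Hom sheaf avoids outward conormals of the sublevel sets $\{s \le t\}$ during the deformation, and this is exactly where the hypotheses $\epsilon$-thinness of $L$ together with $t \ge 2\epsilon$ enter: after the Reeb shift by $T_{2t}$, the microsupports of $\SF$ and of $T_{2t}(i_{-,t}^*\SG)$ are separated by strictly more than the Reeb chord length allowed by the $\epsilon$-thinness, so that the pair is positively gapped in the sense required by Proposition \ref{prop: hom deformation}.  Controlling the conic end — where one must invoke Lemma \ref{lem: liouville flow pdff} to see that the Liouville limits remain sufficiently Legendrian rather than filling out the whole ambient — is the technical heart of the argument, and is where I expect most of the work to be concentrated.
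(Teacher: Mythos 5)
Your proposal captures the right geometric picture — a propagation argument in the movie parameter that collapses to a single slice, with $\epsilon$-thinness and $t\ge 2\epsilon$ supplying the requisite gappedness. This matches the overall strategy of the paper's proof. However, there are two genuine gaps in the details.

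First, before any deformation can be applied, the paper's proof perturbs $\SG$ by a small positive Reeb push-off and invokes Propositions \ref{prop: positive continue} and \ref{prop: continuation hom microsheaf} to get $\Hom(T_\prec\SF,\SG)=\Hom(T_\prec\SF,T_t\SG)$. This step is not cosmetic: it is what ensures $\SS(T_\prec\SF)\cap\SS(T_t\SG)$ lands in the skeleton $\mathfrak{c}_{W\times T^*\bR^2}$, which is the microsupport intersection estimate on which the non-characteristic deformation in the movie parameter (via Proposition \ref{prop: hom deformation} for $W\times T^*\bR\times s\hookrightarrow W\times T^*\bR\times\bR$) rests. Without this perturbation, the prerequisite singular-support disjointness is unavailable and the deformation cannot begin. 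Your proposal jumps directly to restricting to the slice without securing this hypothesis.

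Second, and more seriously, your last step claims that ``the standard sheaf-level adjunction furnishes the identity with $\Hom(\SF, T_{2t}(i_{-,t}^*\SG))$.'' There is no standard adjunction that does this. After restricting to the slice $s=t$, what you are left with is $\Hom(\mathrm{Fib}(T_{-t}\SF\to T_{t}\SF),\, T_{2t}(i_{-,t}^*\SG))$, not $\Hom(\SF,\,T_{2t}(i_{-,t}^*\SG))$ — the slice of the doubling is a two-branch object, not $\SF$ itself. Passing from the fiber to $\SF$ alone requires a second non-characteristic deformation, this time in the $T^*\bR$ factor of $W\times T^*\bR$ (the paper applies Proposition \ref{prop: hom deformation} for the inclusion $W\times T^*(-\infty,0)\hookrightarrow W\times T^*\bR$), relying on the microsupport estimate $-\SS(\mathrm{Fib}(T_{-t}\SF\to T_{t}\SF))\cap\SS(T_{2t}(i_{-,t}^*\SG))\subset -\mathfrak{c}_{W\times T^*\bR}$, which in effect shows that only the $T_{-t}\SF$ branch contributes. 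Calling this an ``adjunction'' obscures the fact that this is exactly where the gappedness coming from $t\ge 2\epsilon$ is used a second time; your proposal uses the gappedness only once. You should also note that the paper works at the level of $\mu\mathrm{hom}$ rather than explicitly passing to ambient sheaf Hom via Theorem \ref{thm: relative-doubling}, though this difference is stylistic rather than essential.
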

\begin{proof}
    Let $T_\prec = \mathrm{Fib}(T_- \to T_+)$ where the continuation map is induced by Proposition \ref{prop: positive continue}. We consider the internal hom sheaf $\mu hom$ and apply the non-characteristic deformation lemma. First, considering a contact push-off $T_t: W \times \bR \times T^*\bR \to W \times \bR \times T^*\bR$ for any $t \in \bR_{>0}$, we know by Proposition \ref{prop: positive continue} and \ref{prop: continuation hom microsheaf} that
    \begin{align*}
    \Hom(T_\prec \SF, \SG) = \lim_{t \to 0^+}\Hom(T_\prec\SF, T_t \SG) = \Hom(T_\prec\SF, T_t \SG).
    \end{align*}
    Then, consider the microsupport. Since $\SS(T_\prec\SF) \cap \SS(T_t \SG) = L_{\cup_W}^\prec \cap T_t(L_{\cup_W}^\prec) \subset \mathfrak{c}_{W \times T^*\bR^2}$. Then it follows by the non-characteristic deformation Proposition \ref{prop: hom deformation} for the inclusion $W \times T^*\bR \times s \hookrightarrow W \times T^*\bR \times \bR$ that for $t \geq 2\epsilon$, we have
    $$\Hom(T_\prec\SF, T_{t} \SG) = \Hom(\mathrm{Fib}(T_{-t}\SF \to T_{t}\SF), T_{2t}(i_{-,t}^* \SG)).$$
    We also have $-\SS(\mathrm{Fib}(T_{-t}\SF \to T_{t} \SF)) \cap \SS(T_{2t} (i_{-,t}^*\SG)) \subset -\mathfrak{c}_{W \times T^*\bR}$. Then, by Proposition \ref{prop: hom deformation} for the inclusion $W \times T^*(-\infty, 0) \hookrightarrow W \times T^*\bR$, we know that 
    $$\Hom(T_\prec \SF, \SG) = \Hom(T_\prec \SF, T_{2t}\SG) = \Hom(\SF, T_{2t}(i_{-,t}^*\SG)).$$
    This now completes the proof.
\end{proof}

\begin{theorem}\label{thm: quantization double movie}
    Let $W$ be a Weinstein manifold with Maslov data and ${L} \subset W \times \bR$ be  eventually conic and sufficiently Legendrian. Then there a natural bijection between the isomorphism classes of objects:
    $$i_{-, t}^*: \msh_{L_{\cup_W}}(L_{\cup_W}) \xrightarrow{\sim} \msh_{\mathfrak{c}_{W}^{L}}(\mathfrak{c}_W^{L}).$$
    Moreover, for any $\SF, \SG \in \msh_{L_{\cup_W}^\prec}(L_{\cup_W}^\prec)$, when $L \ll T_tL$,
    $$\Hom(\SF, \SG) \simeq \Hom(i_{-,t}^*\SF, T_{2t}(i_{-,t}^*\SG)) \simeq \Hom(i_\infty^*\SF, i_\infty^*\SG).$$
    In other words, the following composition is fully faithful
    $$\msh_{L_{\cup_W}^\prec}(L_{\cup_W}^\prec) \xrightarrow{i_{-,t}^*} \msh_{\mathfrak{c}_{W}^L}(\mathfrak{c}_W^L) \xrightarrow{i_\infty^*} \msh_{\mathfrak{c}_{W,\partial L}}(\mathfrak{c}_W).$$
\end{theorem}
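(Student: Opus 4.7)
The plan is to view $L_{\cup_W}^\prec \subset W \times \bR \times T^*\bR$ as the parametric relative doubling of the Lagrangian skeleton $\mathfrak{c}_W^L \subset W \times T^*\bR$ inside its contactization, and then to read off both halves of the theorem from Theorem \ref{thm: relative-doubling} and the adjunction in Proposition \ref{prop: fiber sequence weinstein}. By construction $L_\pm$ consists of the Reeb pushoffs of $L$ by times $\pm s^{3/2}$, and the ``cap'' $\bigcup_{-s\le r\le s} \psi_{\mathfrak{c}_{W\times T^*\bR}}(L_r)\times s$ at large $s$ is exactly the negative Liouville limit producing $\mathfrak{c}_W^L$. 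First I would check that up to a contact isotopy supported in the Legendrian thickening inside an ambient $S^*\bR^N$, the movie $L_{\cup_W}^\prec$ agrees with the standard relative doubling of $\mathfrak{c}_W^L$; Lemma \ref{lem:contact-transform-main} then transports microsheaf categories along this contactomorphism. Applying Theorem \ref{thm: relative-doubling} (parametric version) in this setup yields the bijection $i_{-,t}^*\colon \msh_{L_{\cup_W}^\prec}(L_{\cup_W}^\prec)\xrightarrow{\sim}\msh_{\mathfrak{c}_W^L}(\mathfrak{c}_W^L)$ on isomorphism classes.

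For the Hom formula, given $\SF,\SG\in\msh_{L_{\cup_W}^\prec}(L_{\cup_W}^\prec)$ I would set $\SF':=i_{-,t}^*\SF\in\msh_{\mathfrak{c}_W^L}(\mathfrak{c}_W^L)$, and realize the inverse to $i_{-,t}^*$ on isomorphism classes by the fiber functor $T_\prec=\mathrm{Fib}(T_-\to T_+)$. The microsupport of $T_\prec \SF'$ is confined to $L_{\cup_W}^\prec$ by construction of $T_\pm$ as Reeb pushoffs, while the restriction $i_{-,t}^* T_\prec \SF'$ tautologically recovers $\SF'$; combined with the doubling uniqueness from step one, this yields $\SF\simeq T_\prec\SF'$. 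Plugging into Proposition \ref{prop: fiber sequence weinstein} gives $\Hom(\SF,\SG)=\Hom(T_\prec \SF',\SG)=\Hom(\SF',T_{2t}(i_{-,t}^*\SG))=\Hom(i_{-,t}^*\SF, T_{2t}(i_{-,t}^*\SG))$.

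For the final identification $\Hom(i_{-,t}^*\SF,T_{2t}(i_{-,t}^*\SG))\simeq \Hom(i_\infty^*\SF,i_\infty^*\SG)$ I would invoke gapped specialization. Under the hypothesis $L\ll T_tL$ the relevant pair is positively gapped, so Theorems \ref{gapped nearby cycle} and \ref{thm: lagrangian immersion hom} together with Theorem \ref{thm: nearby = restriction at infty} say that sufficient Reeb wrapping of the second argument stabilizes the Hom onto the Hom at infinity, computed via $i_\infty^*$. This yields the claimed full faithfulness of the composition $i_\infty^*\circ i_{-,t}^*$ and completes the cycle of isomorphisms.

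The main obstacle, I expect, will be the identification $T_\prec(i_{-,t}^*\SF)\simeq \SF$ for $\SF\in\msh_{L_{\cup_W}^\prec}(L_{\cup_W}^\prec)$. Although this is forced on isomorphism classes by the parametric doubling, producing a canonical isomorphism requires coordinating the Maslov data on $L_\pm$ with the connecting map $T_-\to T_+$, and controlling the microsupport of the cofiber to confirm that nothing ``leaks'' outside $L_{\cup_W}^\prec$ once one restricts to a $\pm s^{3/2}$-slice. A secondary technical worry is compatibility of the contact identification between $L_{\cup_W}^\prec$ and the standard relative doubling with the ambient Maslov data of Section \ref{ssec: weinstein}, so that the doubling functor and its fiber-sequence avatar $T_\prec$ match up coherently.
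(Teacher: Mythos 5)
Your first step is the gap. You propose to identify $L_{\cup_W}^\prec$ with the standard parametric relative doubling of $\mathfrak{c}_W^L$ (via a contact isotopy in an ambient $S^*\bR^N$) and then read the bijection on isomorphism classes off Theorem \ref{thm: relative-doubling}. But Theorem \ref{thm: relative-doubling} delivers a full categorical equivalence, so if that identification held you would have proved something strictly stronger — namely that $i_{-,t}^*$ is fully faithful. The remark immediately following the theorem explicitly says this is false when $\bar{L}\looparrowright W$ is only an immersion. So the identification cannot hold in the generality of the theorem.

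The geometric reason it fails: the ``cap'' of $L_{\cup_W}^\prec$ is the Liouville limit $\psi_{\mathfrak{c}_{W\times T^*\bR}}(L_r)$, not the Lagrangian projection of $(\mathfrak{c}_W^L)_r$ as in the standard double. Whenever $\bar{L}$ has double points, the Reeb pushoff $L_t$ in $W\times\bR$ is disjoint from $L$ as a Legendrian, but the Liouville limit of the interpolating movie still retains pieces of $\mathfrak{c}_{W\times T^*\bR}$ that record the self-intersections. These are precisely the extra microsupport components that make $i_{-,t}^*$ fail to be fully faithful, and they obstruct any contact isotopy moving $L_{\cup_W}^\prec$ onto the Legendrian-level relative double of $\mathfrak{c}_W^L$.

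The paper avoids this by arguing directly: it proves conservativity of $i_{-,t}^*$ by a microstalk computation (if $i_{-,t}^*\SF=0$, then the microstalks along $L_-\cup L_+$ vanish, so $\SF$ is supported on the skeleton $\mathfrak{c}_{W\times T^*\bR}$, and the condition of vanishing stalks near $-\infty$ forces $\SF=0$ by Lemma \ref{lem:contact-transform-main}), and combines this with the explicit two-sided retraction $i_{-,t}^*\circ T_\prec = \mathrm{id}$ coming from $T_\prec = \mathrm{Fib}(T_-\to T_+)$. These together give the bijection on isomorphism classes without ever invoking a doubling equivalence. Your remaining steps — running the fiber sequence of Proposition \ref{prop: fiber sequence weinstein} to get the $T_{2t}$-shifted Hom, and then Theorem \ref{thm: lagrangian immersion hom} together with Theorem \ref{thm: nearby = restriction at infty} for the passage to $i_\infty^*$ — match the paper, and your worry in the final paragraph about the canonicity of $T_\prec i_{-,t}^*\SF\simeq\SF$ is a real one (addressed here by conservativity plus the counit of the doubling adjunction), but the fix is to argue as above rather than via an identification with the standard double.
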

\begin{proof}
    First, we show that the restriction functor $i_{-,t}^*$ is a conservative functor. Suppose $i_{-,t}^*\SF = 0$. Then we know that the microstalk of $\SF$ along $L_{-t}$ is trivial, and thus the microstalk along the doubled Legendrian $L_- \cup L_+$ is trivial. Thus $\SF$ is supported on the Lagrangian skeleton $\mathfrak{c}_{W \times T^*\bR}$ whose microstalk at $-\infty$ is trivial. This implies that $\SF = 0$ by Lemma \ref{lem:contact-transform-main}. Then, we notice that $i_{-,t}^* \circ T_\prec = \id$. This shows that $i_{-,t}^*$ induces a bijection on the isomorphism classes of objects. Since $i_{-,t}^* \circ T_\prec = \id$, by Proposition \ref{prop: fiber sequence weinstein}, we have
    \begin{align*}
    \Hom(\SF, \SG) &= \Hom(T_\prec \circ i_{-,t}^*\SF, \SG) = \Hom(i_{-,t}^*\SF, T_{2t}(i_{-,t}^*\SG)).
    \end{align*}
    The rest of the result follows formally from Theorem \ref{thm: lagrangian immersion hom}.
\end{proof}

\begin{remark}
    In general, when $\ol{L} \looparrowright W$ is not an embedded exact Lagrangian, the functor $i_{-,t}^*$ is usually not fully faithful, though it is conservative and surjective on objects.
\end{remark}

\begin{example}\label{ex: ike kuwagaki 1} 
    Consider the case $W = T^*M$ with the polarization $\sigma$ by cotangent fibers and $L \hookrightarrow T^*M \times \bR$ a Legendrian submanifold. Another (equivalent) way to define the category of sheaf quantizations for $L$ is as
    $$\msh_{L_{\cup_{T^*M}}^\prec}(L_{\cup_{T^*M}}^\prec) = \Sh_{L^\prec}(M \times \bR \times \bR)_0.$$
    This variant was studied by Ike and Kuwagaki \cite[Definition 3.5]{IkeKuwagaki} (see also earlier works \cite[Definition 11.4.3]{Guillermou-survey} and \cite[Section 4.2]{Asano-Ike2}) to define sheaf quantizations of immersed Lagrangians in Weinstein manifolds. Moreover, when the coefficient is $\SC = \Mod(k)$ for a discrete field $k$, Ike and Kuwagaki defined the notion of sheaf theoretic bounding cochains \cite[Definition 6.37]{IkeKuwagaki} and they showed that sheaf theoretic bounding cochains give rise to rank 1 sheaf quantizations in $\Sh_{L^\prec}(M \times \bR \times \bR)_0$ (and rank 1 sheaf quantizations also give rise to sheaf theoretic bounding cochains) in \cite[Theorem 6.38]{IkeKuwagaki}.

    As a result, our main Theorem \ref{sheaf quantization commutes with composition}, combined with Theorem \ref{thm: quantization double movie} implies that,  given sheaf theoretic bounding cochains for exact immersed Lagrangians, their composition is also naturally equipped with sheaf theoretic bounding cochain in the sense of \cite[Definition 6.37]{IkeKuwagaki}.  Moreover, the same holds for the corresponding assertion in Weinstein manifolds. 
\end{example}


\section{Reformulation via Tamarkin category}\label{sec:tamarkin}

For $W$ an exact symplectic manifold, equipped with Maslov data, we define the \emph{Tamarkin category} by the Dwyer--Kan lozalization (following Tamarkin \cite{Tamarkin1}; see also \cite{Guillermou-Schapira})\footnote{When $W$ is not the cotangent bundle, it is not clear to the authors whether this is a Bousfield localization in the sense of \cite[Definition 5.2.7.2]{Lurie-HTT}, i.e.~whether the quotient functor admits a fully faithful right adjoint.}
\begin{equation} \label{tamarkin category} 
\ST(W) := \msh_{W \times T^* \R}(W \times T^*\R) / \msh_{W \times T^*_{\le 0} \R}(W \times T^*\R). \end{equation}
In our setup where $W$ is a Weinstein manifold, $L \subset W \times \R$ is an eventually conical Legendrian, the category $\msh_{{\mathfrak{c}}_W^L}(\mathfrak{c}_W^L) \subset \msh_{W \times T^* \R}(W \times T^*\R)$ embeds in $\ST(W)$ (as will be recalled in Lemma \ref{lem: quantization embeds into tamarkin}). Our results can thus be formulated, sometimes more compellingly, in terms of $\ST(W)$.  Here we record the formulation. 

\subsection{Recollections on the Tamarkin category}

The category $\ST := \ST(\mathrm{pt})$ can be identified with the full subcategory of $\Sh_{T^*_{\ge 0} \R}(\R)$
on objects with vanishing compactly supported global sections \cite[Proposition 4.2]{Kuo-Shende-Zhang-Hochschild-Tamarkin}. Such sheaves can be understood as $\R$-filtered complexes; in particular, the microsupport of such an object gives the nontrivial steps of the filtration  \cite[Lemma 4.9]{Kuo-Shende-Zhang-Hochschild-Tamarkin}. 
Convolution in the $\R$ factor gives $\ST$ a symmetric monoidal structure, and there is a corresponding internal Hom \cite[Section 3]{Guillermou-Schapira} \cite[Section 4.1]{Kuo-Shende-Zhang-Hochschild-Tamarkin}.

Let us write $\vec{\R}$ for the symmetric monoidal category whose objects are elements of $\R$, morphisms are $a \to b$ when $a < b$, and the monoidal structure is given by addition.  There is a symmetric monoidal functor $\vec{\R} \to \ST$ sending $a \mapsto 1_{[a, \infty)} \in \Sh_{T^*_{\ge 0} \R}(\R)$ \cite[Lemma 4.6]{Kuo-Shende-Zhang-Hochschild-Tamarkin} \cite[Proposition 4.22]{EfimovK}. 
We write $T_a \in \ST$ for the image of $a \in \R$.  In particular, for any object $\SF$ of a $\ST$-linear category $\mathscr{X}$ (a stable category with a $\ST$-action), we may form another object $T_a \SF$, and if $a \ge b$, there is a map $T_b \SF \to T_a \SF$.  

We say an object is {\em torsion} if $\SF \to T_a \SF$ is the zero map for $a \gg 0$ \cite[Section 2.2.3]{Tamarkin1} \cite[Definition 5.1]{Guillermou-Schapira}.  For a $\ST$-linear category $\mathscr{X}$, we write $i_\infty^* : \mathscr{X} \to \mathscr{X}_\infty$ for the quotient of $\mathscr{X}$ by torsion objects.   
One has by a formal argument \cite[Proposition 5.7]{Guillermou-Schapira} \cite[Proposition 5.21]{Kuwagaki-Zhang}
\begin{equation}\label{eq: hom mod torsion}
\Hom_{\mathscr{X}_\infty}(i_\infty^*\SF, i_\infty^*\SG) = \operatornamewithlimits{colim}_{t\to \infty}\Hom_{\mathscr{X}}(\SF, T_t \SG).
\end{equation}
Dually, when $\mathscr{X}$ is a $\ST$-linear category with pairing $\circ_{\mathscr{X}}$, we can show that the pairing descends to the the quotient 
\begin{equation}\label{eq: pairing mod torsion}
i_\infty^*\SF \circ_{\mathscr{X}_\infty} i_\infty^*\SG = \operatornamewithlimits{colim}_{t\to \infty}\SF \circ_{\mathscr{X}} T_t\SG.
\end{equation}

We can also consider a full subcategory $\ST_c \subset \ST$ as the localization $\Sh(\bR)_c/\Sh_{T^*_{\leq 0}\bR}(\bR)_c$ where $\Sh(\bR)_c$ and $\Sh_{T^*_{\leq 0}\bR}(\bR)_c$ are the full subcategories of constructible sheaves in $\Sh(\bR)$ and $\Sh_{T^*_{\leq 0}\bR}(\bR)$. $\ST_c$ can be identified with the full subcategory of $\Sh_{T^*_{\geq 0}\bR}(\bR)$ of constructible sheaves with vanishing compactly supported global sections. Then all the above results for $\ST$ also holds for $\ST_c$: convolutions on $\bR$ gives $\ST$ a closed symmetric monoidal structure, and $\vec{\bR} \to \ST_c, a \mapsto 1_{[a,\infty)}$ defines a symmetric monoidal functor.


When $W =T^*M$ (with Maslov data from the fiber polarization), it follows from K\"unneth Formula \eqref{bingyu kunneth} that $\ST(T^*M) = \Sh(M, \ST)$; see also \cite[Proposition 5.5]{Kuo-Shende-Zhang-Hochschild-Tamarkin}. Thus $\ST(T^*M)$ is enriched over $\ST$. The closed symmetric monoidal structure on $\Sh(M, \ST)$ induced from that on $\ST$ can  be described in terms of (and in early references was defined using) convolution in the $\R$ factor; we refer to \cite{Guillermou-Schapira, Kuo-Shende-Zhang-Hochschild-Tamarkin} for details. 

When $W$ is a Weinstein manifold, we do not know whether the categories $\ST(W)$ (or for that matter, $\mu sh_W(W)$) are presentable, and relatedly we do not know a K\"unneth formula in this context.  As a consequence we cannot formally deduce the $\ST$-linearity of $\ST(W)$.

As elsewhere the situation improves after imposing constructibility hypotheses. 
We write $ \msh_{W\times T^*\bR}(W \times T^*\bR)_c$ for the full subcategory of $\msh_{W\times T^*\bR}(W \times T^*\bR)$ of objects with universally sufficiently Legendrian supports.
Consider the Dwyer--Kan localization
\begin{equation}\label{eq: tamarkin constructible def}
\ST_c(W) := \msh_{W\times T^*\bR}(W \times T^*\bR)_c / \msh_{W\times T^*_{\leq 0}\bR}(W \times T^*_{\leq 0}\bR)_c,
\end{equation}
We show: 

\begin{lemma}\label{lem: tamarkin linear}
    Let $W$ be a Weinstein manifold with Maslov data. Then $\ST_c(W)$ admits an action by the symmetric monoidal category $\ST_c$ such that the action map has a right adjoint. 
\end{lemma}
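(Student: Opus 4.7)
The plan is to construct the $\ST_c$-action via gapped composition with an explicit Lagrangian correspondence encoding convolution in the $\bR$-factor, and then verify that the construction descends to the Tamarkin quotient.

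First, I would introduce the Lagrangian correspondence $L_{\mathrm{conv}} \subset (T^*\bR)^- \times (W \times T^*\bR)^- \times (W \times T^*\bR)$ whose Legendrian lift (in the contactization) is, up to appropriate primitives,
$$L_{\mathrm{conv}} = \bigl\{ \bigl((t_0, -\tau),\, (x, t_1, -\tau),\, (x, t_0+t_1, \tau)\bigr) : x \in W,\ t_0, t_1, \tau \in \bR \bigr\},$$
built from the diagonal on $W$ and the graph of the addition map on $\bR$. This Legendrian is smooth, eventually conical in the sense of Definition \ref{def: eventually conic}, and its relative core is universally sufficiently Legendrian in the sense of Definition \ref{def: universal legendrian} since it is essentially a product of a Weinstein diagonal with the conormal to the addition map.

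Second, I would apply Theorem \ref{thm: main composition stop removal} to $L_{\mathrm{conv}}$, taking the constant rank-one local system as its microsheaf quantization (the correspondence is smooth, so we are in the regime of the top row of Theorem \ref{sheaf quantization commutes with composition}). Via Theorem \ref{thm: weinstein kunneth duality maslov}, this produces a colimit-preserving functor
$$\alpha : \msh_{\mathfrak c_{T^*\bR}}(\mathfrak c_{T^*\bR})_c \otimes \msh_{\mathfrak c_{W\times T^*\bR}}(\mathfrak c_{W\times T^*\bR})_c \to \msh_{\mathfrak c_{W\times T^*\bR}}(\mathfrak c_{W\times T^*\bR})_c,$$
preserving constructibility of supports because composition of universally sufficiently Legendrian correspondences by Reeb extension preserves this class (Lemma \ref{lem: weinstein composable}). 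Unitality and associativity follow from the fact that triple addition is independent of bracketing, combined with the iterated associativity from Corollary \ref{thm: main composition stop removal higher}; the unit is the constant sheaf $1_{[0,\infty)}$.

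Third, I would verify descent to the Tamarkin quotient and construct the right adjoint. The crucial microsupport calculation is that on $L_{\mathrm{conv}}$ we have $\tau_{\mathrm{out}} = \tau_{\mathrm{conv}} = \tau_{\mathrm{in}}$, so if either input has microsupport in $T^*_{\leq 0}\bR$ then the output has microsupport in $W \times T^*_{\leq 0}\bR$. Thus $\alpha$ sends the subcategory generated by the quotient relations to itself in each variable, and by the universal property of Dwyer--Kan localization descends to a functor $\bar\alpha : \ST_c \otimes \ST_c(W) \to \ST_c(W)$. For the right adjoint, one repeats the construction with Theorem \ref{thm: main composition hom} in place of Theorem \ref{thm: main composition stop removal}, applied to the same $L_{\mathrm{conv}}$: this produces a partial right adjoint $\sHom^\circ_g$ valued in $\msh_{\mathfrak c_{W\times T^*\bR}}(\mathfrak c_{W\times T^*\bR})_c$ and the same $\tau$-positivity argument shows it descends to $\ST_c(W)$ and is right adjoint to $\bar\alpha(A, -)$ for each $A \in \ST_c$.

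The main obstacle will be the verification of descent together with the right-adjoint construction, specifically ensuring that the Hom-composition variant (which lives on $L_{\mathrm{conv}}^- \circ L_{\mathrm{conv}}$ rather than on $L_{\mathrm{conv}} \circ L_{\mathrm{conv}}$) still satisfies the $\tau$-positivity needed to descend to the quotient, and that the resulting adjunction is honest rather than merely lax. This ultimately amounts to tracking microsupports through the doublings and nearby cycles used to define the gapped (Hom-)composition, and relies essentially on the fact that the correspondence $L_{\mathrm{conv}}$ preserves $\tau$-positivity strictly, not merely up to compact perturbation.
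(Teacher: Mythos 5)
There is a genuine gap, and it is at the level of identifying the categories involved. Your functor $\alpha$ has domain and codomain the microsheaf categories on \emph{cores}: $\msh_{\mathfrak c_{T^*\bR}}(\mathfrak c_{T^*\bR})_c \simeq \Loc(\bR)$ (since $\mathfrak{c}_{T^*\bR}=0_\bR$), and $\msh_{\mathfrak c_{W\times T^*\bR}}(\mathfrak c_{W\times T^*\bR})_c$ consists of microsheaves supported on $\mathfrak{c}_W\times 0_\bR$. But $\ST_c$ is the Dwyer--Kan quotient of $\Sh(\bR)_c$, and $\ST_c(W)$ is the quotient of $\msh_{W\times T^*\bR}(W\times T^*\bR)_c$, per Formulae \eqref{tamarkin category} and \eqref{eq: tamarkin constructible def} --- the full microsheaf categories on the exact symplectic manifolds, not microsheaves on their cores. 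So your $\alpha$ is not the required action map. More fundamentally, the hypotheses of Theorems \ref{thm: main composition stop removal} and \ref{thm: main composition hom} are never met here: the kernel $\SK\in\ST_c$ is an arbitrary constructible sheaf on $\bR$ whose positive microsupport projects to a (possibly infinite, hence not relatively compact) discrete subset of $\bR$, and an object $\SF\in\ST_c(W)$ can have microsupport anywhere in $W\times T^*_{>0}\bR$. Neither input belongs to a category of the form $\msh_{\mathfrak c_{W_i^-\times W_j,\partial L}}(\mathfrak c_{W_i^-\times W_j})$ on which the composition theorems operate, so those theorems cannot be invoked.

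The paper's proof builds the action directly rather than through the composition theorems. It fixes a product contact embedding $W\times T^*\bR\times\bR\hookrightarrow S^*\bR^N\times T^*\bR$ acting as the identity on the $T^*\bR$ factor, and defines
\[
\SK\circ_\ST\SF \;:=\; m_{\SS(\SK\circ\SF)}\bigl(\pi_\bR^*\SK\star w_{\SS(\SF)}^+\SF\bigr),
\]
using the relative doubling $w^+$ and the convolution $\star$ along the auxiliary $\bR$. Well-definedness (i.e.\ that the result again has sufficiently Legendrian support) uses only that the $T^*_{>0}\bR$-projection of $\SS(\SK)$ is a discrete $K\subset\bR$, so that $\SS(\SF)+K$ is again sufficiently Legendrian; descent to the quotient is the $\tau$-positivity you note. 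Associativity is a direct check via the counits of the $m\dashv w^+$ adjunction of Theorem \ref{thm: doubling adjoint}, and the right adjoint is the explicit formula $\pi_{\bR,*}\sHom^\star(\pi_\bR^*\SK,-)$. Your $L_{\mathrm{conv}}$ is the correct geometric picture --- it is precisely the Lagrangian graph of the addition $s:\bR\times\bR\to\bR$ that defines $\star$ --- but the construction has to be done by hand because the kernels do not satisfy the hypotheses of the correspondence theorems, which are what your plan relies on.
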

\begin{proof}
    Consider a contact embedding $W \times \bR \hookrightarrow S^*\bR^N$ and the corresponding product contact embedding $W \times T^*\bR \times \bR \hookrightarrow S^*\bR^N \times T^*\bR$ such that the map $T^*\bR \to T^*\bR$ is the identity. Let $s: \bR^N \times \bR \times \bR \to \bR^N \times \bR$ be the addition map $(x, t_1, t_2) \mapsto (x, t_1+t_2)$. Then we define
    $$\circ_\ST: \ST_c \times \ST_c(W) \to \ST_c(W), \quad (\SK, \SF) \mapsto \pi_{\bR}^*\SK \star \SF := m_{\SS(\SK \circ \SF)}(\pi_{\bR}^*\SK \star w_{\SS(\SF)}^+\SF).$$
    Since $\SK \in \ST_c$, we know that the projection of $\SS_{\tau>0}(\SK)$ is a discrete subset $K \subset \bR$. Therefore, by the standard singular support estimation, we know $\SS_{\tau>0}(\SK \circ_\ST w_{\SS(\SF)}^+\SF)$ is the subset contained in the subset $\SS(\SF) + K \subset W \times T^*\bR$. Since $\SS(\SF)$ is sufficiently Legendrian, it is still sufficiently Legendrian. The above action is well defined. We note that for any $\SG \in \msh_{W \times T^*_{\leq 0}\bR}(W \times T^*_{\leq 0}\bR)$, we have $\SS(\SK \circ_\ST \SG) \subset W \times T^*_{\leq 0}\bR$.
    Finally, we can show that the above action fits into the diagram
    $$\cdots 
    \,\substack{\longrightarrow \\[-0.8em] \longrightarrow \\[-0.8em] \longrightarrow \\[-0.8em] \longrightarrow \\[-0.8em] \longrightarrow}\, \ST_c^{\times 3} \times \ST_c(W) 
    \,\substack{\longrightarrow \\[-0.8em] \longrightarrow \\[-0.8em] \longrightarrow \\[-0.8em] \longrightarrow}\, \ST_c^{\times 2} \times \ST_c(W) \,\substack{\longrightarrow \\[-0.8em] \longrightarrow \\[-0.8em] \longrightarrow}\, \ST_c \times \ST_c(W) \,\substack{\longrightarrow \\[-0.8em] \longrightarrow}\, \ST_c(W).$$
    For example, using the adjunctions between microlocalization and doubling in Theorem \ref{thm: doubling adjoint}, we can show that
    \begin{align*}
    m_{\SS(\SK_1 \circ \SK_2 \circ \SF)}(\SK_1 \circ_\ST \SK_2 \circ_\ST w_{\SS(\SF)}^+\SF) 
    \simeq m_{\SS(\SK_1 \circ\SK_2 \circ \SF)}(\SK_1 \circ_\ST w_{\SS(\SK_2 \circ \SF)}^+m_{\SS(\SK_2 \circ \SF)}(\SK_2 \circ_\ST w_{\SS(\SF)}^+\SF)).
    \end{align*}
    Considering the higher compositions and using the functoriality of the counits of adjunctions in Theorem \ref{thm: doubling adjoint}, this shows that $\ST_c(W)$ is a stable category with a $\ST_c$-action. The action is closed because $\SK \circ_\ST -$ has a right adjoint 
    $$\pi_{\bR,*}\sHom^\star(\pi_{\bR}^*\SK, -) := \pi_{\bR,*} s_*\sHom(\pi_\bR^*\SK, \pi^*w_{\SS(-)}^+(-)).$$
    This then completes the proof.
\end{proof}

\begin{corollary}\label{cor: R action}
    Let $W$ be a Weinstein manifold with Maslov data. Then $\ST_c(W)$ is admits an
    action by the symmetric monoidal category $\vec{\R}$ such that the action map has a right adjoint.
\end{corollary}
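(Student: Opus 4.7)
The plan is to deduce this as a formal consequence of Lemma \ref{lem: tamarkin linear} by restricting the $\ST_c$-action along a symmetric monoidal functor $\vec{\R} \to \ST_c$. The Corollary is an immediate ``strengthening of the base,'' so no fundamentally new geometric input is required; the work is in assembling what is already available.

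First I would verify that the assignment $a \mapsto 1_{[a,\infty)}$ defines a symmetric monoidal functor $\vec{\R} \to \ST_c$. The object $1_{[a,\infty)}$ is constructible with microsupport contained in $T^*_{\ge 0}\R$ and has vanishing compactly supported global sections, so it does represent an object of $\ST_c$ under the identification $\ST_c \subset \Sh_{T^*_{\ge 0}\R}(\R)$ recalled above. For $a \le b$, the restriction map $1_{[a,\infty)} \to 1_{[b,\infty)}$ gives the required morphism. Monoidality follows from the standard identity $1_{[a,\infty)} \star 1_{[b,\infty)} = 1_{[a+b,\infty)}$, which exhibits the same construction recalled in the discussion preceding Equation \eqref{tamarkin category} for $\ST$; by constructibility, it restricts to $\ST_c$.

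Next I would pull back the action. Composing the above monoidal functor with the $\ST_c$-linear structure on $\ST_c(W)$ from Lemma \ref{lem: tamarkin linear} yields
\[
\vec{\R} \times \ST_c(W) \longrightarrow \ST_c \times \ST_c(W) \xrightarrow{\circ_\ST} \ST_c(W), \qquad (a,\SF) \longmapsto T_a \SF := 1_{[a,\infty)} \circ_\ST \SF,
\]
together with, for $a \le b$, the natural transformation $T_a\SF \to T_b\SF$ induced by $1_{[a,\infty)} \to 1_{[b,\infty)}$. The higher coherences of this $\vec{\R}$-action are inherited from the simplicial diagram of $\ST_c$-linearity established in the proof of Lemma \ref{lem: tamarkin linear} by functoriality of the composition $\vec{\R} \to \ST_c \to \End(\ST_c(W))$.

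Finally, for the right adjoint: because the action map of Lemma \ref{lem: tamarkin linear} admits a right adjoint given by $\pi_{\R,*}\sHom^\star(\pi_\R^*(-), -)$, the induced $\vec{\R}$-action inherits this right adjoint, namely the functor $\SG \mapsto \pi_{\R,*}\sHom^\star(\pi_\R^* 1_{[a,\infty)}, \SG)$ is right adjoint to $T_a(-)$. I do not expect any genuine obstacle in this argument; the only points requiring a small amount of care are (i) confirming that $1_{[a,\infty)} \star \SF$ is computed within $\ST_c(W)$ (which follows since convolution with $1_{[a,\infty)}$ preserves sufficiently Legendrian microsupport, being merely a translation along the $\R$-factor), and (ii) tracking the higher coherences through the monoidal functor, both of which are standard and essentially already built into Lemma \ref{lem: tamarkin linear}.
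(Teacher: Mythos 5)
Your proof is correct and matches the intended argument: the paper leaves this corollary without a written proof precisely because it is the formal restriction of the $\ST_c$-action from Lemma \ref{lem: tamarkin linear} along the symmetric monoidal functor $\vec{\R}\to\ST_c$, $a\mapsto 1_{[a,\infty)}$, which the paper has already recorded. Your verification that this functor lands in $\ST_c$ (constructibility and vanishing compactly supported sections), that the convolution identity $1_{[a,\infty)}\star 1_{[b,\infty)}=1_{[a+b,\infty)}$ gives monoidality, and that the right adjoint is inherited, is exactly the bookkeeping the paper silently invokes.
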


\begin{lemma}\label{def: enriched hom}
    Let $W$ be a Weinstein manifold with Maslov data. Then $\ST(W)_c$ is a $\ST_c$-enriched category with the morphism object described as follows: Consider a contact embedding $W \times \bR \hookrightarrow S^*\bR^N$ and the corresponding product contact embedding $W \times T^*\bR \times \bR \hookrightarrow S^*\bR^N \times T^*\bR$ such that the map $T^*\bR \to T^*\bR$ is the identity. Let $s: \bR^N \times \bR \times \bR \to \bR^N \times \bR$ be the addition map $(x, t_1, t_2) \mapsto (x, t_1+t_2)$. The $\ST_c$-enriched Hom is
    $$\Hom_{\ST(W)_c/\ST_c}(\SF, \SG) = m_{-\SS(\SF) \circ \SS(\SG)}\big(\pi_{\bR,!}\sHom^\star(w_{\SS(\SF)}^+\SF, w_{\SS(\SF)}^+\SG)\big) \in \ST_c.$$
\end{lemma}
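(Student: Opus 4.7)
The plan is to verify that the formula on the right hand side is the right adjoint to the action functor $(-) \circ_\ST \SF : \ST_c \to \ST_c(W)$ constructed in Lemma \ref{lem: tamarkin linear}. Since that lemma already asserts the action is closed, the enriched Hom exists; the content here is to produce an explicit formula. Concretely, for any $\SK \in \ST_c$ I will establish a natural isomorphism
$$\Hom_{\ST_c(W)}(\SK \circ_\ST \SF, \SG) \simeq \Hom_{\ST_c}\bigl(\SK,\; m_{-\SS(\SF) \circ \SS(\SG)}(\pi_{\bR,!}\sHom^\star(w_{\SS(\SF)}^+\SF, w_{\SS(\SG)}^+\SG))\bigr),$$
and invoke Yoneda.

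First, I would unfold the left hand side using the action formula from Lemma \ref{lem: tamarkin linear}: by definition $\SK \circ_\ST \SF = m_{\SS(\SK \circ \SF)}(\pi_\bR^*\SK \star w_{\SS(\SF)}^+\SF)$. Next, I would pass to the sheaf level by applying the doubling adjunction of Theorem \ref{thm: doubling adjoint}: the right adjoint to $m_{\SS(\SG)}$ is the doubling $w_{\SS(\SG)}^+$, so the Hom becomes a Hom between sheaves in $\Sh_{(\underline{\SS(\SF) \star \SK})^+_{\cup,\epsilon}}(\bR^N \times \bR)$ and $w_{\SS(\SG)}^+\SG$. At this point the standard $\star \dashv \pi_{\bR,!}\sHom^\star$ adjunction for convolution in the $\bR$ factor (which holds for sheaves on the product $\bR^N \times \bR$) lets me move $\pi_\bR^*\SK$ to the other side, producing
$$\Hom_{\Sh}(\pi_\bR^*\SK,\; \pi_{\bR,!}\sHom^\star(w_{\SS(\SF)}^+\SF, w_{\SS(\SG)}^+\SG)).$$
Finally, the right hand argument naturally lives in a constructible sheaf category on $\bR$, and I would identify its image in $\ST_c$ with the microlocalization $m_{-\SS(\SF) \circ \SS(\SG)}(\pi_{\bR,!}\sHom^\star(w_{\SS(\SF)}^+\SF, w_{\SS(\SG)}^+\SG))$; this uses that the free factors $\pi_\bR^*\SK$ in the Hom see only the microsupport along the $\bR$ direction, which is $-\SS(\SF) \circ \SS(\SG)$ by the Kashiwara--Schapira estimates (Formulas \eqref{lem: ss-push forward} and \eqref{lem: ss-hom}) applied to $\sHom$ and $\pi_{\bR,!}$, combined with the fact that doublings are near the original Legendrians up to controlled Reeb pushoffs.

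The main obstacle will be bookkeeping the microsupport conditions. Specifically, I need to confirm that $\pi_{\bR,!}\sHom^\star(w_{\SS(\SF)}^+\SF, w_{\SS(\SG)}^+\SG)$ has microsupport in $T^*_{\tau \geq 0}\bR$ so that its microlocalization lands in $\ST_c$, and that the resulting subset after projection to $\bR$ is exactly (a $\pm \epsilon$-doubling of) $-\SS(\SF) \circ \SS(\SG)$, so that the microlocalization along $-\SS(\SF) \circ \SS(\SG)$ agrees with viewing the sheaf as an object of $\ST_c$ via Formula \eqref{eq: tamarkin constructible def}. The key inputs are that $\SS(\SF), \SS(\SG)$ are universally sufficiently Legendrian, so their compositions are pdff (via Lemma \ref{lem: product composable}), guaranteeing that the microlocalization is well-defined and that the adjunction chasing is valid at each step.

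Once the adjunction isomorphism is established, a parallel argument for the higher simplicial compatibilities (using the functoriality of the counits in Theorem \ref{thm: doubling adjoint}, as was done at the end of the proof of Lemma \ref{lem: tamarkin linear}) upgrades the isomorphism to a fully coherent $\ST_c$-enriched Hom structure.
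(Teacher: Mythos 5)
Your proposal is correct and follows the same approach as the paper, which (much more tersely) simply asserts that the adjunction $\Hom_{\ST_c}(\SK, \Hom_{\ST(W)_c/\ST_c}(\SF, \SG)) = \Hom_{\ST(W)_c}(\SK \circ_{\ST_c} \SF, \SG)$ follows from the six-functor adjunctions applied to the doublings of $\SF$ and $\SG$; you have merely spelled out the doubling adjunction, the convolution adjunction, and the microsupport bookkeeping that the paper leaves implicit. One minor imprecision to flag: the closedness established in Lemma \ref{lem: tamarkin linear} is a right adjoint in the module ($\ST(W)_c$) variable, not in the acting ($\ST_c$) variable, so it does not by itself guarantee the existence of the enriched Hom object; since you go on to construct the required right adjoint to $(-)\circ_\ST\SF$ directly, this does not affect the validity of your argument, but the opening sentence overstates what follows from the earlier lemma.
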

\begin{proof}
    We follow the definition of enriched category in \cite[Definition 4.2.1.28]{Lurie-HA} and show that there is an adjunction for any $\SK \in \ST_c, \SF, \SG \in \ST(W)_c$:
    $$\Hom_{\ST_c}(\SK, \Hom_{\ST(W)_c/\ST_c}(\SF, \SG)) = \Hom_{\ST(W)_c}(\SK \circ_{\ST_c} \SF, \SG).$$
    The above identity follows from adjunctions of six-functors for the doublings of $\SF$ and $\SG$.
\end{proof}

\begin{lemma}    
    Let $W$ be a Weinstein manifold with Maslov data. Then there is a $\ST_c$-enriched pairing described as follows, under the same contact embeddings as above:
    $$\SF \circ_{\ST(W)_c/\ST_c} \SG = m_{\SS(\SF) \circ \SS(\SG)}\big(\pi_{\bR,!}(w_{\SS(\SF)}^+\SF \star w_{\SS(\SF)}^+\SG)\big) \in \ST_c,$$
    such that for any $\SK \in \ST_c$, we have $\Hom_{\ST_c}(\SF \circ_{\ST(W)_c/\ST_c} \SG, \SK) = \Hom_{\ST(W)_c}(\SF, \sHom^\star(\SG, \pi_\bR^*\SK))$.
\end{lemma}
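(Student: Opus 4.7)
The proof plan mirrors that of Lemma \ref{def: enriched hom}, applying the dual tensor--hom adjunction in place of the hom--tensor adjunction. First, I would verify that the formula produces a well-defined object of $\ST_c$. Given $\SF, \SG \in \ST(W)_c$ with sufficiently Legendrian supports contained in $W \times T^*_{\geq 0}\bR$, the convolution $w_{\SS(\SF)}^+\SF \star w_{\SS(\SG)}^+\SG = s_!(\pi_1^* w_{\SS(\SF)}^+\SF \otimes \pi_2^* w_{\SS(\SG)}^+\SG)$ has microsupport controlled by the composition estimate Lemma \ref{lem: ss-composition} (with $s$ playing the role of $\pi_{13}$). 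The further pushforward $\pi_{\bR,!}$ produces a sheaf on $\bR$ whose microsupport lies in $T^*_{\geq 0}\bR$, and the final microlocalization along $\SS(\SF) \circ \SS(\SG)$ lands in $\ST_c$.

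Second, I would establish the adjunction by a four-step chain:
\begin{align*}
\Hom_{\ST_c}(\SF \circ_{\ST(W)_c/\ST_c} \SG,\, \SK)
&\simeq \Hom\bigl(\pi_{\bR,!}(w^+\SF \star w^+\SG),\, w^+\SK\bigr) \\
&\simeq \Hom\bigl(w^+\SF \star w^+\SG,\, \pi_\bR^! w^+\SK\bigr) \\
&\simeq \Hom\bigl(w^+\SF,\, \sHom^\star(w^+\SG,\, \pi_\bR^! w^+\SK)\bigr) \\
&\simeq \Hom_{\ST(W)_c}\bigl(\SF,\, \sHom^\star(\SG,\, \pi_\bR^*\SK)\bigr).
\end{align*}
The first and last isomorphisms use the microlocalization/doubling adjunction (Theorem \ref{thm: doubling adjoint}) to pass between the localized microsheaf setting and the ambient sheaf categories with prescribed microsupport; the middle two invoke the standard $\pi_{\bR,!} \dashv \pi_\bR^!$ adjunction and the internal-hom adjunction $\star \dashv \sHom^\star$ from the six-functor formalism. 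Unpacking gives the explicit formula for the pairing and identifies its right adjoint in the second variable.

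The main technical point will be the reconciliation of $\pi_\bR^!$ with $\pi_\bR^*$ in the last step: for the submersion $\pi_\bR$ these differ by a shift by the (constant) fiber dimension, which must be absorbed consistently into the Maslov data conventions on $W \times T^*\bR$ and into the definitions of $\star$ and $\sHom^\star$. A second (routine) check is that each intermediate adjunction respects the microsupport conditions defining $\ST_c$ and $\ST(W)_c$ as localizations; this follows from Theorem \ref{thm: doubling adjoint} factoring the microlocalization/doubling adjunction through the subcategories of sheaves with prescribed microsupport, together with the microsupport estimates for the six functors used above. Once these conventions are fixed, the substantive content of the proof is simply the explicit chain of adjunctions displayed, in direct parallel with the brief argument given for Lemma \ref{def: enriched hom}.
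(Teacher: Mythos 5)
The paper provides no proof of this lemma; the only guidance is the one-sentence proof of the analogous Lemma \ref{def: enriched hom}, which simply says the adjunction identity ``follows from adjunctions of six-functors for the doublings of $\SF$ and $\SG$.'' Your four-step chain is precisely the expansion of that sentence, applied to the tensor/Hom adjunction in place of the Hom/tensor one, so your approach matches the intended proof.

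Two remarks on the points you flag. First, the $\pi_\bR^! / \pi_\bR^*$ reconciliation in the last step is indeed the place where care is needed, but the discrepancy is not really a Maslov-data matter: it comes from the fact that the paper's working definition of $\sHom^\star$ (and of the $\ST_c$-action in Lemma \ref{lem: tamarkin linear}) is written with $\pi_\bR^*$ rather than $\pi_\bR^!$, i.e.\ the authors have implicitly absorbed the relative dualizing shift into the convention for the internal convolution Hom. You should locate the shift there, not in the Maslov data on $W \times T^*\bR$. Second, your step 4 is not a single adjunction: passing from $\Hom\bigl(w^+\SF,\, \sHom^\star(w^+\SG,\, \pi_\bR^! w^+\SK)\bigr)$ to $\Hom_{\ST(W)_c}\bigl(\SF,\, \sHom^\star(\SG,\, \pi_\bR^*\SK)\bigr)$ requires identifying $m_{\SS(\SF)}$ applied to the sheaf-level $\sHom^\star$ with the microsheaf-level $\sHom^\star$ from Lemma \ref{def: enriched hom}; the sheaf $\sHom^\star(w^+\SG,\, \pi_\bR^! w^+\SK)$ does \emph{not} a priori lie in $\Sh_{\SS(\SF)^+_{\cup,\epsilon}}$, so the naive inverse-equivalence form of doubling does not apply and you must instead use the factored right adjoint $\iota^! \circ w$ of Theorem \ref{thm: doubling adjoint} and then check that $m_{\SS(\SF)}$ sees only the relevant microgerm. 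This compatibility is in the spirit of Lemmas \ref{lem: tamarkin linear} and \ref{def: enriched hom} and is what the paper's terse phrase glosses over; making it explicit would complete your argument.
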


Directly applying the standard definition of microsupport to Tamarkin categories does not result in a useful notion, due to the failure of the non-characteristic deformation lemma for the non-compactly-generated category $\ST$ \cite[Remark 4.24]{EfimovK}.
Instead, from the right hand side of Formula \eqref{tamarkin category} or \eqref{eq: tamarkin constructible def}, we see that objects in $\SF \in \ST(W)$ or $\ST(W)_c$ have a naturally defined microsupport in $W \times T^*_{>0} \R$.  We write $\ss_{\tau > 0}(\SF)$ to mean the restriction of this microsupport to the locus with positive unit $\R$-covector, $W \times \R \subset W \times T^*_{>0} \R$; by conicity, this encodes the microsupport in 
$W \times T^*_{>0} \R$.  We write $\underline{\ss}(\SF)$ for the (nonconic) projection of this locus to 
$W$ (sometimes called reduced microsupport in the literature).

For $L \subset W \times \R$, we write $\ST_L(W) \subset \ST(W)_c$ for the full subcategory on objects with $\ss_{\tau > 0}(\SF) \subset L$; we refer to such objects as sheaf quantizations of $L$. The following lemma justifies the name by showing the consistency of the above definition with Definition \ref{def: quantization immerse}:

\begin{lemma}\label{lem: quantization embeds into tamarkin}
    Let $W$ be a Weinstein manifold with Maslov data and $L \subset W \times \R$ be a universally sufficiently Legendrian. Then there is a natural embedding $\msh_{\mathfrak{c}_W^L}(\mathfrak{c}_W^L) \hookrightarrow \ST_L(W)$.
\end{lemma}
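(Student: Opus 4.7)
The functor $\iota: \msh_{\mathfrak{c}_W^L}(\mathfrak{c}_W^L) \to \ST_L(W)$ will be defined as the composition of the tautological inclusion $\msh_{\mathfrak{c}_W^L}(\mathfrak{c}_W^L) \hookrightarrow \msh_{W \times T^*\bR}(W \times T^*\bR)_c$ with the Dwyer--Kan quotient $\msh_{W \times T^*\bR}(W \times T^*\bR)_c \to \ST(W)_c$ defining the constructible Tamarkin category. The inclusion is legitimate because $\mathfrak{c}_W^L = (\mathfrak{c}_{W, \partial L} \times \bR) \cup (L \times \bR_{>0})$ is universally sufficiently Legendrian: the first piece is so by Lemma \ref{lem: universal legendrian}, the second by the hypothesis on $L$, and a small check shows that their union at the boundary interface is as well. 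To see that $\iota$ factors through $\ST_L(W)$, note that $\mathfrak{c}_W^L \cap (W \times T^*_{\tau > 0}\bR) = L \times \bR_{>0}$, whose image under the $\bR_{>0}$-quotient is precisely $L$, so $\ss_{\tau > 0}(\SF) \subset L$ for every $\SF \in \msh_{\mathfrak{c}_W^L}(\mathfrak{c}_W^L)$.

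The substantive content is fully faithfulness: for $\SF, \SG \in \msh_{\mathfrak{c}_W^L}(\mathfrak{c}_W^L)$, the natural map
$$\Hom_{\msh_{W \times T^*\bR}}(\SF, \SG) \longrightarrow \Hom_{\ST(W)_c}(\SF, \SG)$$
should be an equivalence. The plan is to exhibit $\msh_{\mathfrak{c}_W^L}(\mathfrak{c}_W^L)$ as landing inside a reflective subcategory of $\msh_{W \times T^*\bR}(W \times T^*\bR)_c$ equivalent to $\ST(W)_c$. In the pointwise case $W = \mathrm{pt}$, this is the classical identification of \cite[Proposition 4.2]{Kuo-Shende-Zhang-Hochschild-Tamarkin}: the quotient $\Sh(\bR) \to \ST$ admits a fully faithful section onto the full subcategory of objects with microsupport in $T^*_{\geq 0}\bR$ and vanishing compactly supported global sections, which in the presence of $\tau \geq 0$ microsupport is equivalent to having zero stalks near $-\infty$. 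Objects of $\msh_{\mathfrak{c}_W^L}(\mathfrak{c}_W^L)$ manifestly satisfy both conditions --- microsupport in $W \times T^*_{\tau \geq 0}\bR$ since $\mathfrak{c}_W^L \subset W \times T^*_{\tau \geq 0}\bR$, and vanishing stalks near $-\infty$ by the subscript $0$ in Definition \ref{def: quantization immerse} --- so they lie in the image of any such section, from which full faithfulness follows formally.

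The main obstacle is extending the pointwise Tamarkin characterization from $\Sh(\bR)$ to the microsheaf setting over general Weinstein $W$. The plan is to combine the pointwise statement with the high-codimension contact embedding $W \times \bR \hookrightarrow S^*\bR^N$ from Section \ref{ssec: weinstein} and the stabilization invariance of microsheaves (Lemma \ref{lem:contact-transform-main}): since the Dwyer--Kan quotient by $\msh_{W \times T^*_{\leq 0}\bR}$ is fiberwise in the $T^*\bR$ factor, the pointwise reflective-subcategory structure propagates to the Weinstein setting. An equivalent, more direct route is via the doubling functor $w_{\mathfrak{c}_W^L}^+: \msh_{\mathfrak{c}_W^L}(\mathfrak{c}_W^L) \hookrightarrow \Sh_{(\underline{\mathfrak{c}}_W^L)_{\cup,\epsilon}^+}(\bR^N)$ of Theorem \ref{thm: quantize nearby = microlocalize}: the doubled images lie in the $\tau \geq 0$/zero-stalk subcategory of $\Sh(\bR^N)$, both Hom computations reduce to Hom in this sheaf category by Theorem \ref{thm: lagrangian immersion hom}, and the resulting comparison reduces to the pointwise case applied to the $\bR$-factor of $\bR^N = \bR^{N-1} \times \bR$.
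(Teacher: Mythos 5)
Your high-level strategy is right — define the functor as the composite of the tautological inclusion and the Dwyer--Kan quotient, observe the microsupport estimate places the image in $\ST_L(W)$, and reduce full faithfulness to an orthogonality computation via the doubling functor. But the paper's argument is both more direct and avoids a commitment you are making tacitly.

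The paper does not attempt to show $\ST(W)_c$ is a \emph{reflective} subcategory of $\msh_{W \times T^*\bR}(W \times T^*\bR)_c$; indeed the paper's own footnote to Formula \eqref{tamarkin category} flags that reflectivity (existence of a fully faithful right adjoint to the quotient) is unclear off cotangent bundles, and nothing in Section B establishes it even in the constructible setting. What the paper actually shows, and what is all you need, is a single orthogonality: $\Hom_{\msh}(\SF, \SG) = 0$ for $\SF \in \msh_{\mathfrak{c}_W^L}(\mathfrak{c}_W^L)$ and $\SG \in \msh_{W \times T^*_{\leq 0}\bR}(W \times T^*_{\leq 0}\bR)_c$. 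This is obtained by applying the doubling functor and Proposition \ref{prop: hom deformation} (the microlocal non-characteristic propagation for Hom) in the product embedding $W \times T^*\bR \times \bR \hookrightarrow S^*\bR^N \times T^*\bR$ to push the Hom-computation to $(-\infty, -N)$, where $\SF$ has zero stalks by the subscript $0$ in Definition \ref{def: quantization immerse}. Orthogonality alone then gives full faithfulness of the quotient on this subcategory by \cite[Theorem I.3.3]{NikolausScholze} — no section, no reflectivity needed.

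Two concrete issues with your writeup. First, the citation of Theorem \ref{thm: lagrangian immersion hom} in your doubling route is off-target: that theorem computes Homs between \emph{two} conic microsheaf quantizations, whereas here you need a Hom where one argument lives in $\msh_{W \times T^*_{\leq 0}\bR}(W \times T^*_{\leq 0}\bR)_c$, which is not a quantization; the right tool is Proposition \ref{prop: hom deformation}. Second, your first route (pointwise reflective-subcategory characterization plus a "fiberwise" propagation) is left too vague to count as a proof, and rests on a structure the paper itself declines to assert; you should drop it and flesh out the doubling route by replacing the general reflectivity claim with the concrete orthogonality computation, exactly as in the paper's two-line argument.
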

\begin{proof}
    Consider the inclusion $\msh_{\mathfrak{c}_W^L}(\mathfrak{c}_W^L) \subset \msh_{W \times T^*\bR}(W \times T^*\bR)_c$. We show that the subcategory is right orthogonal to $\msh_{W \times T^*_{\leq 0}\bR}(W \times T^*_{\leq 0}\bR)_c$. Similar to Lemma \ref{lem: tamarkin linear}, consider a product contact embedding $W \times T^*\bR \times \bR \hookrightarrow S^*\bR^N \times T^*\bR$ such that the map $T^*\bR \to T^*\bR$ is the identity. Then for any $\SF \in \msh_{\mathfrak{c}_W^L}(\mathfrak{c}_W^L)$ and $\SG \in \msh_{W \times T^*_{\leq 0}\bR}(W \times T^*_{\leq 0}\bR)_c$, by Proposition \ref{prop: hom deformation}, for $i_-: (-\infty, -N) \hookrightarrow \bR$ for $N \gg 0$, we have
    $$\Hom_{\msh}(\SF, \SG) = \Hom(i_{-}^*(w_{\SS(\SF)}^+\SF), i_-^*(w_{\SS(\SG)}^+\SG)) = 0.$$
    Therefore, by the formal property of Dwyer--Kan localization \cite[Theorem I.3.3]{NikolausScholze}, we conclude that the functor from $\msh_{\mathfrak{c}_W^L}(\mathfrak{c}_W^L)$ to the quotient $\ST(W)_c$ is also fully faithful.
\end{proof}

\subsection{Reformulations of the main theorems}
    We define the composition functor over Tamarkin categories and reformulate our main theorems in terms of this composition (recall that eventually conical sufficiently Legendrian subsets are always composable in the sense by Lemma \ref{lem: weinstein composable}):

\begin{definition}\label{def: enriched composition}
    Let $W_1, W_2, W_3$ be Weinstein manifolds with Maslov data and $L_{12} \subset W_1^- \times W_2 \times \bR$, $L_{23} \subset W_2^- \times W_3 \times \bR$ be eventually conic sufficiently Legendrians and $L_{13} = L_{23} \circ L_{12} \subset W_1^- \times W_3 \times \bR$. Consider the product contact embeddings 
    $$W_i^- \times W_j \times T^*\bR \times \bR \hookrightarrow (S^*\bR^{N} \,\widehat\times\, S^*\bR^N) \times T^*\bR,$$
    induced by exact embeddings $W_i, W_j \hookrightarrow S^*\bR^N$ such that second factor $T^*\bR \to T^*\bR$ is the identity map. Write $\mathfrak{c}_{W_i^-\times W_j}^{L_{ij}} = \mathfrak{c}_{ij}$ for simplicity. Let $s: \bR \times \bR \to \bR$ be the addition map and $\pi_{ij}: M_1 \times M_2 \times M_3 \times \bR^2 \to M_i \times M_j \times \bR$ be the projections. 
    We define the composition on $\ST_{L_{12}}(W_1^- \times W_2) \otimes \ST_{L_{23}}(W_2^- \times W_3)$ by
    $$\SF_{23} \circ_\ST \SF_{12} := m_{\mathfrak{c}_{13}} \big(\pi_{13!}\big(\pi_{12}^*(w_{\mathfrak{c}_{23}}^+\SF_{23} ) \star \pi_{23}^*(w_{\mathfrak{c}_{12}}^+\SF_{12})\big) \big),$$
    where $w_{\mathfrak{c}_{12}}^+$ and $w_{\mathfrak{c}_{23}}^+$ are the doublings of the Legendrian thickenings of $\mathfrak{c}_{12}$ and $\mathfrak{c}_{23}$ with respect to the contact embeddings in Theorem \ref{thm: relative-doubling}.

    Similarly, let $L_{12} \subset W_1 \times W_2 \times \bR$, $L_{23} \subset W_2 \times W_3 \times \bR$ be eventually conic sufficiently Legendrians and $L_{\bar 13} = L_{23} \circ (-L_{12}) \subset W_1^- \times W_3 \times \bR$. We define the hom composition on $\ST_{L_{12}}(W_1 \times W_2) \otimes \ST_{L_{23}}(W_2 \times W_3)$ by
    $$\sHom_\ST^\circ(\SF_{12}, \SF_{23}) := m_{\mathfrak{c}_{13}} \big(\pi_{13*}\sHom^\star\big(\pi_{12}^*(w_{\mathfrak{c}_{12}}^+\SF_{12}), \pi_{23}^!(w_{\mathfrak{c}_{23}}^+\SF_{23})\big) \big).$$
\end{definition}

    The following observation shows that compositions in Tamarkin categories agree with compositions in Definition \ref{def: gap composition weinstein}:

\begin{proposition}\label{prop: composition tamarkin wein}
    Let $W_1, W_2, W_3$ be Weinstein manifolds with Maslov data and $L_{12} \subset W_1^- \times W_2 \times \bR$, $L_{23} \subset W_2^- \times W_3 \times \bR$ be eventually conic sufficiently Legendrians and $L_{13} = L_{23} \circ L_{12} \subset W_1^- \times W_3 \times \bR$. Then there is a commutative diagram
    \[\begin{tikzcd}
        \msh_{\mathfrak{c}_{W_1^- \times W_2}^{L_{12}}}(\mathfrak{c}_{W_1^- \times W_2}^{L_{12}}) \otimes \msh_{\mathfrak{c}_{W_2^- \times W_3}^{L_{23}}}(\mathfrak{c}_{W_2^- \times W_3}^{L_{23}}) \ar[r, "\circ"] \ar[d] & \msh_{\mathfrak{c}_{W_1^- \times W_3}^{L_{13}}}(\mathfrak{c}_{W_1^- \times W_3}^{L_{13}}) \ar[d] \\
        \ST_{L_{12}}(W_1^- \times W_2) \otimes \ST_{L_{23}}(W_2^- \times W_3) \ar[r, "\circ_\ST"] & \ST_{L_{13}}(W_1^-\times W_3).
    \end{tikzcd}\]
    Similarly, let $L_{12} \subset W_1 \times W_2 \times \bR$, $L_{23} \subset W_2 \times W_3 \times \bR$ be eventually conic sufficiently Legendrians and $L_{\bar 13} = L_{23} \circ (-L_{12}) \subset W_1^- \times W_3 \times \bR$. There is also a commutative diagram
    \[\begin{tikzcd}[column sep=50pt]
        \msh_{\mathfrak{c}_{W_1 \times W_2}^{L_{12}}}(\mathfrak{c}_{W_1 \times W_2}^{L_{12}}) \otimes \msh_{\mathfrak{c}_{W_2 \times W_3}^{L_{23}}}(\mathfrak{c}_{W_2 \times W_3}^{L_{23}}) \ar[r, "{\sHom^\circ(-,-)}"] \ar[d] & \msh_{\mathfrak{c}_{W_1^- \times W_3}^{L_{\bar 13}}}(\mathfrak{c}_{W_1^- \times W_3}^{L_{\bar 13}}) \ar[d] \\
        \ST_{L_{12}}(W_1 \times W_2) \otimes \ST_{L_{23}}(W_2 \times W_3) \ar[r, "{\sHom^\circ_{\ST}(-,-)}"] & \ST_{L_{\bar 13}}(W_1^-\times W_3).
    \end{tikzcd}\]
\end{proposition}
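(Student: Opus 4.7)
My plan is to verify each of the two commutative diagrams by tracing both compositions down to explicit sheaf representatives on a common ambient space, and then identifying them through proper base change and the universal property of microlocalization. The vertical embeddings are those of Lemma \ref{lem: quantization embeds into tamarkin}, so the content of the proposition is to identify two different sheaf-theoretic incarnations of one and the same microsheaf composition on $\mathfrak{c}_{W_1^-\times W_3}^{L_{13}}$.

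The first step is to identify the doubling functors in the two definitions. On the $\circ$ side (via Formula \eqref{eq: composition weinstein} and the Reeb extension of Definition \ref{def: reeb extension}), the doubling is carried out inside the contactization of the contact product $(W_i^-\times \bR)\,\widehat\times\,(W_j\times \bR)$. On the $\circ_\ST$ side (Definition \ref{def: enriched composition}), the doubling is a relative doubling over the preserved $T^*\bR$ factor under the embedding $W_i^-\times W_j\times T^*\bR\times\bR\hookrightarrow S^*\bR^{2N}\times T^*\bR$. Lemma \ref{lem:contact-transform-main}, combined with the explicit formula of Example \ref{example reeb extension}, identifies these two doublings once the free Reeb-extension coordinate is matched with the base coordinate of $T^*\bR$, so that both yield the same sheaf on $\bR^{2N}\times\bR$ up to contact isotopy.

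Next, I would expand both compositions as explicit sheaf operations. On the Tamarkin side, $\pi_{13!}\bigl(\pi_{12}^*w^+\SF_{12}\star \pi_{23}^*w^+\SF_{23}\bigr)$ unfolds, via the addition map $s$ implementing $\star$, into the standard kernel composition $\pi_{13!}\Delta_2^*(w^+\SF_{12}\boxtimes w^+\SF_{23})$ with the extra convolution along the $\bR$-coordinate from $T^*\bR$. On the $\circ$ side, Lemma \ref{reeb extension composition} shows that the composition of Reeb extensions is itself the Reeb extension of the contact composition, with the combined primitive $f_{23}(x_2,x_3)-f_{12}(x_1,x_2)$ of Formula \eqref{eq: contact composition final}; this shift in primitives is precisely what $\star$-convolution in the extra $\bR$-factor implements at the sheaf level. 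Applying $m_{\mathfrak{c}_{13}}$ to both expressions and invoking Theorem \ref{thm: relative-doubling} then yields the first commutative diagram. The second diagram for $\sHom^\circ$ follows by a parallel argument, with $\pi_{13!}$ replaced by $\pi_{13*}$ and $\star$ replaced by $\sHom^\star$, using the negative doubling $w^-$ of Theorem \ref{thm:microsheaf-duality-PrR}.

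The main obstacle will be the doubling identification of the first step: the two doubling procedures are defined via \emph{a priori} different contact embeddings, and matching them requires a contact isotopy argument that must also preserve the vanishing-at-$-\infty$ condition used in Definition \ref{def: quantization immerse} to cut out the sheaf-quantization subcategory. Once this contact-geometric identification is in hand, what remains is proper base change combined with the compatibility results already collected in Section \ref{sec: gapped composition microsheaf}, and the argument parallels the proof of Theorem \ref{thm: kunneth-doubling}.
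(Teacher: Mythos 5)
Your outline follows the same backbone as the paper's proof — pass to doubled sheaf representatives via Theorem \ref{thm: relative-doubling}, use Example \ref{example reeb extension} and Lemma \ref{reeb extension composition} to identify what the Reeb-extension composition looks like at the sheaf level, and then compare with the $\star$-convolution formula for $\circ_\ST$ by proper base change before microlocalizing along $\mathfrak{c}_{13}$. The one place where you have not actually done the work is the step you yourself call the ``main obstacle'': you propose to match the two doubling procedures by a contact isotopy argument. The paper avoids this issue entirely by \emph{choosing the contact embeddings compatibly from the outset} — namely, taking the product embedding $W_i^-\times W_j\times T^*\bR\hookrightarrow (S^*\bR^{N}\,\widehat\times\,S^*\bR^{N})\times T^*\bR$ so that the $T^*\bR$-factor is carried by the identity — which makes the addition map $s:\bR\times\bR\to\bR$ well defined on sheaf representatives without any isotopy. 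With that choice in place, the comparison of the two compositions is a single proper base change identity involving an \emph{anti-diagonal} embedding $a: M_i\times M_j\times\bR\hookrightarrow M_i\times M_j\times\bR^2$, which is exactly the map that implements the ``matching of the free Reeb-extension coordinate with the base coordinate of $T^*\bR$'' you describe in words. You are circling around this map but never write it down; you should make $a$ (and the identity $a^*\pi_{13!}s_!\Delta_2^*(\cdots)\simeq\ol\pi_{13!}\Delta_2^*(\ol\pi_{12}^*a^*\cdots\otimes\ol\pi_{23}^*a^*\cdots)$) explicit, and then the contact isotopy you worried about disappears. Once you do that, the rest of your plan — microlocalizing and appealing to Theorem \ref{thm: relative-doubling} — goes through as you say, and the $\sHom^\circ$ case is indeed the dual argument with $\pi_{13*}$, $\sHom^\star$, and the negative doubling.
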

\begin{proof}
    We only show the first isomorphism. The second one will be similar.
    For this, we need to consider specific contact embeddings 
    $$W_i^- \times W_j \times T^*\bR \hookrightarrow (S^*\bR^{N} \,\widehat\times\, S^*\bR^N) \times T^*\bR$$
    induced by exact embeddings $W_i, W_j \hookrightarrow S^*\bR^N$ and the second factor $T^*\bR \to T^*\bR$ is the identity map. Write $\mathfrak{c}_{W_i^-\times W_j}^{L_{ij}} = \mathfrak{c}_{ij}$ for simplicity. Under such choice of contact embeddings, the addition map $s: \bR \times \bR \to \bR$ is well defined on the level of sheaves is compatible with the $\bR$ factors on the level of microsheaves. Let $\pi_{ij}: M_1 \times M_2 \times M_3 \times \bR^2 \to M_i \times M_j \times \bR$ and $\ol{\pi}_{ij} : M_1 \times M_2 \times M_3 \times \bR^2 \times \bR^2 \to M_i \times M_j \times \bR^2$ be the projections. Using the doubling Theorem \ref{thm: relative-doubling}, we define the composition on $\ST_{L_{12}}(W_1^- \times W_2) \otimes \ST_{L_{23}}(W_2^- \times W_3)$ by
    $$\SF_{23} \circ_\ST \SF_{12} = m_{\mathfrak{c}_{13}} \big(\pi_{13!}\big(\pi_{12}^*(w_{\mathfrak{c}_{12}}^+\SF_{12} ) \star \pi_{23}^*(w_{\mathfrak{c}_{23}}^+\SF_{23})\big) \big).$$
    Let $\Delta_{2}: M_1 \times M_2 \times M_3 \times \bR^3 \to M_1 \times M_2^2 \times M_3 \times \bR^3$ be induced by the diagonal embedding $\bR \hookrightarrow \bR^2$ and $a: M_i \times M_j \times \bR \hookrightarrow M_i \times M_j \times \bR^2$ be the anti-diagonal embedding. It follows from the proper base change formula that
    \begin{align*}
    a^*\pi_{13!}s_!\Delta_2^*\big(\pi_{12}^*(w_{\mathfrak{c}_{12}}^+\SF_{12}) \otimes \pi_{23}^*\big(w_{\mathfrak{c}_{23}}^+\SF_{23}\big)\big) \simeq \ol{\pi}_{13!}\Delta_2^*\big(\ol{\pi}_{12}^*a^*\big(w_{\mathfrak{c}_{12}}^+\SF_{12}\big) \otimes \ol{\pi}_{23}^*a^*\big(w_{\mathfrak{c}_{23}}^+\SF_{23}\big)\big).
    \end{align*}
    This completes the proof that $\SF_{23} \circ_\ST \SF_{12} \simeq \SF_{23} \circ \SF_{12}.$
\end{proof}

    We can now reformulate our main Theorems \ref{thm: main composition} and \ref{thm: main composition hom} in terms of compositions over the Tamarkin categories:

\begin{theorem}[Reformulation of  Theorems \ref{thm: main composition} and \ref{thm: main composition hom}]\label{thm: main tamarkin}
    Let $W_1, W_2, W_3$ be Weinstein manifolds with Maslov data, $L_{12} \subset W_1^- \times W_2 \times \bR$, $L_{23} \subset W_2^- \times W_3 \times \bR$ be eventually conic sufficiently Legendrians and $L_{13} = L_{23} \circ L_{12} \subset W_1^- \times W_3 \times \bR$. Then there is a commutative diagram
    \[\begin{tikzcd}
    \ST_{L_{12}}(W_1^- \times W_2) \otimes \ST_{L_{23}}(W_2 \times W_3) \ar[r, "{(-) \circ_\ST (-)}"] \ar[d, "i_\infty^* \otimes i_\infty^*" left] & \ST_{L_{13}}(W_1^- \times W_3) \ar[d, "i_\infty^*"] \\
    \msh_{\mathfrak{c}_{W_1^-\times W_2,\partial L_{12}}}(\mathfrak{c}_{W_1^-\times W_2}) \otimes \msh_{\mathfrak{c}_{W_2^-\times W_3,\partial L_{23}}}(\mathfrak{c}_{W_2^-\times W_3}) \ar[r, "{- \circ -}"] & \msh_{\mathfrak{c}_{W_1^-\times W_3,\partial L_{13}}}(\mathfrak{c}_{W_1^-\times W_3}).
    \end{tikzcd}\]
    Similarly, for $L_{12} \subset W_1 \times W_2 \times \bR$, $L_{23} \subset W_2 \times W_3 \times \bR$ be eventually conic sufficiently Legendrians and $L_{\bar 13} = L_{23} \circ (-L_{12}) \subset W_1^- \times W_3 \times \bR$, there is a commutative diagram
    \[\begin{tikzcd}
    \ST_{L_{12}}(W_1 \times W_2) \otimes \ST_{L_{23}}(W_2 \times W_3) \ar[r, "{\sHom^\circ_\ST(-,-)}"] \ar[d, "i_\infty^* \otimes i_\infty^*" left] & \ST_{L_{\bar 13}}(W_1^- \times W_3) \ar[d, "i_\infty^*"] \\
    \msh_{\mathfrak{c}_{W_1\times W_2,\partial L_{12}}}(\mathfrak{c}_{W_1\times W_2}) \otimes \msh_{\mathfrak{c}_{W_2\times W_3,\partial L_{23}}}(\mathfrak{c}_{W_2\times W_3}) \ar[r, "{\sHom^\circ(-,-)}"] & \msh_{\mathfrak{c}_{W_1^-\times W_3,\partial L_{\bar 13}}}(\mathfrak{c}_{W_1^-\times W_3}).
    \end{tikzcd}\]
\end{theorem}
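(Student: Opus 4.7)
The plan is to reduce Theorem \ref{thm: main tamarkin} to the bottom commutative square of Theorem \ref{thm: main composition} (respectively Theorem \ref{thm: main composition hom}) by identifying all three types of data entering the Tamarkin diagram with their counterparts in the microsheaf quantization diagram. The three identifications needed are: (a) the horizontal composition $\circ_\ST$ on the Tamarkin category restricts on the embedded subcategory to the gapped composition $\circ_g$ of Definition \ref{def: gap composition weinstein}; (b) the analogous identification $\sHom^\circ_\ST \simeq \sHom^\circ_g$; and (c) the vertical arrow $i_\infty^*$ from the Tamarkin category factors through the sheaf-quantization subcategory as the restriction-at-infinity functor from Theorem \ref{thm: nearby = restriction at infty}.

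Identifications (a) and (b) are exactly the content of Proposition \ref{prop: composition tamarkin wein}, so the only nontrivial point is (c). For this, first invoke Lemma \ref{lem: quantization embeds into tamarkin} to realize $\msh_{\mathfrak{c}_{W_i^-\times W_j}^{L_{ij}}}(\mathfrak{c}_{W_i^-\times W_j}^{L_{ij}})$ as a full subcategory of $\ST_{L_{ij}}(W_i^-\times W_j)$. The functor $i_\infty^*$ in the Tamarkin setting is defined using the $\vec{\R}$-action of Corollary \ref{cor: R action} as the colimit $\operatornamewithlimits{colim}_{t \to \infty} T_t(-)$, with values in the microsheaf category at the boundary $\msh_{\mathfrak{c}_{W_i^-\times W_j,\partial L_{ij}}}(\mathfrak{c}_{W_i^-\times W_j,\partial L_{ij}})$. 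On the image of the embedding, this colimit can be computed via the Liouville-flow description used in the proof of Theorem \ref{thm: nearby = restriction at infty}: the sequence $T_t(-)$ for $t\to\infty$ models the contact lift of the Liouville flow pushing $\mathfrak{c}_{W_i^-\times W_j}^{L_{ij}}$ toward the skeleton $\mathfrak{c}_{W_i^-\times W_j,\partial L_{ij}}\times \R$, so the colimit agrees with the restriction at infinity.

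Combining (a)--(c) with the commutative lower squares of Theorems \ref{thm: main composition} and \ref{thm: main composition hom} then produces the two commutative diagrams asserted in Theorem \ref{thm: main tamarkin}. The higher-coherent version likewise follows from Corollaries \ref{thm: main composition higher} and \ref{thm: main composition stop removal higher} by the same translation. The main potential obstacle is strictly bookkeeping: one must check that the identification (c) genuinely extends from the embedded subcategory to the entire Tamarkin category $\ST_{L_{ij}}(W_i^-\times W_j)$ in a manner compatible with the horizontal compositions. This is ensured by the $\vec{\R}$-linearity of $\circ_\ST$ and $\sHom^\circ_\ST$ (Lemma \ref{lem: tamarkin linear}), together with the general formulas \eqref{eq: hom mod torsion} and \eqref{eq: pairing mod torsion} that describe $i_\infty^*$ as a colimit of translates and the induced pairing on the quotient.
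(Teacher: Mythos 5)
The paper does not give an explicit proof of Theorem \ref{thm: main tamarkin}; it is intended to be read off from the material preceding it, namely Proposition \ref{prop: composition tamarkin wein}, Lemma \ref{lem: quantization embeds into tamarkin}, and the identification of the Tamarkin torsion quotient with restriction at infinity. Your reduction plan, parts (a) and (b) via Proposition \ref{prop: composition tamarkin wein}, together with part (c), is indeed the correct overall structure, so there is no dispute about the strategy.

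However, your argument for step (c) contains a genuine conceptual error. You write that ``the sequence $T_t(-)$ for $t\to\infty$ models the contact lift of the Liouville flow pushing $\mathfrak{c}_{W_i^-\times W_j}^{L_{ij}}$ toward the skeleton.'' This is not correct. The operator $T_t$ is translation in the $\bR$-factor of the contactization $W\times\bR$, i.e.\ the {\em Reeb flow}. By contrast, the nearby cycle functor realizing $i_\infty^*$ in Theorem \ref{thm: nearby = restriction at infty} is built from the contact lift of the {\em Liouville flow} $Z = Z_\lambda + t\,\partial/\partial t$, a flow by scaling rather than translation. These are different objects; identifying them as you do is not justified, and the paper nowhere suggests it.

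What you should do instead is prove step (c) via the general colimit formula for the torsion quotient. By Formulae \eqref{eq: hom mod torsion} and \eqref{eq: pairing mod torsion}, morphism spaces (resp.\ pairings) in the torsion quotient are the $t\to\infty$ colimits of $T_t$-shifted morphism spaces in $\ST(W)$. On the image of the embedding $\msh_{\mathfrak{c}_W^L}(\mathfrak{c}_W^L)\hookrightarrow\ST_L(W)$, for $t\gg 0$ the shifted pair $(L, T_t K)$ is in the relation $L\ll T_t K$ of Definition \ref{def: cofinal position}. Theorems \ref{thm: lagrangian immersion tensor} and \ref{thm: lagrangian immersion hom} then say that each such shifted morphism/pairing already computes the boundary morphism/pairing in $\msh_{\mathfrak{c}_{W,\partial L}}(\mathfrak{c}_{W,\partial L})$, and the $T_t$-action on the boundary category is trivial since $\mathfrak{c}_{W,\partial L}$ carries no $\bR$-factor. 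Hence the colimit stabilizes and agrees with restriction at infinity. This is precisely the observation in the paper's concluding paragraph (``the recovery of the unfiltered morphism of microsheaves \dots\ is exactly realized through quotient by torsion objects in the Tamarkin category''), and it is the replacement for your invalid Liouville-flow heuristic.

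One further caution: you refer to ``the identification (c) genuinely extends from the embedded subcategory to the entire Tamarkin category.'' Be aware that the paper only establishes that $\msh_{\mathfrak{c}_W^L}(\mathfrak{c}_W^L)\hookrightarrow\ST_L(W)$ is fully faithful (Lemma \ref{lem: quantization embeds into tamarkin}), not an equivalence, and that the formulas defining $\circ_\ST$ and $\sHom^\circ_\ST$ in Definition \ref{def: enriched composition} invoke the doubling functor $w_{\mathfrak{c}_{ij}}^+$, which is defined only on microsheaves supported on $\mathfrak{c}_{ij}$. The theorem should therefore be read on the image of the embedding; if you want to extend the computation to the whole of $\ST_{L_{ij}}(W_i^-\times W_j)$ you must first address how the right-hand side of the composition formula is interpreted on objects outside that image, which is not addressed by invoking $\vec{\R}$-linearity alone.
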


    For the reformulation of Theorem \ref{thm: main tamarkin} (of the main Theorems \ref{thm: main composition} and \ref{thm: main composition hom}), when we specialize to the case of $W_1 = W_3 = pt$ and $W_2 = W$, we can show that the result recovers the filtered morphisms of microsheaves supported on $L$ and $K$. Comparing Definitions \ref{def: enriched hom} and \ref{def: enriched composition}, one can see that the composition exactly reduces to the enriched Hom over $\ST_c$:

\begin{theorem}[Reformulation of Corollaries \ref{cor: recover tensor} and \ref{cor: recover hom}]
    Let $L, K \subset W \times \bR$ be eventually conic sufficiently Legendrians with conical ends contained in the sufficiently Legendrian $\Lambda \subset \partial W$. Then we have a commutative diagram 
     \[\begin{tikzcd}[column sep=80pt]
        \ST_L(W) \times \ST_K(W) \ar[d, "i_\infty^* \otimes i_\infty^*" left] \ar[r, "{\Hom_{\ST(W)}(-,-)}"] & \ST_c \ar[d, "i_\infty^*"]\\
        \msh_{\mathfrak{c}_{W,\Lambda}}(\mathfrak{c}_{W,\Lambda}) \times \msh_{\mathfrak{c}_{W,\Lambda}}(\mathfrak{c}_{W,\Lambda}) \ar[r, "{\Hom_{\msh}(-,-)}"] & \SC.
    \end{tikzcd}\]
    Similarly, for $L \subset W^- \times \bR$ and $K \subset W \times \bR$ eventually conic sufficiently Legendrians with conical ends contained in the sufficiently Legendrian $\Lambda \subset \partial W$, we have a commutative diagram
    \[\begin{tikzcd}[column sep=80 pt]
        \ST_L(W) \times \ST_K(W) \ar[d, "i_\infty^* \otimes i_\infty^*" left] \ar[r, "{(-) \circ_{\ST(W)}(-)}"] & \ST \ar[d, "i_\infty^*"]\\
        \msh_{\mathfrak{c}_{W,\Lambda}}(\mathfrak{c}_{W,\Lambda}) \times \msh_{\mathfrak{c}_{W,\Lambda}}(\mathfrak{c}_{W,\Lambda}) \ar[r, "{(-)\circ(-)}"] & \SC.
    \end{tikzcd}\]
\end{theorem}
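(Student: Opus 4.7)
The plan is to deduce this statement by specializing Theorem~\ref{thm: main tamarkin} to the case $W_1 = W_3 = \mathrm{pt}$ and $W_2 = W$, and then unwinding the identifications of categories and functors that arise when one of the factors is a point.

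For the composition diagram, I would apply the first square of Theorem~\ref{thm: main tamarkin} taking $L_{12} = L \subset W^- \times \mathrm{pt} \times \bR$ and $L_{23} = K \subset \mathrm{pt} \times W \times \bR$, so that $L_{13} = K \circ L \subset \mathrm{pt}^- \times \mathrm{pt} \times \bR = \bR$. Since the contact boundary of $\mathrm{pt}$ is empty, I would identify $\msh_{\mathfrak{c}_{\mathrm{pt}, \partial L_{13}}}(\mathfrak{c}_{\mathrm{pt}}) = \SC$, and similarly $\ST(\mathrm{pt}) = \ST_c$ (with $\ST_{L_{13}}(\mathrm{pt}) \subset \ST_c \subset \ST$ as an evident subcategory). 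The top horizontal functor is then the Tamarkin pairing $\circ_\ST$ of Definition~\ref{def: enriched composition}; specialized to $W_1 = W_3 = \mathrm{pt}$, its defining formula $m_{\mathfrak{c}_{13}}(\pi_{13!}(\pi_{12}^*(w^+ -) \star \pi_{23}^*(w^+ -)))$ reduces to the $\ST_c$-enriched pairing on $\ST(W)_c$ appearing in the statement of the theorem, which the statement denotes $(-) \circ_{\ST(W)} (-)$.

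For the enriched Hom diagram, I would analogously apply the second square of Theorem~\ref{thm: main tamarkin} with $L_{12} = L$ and $L_{23} = K$ both in $\mathrm{pt} \times W \times \bR$, so $L_{\bar 13} = K \circ (-L) \subset \bR$. The top horizontal functor $\sHom^\circ_\ST$ from Definition~\ref{def: enriched composition}, specialized to this case, matches the $\ST_c$-enriched Hom described in Lemma~\ref{def: enriched hom} (in both, one takes $\pi_{\bR *}\sHom^\star$ of the doubled representatives, after transposing $L$ via $W \to W^-$). Under the identification $\msh_{\mathfrak{c}_{\mathrm{pt}}}(\mathfrak{c}_{\mathrm{pt}}) = \SC$, the bottom horizontal functors in both diagrams become the usual $\Hom_{\msh}$ and $\circ$ pairings, and the bottom rows recover the content of Corollaries~\ref{cor: recover tensor} and~\ref{cor: recover hom}. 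The right-hand vertical arrow $i_\infty^*$ is the quotient $\ST_c \to \SC$ sending a filtered object to its colimit, in accordance with Formulas~\eqref{eq: hom mod torsion} and~\eqref{eq: pairing mod torsion}.

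The main obstacle I anticipate is purely bookkeeping: namely, verifying that the Tamarkin composition/internal Hom, as constructed via doublings in Definition~\ref{def: enriched composition} and Lemma~\ref{def: enriched hom} for a product of Weinstein manifolds, really does reduce, when one of the factors is a point, to the $\ST_c$-enriched structure on $\ST(W)_c$ that appears in the statement. Once this unpacking of definitions is carried out (using Proposition~\ref{prop: composition tamarkin wein} to cross-check consistency with the microsheaf-level compositions), the commutativity of both squares is immediate from Theorem~\ref{thm: main tamarkin}, since every arrow in the new diagrams is the corresponding arrow in the specialized version of that theorem.
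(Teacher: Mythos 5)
Your approach — specializing Theorem~\ref{thm: main tamarkin} to $W_1 = W_3 = \mathrm{pt}$, $W_2 = W$, then matching the specialized Tamarkin pairing and internal Hom with the $\ST_c$-enriched structure of Definition~\ref{def: enriched composition} and Lemma~\ref{def: enriched hom} — is precisely the route the paper takes; the sentence immediately preceding the theorem in the paper spells out the same specialization and identification, and no further proof is recorded. One bookkeeping slip: in the composition square, with $W_1 = W_3 = \mathrm{pt}$ one must take $L_{12} = K \subset \mathrm{pt}^- \times W \times \bR$ and $L_{23} = L \subset W^- \times \mathrm{pt} \times \bR$ (your assignment $L_{12} = L \subset W^- \times \mathrm{pt} \times \bR$ forces $W_1 = W$, inconsistent with your claim $L_{13} \subset \mathrm{pt}^- \times \mathrm{pt} \times \bR$), but this label swap does not affect the argument.
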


    Finally, we note that Theorems  \ref{thm: lagrangian immersion tensor} and \ref{thm: lagrangian immersion hom} on the (unfiltered) morphisms of microsheaves supported on $L \ll K$ can be reformulated as (unfiltered) morphisms in $\ST(W)_\infty$ in terms of Formlae \eqref{eq: hom mod torsion} and \eqref{eq: pairing mod torsion}.

\begin{proposition}
[Reformulation of Theorems  \ref{thm: lagrangian immersion tensor} and \ref{thm: lagrangian immersion hom}]
    Let $L, K \subset W \times \bR$ be eventually conic sufficiently Legendrians with conical ends contained in the sufficiently Legendrian $\Lambda \subset \partial W$. Then we have a commutative diagram
    \[\begin{tikzcd}[column sep=80pt]
        \ST_L(W)_{\infty} \times \ST_K(W)_{\infty} \ar[d, "i_\infty^* \otimes i_\infty^*" left] \ar[r, "{\Hom_{\ST(W)_\infty}(-,-)}"] & \SC \ar[d, "\rotatebox{90}{$=$}"]\\
        \msh_{\mathfrak{c}_{W,\Lambda}}(\mathfrak{c}_{W,\Lambda}) \times \msh_{\mathfrak{c}_{W,\Lambda}}(\mathfrak{c}_{W,\Lambda}) \ar[r, "{\Hom_{\msh}(-,-)}"] & \SC.
    \end{tikzcd}\]
    Similarly, for $L \subset W^- \times \bR$ and $K \subset W \times \bR$ eventually conic sufficiently Legendrians   with conical ends contained in the sufficiently Legendrian $\Lambda \subset \partial W$, we have a commutative diagram
    \[\begin{tikzcd}[column sep=80 pt]
        \ST_L(W)_{\infty} \times \ST_K(W)_{\infty} \ar[d, "i_\infty^* \otimes i_\infty^*" left] \ar[r, "{(-) \circ_{\ST(W)_\infty}(-)}"] & \SC \ar[d, "\rotatebox{90}{$=$}"]\\
        \msh_{\mathfrak{c}_{W,\Lambda}}(\mathfrak{c}_{W,\Lambda}) \times \msh_{\mathfrak{c}_{W,\Lambda}}(\mathfrak{c}_{W,\Lambda}) \ar[r, "{(-)\circ(-)}"] & \SC.
    \end{tikzcd}\]
\end{proposition}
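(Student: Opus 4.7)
The strategy is to express the Tamarkin-category morphism object (respectively, pairing) as a filtered colimit as $t \to \infty$ of the unfiltered morphism (respectively, pairing) in $\ST(W)$, and then apply Theorems \ref{thm: lagrangian immersion tensor} and \ref{thm: lagrangian immersion hom} termwise, using that the restriction $i_\infty^*$ is insensitive to Reeb translation.

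First I would invoke Formulae \eqref{eq: hom mod torsion} and \eqref{eq: pairing mod torsion} for the $\ST$-linear category $\ST(W)$: for $\SF \in \ST_L(W)$ and $\SG \in \ST_K(W)$,
\[
\Hom_{\ST(W)_\infty}(i_\infty^*\SF, i_\infty^*\SG) \;\simeq\; \operatornamewithlimits{colim}_{t \to \infty}\Hom_{\ST(W)}(\SF, T_t\SG),
\]
and analogously for the $\circ_{\ST(W)_\infty}$ pairing. Thus it suffices to construct, for each sufficiently large $t$, a commutative square with top horizontal arrow $\Hom_{\ST(W)}(\SF, T_t\SG)$ (resp.\ $\SF \circ_{\ST(W)} T_t\SG$) and bottom arrow $\Hom_{\msh}(i_\infty^*\SF, i_\infty^*\SG)$ (resp.\ $(i_\infty^*\SF) \circ (i_\infty^*\SG)$), compatible with the continuation maps as $t$ increases; the colimit then produces the desired diagram.

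Second, for $t$ sufficiently large, the Legendrians $L$ and $T_tK$ satisfy $L \ll T_tK$ in the sense of Definition \ref{def: cofinal position}, since both are eventually conical with conical ends in $\Lambda$ and $T_t$ translates the $\bR$ factor upward past the $\epsilon$-thin range. Moreover, $T_t$ commutes with the restriction-at-infinity functor, since translation in the contactization factor does not change the conical piece $\partial L \subset \partial W$ of the microsupport; equivalently, $i_\infty^*(T_t\SG) \simeq i_\infty^*\SG$ canonically. Applying Theorem \ref{thm: lagrangian immersion hom} (or \ref{thm: lagrangian immersion tensor}) to the pair $(L, T_tK)$ then provides the commutative square for each such $t$, with the bottom arrow constant in $t$. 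Assembling these squares for the filtered system $t \to \infty$ of continuation maps yields the diagrams of the statement.

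The main obstacle, and the step requiring the most care, will be verifying the coherence of the maps between the squares indexed by $t$, so that the colimit on the top horizontal side genuinely assembles into the Tamarkin-category Hom (resp.\ pairing), while the bottom side, being a constant diagram, remains unchanged under passage to the colimit. This reduces to checking that the structural continuation map $T_t\SG \to T_{t'}\SG$ for $t \le t'$ becomes an isomorphism after applying $i_\infty^*$, which follows from the compatibility of nearby-cycle/restriction-at-infinity with Reeb translation as set up in Section \ref{sec:quantization}. Once this is in place, passing to the colimit converts the $t$-indexed squares provided by Theorems \ref{thm: lagrangian immersion tensor} and \ref{thm: lagrangian immersion hom} into the two commutative diagrams claimed.
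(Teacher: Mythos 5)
Your proposal is correct and follows essentially the same route the paper gestures at: the sentence just before the Proposition says exactly that the claim follows from Formulae \eqref{eq: hom mod torsion} and \eqref{eq: pairing mod torsion} combined with Theorems \ref{thm: lagrangian immersion tensor} and \ref{thm: lagrangian immersion hom}, which is precisely your reduction. Your three moves --- express $\Hom_{\ST(W)_\infty}$ (resp.\ $\circ_{\ST(W)_\infty}$) as the filtered colimit over $t$ of the unshifted Hom (resp.\ pairing) against $T_t\SG$, replace $K$ by $T_tK$ so that $L \ll T_tK$ in the sense of Definition \ref{def: cofinal position} and Theorems \ref{thm: lagrangian immersion tensor}/\ref{thm: lagrangian immersion hom} apply termwise, and note that $i_\infty^*$ is unaffected by the translation so the bottom row is constant in $t$ --- match the intended argument, and your coherence check that the continuation maps become isomorphisms after $i_\infty^*$ is the one remaining verification, which you correctly ground in the Reeb-equivariance of the nearby-cycle presentation of $i_\infty^*$ from Section \ref{sec:quantization}.
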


We can also reformulate Theorem \ref{thm: quantization double movie} using the language of Tamarkin categories:

\begin{proposition}[Reformulation of Theorem \ref{thm: quantization double movie}]
    Let $W$ be a Weinstein manifold with Maslov data and ${L} \subset W \times \bR$ be an $\epsilon$-thin eventually conical sufficiently Legendrian. Then for $t > \epsilon$, there a fully faithful embedding
    $$i_{-,t}^*: \msh_{L_{\cup_W}}(L_{\cup_W}) \hookrightarrow \ST_L(W)_\infty \hookrightarrow \msh_{\mathfrak{c}_{W,\partial L}}(\mathfrak{c}_W).$$
\end{proposition}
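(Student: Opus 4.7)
The plan is to factor the fully faithful composition from Theorem \ref{thm: quantization double movie} through the torsion quotient of the Tamarkin category, and verify full faithfulness at each stage by computing morphisms in the quotient using Formula \eqref{eq: hom mod torsion}. First, I would define the first arrow as follows: Lemma \ref{lem: quantization embeds into tamarkin} gives a fully faithful embedding $\msh_{\mathfrak{c}_W^L}(\mathfrak{c}_W^L) \hookrightarrow \ST_L(W)$; composing on the right with the quotient $\ST_L(W) \to \ST_L(W)_\infty$ and on the left with the bijection $i_{-,t}^*: \msh_{L_{\cup_W}^\prec}(L_{\cup_W}^\prec) \to \msh_{\mathfrak{c}_W^L}(\mathfrak{c}_W^L)$ of Theorem \ref{thm: quantization double movie} yields the candidate for the first embedding.

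To check full faithfulness of this first arrow, for $\SF, \SG \in \msh_{L_{\cup_W}^\prec}(L_{\cup_W}^\prec)$, I would combine Formula \eqref{eq: hom mod torsion} with Lemma \ref{lem: quantization embeds into tamarkin} to get
$$\Hom_{\ST_L(W)_\infty}(i_\infty^* i_{-,t}^*\SF,\, i_\infty^* i_{-,t}^*\SG) = \operatornamewithlimits{colim}_{s\to\infty}\Hom_{\msh_{\mathfrak{c}_W^L}}(i_{-,t}^*\SF,\, T_s\, i_{-,t}^*\SG).$$
For $s \geq 2t$ the $\epsilon$-thinness of $L$ gives $L \ll T_s L$, so Theorem \ref{thm: quantization double movie} identifies each term in the colimit with $\Hom_{\msh_{L_{\cup_W}^\prec}}(\SF, \SG)$; the continuation morphisms between these terms are identities by Proposition \ref{prop: continuation hom microsheaf}, so the colimit stabilizes to the desired Hom.

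For the second arrow, I would construct $\ST_L(W)_\infty \to \msh_{\mathfrak{c}_{W,\partial L}}(\mathfrak{c}_W)$ by noting that the restriction at infinity $i_\infty^*$ is defined on representatives in $\msh_{W \times T^*\bR}(W \times T^*\bR)_c$ and, since torsion objects in $\ST_L(W)$ are supported at finite values in the $\bR$-factor, annihilates the torsion subcategory and descends to the quotient. Full faithfulness on objects in the image of $\msh_{\mathfrak{c}_W^L}(\mathfrak{c}_W^L)$ then follows from Theorem \ref{thm: lagrangian immersion hom}: the gappedness $L \ll T_s L$ for $s > 2\epsilon$ produces isomorphisms $\Hom_{\msh_{\mathfrak{c}_W^L}}(\SF, T_s\SG) \simeq \Hom_{\msh_{\mathfrak{c}_{W,\partial L}}}(i_\infty^*\SF, i_\infty^* T_s\SG) = \Hom_{\msh_{\mathfrak{c}_{W,\partial L}}}(i_\infty^*\SF, i_\infty^*\SG)$, and taking the colimit in $s$ gives exactly the morphism in $\ST_L(W)_\infty$ by \eqref{eq: hom mod torsion}.

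The main obstacle is to show that the fully faithful embedding $\msh_{\mathfrak{c}_W^L}(\mathfrak{c}_W^L) \hookrightarrow \ST_L(W)$ together with the quotient map becomes essentially surjective onto $\ST_L(W)_\infty$, so that full faithfulness of the second arrow on the essential image of $\msh_{L_{\cup_W}^\prec}$ upgrades to full faithfulness on all of $\ST_L(W)_\infty$. I expect this to follow from the bijection on isomorphism classes in Theorem \ref{thm: quantization double movie}: any object of $\ST_L(W)$ represented by a microsheaf with support in (an arbitrary Reeb-translate of) $L \times \bR_{>0}$ becomes isomorphic, modulo torsion, to an object lifted from $\msh_{\mathfrak{c}_W^L}$, by translating to ensure the support lies in $\mathfrak{c}_W^L$ up to a torsion discrepancy. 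Once this essential surjectivity is in hand, the two full faithfulness statements combine as above.
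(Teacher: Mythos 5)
Your overall strategy — factoring the functor of Theorem \ref{thm: quantization double movie} through the Tamarkin category, identifying the first arrow by composing Lemma \ref{lem: quantization embeds into tamarkin} with the torsion quotient, and computing the quotient Hom via Formula \eqref{eq: hom mod torsion} — is the right one and matches the paper's intent. However, two points need attention.

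First, the claim that ``the continuation morphisms between these terms are identities by Proposition \ref{prop: continuation hom microsheaf}'' misuses that result. Proposition \ref{prop: continuation hom microsheaf} concerns stability of $\Hom$ under a \emph{small} positive push-off, i.e.~the limit as $t \to 0^+$, whereas what you need is that the continuation maps $\Hom(i_{-,t}^*\SF, T_s\, i_{-,t}^*\SG) \to \Hom(i_{-,t}^*\SF, T_{s'}\, i_{-,t}^*\SG)$ for $2\epsilon < s < s'$ are all isomorphisms, so that the colimit over $s\to\infty$ stabilizes. Also beware that Theorem \ref{thm: quantization double movie} gives the identification $\Hom(\SF,\SG)\simeq\Hom(i_{-,t}^*\SF, T_{2t}\,i_{-,t}^*\SG)$ only for the specific shift $T_{2t}$, not directly for arbitrary $T_s$. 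The stabilization instead comes from the perturbation-stability principle underlying Proposition \ref{prop: perturb compact}, or equivalently from applying Theorem \ref{thm: lagrangian immersion hom} for each $s$ and checking that the resulting isomorphisms to $\Hom(i_\infty^*\SF, i_\infty^*\SG)$ intertwine the continuation maps (once $-L$ and $T_sL$ are disjoint, positive push-off induces isomorphisms on $\Hom$).

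Second, your ``main obstacle'' paragraph is indeed where the genuine difficulty lies, but the proposed resolution is not correct as stated. For $\SH$ a representative of an object of $\ST_L(W)$, the constraint $\ss_{\tau>0}(\SH)\subset L$ governs only the positive-$\tau$ part of the microsupport; the $\tau=0$ part can be an arbitrary (universally sufficiently Legendrian) subset of $W\times 0_\bR$, and applying a Reeb translate $T_a$ shifts this subset rigidly without forcing it into $\mathfrak{c}_{W\times T^*\bR}$. So ``translating to ensure the support lies in $\mathfrak{c}_W^L$'' does not produce a representative in $\msh_{\mathfrak{c}_W^L}$. The paper avoids this obstruction entirely: the full faithfulness of the second arrow $i_\infty^* : \ST_L(W)_\infty \to \msh_{\mathfrak{c}_{W,\partial L}}(\mathfrak{c}_W)$ is already asserted (for \emph{all} objects of $\ST_L(W)_\infty$, not only those in the image of $\msh_{\mathfrak{c}_W^L}$) by the immediately preceding Proposition ``Reformulation of Theorems \ref{thm: lagrangian immersion tensor} and \ref{thm: lagrangian immersion hom}'', which is itself a consequence of the ``Reformulation of Corollaries \ref{cor: recover tensor} and \ref{cor: recover hom}'' together with Formulae \eqref{eq: hom mod torsion} and \eqref{eq: pairing mod torsion}. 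Citing that result lets you discharge the second arrow without any essential-surjectivity argument. As written, your proposal leaves this step as a genuine gap.
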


In conclusion, this shows that the recovery of the unfiltered morphism of microsheaves in Theorems \ref{thm: lagrangian immersion tensor}, \ref{thm: lagrangian immersion hom} (and \ref{thm: quantization double movie}) from the filtered morphisms, as explained in Corollaries \ref{cor: recover tensor} and \ref{cor: recover hom}, is exactly realized through quotient by torsion objects in the Tamarkin category.

\bibliographystyle{amsplain}
\bibliography{ref_LNS}

\end{document}